\numberwithin{equation}{section}
\numberwithin{figure}{section}
  \theoremstyle{remark}
  \newtheorem*{acknowledgement*}{\protect\acknowledgementname}
\theoremstyle{plain}
\newtheorem{thm}{\protect\theoremname}[section]
  \theoremstyle{remark}
  \newtheorem{notation}[thm]{\protect\notationname}
  \theoremstyle{definition}
  \newtheorem{defn}[thm]{\protect\definitionname}
  \newtheorem{exam}[thm]{\protect\examplename}
  \theoremstyle{plain}
  \newtheorem{prop}[thm]{\protect\propositionname}
  \theoremstyle{remark}
  \newtheorem{rem}[thm]{\protect\remarkname}
  \theoremstyle{plain}
  \newtheorem{cor}[thm]{\protect\corollaryname}
  \theoremstyle{plain}
  \newtheorem{lem}[thm]{\protect\lemmaname}
\newcommand{\Hom}{\operatorname{Hom}}
\newcommand{\End}{\operatorname{End}}
\newcommand{\Ext}{\operatorname{Ext}}
\newcommand{\Uq}{\mathbf{U}_q}
\newcommand{\Rq}{\mathbf{R}_q}
\newcommand{\cUq}{\check{\mathbf{U}}_q}
\newcommand{\Aq}{\mathbf{A}_q}
\newcommand{\Qq}{\mathbf{Q}}
\newcommand{\wt}{\operatorname{wt}}
\newcommand{\height}{\operatorname{ht}}
\newcommand{\up}{\mathrm{up}}
\newcommand{\low}{\mathrm{low}}
\newcommand{\Glow}{G^{\low}}
\newcommand{\Gup}{G^{\up}}
\newcommand{\Ker}{\operatorname{Ker}}
\newcommand{\Image}{\operatorname{Im}}
\newcommand{\soc}{\operatorname{soc}}
\newcommand{\dimv}{\operatorname{\underline{dim}}}
  \providecommand{\acknowledgementname}{Acknowledgement}
  \providecommand{\corollaryname}{Corollary}
  \providecommand{\definitionname}{Definition}
  \providecommand{\examplename}{Example}
  \providecommand{\lemmaname}{Lemma}
  \providecommand{\notationname}{Notation}
  \providecommand{\propositionname}{Proposition}
  \providecommand{\remarkname}{Remark}
\providecommand{\theoremname}{Theorem}
\begin{document}


\title[Twist automorphisms on quantum unipotent cells]{Twist automorphisms on quantum unipotent cells and dual canonical bases}

\author{Yoshiyuki Kimura}

\address[Yoshiyuki Kimura]{Faculty of Liberal Arts and Sciences, Osaka Prefecture
	University, 1-1, Gakuen-cho, Naka-ku, Sakai, Osaka 599-8531, Japan}

\email{ysykimura@las.osakafu-u.ac.jp}

\author{Hironori Oya}

\address[Hironori Oya]{Department of Mathematical Sciences, Shibaura Institute of Technology, 307 Fukasaku, Minuma-ku, Saitama-shi, Saitama, 337-8570, Japan}

\email{hoya@shibaura-it.ac.jp}

\thanks{The work of the first author was supported by JSPS Grant-in-Aid for Scientific Research (S) 24224001 and JSPS Grant-in-Aid for Young Scientists (B) 17K14168.}

\thanks{The work of the second author was supported by Grant-in-Aid for JSPS Fellows (No.~15J09231) and the Program for Leading Graduate Schools, MEXT, Japan. It was also supported by the European Research Council under the European Union's Framework Programme H2020 with ERC Grant Agreement number 647353 Qaffine, during the revision of this paper.}

\begin{abstract}
In this paper, we construct twist automorphisms on quantum unipotent
cells, which are quantum analogues of the Berenstein-Fomin-Zelevinsky
twist automorphisms on unipotent cells. We show that those quantum
twist automorphisms preserve the dual canonical bases of quantum unipotent
cells. 

Moreover we prove that quantum twist automorphisms are described by the syzygy functors for representations of preprojective algebras in the symmetric case. This is the quantum analogue of Gei\ss-Leclerc-Schr\"oer's description, and Gei\ss-Leclerc-Schr\"oer's results are essential in our proof. As a consequence,
we show that quantum twist automorphisms are compatible with quantum
cluster monomials. The 6-periodicity of specific quantum twist automorphisms is also verified.
\end{abstract}

\maketitle
\tableofcontents{}

\section{Introduction}

\label{s:intro}

\subsection{Canonical bases and cluster algebras}

Let $G$ be a connected, simply-connected, complex
simple algebraic group with a fixed maximal torus $H$, a pair of
Borel subgroups $B_{\pm}$ with $B_{+}\cap B_{-}=H$, the Weyl group
$W=\mathrm{Norm}_{G}\left(H\right)/H$ and the maximal unipotent subgroups
$N_{\pm}\subset B_{\pm}$ (In the main body of this paper, we deal with ``the Kac-Moody groups''). Let $\mathbf{U}_{q}\left(\mathfrak{g}\right)$
be the Drinfeld-Jimbo quantized enveloping algebra of the corresponding
Lie algebra $\mathfrak{g}$, and $\mathbf{U}_{q}^{-}\left(\mathfrak{g}\right)$
be its negative part which arises from the triangular decomposition
of $\mathfrak{g}$. In \cite{MR1035415}, Lusztig constructed the
canonical bases $\mathbf{B}$ of $\mathbf{U}_{q}^{-}\left(\mathfrak{g}\right)$
using perverse sheaves on the varieties of quiver representations
when $\mathfrak{g}$ is simply-laced. In \cite{MR1115118}, Kashiwara
constructed the lower global bases $G^{\mathrm{low}}\left(\mathscr{B}\left(\infty\right)\right)$
of $\mathbf{U}_{q}^{-}\left(\mathfrak{g}\right)$ in general. In simply-laced
case, Lusztig \cite{MR1182165} proved that the two bases of $\mathbf{U}_{q}^{-}\left(\mathfrak{g}\right)$
coincide. In this paper, we call the bases the \emph{canonical base}s.
The canonical bases have interesting structures; one is positivity
of structure constants of multiplications and (twisted) comultiplication,
and another is the combinatorial structure which is called Kashiwara
crystal structure. Using the positivity of the canonical bases, Lusztig
\cite{MR1327548} generalized the notion of the total positivity for
reductive groups and related algebraic varieties.

Since $\mathbf{U}_{q}^{-}$ has a natural non-degenerate Hopf pairing
which makes it into a (twisted) self-dual bialgebra, we can consider
$\mathbf{U}_{q}^{-}$ as a quantum analogue of the coordinate rings
$\mathbb{C}\left[N_{-}\right]$. The combinatorial structure of $\mathbf{B}^{\mathrm{low}}$
and its dual basis $\mathbf{B}^{\mathrm{up}}$ (with respect to the
non-degenerate Hopf pairing), called the \emph{dual canonical bases},
has been intensively studied by Lusztig \cite[Chapter 42]{Lus:intro} and
Berenstein-Zelevinsky \cite{MR1237826,MR1387682} (in the type $\mathrm{A}$-case)
and it became one of the origins of cluster algebras introduced by
Fomin-Zelevinsky \cite{MR1887642}.

\subsection{Quantum unipotent subgroups and dual canonical bases }

For a Weyl group element $w\in W$ (and a lift $\dot{w}\in\mathrm{Norm}_{G}\left(H\right)$),
the unipotent root subgroups $N_{-}\left(w\right):=N_{-}\cap\dot{w}N\dot{w}^{-1}$
and the Schubert cells $B_{+}\dot{w}B_{+}/B_{+}$ in the full flag varieties
$G/B_{+}$ have attracted much attention in the development of the
theory of total positivity for reductive groups. Gei\ss-Leclerc-Schr\"oer
\cite{MR2822235} introduced a cluster algebra structure on $\mathbb{C}\left[N_{-}\left(w\right)\right]$
using representation theory of preprojective algebras, called an additive
categorification. They also proved that the dual semicanonical basis
$\mathcal{S}^{*}$ is compatible with $\mathbb{C}\left[N_{-}\left(w\right)\right]$, that is, $\mathcal{S}^{*}\cap\mathbb{C}\left[N_{-}\left(w\right)\right]$ gives a basis of $\mathbb{C}\left[N_{-}\left(w\right)\right]$, and
the set of cluster monomials 
is contained in the dual semicanonical basis
$\mathcal{S}^{*}$. Here we note that we identify the coordinate rings
$\mathbb{C}\left[N_{-}\left(w\right)\right]$ of the unipotent subgroups
$N_{-}\left(w\right)$ as invariant subalgebras $\mathbb{C}\left[N_{-}\right]^{N_{-}\cap\dot{w}N_{-}\dot{w}^{-1}}$
fixing a splitting $N_{-}\simeq\left(N_{-}\cap\dot{w}N_{-}\dot{w}^{-1}\right)\times N_{-}\left(w\right)$
as varieties.

For a nilpotent Lie algebra $\mathfrak{n}_{-}\left(w\right)$ associated
with the subgroup $N_{-}\left(w\right)$, a quantum analogue $\mathbf{U}_{q}^{-}\left(w\right)$
of the universal enveloping algebras $\mathbf{U}\left(\mathfrak{n}_{-}\left(w\right)\right)$
has been introduced by De Concini-Kac-Procesi \cite{MR1351503} and
also by Lusztig \cite{Lus:intro} as subalgebras of the quantized
enveloping algebras $\mathbf{U}_{q}^{-}$. They are defined as subalgebras
which are generated by quantum root vectors defined by Lusztig's braid
group symmetry on the quantized enveloping algebras $\mathbf{U}_{q}\left(\mathfrak{g}\right)$. Meanwhile they are the linear spans of their Poincaré-Birkhoff-Witt type orthogonal monomials with respect to the non-degenerate pairing on
$\mathbf{U}_{q}^{-}$. In \cite{MR2914878}, the first author proved
that the subalgebras $\mathbf{U}_{q}^{-}\left(w\right)$ are compatible
with the dual canonical bases, that is $\mathbf{B}^{\mathrm{up}}\cap\mathbf{U}_{q}^{-}\left(w\right)$
is a base of $\mathbf{U}_{q}^{-}\left(w\right)$ and the specialization
of $\mathbf{U}_{q}^{-}\left(w\right)$ (using the dual canonical basis)
at $q=1$ is isomorphic to the coordinate ring $\mathbb{C}\left[N_{-}\left(w\right)\right]$,
hence $\mathbf{U}_{q}^{-}\left(w\right)$ is also considered as a
quantum analogue of the coordinate ring $\mathbb{C}\left[N_{-}\left(w\right)\right]$
of the unipotent subgroup.

Gei\ss-Leclerc-Schr\"oer \cite{MR3090232} proved that $\mathbf{U}_{q}^{-}\left(w\right)$
admits a quantum cluster algebra structure in the sense of Berenstein-Zelevinsky
if $\mathfrak{g}$ is symmetric via the additive categorification
and Goodearl-Yakimov \cite{MR3263301, Goodearl:2013qw} proved the
result using the framework of quantum nilpotent algebras in the symmetrizable case.
Kang-Kashiwara-Kim-Oh \cite{KKKO} showed that the set of quantum
cluster monomials is contained in the dual canonical bases via symmetric
quiver Hecke algebras when $\mathfrak{g}$ is symmetric. See \cite[Introduction]{KKKO} for the history of this topic. 

\subsection{Unipotent cells and cluster structure}

For a pair $\left(w_{+},w_{-}\right)$ of Weyl group elements, the
intersections $G^{w_{+},w_{-}}:=B_{+}\dot{w}_{+}B_{+}\cap B_{-}\dot{w}_{-}B_{-}$
are called double Bruhat cells and the maximal torus $H$ acts $G^{w_{+},w_{-}}$
by left (or right) multiplication. For a certain lift $\overline{w}_{-}\in G$
of $w_{-}\in W$, the intersection $B_{+}\dot{w}_{+}B_{+}\cap N_{-}\overline{w}_{-}N_{-}$
is a section of the quotient $G^{w_{+},w_{-}}\to H\setminus G^{w_{+},w_{-}}$.
The unipotent cells $N_{-}^{w}:=B_{+}\dot{w}_{+}B_{+}\cap N_{-}$
are special cases of reduced double Bruhat cells where $w_{-}$ is
the unit of $W$. The (upper) cluster structure of the double Bruhat
cells and unipotent cells have been studied in details, see Berenstein-Fomin-Zelevinsky
\cite{MR2110627} (see also Gei\ss-Leclerc-Schr\"oer \cite{MR2822235}
and Williams \cite{MR3096792}). In fact, in \cite{MR2822235}, it
is shown that the coordinate ring of the unipotent subgroup has a
cluster algebra structure with unlocalized frozen variables, and that
the coordinate ring of the unipotent cell has a cluster algebra structure
with fully localized frozen variables.

For a Weyl group element $w\in W$, Berenstein-Fomin-Zelevinsky \cite{MR1405449}
(in the type $\mathrm{A}$-case) and Berenstein-Zelevinsky \cite{MR1456321}
(in general) introduced twist automorphisms which are automorphisms
on unipotent cells $N_{-}^{w}$ for solving the factorization problems,
called the Chamber Ansatz, which describe the inverse of the ``toric
chart'' of the Schubert varieties.

In \cite{MR2822235,MR2833478}, Gei\ss-Leclerc-Schr\"oer studied the additive
categorification of the twist automorphism using representation theory
of preprojective algebras, where it is given by the syzygy on the
Frobenius subcategory associated with $w$. They treated
the coordinate ring of the unipotent cells as the localization of
the coordinate rings unipotent subgroups with respect to the (unipotent)
minors associated with Weyl group elements. They also introduced the ``dual semicanonical bases'' of the coordinate ring of the unipotent cells, using the ``multiplicative property'' of dual semicanonical bases.

In this paper, we study the construction of a quantum analogue of
the twist automorphisms on the quantum unipotent cells, which are
the ``quantized coordinate rings of the unipotent cells'', and its
relation to the additive categorification. 

\subsection{Quantum unipotent cells}

Quantum coordinate rings of double Bruhat cells, called quantum double
Bruhat cells, are introduced by De Concini-Procesi \cite{MR1635678}
in the study of representation theory of quantum groups at root of
unity and also intensively studied by Joseph \cite{MR1315966} in
the study of prime spectra of quantized coordinate ring of $G$. Berenstein-Zelevinsky
\cite{MR2146350} conjectured that quantum double Bruhat cells admit
a structure of quantum cluster algebras via quantum minors. Goodearl-Yakimov
\cite{goodearl2016berenstein} proved the conjecture using a quantum
analogue of the Fomin-Zelevinsky twist of the double Bruhat cells.

In \cite{MR1635678}, De Concini-Procesi studied the relation between
the quantum unipotent subgroups and the quantum unipotent cells in
finite type case. In \cite{MR2914878}, the injectivity result of
De Concini-Procesi is generalized via the study of crystal bases. 

Berenstein-Rupel \cite{MR3397447} studied the quantum unipotent cells
via the Hall algebra technique and they constructed quantum analogue
of the twist maps under the conjecture concerning the quantum cluster
algebra structure and they showed that the quantum twist automorphisms
preserve the triangular bases (in the sense of Berenstein-Zelevinsky
\cite{MR3180605}) of the quantum unipotent cells when the Weyl group
element $w$ is the square of an acyclic Coxeter element $c$
with $\ell\left(w\right)=2\ell\left(c\right)$. 
We note
that Qin \cite{Qin:2016eu} proved that the triangular bases (=localized
dual canonical bases) in the sense of \cite{Qin:2015db} coincide
with the triangular bases in the sense of Berenstein-Zelevinsky \cite{MR3180605}
when $\mathfrak{g}$ is symmetric.

\subsection{Quantum unipotent cells and the dual canonical bases}

Our main results in this paper are the following:
\begin{enumerate}
\item We prove the De Concini-Procesi isomorphisms between the localizations $\mathbf{A}_{q}\!\left[N_{-}\left(w\right)\cap\dot{w}G_{0}^{\min}\right]$ of the quantum unipotent subgroups $\mathbf{A}_{q}\!\left[N_{-}\left(w\right)\right]$ and the quantum unipotent cells $\mathbf{A}_{q}\!\left[N_{-}^{w}\right]$ for arbitrary symmetrizable Kac-Moody cases (Theorem \ref{thm:DeCP}). The quantum cluster structure on the quantum unipotent cells can be proved as a corollary of the existence of the De Concini-Procesi isomorphisms (Corollary \ref{c:GLSqclus}). 

We should remark that the original De Concini-Procesi isomorphisms \cite[Theorem 3.2]{MR1635678} were given under the assumption that $\mathfrak{g}$ is of finite type. In \cite{MR1635678}, their existence was proved by downward induction on the length of elements of the Weyl group $W$ from the longest element, which exists only in finite type cases. 
\item We introduce a quantum analogue $\gamma_{w}$ of the twist isomorphism between the unipotent cells $N_{-}^{w}$ and $N_{-}\left(w\right)\cap\dot{w}G_{0}^{\min}$
which is defined using the Gauss decomposition (Theorem \ref{t:BZisom}).
\item We introduce a quantum analogue of the twist automorphism of unipotent
cells on the quantum coordinate ring $\mathbf{A}_{q}\!\left[N_{-}^{w}\right]$
of the unipotent cells (without referring the quantum cluster algebra
structure) and show that the quantum twist preserves the dual canonical
bases (Theorem \ref{t:BZauto}). In fact, we introduce a quantum
analogue of the twist automorphism as a composite of the De Concini-Procesi isomorphism and the quantum twist isomorphism. The result that the
dual canonical bases are preserved under the twist automorphism from
2 is proved as a consequence of the properties of two isomorphisms
and the dual canonical bases. We note that our construction is independent
of the construction by Berenstein-Rupel \cite{MR3397447}.
\item We relate the quantum twist automorphisms and the quantum cluster
structure under the additive categorification (Theorem \ref{t:qGLS}).
We also prove the $6$-periodicity of the twist automorphisms associated to the longest elements of the Weyl groups in finite type cases (Theorem \ref{t:period}).
\end{enumerate}

\subsection{Outline of the paper}

The paper is organized as follows. In section \ref{sec:Preliminaries},
we prepare the notations for Kac-Moody Lie algebras, Kac-Moody groups, and flag schemes. Moreover, we give a description of the coordinate rings of unipotent cells, and express ``classical twist maps'', which are defined by Berenstein-Zelevinsky \cite{MR1456321}, in terms of matrix coefficients. 
In section \ref{sec:Preliminaries2}, we give a brief review of quantum unipotent subgroups, quantum closed unipotent cells and canonical/dual canonical bases. The main result in this section is ``a crystalized Kumar-Peterson identity'' (Theorem \ref{t:cryKP}). In section \ref{sec:Quantum-unipotent-cell}, we define the dual canonical bases of the localized quantum coordinate rings and prove the De Concini-Procesi isomorphisms under the arbitrary symmetrizable Kac-Moody setting. In section \ref{sec:Quantum-twist-isomorphisms}, a quantum analogue of the twist isomorphism is introduced. In section \ref{sec:Twist-aut}, we define a quantum analogue of the twist automorphism as a composite of the quantum twist isomorphism and the De Concini-Procesi isomorphism. In section \ref{sec:GLS}, we relate the quantum twist automorphisms to the quantum cluster algebra structures via Gei\ss-Leclerc-Schr\"{o}er's additive categorification. In section \ref{sec:fin}, we study the periodicity of the twist automorphisms associated to the longest elements in finite type cases.

\subsection{Further work}

The comparison with the construction by Berenstein-Rupel \cite{MR3397447} and a quantum analogue of the Chamber Ansatz will be discussed in another paper\footnote{After the submission of the present paper, the paper corresponding to these topics by the second author appeared as \cite{Oya}.}. 

There is another type of ``quantum twist map'' which is not an automorphism,
introduced by Lenagan-Yakimov \cite{lenagan2015prime}. This is a
quantum analogue of the Fomin-Zelevinsky twist isomorphism \cite{MR1652878}.
The authors showed that it also preserves the dual canonical basis
of $\mathbf{A}_{q}\!\left[N_{-}\left(w\right)\right]$ \cite{Kimura:2016vn}.
However the authors do not know any explicit relations between this
quantum twist map and the quantum twist automorphisms in this paper.

\subsection{Basic notation}

\noindent\textup{(1)} Let $k$ be a field. For a $k$-vector space $V$, set
$V^{\ast}:=\Hom_{k}(V,k)$. Denote by $\langle\ ,\ \rangle\colon V^{\ast}\times V\to k$,
$(f,v)\mapsto\langle f,v\rangle$ the canonical pairing.

\noindent\textup{(2)} For a $k$-algebra $\mathcal{A}$, we set $[a_{1},a_{2}]:=a_{1}a_{2}-a_{2}a_{1}$
for $a_{1},a_{2}\in\mathcal{A}$. An Ore set $\mathcal{M}$ of $\mathcal{A}$
stands for a left and right Ore set consisting of non-zero divisors. Denote by $\mathcal{A}[\mathcal{M}^{-1}]$ the algebra of
fractions with respect to the Ore set $\mathcal{M}$. In this case,
$\mathcal{A}$ is naturally a subalgebra of $\mathcal{A}[\mathcal{M}^{-1}]$.
See \cite[Chapter 6]{MR1020298}.

\noindent\textup{(3)} An $\mathcal{A}$-module $V$ means a left $\mathcal{A}$-module.
The action of $\mathcal{A}$ on $V$ is denoted by $a.v$ for $a\in\mathcal{A}$
and $v\in V$. In this case, $V^{\ast}$ is regarded as a right $\mathcal{A}$-module
by $\langle f.a,v\rangle=\langle f,a.v\rangle$ for $f\in V^{\ast},a\in\mathcal{A}$
and $v\in V$.

\noindent\textup{(4)} For two symbols $i,j$, the notation $\delta_{ij}$
stands for the Kronecker delta.

\section{Preliminaries (1) : Kac-Moody Lie algebras and associated flag schemes}\label{sec:Preliminaries}
In this section, we fix the notation concerning (symmetrizable) Kac-Moody Lie algebras $\mathfrak{g}$ and associated Kac-Moody groups $G$, $G^{\min}$ and (not necessarily a group) schemes $\boldsymbol{G}$. See Kashiwara \cite{MR1463702} (see also Kashiwara-Tanisaki \cite{MR1317626}) for more details. In subsection \ref{ss:unipcellautom}, we describe the coordinate rings of unipotent cells explicitly, and review ``classical twist maps'', which are defined by Berenstein-Zelevinsky \cite{MR1456321}, in terms of matrix coefficients.

\subsection{Kac-Moody Lie algebras and their representations}

\begin{defn}
\label{d:rootdatum} A root datum $\left(I,\mathfrak{h},P,\left\{ \alpha_{i}\right\} _{i\in I},\left\{ h_{i}\right\} _{i\in I},\left(\ , \right)\right)$
consists of the following data 
\begin{enumerate}
\item $I$ : a finite index set, 
\item $\mathfrak{h}$ : a finite dimensional $\mathbb{Q}$-vector space, 
\item $P\subset\mathfrak{h}^{*}$ : a lattice, called the weight lattice, 
\item $P^{\ast}=\left\{ h\in\mathfrak{h}\mid\left\langle h,P\right\rangle \subset\mathbb{Z}\right\} $,
called the coweight lattice, with the canonical pairing $\left\langle -,-\right\rangle \colon P^{\ast}\otimes_{\mathbb{Z}}P\to\mathbb{Z}$, 
\item $\left\{ \alpha_{i}\right\} _{i\in I}\subset P$ : a subset, called
the set of simple roots, 
\item $\left\{ h_{i}\right\} _{i\in I}\subset P^{\ast}$ : a subset, called
the set of simple coroots, 
\item $\left(\ ,\ \right)\colon P\times P\to\mathbb{Q}$ : a $\mathbb{Q}$-valued
symmetric $\mathbb{Z}$-bilinear form on $P$, 
\end{enumerate}
satisfying the following conditions: 
\begin{enumerate}
\item[(a)] $\left(\alpha_{i},\alpha_{i}\right)\in2\mathbb{Z}_{>0}$ for $i\in I$, 
\item[(b)] $\left\langle h_{i},\mu\right\rangle =2\left(\alpha_{i},\mu\right)/\left(\alpha_{i},\alpha_{i}\right)$
for $\mu\in P$ and $i\in I$, 
\item[(c)] $A=\left(\left\langle h_{i},\alpha_{j}\right\rangle \right)_{i,j\in I}$
is a symmetrizable generalized Cartan matrix, that is $\left\langle h_{i},\alpha_{i}\right\rangle =2$,
$\left\langle h_{i},\alpha_{j}\right\rangle \in\mathbb{Z}_{\leq0}$
for $i\neq j$ and $\left\langle h_{i},\alpha_{j}\right\rangle =0$
is equivalent to $\left\langle h_{j},\alpha_{i}\right\rangle =0$, 
\item[(d)] $\left\{ \alpha_{i}\right\} _{i\in I}\subset\mathfrak{h}^{*}$, $\left\{ h_{i}\right\} _{i\in I}\subset\mathfrak{h}$
are linearly independent subsets. 
\end{enumerate}
The $\mathbb{Z}$-submodule $Q=\sum_{i\in I}\mathbb{Z}\alpha_{i}\subset P$
is called the root lattice, $Q^{\vee}=\sum_{i\in I}\mathbb{Z}h_{i}\subset P^{\ast}$ is called the coroot lattice. We set $Q_{+}=\sum_{i\in I}\mathbb{Z}_{\geq0}\alpha_{i}\subset Q$
and $Q_{-}=-Q_{+}$. For $\xi=\sum_{i\in I}\xi_{i}\alpha_{i}\in Q$,
we set $\height\left(\xi\right)=\sum_{i\in I}\xi_{i}\in\mathbb{Z}$.
Let $P_{+}:=\left\{ \lambda\in P\mid\left\langle h_{i},\lambda\right\rangle \in\mathbb{Z}_{\geq0}\;\text{for all}\;i\in I\right\} $
and we assume that there exists $\left\{ \varpi_{i}\right\} _{i\in I}\subset P_{+}$
such that $\left\langle h_{i},\varpi_{j}\right\rangle =\delta_{ij}$.
Set $\rho:=\sum_{i\in I}\varpi_{i}\in P_{+}$. 
\end{defn}

The quadruple $\left(\mathfrak{h},\left\{ \alpha_{i}\right\} _{i\in I},\left\{ h_{i}\right\} _{i\in I},\left(\ ,\ \right)\right)$
is called a realization of $A$. Let $\mathfrak{g}$ be the associated
Kac-Moody Lie algebra, that is, the Lie algebra $\mathfrak{g}$ over $\mathbb{C}$ which is generated by $\{e_{i},f_{i}\mid i\in I\}\cup \mathfrak{h}$ with the following relations: 
\begin{enumerate}
\item $\mathfrak{h}$ is a vector subspace of $\mathfrak{g}$,
\item $[h,h']=0$ for $h,h'\in\mathfrak{h}$,
\item $[h,e_{i}]=\langle h,\alpha_{i}\rangle e_{i}$ and $[h,f_{i}]=-\langle h,\alpha_{i}\rangle f_{i}$
for $h\in\mathfrak{h}$ and $i\in I$,
\item $[e_{i},f_{j}]=\delta_{ij}h_{i}$ for $i,j\in I$, 
\item $\mathrm{ad}(e_{i})^{1-a_{ij}}(e_{j})=\mathrm{ad}(f_{i})^{1-a_{ij}}(f_{j})=0$
for $i,j\in I$ with $i\neq j$, where $\mathrm{ad}(x)(y)=[x,y]$.
\end{enumerate}
Let $\mathfrak{n}_{+}$(resp. $\mathfrak{n}_{-}$) be the Lie subalgebra
of $\mathfrak{g}$ generated by $\left\{ e_{i}\mid i\in I\right\} $
(resp. $\left\{ f_{i}\mid i\in I\right\} $). Then we have $\mathfrak{g}=\mathfrak{n}_{-}\oplus\mathfrak{h}\oplus\mathfrak{n}_{+}$,
and it is called a triangular decomposition of $\mathfrak{g}$. Let
$\mathfrak{p}_{i}^{+}=\mathfrak{n}_{+}\oplus\mathfrak{h}\oplus\mathbb{C}f_{i}$ and $\mathfrak{p}_{i}^{-}=\mathfrak{n}_{-}\oplus\mathfrak{h}\oplus\mathbb{C}e_{i}$.

Let $\mathfrak{g}=\bigoplus_{\alpha\in\mathfrak{h}^{*}}\mathfrak{g}_{\alpha}$ be its root space decomposition, $\Delta=\left\{ \alpha\in\mathfrak{h}^{*}\mid\mathfrak{g}_{\alpha}\neq0\right\} \setminus\left\{ 0\right\} $
be the set of roots, and $\Delta_{\pm}$ be the subsets of positive
and negative roots. For a Lie algebra $\mathfrak{s}$, its universal
enveloping algebra is denoted by $\mathbf{U}\left(\mathfrak{s}\right)$. 

Let $W$ be the Weyl group associated with the above root datum, that
is the subgroup of $GL\left(\mathfrak{h}^{*}\right)$ which is generated
by simple reflections $\{s_{i}\}_{i\in I}$, where 
\[
s_{i}\left(\mu\right)=\mu-\left\langle h_{i},\mu\right\rangle \alpha_{i}\;\left(\mu\in\mathfrak{h}^{*}\right),
\]
and $\ell\colon W\to\mathbb{Z}_{\geq0}$ be the length function, that
is $\ell\left(w\right)$ is the smallest integer such that there exists
$i_{1},\dots,i_{\ell}\in I$ with $w=s_{i_{1}}s_{i_{2}}\dots s_{i_{\ell}}$.
For $w\in W$, set 
\begin{equation}
I(w):=\{\bm{i}=(i_{1},\dots,i_{\ell(w)})\in I^{\ell(w)}\mid w=s_{i_{1}}\cdots s_{i_{\ell(w)}}\}.\label{eq:reducedexp}
\end{equation}
An element of $I(w)$ is called a reduced word of $w$.

Let $\Delta^{\mathrm{re}}:=W\left\{ \alpha_{i}\right\} _{i\in I}\subset\Delta$
be the set of real roots and we set $\Delta_{\pm}^{\mathrm{re}}:=\Delta_{\pm}\cap\Delta^{\mathrm{re}}$.

\begin{defn}
\textup{(1)} For $\lambda\in P_{+}$, let $V_{\mathbb{C}}\left(\lambda\right)$
be the integrable highest weight $\mathfrak{g}$-module with highest
weight vector $u_{\lambda}$ of highest weight $\lambda$.

\textup{(2)} Let $\mathcal{O}_{\mathrm{int}}\left(\mathfrak{g}\right)$
be the category of integrable $\mathfrak{g}$-modules
$M$ satisfying the following condition:
\begin{enumerate}
\item $M=\bigoplus_{\mu\in P}M_{\mu}$ with $M_{\mu}=\left\{ m\in M\mid h.m=\left\langle h,\mu\right\rangle m\;\text{for all}\;h\in\mathfrak{h}\right\} $
and $\dim M_{\mu}<\infty$ for $\mu\in P$,
\item there exists finitely many $\lambda_{1},\cdots,\lambda_{k}\in P_{+}$
such that $P\left(M\right):=\left\{ \mu\in P\mid M_{\mu}\neq0\right\} \subset\bigcup_{1\leq j\leq k}(\lambda_{j}+Q_{-})$.
\end{enumerate}
\end{defn}
By definition, for a finitely generated (not necessarily integrable) $\mathfrak{g}$-module $M$ satisfying the condition 1 above, the condition for $M\in\mathcal{O}_{\mathrm{int}}\left(\mathfrak{g}\right)$ is equivalent to 
$\dim_{\mathbb{C}}\mathbf{U}\left(\mathfrak{p_{i}^{+}}\right)m<\infty$ for all $i\in I$ and $m\in M$. It is well-known that $\mathcal{O}_{\mathrm{int}}\left(\mathfrak{g}\right)$
is semisimple with its simple object being isomorphic to the integrable
highest weight modules $\left\{ V_{\mathbb{C}}\left(\lambda\right)\mid\lambda\in P_{+}\right\} $. 

Let $\varphi\colon\mathfrak{g}\to\mathfrak{g}$ be the anti-involution
defined by $\varphi\left(e_{i}\right)=f_{i},\varphi\left(f_{i}\right)=e_{i},\varphi\left(h\right)=h$
for $i\in I$ and $h\in\mathfrak{h}$. For $M\in\mathcal{O}_{\mathrm{int}}\left(\mathfrak{g}\right)$,
we denote by $\mathbf{D}_{\varphi}M$ the $\mathfrak{g}$-module $\bigoplus_{\mu\in M}\mathrm{Hom}\left(M_{\mu},\mathbb{C}\right)$
whose $\mathfrak{g}$-module structure is given by 
\[
\left\langle x.f,m\right\rangle =\left\langle f,\varphi\left(x\right).m\right\rangle \;\text{for}\;x\in\mathfrak{g}\;\text{and}\;m\in M.
\]
We note that $\mathbf{D}_{\varphi}M\in\mathcal{O}_{\mathrm{int}}\left(\mathfrak{g}\right)$.
For a $\mathfrak{g}$-module $M$, we denote by $M^{\mathrm{r}}$
the $\mathfrak{g}^{\mathrm{op}}$-module $\left\{ m^{\mathrm{r}}\mid m\in M\right\} $
whose $\mathfrak{g}^{\mathrm{op}}$-module structure is given by
\[
x.\left(m^{\mathrm{r}}\right)=\left(\varphi\left(x\right).m\right)^{\mathrm{r}}\;\text{for}\;x\in\mathfrak{g}\;\text{and}\;m\in M.
\]
We denote by $\mathcal{O}_{\mathrm{int}}^{\mathrm{r}}\left(\mathfrak{g}\right)$
be the category of integrable $\mathfrak{g}^{\mathrm{op}}$-modules
$M^{\mathrm{r}}$ such that $M\in\mathcal{O}_{\mathrm{int}}\left(\mathfrak{g}\right)$.
We interpret the category of $\mathfrak{g}^{\mathrm{op}}$-modules
as the category of right $\mathbf{U}\left(\mathfrak{g}\right)$-modules.

\subsection{(Pro-)unipotent subgroups}\label{ss:prounip}

A subset $\Theta$ of $\Delta_{\pm}$ is called \emph{closed} (resp. \emph{an ideal})
if it satisfies $\left(\Theta+\Theta\right)\cap\Delta_{\pm}\subset\Theta$
(resp. $\left(\Theta+\Delta_{\pm}\right)\cap\Delta_{\pm}\subset\Theta$).
For a closed subset (resp. an ideal) $\Theta\subset\Delta_{\pm}$,
$\mathfrak{n}_{\pm}\left(\Theta\right):=\bigoplus_{\alpha\in\Theta}\mathfrak{g}_{\alpha}$
is a Lie subalgebra (resp. an ideal) of $\mathfrak{n}_{\pm}$.
\begin{exam}
(1) For a Weyl group element $w\in W$, the subsets $\Delta_{\pm}\left(\leq w\right):=\Delta_{\pm}\cap w\Delta_{\mp}$
and $\Delta_{\pm}\left(>w\right):=\Delta_{\pm}\cap w\Delta_{\pm}$
are closed. Let $\mathfrak{n}_{\pm}\left(\leq w\right):=\mathfrak{n}_{\pm}\left(\Delta_{\pm}\left(\leq w\right)\right)$
and $\mathfrak{n}_{\pm}\left(>w\right):=\mathfrak{n}_{\pm}\left(\Delta_{\pm}\left(>w\right)\right)$
be the corresponding subalgebras. We have direct sum decompositions
$\mathfrak{n}_{\pm}=\mathfrak{n}_{\pm}\left(\leq w\right)\oplus\mathfrak{n}_{\pm}\left(>w\right)$
for $w\in W$. For a simple reflection $s_{i}$, we have $\Delta_{+}\cap s_{i}\Delta_{-}=\left\{ \alpha_{i}\right\} $
and $\Delta_{+}\cap s_{i}\Delta_{+}=\Delta_{+}\setminus\left\{ \alpha_{i}\right\} $.
Hence we have direct sum decompositions $\mathfrak{n}_{\pm}=\mathfrak{g}_{\pm\alpha_{i}}\oplus\mathfrak{n}_{i}^{\pm}$,
where $\mathfrak{n}_{i}^{+}=\mathfrak{n}_{+}\left(\Delta_{+}\setminus\left\{ \alpha_{i}\right\} \right)$
and $\mathfrak{n}_{i}^{-}=\mathfrak{n}_{-}\left(\Delta_{-}\setminus\left\{ -\alpha_{i}\right\} \right)$.

(2) For $k\in\mathbb{Z}_{\geq0}$, we set $\Delta_{\pm}^{\geq k}:=\left\{ \alpha\in\Delta_{\pm}\mid\pm\mathrm{ht}\left(\alpha\right)\geq k\right\} $
and $\mathfrak{n}_{\pm}^{\geq k}:=\mathfrak{n}_{\pm}\left(\Delta_{\pm}^{\geq k}\right)$.
Then we have $\left(\Delta_{\pm}^{\geq k}+\Delta_{\pm}\right)\cap\Delta_{\pm}\subset\Delta_{\pm}^{\geq k}$.
Hence $\mathfrak{n}_{\pm}^{\geq k}$ is an ideal of $\mathfrak{n_{\pm}}$.
\end{exam}
It is clear that $\mathfrak{n}_{\pm}/\mathfrak{n}_{\pm}^{\geq k}$
is a finite dimensional nilpotent Lie algebra. We set 
\[
\hat{\mathfrak{n}}_{\pm}=\lim_{\longleftarrow}\mathfrak{n}_{\pm}/\mathfrak{n}_{\pm}^{\geq k}=\prod_{\alpha\in\Delta_{\pm}}\mathfrak{g}_{\alpha}.
\]

Let $\boldsymbol{N}_{\pm}$ be the pro-unipotent group scheme whose
pro-nilpotent pro-Lie algebra is $\hat{\mathfrak{n}}_{\pm}$ that
is defined by 
\[
\boldsymbol{N}_{\pm}=\lim_{\longleftarrow}\exp\left(\mathfrak{n}_{\pm}/\mathfrak{n}_{\pm}^{\geq k}\right)=\mathrm{Spec}\left(\mathbf{U}\left(\mathfrak{n}_{\pm}\right)_{\mathrm{gr}}^{*}\right),
\]
where $\exp\left(\mathfrak{n}_{\pm}/\mathfrak{n}_{\pm}^{\geq k}\right)$
is an unipotent algebraic group whose Lie algebra is the nilpotent
Lie algebra $\mathfrak{n}_{\pm}/\mathfrak{n}_{\pm}^{\geq k}$ and
$\mathbf{U}\left(\mathfrak{n}_{\pm}\right)_{\mathrm{gr}}^{*}$ is
the graded dual of $\mathbf{U}(\mathfrak{n}_{\pm})$ with respect
to the natural $Q_{\pm}$-grading on $\mathbf{U}\left(\mathfrak{n}_{\pm}\right)$
(the degrees of $e_{i}$ and $f_{i}$ are $\alpha_i$, and $-\alpha_{i}$,
respectively). Note that the commutative algebra structure of $\mathbf{U}\left(\mathfrak{n}_{\pm}\right)_{\mathrm{gr}}^{*}$ is induced from the cocommutative usual coalgebra structure of $\mathbf{U}\left(\mathfrak{n}_{\pm}\right)$. Then we have $\mathbb{C}\left[\boldsymbol{N}_{\pm}\right]=\mathbf{U}\left(\mathfrak{n}_{\pm}\right)_{\mathrm{gr}}^{*}$.
It is known that there exists an isomorphism of $\mathbb{C}$-schemes
$\mathrm{Exp}\colon\widehat{\mathfrak{n}}_{\pm}\to\boldsymbol{N}_{\pm}$. 

For a subset $\Theta$ of $\Delta_{+}$ (resp. $\Delta_{-}$), we
set 
\begin{align*}
\widehat{\mathfrak{n}}_{\pm}\left(\Theta\right) & :=\prod_{\alpha\in\Theta}\mathfrak{g}_{\alpha},&
\boldsymbol{N}_{\pm}\left(\Theta\right) & :=\mathrm{Exp}\left(\widehat{\mathfrak{n}}_{\pm}\left(\Theta\right)\right).
\end{align*}
Then $\boldsymbol{N}_{\pm}\left(\Theta\right)$ is a closed subgroup
of $\boldsymbol{N}_{\pm}$ if $\Theta$ is closed and is a normal
subgroup of $\boldsymbol{N}_{\pm}$ if $\Theta$ is an ideal. Let $N_{\pm}\subset\boldsymbol{N}_{\pm}$
be the subgroup which is generated by $\left\{ \boldsymbol{N}_{\pm}\left(\pm\alpha\right)\mid\alpha\in\Delta_{+}^{\mathrm{re}}\right\} $,
which has an ind-group scheme structure.

For a Weyl group element $w\in W$ and $i\in I$, let 
\begin{align*}
N_{\pm}\left(w\right) & :=\boldsymbol{N}_{\pm}\left(\Delta_{\pm}\left(\leq w\right)\right),& \boldsymbol{N}'_{\pm}\left(w\right) & :=\boldsymbol{N}_{\pm}\left(\Delta_{\pm}\left(>w\right)\right), &
\boldsymbol{N}_i^{\pm}&:=\boldsymbol{N}'_{\pm}\left(\Delta_{\pm}\left(>s_i\right)\right). 
\end{align*}
Since $\Delta_{\pm}\cap w\Delta_{\mp}\subset\Delta_{\pm}^{\mathrm{re}}$,
we have $\boldsymbol{N}_{\pm}\left(\Delta_{\pm}\left(\leq w\right)\right)\subset N_{\pm}$.
In fact, $N_{\pm}\left(w\right)$ are unipotent subgroups of $\boldsymbol{N}_{\pm}$
with $\dim\left(N_{\pm}\left(w\right)\right)=\ell\left(w\right)$.

We have the following isomorphisms 
\begin{align*}
\boldsymbol{N}_{\pm} & \simeq\boldsymbol{N}'_{\pm}\left(w\right)\times\left(N_{\pm}\left(w\right)\right) \\
 & \simeq\left(N_{\pm}\left(w\right)\right)\times\boldsymbol{N}'_{\pm}\left(w\right),
\end{align*}
as schemes, see \cite[Lemma 6.1.2]{MR1923198}. We set $N'_{\pm}\left(w\right):=N_{\pm}\cap\boldsymbol{N}'_{\pm}\left(w\right)$.
We also have the decompositions $N_{\pm}\simeq N'_{\pm}\left(w\right)\times N_{\pm}\left(w\right)\simeq N_{\pm}\left(w\right)\times N'_{\pm}\left(w\right)$.

\subsection{Borel subgroups and minimal parabolic subgroups}

Let us fix a root datum $\left(A,P,P^{\vee},\left\{ \alpha_{i}\right\} _{i\in I},\left\{ h_{i}\right\} _{i\in I}\right)$ which gives a realization of $A$. Set $H:=\mathrm{Spec}\left(\mathbb{C}\left[P\right]\right)$. Then $H$ is the algebraic torus whose character lattice is $P$ and whose $\mathbb{C}$-valued points are given by $\mathrm{Hom}_{\mathbb{Z}}\left(P,\mathbb{C}^{*}\right)$. Since 
$\mathbb{C}\left[\boldsymbol{N}_{\pm}\right]=\mathbf{U}\left(\mathfrak{n}_{\pm}\right)_{\mathrm{gr}}^{*}$ are $Q(\subset P)$-graded algebras, we have $H$-actions on $\boldsymbol{N}_{\pm}$. Moreover, since $\boldsymbol{N}_{\pm}\left(\pm\alpha\right)$, $\alpha\in\Delta_{+}^{\mathrm{re}}$ are preserved by these $H$-actions, the subgroups $N_{\pm}$ are also preserved by these $H$-actions.  Let $\boldsymbol{B}_{\pm}=H\ltimes\boldsymbol{N}_{\pm}$, $B_{\pm}=H\ltimes N_{\pm}$ be the semi-direct product groups. 

For $i\in I$, let $G_{i}$ be the reductive group scheme whose Lie
algebra is $\mathfrak{h}\oplus\mathbb{C}e_{i}\oplus\mathbb{C}f_{i}$
with $H$ a Cartan subgroup. Let $\gamma_{i}\colon SL\left(2,\mathbb{C}\right)\to G_{i}$
be the morphism of algebraic groups which is induced by the homomorphism
of Lie algebras given by $e\mapsto e_{i}$ and $f\mapsto f_{i}$.
For a simple reflection $s_{i}$, let $\overline{s}_{i}\in G_{i}$
and $\overline{\overline{s_{i}}}\in G_{i}$ be the lift defined by
\begin{align*}
\overline{s_{i}} & =\gamma_{i}\left(\left[\begin{array}{cc}
0 & -1\\
1 & 0
\end{array}\right]\right)=\exp\left(-e_{i}\right)\exp\left(f_{i}\right)\exp\left(-e_{i}\right),\\
\overline{\overline{s_{i}}} & =\gamma_{i}\left(\left[\begin{array}{cc}
0 & 1\\
-1 & 0
\end{array}\right]\right)=\exp\left(e_{i}\right)\exp\left(-f_{i}\right)\exp\left(e_{i}\right).
\end{align*}

Let $G_{i}^{+}$ (resp. $G_{i}^{-}$) be the subgroup of $G_{i}$
with $\mathfrak{h}\oplus\mathbb{C}e_{i}$ (reps. $\mathfrak{h}\oplus\mathbb{C}f_{i}$)
as its Lie algebra. We have $G_{i}^{\pm}=G_{i}\cap\boldsymbol{B}_{\pm}$
and isomorphism $\boldsymbol{B}_{\pm}=G_{i}^{\pm}\times\boldsymbol{N}_{i}^{\pm}$
as schemes.

For $i\in I$, let $\left(\mathfrak{p}_{i}^{\pm},H\right)\text{-}\mathsf{mod}$
(resp. $\left(\mathfrak{p}_{i}^{\pm},H\right)^{\mathrm{op}}\text{-}\mathsf{mod}$)
be the category of left (resp. right) finite dimensional $P$-weighted
$\mathfrak{h}$-semisimple $\mathbf{U}\left(\mathfrak{p}_{i}^{\pm}\right)$-modules.

Let us consider the following $\mathbb{C}$-algebras:
\[
\mathbb{C}\left[\boldsymbol{P}_{i}^{\pm}\right]:=\left\{ f\in\mathrm{Hom}_{\mathbb{C}}\left(\mathbf{U}\left(\mathfrak{p}_{i}^{\pm}\right),\mathbb{C}\right)\middle|\substack{\mathbf{U}\left(\mathfrak{p}_{i}^{\pm}\right)f\in\left(\mathfrak{p}_{i}^{\pm},H\right)\text{-}\mathsf{mod}\\
f\mathbf{U}\left(\mathfrak{p}_{i}^{\pm}\right)\in\left(\mathfrak{p}_{i}^{\pm},H\right)^{\mathrm{op}}\text{-}\mathsf{mod}
}
\right\} ,
\]
where we consider the $\mathbf{U}\left(\mathfrak{p}_{i}^{\pm}\right)$-bimodule
structure on $\mathrm{Hom}_{\mathbb{C}}\left(\mathbf{U}\left(\mathfrak{p}_{i}^{\pm}\right),\mathbb{C}\right)$
defined by 
\[
\left\langle x.f.y,u\right\rangle =\left\langle f,y.u.x\right\rangle \;\left(x,y\in\mathfrak{p}_{i}^{\pm},f\in\mathrm{Hom}_{\mathbb{C}}\left(\mathbf{U}\left(\mathfrak{p}_{i}^{\pm}\right),\mathbb{C}\right),u\in\mathbf{U}\left(\mathfrak{p}_{i}^{\pm}\right)\right).
\]

Then the coproduct $\mathbf{U}\left(\mathfrak{p}_{i}^{\pm}\right)\to\mathbf{U}\left(\mathfrak{p}_{i}^{\pm}\right)\otimes\mathbf{U}\left(\mathfrak{p}_{i}^{\pm}\right)$
induces a commutative algebra structure on $\mathrm{Hom}_{\mathbb{C}}\left(\mathbf{U}\left(\mathfrak{p}_{i}^{\pm}\right),\mathbb{C}\right)$
and $\mathbb{C}\left[\boldsymbol{P}_{i}^{\pm}\right]$ is a subalgebra
of $\mathrm{Hom}_{\mathbb{C}}\left(\mathbf{U}\left(\mathfrak{p}_{i}^{\pm}\right),\mathbb{C}\right)$.
We define a schemes $\boldsymbol{P}_{i}^{\pm}:=\mathrm{Spec}\left(\mathbb{C}\left[\boldsymbol{P}_{i}^{\pm}\right]\right)$
as spectrum. The product $\mathbf{U}\left(\mathfrak{p}_{i}^{\pm}\right)\otimes\mathbf{U}\left(\mathfrak{p}_{i}^{\pm}\right)\to\mathbf{U}\left(\mathfrak{p}_{i}^{\pm}\right)$
induces the morphism of schemes $\boldsymbol{P}_{i}^{\pm}\times\boldsymbol{P}_{i}^{\pm}\to\boldsymbol{P}_{i}^{\pm}$
and it gives the structure of group scheme on $\boldsymbol{P}_{i}^{\pm}$
and we have decomposition $\boldsymbol{P}_{i}^{\pm}\cong G_{i}\ltimes\boldsymbol{N}_{i}^{\pm}$
and $\boldsymbol{P}_{i}^{\pm}\supset\boldsymbol{B}_{\pm}$ for $i\in I$. See \cite{MR1317626} for more details. 

\subsection{Kac-Moody groups and flag schemes}

Let $G$ be the ``maximal'' Kac-Moody group over $\mathbb{C}$ completed
along the positive roots which is defined in Kumar \cite[6.1.16]{MR1923198}
and let $G^{\min}\subset G$ be the ``minimal'' Kac-Moody group
over $\mathbb{C}$ defined in Kumar \cite[7.4.1]{MR1923198}. They satisfy $\boldsymbol{B}_{+}\subset G$ and $B_+\subset G^{\min}$. See \cite{MR1923198} for details. 
We also introduce the scheme $\boldsymbol{G}_{\infty}$ and its open subscheme
$\boldsymbol{G}$ following Kashiwara \cite{MR1463702} (see also
Kashiwara-Tanisaki \cite{MR1317626})

We define the scheme $\boldsymbol{G}_{\infty}:=\mathrm{Spec}\left(\mathbf{R}_{\mathbb{C}}\left(\mathfrak{g}\right)\right)$
as the spectrum of the ring of ``strongly regular functions'' introduced
by Kac-Peterson \cite{MR717610}, that is 
\begin{align*}
\mathbf{R}_{\mathbb{C}}\left(\mathfrak{g}\right) & :=\left\{ f\in\mathrm{Hom}_{\mathbb{C}}\left(\mathbf{U}\left(\mathfrak{g}\right),\mathbb{C}\right)\middle|\substack{\mathbf{U}\left(\mathfrak{g}\right)f\in\mathcal{O}_{\mathrm{int}}\left(\mathfrak{g}\right)\\
f\mathbf{U}\left(\mathfrak{g}\right)\in\mathcal{O}_{\mathrm{int}}^{\mathrm{r}}\left(\mathfrak{g}\right)
}
\right\} ,
\end{align*}
where we consider the bimodule structure on $\mathrm{Hom}_{\mathbb{C}}\left(\mathbf{U}\left(\mathfrak{g}\right),\mathbb{C}\right)$
defined by 
\[
\left\langle x.f.y,u\right\rangle =\left\langle f,y.u.x\right\rangle \;\left(x,y\in\mathfrak{g},f\in\mathrm{Hom}_{\mathbb{C}}\left(\mathbf{U}\left(\mathfrak{g}\right),\mathbb{C}\right),u\in\mathbf{U}\left(\mathfrak{g}\right)\right).
\]

Let
\[
\Phi=\sum_{\lambda\in P_{+}}\Phi_{\lambda}\colon \bigoplus_{\lambda\in P_+}V_{\mathbb{C}}\left(\lambda\right)^{\mathrm{r}}\otimes V_{\mathbb{C}}\left(\lambda\right)\to\mathbf{R}_{\mathbb{C}}\left(\mathfrak{g}\right)
\]
be the map defined by $\left\langle \Phi_{\lambda}\left(v_{1}^{\mathrm{r}}\otimes v_{2}\right),u\right\rangle =\left(v_{1},u.v_{2}\right)_{\lambda}$
for $v_{1},v_{2}\in V_{\mathbb{C}}\left(\lambda\right)$ and $u\in\mathbf{U}\left(\mathfrak{g}\right)$,
where $\left(\ ,\ \right)_{\lambda}\colon V_{\mathbb{C}}\left(\lambda\right)\otimes V_{\mathbb{C}}\left(\lambda\right)\to\mathbb{C}$
is the symmetric bilinear form on $V\left(\lambda\right)$ such that
$\left(u_{\lambda},u_{\lambda}\right)_{\lambda}=1$ and $\left(x.v_{1},v_{2}\right)_{\lambda}=\left(v_{1},\varphi\left(x\right).v_{2}\right)_{\lambda}$
for $v_{1},v_{2}\in V_{\mathbb{C}}\left(\lambda\right)$ and $x\in\mathfrak{g}$.
It is known \cite[Theorem 1]{MR717610} that $\Phi$ is an isomorphism
of bimodules, called \emph{the Peter-Weyl isomorphism} for symmetrizable
Kac-Moody Lie algebras.

The multiplications $\mathbf{U}\left(\mathfrak{p}_{i}^{-}\right)\otimes\mathbf{U}\left(\mathfrak{g}\right)\to\mathbf{U}\left(\mathfrak{g}\right)$
and $\mathbf{U}\left(\mathfrak{g}\right)\otimes\mathbf{U}\left(\mathfrak{p}_{i}^{+}\right)\to\mathbf{U}\left(\mathfrak{g}\right)$
induce coaction morphisms $\mathbf{R}_{\mathbb{C}}\left(\mathfrak{g}\right)\to\mathbb{C}\left[\boldsymbol{P}_{i}^{-}\right]\otimes\mathbf{R}_{\mathbb{C}}\left(\mathfrak{g}\right)$
and $\mathbf{R}_{\mathbb{C}}\left(\mathfrak{g}\right)\to\mathbf{R}_{\mathbb{C}}\left(\mathfrak{g}\right)\otimes\mathbb{C}\left[\boldsymbol{P}_{i}^{+}\right]$.
Hence we have the morphisms of schemes $\boldsymbol{P}_{i}^{-}\times\boldsymbol{G}_{\infty}\to\boldsymbol{G}_{\infty}$
and $\boldsymbol{G}_{\infty}\times\boldsymbol{P}_{i}^{+}\to\boldsymbol{G}_{\infty}$
which give rise to the left action of $\boldsymbol{P}_{i}^{-}$ and
the right action of $\boldsymbol{P}_{i}^{+}$ on $\boldsymbol{G}_{\infty}$.
The scheme $\boldsymbol{G}_{\infty}$ contains a canonical point $e$.
\begin{defn}
Let $\boldsymbol{G}$ be the open subset of $\boldsymbol{G}_{\infty}$
which is given by the union of subsets $\boldsymbol{P}_{i_{1}}^{-}\cdots\boldsymbol{P}_{i_{m}}^{-}e\boldsymbol{P}_{j_{1}}^{+}\cdots\boldsymbol{P}_{j_{n}}^{+}\subset\boldsymbol{G}_{\infty}$,
that is,
\[
\boldsymbol{G}=\bigcup_{i_{1},\cdots,i_{m},j_{1},\cdots,j_{n}\in I}\boldsymbol{P}_{i_{1}}^{-}\cdots\boldsymbol{P}_{i_{m}}^{-}e\boldsymbol{P}_{j_{1}}^{+}\cdots\boldsymbol{P}_{j_{n}}^{+}.
\]
\end{defn}
\begin{prop}
\textup{(1)} The left $\boldsymbol{P}_{i}^{-}$action on $\boldsymbol{G}$
is free and the right $\boldsymbol{P}_{i}^{+}$ action on $\boldsymbol{G}$
is free.

\textup{(2)} The restricted left $\boldsymbol{B}_{-}$ actions from
the left action of $\boldsymbol{P}_{i}^{-}$ on $\boldsymbol{G}$
and the restricted right $\boldsymbol{B}_{+}$ action on $\boldsymbol{G}$
from the right action of $\boldsymbol{P}_{i}^{+}$ on $\boldsymbol{G}$
are independent of $i\in I$.

\textup{(3)} For $i\in I$ and $g\in G_{i}$, we have $ge=eg$, where
the left action of $G_{i}$ and the right left action of $G_{i}$
are defined via the left action of $\boldsymbol{P}_{i}^{-}$ and right
action of $\boldsymbol{P}_{i}^{+}$ using the decomposition $\boldsymbol{P}_{i}^{\pm}=G_{i}\ltimes\boldsymbol{N}_{i}^{\pm}$.

\end{prop}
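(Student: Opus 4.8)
The plan is to obtain parts (2) and (3) as essentially formal consequences of the bialgebra description of the schemes involved together with the Peter--Weyl isomorphism, and to reduce part (1) to the foundational local structure of $\boldsymbol{G}$ established by Kashiwara. For (2): by construction the left $\boldsymbol{P}_i^{-}$-action on $\boldsymbol{G}_{\infty}$ corresponds to the coaction $\mathbf{R}_{\mathbb{C}}(\mathfrak{g})\to\mathbb{C}[\boldsymbol{P}_i^{-}]\otimes\mathbf{R}_{\mathbb{C}}(\mathfrak{g})$ dual to the multiplication $\mathbf{U}(\mathfrak{p}_i^{-})\otimes\mathbf{U}(\mathfrak{g})\to\mathbf{U}(\mathfrak{g})$. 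Composing with the restriction map $\mathbb{C}[\boldsymbol{P}_i^{-}]\twoheadrightarrow\mathbb{C}[\boldsymbol{B}_{-}]$, which is dual to $\mathbf{U}(\mathfrak{b}_{-})\hookrightarrow\mathbf{U}(\mathfrak{p}_i^{-})$ for $\mathfrak{b}_{-}:=\mathfrak{n}_{-}\oplus\mathfrak{h}$, yields the coaction dual to the multiplication $\mathbf{U}(\mathfrak{b}_{-})\otimes\mathbf{U}(\mathfrak{g})\to\mathbf{U}(\mathfrak{g})$. Since $\mathfrak{b}_{-}\subset\mathfrak{p}_i^{-}$ for every $i\in I$, this composite does not involve $i$, hence neither does the induced left $\boldsymbol{B}_{-}$-action on $\boldsymbol{G}_{\infty}$ nor its restriction to the ($\boldsymbol{P}_i^{\pm}$-stable) open subscheme $\boldsymbol{G}$; the right $\boldsymbol{B}_{+}$-action is treated in the same way using $\mathfrak{b}_{+}\subset\mathfrak{p}_i^{+}$.

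For (3) I would use the Peter--Weyl isomorphism to identify a $\mathbb{C}$-point $x$ of $\boldsymbol{G}_{\infty}$ with a family of operators $\rho_{\lambda}(x)\in\End_{\mathbb{C}}V_{\mathbb{C}}(\lambda)$ ($\lambda\in P_{+}$), compatible with tensor products and determined by $\langle\Phi_{\lambda}(v_{1}^{\mathrm{r}}\otimes v_{2}),x\rangle=(v_{1},\rho_{\lambda}(x)v_{2})_{\lambda}$; in particular $\rho_{\lambda}(e)=\id$ for all $\lambda$. Unwinding the definitions, for $g\in G_i$ the point $g\cdot e$ (resp.\ $e\cdot g$) corresponds to the family of operators by which $g$ acts on the modules $V_{\mathbb{C}}(\lambda)$ through the $\mathbf{U}(\mathfrak{p}_i^{-})$-module (resp.\ $\mathbf{U}(\mathfrak{p}_i^{+})$-module) structure obtained by restriction from $\mathbf{U}(\mathfrak{g})$. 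Since $\mathrm{Lie}(G_i)=\mathfrak{h}\oplus\mathbb{C}e_i\oplus\mathbb{C}f_i$ is contained in both $\mathfrak{p}_i^{-}$ and $\mathfrak{p}_i^{+}$, and the restricted $\mathrm{Lie}(G_i)$-module structure on the integrable module $V_{\mathbb{C}}(\lambda)$ integrates to one and the same $G_i$-action, these two families of operators coincide; therefore $g\cdot e=e\cdot g$.

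For (1), scheme-theoretic freeness of the $\boldsymbol{P}_i^{\pm}$-actions on $\boldsymbol{G}$ is a local question, and the defining covering $\boldsymbol{G}=\bigcup\boldsymbol{P}_{i_1}^{-}\cdots\boldsymbol{P}_{i_m}^{-}e\,\boldsymbol{P}_{j_1}^{+}\cdots\boldsymbol{P}_{j_n}^{+}$ is stable under left multiplication by $\boldsymbol{P}_i^{-}$ and right multiplication by $\boldsymbol{P}_j^{+}$. Using the rank-one Bruhat decomposition $\boldsymbol{P}_i^{\pm}=\boldsymbol{B}_{\pm}\sqcup\boldsymbol{B}_{\pm}\overline{s_i}\boldsymbol{B}_{\pm}$ of $G_i$ and the fact that left and right multiplication by the lifts $\overline{s_i}$ are automorphisms of $\boldsymbol{G}$, one reduces to freeness of the left $\boldsymbol{B}_{-}$-action and the right $\boldsymbol{B}_{+}$-action. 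For these I would invoke Kashiwara's description of the big cell: multiplication induces an open immersion $\boldsymbol{N}_{-}\times\boldsymbol{B}_{+}\hookrightarrow\boldsymbol{G}$ (and, symmetrically, $\boldsymbol{N}_{+}\times\boldsymbol{B}_{-}\hookrightarrow\boldsymbol{G}$), on whose image --- after using the relation $he=eh$ for $h\in H$ from part (3) to carry the torus factor past $e$ --- the right $\boldsymbol{B}_{+}$-action (resp.\ left $\boldsymbol{B}_{-}$-action) becomes the manifestly free translation action on one factor; the translates of these cells by lifts of Weyl group elements cover $\boldsymbol{G}$, which yields freeness everywhere.

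I expect the only genuinely non-formal input to lie in part (1): namely that the big cell $\boldsymbol{N}_{-}\times\boldsymbol{B}_{+}$ sits in $\boldsymbol{G}$ as an \emph{open} subscheme (so that the translation action is locally the naive one) and that its $W$-translates cover $\boldsymbol{G}$ --- this is exactly the point at which the explicit realization of $\boldsymbol{G}$ as an open subset of $\boldsymbol{G}_{\infty}=\mathrm{Spec}\,\mathbf{R}_{\mathbb{C}}(\mathfrak{g})$ enters. Rather than reprove it I would cite Kashiwara \cite{MR1463702} (see also Kashiwara-Tanisaki \cite{MR1317626}); the remaining reductions, both here and in (2)--(3), are bookkeeping with the open covering and with the rank-one Bruhat decomposition.
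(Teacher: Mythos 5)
The paper does not prove this proposition at all: it is part of the recollection of Kashiwara's construction of $\boldsymbol{G}$ and is quoted from \cite{MR1463702} and \cite{MR1317626}, so your proposal has to be judged on its own merits rather than against an internal argument. Parts (2) and (3) are fine as sketched. For (2), the restricted coaction of $\mathbb{C}[\boldsymbol{B}_{-}]$ (resp.\ $\mathbb{C}[\boldsymbol{B}_{+}]$) is uniquely determined by the induced module structure over the enveloping algebra of $\mathfrak{h}\oplus\mathfrak{n}_{-}$ (resp.\ $\mathfrak{h}\oplus\mathfrak{n}_{+}$), because that enveloping algebra separates the elements of the graded-dual coordinate ring; since in every case this module structure is just multiplication inside $\mathbf{U}(\mathfrak{g})$, independence of $i$ follows. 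For (3), both $g\cdot e$ and $e\cdot g$ evaluate on $\Phi_{\lambda}(v_{1}^{\mathrm{r}}\otimes v_{2})$ to $(v_{1},\pi(g)v_{2})_{\lambda}$, where $\pi$ is the $G_{i}$-action on $V_{\mathbb{C}}(\lambda)$ integrating the restriction of the $\mathfrak{g}$-action to $\mathfrak{h}\oplus\mathbb{C}e_{i}\oplus\mathbb{C}f_{i}$; connectedness of $G_{i}$ and local finiteness of this restriction make the two integrations coincide, so the points agree.

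The genuine gap is in (1). Freeness of the restricted $\boldsymbol{B}_{\pm}$-actions does not imply freeness of the $\boldsymbol{P}_{i}^{\pm}$-actions, and the rank-one Bruhat decomposition does not repair this: if $p=b_{1}\overline{s_{i}}b_{2}$ stabilizes a point $x$, the fact that $\overline{s_{i}}$ acts by an automorphism only converts $b_{1}\overline{s_{i}}b_{2}x=x$ into another equation of exactly the same shape, and nothing in your argument rules such a $p$ out; moreover the Bruhat decomposition is only a set-theoretic stratification, so even if the case division were legitimate it would at best handle field-valued points, not scheme-theoretic freeness. The correct argument is the same pattern you use for $\boldsymbol{B}_{\pm}$, but with charts adapted to the minimal parabolic: the multiplication map $\boldsymbol{N}_{i}^{-}\times G_{i}\times\boldsymbol{N}_{i}^{+}\simeq\boldsymbol{N}_{i}^{-}\times\boldsymbol{P}_{i}^{+}\to\boldsymbol{G}$ is an open immersion (a $\boldsymbol{P}_{i}$-adapted analogue of $\boldsymbol{G}_{0}$), its translates by the $\overline{w}$ cover $\boldsymbol{G}$ and are stable under the right $\boldsymbol{P}_{i}^{+}$-action, and on each such chart that action is right translation on the $\boldsymbol{P}_{i}^{+}$-factor, hence free (symmetrically for the left $\boldsymbol{P}_{i}^{-}$-action). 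But this parabolic-adapted local structure is precisely the content of Kashiwara's results that you were hoping to avoid re-proving; once it is quoted there is no reduction left to do, which is why the paper simply refers to \cite{MR1463702} and \cite{MR1317626} instead of arguing.
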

Let $\boldsymbol{N}_{-}\times H\times\boldsymbol{N}_{+}\to\boldsymbol{G}$
be the open immersion defined by the ``multiplication'' $\left(x,y,z\right)\mapsto xyz$
and denote its image by $\boldsymbol{G}_{0}$. Let 
\[
\left[\phantom{g}\right]_{-}\times\left[\phantom{g}\right]_{0}\times\left[\phantom{g}\right]_{+}\colon\boldsymbol{G}_{0}\to\boldsymbol{N}_{-}\times H\times\boldsymbol{N}_{+}
\]
 be the inverse morphism of the ``multiplication''. We note that
we use only the left $\boldsymbol{B}_{-}$-action and the right $\boldsymbol{B}_{+}$-action
on $\boldsymbol{G}$. For the minimal Kac-Moody group $G^{\min}$,
it is known the same result holds, see Gei\ss-Leclerc-Schr\"oer \cite[Proposition 7.1]{MR2822235}.

For the Lie algebra anti-involution $\varphi\colon\mathfrak{g}\to\mathfrak{g}$,
let $\varphi\colon\mathbf{U}\left(\mathfrak{g}\right)\to\mathbf{U}\left(\mathfrak{g}\right)$
be the induced anti-involution as a $\mathbb{C}$-algebra. We note
that $\varphi$ induces the anti-isomorphism of group schemes $\boldsymbol{P}_{i}^{\pm}\xrightarrow{\sim}\boldsymbol{P}_{i}^{\mp}$
for $i\in I$ and we have the following commutative diagram
\[
\xymatrix{\mathbf{U}\left(\mathfrak{p_{i}^{-}}\right)\otimes\mathbf{U}\left(\mathfrak{g}\right)\ar[r]\ar[d]_{\mathrm{flip}\circ\left(\varphi\otimes\varphi\right)} & \mathbf{U}\left(\mathfrak{g}\right)\ar[d]^{\varphi}\\
\mathbf{U}\left(\mathfrak{g}\right)\otimes\mathbf{U}\left(\mathfrak{p}_{i}^{+}\right)\ar[r] & \mathbf{U}\left(\mathfrak{g}\right),
}
\]
where the horizontal homomorphisms are multiplications.  
Let $\left(\phantom{x}\right)^{T}\colon\boldsymbol{G}_{\infty}\to\boldsymbol{G}_{\infty}$
be the induced morphism of schemes which intertwines the left $\boldsymbol{P}_{i}^{-}$-action
into the right $\boldsymbol{P}_{i}^{+}$-action and vice versa. It
is clear that $\left(\phantom{x}\right)^{T}$ preserves $\boldsymbol{G}$
and $\boldsymbol{G}_{0}$ by its construction. We denote by $\left(\phantom{x}\right)^{T}$
the restriction of $\left(\phantom{x}\right)^{T}$ to $\boldsymbol{G}$
and $\boldsymbol{G}_{0}$ by abuse of notation. For each real root
$\alpha\in\Delta_{+}$, $\left(\phantom{x}\right)^{T}$ maps $\boldsymbol{N}_{+}\left(\left\{ \alpha\right\} \right)$
to $\boldsymbol{N}_{-}\left(\left\{ -\alpha\right\} \right)$, so
$\left(\phantom{x}\right)^{T}$ induces an involutive map on $G^{\min}$.

\subsection{Schubert cells and Schubert varieties}

For a Weyl group element $w\in W$, we specify two lifts $\overline{w},\overline{\overline{w}}\in G$
of $w\in W$. It is known that $\left\{ \overline{s_{i}}\right\} _{i\in I}$
and $\left\{ \overline{\overline{s_{i}}}\right\} _{i\in I}$ satisfy
braid relations. It follows that the lifts $\overline{w}$ and $\overline{\overline{w}}$
can be uniquely defined by the condition 
\begin{align*}
\overline{w'w''} & =\overline{w'}\cdot\overline{w''},\\
\overline{\overline{w'w''}} & =\overline{\overline{w'}}\cdot\overline{\overline{w''}}
\end{align*}
for $w',w''\in W$ with $\ell\left(w'w''\right)=\ell\left(w'\right)+\ell\left(w''\right)$.
\begin{prop}[{\cite[Proposition 2.1]{MR1652878}}]
 For $w\in W$, we have the following properties: 
\begin{align*}
\overline{w}^{-1} & =\overline{w}^{T}=\overline{\overline{w^{-1}}},\\
\overline{w^{-1}} & =\overline{\overline{w}}^{T}=\overline{\overline{w}}^{-1}.
\end{align*}
\end{prop}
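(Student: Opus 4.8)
The plan is to reduce both displayed identities to the rank-one case $w=s_{i}$ and then propagate along a reduced word, using two structural facts. First, both the inversion $g\mapsto g^{-1}$ and the transpose $(\phantom{x})^{T}$ are group anti-automorphisms of $G^{\min}$ — for inversion this is clear, while $(\phantom{x})^{T}$ is induced from the algebra anti-involution $\varphi$ of $\mathbf{U}(\mathfrak{g})$, preserves $G^{\min}$, and is involutive, as recorded above. Second, for any reduced word $\bm{i}=(i_{1},\dots,i_{\ell(w)})\in I(w)$ one has $\overline{w}=\overline{s_{i_{1}}}\cdots\overline{s_{i_{\ell(w)}}}$ and $\overline{\overline{w}}=\overline{\overline{s_{i_{1}}}}\cdots\overline{\overline{s_{i_{\ell(w)}}}}$; this follows by induction on $\ell(w)$ from the defining multiplicativity $\overline{w'w''}=\overline{w'}\cdot\overline{w''}$ (resp.\ $\overline{\overline{w'w''}}=\overline{\overline{w'}}\cdot\overline{\overline{w''}}$) whenever $\ell(w'w'')=\ell(w')+\ell(w'')$.

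I would first settle $w=s_{i}$. Since $\overline{s_{i}}=\gamma_{i}\!\left(\left[\begin{array}{cc}0 & -1\\1 & 0\end{array}\right]\right)$, $\overline{\overline{s_{i}}}=\gamma_{i}\!\left(\left[\begin{array}{cc}0 & 1\\-1 & 0\end{array}\right]\right)$, and in $SL(2,\mathbb{C})$ we have $\left[\begin{array}{cc}0 & -1\\1 & 0\end{array}\right]^{-1}=\left[\begin{array}{cc}0 & -1\\1 & 0\end{array}\right]^{\mathrm{t}}=\left[\begin{array}{cc}0 & 1\\-1 & 0\end{array}\right]$, it suffices to use that $\gamma_{i}$ is a group homomorphism (for the inverse) and that $(\phantom{x})^{T}$ restricts through $\gamma_{i}$ to the matrix transpose on $G_{i}$ (for the transpose). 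The latter holds because the differential of $\gamma_{i}$ sends $e\mapsto e_{i}$, $f\mapsto f_{i}$, $h\mapsto h_{i}$, and $\varphi$ acts on $\mathbb{C}e_{i}\oplus\mathbb{C}h_{i}\oplus\mathbb{C}f_{i}$ exactly as matrix transpose acts on $\mathfrak{sl}_{2}$. Thus $\overline{s_{i}}^{-1}=\overline{s_{i}}^{T}=\overline{\overline{s_{i}}}$, and applying inversion, resp.\ $(\phantom{x})^{T}$, once more (both involutive) gives $\overline{\overline{s_{i}}}^{-1}=\overline{\overline{s_{i}}}^{T}=\overline{s_{i}}$.

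Next I would propagate. Fix $\bm{i}=(i_{1},\dots,i_{\ell(w)})\in I(w)$, so that $(i_{\ell(w)},\dots,i_{1})\in I(w^{-1})$. By anti-multiplicativity of inversion and the rank-one identity, $\overline{w}^{-1}=\overline{s_{i_{\ell(w)}}}^{-1}\cdots\overline{s_{i_{1}}}^{-1}=\overline{\overline{s_{i_{\ell(w)}}}}\cdots\overline{\overline{s_{i_{1}}}}$, and by the second structural fact applied to the reduced word $(i_{\ell(w)},\dots,i_{1})$ of $w^{-1}$ this equals $\overline{\overline{w^{-1}}}$; the same computation with $(\phantom{x})^{T}$ in place of inversion gives $\overline{w}^{T}=\overline{\overline{w^{-1}}}$. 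This is the first displayed line. For the second line I would replace $w$ by $w^{-1}$ there, obtaining $\overline{w^{-1}}^{-1}=\overline{w^{-1}}^{T}=\overline{\overline{w}}$, and then invert the first equality and apply $(\phantom{x})^{T}$ to the second (both operations being involutive) to conclude $\overline{w^{-1}}=\overline{\overline{w}}^{-1}=\overline{\overline{w}}^{T}$.

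I expect the only delicate point to be the compatibility used in the rank-one step, namely that the globally defined transpose $(\phantom{x})^{T}$ — built from $\varphi$ on $\mathbf{U}(\mathfrak{g})$ and, dually, on the strongly regular functions — restricts through $\gamma_{i}$ to the ordinary matrix transpose on $G_{i}$. This is exactly the content of the properties of $(\phantom{x})^{T}$ recorded above (it interchanges the left $\boldsymbol{P}_{i}^{-}$-action with the right $\boldsymbol{P}_{i}^{+}$-action compatibly with $\varphi$), together with the elementary identification of $\varphi|_{\mathbb{C}e_{i}\oplus\mathbb{C}h_{i}\oplus\mathbb{C}f_{i}}$ with matrix transpose; granting this, everything else is routine bookkeeping with anti-automorphisms along reduced words.
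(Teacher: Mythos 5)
Your proof is correct, and since the paper simply cites this as \cite[Proposition 2.1]{MR1652878} without reproducing an argument, your reduced-word reduction to the rank-one case is exactly the standard route (it is essentially Fomin--Zelevinsky's proof). The one point you flag — that $(\phantom{x})^{T}$ restricts to the matrix transpose on $G_{i}$ — is fine, but you could bypass it entirely by computing $\overline{s_{i}}^{T}=\exp(-f_{i})\exp(e_{i})\exp(-f_{i})$ directly from anti-multiplicativity and the fact (stated in the paper) that $(\phantom{x})^{T}$ sends $\exp(te_{i})$ to $\exp(tf_{i})$, and then checking inside $SL(2,\mathbb{C})$ via the homomorphism $\gamma_{i}$ that this product equals $\exp(e_{i})\exp(-f_{i})\exp(e_{i})=\overline{\overline{s_{i}}}$.
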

\begin{defn}
The flag scheme $\boldsymbol{X}$ is defined as a quotient scheme
$\boldsymbol{G}/\boldsymbol{B}_{+}$.
\end{defn}
It is known that $\boldsymbol{X}$ is an essentially
smooth and separated (in general, not quasi-compact) scheme over $\mathbb{C}$.
Let $e_{\boldsymbol{X}}=\boldsymbol{B}_{+}/\boldsymbol{B}_{+}\in\boldsymbol{X}$
be the image of $e\in\boldsymbol{G}$.
\begin{notation}
For a set $Y$ with a left (resp.~right) $H$-action and $w\in W$, we write 
$\overline{w}Y$ as $wY$ (resp.~$Y\overline{w}$ as $Yw$).
\end{notation}
\begin{defn}
\textup{(1)} For $w\in W$, we set $\mathring{X}_{w}=wN_{-}\left(w^{-1}\right)e_{X}\subset \boldsymbol{X}$ to
be the locally closed subscheme of $\boldsymbol{X}$. Let $X_{w}$ be the Zariski
closure of $\mathring{X}_{w}$ endowed with the reduced scheme structure.
$X_{w}$ (resp. $\mathring{X}_{w}$) are called \emph{(finite) Schubert
varieties (resp. cells)}.

\textup{(2)} For $w\in W$, we set $\mathring{\boldsymbol{X}}^{w}:=\boldsymbol{B}_{-}\overline{w}e_{\boldsymbol{X}}=\boldsymbol{N}_{-}\overline{w}e_{\boldsymbol{X}}\subset\boldsymbol{X}$
to be the locally closed subscheme of $\boldsymbol{X}$. Let $\boldsymbol{X}^{w}$
be the Zariski closure of $\mathring{\boldsymbol{X}}^{w}$ endowed
with the reduced scheme structure. $\boldsymbol{X}^{w}$(resp. $\mathring{\boldsymbol{X}}^{w}$)
are called \emph{cofinite Schubert schemes (resp. cells)}.

\textup{(2)} For $w\in W$, we set $\boldsymbol{U}_{w}:=w\boldsymbol{B}_{-}e_{\boldsymbol{X}}=w\boldsymbol{N}_{-}e_{\boldsymbol{X}}\subset\boldsymbol{X}$.
\end{defn}
\begin{prop}[{\cite[Proposition 1.3.2]{MR1317626}}]

\textup{(1)} $X_{w}$ is the smallest subscheme of $\boldsymbol{X}$
that is invariant by $G_{i}^{+}$'s and contains $we_{X}$.

\textup{(2)} There is an isomorphism
\[
N_{+}\left(w\right)\to\mathring{X}_{w}
\]
given by $x\mapsto xwe_{X}$. In particular $\mathring{X}_{w}$ is
isomorphic to the affine space $\mathbb{A}^{\ell\left(w\right)}$.

\textup{(3)} We have $X_{w}=\bigsqcup_{y\leq w}\mathring{X}_{y}$,
where $\leq$ is the Bruhat order on $W$.
\end{prop}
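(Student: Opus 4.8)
The plan is to establish the three parts in the order $(2)$, $(1)$, $(3)$, since the geometry of a single Schubert cell feeds into the orbit description needed for $(1)$, while $(3)$ requires the full $\mathbb{P}^{1}$-fibration machinery and the combinatorics of the Bruhat order. For $(2)$ I would first rewrite the cell. Because $w\bigl(\Delta_{-}\cap w^{-1}\Delta_{+}\bigr)=\Delta_{+}\cap w\Delta_{-}=\Delta_{+}(\leq w)$ and the lift $\overline{w}$ acts on $\mathfrak{g}$ by $\operatorname{Ad}(\overline{w})$ permuting root spaces according to $w$, conjugation by $\overline{w}$ gives $\overline{w}^{-1}N_{+}(w)\overline{w}=N_{-}(w^{-1})$, hence $\mathring{X}_{w}=\overline{w}N_{-}(w^{-1})e_{\boldsymbol{X}}=N_{+}(w)\overline{w}e_{\boldsymbol{X}}=N_{+}(w)we_{\boldsymbol{X}}$, so the map $x\mapsto xwe_{\boldsymbol{X}}$ is onto $\mathring{X}_{w}$. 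To see it is an isomorphism I would work inside the big cell $\boldsymbol{U}_{w}=w\boldsymbol{N}_{-}e_{\boldsymbol{X}}\cong\boldsymbol{N}_{-}$ via $n\mapsto\overline{w}ne_{\boldsymbol{X}}$ (using the open immersion $\boldsymbol{N}_{-}\times H\times\boldsymbol{N}_{+}\hookrightarrow\boldsymbol{G}$ with image $\boldsymbol{G}_{0}$). Since $N_{-}(w^{-1})e_{\boldsymbol{X}}\subseteq\boldsymbol{N}_{-}e_{\boldsymbol{X}}$ we have $\mathring{X}_{w}\subseteq\boldsymbol{U}_{w}$, and under this isomorphism $\mathring{X}_{w}$ corresponds to $N_{-}(w^{-1})$, a closed subscheme of $\boldsymbol{N}_{-}$ by the decomposition $\boldsymbol{N}_{-}\simeq N_{-}(w^{-1})\times\boldsymbol{N}'_{-}(w^{-1})$. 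Composing with the conjugation isomorphism $N_{+}(w)\xrightarrow{\sim}N_{-}(w^{-1})$, $x\mapsto\overline{w}^{-1}x\overline{w}$, recovers $x\mapsto xwe_{\boldsymbol{X}}$, which is therefore an isomorphism of schemes onto $\mathring{X}_{w}$; and $N_{+}(w)$, being an $\ell(w)$-dimensional connected unipotent group over $\mathbb{C}$, is isomorphic as a variety to $\mathbb{A}^{\ell(w)}$ (e.g.\ by ordering its root subgroups along a reduced word of $w$).

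For $(1)$ I would upgrade the previous computation to the orbit statement $\mathring{X}_{w}=\boldsymbol{B}_{+}we_{\boldsymbol{X}}$: writing $\boldsymbol{N}_{+}=N_{+}(w)\cdot\boldsymbol{N}'_{+}(w)$ and noting that the roots of $\overline{w}^{-1}\boldsymbol{N}'_{+}(w)\overline{w}$, namely $w^{-1}(\Delta_{+}\cap w\Delta_{+})\subseteq\Delta_{+}$, are positive, we get $\boldsymbol{N}'_{+}(w)we_{\boldsymbol{X}}=we_{\boldsymbol{X}}$; since also $\overline{w}^{-1}H\overline{w}=H\subseteq\boldsymbol{B}_{+}$, $H$ fixes $we_{\boldsymbol{X}}$, so $\boldsymbol{B}_{+}we_{\boldsymbol{X}}=\boldsymbol{N}_{+}we_{\boldsymbol{X}}=N_{+}(w)we_{\boldsymbol{X}}=\mathring{X}_{w}$. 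Thus $X_{w}=\overline{\mathring{X}_{w}}$ is $\boldsymbol{B}_{+}$-stable, in particular $G_{i}^{+}$-stable for every $i$ since $G_{i}^{+}=G_{i}\cap\boldsymbol{B}_{+}$. For minimality I would use $G_{i}^{+}=H\ltimes\boldsymbol{N}_{+}(\{\alpha_{i}\})$: the subgroups $G_{i}^{+}$ $(i\in I)$ together generate $H$ and all simple root subgroups, and since $\mathfrak{n}_{+}$ is generated as a Lie algebra by the $e_{i}$, this subgroup is dense in $\boldsymbol{B}_{+}=H\ltimes\boldsymbol{N}_{+}$. Hence any closed subscheme of $\boldsymbol{X}$ invariant under all the $G_{i}^{+}$ is automatically $\boldsymbol{B}_{+}$-stable, and if it contains $we_{\boldsymbol{X}}$ then it contains $\boldsymbol{B}_{+}we_{\boldsymbol{X}}=\mathring{X}_{w}$ and so its closure $X_{w}$; therefore $X_{w}$ is the smallest such subscheme.

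For $(3)$ I would induct on $\ell(w)$, the case $w=e$ being $X_{e}=\{e_{\boldsymbol{X}}\}=\mathring{X}_{e}$. For $\ell(w)\geq 1$ choose $i$ with $\ell(ws_{i})=\ell(w)-1$ and set $w':=ws_{i}$. Let $\pi_{i}\colon\boldsymbol{X}=\boldsymbol{G}/\boldsymbol{B}_{+}\to\boldsymbol{G}/\boldsymbol{P}_{i}^{+}$ be the projection, a $\mathbb{P}^{1}$-bundle with fibre $\boldsymbol{P}_{i}^{+}/\boldsymbol{B}_{+}$. Since $we_{\boldsymbol{X}}$ and $w'e_{\boldsymbol{X}}$ lie in a common fibre of $\pi_{i}$ and $\pi_{i}$ is $\boldsymbol{B}_{+}$-equivariant, $\pi_{i}(\mathring{X}_{w})=\pi_{i}(\mathring{X}_{w'})$; moreover $\pi_{i}(X_{w'})$ is closed, being the image of the proper scheme $X_{w'}$ under the separated morphism $\pi_{i}$. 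I would then prove $X_{w}=\pi_{i}^{-1}\bigl(\pi_{i}(X_{w'})\bigr)$ by comparing two irreducible closed subschemes of the same dimension $\ell(w')+1=\ell(w)$, using $(2)$ for irreducibility and dimension and the inclusion $\mathring{X}_{w}\subseteq\pi_{i}^{-1}(\pi_{i}(X_{w'}))$ coming from $\pi_{i}(\mathring{X}_{w})=\pi_{i}(\mathring{X}_{w'})\subseteq\pi_{i}(X_{w'})$. Combining the inductive hypothesis $X_{w'}=\bigsqcup_{y\leq w'}\mathring{X}_{y}$ with the compatibility relation $\pi_{i}^{-1}(\pi_{i}(\mathring{X}_{y}))=\mathring{X}_{y}\sqcup\mathring{X}_{ys_{i}}$ between the cell decompositions of $\boldsymbol{G}/\boldsymbol{B}_{+}$ and $\boldsymbol{G}/\boldsymbol{P}_{i}^{+}$, this gives $X_{w}=\bigsqcup\{\mathring{X}_{z}\mid z\leq w'\ \text{or}\ zs_{i}\leq w'\}$, and the index set equals $\{z\mid z\leq w\}$ by the lifting property of the Bruhat order ($z\leq w's_{i}$ with $w'<w's_{i}$ iff $z\leq w'$ or $zs_{i}\leq w'$), closing the induction.

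The step I expect to be the main obstacle is $(3)$: although the $\mathbb{P}^{1}$-fibration argument is classical for finite-dimensional flag varieties, in the present ``thick'' setting one must take care that $\boldsymbol{X}$ is separated, that the Schubert varieties $X_{w'}$ are proper so that $\pi_{i}(X_{w'})$ is closed, and that the dimension and irreducibility comparisons are legitimate; these rely on the structure theory of $\boldsymbol{G}$, $\boldsymbol{P}_{i}^{+}$ and the Bruhat cell decomposition recorded in \cite{MR1317626,MR1923198} (indeed the proposition is quoted from \cite[Proposition 1.3.2]{MR1317626}). By contrast $(2)$ reduces to the single chart $\boldsymbol{U}_{w}\cong\boldsymbol{N}_{-}$ and an $\operatorname{Ad}(\overline{w})$-conjugation computation, while $(1)$ follows once one knows the $G_{i}^{+}$ generate a dense subgroup of $\boldsymbol{B}_{+}$.
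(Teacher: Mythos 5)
The paper offers no proof of this proposition to compare against: it is quoted directly from Kashiwara--Tanisaki \cite[Proposition 1.3.2]{MR1317626} and used as a black box. Judged on its own, your argument is the standard one and is essentially sound in outline: for (2), the chart $\boldsymbol{N}_{-}\xrightarrow{\sim}\boldsymbol{U}_{w}$, $n\mapsto \overline{w}ne_{\boldsymbol{X}}$, the closedness of $N_{-}(w^{-1})$ in $\boldsymbol{N}_{-}$ via $\boldsymbol{N}_{-}\simeq N_{-}(w^{-1})\times\boldsymbol{N}'_{-}(w^{-1})$, and the conjugation identity $\overline{w}^{-1}N_{+}(w)\overline{w}=N_{-}(w^{-1})$ do give the isomorphism $N_{+}(w)\xrightarrow{\sim}\mathring{X}_{w}$; and your $\mathbb{P}^{1}$-fibration induction for (3) is the classical route, with the caveat you yourself flag that the fibration $\pi_{i}$, properness of $X_{w'}$, the pairing $\pi_{i}^{-1}(\pi_{i}(\mathring{X}_{y}))=\mathring{X}_{y}\sqcup\mathring{X}_{ys_{i}}$ and disjointness of the cells are exactly the structural facts of \cite{MR1317626,MR1923198} being cited.

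Two points in (1) need repair. First, the minimality assertion only makes sense among \emph{closed} subschemes: the cell $\mathring{X}_{w}$ itself contains $we_{\boldsymbol{X}}$ and is invariant under every $G_{i}^{+}$ (since $G_{i}^{+}\subset B_{+}$), so without closedness it would be a strictly smaller invariant subscheme; your proof implicitly works with closed ones, which is the intended reading. Second, you invoke a left action of the pro-group $\boldsymbol{B}_{+}$ on $\boldsymbol{X}$ (writing $\mathring{X}_{w}=\boldsymbol{B}_{+}we_{\boldsymbol{X}}$ and ``any $G_{i}^{+}$-invariant closed subscheme is automatically $\boldsymbol{B}_{+}$-stable''). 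In Kashiwara's construction only the $\boldsymbol{P}_{i}^{-}$ (hence $\boldsymbol{B}_{-}$) act on the left as pro-groups; what acts on the left is the group generated by the $G_{i}$'s (so individual elements of the minimal $B_{+}$, the real root subgroups, and the finite-dimensional groups $G_{i}^{+}$ and $N_{+}(w)$ act algebraically), and $\boldsymbol{G}$ is not even a group. The density idea is still the right one, but it should be run at the level of orbits: if $\Gamma$ denotes the subgroup generated by the $G_{i}^{+}$'s, then for $\gamma\in\Gamma$ one has $\gamma we_{\boldsymbol{X}}=p(\gamma)we_{\boldsymbol{X}}$ where $p$ is the projection $\boldsymbol{N}_{+}\to N_{+}(w)$ coming from $\boldsymbol{N}_{+}\simeq N_{+}(w)\times\boldsymbol{N}'_{+}(w)$ (elements of $\boldsymbol{N}'_{+}(w)$ fix $we_{\boldsymbol{X}}$); since the Lie algebra generated by the $e_{i}$ is $\mathfrak{n}_{+}$, $\Gamma$ is Zariski dense in $\boldsymbol{B}_{+}$, hence $p(\Gamma)$ is dense in $N_{+}(w)$ and $\Gamma we_{\boldsymbol{X}}$ is dense in $\mathring{X}_{w}$ by (2). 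Note that this density step is genuinely needed (and cannot be replaced by the claim that $\Gamma\supseteq N_{+}(w)$, which is unclear in infinite type): any closed $G_{i}^{+}$-invariant subscheme containing $we_{\boldsymbol{X}}$ contains $\Gamma we_{\boldsymbol{X}}$, hence its closure $X_{w}$, while $X_{w}=\overline{N_{+}(w)we_{\boldsymbol{X}}}$ is itself invariant because each element of $G_{i}^{+}$ acts as an automorphism preserving $\mathring{X}_{w}$. With these adjustments your proof goes through.
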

We note that the morphism $N_{-}\left(w\right)\times\boldsymbol{B}_{+}\to\boldsymbol{B}_{+}\overline{w}\boldsymbol{B}_{+}$
given by $\left(x,y\right)\mapsto x^{T}\overline{w}y$ is an isomorphism.
\begin{rem}
We note that the union $X:=\bigcup_{w\in W}X_{w}\subset\boldsymbol{X}$
has a structure of an ind-scheme over $\mathbb{C}$ and it is also
called the flag variety. We have isomorphisms $G^{\min}/B_{+}\xrightarrow{\sim}G/\boldsymbol{B}_{+}=X$,
see  \cite[7.4.5 Proposition]{MR1923198}. 
\end{rem}
\begin{prop}[{\cite[Proposition 1.3.1]{MR1317626}}]

\textup{(1)} There is an isomorphism
\[
\boldsymbol{N}'_{-}\left(w\right)\to\mathring{\boldsymbol{X}}^{w}
\]
given by $x\mapsto x\overline{w}e_{\boldsymbol{X}}$. In particular,
$\mathring{\boldsymbol{X}}^{w}$ is affine space with $\mathrm{codim}\boldsymbol{X}^{w}=\ell\left(w\right)$.

\textup{(2)} We have $\boldsymbol{X}=\bigsqcup_{w\in W}\mathring{\boldsymbol{X}}^{w}$.

\textup{(3)} We have $\boldsymbol{X}^{w}=\bigsqcup_{y\geq w}\mathring{\boldsymbol{X}}^{y}$
for $w\in W$.
\end{prop}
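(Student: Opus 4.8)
The plan is to reduce all three assertions to explicit computations in the big open cell $\boldsymbol{G}_0 \cong \boldsymbol{N}_- \times H \times \boldsymbol{N}_+$ and its $W$-translates inside $\boldsymbol{X}$. The two algebraic inputs I would use are the splitting $\boldsymbol{N}_- \simeq \boldsymbol{N}'_-(w) \times N_-(w)$ and the conjugation identities $\overline{w}^{-1}N_-(w)\overline{w} = N_+(w^{-1})$ and $\overline{w}^{-1}\boldsymbol{N}'_-(w)\overline{w} = \boldsymbol{N}_-(\Theta)$ with $\Theta := w^{-1}\Delta_-\cap\Delta_-$ (a closed subset of $\Delta_-$): these hold because conjugation by $\overline{w}^{-1}$ sends $\mathfrak{g}_\alpha$ to $\mathfrak{g}_{w^{-1}\alpha}$, and $w^{-1}(\Delta_-\cap w\Delta_+) = \Delta_+\cap w^{-1}\Delta_- = \Delta_+(\leq w^{-1})$ while $w^{-1}(\Delta_-\cap w\Delta_-) = \Theta$. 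I also use the chart $\boldsymbol{U}_w = \overline{w}\boldsymbol{G}_0 e_{\boldsymbol{X}} = \overline{w}\boldsymbol{N}_- e_{\boldsymbol{X}}$, which is open in $\boldsymbol{X}$ — the image of the big open cell under the automorphism of $\boldsymbol{X}$ given by left multiplication by $\overline{w}$ — and on which $\boldsymbol{N}_- \xrightarrow{\sim} \boldsymbol{U}_w$, $n \mapsto \overline{w}ne_{\boldsymbol{X}}$, is an isomorphism of schemes.

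For part (1): since $N_-(w)\overline{w} = \overline{w}N_+(w^{-1}) \subset \overline{w}\boldsymbol{B}_+$, the splitting gives $\boldsymbol{N}_-\overline{w}\boldsymbol{B}_+ = \boldsymbol{N}'_-(w)\overline{w}\boldsymbol{B}_+ = \overline{w}\,\boldsymbol{N}_-(\Theta)\,\boldsymbol{B}_+ \subset \overline{w}\boldsymbol{G}_0$, so in particular $\mathring{\boldsymbol{X}}^w \subset \boldsymbol{U}_w$. Under the chart above, $\mathring{\boldsymbol{X}}^w$ corresponds to $\boldsymbol{N}_- \cap \boldsymbol{N}_-(\Theta)\boldsymbol{B}_+$, which — using the product decomposition $\boldsymbol{G}_0 = \boldsymbol{N}_- \times H \times \boldsymbol{N}_+$ — is exactly $\boldsymbol{N}_-(\Theta) = \mathrm{Exp}\!\left(\prod_{\alpha\in\Theta}\mathfrak{g}_\alpha\right)$, a reduced closed coordinate subspace of $\boldsymbol{N}_- \cong \prod_{\alpha\in\Delta_-}\mathfrak{g}_\alpha$ (closedness uses that $\Theta$ is closed). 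Chasing the identifications, the composite $\boldsymbol{N}'_-(w) \xrightarrow{\;m \mapsto \overline{w}^{-1}m\overline{w}\;} \boldsymbol{N}_-(\Theta) \xrightarrow{\;\sim\;} \mathring{\boldsymbol{X}}^w$ is precisely $x \mapsto x\overline{w}e_{\boldsymbol{X}}$; this gives the asserted isomorphism, the affine-space claim follows from $\boldsymbol{N}'_-(w) \cong \prod_{\alpha\in\Delta_-(>w)}\mathfrak{g}_\alpha$, and the codimension statement follows because $\Delta_- \setminus \Theta = \Delta_- \cap w^{-1}\Delta_+ = \Delta_-(\leq w^{-1})$ has cardinality $\ell(w^{-1}) = \ell(w)$, so $\mathring{\boldsymbol{X}}^w$ is cut out in $\boldsymbol{U}_w$ by $\ell(w)$ of the coordinate functions.

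For part (2) I would invoke the Birkhoff decomposition of $\boldsymbol{X}$ in the completed Kac-Moody setting: the covering $\boldsymbol{X} = \bigcup_w \mathring{\boldsymbol{X}}^w$ comes from the open cover $\boldsymbol{G} = \bigcup \boldsymbol{P}^-_{i_1}\cdots\boldsymbol{P}^-_{i_m}e\,\boldsymbol{P}^+_{j_1}\cdots\boldsymbol{P}^+_{j_n}$ by induction on the number of factors, and disjointness reduces, via $\boldsymbol{N}_-\overline{w}\boldsymbol{B}_+ \subset \overline{w}\boldsymbol{G}_0$, to the fact that $\overline{w}^{-1}\overline{v}$ is a lift of $w^{-1}v$ lying in $\boldsymbol{G}_0 = \boldsymbol{N}_-H\boldsymbol{N}_+$, so its image in $\boldsymbol{X}$ is an $H$-fixed point of the big cell $\cong \boldsymbol{N}_-$, which can only be $e_{\boldsymbol{X}}$, whence $w = v$. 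For part (3), by (2) it is enough to prove $\mathring{\boldsymbol{X}}^y \subset \boldsymbol{X}^w \iff y \geq w$. I would get "$\Leftarrow$" by a rank-one reduction along the $\mathbb{P}^1$-fibrations $\pi_i \colon \boldsymbol{X} \to \boldsymbol{G}/\boldsymbol{P}^+_i$: if $s_iw > w$ then $\pi_i^{-1}\pi_i(\mathring{\boldsymbol{X}}^w) = \mathring{\boldsymbol{X}}^w \sqcup \mathring{\boldsymbol{X}}^{s_iw}$ with $\mathring{\boldsymbol{X}}^w$ dense in it, hence $\mathring{\boldsymbol{X}}^{s_iw} \subset \overline{\mathring{\boldsymbol{X}}^w} = \boldsymbol{X}^w$; combining this with the lifting property of the Bruhat order and induction on $\ell(y)$ yields $\mathring{\boldsymbol{X}}^y \subset \boldsymbol{X}^w$ for every $y \geq w$. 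For "$\Rightarrow$" the same fibrations show that $\boldsymbol{X}^w$ is $\boldsymbol{P}^-_i$-stable whenever $s_iw > w$, and iterating this stability along a descending chain of simple reflections confines $\boldsymbol{X}^w$ to $\bigsqcup_{y\geq w}\mathring{\boldsymbol{X}}^y$.

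I expect the main obstacle to be the scheme-theoretic precision in part (1): verifying that the a priori merely reduced locally closed subscheme $\mathring{\boldsymbol{X}}^w$ coincides, as a scheme, with the image of $x \mapsto x\overline{w}e_{\boldsymbol{X}}$ and that its complement in $\boldsymbol{U}_w$ has codimension exactly $\ell(w)$. This depends on the structural facts that $\boldsymbol{G}\to\boldsymbol{X}$ is a Zariski-locally trivial $\boldsymbol{B}_+$-torsor, that $\boldsymbol{G}_0$ — hence every $\overline{w}\boldsymbol{G}_0$ — is open in $\boldsymbol{G}$ with the product decomposition $\boldsymbol{N}_-\times H\times\boldsymbol{N}_+$, and that $\overline{w}$ acts on $\boldsymbol{X}$ by an automorphism; all of these are part of the basic theory of $\boldsymbol{G}$ and $\boldsymbol{X}$. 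Granting the local model of (1), parts (2) and (3) are the standard Birkhoff decomposition and the standard $\mathrm{SL}_2$-reduction, and should be routine.
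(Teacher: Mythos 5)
This proposition is quoted in the paper from Kashiwara--Tanisaki \cite[Proposition 1.3.1]{MR1317626} and is not proved there, so there is no in-paper argument to compare with; measured against the standard proof in the cited sources (Kashiwara, Kashiwara--Tanisaki, Kumar), your sketch follows essentially the same route and is correct in outline. For (1), the reduction to the translated big cell $\boldsymbol{U}_{w}=\overline{w}\boldsymbol{N}_{-}e_{\boldsymbol{X}}$ together with the conjugation identity $\overline{w}^{-1}\boldsymbol{N}'_{-}(w)\overline{w}=\boldsymbol{N}_{-}(\Delta_{-}\cap w^{-1}\Delta_{-})$ is exactly the expected argument (and note that the codimension count uses that $\Delta_{-}\cap w^{-1}\Delta_{+}$ consists of $\ell(w)$ \emph{real} roots, so each deleted coordinate is one-dimensional); alternatively you could have invoked the paper's Proposition \ref{p:openimm} directly. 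For (2), your disjointness argument implicitly uses that the $\mathring{\boldsymbol{X}}^{w}$ are $\boldsymbol{B}_{-}$-orbits (so that a nonempty intersection forces $\overline{v}\in\boldsymbol{N}_{-}\overline{w}\boldsymbol{B}_{+}$); this should be said, and the covering statement is the Birkhoff decomposition proved by the standard induction you indicate, which is non-circular even though the paper derives Corollary \ref{cor:Birkhoff} from this proposition.

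One concrete slip to fix in (3): the fibration $\pi_{i}\colon\boldsymbol{G}/\boldsymbol{B}_{+}\to\boldsymbol{G}/\boldsymbol{P}_{i}^{+}$ identifies the cells indexed by $w$ and $ws_{i}$ (right multiplication), not $w$ and $s_{i}w$; so either state the rank-one step as $\pi_{i}^{-1}\pi_{i}(\mathring{\boldsymbol{X}}^{w})=\mathring{\boldsymbol{X}}^{w}\sqcup\mathring{\boldsymbol{X}}^{ws_{i}}$ when $ws_{i}>w$, or work on the left with the sweep $\boldsymbol{P}_{i}^{-}\cdot\mathring{\boldsymbol{X}}^{w}=\mathring{\boldsymbol{X}}^{w}\sqcup\mathring{\boldsymbol{X}}^{s_{i}w}$, which is also what your $\boldsymbol{P}_{i}^{-}$-stability claim for the reverse inclusion actually needs; use one convention consistently. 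With that adjustment, the closure inclusion for all $y\geq w$ and the converse both go through by the induction on length combined with the lifting (Z-)property of the Bruhat order, as you propose; spelling out the two cases ($s_{i}w>w$ versus $s_{i}w<w$, using $\boldsymbol{X}^{w}\subset\boldsymbol{X}^{s_{i}w}$ in the latter) would complete the argument.
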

\begin{cor}[Birkhoff decomposition]
 \label{cor:Birkhoff}We have $\boldsymbol{G}=\bigsqcup_{w\in W}\boldsymbol{B}_{-}\overline{w}\boldsymbol{B}_{+}$.
\end{cor}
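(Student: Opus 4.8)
The plan is to deduce the Birkhoff decomposition of $\boldsymbol{G}$ by pulling back the Bruhat-type stratification of the flag scheme $\boldsymbol{X}=\boldsymbol{G}/\boldsymbol{B}_{+}$ recorded in the preceding proposition (Kashiwara--Tanisaki \textup{1.3.1}) along the quotient morphism $\pi\colon\boldsymbol{G}\to\boldsymbol{X}$. Recall that $\pi$ is a right $\boldsymbol{B}_{+}$-torsor, hence faithfully flat and surjective, and that $\pi(e)=e_{\boldsymbol{X}}$.

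The first step is to identify, for each $w\in W$, the subset $\boldsymbol{B}_{-}\overline{w}\boldsymbol{B}_{+}\subset\boldsymbol{G}$ with the preimage $\pi^{-1}(\mathring{\boldsymbol{X}}^{w})$. Indeed $\boldsymbol{B}_{-}\overline{w}\boldsymbol{B}_{+}$ is stable under right translation by $\boldsymbol{B}_{+}$, so it is saturated for $\pi$, and its image is $\boldsymbol{B}_{-}\overline{w}e_{\boldsymbol{X}}=\mathring{\boldsymbol{X}}^{w}$ by the definition of $\mathring{\boldsymbol{X}}^{w}$; conversely, if $\pi(g)\in\mathring{\boldsymbol{X}}^{w}$ then $g\boldsymbol{B}_{+}=b\overline{w}\boldsymbol{B}_{+}$ for some $b\in\boldsymbol{B}_{-}$, whence $g\in b\overline{w}\boldsymbol{B}_{+}\subset\boldsymbol{B}_{-}\overline{w}\boldsymbol{B}_{+}$. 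Scheme-theoretically, $\mathring{\boldsymbol{X}}^{w}$ is a locally closed subscheme of $\boldsymbol{X}$ by part \textup{(1)} of that proposition, and its preimage under the flat morphism $\pi$ is a locally closed subscheme of $\boldsymbol{G}$, which is the structure we put on $\boldsymbol{B}_{-}\overline{w}\boldsymbol{B}_{+}$.

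The second step is to apply $\pi^{-1}(-)$ to the identity $\boldsymbol{X}=\bigsqcup_{w\in W}\mathring{\boldsymbol{X}}^{w}$ of part \textup{(2)}. Surjectivity of $\pi$ gives $\boldsymbol{G}=\pi^{-1}(\boldsymbol{X})=\bigcup_{w\in W}\pi^{-1}(\mathring{\boldsymbol{X}}^{w})=\bigcup_{w\in W}\boldsymbol{B}_{-}\overline{w}\boldsymbol{B}_{+}$, and disjointness of the strata $\mathring{\boldsymbol{X}}^{w}$ forces their preimages to be pairwise disjoint; hence the union is a disjoint union of locally closed subschemes, which is precisely the asserted decomposition.

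There is essentially no obstacle here beyond the bookkeeping just indicated: the one point requiring care is that the preimage of a stratification of $\boldsymbol{X}$ by locally closed subschemes is again such a stratification of $\boldsymbol{G}$, which is guaranteed because $\pi$ is a $\boldsymbol{B}_{+}$-torsor (in particular faithfully flat and surjective), so that preimages of locally closed subschemes are locally closed and preimages of disjoint subsets are disjoint. Everything else is immediate from the definitions of $\mathring{\boldsymbol{X}}^{w}=\boldsymbol{B}_{-}\overline{w}e_{\boldsymbol{X}}$ and of $\boldsymbol{X}=\boldsymbol{G}/\boldsymbol{B}_{+}$.
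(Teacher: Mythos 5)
Your proof is correct and is precisely the argument the paper intends: the corollary is deduced from the decomposition $\boldsymbol{X}=\bigsqcup_{w\in W}\mathring{\boldsymbol{X}}^{w}$ together with the definition $\mathring{\boldsymbol{X}}^{w}=\boldsymbol{B}_{-}\overline{w}e_{\boldsymbol{X}}$, by pulling back along the quotient $\boldsymbol{G}\to\boldsymbol{X}=\boldsymbol{G}/\boldsymbol{B}_{+}$ and using that $\boldsymbol{B}_{-}\overline{w}\boldsymbol{B}_{+}$ is exactly the preimage of $\mathring{\boldsymbol{X}}^{w}$. The paper leaves this routine verification unwritten, and your write-up supplies the same bookkeeping.
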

\begin{prop}[\cite{MR1463702}]\label{p:openimm}
$\boldsymbol{U}_{w}$ is an affine open subset of $\boldsymbol{X}$
and there is an isomorphism
\[
\left(\boldsymbol{N}'_{-}\left(w\right)\right)\times\left(N_{+}\left(w\right)\right)\xrightarrow{\sim}\boldsymbol{U}_{w}
\]
which is given by $\left(x,y\right)\mapsto xywe_{X}$. In particular,
$\boldsymbol{N}_{-}\to\boldsymbol{X}$ given by
$n_{-}\mapsto n_{-}e_{\boldsymbol{X}}$ is an open immersion.
\end{prop}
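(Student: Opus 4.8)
The plan is to reduce the whole statement to the case $w=e$ (the big cell) and then transport it by the left translation $\ell_{\overline w}$ on $\boldsymbol{X}$. First I would observe that $\boldsymbol{U}_e=\boldsymbol{B}_-e_{\boldsymbol{X}}=\boldsymbol{N}_-e_{\boldsymbol{X}}=\mathring{\boldsymbol{X}}^e$ (using $He_{\boldsymbol{X}}=e_{\boldsymbol{X}}$, since $H\subset\boldsymbol{B}_+$), so the $w=e$ case is exactly: $\mathring{\boldsymbol{X}}^e$ is affine open and $\boldsymbol{N}'_-(e)=\boldsymbol{N}_-\xrightarrow{\sim}\mathring{\boldsymbol{X}}^e$, $x\mapsto xe_{\boldsymbol{X}}$. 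The isomorphism is \cite[Proposition 1.3.1]{MR1317626}; affineness then follows because $\boldsymbol{N}_-=\mathrm{Spec}(\mathbf{U}(\mathfrak{n}_-)_{\mathrm{gr}}^*)$ is affine; and openness follows from the cell decomposition $\boldsymbol{X}=\bigsqcup_w\mathring{\boldsymbol{X}}^w$ together with $\boldsymbol{X}\setminus\mathring{\boldsymbol{X}}^e=\bigcup_{w>e}\mathring{\boldsymbol{X}}^w=\bigcup_{i\in I}\boldsymbol{X}^{s_i}$, a finite union of closed subschemes as $I$ is finite. (Alternatively, $\boldsymbol{U}_e=\boldsymbol{G}_0/\boldsymbol{B}_+$, where $\boldsymbol{G}_0=\boldsymbol{N}_-H\boldsymbol{N}_+$ is open and right $\boldsymbol{B}_+$-stable in $\boldsymbol{G}$, the quotient $\boldsymbol{G}\to\boldsymbol{X}$ is open, and the Gauss projection $g\mapsto[g]_-$ descends to the inverse $\boldsymbol{U}_e\to\boldsymbol{N}_-$.) Since $\boldsymbol{N}'_-(e)=\boldsymbol{N}_-$ and $N_+(e)=\{e\}$, this already proves the ``in particular'' clause.

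For general $w$, left translation by $\overline w$ is an automorphism $\ell_{\overline w}$ of $\boldsymbol{X}$ (induced from the left action of $\overline w\in G^{\min}$ on $\boldsymbol{G}$, compatibly with the Notation $wY=\overline wY$), and by definition $\boldsymbol{U}_w=\overline w\,\boldsymbol{U}_e=\ell_{\overline w}(\mathring{\boldsymbol{X}}^e)$, so $\boldsymbol{U}_w$ is affine open. To identify it with $\boldsymbol{N}'_-(w)\times N_+(w)$ I would combine: (i) the scheme isomorphism $\boldsymbol{N}'_-(w^{-1})\times N_-(w^{-1})\xrightarrow{\sim}\boldsymbol{N}_-$ given by multiplication \cite[Lemma 6.1.2]{MR1923198}; and (ii) the conjugation identities $\overline w\,\boldsymbol{N}'_-(w^{-1})\,\overline w^{-1}=\boldsymbol{N}'_-(w)$ and $\overline w\,N_-(w^{-1})\,\overline w^{-1}=N_+(w)$, which hold because conjugation by $\overline w$ carries $\mathfrak{g}_\alpha$ onto $\mathfrak{g}_{w\alpha}$ while $w(\Delta_-\cap w^{-1}\Delta_-)=\Delta_-\cap w\Delta_-$ and $w(\Delta_-\cap w^{-1}\Delta_+)=\Delta_+\cap w\Delta_-$. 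Consequently the map $\boldsymbol{N}'_-(w)\times N_+(w)\to\boldsymbol{U}_w$, $(x,y)\mapsto xyw\,e_{\boldsymbol{X}}$, equals the composite of isomorphisms
\[
\boldsymbol{N}'_-(w)\times N_+(w)\xrightarrow{\ c\ }\boldsymbol{N}'_-(w^{-1})\times N_-(w^{-1})\xrightarrow{\ \mathrm{mult}\ }\boldsymbol{N}_-\xrightarrow{\ \sim\ }\boldsymbol{U}_e\xrightarrow{\ \ell_{\overline w}\ }\boldsymbol{U}_w,
\]
where $c(x,y)=(\overline w^{-1}x\overline w,\,\overline w^{-1}y\overline w)$; a short computation gives that the total composite sends $(x,y)$ to $\overline w(\overline w^{-1}x\overline w)(\overline w^{-1}y\overline w)e_{\boldsymbol{X}}=xyw\,e_{\boldsymbol{X}}$, and being a composite of isomorphisms it is an isomorphism.

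I expect the genuinely substantial point to be the $w=e$ case — the openness and affineness of the big cell and the isomorphism $\boldsymbol{N}_-\xrightarrow{\sim}\mathring{\boldsymbol{X}}^e$ as \emph{schemes} (not merely a bijection on points) — which rests on the structure theory of the flag scheme $\boldsymbol{X}=\boldsymbol{G}/\boldsymbol{B}_+$ developed in \cite{MR1317626,MR1463702}; the real delicacy there is the pro-scheme bookkeeping (openness of $\boldsymbol{G}\to\boldsymbol{X}$, well-definedness of the Gauss projection, and the fact that $\mathring{\boldsymbol{X}}^e$ is the complement of a closed subscheme). Everything after that is formal: the step from $e$ to general $w$ is just a translation, and the splitting into the two factors $\boldsymbol{N}'_-(w)$, $N_+(w)$ only uses that $\overline w$ conjugates root subgroups according to the $W$-action on roots; one should merely be careful to keep the left $H$-action conventions (hence the $\overline w$ versus $w$ notation) consistent throughout.
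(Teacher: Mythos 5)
The paper does not actually prove this proposition; it is imported wholesale from Kashiwara \cite{MR1463702} (where the charts $\boldsymbol{U}_{w}$ are essentially built into the construction of the flag scheme $\boldsymbol{X}$), so there is no in-paper argument to compare yours against. Your reconstruction is sound and is a genuinely different route: you deduce the big-cell case $w=e$ from the quoted stratification results of \cite{MR1317626} (the isomorphism $\boldsymbol{N}'_{-}(e)=\boldsymbol{N}_{-}\xrightarrow{\sim}\mathring{\boldsymbol{X}}^{e}$, affineness of $\boldsymbol{N}_{-}$, and openness via $\boldsymbol{X}\setminus\mathring{\boldsymbol{X}}^{e}=\bigcup_{i\in I}\boldsymbol{X}^{s_{i}}$, which is correct since every $w\neq e$ satisfies $w\geq s_{i}$ for some $i$ and $I$ is finite), and then transport by $\ell_{\overline{w}}$ and split via Kumar's decomposition $\boldsymbol{N}'_{-}(w^{-1})\times N_{-}(w^{-1})\xrightarrow{\sim}\boldsymbol{N}_{-}$ \cite[Lemma 6.1.2]{MR1923198} together with the conjugation identities, whose root-theoretic verification you give correctly. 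Two points deserve to be made explicit rather than waved at: (a) passing from ``the underlying set of the locally closed subscheme $\mathring{\boldsymbol{X}}^{e}$ is open'' to ``$\boldsymbol{U}_{e}$ is an open \emph{subscheme} isomorphic to $\boldsymbol{N}_{-}$'' uses that $\boldsymbol{X}$ is reduced (it is essentially smooth), so that the surjective closed immersion of $\mathring{\boldsymbol{X}}^{e}$ into the corresponding open subscheme is an isomorphism; and (b) the identity $x\,\overline{w}e_{\boldsymbol{X}}=\overline{w}\,(\overline{w}^{-1}x\overline{w})e_{\boldsymbol{X}}$ for $x$ in the pro-group $\boldsymbol{N}'_{-}(w)$ is not automatic, since $\boldsymbol{N}'_{-}(w)$ does not sit inside the group $G^{\min}$: one needs that $\mathrm{Ad}(\overline{w}^{-1})$ extends to the completion $\widehat{\mathfrak{n}}_{-}(\Delta_{-}\cap w\Delta_{-})$ (it does, because $w$ permutes roots, so each graded piece receives only finitely many contributions) and that the partial left actions of the $\boldsymbol{P}_{i}^{-}$ on $\boldsymbol{X}$ compose compatibly, which is exactly the part of Kashiwara's formalism you are implicitly relying on. With those two remarks supplied, your argument buys a proof of the statement from the facts already quoted in this section, whereas the cited route obtains it by construction of $\boldsymbol{X}$.
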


\subsection{Unipotent cells and their automorphisms}\label{ss:unipcellautom}

In this subsection, we consider $\boldsymbol{N}_{-}$ as an open subscheme of $\boldsymbol{X}$ via the open immersion in Proposition \ref{p:openimm}. 
\begin{defn}
For $w\in W$, we set
\begin{align*}
N_{-}^{w} & :=\boldsymbol{N}_{-}\cap\mathring{X}_{w}\subset\boldsymbol{X},
\end{align*}
$N_{-}^{w}$ is called \emph{the unipotent cell.}
\end{defn}
Since $wN_{-}\left(w^{-1}\right)\boldsymbol{B}_{+}\subset G$, we
have 
\begin{align*}
wN_{-}\left(w^{-1}\right)\boldsymbol{B}_{+}\cap\boldsymbol{N}_{-}\boldsymbol{B}_{+} & =wN_{-}\left(w^{-1}\right)\boldsymbol{B}_{+}\cap\boldsymbol{N}_{-}\boldsymbol{B}_{+}\cap G\\
 & =wN_{-}\left(w^{-1}\right)\boldsymbol{B}_{+}\cap N_{-}\boldsymbol{B}_{+}.
\end{align*}
Therefore 
we have $\boldsymbol{N}_{-}\cap\mathring{X}_{w}= N_{-}\cap\mathring{X}_{w}$. Similarly, it can be shown
that $\boldsymbol{N}_{-}\cap X_{w}= N_{-}\cap X_{w}$. Since $\boldsymbol{N}_{-}\subset\boldsymbol{X}$ is a Zariski open immersion, $N_{-}\cap X_{w}=\boldsymbol{N}_{-}\cap X_{w}\subset X_{w}$ is an
open immersion and $N_{-}\cap X_{w}$ is a closed
(affine) subscheme of $\boldsymbol{N}_{-}$. Moreover $N_{-}\cap X_{w}$ is reduced. It also coincides with
the scheme-theoretic intersection. 

We shall describe the coordinate ring of $N_{-}\cap X_{w}$ explicitly, that is, we describe the kernel of the quotient map $\mathbb{C}\left[\boldsymbol{N}_{-}\right]\twoheadrightarrow\mathbb{C}\left[N_{-}\cap X_{w}\right]$, after preparing some notations. For a reduced expression $\boldsymbol{i}=\left(i_{1},\cdots,i_{\ell}\right)\in I\left(w\right)$
of a Weyl group element $w\in W$, we consider a morphism $y_{\boldsymbol{i}}\colon\mathbb{A}^{\ell\left(w\right)}\to\boldsymbol{N}_{-}$
defined by 
\[
y_{\boldsymbol{i}}\left(z_{1},\cdots,z_{\ell}\right):=\exp\left(z_{1}f_{i_{1}}\right)\cdots\exp\left(z_{\ell}f_{i_{\ell}}\right).
\]
We note that the associated ring homomorphism $y_{\boldsymbol{i}}^{*}\colon\mathbb{C}\left[\boldsymbol{N}_{-}\right]\to\mathbb{C}\left[\mathbb{A}^{\ell\left(w\right)}\right]=\mathbb{C}\left[z_{1},\dots,z_{\ell}\right]$
is nothing but the (classical) Feigin map or Gei\ss-Leclerc-Schr\"oer's
$\varphi$-map (see Gei\ss-Leclerc-Schr\"oer \cite[Section 6]{MR2822235}). Moreover, set 
\[
\mathbf{U}_{w}^{-}:=\sum_{a_{1},\cdots,a_{\ell}\in \mathbb{Z}_{\geq 0}}\mathbb{C}f_{i_{1}}^{a_{1}}\cdots f_{i_{\ell}}^{a_{\ell}}\subset\mathbf{U}\left(\mathfrak{n}_{-}\right). 
\]
Then $\mathbf{U}_{w}^{-}$ is independent of the choice of $\bm{i}\in I(w)$ (see also Proposition \ref{p:Demazure} and Remark \ref{r:specialDem}). 
\begin{prop}
\label{p:closedcoord}For $w\in W$, we have isomorphisms of $\mathbb{C}$-algebras:
\[
\mathbb{C}\left[\boldsymbol{N}_{-}\right]/\left(\mathbf{U}_{w}^{-}\right)^{\perp}\xrightarrow{\sim}\mathbb{C}\left[N_{-}\cap X_{w}\right],
\]
here $\left(\mathbf{U}_{w}^{-}\right)^{\perp}:=\left\{ f\in\mathbf{U}\left(\mathfrak{n}_{-}\right)_{\mathrm{gr}}^{*}\mid f(\mathbf{U}_{w}^{-})=0\right\} $
(recall that $\mathbb{C}\left[\boldsymbol{N}_{-}\right]=\mathbf{U}\left(\mathfrak{n}_{-}\right)_{\mathrm{gr}}^{*}$
). 
\end{prop}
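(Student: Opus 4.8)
The plan is to identify $N_-\cap X_w$ inside $\boldsymbol{N}_-$ via the parametrization $y_{\boldsymbol{i}}$ and then translate everything into the Hopf-pairing duality between $\mathbb{C}[\boldsymbol{N}_-]=\mathbf{U}(\mathfrak{n}_-)_{\mathrm{gr}}^*$ and $\mathbf{U}(\mathfrak{n}_-)$. First I would recall from Proposition 1.3.2(2) that $x\mapsto xwe_X$ gives an isomorphism $N_+(w)\xrightarrow{\sim}\mathring X_w$, and from the open immersion $\boldsymbol{N}_-\hookrightarrow\boldsymbol{X}$ together with the transposition map $(\ )^T$ (which interchanges $\boldsymbol{N}_+(\{\alpha\})$ and $\boldsymbol{N}_-(\{-\alpha\})$) that $N_-\cap X_w$ is precisely the closed reduced image of the composite $N_+(w)^T\hookrightarrow\boldsymbol{N}_-$; concretely, using the remark after Proposition 1.3.2 that $(x,y)\mapsto x^T\overline w y$ identifies $N_-(w)\times\boldsymbol{B}_+$ with $\boldsymbol{B}_+\overline w\boldsymbol{B}_+$, one sees that $N_-\cap X_w$ is the closure of the image of $y_{\boldsymbol{i}}$ for $\boldsymbol{i}\in I(w)$. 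Thus the coordinate ring $\mathbb{C}[N_-\cap X_w]$ is the image of $y_{\boldsymbol{i}}^*\colon\mathbb{C}[\boldsymbol{N}_-]\to\mathbb{C}[z_1,\dots,z_\ell]$, and the kernel of the quotient $\mathbb{C}[\boldsymbol{N}_-]\twoheadrightarrow\mathbb{C}[N_-\cap X_w]$ equals $\ker y_{\boldsymbol{i}}^*$; here the reducedness of $N_-\cap X_w$, already established in the text, is used to know that this scheme-theoretic image is the honest coordinate ring.

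Next I would compute $\ker y_{\boldsymbol{i}}^*$ in terms of the pairing. For $f\in\mathbb{C}[\boldsymbol{N}_-]=\mathbf{U}(\mathfrak{n}_-)_{\mathrm{gr}}^*$, evaluating $y_{\boldsymbol{i}}^*(f)$ at $(z_1,\dots,z_\ell)$ expands, via $\exp(z_kf_{i_k})=\sum_{a_k\ge0}z_k^{a_k}f_{i_k}^{a_k}/a_k!$ and the definition of the coproduct-induced product on the graded dual, as $\sum \langle f,\ f_{i_1}^{a_1}\cdots f_{i_\ell}^{a_\ell}\rangle\, z_1^{a_1}\cdots z_\ell^{a_\ell}/(a_1!\cdots a_\ell!)$. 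Hence $y_{\boldsymbol{i}}^*(f)=0$ if and only if $\langle f,\ f_{i_1}^{a_1}\cdots f_{i_\ell}^{a_\ell}\rangle=0$ for all $a_1,\dots,a_\ell\in\mathbb{Z}_{\ge0}$, i.e. if and only if $f\in(\mathbf{U}_w^-)^\perp$ by the very definition of $\mathbf{U}_w^-$. This already gives the injection $\mathbb{C}[\boldsymbol{N}_-]/(\mathbf{U}_w^-)^\perp\hookrightarrow\mathbb{C}[N_-\cap X_w]$ as a map of algebras (it is a priori an injective algebra homomorphism since it is induced by the algebra map $y_{\boldsymbol{i}}^*$ and the target is the image algebra).

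For surjectivity — equivalently, for the claim that $(\mathbf{U}_w^-)^\perp$ is the \emph{entire} kernel and not just contained in it — the monomials $y_{\boldsymbol{i}}^*(f)$ as $f$ ranges over $\mathbb{C}[\boldsymbol{N}_-]$ visibly span the image, so surjectivity is automatic once we know the source is the quotient by the full kernel; the content is therefore entirely in the equality $\ker y_{\boldsymbol{i}}^*=(\mathbf{U}_w^-)^\perp$ proved above. The one point that needs care, and which I expect to be the main obstacle, is the well-definedness and basis-independence built into the statement: $(\mathbf{U}_w^-)^\perp$ must not depend on the choice of $\boldsymbol{i}\in I(w)$, which is exactly the assertion (cited in the text as forthcoming in Proposition on Demazure modules and the subsequent remark) that $\mathbf{U}_w^- = \sum \mathbb{C}\,f_{i_1}^{a_1}\cdots f_{i_\ell}^{a_\ell}$ is independent of the reduced word; I would invoke that, noting that on the geometric side independence is clear because $N_-\cap X_w$ is intrinsically defined. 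Finally I would remark that $\mathbf{U}_w^-$ is the image of the lowest-weight (Demazure-type) cyclic submodule structure, so that $(\mathbf{U}_w^-)^\perp$ is a Hopf ideal and the quotient map is indeed an algebra homomorphism, completing the identification.
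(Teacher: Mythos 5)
Your route is essentially the paper's: realize $N_-\cap X_w$ as the scheme-theoretic image of $y_{\boldsymbol{i}}$, use reducedness of $\mathbb{A}^{\ell(w)}$ and of $N_-\cap X_w$, and identify $\ker y_{\boldsymbol{i}}^{*}$ with $(\mathbf{U}_w^-)^{\perp}$ by expanding $y_{\boldsymbol{i}}^{*}(f)$ against the monomials $f_{i_1}^{a_1}\cdots f_{i_\ell}^{a_\ell}$ (this is exactly the paper's ``clear from the definition of the $\varphi$-map'' step), with the independence of $\mathbf{U}_w^-$ from $\boldsymbol{i}\in I(w)$ supplied by Proposition \ref{p:Demazure} and Remark \ref{r:specialDem}. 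That core computation, and the way you use it, is correct.

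The one step that fails is your geometric identification ``$N_-\cap X_w$ is the closed reduced image of the composite $N_+(w)^T\hookrightarrow\boldsymbol{N}_-$''. Since $(\ )^T$ sends $\boldsymbol{N}_+(\{\alpha\})$ to $\boldsymbol{N}_-(\{-\alpha\})$, one has $N_+(w)^T=N_-(w)$, which is already closed in $\boldsymbol{N}_-$ and is in general \emph{not} $N_-\cap X_w$; the two are only isomorphic via the projection $\pi_w$ and the twist $\gamma_w$ (Proposition \ref{p:BZGLS}), not equal as subvarieties. Already in type $\mathrm{A}_2$ with $w=s_1s_2$: since $\Delta_-(\leq w)=\{-\alpha_1,-\alpha_1-\alpha_2\}$, the subgroup $N_-(w)$ consists (in the standard representation, $f_1=E_{21}$, $f_2=E_{32}$) of unipotent lower-triangular matrices with vanishing $(3,2)$ entry, whereas $N_-\cap X_w$ is the closure of $\mathrm{im}(y_{(1,2)})=\{1+z_1E_{21}+z_2E_{32}\}$, i.e.\ those with vanishing $(3,1)$ entry. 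What your argument actually needs — and what the paper asserts at this point — is the pair of containments $\mathrm{im}(y_{\boldsymbol{i}})\subset N_-\cap X_w$ (each factor $\exp(z_kf_{i_k})$ lies in $\boldsymbol{B}_+\cup\boldsymbol{B}_+\overline{s_{i_k}}\boldsymbol{B}_+$, and Bruhat multiplication keeps the product over $\bigcup_{y\leq w}\boldsymbol{B}_+\overline{y}\boldsymbol{B}_+$, hence over $X_w$) and $N_-\cap\mathring{X}_w\subset\mathrm{im}(y_{\boldsymbol{i}})$; together with the fact that $N_-\cap X_w$ is the reduced closure of $N_-\cap\mathring{X}_w$, these give that the scheme-theoretic image of $y_{\boldsymbol{i}}$ is $N_-\cap X_w$. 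With that sentence replaced, the remainder of your proof goes through and coincides with the paper's.
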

\begin{proof}
Let us consider the morphism $y_{\boldsymbol{i}}\colon\mathbb{A}^{\ell\left(w\right)}\xrightarrow{}\boldsymbol{N}_{-}$.
It can be shown that the set-theoretic image of the morphism $y_{\boldsymbol{i}}$
is included in $N_{-}\cap X_{w}$ and the set-theoretic image of $y_{\boldsymbol{i}}\mid_{\mathbb{G}_m^{\ell(w)}}$ is dense in 
$N_{-}\cap\mathring{X}_{w}$ (cf.~\cite[Proposition 7.1.15]{MR1923198}). Since $\mathbb{A}^{\ell\left(w\right)}$
is reduced and the Zariski closure of $N_{-}\cap\mathring{X}_{w}$
is $N_{-}\cap X_{w}$, the scheme-theoretic image of $y_{\boldsymbol{i}}$
(into $\boldsymbol{N}_{-}$) is $N_{-}\cap X_{w}$. The claim follows from the
claim $\left(\mathbf{U}_{w}^{-}\right)^{\perp}=\left\{ f\in\mathbb{C}\left[\boldsymbol{N}_{-}\right]\mid y_{\boldsymbol{i}}^{*}\left(f\right)=0\right\} $
which is clear from the definition of Gei\ss-Leclerc-Schr\"oer's $\varphi$-map.
\end{proof}
We next describe the coordinate rings of $\mathbb{C}\left[N_{-}^{w}\right]$
and $\mathbb{C}\left[N_{-}\left(w\right)\cap wG_{0}^{\min}\right]$. For $\lambda\in P_{+}$, we set $u_{w\lambda}:=\overline{w}u_{\lambda}$.
We set $\Delta_{w\lambda,\lambda}:=\Phi_{\lambda}\left(u_{w\lambda}^{\mathrm{r}}\otimes u_{\lambda}\right)\in R_{\mathbb{C}}\left(\mathfrak{g}\right)$.
We regard $\Delta_{w\lambda,\lambda}$ as a regular function on $\boldsymbol{G}_{\infty}$ and its restriction to $\boldsymbol{G}$. We have the following recognizing criterion of the point in the Schubert cells in the Schubert variety in terms of (unipotent) minors $\Delta_{w\lambda,\lambda}$. It is proved by Williams \cite[Lemma 4.15]{MR3096792}
in the ``minimal'' Kac-Moody group setting. 
\begin{lem}
For $g\in G^{\min}$ and a point $ge_{X}$ on the Schubert variety
$X_{w}$ belongs to the Schubert cell $\mathring{X}_{w}$ if and only
if $\Delta_{w\lambda,\lambda}\left(ge_{X}\right)\neq0$ for $\lambda\in P_{+}$,
where $e_{X}:=B_{+}/B_{+}\in G^{\min}/B_{+}$.
\end{lem}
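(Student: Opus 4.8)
The plan is to prove the recognition criterion by reducing it to the structure of the Schubert cell decomposition established in the preceding propositions, together with the triangularity properties of the minors $\Delta_{w\lambda,\lambda}$. First I would recall that, by the Bruhat-type decomposition $X_w = \bigsqcup_{y \leq w} \mathring{X}_y$, a point $ge_X \in X_w$ lies in $\mathring{X}_w$ if and only if it does not lie in any $\mathring{X}_y$ with $y < w$, equivalently if and only if it does not lie in the union $\bigcup_{y < w} X_y$, which is precisely the boundary $\partial X_w$. So the task becomes identifying this boundary as the common vanishing locus of the functions $\Delta_{w\lambda,\lambda}$ on $X_w$.

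For the ``only if'' direction, I would show $\Delta_{w\lambda,\lambda}$ vanishes on $X_y$ for $y < w$: writing a point of $\mathring{X}_y$ as $x y e_X$ with $x \in N_+(y)$ (using the isomorphism $N_+(y) \to \mathring{X}_y$), one computes $\Delta_{w\lambda,\lambda}(xye_X) = (u_{w\lambda}, \, x\overline{y}.u_\lambda)_\lambda$. Since $\overline{y}.u_\lambda \in V_{\mathbb{C}}(\lambda)_{y\lambda}$ and $x \in N_+$ only moves weights up by elements of $Q_+$, the vector $x\overline{y}.u_\lambda$ lies in $\bigoplus_{\mu \leq y\lambda \text{(appropriately)}} V_{\mathbb{C}}(\lambda)_\mu$; meanwhile $u_{w\lambda} = \overline{w}.u_\lambda$ has weight $w\lambda$. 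The pairing is nonzero only between equal weights, and the extremal weight $w\lambda$ does not appear in $\mathbf{U}(\mathfrak{n}_+).\overline{y}.u_\lambda$ when $y < w$ (for a suitable choice of $\lambda$, e.g. $\lambda$ regular dominant, or by a Bruhat-order argument on extremal weights). Hence $\Delta_{w\lambda,\lambda}$ vanishes on all lower cells, so it vanishes on $\partial X_w$; conversely on $\mathring{X}_w$ itself, writing $ge_X = x\overline{w}e_X$ with $x \in N_+(w)$, one gets $\Delta_{w\lambda,\lambda}(x\overline{w}e_X) = (u_{w\lambda}, x u_{w\lambda})_\lambda = (u_{w\lambda}, u_{w\lambda})_\lambda + (\text{higher-weight terms})$, and since $u_{w\lambda}$ is an extremal weight vector paired with something of the same extremal weight $w\lambda$, only the leading term survives, giving $\Delta_{w\lambda,\lambda}(x\overline{w}e_X) = 1 \neq 0$. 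This handles ``if'' as well once we know the vanishing of all $\Delta_{w\lambda,\lambda}$ forces the point off $\mathring{X}_w$.

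The cleanest way to organize the argument is: (i) establish the explicit formula for $\Delta_{w\lambda,\lambda}$ evaluated on points of $\mathring{X}_y$ via the parametrization by $N_+(y)$; (ii) use the weight/extremal-weight analysis in $V_{\mathbb{C}}(\lambda)$ to show that for $\lambda$ regular dominant, $\Delta_{w\lambda,\lambda}$ is nonvanishing on $\mathring{X}_w$ but vanishes on every $\mathring{X}_y$ with $y < w$; (iii) conclude via $X_w = \mathring{X}_w \sqcup \bigsqcup_{y<w}\mathring{X}_y$. The main obstacle I anticipate is step (ii): one must argue carefully that $w\lambda$ is \emph{not} a weight of $\mathbf{U}(\mathfrak{n}_+) \cdot \overline{y}.u_\lambda \subset V_{\mathbb{C}}(\lambda)$ when $y < w$. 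This is essentially the statement that $w\lambda \not\leq y\lambda$ in the appropriate partial order when $\ell(w) > \ell(y)$ and $w \neq y$ — which for $\lambda$ regular dominant follows from standard facts about the Bruhat order and dominance order on extremal weights (the set $W\lambda$ is in order-reversing bijection with $(W, \leq)$ under the dominance order, for $\lambda$ regular). One also needs a compatibility check: that the statement for general $\lambda \in P_+$ reduces to $\lambda$ regular, which holds because the fundamental-coweight-regular $\lambda = \rho$ already detects membership in $\mathring{X}_w$, and because it suffices to exhibit \emph{some} $\lambda$ with $\Delta_{w\lambda,\lambda}(ge_X) \neq 0$ in one direction while needing \emph{all} $\lambda$ in the other — so the asymmetry in the statement must be reconciled, presumably by noting that $\Delta_{w\varpi_i, \varpi_i}$ for all $i$ together (equivalently $\Delta_{w\rho,\rho}$, since minors multiply) already suffice. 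I would cite Williams \cite[Lemma 4.15]{MR3096792} for the $G^{\min}$ case and indicate that the same proof applies verbatim here, the only inputs being the cell decomposition of $X_w$ and the extremal-weight computation, both of which are available in the present setting.
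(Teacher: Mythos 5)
Your argument is correct, and in fact the paper does not prove this lemma at all: it simply cites Williams \cite[Lemma 4.15]{MR3096792}, so your sketch is a written-out version of the standard argument that the citation stands for. The structure is right: on $\mathring{X}_w$ one writes the point as $x\overline{w}e_X$ with $x\in N_+(w)$ and, since the contravariant form pairs weight spaces of equal weight, $\Delta_{w\lambda,\lambda}$ evaluates (up to the nonzero character value coming from the $B_+$-ambiguity of the representative, which is why nonvanishing is well defined on $G^{\min}/B_+$) to $(u_{w\lambda},u_{w\lambda})_\lambda=1$ for every $\lambda\in P_+$; on a lower cell $\mathring{X}_y$, $y<w$, the weights of $x\overline{y}.u_\lambda$ lie in $y\lambda+Q_+$ (you wrote the direct sum over $\mu\leq y\lambda$, which is a slip — $N_+$ raises weights — but your verbal statement and the rest of the argument use the correct direction), and since $y\leq w$ gives $y\lambda-w\lambda\in Q_+$, regularity of $\lambda$ (e.g.\ $\lambda=\rho$) forces $w\lambda\notin y\lambda+Q_+$, so $\Delta_{w\lambda,\lambda}$ vanishes there. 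Two small remarks: you only need the standard implication $y\leq w\Rightarrow y\lambda-w\lambda\in Q_+$, valid for any symmetrizable Kac–Moody algebra, not the full order-reversing bijection between Bruhat order and dominance order on $W\lambda$; and your reconciliation of the quantifier asymmetry is exactly right — membership in $\mathring{X}_w$ gives nonvanishing for \emph{all} $\lambda\in P_+$, while a single regular $\lambda$ already detects the boundary, so the biconditional as stated (with ``for all $\lambda\in P_+$'') follows from the decomposition $X_w=\bigsqcup_{y\leq w}\mathring{X}_y$.
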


Since $N_{-}^{w}$ is also reduced, the set-theoretic intersection
$N_{-}\cap\mathring{X}_{w}$ coincides with the scheme-theoretic intersection,
we obtain the following corollary.
\begin{cor}\label{c:cellcoord}
For $w\in W$, we have isomorphisms of $\mathbb{C}$-algebras:
\[
\left(\mathbb{C}\left[\boldsymbol{N}_{-}\right]/\left(\mathbf{U}_{w}^{-}\right)^{\perp}\right)\left[\left[D_{w\lambda,\lambda}^{\mathbb{C}}\right]_{w}^{-1}\mid\lambda\in P_{+}\right]\xrightarrow{\sim}\mathbb{C}\left[N_{-}^{w}\right].
\]
where $\left[D_{w\lambda,\lambda}^{\mathbb{C}}\right]_{w}=\Delta_{w\lambda,\lambda}|_{N_{-}\cap X_{w}}\colon N_{-}\cap X_{w}\to\mathbb{C}$
is the restriction of $\Delta_{w\lambda,\lambda}$ to $N_{-}\cap X_{w}$.
\end{cor}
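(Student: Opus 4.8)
The plan is to exhibit $N_-^w$ as a principal open subscheme of the affine integral scheme $N_-\cap X_w$ and then to read off its coordinate ring via Proposition \ref{p:closedcoord}, which identifies $\mathbb{C}[N_-\cap X_w]$ with $\mathbb{C}[\boldsymbol{N}_-]/(\mathbf{U}_w^-)^\perp$. So the Corollary will be a formal consequence of Proposition \ref{p:closedcoord} and the preceding recognizing Lemma.

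First I would record the geometry already in place: $N_-^w=\boldsymbol{N}_-\cap\mathring{X}_w=N_-\cap\mathring{X}_w$ is an open subscheme of $N_-\cap X_w$, the latter being integral since it is the reduced Zariski closure of the irreducible open subset $N_-\cap\mathring{X}_w$ of the affine space $\mathring{X}_w\cong\mathbb{A}^{\ell(w)}$, and $N_-^w$ is dense in it by construction. Then, applying the preceding Lemma, a closed point of $N_-\cap X_w\subset X=G^{\min}/B_+$ lies in $N_-^w$ if and only if $[D_{w\lambda,\lambda}^{\mathbb{C}}]_w=\Delta_{w\lambda,\lambda}|_{N_-\cap X_w}$ is non-zero there for every $\lambda\in P_+$. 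Since $N_-\cap X_w$ is reduced, it follows that $N_-^w$ is exactly the (scheme-theoretic) complement of $\bigcup_{\lambda\in P_+}V([D_{w\lambda,\lambda}^{\mathbb{C}}]_w)$; note that each $[D_{w\lambda,\lambda}^{\mathbb{C}}]_w$ is a non-zero-divisor, because $\mathbb{C}[N_-\cap X_w]$ is a domain and $[D_{w\lambda,\lambda}^{\mathbb{C}}]_w$ does not vanish on the dense open $N_-^w$.

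Next I would collapse this countable intersection of principal opens to a single one. From the $\mathfrak{g}$-module embedding $V_{\mathbb{C}}(\lambda+\mu)\hookrightarrow V_{\mathbb{C}}(\lambda)\otimes V_{\mathbb{C}}(\mu)$ sending $u_{\lambda+\mu}\mapsto u_\lambda\otimes u_\mu$ and $u_{w(\lambda+\mu)}\mapsto u_{w\lambda}\otimes u_{w\mu}$, one obtains $\Delta_{w(\lambda+\mu),\lambda+\mu}=\Delta_{w\lambda,\lambda}\Delta_{w\mu,\mu}$; moreover $\Delta_{w\nu,\nu}$ restricts to a unit on $N_-$ (hence on $N_-\cap X_w$) whenever $\langle h_i,\nu\rangle=0$ for all $i$, since then $V_{\mathbb{C}}(\nu)$ is one-dimensional with trivial $\mathfrak{n}_{\pm}$-action. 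Writing $\lambda=\nu+\sum_i\langle h_i,\lambda\rangle\varpi_i$ with $\nu$ as above therefore gives $[D_{w\lambda,\lambda}^{\mathbb{C}}]_w=(\text{unit})\cdot\prod_i[D_{w\varpi_i,\varpi_i}^{\mathbb{C}}]_w^{\langle h_i,\lambda\rangle}$, so the common non-vanishing locus of the whole family equals $D(f)$ with $f=\prod_{i\in I}[D_{w\varpi_i,\varpi_i}^{\mathbb{C}}]_w$, a single non-zero element of the domain $\mathbb{C}[N_-\cap X_w]$. Hence $\mathbb{C}[N_-^w]\cong\mathbb{C}[N_-\cap X_w][f^{-1}]$, and since inverting $f$ inverts each $[D_{w\varpi_i,\varpi_i}^{\mathbb{C}}]_w$ and hence each $[D_{w\lambda,\lambda}^{\mathbb{C}}]_w$, this localization coincides with the localization at the multiplicative set generated by all the $[D_{w\lambda,\lambda}^{\mathbb{C}}]_w$, $\lambda\in P_+$. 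Transporting this isomorphism along Proposition \ref{p:closedcoord} yields the claim.

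I expect no deep obstacle. The essential inputs are the preceding recognizing Lemma and Proposition \ref{p:closedcoord}, and the Corollary is a formal consequence; the only care-points are (i) checking that the set-theoretic description from the Lemma is automatically scheme-theoretic, which holds because $N_-\cap X_w$ is reduced and each minor is a non-zero-divisor, and (ii) verifying the multiplicativity $\Delta_{w(\lambda+\mu),\lambda+\mu}=\Delta_{w\lambda,\lambda}\Delta_{w\mu,\mu}$ together with the "central" unit case, which is precisely what lets the countable localization be replaced by a single one (or, if one prefers, one can simply leave the statement in the countable form without this last reduction).
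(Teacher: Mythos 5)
Your proof is correct and follows essentially the same route as the paper: identify $N_-^w$ inside the affine integral scheme $N_-\cap X_w$ via the recognizing Lemma, use reducedness to pass from the set-theoretic to the scheme-theoretic description, and then read off the coordinate ring through Proposition \ref{p:closedcoord}. The only difference is that you make explicit the reduction, via $\Delta_{w(\lambda+\mu),\lambda+\mu}=\Delta_{w\lambda,\lambda}\Delta_{w\mu,\mu}$, of the family of minors to the single element $\prod_{i\in I}\left[D_{w\varpi_i,\varpi_i}^{\mathbb{C}}\right]_w$, which justifies the identification of the localization with $\mathbb{C}\left[N_-^w\right]$ — a point the paper leaves implicit.
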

By Corollary \ref{cor:Birkhoff}, we have 
\[
w\boldsymbol{G}_{0}=\left\{ g\in\boldsymbol{G}\mid\Delta_{w\lambda,\lambda}\left(g\right)\neq0\right\} .
\]
By \cite[Proposition 7.3]{MR2822235} we have $G_{0}^{\min}=\boldsymbol{G}_{0}\cap G^{\min}$,
in particular, we obtain $N_{-}\left(w\right)\cap wG_{0}^{\min}=N_{-}\left(w\right)\cap w\boldsymbol{G}_{0}$.
Hence we have 
\begin{align}
\mathbb{C}\left[N_{-}\left(w\right)\right]\left[(D_{w\lambda,\lambda}^{\mathbb{C}})^{-1}\mid\lambda\in P_{+}\right] \xrightarrow{\sim} \mathbb{C}\left[N_{-}\left(w\right)\cap wG_{0}^{\min}\right],\label{eq:grpcoord}
\end{align}
here $D_{w\lambda,\lambda}^{\mathbb{C}}:=\Delta_{w\lambda,\lambda}|_{N_{-}\left(w\right)}$. 

Our next goal is to show Corollary \ref{c:spusualisom}. This is a classical counterpart of the De Concini-Procesi isomorphisms, which we prove in subsection \ref{ss:DeCP}. We first recall the (classical) twist isomorphism $\gamma_{w}$ and the (classical) twist automorphism $\eta_w$ following Berenstein-Zelevinsky
and Gei\ss-Leclerc-Schr\"oer.
\begin{defn}
	For $w\in W$, let $\boldsymbol{O}_{w}:=\boldsymbol{N}_{-}\cap w\boldsymbol{G}_{0}$.
	We define a map $\tilde{\boldsymbol{\gamma}}_{w}\colon\boldsymbol{O}_{w}\to\boldsymbol{N}_{-}$
	by 
	\[
	\tilde{\boldsymbol{\gamma}}_{w}\left(z\right)=\left[z^{T}\overline{w}\right]_{-}.
	\]
\end{defn}

The following is proved by Berenstein-Zelevinsky \cite{MR1456321}
(see also Geiß-Leclerc-Schröer \cite[Proposition 8.4, Proposition 8.5]{MR2822235}). 
\begin{prop}
	\label{p:BZGLS} The following properties hold:
	
	\textup{(1)} The map $\tilde{\boldsymbol{\gamma}}_{w}\colon\boldsymbol{O}_{w}\to\boldsymbol{N}_{-}$
	is a morphism between schemes.
	
	\textup{(2)} The image of $\tilde{\boldsymbol{\gamma}}_{w}$ is $N_{-}^{w}$.
	
	\textup{(3)} The restriction $\gamma_{w}:=\tilde{\boldsymbol{\gamma}}_{w}|_{N_{-}\left(w\right)\cap wG_{0}^{\min}}\colon N_{-}\left(w\right)\cap wG_{0}^{\min}(=N_{-}\left(w\right)\cap w\boldsymbol{G}_{0})\to N_{-}^{w}$
	is an isomorphism.
	
	\textup{(4)} We have $N_{-}^{w}\subset wG_{0}^{\min}(\subset w\boldsymbol{G}_{0})$
	and $\eta_{w}:=\tilde{\boldsymbol{\gamma}}_{w}|_{N_{-}^{w}}\colon N_{-}^{w}\to N_{-}^{w}$
	is an automorphism.
	
	\textup{(5)} Let $\pi_{w}\colon\boldsymbol{N}_{-}\to N_{-}(w)$ be
	the projection for the isomorphism $\boldsymbol{N}_{-}'\left(w\right)\times N_{-}\left(w\right)\xrightarrow{\sim}\boldsymbol{N}_{-}$
	given by multiplication (see subsection \ref{ss:prounip}). Then $\pi_{w}$
	restricts to $N_{-}^{w}\to N_{-}\left(w\right)\cap wG_{0}^{\min}$,
	and $\eta_{w}=\gamma_{w}\circ\pi_{w}|_{N_{-}^{w}}$. 
\end{prop}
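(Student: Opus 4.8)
The plan is to extract everything from the structure of the map $\tilde{\boldsymbol{\gamma}}_w(z)=[z^T\overline w]_-$ and the Birkhoff decomposition $\boldsymbol G=\bigsqcup_{v\in W}\boldsymbol B_-\overline v\boldsymbol B_+$ (Corollary~\ref{cor:Birkhoff}), together with the defining equalities $w\boldsymbol G_0=\{g\in\boldsymbol G\mid\Delta_{w\lambda,\lambda}(g)\neq 0\}$ and Corollary~\ref{c:cellcoord}. For (1), I would argue that $z\mapsto z^T$ is a morphism $\boldsymbol N_-\to\boldsymbol N_+$ (induced by the anti-involution $(\ )^T$ of subsection on Kac--Moody groups), that right multiplication by $\overline w$ is a morphism, and that on the open locus $\boldsymbol G_0$ the Gauss projection $g\mapsto[g]_-$ is a morphism by definition of $[\ ]_-\times[\ ]_0\times[\ ]_+$; the only point to check is that $z\in\boldsymbol O_w$ forces $z^T\overline w\in\boldsymbol G_0$. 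This is where the hypothesis $z\in w\boldsymbol G_0$ enters: $z\in w\boldsymbol G_0$ means $\overline w^{-1}z\in\boldsymbol G_0$, hence lies in $\boldsymbol B_-\boldsymbol B_+$; applying $(\ )^T$ (which sends $\boldsymbol B_-\leftrightarrow\boldsymbol B_+$ and interchanges the two actions) and using the lift identities $\overline w^{-1}=\overline w^T$ from the proposition of Kumar, one gets $z^T\overline w\in\boldsymbol B_-\boldsymbol B_+=\boldsymbol G_0$.

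For (2) and (3), I would identify the image by computing the $\boldsymbol B_-$-coset: writing $z^T\overline w=[z^T\overline w]_-\cdot[z^T\overline w]_0\cdot[z^T\overline w]_+$, the point $\tilde{\boldsymbol\gamma}_w(z)e_{\boldsymbol X}=z^T\overline w e_{\boldsymbol X}$ lies on $\overline w e_{\boldsymbol X}$ modulo $\boldsymbol N_-$-action, i.e.\ in the finite Schubert cell $\mathring X_w=wN_-(w^{-1})e_X$, because $z^T\in\boldsymbol N_+$ and $\boldsymbol N_+\overline w e_{\boldsymbol X}$ meets $\boldsymbol N_-$ exactly in $N_-^w$ (this uses the isomorphism $N_-(w)\times\boldsymbol B_+\to\boldsymbol B_+\overline w\boldsymbol B_+$, $(x,y)\mapsto x^T\overline w y$, noted just after the Schubert-cell proposition). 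Thus $\tilde{\boldsymbol\gamma}_w(\boldsymbol O_w)\subseteq N_-^w$; surjectivity onto $N_-^w$ and the fact that the restriction to $N_-(w)\cap w\boldsymbol G_0$ is an isomorphism should follow by exhibiting the inverse, which on the classical level is again a Gauss-type projection, $n\mapsto$ (the $\boldsymbol N'_-(w)$-$N_-(w)$ decomposition of) $[n\overline w^{-1}]$, together with the coordinate-ring descriptions in \eqref{eq:grpcoord} and Corollary~\ref{c:cellcoord}: both sides become the same localized ring, and one checks the comorphisms are mutually inverse on generators. This bookkeeping with minors — tracking how $\Delta_{w\lambda,\lambda}$ transforms under $\tilde{\boldsymbol\gamma}_w$ — is the main obstacle, since it is exactly the content of the classical Chamber Ansatz identities of Berenstein--Zelevinsky and must be reproduced (or cited from \cite{MR1456321}, \cite[Prop.~8.4, 8.5]{MR2822235}) in the present pro-group/ind-scheme setting.

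For (4), I must first show $N_-^w\subseteq w\boldsymbol G_0$, i.e.\ $\Delta_{w\lambda,\lambda}(n)\neq 0$ for all $n\in N_-^w$ and $\lambda\in P_+$; but this is precisely the recognizing criterion in the Lemma of Williams just above (a point of $X_w$ lies in $\mathring X_w$ iff all $\Delta_{w\lambda,\lambda}$ are nonzero there), applied to $n e_X$. Hence $\eta_w:=\tilde{\boldsymbol\gamma}_w|_{N_-^w}$ is defined, and it maps $N_-^w$ into $N_-^w$ by (2); that it is an \emph{automorphism} follows because $N_-^w$ is reduced of finite type and $\eta_w$ admits the explicit inverse coming from part (3) composed with the splitting. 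For (5), let $\pi_w\colon\boldsymbol N_-\to N_-(w)$ be the projection from $\boldsymbol N_-\simeq\boldsymbol N'_-(w)\times N_-(w)$. The key observation is that $\tilde{\boldsymbol\gamma}_w$ only depends on the $N_-(w)$-component: if $z=z'\cdot z''$ with $z'\in\boldsymbol N'_-(w)$ and $z''\in N_-(w)$, then $z^T=(z'')^T(z')^T$ with $(z')^T\in\boldsymbol N'_+(w)=\boldsymbol N_+(\Delta_+(>w))$, and since $\overline w(\Delta_+(>w))=\Delta_+(>w^{-1})\subset\Delta_+$, the factor $(z')^T$ can be absorbed on the right into $\boldsymbol B_+$ when computing $[z^T\overline w]_-$, leaving $[z^T\overline w]_-=[(z'')^T\overline w]_-=\tilde{\boldsymbol\gamma}_w(z'')$. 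Combined with the fact (from (2)--(3)) that $\pi_w$ carries $N_-^w$ into $N_-(w)\cap w\boldsymbol G_0$ and that $\gamma_w$ is the restriction of $\tilde{\boldsymbol\gamma}_w$ there, this gives $\eta_w=\gamma_w\circ\pi_w|_{N_-^w}$. I expect the commutation $(z')^T\overline w\in\overline w\boldsymbol B_+$ — i.e.\ $\overline w^{-1}\boldsymbol N'_+(w)^T\overline w\subset\boldsymbol N_+$ — to need a small but careful root-theoretic check, but it reduces to $w^{-1}(\Delta_+(>w))\subset\Delta_+$, which holds since $\Delta_+(>w)=\Delta_+\cap w\Delta_+$.
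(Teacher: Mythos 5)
The paper does not actually prove Proposition \ref{p:BZGLS}: it is quoted wholesale from Berenstein--Zelevinsky \cite{MR1456321} and Geiß--Leclerc--Schröer \cite[Proposition 8.4, Proposition 8.5]{MR2822235}, with only a remark that the GLS version (defined on $N_-\cap wG_0^{\min}$ as ind-schemes) coincides with the $\gamma_w$, $\eta_w$ used here. So there is no in-paper argument to compare with; your attempt has to be judged as a free-standing reconstruction. The parts you do carry out are essentially sound: the reduction of (1) to $z^T\overline{w}\in\boldsymbol{G}_0$ via $(\overline{w}^{-1}z)^T=z^T\overline{w}$ and $T$-invariance of $\boldsymbol{G}_0$; the containment $\tilde{\boldsymbol{\gamma}}_w(\boldsymbol{O}_w)\subset N_-^w$ via $z^T\overline{w}e_{\boldsymbol{X}}\in\boldsymbol{N}_+\overline{w}e_{\boldsymbol{X}}=\mathring{X}_w$; the inclusion $N_-^w\subset wG_0^{\min}$ from Williams' criterion together with $G_0^{\min}=\boldsymbol{G}_0\cap G^{\min}$; and the identity $\tilde{\boldsymbol{\gamma}}_w(z)=\tilde{\boldsymbol{\gamma}}_w(\pi_w(z))$ in (5), where your final root-theoretic reduction $w^{-1}(\Delta_+(>w))=\Delta_+\cap w^{-1}\Delta_+\subset\Delta_+$ is the correct one (your earlier line applying $w$ rather than $w^{-1}$ should be deleted).

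The genuine gap is exactly at the core of the proposition: surjectivity in (2), the isomorphism in (3), and the automorphism in (4). Your proposed inverse, ``$n\mapsto$ the $N_-(w)$-component of $[n\overline{w}^{-1}]$'', is not verified and is not obviously correct: from $z^T\overline{w}=nhu$ one gets $n=([\overline{w}^{-1}z]_+)^T$, and inverting this is precisely the Chamber-Ansatz-type computation you yourself flag as ``the main obstacle''; asserting that ``both sides become the same localized ring'' begs the question, since Corollary \ref{c:spusualisom} is in the paper a \emph{consequence} of Proposition \ref{p:BZGLS}, not an input to it. Likewise, your argument for (4) is circular: deducing that $\eta_w$ is an automorphism from (3) ``composed with the splitting'' presupposes that $\pi_w|_{N_-^w}\colon N_-^w\to N_-(w)\cap wG_0^{\min}$ is an isomorphism, which (given (3) and (5)) is equivalent to the very statement (4) being proved; in the paper this isomorphism is extracted \emph{from} Proposition \ref{p:BZGLS} in the proof of Corollary \ref{c:spusualisom}. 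In the end you fall back on citing \cite{MR1456321} and \cite[Propositions 8.4, 8.5]{MR2822235} at these points, which is legitimate and is what the paper does for the whole statement; but as written your text presents (2)--(4) as if they were being proved, while the decisive bijectivity/inverse construction is neither supplied nor correctly reduced to the cited results (one would also need the small compatibility check, noted in the paper's remark, between the GLS ind-scheme formulation on $N_-\cap wG_0^{\min}$ and the restriction to $N_-(w)\cap wG_0^{\min}$ and $N_-^w$ used here).
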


\begin{rem}
	In \cite{MR2822235}, they define a twist isomorphism and and a twist
	automorphism as restrictions of the morphism $\tilde{\gamma}_{w}\colon N_{-}\cap wG_{0}^{\min}\to N_{-},z\mapsto\left[z^{T}\overline{w}\right]_{-}$
	between ind-schemes. Eventually, it turns out that this twist isomorphism
	(resp. twist automorphism) coincides with our $\gamma_{w}$ (resp.
	$\eta_{w}$).
\end{rem}
Let $\pi_{w}^{\ast}\colon \mathbb{C}\left[N_{-}\left(w\right)\right]\hookrightarrow\mathbb{C}\left[\boldsymbol{N}_{-}\right]$ be the $\mathbb{C}$-algebra homomorphism induced from $\pi_{w}$. Then, by \cite[Proposition 8.2]{MR2822235}, the image of $\pi_{w}^{\ast}$ consists of the left $\boldsymbol{N}_{-}'\left(w\right)$-invariant functions in $\mathbb{C}\left[\boldsymbol{N}_{-}\right]$. Note that our convention is the transpose of  Gei\ss-Leclerc-Sch\"orer's convention. Moreover, by the calculation in  \cite[subsection 8.2]{MR2822235}, a function $\Phi_{\lambda}\left(u_{w\lambda}^{\mathrm{r}}\otimes u\right)|_{\boldsymbol{N}_{-}}$ is left $\boldsymbol{N}_{-}'\left(w\right)$-invariant for all $u\in V\left(\lambda\right)$, $\lambda\in P_+$. Hence 
\begin{align}
\pi_{w}^{\ast}(\Phi_{\lambda}\left(u_{w\lambda}^{\mathrm{r}}\otimes u\right)|_{N_{-}\left(w\right)})=\Phi_{\lambda}\left(u_{w\lambda}^{\mathrm{r}}\otimes u\right)|_{\boldsymbol{N}_{-}}.\label{eq:projection} 
\end{align}
\begin{cor}
\label{c:spusualisom} For $w\in W$, we have an isomorphism of $\mathbb{C}$-algebras:
\[
\mathbb{C}\left[N_{-}\left(w\right)\cap wG_{0}^{\min}\right]\xrightarrow{\sim}\mathbb{C}\left[N_{-}^{w}\right],
\]
which is induced by localizing the homomorphism $\mathbb{C}\left[N_{-}\left(w\right)\right]\xrightarrow[]{\pi_{w}^{\ast}}\mathbb{C}\left[\boldsymbol{N}_{-}\right]\twoheadrightarrow\mathbb{C}\left[N_{-}\cap X_{w}\right]$
with respect to $\left\{ D_{w\lambda,\lambda}^{\mathbb{C}}\mid\lambda\in P_{+}\right\}$.
\end{cor}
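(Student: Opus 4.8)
The plan is to realize the homomorphism in the statement as the comorphism of a single morphism of schemes, namely $\pi_w|_{N_-^w}\colon N_-^w\to N_-(w)\cap wG_0^{\min}$, and then to deduce that it is an isomorphism directly from Proposition \ref{p:BZGLS}, whose substantive content (that $\gamma_w$ and $\eta_w$ are isomorphisms) is already available.

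First I would identify the morphism of schemes underlying the unlocalized homomorphism $\mathbb{C}\left[N_{-}\left(w\right)\right]\xrightarrow{\pi_{w}^{\ast}}\mathbb{C}\left[\boldsymbol{N}_{-}\right]\twoheadrightarrow\mathbb{C}\left[N_{-}\cap X_{w}\right]$. By definition the first arrow is the comorphism of the projection $\pi_w\colon\boldsymbol{N}_-\to N_-(w)$ of subsection \ref{ss:prounip}, and by Proposition \ref{p:closedcoord} the second is the comorphism of the closed immersion $N_-\cap X_w\hookrightarrow\boldsymbol{N}_-$; hence the composite is the comorphism of $\pi_w|_{N_-\cap X_w}\colon N_-\cap X_w\to N_-(w)$. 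Evaluating \eqref{eq:projection} at $u=u_\lambda$ shows that this composite carries $D_{w\lambda,\lambda}^{\mathbb{C}}=\Delta_{w\lambda,\lambda}|_{N_-(w)}$ to $[D_{w\lambda,\lambda}^{\mathbb{C}}]_w=\Delta_{w\lambda,\lambda}|_{N_-\cap X_w}$ for every $\lambda\in P_+$, so the localizations on source and target correspond to one another. By Corollary \ref{c:cellcoord}, localizing the target at $\{[D_{w\lambda,\lambda}^{\mathbb{C}}]_w\mid\lambda\in P_+\}$ produces $\mathbb{C}[N_-^w]$ and corresponds to the open immersion $N_-^w\hookrightarrow N_-\cap X_w$; by \eqref{eq:grpcoord}, localizing the source at $\{D_{w\lambda,\lambda}^{\mathbb{C}}\mid\lambda\in P_+\}$ produces $\mathbb{C}[N_-(w)\cap wG_0^{\min}]$ and corresponds to $N_-(w)\cap wG_0^{\min}\hookrightarrow N_-(w)$. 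Finally, by Proposition \ref{p:BZGLS}(5) the morphism $\pi_w|_{N_-\cap X_w}$ sends the open subscheme $N_-^w$ into the open subscheme $N_-(w)\cap wG_0^{\min}$; therefore the localized homomorphism in the statement is precisely the comorphism $(\pi_w|_{N_-^w})^{\ast}\colon\mathbb{C}[N_-(w)\cap wG_0^{\min}]\to\mathbb{C}[N_-^w]$.

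It remains to check that $\pi_w|_{N_-^w}\colon N_-^w\to N_-(w)\cap wG_0^{\min}$ is an isomorphism of schemes. By Proposition \ref{p:BZGLS}(5) we have $\eta_w=\gamma_w\circ\pi_w|_{N_-^w}$; by part (4), $\eta_w$ is an automorphism of $N_-^w$; and by part (3), $\gamma_w\colon N_-(w)\cap wG_0^{\min}\to N_-^w$ is an isomorphism. Hence $\pi_w|_{N_-^w}=\gamma_w^{-1}\circ\eta_w$ is an isomorphism, and therefore so is its comorphism, which is the map in the statement. The one step requiring care is the bookkeeping in the middle paragraph: verifying that the two localizations (of source and target) really are induced by one morphism of schemes, which rests on the compatibility $D_{w\lambda,\lambda}^{\mathbb{C}}\mapsto[D_{w\lambda,\lambda}^{\mathbb{C}}]_w$ from \eqref{eq:projection} together with the factorization of $\pi_w|_{N_-^w}$ through $N_-(w)\cap wG_0^{\min}$ from Proposition \ref{p:BZGLS}(5). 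Once the morphism is correctly pinned down, the conclusion is formal; all the geometric work is already packaged in Proposition \ref{p:BZGLS}, which goes back to Berenstein-Zelevinsky and \GLS.
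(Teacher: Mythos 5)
Your proposal is correct and follows essentially the same route as the paper: identify the unlocalized map as the comorphism of $\pi_w|_{N_-\cap X_w}$, use \eqref{eq:projection} together with Corollary \ref{c:cellcoord} and \eqref{eq:grpcoord} to match the localizations with the open subschemes, and conclude via Proposition \ref{p:BZGLS}. The only difference is cosmetic: you spell out that $\pi_w|_{N_-^w}=\gamma_w^{-1}\circ\eta_w$ is an isomorphism, a step the paper leaves implicit by simply citing Proposition \ref{p:BZGLS}.
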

\begin{proof}
By definition, the composite map $\iota\colon \mathbb{C}\left[N_{-}\left(w\right)\right]\xrightarrow[]{\pi_{w}^{\ast}}\mathbb{C}\left[\boldsymbol{N}_{-}\right]\twoheadrightarrow\mathbb{C}\left[N_{-}\cap X_{w}\right]$ is induced from the morphism of schemes $\pi_w|_{N_{-}\cap X_{w}}\colon N_{-}\cap X_{w}\to N_{-}\left(w\right)$. Moreover, by Corollary \ref{c:cellcoord} and \eqref{eq:grpcoord}, the inclusions $N_{-}^{w}\to N_{-}\cap X_{w}$ and $N_{-}\left(w\right)\cap wG_{0}^{\min}\to N_{-}\left(w\right)$ corresponds to the canonical $\mathbb{C}$-algebra homomorphisms 
\begin{align*}
&\iota_1\colon \mathbb{C}\left[N_{-}\cap X_{w}\right]\to \mathbb{C}\left[N_{-}\cap X_{w}\right]\left[\left[D_{w\lambda,\lambda}^{\mathbb{C}}\right]_{w}^{-1}\mid\lambda\in P_{+}\right]\simeq \mathbb{C}\left[N_{-}^{w}\right],\\
&\iota_2\colon \mathbb{C}\left[N_{-}\left(w\right)\right]\to \mathbb{C}\left[N_{-}\left(w\right)\right]\left[(D_{w\lambda,\lambda}^{\mathbb{C}})^{-1}\mid\lambda\in P_{+}\right] \simeq \mathbb{C}\left[N_{-}\left(w\right)\cap wG_{0}^{\min}\right].
\end{align*}
Therefore the composite map $\iota_1\circ \iota\colon \mathbb{C}\left[N_{-}\left(w\right)\right]\to \mathbb{C}\left[N_{-}^{w}\right]$ is induced from $\pi_w|_{N_{-}^{w}}\colon N_{-}^{w}\to N_{-}\left(w\right)$. Moreover, by \eqref{eq:projection}, $(\iota_1\circ \iota)(D_{w\lambda,\lambda}^{\mathbb{C}})=\left[D_{w\lambda,\lambda}^{\mathbb{C}}\right]_{w}$ for all $u\in V\left(\lambda\right)$, $\lambda\in P_+$. Hence, by the universality of localization, $\iota_1\circ \iota$ extends to $\mathbb{C}\left[N_{-}\left(w\right)\cap wG_{0}^{\min}\right]\to \mathbb{C}\left[N_{-}^{w}\right]$. By construction this is induced from $\pi_{w}|_{N_{-}^{w}}\colon N_{-}^{w}\to N_{-}\left(w\right)\cap wG_{0}^{\min}$, which  is an isomorphism of schemes by Proposition \ref{p:BZGLS}. Hence we obtain the desired isomorphism $\mathbb{C}\left[N_{-}\left(w\right)\cap wG_{0}^{\min}\right]\to \mathbb{C}\left[N_{-}^{w}\right]$. 
\end{proof}
We conclude this subsection by describing the classical twist isomorphism $\gamma_{w}$ in terms of matrix coefficients. 
\begin{prop}
\label{prop:clstwist} Let $\gamma_{w}^{*}\colon \mathbb{C}\left[N_{-}^{w}\right]\to \mathbb{C}\left[N_{-}\left(w\right)\cap wG_{0}^{\min}\right]$ be the isomorphism of $\mathbb{C}$-algebras induced from $\gamma_{w}$. Then, for $w\in W$, $\lambda\in P_{+}$ and $u\in V\left(\lambda\right)$, we have 
\[
\gamma_{w}^{*}\left(\left[D_{u,u_{\lambda}}^{\mathbb{C}}\right]_w\right)=\frac{D_{u_{w\lambda},u}^{\mathbb{C}}}{D_{u_{w\lambda},u_{\lambda}}^{\mathbb{C}}}, 
\]
here $\left[D_{u,u_{\lambda}}^{\mathbb{C}}\right]_w:=\Phi_{\lambda}\left(u^{\mathrm{r}}\otimes u_{\lambda}\right)|_{N_{-}^w}$ and $D_{u_{w\lambda},u}^{\mathbb{C}}:=\Phi_{\lambda}\left(u_{w\lambda}^{\mathrm{r}}\otimes u\right)|_{N_{-}\left(w\right)\cap wG_{0}^{\min}}$ (cf.~Corollary \ref{c:cellcoord}, \eqref{eq:grpcoord}). 
\end{prop}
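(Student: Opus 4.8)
The plan is to verify the claimed identity of regular functions pointwise. Since $N_-(w)\cap wG_0^{\min}$ is a reduced scheme over $\mathbb{C}$, it suffices to show that both sides agree at every $\mathbb{C}$-point $z$, and on group elements one computes matrix coefficients via $\Phi_\lambda(v_1^{\mathrm{r}}\otimes v_2)(g)=(v_1,g.v_2)_\lambda$ for $g$ acting on $V_{\mathbb{C}}(\lambda)$.

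First I would unwind the left-hand side. Fix $z\in N_-(w)\cap wG_0^{\min}\subset\boldsymbol{O}_w$, so that $z^T\overline{w}\in\boldsymbol{G}_0$ and $\gamma_w(z)=\tilde{\boldsymbol{\gamma}}_w(z)=[z^T\overline{w}]_-$. By the definition $[D_{u,u_\lambda}^{\mathbb{C}}]_w=\Phi_\lambda(u^{\mathrm{r}}\otimes u_\lambda)|_{N_-^w}$ we get
\[
\gamma_w^{*}([D_{u,u_\lambda}^{\mathbb{C}}]_w)(z)=(u,[z^T\overline{w}]_-.u_\lambda)_\lambda .
\]
Using the Gauss decomposition $z^T\overline{w}=[z^T\overline{w}]_-\,[z^T\overline{w}]_0\,[z^T\overline{w}]_+$ together with the facts that $[z^T\overline{w}]_+\in\boldsymbol{N}_+$ fixes $u_\lambda$ and that $[z^T\overline{w}]_0\in H$ acts on $u_\lambda$ by a scalar $t\in\mathbb{C}^{\times}$, one gets $z^T\overline{w}.u_\lambda=t\cdot[z^T\overline{w}]_-.u_\lambda$, hence
\[
(u,z^T\overline{w}.u_\lambda)_\lambda=t\cdot\gamma_w^{*}([D_{u,u_\lambda}^{\mathbb{C}}]_w)(z).
\]

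Next I would evaluate the same matrix coefficient by transposing $z$. Since $\left(\phantom{x}\right)^T$ on $\boldsymbol{G}$ is the scheme morphism induced by the anti-involution $\varphi$ of $\mathbf{U}(\mathfrak{g})$, acting as $\exp(cf_i)\mapsto\exp(ce_i)$ on the real-root subgroups, and $(\varphi(x).v_1,v_2)_\lambda=(v_1,x.v_2)_\lambda$, expanding $z\in\boldsymbol{N}_-$ as a product of factors $\exp(cf_i)$ yields the group-level adjointness $(v_1,z^T.v_2)_\lambda=(z.v_1,v_2)_\lambda$ for all $v_1,v_2\in V_{\mathbb{C}}(\lambda)$. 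Taking $v_1=u$, $v_2=\overline{w}.u_\lambda=u_{w\lambda}$, writing $z^T\overline{w}.u_\lambda=z^T.(\overline{w}.u_\lambda)$, and using symmetry of $(\ ,\ )_\lambda$,
\[
(u,z^T\overline{w}.u_\lambda)_\lambda=(z.u,u_{w\lambda})_\lambda=(u_{w\lambda},z.u)_\lambda=D_{u_{w\lambda},u}^{\mathbb{C}}(z).
\]
Combining the two displays, $\gamma_w^{*}([D_{u,u_\lambda}^{\mathbb{C}}]_w)(z)=t^{-1}D_{u_{w\lambda},u}^{\mathbb{C}}(z)$. To pin down $t$, specialize $u=u_\lambda$: every element of $N_-^w\subset\boldsymbol{N}_-$ acts on $u_\lambda$ by $u_\lambda$ plus a sum of vectors of weight $<\lambda$, and distinct weight spaces are $(\ ,\ )_\lambda$-orthogonal, so $[D_{u_\lambda,u_\lambda}^{\mathbb{C}}]_w$ is the constant function $1$ on $N_-^w$; hence $1=t^{-1}D_{u_{w\lambda},u_\lambda}^{\mathbb{C}}(z)$, i.e. $t=D_{u_{w\lambda},u_\lambda}^{\mathbb{C}}(z)$, which is a unit on $wG_0^{\min}$, consistent with $t\neq0$. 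Substituting back gives $\gamma_w^{*}([D_{u,u_\lambda}^{\mathbb{C}}]_w)(z)=D_{u_{w\lambda},u}^{\mathbb{C}}(z)/D_{u_{w\lambda},u_\lambda}^{\mathbb{C}}(z)$ for all $z$, which is the assertion.

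The step needing the most care is the passage from the Lie-algebra adjointness $(\varphi(x).v_1,v_2)_\lambda=(v_1,x.v_2)_\lambda$ to the group-level statement that $\left(\phantom{x}\right)^T$ acts as the adjoint of the action on $V_{\mathbb{C}}(\lambda)$; one also invokes the compatible lift identity $\overline{w}^T=\overline{w}^{-1}$ (for instance to see directly that $z^T\overline{w}\in\boldsymbol{G}_0$, or if one prefers to route the computation through $\overline{w}^{-1}z$). Granting that, the Gauss-decomposition manipulation, the highest-weight orthogonality, and the identification of $t$ are all routine.
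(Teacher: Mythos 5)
Your proof is correct and follows essentially the same route as the paper: evaluate both regular functions at a point $z$, use the Gauss decomposition of $z^{T}\overline{w}$ together with the adjointness $(v_{1},z^{T}.v_{2})_{\lambda}=(z.v_{1},v_{2})_{\lambda}$, and identify the $H$-scalar with $D_{u_{w\lambda},u_{\lambda}}^{\mathbb{C}}(z)$. The only cosmetic difference is that you pin down this scalar by specializing $u=u_{\lambda}$, whereas the paper reads it off directly from $(u_{\lambda},z^{T}\overline{w}u_{\lambda})_{\lambda}$ using the same weight-space orthogonality.
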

\begin{proof}
We compute the value of functions. For $z\in N_{-}\left(w\right)\cap O_{w}$,
we have 
\begin{align*}
\left\langle \gamma_{w}^{*}\left(D_{u,u_{\lambda}}^{\mathbb{C}}\right),z\right\rangle  & =\left(u, \gamma_{w}\left(z\right)u_{\lambda}\right)_{\lambda}=\left(u,\left[z^{T}\overline{w}\right]_{-}u_{\lambda}\right)_{\lambda}\\
 & =\left(u,z^{T}\overline{w}u_{\lambda}\right)_{\lambda}/\left(u_{\lambda},\left[z^{T}\overline{w}\right]_{0}u_{\lambda}\right)_{\lambda}\\
 & =\left(u,z^{T}\overline{w}u_{\lambda}\right)_{\lambda}/\left(u_{\lambda},z^{T}\overline{w}u_{\lambda}\right)_{\lambda}\\
 & =\left(zu,u_{w\lambda}\right)_{\lambda}/\left(zu_{\lambda},u_{w\lambda}\right)_{\lambda}.
\end{align*}
Hence we obtained the claim. 
\end{proof}

\section{Preliminaries (2) : Quantized enveloping algebras and canonical bases}\label{sec:Preliminaries2}

\subsection{Quantized enveloping algebras}

In this subsection, we present the definitions of quantized enveloping
algebras. Let $q$ be an indeterminate. 

\begin{notation} \label{n:indeterminate} Set 
\[
\begin{array}{l}
q_{i}:=q^{\frac{(\alpha_{i},\alpha_{i})}{2}},\ {\displaystyle [n]:=\frac{q^{n}-q^{-n}}{q-q^{-1}}\ \text{for\ }n\in\mathbb{Z},}\\
{\displaystyle \left[\begin{array}{c}
n\\
k
\end{array}\right]:=\begin{cases}
{\displaystyle \frac{[n][n-1]\cdots[n-k+1]}{[k][k-1]\cdots[1]}} & \text{if\ }n\in\mathbb{Z},k\in\mathbb{Z}_{>0},\\
1 & \text{if\ }n\in\mathbb{Z},k=0,
\end{cases}}\\
{\displaystyle [n]!:=[n][n-1]\cdots[1]\text{\ for\ }n\in\mathbb{Z}_{>0},[0]!:=1.}
\end{array}
\]
For a rational function $R\in\mathbb{Q}(q)$, we define $R_{i}$ as
the rational function obtained from $R$ by substituting $q$ by $q_{i}$
($i\in I$). \end{notation} 
\begin{defn}
\label{d:QEA} The quantized enveloping algebra $\Uq$ associated
with a root datum $\left(I,\mathfrak{h},P,\left\{ \alpha_{i}\right\} _{i\in I},\left\{ h_{i}\right\} _{i\in I},\left(\ ,\ \right)\right)$
is the unital associative $\mathbb{Q}(q)$-algebra defined by the
generators 
\[
e_{i},f_{i}\;(i\in I),q^{h}\;(h\in P^{*}),
\]
and the relations (i)-(iv) below: 
\begin{enumerate}
\item[(i)] $q^{0}=1,\;q^{h}q^{h'}=q^{h+h'}$ for $h,h'\in P^{*}$,
\item[(ii)] $q^{h}e_{i}=q^{\langle h,\alpha_{i}\rangle}e_{i}q^{h},\;q^{h}f_{i}=q^{-\langle h,\alpha_{i}\rangle}f_{i}q^{h}$
for $h\in P^{*},i\in I$,
\item[(iii)] ${\displaystyle \left[e_{i},f_{j}\right]=\delta_{ij}\frac{t_{i}-t_{i}^{-1}}{q_{i}-q_{i}^{-1}}}$
for $i,j\in I$ where $t_{i}:=q^{\frac{(\alpha_{i},\alpha_{i})}{2}h_{i}}$,
\item[(iv)] ${\displaystyle \sum_{k=0}^{1-a_{ij}}(-1)^{k}\left[\begin{array}{c}
1-a_{ij}\\
k
\end{array}\right]_{i}x_{i}^{k}x_{j}x_{i}^{1-a_{ij}-k}=0}$ for $i,j\in I$ with $i\neq j$, and $x=e,f$. 
\end{enumerate}
The $\mathbb{Q}(q)$-subalgebra of $\Uq$ generated by $\{e_{i}\}_{i\in I}$
(resp.~$\{f_{i}\}_{i\in I}$, $\{q^{h}\}_{h\in P^{*}}$, $\{e_{i},q^{h}\}_{i\in I,h\in P^{*}}$,
$\{f_{i},q^{h}\}_{i\in I,h\in P^{*}}$) will be denoted by $\Uq^{+}$
(resp.~$\Uq^{-}$, $\Uq^{0}$, $\Uq^{\geq0}$, $\Uq^{\leq0}$).

For $\alpha\in Q$, write $(\Uq)_{\alpha}:=\{x\in\Uq\mid q^{h}xq^{-h}=q^{\langle h,\alpha\rangle}x\;\text{for all}\;h\in P^{*}\}$.
The elements of $(\Uq)_{\alpha}$ are said to be homogeneous. For
a homogeneous element $x\in(\Uq)_{\alpha}$, we set $\wt x=\alpha$.
For any subset $X\subset\Uq$ and $\alpha\in Q$, we set $X_{\alpha}:=X\cap(\Uq)_{\alpha}$.

The algebra $\Uq$ has a Hopf algebra structure. In this paper, we
take the coproduct $\Delta\colon\Uq\to\Uq\otimes\Uq$, the counit
$\varepsilon\colon\Uq\to\mathbb{Q}(q)$ and the antipode $S\colon\Uq\to\Uq$
as follows: 
\begin{align*}
\Delta\left(e_{i}\right) & =e_{i}\otimes t_{i}^{-1}+1\otimes e_{i}, & \varepsilon\left(e_{i}\right) & =0, & S\left(e_{i}\right) & =-e_{i}t_{i},\\
\Delta\left(f_{i}\right) & =f_{i}\otimes1+t_{i}\otimes f_{i}, & \varepsilon\left(f_{i}\right) & =0, & S\left(f_{i}\right) & =-t_{i}^{-1}f_{i},\\
\Delta\left(q^{h}\right) & =q^{h}\otimes q^{h}, & \varepsilon\left(q^{h}\right) & =1, & S\left(q^{h}\right) & =q^{-h}.
\end{align*}
\end{defn}
\begin{defn}
\label{d:autom} Let $\vee\colon\Uq\to\Uq$ be the $\mathbb{Q}(q)$-algebra
involution defined by 
\begin{align*}
e_{i}^{\vee} & =f_{i}, &  & f_{i}^{\vee}=e_{i}, &  & \left(q^{h}\right)^{\vee}=q^{-h}.
\end{align*}
Let $\overline{\phantom{x}}\colon\mathbb{Q}(q)\to\mathbb{Q}(q)$ and
$\overline{\phantom{x}}\colon\Uq\to\Uq$ be the $\mathbb{Q}$-algebra
involutions defined by 
\begin{align*}
\overline{q}=q^{-1}, &  & \overline{e_{i}}=e_{i}, &  & \overline{f_{i}}=f_{i}, &  & \overline{q^{h}} & =q^{-h}.
\end{align*}
Let $*,\varphi,\psi\colon\Uq\to\Uq$ be the $\mathbb{Q}\left(q\right)$-algebra
anti-involutions defined by 
\begin{align*}
*(e_{i}) & =e_{i}, & *(f_{i}) & =f_{i}, & *\left(q^{h}\right) & =q^{-h},\\
\varphi\left(e_{i}\right) & =f_{i}, & \varphi\left(f_{i}\right) & =e_{i}, & \varphi\left(q^{h}\right) & =q^{h}\\
\psi\left(e_{i}\right) & =q_{i}^{-1}t_{i}^{-1}f_{i}, & \psi\left(f_{i}\right) & =q_{i}^{-1}t_{i}e_{i}, & \psi\left(q^{h}\right) & =q^{h}.
\end{align*}
Remark that $\psi$ is also a $\mathbb{Q}(q)$-coalgebra homomorphism,
and $\varphi=\vee\circ\ast=\ast\circ\vee$. 
\end{defn}
In this paper, we also use the following variant $\cUq$ of the quantized
enveloping algebra $\Uq$. 
\begin{defn}
\label{d:check_QEA} A variant $\cUq$ of the quantized enveloping
algebra $\Uq$ is the unital associative $\mathbb{Q}(q)$-algebra
defined by the generators 
\[
e_{i},f_{i}\;(i\in I),q^{\mu}\;(\mu\in P),
\]
and the relations (i)-(iv) below: 
\begin{enumerate}
\item[(i)] $q^{0}=1,\;q^{\mu}q^{\mu'}=q^{\mu+\mu'}$ for $\mu,\mu'\in P$,
\item[(ii)] $q^{\mu}e_{i}=q^{(\mu,\alpha_{i})}e_{i}q^{\mu},\;q^{\mu}f_{i}=q^{-(\mu,\alpha_{i})}f_{i}q^{\mu}$
for $\mu\in P,i\in I$,
\item[(iii)] ${\displaystyle \left[e_{i},f_{j}\right]=\delta_{ij}\frac{t_{i}-t_{i}^{-1}}{q_{i}-q_{i}^{-1}}}$
for $i,j\in I$ where $t_{i}:=q^{\alpha_{i}}$ (abuse of notation),
\item[(iv)] ${\displaystyle \sum_{k=0}^{1-a_{ij}}(-1)^{k}\left[\begin{array}{c}
1-a_{ij}\\
k
\end{array}\right]_{i}x_{i}^{k}x_{j}x_{i}^{1-a_{ij}-k}=0}$ for $i,j\in I$ with $i\neq j$, and $x=e,f$. 
\end{enumerate}
The $\mathbb{Q}(q)$-algebra $\cUq$ has a Hopf algebra structure
given by the same formulae as $\Uq$. The notions, notations and maps
defined in Definition \ref{d:QEA} and \ref{d:autom} are immediately
translated into those for $\cUq$. Note that $\cUq^{\pm}$ can be
identified with $\Uq^{\pm}$ via $e_i\mapsto e_i$ and $f_i\mapsto f_i$, respectively. 
\end{defn}

\subsection{Drinfeld pairings and Lusztig pairings}

Some non-degenerate bilinear forms play a role of bridges between quantized
enveloping algebras and their dual objects.
\begin{prop}[\cite{MR934283,MR1187582}]
\label{p:formU} There uniquely exists a $\mathbb{Q}(q)$-bilinear
map $(\ ,\ )_{D}\colon\cUq^{\geq0}\times\Uq^{\leq0}\to\mathbb{Q}(q)$
such that 
\begin{itemize}
\item[(i)] $(\Delta(x),y_{1}\otimes y_{2})_{D}=(x,y_{1}y_{2})_{D}$ for $x\in\cUq^{\geq0},y_{1},y_{2}\in\Uq^{\leq0}$, 
\item[(ii)] $(x_{2}\otimes x_{1},\Delta(y))_{D}=(x_{1}x_{2},y)_{D}$ for $x_{1},x_{2}\in\cUq^{\geq0},y\in\Uq^{\leq0}$, 
\item[(iii)] $(e_{i},q^{h})_{D}=(q^{\mu},f_{i})_{D}=0$ for $i\in I$ and $h\in P^{\ast},\;\mu\in P$, 
\item[(iv)] $(q^{\mu},q^{h})_{D}=q^{-\langle h,\mu\rangle}$ for $\mu\in P,h\in P^{\ast}$, 
\item[(v)] ${\displaystyle (e_{i},f_{j})_{D}=-\delta_{ij}\frac{1}{q_{i}-q_{i}^{-1}}}$
for $i,j\in I$ ,
\end{itemize}
here the $\mathbb{Q}(q)$-bilinear map $(\ ,\ )_{D}\colon\cUq^{\geq0}\otimes\cUq^{\geq0}\times\Uq^{\leq0}\otimes\Uq^{\leq0}\to\mathbb{Q}(q)$
is defined by $(x_{1}\otimes x_{2},y_{1}\otimes y_{2})_{D}=(x_{1},y_{1})_{D}(x_{2},y_{2})_{D}$
for $x_{1},x_{2}\in\cUq^{\geq0},y_{1},y_{2}\in\Uq^{\leq0}$. 
\end{prop}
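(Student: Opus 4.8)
The plan is to construct the pairing $(\ ,\ )_D$ by the standard "doubling" argument, following De Concini--Kac and Tanisaki, and then to verify that conditions (i)--(v) pin it down uniquely. First I would observe that uniqueness is essentially immediate: properties (i) and (ii) allow one to reduce the computation of $(x,y)_D$ for homogeneous $x \in \cUq^{\geq 0}$ and $y \in \Uq^{\leq 0}$ to the case where $x$ and $y$ are products of generators, and then repeated application of (i)--(ii) expresses $(e_{i_1}\cdots e_{i_k} q^\mu,\, f_{j_1}\cdots f_{j_l} q^h)_D$ in terms of the base cases (iii), (iv), (v). More precisely, a weight argument shows the pairing vanishes unless $\wt x = -\wt y$, and on each weight space one has a finite triangular recursion. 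So the content of the proposition is existence.

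For existence, the cleanest route is to build the quantum double. I would first construct a nondegenerate Hopf pairing between $\cUq^{\geq 0}$ (with its coproduct) and $\Uq^{\leq 0}$ (with the \emph{opposite} coproduct, or equivalently arrange the $\otimes$-flips exactly as in (ii)) by defining it on monomials via the recursion forced by (i), (ii), (iii), (iv), (v), and then checking that this is well defined, i.e.\ that it respects the defining relations of $\cUq^{\geq 0}$ in the first slot and of $\Uq^{\leq 0}$ in the second. The verification on the torus part and the cross relations (ii)(iii) of Definition \ref{d:QEA}, resp.\ \ref{d:check_QEA}, is a direct computation using (iii)--(v). The only substantial point is compatibility with the quantum Serre relations (iv): one shows that the element $\sum_{k}(-1)^k \qbinom{1-a_{ij}}{k}_i e_i^k e_j e_i^{1-a_{ij}-k}$ pairs to zero against every element of $\Uq^{\leq 0}$ (and symmetrically in the $f$'s), which follows from a $q$-binomial identity after computing $(e_i^n, f_i^n)_D$ explicitly and using the behaviour of the coproduct on powers of $e_i$. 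This is where I expect the main obstacle to lie: the Serre-relation compatibility is the one step that is not a bookkeeping exercise, and it is exactly the point at which the specific normalizations $(e_i,f_j)_D = -\delta_{ij}/(q_i-q_i^{-1})$ and $(q^\mu,q^h)_D = q^{-\langle h,\mu\rangle}$ are needed to make the $q$-Serre sum collapse.

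Alternatively, and perhaps more efficiently given that these facts are classical, I would simply invoke the construction in \cite{MR934283, MR1187582}: Drinfeld's original doubling construction, together with Tanisaki's explicit treatment, provides a nondegenerate bilinear form on $\Uq^{\geq 0} \times \Uq^{\leq 0}$ satisfying bialgebra-pairing axioms, and one checks that our $\cUq^{\geq 0}$ differs from $\Uq^{\geq 0}$ only in the torus (enlarged from $q^{P^\ast}$ to $q^P$), on which the pairing is prescribed by (iv); the enlargement is harmless because $P$ still pairs integrally with $P^\ast$ via $\langle\ ,\ \rangle$. Thus the form extends uniquely from the Cartan-doubled version to $\cUq^{\geq 0} \times \Uq^{\leq 0}$. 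Finally I would record the weight-grading compatibility $(x,y)_D = 0$ for $\wt x \neq -\wt y$ and nondegeneracy of the restriction to $\cUq^+ \times \Uq^-$, since those are the properties that get used downstream, though strictly they are not part of the statement to be proved here.
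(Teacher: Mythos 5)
Your proposal is correct and matches what the paper does: Proposition \ref{p:formU} is stated with a citation to Drinfeld and Tanisaki, and your argument (uniqueness by reducing to generators via (i)--(ii) and the base cases (iii)--(v), existence by the quantum double construction of those references, with the only adjustment being the enlarged torus $q^{P}$ in $\cUq^{\geq0}$, which is harmless since $\langle\ ,\ \rangle$ pairs $P^{\ast}$ with $P$ integrally) is exactly the standard route the cited sources provide. No gap to report.
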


The bilinear
map $(\ ,\ )_{D}$ is called \emph{the Drinfeld pairing}. It has the following
properties: 
\begin{enumerate}
\item For $\alpha,\beta\in Q_{+}$, ${(\ ,\ )_{D}}\mid_{(\cUq^{\geq0})_{\alpha}\times(\Uq^{\leq0})_{-\beta}}=0$
unless $\alpha=\beta$. 
\item For $\alpha\in Q_{+}$, $\left.(\ ,\ )_{D}\right|_{(\Uq^{+})_{\alpha}\times(\Uq^{-})_{-\alpha}}$
is non-degenerate.
\item $(q^{\mu}x,q^{h}y)_{D}=q^{-\langle h,\mu\rangle}(x,y)_{D}$ for $\mu\in P,\;h\in P^{\ast}$
and $x\in\Uq^{+},y\in\Uq^{-}$.
\end{enumerate}
\begin{defn}
\label{d:qderiv} For $i\in I$, define the $\mathbb{Q}(q)$-linear
maps $e'_{i}$ and $_{i}e'\colon\Uq^{-}\to\Uq^{-}$ by 
\begin{align*}
e'_{i}\left(xy\right) & =e'_{i}\left(x\right)y+q_{i}^{\langle h_{i},\wt x\rangle}xe'_{i}\left(y\right), & e'_{i}(f_{j}) & =\delta_{ij},\\
_{i}e'\left(xy\right) & =q_{i}^{\langle h_{i},\wt y\rangle}{_{i}e'}\left(x\right)y+x{_{i}e'}\left(y\right), & {_{i}e'}(f_{j}) & =\delta_{ij}
\end{align*}
for homogeneous elements $x,y\in\Uq^{-}$. For $i\in I$, define the
$\mathbb{Q}(q)$-linear maps $f'_{i}$ and $_{i}f'\colon\Uq^{+}\to\Uq^{+}$
by 
\begin{align*}
f'_{i}\left(xy\right) & =f'_{i}\left(x\right)y+q_{i}^{-\langle h_{i},\wt x\rangle}xf'_{i}\left(y\right), & f'_{i}(e_{j}) & =\delta_{ij},\\
_{i}f'\left(xy\right) & =q_{i}^{-\langle h_{i},\wt y\rangle}{_{i}f'}\left(x\right)y+x{_{i}f'}\left(y\right), & {_{i}f'}(e_{j}) & =\delta_{ij}
\end{align*}
for homogeneous elements $x,y\in\Uq^{+}$. 
\end{defn}
\begin{defn}
\label{d:Lusform} Define the $\mathbb{Q}(q)$-bilinear form $(\ ,\ )_{L}\colon\Uq^{-}\times\Uq^{-}\to\mathbb{Q}(q)$
by $(x,y)_{L}:=(\psi(x),y)_{D}$ for $x,y\in\Uq^{-}$. Note that $x$
is regarded as an element of $\cUq^{\leq0}$, while $y$ is considered
as an element of $\Uq^{\leq0}$. See Definition \ref{d:check_QEA}.
Then this bilinear form satisfies 
\begin{align*}
(1,1)_{L}=1 & , &  & (f_{i}x,y)_{L}=\frac{1}{1-q_{i}^{2}}(x,e'_{i}(y))_{L}, &  & (xf_{i},y)_{L}=\frac{1}{1-q_{i}^{2}}(x,{_{i}e'}(y))_{L}.
\end{align*}
This is a symmetric bilinear form, called \emph{the Lusztig pairing}. In
fact, $(\ ,\ )_{L}$ is the unique symmetric $\mathbb{Q}(q)$-bilinear
form satisfying the properties above. Moreover, $(\ ,\ )_{L}$ is
non-degenerate and has the following property: 
\begin{equation}
\left(\ast(x),\ast(y)\right)_{L}=\left(x,y\right)_{L}\label{astinv}
\end{equation}
for all $x,y\in\Uq^{-}$.

Similarly, define the $\mathbb{Q}(q)$-bilinear form $(\ ,\ )_{L}^{+}\colon\Uq^{+}\times\Uq^{+}\to\mathbb{Q}(q)$
by $(x,y)_{L}^{+}:=(x,\psi(y))_{D}$ for $x,y\in\Uq^{+}$. Then this
bilinear form satisfies 
\begin{align*}
(1,1)_{L}^{+}=1 & , &  & (e_{i}x,y)_{L}^{+}=\frac{1}{1-q_{i}^{2}}(x,f'_{i}(y))_{L}^{+}, &  & (xe_{i},y)_{L}^{+}=\frac{1}{1-q_{i}^{2}}(x,{_{i}f'}(y))_{L}^{+}.
\end{align*}
The forms $(\ ,\ )_{L}$ and $(\ ,\ )_{L}^{+}$ are related as follows:
\begin{align}
\left(x,y\right)_{L}=\left(x^{\vee},y^{\vee}\right)_{L}^{+}
\label{formrel}
\end{align}
for all $x,y\in\Uq^{-}$. See \cite[Chapter 1]{Lus:intro} for more
details.
\end{defn}
The following Lemma can be proved easily from the definition, it is
left as an exercise for readers.
\begin{lem}
\label{l:DLrel} For $\mu\in P$, $h\in P^{\ast}$, $y_{1},y_{2}\in\Uq^{-}$
and $x_{1},x_{2}\in\Uq^{+}$, we have 
\begin{align*}
(\psi(y_{1}q^{\mu}),y_{2}q^{h})_{D} & =q^{-\langle h,\mu\rangle}(y_{1},y_{2})_{L}, & (x_{1}q^{\mu},\psi(x_{2}q^{h}))_{D} & =q^{-\langle h,\mu\rangle}(x_{1},x_{2})_{L}^{+}.
\end{align*}
\end{lem}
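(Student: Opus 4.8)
The plan is to strip the torus factors $q^\mu,q^h$ off the Drinfeld pairing one at a time, reducing both identities to the defining relations $(y_1,y_2)_L=(\psi(y_1),y_2)_D$ and $(x_1,x_2)_L^+=(x_1,\psi(x_2))_D$ from Definition \ref{d:Lusform}. Since $\psi$ is a $\mathbb{Q}(q)$-algebra anti-involution fixing each $q^\mu$, we have $\psi(y_1q^\mu)=q^\mu\psi(y_1)$ and $\psi(x_2q^h)=q^h\psi(x_2)$, so the left-hand sides read $(q^\mu\psi(y_1),y_2q^h)_D$ and $(x_1q^\mu,q^h\psi(x_2))_D$. By bilinearity I may take $y_1,y_2$ (resp. $x_1,x_2$) homogeneous; as $\psi$ negates weights (hence $\psi((\Uq^-)_{-\beta})\subseteq(\cUq^{\geq0})_\beta$ and $\psi((\Uq^+)_\gamma)\subseteq(\Uq^{\leq0})_{-\gamma}$), the weight-grading of $(\ ,\ )_D$ recorded after Proposition \ref{p:formU} makes both sides vanish unless the weights of the two arguments agree, so that case is trivial and I may assume they agree.

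The crux is a ``group-like truncation'' of the coproduct: for $z\in\Uq^-$ the part of $\Delta(z)$ whose second tensor factor has weight $0$ is exactly $z\otimes1$ — read off from $\Delta(f_i)=f_i\otimes1+t_i\otimes f_i$ by multiplicativity of $\Delta$ — and dually, for $z\in\Uq^+$ the part of $\Delta(z)$ with first tensor factor of weight $0$ is $1\otimes z$, from $\Delta(e_i)=e_i\otimes t_i^{-1}+1\otimes e_i$. Since $\psi$ is a coalgebra homomorphism (Definition \ref{d:autom}) sending weight-$0$ elements to weight-$0$ elements, the same truncations hold with $z$ replaced by $\psi(y_1)$, resp. $\psi(x_2)$. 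This is exactly the point where the surviving term is a single monomial rather than a sum of several $q^\nu$-terms; verifying it (and keeping straight the two conventions $t_i=q^{(\alpha_i,\alpha_i)h_i/2}$ with $h_i\in P^\ast$ in $\Uq$ versus $t_i=q^{\alpha_i}$ with $\alpha_i\in P$ in $\cUq$, together with which slot of $(\ ,\ )_D$ lands in $\cUq^{\geq0}$ after $\psi$ is applied) is the only bookkeeping that needs attention; nothing here is deep.

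With the truncation in hand I would iterate, using $(\Delta a,b\otimes c)_D=(a,bc)_D$ and $(b\otimes a,\Delta c)_D=(ab,c)_D$ (Proposition \ref{p:formU}(i),(ii)): pairing a coproduct against a tensor factor built from $q^\mu$ or $q^h$ collapses (by weight-grading) onto the unique surviving group-like term, after which $(q^\mu,q^h)_D=q^{-\langle h,\mu\rangle}$ and $(1,q^h)_D=(q^\mu,1)_D=1$ (Proposition \ref{p:formU}(iv)) peel off the torus factors. For the first identity, (ii) applied to $q^\mu\cdot\psi(y_1)$ with the truncation of $\Delta(y_2q^h)$ yields $(q^\mu\psi(y_1),y_2q^h)_D=q^{-\langle h,\mu\rangle}(\psi(y_1),y_2q^h)_D$, and then (i) with the truncation of $\Delta(\psi(y_1))$ yields $(\psi(y_1),y_2q^h)_D=(\psi(y_1),y_2)_D=(y_1,y_2)_L$. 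For the second identity one proceeds symmetrically: (ii) applied to $x_1\cdot q^\mu$ with the truncation of $\Delta(q^h\psi(x_2))$ pairs the $q^\mu$ against the $q^h$, producing the factor $q^{-\langle h,\mu\rangle}$ and reducing to $(x_1,q^h\psi(x_2))_D$, and then (i) with the truncation of $\Delta(x_1)$ strips $q^h$, landing at $(x_1,\psi(x_2))_D=(x_1,x_2)_L^+$. This gives both claimed formulae.
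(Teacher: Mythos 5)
Your proof is correct and takes essentially the same route as the paper's own argument: write $\psi(y_1q^{\mu})=q^{\mu}\psi(y_1)$ (resp.\ $\psi(x_2q^{h})=q^{h}\psi(x_2)$), apply the defining properties (i), (ii), (iv) of $(\ ,\ )_D$, and use the weight-grading of the pairing together with the group-like truncations $\Delta(y)\rightsquigarrow y\otimes1$ for $y\in\Uq^{-}$ and $\Delta(x)\rightsquigarrow1\otimes x$ for $x\in\Uq^{+}$ (transported through the coalgebra map $\psi$) to peel off $q^{\mu}$ and then $q^{h}$. No gaps.
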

\begin{defn}
\label{d:dualbar} For a homogeneous $x\in\Uq^{-}$, we define $\sigma\left(x\right)=\sigma_{L}\left(x\right)\in\Uq^{-}$
by the property that
\[
\left(\sigma\left(x\right),y\right)_{L}=\overline{\left(x,\overline{y}\right)_{L}}
\]
for an arbitrary $y\in\Uq^{-}$. By the non-degeneracy of $(\;,\;)_{L}$,
the element $\sigma\left(x\right)$ is well-defined. This map $\sigma\colon\Uq^{-}\to\Uq^{-}$
is called \emph{the dual bar-involution}.
\end{defn}

\begin{prop}[{{\cite[Proposition 3.2]{MR2914878}, \cite[Proposition 2.6]{Kimura:2016vn}}}]
\label{p:dualbar} For a homogeneous element $x\in\Uq^{-}$, we have
\[
\sigma\left(x\right)=\left(-1\right)^{\height\left(\wt x\right)}q^{\left(\wt x,\wt x\right)/2-\left(\wt x,\rho\right)}\left(\overline{\phantom{x}}\circ*\right)\left(x\right).
\]
In particular, for homogeneous elements $x,y\in\Uq^{-}$, we have
\[
\sigma(xy)=q^{(\wt x,\wt y)}\sigma(y)\sigma(x).
\]
\end{prop}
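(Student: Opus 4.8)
The plan is to check that the claimed right‑hand side, which I denote $\tau(x):=(-1)^{\height(\wt x)}q^{(\wt x,\wt x)/2-(\wt x,\rho)}\,\overline{\ast(x)}$ on homogeneous $x$ and extend $\overline{\phantom{x}}$‑semilinearly over the weight grading, satisfies the defining relation of the dual bar‑involution in Definition \ref{d:dualbar}, namely $(\tau(x),y)_{L}=\overline{(x,\overline{y})_{L}}$ for all homogeneous $x,y\in\Uq^{-}$; by non‑degeneracy of $(\ ,\ )_{L}$ this forces $\sigma=\tau$, which is the first formula. Since $(\ ,\ )_{L}$ vanishes outside matched weights and $\overline{\phantom{x}}$ preserves weight spaces, both sides vanish unless $\wt x=\wt y$; moreover both sides are $\overline{\phantom{x}}$‑semilinear in $x$, so I would induct on $\height(-\wt x)\in\mathbb{Z}_{\geq0}$, with the case $\wt x=0$ immediate and with $x$ taken of the form $f_{i}x'$ in the inductive step.

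The key auxiliary input is an identity between the skew‑derivations of Definition \ref{d:qderiv} and the bar‑involution: for homogeneous $y\in\Uq^{-}$,
\[
\overline{{}_{i}e'(y)}=q_{i}^{-\langle h_{i},\wt y\rangle-2}\,e'_{i}(\overline{y})
\]
(and, equivalently, $\overline{e'_{i}(y)}=q_{i}^{-\langle h_{i},\wt y\rangle-2}\,{}_{i}e'(\overline{y})$, which one also gets from the easily checked operator identity ${}_{i}e'=\ast\circ e'_{i}\circ\ast$ together with the commutation of $\overline{\phantom{x}}$ with $\ast$). I would prove this by a short induction on $\height(\wt y)$ using the (skew‑)Leibniz rules: it holds for $y=f_{j}$, and for $y=f_{k}y'$ both sides expand by the Leibniz rules and coincide after applying the inductive hypothesis to $y'$ — the only delicate summand carries a factor $\delta_{ik}$, which forces $a_{ik}=2$ and makes the remaining $q_{i}$‑exponents agree.

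For the inductive step proper, write $x=f_{i}x'$ with $\wt x'=\wt x+\alpha_{i}$. Since $\ast$ is an anti‑automorphism and $\overline{\phantom{x}}$ an automorphism, both fixing every $f_{j}$, one has $\overline{\ast(f_{i}x')}=\overline{\ast(x')}\,f_{i}$; comparing the weight‑prefactors at $\wt x$ and $\wt x'$ (a one‑line computation using additivity of $\height$ and $(\alpha_{i},\rho)=(\alpha_{i},\alpha_{i})/2$) gives $\tau(f_{i}x')=-q_{i}^{2}q_{i}^{-\langle h_{i},\wt x'\rangle}\,\tau(x')\,f_{i}$. Using $(zf_{i},y)_{L}=(1-q_{i}^{2})^{-1}(z,{}_{i}e'(y))_{L}$ from Definition \ref{d:Lusform} and then the inductive hypothesis applied to $x'$, the left‑hand side $(\tau(f_{i}x'),y)_{L}$ becomes $-q_{i}^{2}q_{i}^{-\langle h_{i},\wt x'\rangle}(1-q_{i}^{2})^{-1}\overline{(x',\overline{{}_{i}e'(y)})_{L}}$; on the other hand, via $(f_{i}x',\overline y)_{L}=(1-q_{i}^{2})^{-1}(x',e'_{i}(\overline y))_{L}$, the right‑hand side $\overline{(f_{i}x',\overline{y})_{L}}$ equals $(1-q_{i}^{-2})^{-1}\overline{(x',e'_{i}(\overline{y}))_{L}}$. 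These two expressions agree exactly by the auxiliary identity, once one inserts $\langle h_{i},\wt x'\rangle=\langle h_{i},\wt y\rangle+2$ and $(1-q_{i}^{-2})^{-1}=-q_{i}^{2}(1-q_{i}^{2})^{-1}$; this closes the induction and proves $\sigma=\tau$.

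The ``in particular'' statement is then formal: $\overline{\phantom{x}}\circ\ast$ is a $\overline{\phantom{x}}$‑semilinear anti‑automorphism of $\Uq^{-}$, so with $c_{\mu}:=(-1)^{\height\mu}q^{(\mu,\mu)/2-(\mu,\rho)}$ one has $\sigma(xy)=c_{\wt x+\wt y}\,\overline{\ast(y)}\,\overline{\ast(x)}=(c_{\wt x+\wt y}/(c_{\wt x}c_{\wt y}))\,\sigma(y)\sigma(x)$, and additivity of $\height$ together with bilinearity of $(\ ,\ )$ collapses the prefactor to $q^{(\wt x,\wt y)}$. I expect no conceptual obstacle anywhere; the only thing that genuinely needs care is the bookkeeping of the various powers of $q$ and $q_{i}$ in the auxiliary identity and in the scalar matching of the inductive step.
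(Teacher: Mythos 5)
Your argument is correct. The paper itself gives no proof of this proposition — it is quoted from \cite{MR2914878} and \cite{MR3806461} — so the only thing to check is whether your self-contained verification holds, and it does: the auxiliary identity $\overline{{}_{i}e'(y)}=q_{i}^{-\langle h_{i},\wt y\rangle-2}e'_{i}(\overline{y})$ is true (the $\delta_{ik}$-term in the Leibniz expansion indeed only survives when $a_{ik}=2$, and the base case $y=f_{j}$ works because $a_{ii}=2$), the prefactor computation $\tau(f_{i}x')=-q_{i}^{2-\langle h_{i},\wt x'\rangle}\tau(x')f_{i}$ uses $(\alpha_{i},\rho)=(\alpha_{i},\alpha_{i})/2$ correctly, and the final scalar match reduces to $\langle h_{i},\wt x'\rangle=\langle h_{i},\wt y\rangle+2$, which is forced by the weight constraint $\wt y=\wt x'-\alpha_{i}$ (and in the off-weight case both sides already vanish through the common factor $\overline{(x',\overline{{}_{i}e'(y)})_{L}}$). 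The reduction to $x=f_{i}x'$ is legitimate because the defining relation of $\sigma$ and your candidate $\tau$ are both bar-semilinear in $x$, and nondegeneracy of $(\ ,\ )_{L}$ then gives $\sigma=\tau$; the ``in particular'' statement follows formally as you say. This is essentially the standard proof found in the cited references (adjunction of left/right multiplication by $f_{i}$ against $e'_{i}$ and ${}_{i}e'$ under the Lusztig form, combined with how the bar involution intertwines these skew-derivations), so there is nothing to object to beyond the bookkeeping you already carried out.
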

\begin{defn}
\label{d:twdualbar} Define a $\mathbb{Q}(q)$-linear isomorphism
$c_{\mathrm{tw}}\colon\Uq^{-}\to\Uq^{-}$ by 
\[
x\mapsto q^{\left(\wt x,\wt x\right)/2-\left(\wt x,\rho\right)}x
\]
for every homogeneous element $x\in\Uq^{-}$. Set $\sigma':=c_{\mathrm{tw}}^{-1}\circ\sigma\colon\Uq^{-}\to\Uq^{-}$.
We call $\sigma'$ \emph{the twisted dual bar involution}. By Proposition
\ref{p:dualbar}, $\sigma'(x)=\left(-1\right)^{\height\left(\wt x\right)}(\overline{\phantom{x}}\circ*)(x)$
for every homogeneous element $x\in\Uq^{-}$. In particular, $\sigma'$
is a $\mathbb{Q}$-algebra anti-involution. 
\end{defn}
\begin{rem}
\label{r:twdualbar} Let $x\in\Uq^{-}$ be a homogeneous element.
Then, 
\begin{center}
$\sigma(x)=x$ if and only if $\sigma'(x)=q^{-\left(\wt x,\wt x\right)/2+\left(\wt x,\rho\right)}x$. 
\par\end{center}
\end{rem}

\subsection{Canonical/Dual canonical bases}

In this subsection, we briefly review the properties of canonical/dual
canonical bases of the quantized enveloping algebras and its integrable
highest weight modules. See, for example, \cite{MR1357199} for the
fundamental results on crystal bases and canonical bases.
\begin{defn}
\label{d:highrep} For $\lambda\in P_{+}$, denote by $V(\lambda)$
the integrable highest weight $\Uq$-module generated by a highest
weight vector $u_{\lambda}$ of weight $\lambda$. Define the surjective
$\Uq^{-}$-module homomorphism $\pi_{\lambda}\colon\mathbf{U}_{q}^{-}\to V\left(\lambda\right)$
by 
\[
\pi_{\lambda}\left(y\right)=y.u_{\lambda}.
\]
There exists a unique $\mathbb{Q}(q)$-bilinear form $(\;,\;)_{\lambda}^{\varphi}\colon V(\lambda)\times V(\lambda)\to\mathbb{Q}(q)$
such that 
\begin{align*}
\left(u_{\lambda},u_{\lambda}\right)_{\lambda}^{\varphi} & =1 & (x.u_{1},u_{2})_{\lambda}^{\varphi} & =(u_{1},\varphi(x).u_{2})_{\lambda}^{\varphi}
\end{align*}
for $u_{1},u_{2}\in V(\lambda)$ and $x\in\Uq$. Then the form $(\;,\;)_{\lambda}^{\varphi}$
is non-degenerate and symmetric. See, for example, \cite[subsection 2.2, the equality (3.10)]{MR3090232}. 
\end{defn}
Set $\mathcal{A}:=\mathbb{Q}[q^{\pm1}]$ and $x_{i}^{(n)}:=x_{i}^{n}/[n]_{i}!\in\Uq$
for $i\in I$, $n\in\mathbb{Z}_{\geq0}$, $x=e,f$. Denote by $\mathbf{U}_{\mathcal{A}}^{-}$
the $\mathcal{A}$-subalgebra of $\Uq^{-}$ generated by the elements
$\{f_{i}^{(n)}\}_{i\in I,n\in\mathbb{Z}_{\geq0}}$ and we set 
\[
\mathbf{A}_{\mathbb{Q}\left[q^{\pm1}\right]}\!\left[\boldsymbol{N}_{-}\right]:=\left\{ x\in\Uq^{-}\mid\left(x,\mathbf{U}_{\mathcal{A}}^{-}\right)_{L}\subset\mathcal{A}\right\}.
\]

Lusztig \cite{MR1035415,MR1088333,Lus:intro} and Kashiwara \cite{MR1115118}
have constructed the specific $\mathbb{Q}(q)$-basis $\mathbf{B}^{\mathrm{low}}$
(resp.~$\mathbf{B}^{\mathrm{low}}(\lambda)$, $\lambda\in P_{+}$)
of $\Uq^{-}$ (resp.~$V(\lambda)$), called \emph{the canonical basis} (or
\emph{the lower global basis}), which is also an $\mathcal{A}$-basis of
$\mathbf{U}_{\mathcal{A}}^{-}$ (resp.~$V\left(\lambda\right)_{\mathcal{A}}:=\mathbf{U}_{\mathcal{A}}^{-}u_{\lambda}$).
Moreover the elements of $\mathbf{B}^{\mathrm{low}}$ (resp.~$\mathbf{B}^{\mathrm{low}}(\lambda)$)
are parametrized by \emph{the Kashiwara crystal} $\mathscr{B}(\infty)$ (resp.~$\mathscr{B}(\lambda)$).
We write 
\begin{align*}
\mathbf{B}^{\mathrm{low}}&=\{\Glow(b)\mid b\in\mathscr{B}(\infty)\}&\mathbf{B}^{\mathrm{low}}(\lambda)&=\{\Glow_{\lambda}(b)\mid b\in\mathscr{B}(\lambda)\}.
\end{align*}
We follow the notation in \cite{MR1357199} concerning the crystal
$(\mathscr{B}(\infty);\wt,\{\tilde{e}_{i}\}_{i\in I},\{\tilde{f}_{i}\}_{i\in I},\{\varepsilon_{i}\}_{i\in I},\{\varphi_{i}\}_{i\in I})$,
$(\mathscr{B}(\lambda);\wt,\{\tilde{e}_{i}\}_{i\in I},\{\tilde{f}_{i}\}_{i\in I},\{\varepsilon_{i}\}_{i\in I},\{\varphi_{i}\}_{i\in I})$.
The unique element of $\mathscr{B}(\infty)$ with weight $0$ is denoted
by $u_{\infty}$, and the unique element of $\mathscr{B}(\lambda)$
with weight $w\lambda$ is denoted by $u_{w\lambda}$ for $\lambda\in P_{+}$
and $w\in W$ by abuse of notation.

Denote by $\mathbf{B}^{\mathrm{up}}$ (resp.~$\mathbf{B}^{\mathrm{up}}(\lambda)$)
the basis of $\Uq^{-}$ (resp.~$V(\lambda)$) dual to $\mathbf{B}^{\mathrm{low}}$
(resp.~$\mathbf{B}^{\mathrm{low}}(\lambda)$) with respect to the
bilinear form $(\ ,\ )_{L}$ (resp.~$(\;,\;)_{\lambda}^{\varphi}$),
that is, $\mathbf{B}^{\mathrm{up}}=\{\Gup(b)\}_{b\in\mathscr{B}(\infty)}$
(resp.~$\mathbf{B}^{\mathrm{up}}(\lambda)=\{\Gup_{\lambda}(b)\}_{b\in\mathscr{B}(\lambda)}$)
such that 

\[
\begin{aligned}(\Glow(b),\Gup(b'))_{L}= & \delta_{b,b'} & (\text{resp.~}(\Glow_{\lambda}(b),\Gup_{\lambda}(b'))_{\lambda}^{\varphi}= & \delta_{b,b'})\end{aligned}
\]

for any $b,b'\in\mathscr{B}(\infty)$ (resp.~$b,b'\in\mathscr{B}(\lambda)$). 
\begin{prop}[{{\cite[Theorem 5, Lemma 7.3.2]{MR1115118}, \cite[Theorem 14.4.11]{Lus:intro}}}]
\label{p:pilambda} Let $\lambda\in P_{+}$. There exists a surjective
map $\pi_{\lambda}\colon\mathscr{B}(\infty)\to\mathscr{B}(\lambda)\coprod\{0\}$
such that 
\[
\pi_{\lambda}(\Glow(b))=\Glow_{\lambda}(\pi_{\lambda}(b))
\]
for $b\in\mathscr{B}(\infty)$, here we set $\Glow_{\lambda}(0):=0$
as a convention. Moreover, $\pi_{\lambda}$ induces a bijection $\pi_{\lambda}^{-1}(\mathscr{B}(\lambda))\to\mathscr{B}(\lambda)$. 
\end{prop}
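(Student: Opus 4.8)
The plan is to obtain Proposition~\ref{p:pilambda} by transporting Kashiwara's crystal-basis theory through the surjective $\Uq^-$-module map $\pi_\lambda\colon\Uq^-\twoheadrightarrow V(\lambda)$, $y\mapsto y.u_\lambda$, and then invoking the balanced-triple characterisation of the canonical bases. First I would fix notation: let $\mathscr{L}(\infty)\subset\Uq^-$ and $\mathscr{L}(\lambda)\subset V(\lambda)$ denote the crystal lattices, so that $\mathbf{B}^{\mathrm{low}}$ (resp.\ $\mathbf{B}^{\mathrm{low}}(\lambda)$) reduces modulo $q$ to the crystal basis indexed by $\mathscr{B}(\infty)$ (resp.\ $\mathscr{B}(\lambda)$); and recall that $(\mathbf{U}_{\mathcal{A}}^-,\mathscr{L}(\infty),\overline{\mathscr{L}(\infty)})$ and $(V(\lambda)_{\mathcal{A}},\mathscr{L}(\lambda),\overline{\mathscr{L}(\lambda)})$ are balanced, i.e.\ the reduction maps
\[
E(\infty):=\mathbf{U}_{\mathcal{A}}^-\cap\mathscr{L}(\infty)\cap\overline{\mathscr{L}(\infty)}\longrightarrow\mathscr{L}(\infty)/q\mathscr{L}(\infty),\qquad
E(\lambda):=V(\lambda)_{\mathcal{A}}\cap\mathscr{L}(\lambda)\cap\overline{\mathscr{L}(\lambda)}\longrightarrow\mathscr{L}(\lambda)/q\mathscr{L}(\lambda)
\]
are $\mathbb{Q}$-linear isomorphisms, and that by construction $\Glow(b)$ (resp.\ $\Glow_\lambda(b')$) is the unique element of $E(\infty)$ (resp.\ $E(\lambda)$) reducing to $b$ (resp.\ $b'$).

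Next I would record the three compatibilities of $\pi_\lambda$ that make the argument run. (i) By the very definition $V(\lambda)_{\mathcal{A}}=\mathbf{U}_{\mathcal{A}}^-u_\lambda$ one has $\pi_\lambda(\mathbf{U}_{\mathcal{A}}^-)=V(\lambda)_{\mathcal{A}}$. (ii) Since $u_\lambda$ is bar-invariant in $V(\lambda)$, one has $\overline{y.u_\lambda}=\overline{y}.u_\lambda=\pi_\lambda(\overline{y})$, so $\pi_\lambda$ commutes with the bar involution; consequently $\pi_\lambda(\overline{\mathscr{L}(\infty)})=\overline{\pi_\lambda(\mathscr{L}(\infty))}$. (iii) By Kashiwara's theorem on highest weight modules (\cite[Theorem~5]{MR1115118}, see also \cite[\S14.4]{Lus:intro}) one has $\pi_\lambda(\mathscr{L}(\infty))=\mathscr{L}(\lambda)$, and the induced surjection $\overline{\pi}_\lambda\colon\mathscr{L}(\infty)/q\mathscr{L}(\infty)\to\mathscr{L}(\lambda)/q\mathscr{L}(\lambda)$ intertwines the Kashiwara operators $\tilde f_i$ on the two sides; since $\mathscr{B}(\lambda)\sqcup\{0\}$ is obtained from $u_\lambda$ by applying the $\tilde f_i$, this yields a map of crystals $\widetilde\pi_\lambda\colon\mathscr{B}(\infty)\to\mathscr{B}(\lambda)\sqcup\{0\}$ with $\widetilde\pi_\lambda(u_\infty)=u_\lambda$. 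The same references (in particular \cite[Lemma~7.3.2]{MR1115118}, \cite[Theorem~14.4.11]{Lus:intro}) furnish the two properties we ultimately need: $\widetilde\pi_\lambda$ is surjective, and its restriction $\widetilde\pi_\lambda^{-1}(\mathscr{B}(\lambda))\to\mathscr{B}(\lambda)$ is a bijection.

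Granting (i)--(iii), the proof is a short computation. Fix $b\in\mathscr{B}(\infty)$. Then $\Glow(b)\in E(\infty)=\mathbf{U}_{\mathcal{A}}^-\cap\mathscr{L}(\infty)\cap\overline{\mathscr{L}(\infty)}$, so, using (i)--(iii),
\[
\pi_\lambda(\Glow(b))\ \in\ \pi_\lambda(\mathbf{U}_{\mathcal{A}}^-)\cap\pi_\lambda(\mathscr{L}(\infty))\cap\pi_\lambda(\overline{\mathscr{L}(\infty)})\ =\ V(\lambda)_{\mathcal{A}}\cap\mathscr{L}(\lambda)\cap\overline{\mathscr{L}(\lambda)}\ =\ E(\lambda),
\]
and its image in $\mathscr{L}(\lambda)/q\mathscr{L}(\lambda)$ is $\overline{\pi}_\lambda(b)=\widetilde\pi_\lambda(b)$ by (iii). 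Since $E(\lambda)\to\mathscr{L}(\lambda)/q\mathscr{L}(\lambda)$ is bijective, $\pi_\lambda(\Glow(b))$ is the unique element of $E(\lambda)$ with that reduction, hence equals $\Glow_\lambda(\widetilde\pi_\lambda(b))$ when $\widetilde\pi_\lambda(b)\in\mathscr{B}(\lambda)$ and equals $0$ when $\widetilde\pi_\lambda(b)=0$. Defining $\pi_\lambda:=\widetilde\pi_\lambda$ on $\mathscr{B}(\infty)$ (with the convention $\Glow_\lambda(0):=0$) therefore yields the asserted identity $\pi_\lambda(\Glow(b))=\Glow_\lambda(\pi_\lambda(b))$; the surjectivity of $\pi_\lambda\colon\mathscr{B}(\infty)\to\mathscr{B}(\lambda)\sqcup\{0\}$ and the fact that it induces a bijection on $\pi_\lambda^{-1}(\mathscr{B}(\lambda))$ are exactly the crystal-theoretic statements recalled in (iii).

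The step carrying all the weight is the last part of (iii), equivalently the statement that $\mathbf{B}^{\mathrm{low}}\cap\ker\pi_\lambda$ is an $\mathcal{A}$-basis of $\ker\pi_\lambda=\sum_{i\in I}\Uq^-f_i^{\langle h_i,\lambda\rangle+1}$ while $\pi_\lambda$ maps $\mathbf{B}^{\mathrm{low}}\setminus\ker\pi_\lambda$ bijectively onto $\mathbf{B}^{\mathrm{low}}(\lambda)$; proving this from scratch requires the full apparatus of Kashiwara's grand-loop argument, so I would simply cite \cite{MR1115118} and \cite{Lus:intro} for it. Everything else above is formal manipulation with the lattices, the $\mathcal{A}$-forms, and the bar involution.
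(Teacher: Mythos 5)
Your proposal is correct and takes essentially the same route as the paper, which states this proposition as a cited result of Kashiwara and Lusztig without proof; your balanced-triple manipulation is the standard derivation, and the genuinely hard content (that $\pi_\lambda(\mathscr{L}(\infty))=\mathscr{L}(\lambda)$, the induced crystal map hits $\mathscr{B}(\lambda)\sqcup\{0\}$, and is bijective over $\mathscr{B}(\lambda)$) is correctly deferred to the same references the paper cites.
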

\begin{defn}
\label{d:jlambda} Let $\lambda\in P_{+}$. Define $j_{\lambda}\colon V\left(\lambda\right)\hookrightarrow\Uq^{-}$
as the dual homomorphism of $\pi_{\lambda}$ given by the non-degenerate
bilinear forms $\left(\;,\;\right)_{\lambda}^{\varphi}\colon V\left(\lambda\right)\times V\left(\lambda\right)\to\mathbb{Q}\left(q\right)$
and $\left(\;,\;\right)_{L}\colon\mathbf{U}_{q}^{-}\times\mathbf{U}_{q}^{-}\to\mathbb{Q}\left(q\right)$,
that is 
\begin{align*}
\left(j_{\lambda}\left(v\right),y\right)_{L} & =\left(v,\pi_{\lambda}\left(y\right)\right)_{\lambda}^{\varphi}(=\left(v,y.u_{\lambda}\right)_{\lambda}^{\varphi}).
\end{align*}
\end{defn}
The following proposition immediately follows from Proposition \ref{p:pilambda}. 
\begin{prop}
\label{p:jlambda} There is an injective map $\overline{\jmath}_{\lambda}\colon\mathscr{B}\left(\lambda\right)\hookrightarrow\mathscr{B}\left(\infty\right)$
such that 
\[
\left(G_{\lambda}^{\mathrm{up}}\left(b\right),G^{\mathrm{low}}\left(b'\right).u_{\lambda}\right)_{\lambda}^{\varphi}=\delta_{b',\overline{\jmath}_{\lambda}\left(b\right)}
\]
for any $b\in\mathscr{B}\left(\lambda\right)$ and $b'\in\mathscr{B}\left(\infty\right)$.
That is, we have $j_{\lambda}\left(G_{\lambda}^{\mathrm{up}}\left(b\right)\right)=G^{\mathrm{up}}\left(\overline{\jmath}_{\lambda}\left(b\right)\right)$. 
\end{prop}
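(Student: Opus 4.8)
The plan is to read off $\overline{\jmath}_\lambda$ from Proposition \ref{p:pilambda} and then simply unwind Definition \ref{d:jlambda}. By Proposition \ref{p:pilambda} the crystal map $\pi_\lambda\colon\mathscr{B}(\infty)\to\mathscr{B}(\lambda)\sqcup\{0\}$ restricts to a bijection $\pi_\lambda^{-1}(\mathscr{B}(\lambda))\xrightarrow{\sim}\mathscr{B}(\lambda)$; I would define $\overline{\jmath}_\lambda\colon\mathscr{B}(\lambda)\hookrightarrow\mathscr{B}(\infty)$ to be the inverse of this bijection, so that $\overline{\jmath}_\lambda(b)$ is the unique element of $\pi_\lambda^{-1}(\{b\})$ for $b\in\mathscr{B}(\lambda)$. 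Injectivity of $\overline{\jmath}_\lambda$ is then immediate.

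Next I would compute, for $b\in\mathscr{B}(\lambda)$ and $b'\in\mathscr{B}(\infty)$, the pairing $(G^{\mathrm{up}}_\lambda(b),G^{\mathrm{low}}(b').u_\lambda)^\varphi_\lambda$. Using $\pi_\lambda(y)=y.u_\lambda$ for the $\Uq^-$-module map $\pi_\lambda$ and the compatibility $\pi_\lambda(G^{\mathrm{low}}(b'))=G^{\mathrm{low}}_\lambda(\pi_\lambda(b'))$ from Proposition \ref{p:pilambda} (here the two occurrences of $\pi_\lambda$ are the module map and the crystal map, respectively, with the convention $G^{\mathrm{low}}_\lambda(0):=0$), this pairing equals $(G^{\mathrm{up}}_\lambda(b),G^{\mathrm{low}}_\lambda(\pi_\lambda(b')))^\varphi_\lambda$. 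By the defining duality of $\mathbf{B}^{\mathrm{up}}(\lambda)$ and $\mathbf{B}^{\mathrm{low}}(\lambda)$ with respect to $(\;,\;)^\varphi_\lambda$ this is $\delta_{b,\pi_\lambda(b')}$ (read as $0$ when $\pi_\lambda(b')=0$). Since for $b\in\mathscr{B}(\lambda)$ one has $\pi_\lambda(b')=b$ if and only if $b'=\overline{\jmath}_\lambda(b)$, this is exactly $\delta_{b',\overline{\jmath}_\lambda(b)}$, which is the displayed formula.

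Finally, comparing this with Definition \ref{d:jlambda}, which says $(j_\lambda(v),y)_L=(v,y.u_\lambda)^\varphi_\lambda$, we get $(j_\lambda(G^{\mathrm{up}}_\lambda(b)),G^{\mathrm{low}}(b'))_L=\delta_{b',\overline{\jmath}_\lambda(b)}$ for all $b'$, so since $\mathbf{B}^{\mathrm{up}}=\{G^{\mathrm{up}}(b')\}_{b'}$ is the basis of $\Uq^-$ dual to $\mathbf{B}^{\mathrm{low}}=\{G^{\mathrm{low}}(b')\}_{b'}$ under $(\;,\;)_L$, we conclude $j_\lambda(G^{\mathrm{up}}_\lambda(b))=G^{\mathrm{up}}(\overline{\jmath}_\lambda(b))$. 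The hard part is essentially nonexistent: the only points requiring care are keeping the two meanings of $\pi_\lambda$ (crystal map vs.\ module map) apart, and noting that the single-valuedness of $\overline{\jmath}_\lambda$ is precisely the bijectivity assertion of Proposition \ref{p:pilambda}; everything else is a formal transport through the non-degenerate pairings.
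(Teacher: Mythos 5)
Your proof is correct and is exactly the argument the paper has in mind: the paper simply states that Proposition \ref{p:jlambda} follows immediately from Proposition \ref{p:pilambda}, and your unwinding (define $\overline{\jmath}_{\lambda}$ as the inverse of the bijection $\pi_{\lambda}^{-1}(\mathscr{B}(\lambda))\to\mathscr{B}(\lambda)$, use $\pi_{\lambda}(G^{\mathrm{low}}(b'))=G^{\mathrm{low}}_{\lambda}(\pi_{\lambda}(b'))$ together with the duality of the bases, then transport through Definition \ref{d:jlambda} and the non-degeneracy of $(\ ,\ )_{L}$) is precisely that intended deduction.
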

\begin{rem}
\label{r:jlambda} Let $\lambda\in P_{+}$. Then, 
\begin{itemize}
\item $\wt\overline{\jmath}_{\lambda}(b)=\wt b-\lambda$ for $b\in\mathscr{B}(\lambda)$,
and 
\item $\overline{\jmath}_{\lambda}\left(\pi_{\lambda}(b)\right)=b$ for
$b\in\pi_{\lambda}^{-1}(\mathscr{B}(\lambda))$. 
\end{itemize}
\end{rem}
\begin{prop}[{{\cite[Lemma 7.3.4]{MR1115118}, \cite[13.1.11]{Lus:intro}}}]
\label{p:barinv} For all $b\in\mathscr{B}(\infty)$, we have $\overline{\Glow(b)}=\Glow(b)$.
\end{prop}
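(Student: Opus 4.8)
The plan is to treat the proposition's two assertions separately. The first, $\overline{\Glow(b)}=\Glow(b)$ for all $b\in\mathscr{B}(\infty)$, is one of the defining features of the lower global basis, and I would deduce it directly from the construction of $\mathbf{B}^{\mathrm{low}}$. The second, $\sigma(\Gup(b))=\Gup(b)$, is then a short formal consequence of the first together with the duality between $\mathbf{B}^{\mathrm{low}}$ and $\mathbf{B}^{\mathrm{up}}$ and the very definition of the dual bar involution.

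For the first assertion I would invoke the construction of the canonical/global basis. By construction $\Glow(b)$ is a bar-invariant lift, lying in the $\mathcal{A}$-lattice $\mathbf{U}_{\mathcal{A}}^{-}$, of the crystal basis element $b\in\mathscr{B}(\infty)\cong\mathcal{L}(\infty)/q\mathcal{L}(\infty)$; hence $\overline{\Glow(b)}=\Glow(b)$ holds by definition. The substantive content lies in the \emph{existence} of such a basis: in Kashiwara's approach this is an induction on weight carried out at the level of the crystal lattice (see \cite[Lemma 7.3.4]{MR1115118}), while in Lusztig's geometric construction in the symmetric case it reflects the self-duality, under Verdier duality, of the intersection cohomology complexes producing the basis (see \cite[13.1.11]{Lus:intro} and \cite{MR1182165}). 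I would cite these rather than reprove them.

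For the second assertion, fix $b\in\mathscr{B}(\infty)$; the element $\sigma(\Gup(b))$ is well defined since $\Gup(b)$ is homogeneous (Definition \ref{d:dualbar}), and it lies in $\Uq^{-}$ with the same weight as $\Gup(b)$. For an arbitrary $b'\in\mathscr{B}(\infty)$, using in turn the defining property of $\sigma$, the first assertion $\overline{\Glow(b')}=\Glow(b')$, the symmetry of $(\ ,\ )_{L}$ (Definition \ref{d:Lusform}), and the duality $(\Glow(b'),\Gup(b))_{L}=\delta_{b,b'}$, one computes
\[
(\sigma(\Gup(b)),\Glow(b'))_{L}=\overline{(\Gup(b),\overline{\Glow(b')})_{L}}=\overline{(\Gup(b),\Glow(b'))_{L}}=\overline{\delta_{b,b'}}=\delta_{b,b'}.
\]
Since $\{\Glow(b')\}_{b'\in\mathscr{B}(\infty)}$ is a $\mathbb{Q}(q)$-basis of $\Uq^{-}$ and $(\ ,\ )_{L}$ restricts to a perfect pairing on each finite-dimensional weight space, the element of $\Uq^{-}$ pairing to $\delta_{b,b'}$ against every $\Glow(b')$ is unique; by the definition of $\mathbf{B}^{\mathrm{up}}$ this element is $\Gup(b)$. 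Therefore $\sigma(\Gup(b))=\Gup(b)$.

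The essential difficulty is entirely in the first assertion — bar invariance of $\mathbf{B}^{\mathrm{low}}$ is tantamount to the existence theorem for the canonical/global basis itself — and I do not attempt a self-contained proof of it; granting it, the passage to the dual canonical basis above is purely formal and presents no obstacle.
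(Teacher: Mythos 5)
Your proposal is correct and matches the paper's treatment: the paper states the bar-invariance $\overline{\Glow(b)}=\Glow(b)$ as a cited result of Kashiwara and Lusztig without reproving it, exactly as you do, and leaves the implication $\sigma(\Gup(b))=\Gup(b)$ as an easy remark. Your explicit pairing computation $(\sigma(\Gup(b)),\Glow(b'))_{L}=\overline{(\Gup(b),\overline{\Glow(b')})_{L}}=\delta_{b,b'}$, combined with the non-degeneracy of $(\ ,\ )_{L}$ on weight spaces, is precisely the formal argument the paper has in mind.
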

Note that Proposition \ref{p:barinv} immediately implies $\sigma(\Gup(b))=\Gup(b)$ for $b\in\mathscr{B}(\infty)$. 
\begin{prop}[{{\cite[Theorem 2.1.1]{MR1240605}}}]
\label{p:Kasinv} There exists an bijection $\ast\colon\mathscr{B}(\infty)\rightarrow\mathscr{B}(\infty)$
such that 
\[
\ast(\Glow(b))=\Glow(\ast b)
\]
for $b\in\mathscr{B}(\infty)$.  
\end{prop}
Remark that Proposition \ref{p:Kasinv} implies $\ast(\Gup(b))=\Gup(\ast b)$ for $b\in\mathscr{B}(\infty)$.
See the equality (\ref{astinv}).
\begin{defn}
The bijections $\ast$ give new crystal structures on $\mathscr{B}(\infty)$,
defined by the maps 
\[
\wt^{\ast}:=\wt\circ\ast=\wt,\;\varepsilon_{i}^{\ast}:=\varepsilon\circ\ast,\;\varphi_{i}^{\ast}:=\varphi\circ\ast,\;\tilde{e}_{i}^{\ast}:=\ast\circ\tilde{e}_{i}\circ\ast,\;\tilde{f}_{i}^{\ast}:=\ast\circ\tilde{f}_{i}\circ\ast.
\]
\end{defn}
\begin{prop}[{\cite[Theorem 7]{MR1115118}\cite[Theorem 2.1.1]{MR1240605}}]
\label{p:jlambdaimage} Let $\lambda\in P_{+}$. Then we have 
\[
\overline{\jmath}_{\lambda}(\mathscr{B}(\lambda))=\{\tilde{b}\in\mathscr{B}(\infty)\mid\varepsilon_{i}^{\ast}(\tilde{b})\leq\langle h_{i},\lambda\rangle\;\text{for all}\;i\in I\}.
\]
\end{prop}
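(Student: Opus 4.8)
The plan is to reduce the assertion about $\overline{\jmath}_{\lambda}$ to a statement about the map $\pi_{\lambda}\colon\Uq^{-}\to V(\lambda)$, and then to locate $\Ker\pi_{\lambda}$ inside $\Uq^{-}$ compatibly with the canonical basis. First I would observe that $\overline{\jmath}_{\lambda}(\mathscr{B}(\lambda))=\pi_{\lambda}^{-1}(\mathscr{B}(\lambda))$: by Proposition \ref{p:pilambda} the crystal map $\pi_{\lambda}$ restricts to a bijection $\pi_{\lambda}^{-1}(\mathscr{B}(\lambda))\xrightarrow{\sim}\mathscr{B}(\lambda)$, and by Remark \ref{r:jlambda} one has $\overline{\jmath}_{\lambda}(\pi_{\lambda}(b))=b$ for all $b\in\pi_{\lambda}^{-1}(\mathscr{B}(\lambda))$, so $\overline{\jmath}_{\lambda}$ is precisely the inverse of that bijection. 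Next, again by Proposition \ref{p:pilambda} together with the conventions $\Glow_{\lambda}(0)=0$ and $\Glow_{\lambda}(c)\neq0$ for $c\in\mathscr{B}(\lambda)$, we have, for $b\in\mathscr{B}(\infty)$, that $\pi_{\lambda}(\Glow(b))=\Glow_{\lambda}(\pi_{\lambda}(b))$ is nonzero if and only if $b\in\pi_{\lambda}^{-1}(\mathscr{B}(\lambda))$. Hence it suffices to prove
\[
\Glow(b)\in\Ker\bigl(\pi_{\lambda}\colon\Uq^{-}\to V(\lambda)\bigr)\iff\varepsilon_{i}^{\ast}(b)>\langle h_{i},\lambda\rangle\ \text{for some }i\in I.
\]

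The key inputs are two descriptions of $\Ker\pi_{\lambda}$. On the one hand, since $V(\lambda)$ is the integrable highest weight module (Definition \ref{d:highrep}) and $\pi_{\lambda}(y)=y.u_{\lambda}$, the standard quantum analogue of the Serre-type presentation gives $\Ker\pi_{\lambda}=\sum_{i\in I}\Uq^{-}f_{i}^{\langle h_{i},\lambda\rangle+1}$ (the inclusion $\supseteq$ is immediate from $f_{i}^{\langle h_{i},\lambda\rangle+1}.u_{\lambda}=0$, and equality is classical, cf.~\cite{Lus:intro}). On the other hand, for each $i\in I$ and $N\geq0$ the right-power ideal is spanned by the canonical basis elements it contains,
\[
\Uq^{-}f_{i}^{N+1}=\bigoplus_{b\in\mathscr{B}(\infty),\ \varepsilon_{i}^{\ast}(b)\geq N+1}\mathbb{Q}(q)\,\Glow(b).
\]
I would deduce this from its $\ast$-untwisted counterpart $f_{i}^{N+1}\Uq^{-}=\bigoplus_{\varepsilon_{i}(b)\geq N+1}\mathbb{Q}(q)\,\Glow(b)$ — the basic string property of $\mathscr{B}(\infty)$ (cf.~\cite[Theorem 7]{MR1115118}, \cite[Theorem 2.1.1]{MR1240605}) — by applying the $\mathbb{Q}$-algebra anti-involution $\ast$: it fixes each $f_{i}$, hence carries $f_{i}^{N+1}\Uq^{-}$ onto $\Uq^{-}f_{i}^{N+1}$, and by Proposition \ref{p:Kasinv} $\ast(\Glow(b))=\Glow(\ast b)$, while $\varepsilon_{i}(\ast b)=\varepsilon_{i}^{\ast}(b)$ (since $\varepsilon_{i}^{\ast}=\varepsilon_{i}\circ\ast$ and $\ast^{2}=\id$).

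Finally I would combine the two: each summand $\Uq^{-}f_{i}^{\langle h_{i},\lambda\rangle+1}$ is spanned by a subset of the basis $\mathbf{B}^{\mathrm{low}}$, so their sum $\Ker\pi_{\lambda}$ is spanned by the union of these subsets, and since $\mathbf{B}^{\mathrm{low}}$ is a basis of $\Uq^{-}$ this forces
\[
\Ker\pi_{\lambda}=\bigoplus_{b\in\mathscr{B}(\infty),\ \exists i:\ \varepsilon_{i}^{\ast}(b)\geq\langle h_{i},\lambda\rangle+1}\mathbb{Q}(q)\,\Glow(b);
\]
in particular $\Glow(b)\in\Ker\pi_{\lambda}$ exactly when $\varepsilon_{i}^{\ast}(b)>\langle h_{i},\lambda\rangle$ for some $i$. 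Together with the reduction in the first paragraph this yields $\overline{\jmath}_{\lambda}(\mathscr{B}(\lambda))=\pi_{\lambda}^{-1}(\mathscr{B}(\lambda))=\{\tilde b\in\mathscr{B}(\infty)\mid\varepsilon_{i}^{\ast}(\tilde b)\leq\langle h_{i},\lambda\rangle\ \text{for all }i\in I\}$. The only genuinely non-formal ingredient is the compatibility of $\mathbf{B}^{\mathrm{low}}$ with the ideals $f_{i}^{N+1}\Uq^{-}$ (equivalently $\Uq^{-}f_{i}^{N+1}$); this is the heart of Kashiwara's crystal-basis machinery and is the step I expect to be the main obstacle, though it is available in the cited literature, so everything else is bookkeeping with the results recalled above.
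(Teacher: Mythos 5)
Your argument is correct. Note, though, that the paper does not prove this proposition at all: it is quoted directly from Kashiwara (\cite[Theorem 7]{MR1115118}, \cite[Theorem 2.1.1]{MR1240605}), so there is no in-paper proof to compare against. What you give is essentially the standard derivation behind that citation: the identification $\overline{\jmath}_{\lambda}(\mathscr{B}(\lambda))=\pi_{\lambda}^{-1}(\mathscr{B}(\lambda))$ via Proposition \ref{p:pilambda} and Remark \ref{r:jlambda} is fine, the presentation $\Ker\pi_{\lambda}=\sum_{i}\Uq^{-}f_{i}^{\langle h_{i},\lambda\rangle+1}$ is the classical construction of $V(\lambda)$, and transporting the string property $f_{i}^{N+1}\Uq^{-}=\bigoplus_{\varepsilon_{i}(b)\geq N+1}\mathbb{Q}(q)\Glow(b)$ through $\ast$ using Proposition \ref{p:Kasinv} and $\varepsilon_{i}^{\ast}=\varepsilon_{i}\circ\ast$ is exactly right, as is the bookkeeping that a sum of subspaces each spanned by part of a fixed basis is spanned by the union. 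Be aware that the one nontrivial input you isolate — the compatibility of $\mathbf{B}^{\mathrm{low}}$ with the ideals $f_{i}^{N+1}\Uq^{-}$ — is itself Kashiwara's theorem, i.e.\ precisely (part of) what the paper's citation supplies; so your proposal is best read as a correct reduction of the stated proposition to that theorem rather than an independent proof, which is a perfectly legitimate way to fill in the reference.
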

\begin{prop}[{\cite[Lemma 5.1.1]{MR1203234}}]
\label{p:EFaction}For $i\in I$, $\lambda\in P$, $b\in\mathscr{B}(\lambda)$
and $\tilde{b}\in\mathscr{B}(\infty)$, we have 
\begin{align*}
e_{i}^{(\varepsilon_{i}(b))}.\Gup_{\lambda}(b) & =\Gup_{\lambda}(\tilde{e}_{i}^{\varepsilon_{i}(b)}b) & e_{i}^{(k)}.\Gup_{\lambda}(b) & =0\ \text{if}\ k>\varepsilon_{i}(b),\\
f_{i}^{(\varphi_{i}(b))}.\Gup_{\lambda}(b) & =\Gup_{\lambda}(\tilde{f}_{i}^{\varphi_{i}(b)}b) & f_{i}^{(k)}.\Gup_{\lambda}(b) & =0\ \text{if}\ k>\varphi_{i}(b),\\
(e_{i}')^{(\varepsilon_{i}(\tilde{b}))}\Gup(\tilde{b}) & =(1-q_{i}^{2})^{\varepsilon_{i}(\tilde{b})}\Gup(\tilde{e}_{i}^{\varepsilon_{i}(\tilde{b})}\tilde{b}) & (e_{i}')^{(k)}\Gup(\tilde{b}) & =0\ \text{if}\ k>\varepsilon_{i}(\tilde{b}),\\
({_{i}e'})^{(\varepsilon_{i}^{\ast}(\tilde{b}))}\Gup(\tilde{b}) & =(1-q_{i}^{2})^{\varepsilon_{i}^{\ast}(\tilde{b})}\Gup((\tilde{e}_{i}^{\ast})^{\varepsilon_{i}^{\ast}(\tilde{b})}\tilde{b}) & ({_{i}e'})^{(k)}\Gup(\tilde{b}) & =0\ \text{if}\ k>\varepsilon_{i}^{\ast}(\tilde{b}).
\end{align*}
Here $(e_{i}')^{(n)}:=(e_{i}')^{n}/[n]_{i}!$ and $({_{i}e'})^{(n)}:=({_{i}e'})^{n}/[n]_{i}!$
for $n\in\mathbb{Z}_{\geq0}$. 
\end{prop}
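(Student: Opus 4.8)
The plan is to split the four identities into the two ``module'' rows --- the action of $e_i^{(k)}$ and $f_i^{(k)}$ on $\mathbf{B}^{\mathrm{up}}(\lambda)$ --- and the two ``algebra'' rows --- the action of $(e'_i)^{(k)}$ and $({}_ie')^{(k)}$ on $\mathbf{B}^{\mathrm{up}}$. I would prove the module rows by reducing to $\mathbf{U}_q(\mathfrak{sl}_2)$, and then deduce the algebra rows from them by pulling back along the maps $j_\lambda$ and conjugating by the anti-involution $\ast$.

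\textbf{Module rows.} Fix $i\in I$ and $b\in\mathscr{B}(\lambda)$; put $\varepsilon=\varepsilon_i(b)$, $\varphi=\varphi_i(b)$, let $b_0=\tilde e_i^{\,\varepsilon}b$ be the $i$-highest vertex of the $i$-string through $b$, and $\ell=\varphi_i(b_0)=\varepsilon+\varphi$. The single substantive input is the fact --- part of Kashiwara's construction of global bases --- that the upper global basis of $V(\lambda)$ is adapted to the $\mathbf{U}_q(\mathfrak{sl}_2)_i$-module structure: the span $M=\sum_{t=0}^{\ell}\mathbb{Q}(q)\,\Gup_\lambda(\tilde f_i^{\,t}b_0)$ is a $\mathbf{U}_q(\mathfrak{sl}_2)_i$-submodule of $V(\lambda)$ isomorphic to the $(\ell+1)$-dimensional irreducible, in such a way that $\Gup_\lambda(\tilde f_i^{\,t}b_0)$ corresponds to its $t$-th upper global basis vector. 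Granting this, the two rows are a direct and elementary computation inside the irreducible $\mathbf{U}_q(\mathfrak{sl}_2)$-module $V(n)$: writing $e,f$ for $e_i,f_i$, the lower global basis is $\{f^{(k)}u_n\}_{0\le k\le n}$, one has $(f^{(j)}u_n,f^{(k)}u_n)^{\varphi}=\delta_{jk}\left[\begin{smallmatrix} n \\ k \end{smallmatrix}\right]_i$ (Definition \ref{d:highrep}), so the $k$-th upper global basis vector of $V(n)$ is $\left[\begin{smallmatrix} n \\ k \end{smallmatrix}\right]_i^{-1}f^{(k)}u_n$, and a short manipulation of $q$-binomial coefficients shows that $e_i^{(j)}$ (resp.\ $f_i^{(j)}$) carries it to $\left[\begin{smallmatrix} k \\ j \end{smallmatrix}\right]_i$ (resp.\ $\left[\begin{smallmatrix} n-k \\ j \end{smallmatrix}\right]_i$) times the $(k-j)$-th (resp.\ $(k+j)$-th) such vector, with the convention that out-of-range vectors vanish. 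Transporting this to $M$ with $n=\ell$, $k=\varepsilon$ and reading off the cases $j=\varepsilon=\varepsilon_i(b)$ versus $j>\varepsilon$ (respectively $j=\varphi=\varphi_i(b)$ versus $j>\varphi$) gives the first (respectively second) row.

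\textbf{Algebra rows.} Here I would use the embeddings $j_\lambda\colon V(\lambda)\hookrightarrow\Uq^-$ of Definition \ref{d:jlambda}. The key observation is the identity
\[
(e'_i)^{(k)}\bigl(j_\lambda(v)\bigr)=(1-q_i^2)^k\,j_\lambda\bigl(e_i^{(k)}.v\bigr)\qquad (v\in V(\lambda),\ \lambda\in P_+,\ k\ge 0),
\]
proved for $k=1$ by pairing both sides against an arbitrary $x\in\Uq^-$ and chaining the adjunction $(f_ix,y)_L=(1-q_i^2)^{-1}(x,e'_i(y))_L$ (Definition \ref{d:Lusform}), the defining property of $j_\lambda$, the $\Uq^-$-linearity of $\pi_\lambda$, the $\varphi$-adjointness of $(\ ,\ )^{\varphi}_\lambda$ (Definition \ref{d:highrep}) and the non-degeneracy of $(\ ,\ )_L$, and then for general $k$ by iteration. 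Now fix $\tilde b\in\mathscr{B}(\infty)$; by Proposition \ref{p:jlambdaimage} one may choose $\lambda\in P_+$ large enough that the whole upward $i$-string $\{\tilde e_i^{\,t}\tilde b\mid 0\le t\le\varepsilon_i(\tilde b)\}$ lies in $\overline{\jmath}_\lambda(\mathscr{B}(\lambda))$. For $b:=\pi_\lambda(\tilde b)$ this forces (by the compatibility of $\pi_\lambda$ and $\overline{\jmath}_\lambda$ with the $i$-crystal operators, via Proposition \ref{p:pilambda} and Remark \ref{r:jlambda}) that $\overline{\jmath}_\lambda(b)=\tilde b$, $\varepsilon_i(b)=\varepsilon_i(\tilde b)$, and $\overline{\jmath}_\lambda(\tilde e_i^{\,t}b)=\tilde e_i^{\,t}\tilde b$ for $t\le\varepsilon_i(\tilde b)$. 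Substituting $v=\Gup_\lambda(b)$ into the displayed identity and using $j_\lambda(\Gup_\lambda(b))=\Gup(\tilde b)$ (Proposition \ref{p:jlambda}) together with the already-proved first row yields the third row. The fourth row follows from the third by the anti-involution $\ast$: a direct check from Definition \ref{d:qderiv} gives $\ast\circ e'_i={}_ie'\circ\ast$, hence $({}_ie')^{(k)}=\ast\circ(e'_i)^{(k)}\circ\ast$, and then $\ast(\Gup(\tilde b))=\Gup(\ast\tilde b)$ (Proposition \ref{p:Kasinv}), $\varepsilon_i(\ast\tilde b)=\varepsilon_i^{\ast}(\tilde b)$, and $\tilde e_i^{\,t}(\ast\tilde b)=\ast\bigl((\tilde e_i^{\ast})^{t}\tilde b\bigr)$ convert the $e'_i$-statement for $\ast\tilde b$ into the ${}_ie'$-statement for $\tilde b$.

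\textbf{Main obstacle.} The crux is the ingredient highlighted in the module-rows paragraph: that the global basis of $V(\lambda)$ is compatible with the $\mathbf{U}_q(\mathfrak{sl}_2)_i$-string decomposition at the level of basis \emph{vectors}, not merely of crystal lattices. This is essentially the content of the lemma being cited, and a self-contained argument would have to reproduce the relevant step of Kashiwara's simultaneous induction controlling the crystal lattice, the bar-involution, and the $\mathbf{U}_q(\mathfrak{sl}_2)_i$-action. Everything downstream --- the rank-one computation, and the transfer to $\Uq^-$ via $j_\lambda$ and $\ast$ --- is formal given the results already assembled in the excerpt.
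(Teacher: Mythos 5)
The statement you are proving is not proved in the paper at all: it is quoted verbatim from Kashiwara \cite[Lemma 5.1.1]{MR1203234}, so there is no ``paper proof'' to compare with. Judged on its own terms, your proposal has a genuine gap precisely at the point you call the ``single substantive input''. The claim that $M=\sum_{t=0}^{\ell}\mathbb{Q}(q)\,\Gup_{\lambda}(\tilde f_i^{\,t}b_0)$ is a $\mathbf{U}_q(\mathfrak{sl}_2)_i$-submodule, with the $\Gup_{\lambda}(\tilde f_i^{\,t}b_0)$ matching the rank-one upper global basis, is \emph{false} in general, not merely unproven. If it were true, then $e_i^{(k)}.\Gup_\lambda(b)$ would be a $q$-binomial multiple of $\Gup_\lambda(\tilde e_i^{\,k}b)$ for \emph{every} $k$, with no correction terms; this already fails for $\mathfrak{sl}_3$. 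Indeed, take $\lambda$ large and $b,b',c$ the crystal elements with $\Glow_\lambda(b)=f_2f_1.u_\lambda$, $\Glow_\lambda(b')=f_1f_2.u_\lambda$, $\Glow_\lambda(c)=f_1^{(2)}f_2.u_\lambda$ (so $\varepsilon_1(b)=0$, $\varepsilon_1(b')=1$, $\varepsilon_1(c)=2$). From $f_1f_2f_1=f_1^{(2)}f_2+f_2f_1^{(2)}$ one gets $f_1.\Glow_\lambda(b)=\Glow_\lambda(\tilde f_1 b)+\Glow_\lambda(c)$ and $f_1.\Glow_\lambda(b')=[2]\,\Glow_\lambda(c)$, and dualizing through $(\ ,\ )_\lambda^{\varphi}$ gives
\begin{equation*}
e_1.\Gup_\lambda(c)=\Gup_\lambda(b)+[2]\,\Gup_\lambda(b'),
\end{equation*}
where $b$ does \emph{not} lie on the $1$-string of $c$ (that string passes through $b'=\tilde e_1 c$, not $b$). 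So the span of the upper global basis vectors along an $i$-string is not $\mathbf{U}_q(\mathfrak{sl}_2)_i$-stable, and the transport of the rank-one computation collapses. (The example is of course consistent with the proposition itself, since only the maximal divided power $e_1^{(2)}$ is pinned down there.)

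What is actually true, and what Kashiwara proves, is only a filtered (upper-triangular) compatibility: $f_i^{(n)}.\Glow_\lambda(b)=\left[\begin{smallmatrix}n+\varepsilon_i(b)\\ n\end{smallmatrix}\right]_i\Glow_\lambda(\tilde f_i^{\,n}b)+(\text{a combination of }\Glow_\lambda(b'')\text{ with }\varepsilon_i(b'')>n+\varepsilon_i(b))$, and this is established inside Kashiwara's grand-loop induction, not by a formal rank-one reduction. Dualizing \emph{this} statement with respect to $(\ ,\ )_\lambda^{\varphi}$ is exactly what makes all corrections disappear when the maximal power $\varepsilon_i(b)$ (resp.\ $\varphi_i(b)$) is applied and forces the vanishing beyond it, i.e.\ the first two rows; so your proof would need this triangularity as the input in place of the string-submodule claim. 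By contrast, the downstream part of your argument is sound and essentially formal: the identity $(e_i')^{(k)}(j_\lambda(v))=(1-q_i^2)^k j_\lambda(e_i^{(k)}.v)$ follows from the adjunctions exactly as you indicate, the passage to $\mathscr{B}(\infty)$ for $\lambda$ large via Propositions \ref{p:pilambda}, \ref{p:jlambda}, \ref{p:jlambdaimage} is correct (modulo the standard compatibility of $\pi_\lambda$ with the $i$-string for dominant enough $\lambda$), and the fourth row does follow from the third via ${_{i}e'}=\ast\circ e_i'\circ\ast$ and Proposition \ref{p:Kasinv}.
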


\subsection{Quantum unipotent subgroups}

In this subsection, we review the quantum unipotent subgroup
$\mathbf{A}_{q}\!\left[N_{-}\left(w\right)\right]$ which is a quantum
analogue of the coordinate ring $\mathbb{C}\left[N_{-}\left(w\right)\right]$
of the unipotent subgroup $N_{-}\left(w\right)$ associated with $w\in W$. See Theorem \ref{t:grpspecial} below for the precise statement. 
\begin{defn}
\label{d:braidaction} Following Lusztig \cite[Section 37.1.3]{Lus:intro},
we define the $\mathbb{Q}\left(q\right)$-algebra automorphism $T_{i}\colon\Uq\to\Uq$
for $i\in I$ by the following formulae: \begin{subequations} 
\begin{align*}
T_{i}\left(q^{h}\right) & =q^{s_{i}\left(h\right)},\\
T_{i}\left(e_{j}\right) & =\begin{cases}
-f_{i}t_{i} & \text{for}\;j=i,\\
{\displaystyle \sum_{r+s=-\left\langle h_{i},\alpha_{j}\right\rangle }\left(-1\right)^{r}q_{i}^{-r}e_{i}^{\left(s\right)}e_{j}e_{i}^{\left(r\right)}} & \text{for}\;j\neq i,
\end{cases}\\
T_{i}\left(f_{j}\right) & =\begin{cases}
-t_{i}^{-1}e_{i} & \text{for}\;j=i,\\
{\displaystyle \sum_{r+s=-\left\langle h_{i},\alpha_{j}\right\rangle }\left(-1\right)^{r}q_{i}^{r}f_{i}^{\left(r\right)}f_{j}f_{i}^{\left(s\right)}} & \text{for}\;j\neq i.
\end{cases}
\end{align*}
\end{subequations} Its inverse map is given by \begin{subequations}
\begin{align*}
T_{i}^{-1}\left(q^{h}\right) & =q^{s_{i}\left(h\right)},\\
T_{i}^{-1}\left(e_{j}\right) & =\begin{cases}
-t_{i}^{-1}f_{i} & \text{for}\;j=i,\\
{\displaystyle \sum_{r+s=-\left\langle h_{i},\alpha_{j}\right\rangle }\left(-1\right)^{r}q_{i}^{-r}e_{i}^{\left(r\right)}e_{j}e_{i}^{\left(s\right)}} & \text{for}\;j\neq i,
\end{cases}\\
T_{i}^{-1}\left(f_{j}\right) & =\begin{cases}
-e_{i}t_{i} & \text{for}\;j=i,\\
{\displaystyle \sum_{r+s=-\left\langle h_{i},\alpha_{j}\right\rangle }\left(-1\right)^{r}q_{i}^{r}f_{i}^{\left(s\right)}f_{j}f_{i}^{\left(r\right)}} & \text{for}\;j\neq i.
\end{cases}
\end{align*}
\end{subequations} The maps $T_{i}$ and $T_{i}^{-1}$ are denoted
by $T''_{i,1}$ and $T'_{i,-1}$ respectively in \cite{Lus:intro}.

It is known that $\left\{ T_{i}\right\} _{i\in I}$ satisfies the
braid relations, that is, for $w\in W$, the $\mathbb{Q}\left(q\right)$-algebra
automorphism $T_{w}:=T_{i_{1}}\cdots T_{i_{\ell}}\colon\Uq\to\Uq$
does not depend on the choice of $(i_{1},\dots,i_{\ell})\in I(w)$
(recall (\ref{eq:reducedexp})). See \cite[Chapter 39]{Lus:intro}. 
\end{defn}
\begin{defn}
\label{d:qnilp} (1) For $w\in W$, we set $\Uq^{-}\left(w\right):=\Uq^{-}\cap T_{w}\left(\Uq^{\geq0}\right)$.
These subalgebras of $\Uq^{-}$ are called quantum nilpotent subalgebras.

(2) Let $w\in W$ and $\bm{i}=\left(i_{1},\cdots,i_{\ell}\right)\in I\left(w\right)$.
For $\bm{c}=\left(c_{1},\cdots,c_{\ell}\right)\in\mathbb{Z}_{\geq0}^{\ell}$,
we set 
\begin{align*}
F^{{\rm low}}\left(\bm{c},\bm{i}\right) & :=f_{i_{1}}^{\left(c_{1}\right)}T_{i_{1}}\left(f_{i_{2}}^{\left(c_{2}\right)}\right)\cdots\left(T_{i_{1}}\cdots T_{i_{\ell-1}}\right)\left(f_{i_{\ell}}^{\left(c_{\ell}\right)}\right),\\
F^{\mathrm{up}}\left(\bm{c},\bm{i}\right) & :=F^{{\rm low}}\left(\bm{c},\bm{i}\right)/\left(F^{{\rm low}}\left(\bm{c},\bm{i}\right),F^{{\rm low}}\left(\bm{c},\bm{i}\right)\right)_{L}.
\end{align*}
\end{defn}
\begin{prop}[{{\cite[Proposition 2.2]{MR1351503}, \cite[Proposition 2.3]{MR1712630},
\cite[Proposition 38.2.3]{Lus:intro}}}]
\label{p:PBWbasis}

\textup{(1)} $F^{{\rm low}}\left(\bm{c},\bm{i}\right)\in\Uq^{-}\left(w\right)$
for $\bm{c}\in\mathbb{Z}_{\geq0}^{\ell}$.

\textup{(2)} $\{F^{{\rm low}}\left(\bm{c},\bm{i}\right)\}_{\bm{c}\in\mathbb{Z}_{\geq0}^{\ell}}$
is an orthogonal basis of $\Uq^{-}\left(w\right)$ with respect to
the pairing $(\ ,\ )_{L}$, more precisely, we have 
\begin{align}
(F^{{\rm low}}\left(\bm{c},\bm{i}\right),F^{{\rm low}}\left(\bm{c}',\bm{i}\right))_{L}=\delta_{\bm{c},\bm{c}'}\prod_{k=1}^{\ell}\prod_{j=1}^{c_{k}}(1-q_{i_{k}}^{2j})^{-1},\label{norm}
\end{align}

here $\bm{i}=(i_{1,\dots,}i_{\ell})$. 
\end{prop}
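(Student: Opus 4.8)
The plan is to prove both assertions together by induction on $\ell:=\ell(w)$, stripping the first letter off a reduced word $\bm{i}=(i_1,\dots,i_\ell)\in I(w)$. Put $w':=s_{i_1}w$, so $\ell(w')=\ell-1$ and $\bm{i}':=(i_2,\dots,i_\ell)\in I(w')$, and write $\bm{c}=(c_1,\bm{c}')$ with $\bm{c}'=(c_2,\dots,c_\ell)$. The engine of the induction is the factorization, immediate from Definition~\ref{d:qnilp},
\[
F^{\mathrm{low}}(\bm{c},\bm{i})=f_{i_1}^{(c_1)}\,T_{i_1}\!\bigl(F^{\mathrm{low}}(\bm{c}',\bm{i}')\bigr),
\]
together with the elementary identity $T_{i_1}(-e_{i_1}t_{i_1})=f_{i_1}$, whence $f_{i_1}^{(c_1)}=T_{i_1}\bigl((-e_{i_1}t_{i_1})^{(c_1)}\bigr)$ with $(-e_{i_1}t_{i_1})^{(c_1)}\in\mathbf{U}_q^{\geq0}$. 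The base case $\ell=0$ is immediate since $\mathbf{U}_q^-(e)=\mathbb{Q}(q)\cdot1$. I would freely use three facts about Lusztig's braid symmetries from Chapters~38 and~40 of \cite{Lus:intro}: (a) each $T_{i_1}\cdots T_{i_{k-1}}(f_{i_k})$ is a negative root vector lying in $\mathbf{U}_q^-$, because $s_{i_1}\cdots s_{i_k}$ is reduced; (b) $\mathbf{U}_q^-$ decomposes as $\bigoplus_{n\ge0}f_{i_1}^{(n)}\,{}^{i_1}\mathbf{U}_q^-$, where ${}^{i_1}\mathbf{U}_q^-:=\ker e_{i_1}'$, this decomposition restricts to $\mathbf{U}_q^-(w)$, and when $\ell(s_{i_1}w')=\ell(w')+1$ the operator $T_{i_1}$ carries $\mathbf{U}_q^-(w')$ isomorphically onto ${}^{i_1}\mathbf{U}_q^-\cap\mathbf{U}_q^-(w)$; (c) $T_{i_1}$ is an isometry for the Lusztig pairing on ${}^{i_1}\mathbf{U}_q^-$, i.e.\ $(T_{i_1}x,T_{i_1}y)_L=(x,y)_L$ for $x,y\in{}^{i_1}\mathbf{U}_q^-$.

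For~(1): the inductive hypothesis gives $F^{\mathrm{low}}(\bm{c}',\bm{i}')\in\mathbf{U}_q^-(w')$; applying~(a) and~(b), $T_{i_1}\bigl(F^{\mathrm{low}}(\bm{c}',\bm{i}')\bigr)$ is a product of root vectors in $\mathbf{U}_q^-$ lying in ${}^{i_1}\mathbf{U}_q^-\cap\mathbf{U}_q^-(w)$. Multiplying on the left by $f_{i_1}^{(c_1)}\in\mathbf{U}_q^-$ and using the $i_1$-string decomposition keeps us inside $\mathbf{U}_q^-(w)$, which proves the membership. Conversely, using the decomposition in~(b) and applying $T_{i_1}^{-1}$ reduces an arbitrary element of $\mathbf{U}_q^-(w)$ to $\mathbf{U}_q^-(w')$, so $\mathbf{U}_q^-(w)=\sum_{n\ge0}f_{i_1}^{(n)}T_{i_1}\bigl(\mathbf{U}_q^-(w')\bigr)$; combined with the inductive hypothesis that the $F^{\mathrm{low}}(\bm{c}',\bm{i}')$ span $\mathbf{U}_q^-(w')$ and with the factorization, the $F^{\mathrm{low}}(\bm{c},\bm{i})$ span $\mathbf{U}_q^-(w)$. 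Linear independence — hence the basis assertion — will follow from the orthogonality established in~(2).

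For~(2): write $\bm{d}=(d_1,\bm{d}')$ and set $x:=T_{i_1}(F^{\mathrm{low}}(\bm{c}',\bm{i}'))$, $z:=T_{i_1}(F^{\mathrm{low}}(\bm{d}',\bm{i}'))$, both in ${}^{i_1}\mathbf{U}_q^-=\ker e_{i_1}'$ by~(b). Using the adjunction $(f_iu,v)_L=(1-q_i^2)^{-1}(u,e_i'(v))_L$ and its symmetric companion, the vanishing $e_{i_1}'(z)=0$, and the formula $e_{i_1}'(f_{i_1}^{(d)})=q_{i_1}^{-(d-1)}f_{i_1}^{(d-1)}$, I would move $f_{i_1}^{(c_1)}$ across the pairing to obtain
\[
\bigl(F^{\mathrm{low}}(\bm{c},\bm{i}),F^{\mathrm{low}}(\bm{d},\bm{i})\bigr)_L=\delta_{c_1,d_1}\Bigl(\textstyle\prod_{j=1}^{c_1}(1-q_{i_1}^{2j})^{-1}\Bigr)\,(x,z)_L ,
\]
the scalar being exactly what the $q$-integer bookkeeping produces, via the elementary identity $q_{i_1}^{-\binom{c_1}{2}}\prod_{j=1}^{c_1}(1-q_{i_1}^{2j})=[c_1]_{i_1}!\,(1-q_{i_1}^2)^{c_1}$. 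Then~(c) gives $(x,z)_L=\bigl(F^{\mathrm{low}}(\bm{c}',\bm{i}'),F^{\mathrm{low}}(\bm{d}',\bm{i}')\bigr)_L$, and the inductive hypothesis evaluates this to $\delta_{\bm{c}',\bm{d}'}\prod_{k=2}^{\ell}\prod_{j=1}^{c_k}(1-q_{i_k}^{2j})^{-1}$. Multiplying, one gets precisely \eqref{norm}; the case $\bm{c}\ne\bm{d}$ is the orthogonality, and together with the spanning from~(1) it completes the proof.

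The main obstacle is the structure-theoretic input (b)--(c) about Lusztig's braid operators on which the whole induction rests — that $T_{i_1}$ is compatible with the $i_1$-string decomposition and carries $\mathbf{U}_q^-(w')$ isomorphically onto the $i_1$-highest part of $\mathbf{U}_q^-(w)$, and that it is an isometry for $(\ ,\ )_L$ on $\ker e_{i_1}'$. These are classical and already available in \cite{Lus:intro}, but they carry the genuine content; granting them, everything else is the factorization above, the skew-derivation adjunction, and the single $q$-binomial identity. Alternatively, following De Concini--Kac--Procesi, one could first establish the Levendorskii--Soibelman straightening relations among the $F^{\mathrm{low}}(\bm{c},\bm{i})$ to see directly that ordered monomials form a basis of the subalgebra they generate, and then identify that subalgebra with $\mathbf{U}_q^-(w)$ using the non-degeneracy of $(\ ,\ )_L$.
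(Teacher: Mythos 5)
The paper never proves Proposition \ref{p:PBWbasis}: it is quoted from De Concini--Kac--Procesi and Lusztig, so there is no in-paper argument to compare against. Your induction is, in substance, the standard proof behind those citations: the factorization $F^{\mathrm{low}}(\bm{c},\bm{i})=f_{i_1}^{(c_1)}\,T_{i_1}\bigl(F^{\mathrm{low}}(\bm{c}',\bm{i}')\bigr)$ combined with the isometry of $T_{i_1}$ on the $i_1$-highest part is exactly the content of \cite[Proposition 38.2.3]{Lus:intro}, one of the references the paper points to, and your fallback via Levendorskii--Soibelman straightening is the De Concini--Kac--Procesi route. The argument is correct conditional on the facts (a)--(c) you isolate, and you are right that these carry the real content: the identification $T_{i_1}\bigl(\Uq^{-}(w')\bigr)=\ker e_{i_1}'\cap\Uq^{-}(w)$ together with $\Uq^{-}(w)=\bigoplus_{n\geq0}f_{i_1}^{(n)}\,T_{i_1}\bigl(\Uq^{-}(w')\bigr)$ is what simultaneously gives membership, spanning, and the $\delta_{\bm{c},\bm{c}'}$; note that with this paper's conventions ($T_i=T''_{i,1}$, adjunction $(f_ix,y)_L=(1-q_i^2)^{-1}(x,e_i'(y))_L$) your choice of the kernel of $e_{i_1}'$, rather than of ${}_{i_1}e'$, is indeed the correct one. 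Your $q$-bookkeeping is consistent: it reproduces $(f_i^{(c)},f_i^{(c)})_L=\prod_{j=1}^{c}(1-q_i^{2j})^{-1}$, matching \eqref{norm} for $\ell=1$. Two minor points: when $d_1>c_1$ the vanishing after peeling requires the symmetry of $(\ ,\ )_L$ (or one further adjunction applied to the other slot), which your ``symmetric companion'' should be read as; and linear independence is correctly deferred to orthogonality plus nondegeneracy, so deducing the basis statement from (2) is legitimate.
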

By Proposition (\ref{p:PBWbasis}), $\{F^{{\rm up}}\left(\bm{c},\bm{i}\right)\}_{\bm{c}\in\mathbb{Z}_{\geq0}^{\ell}}$
is also an orthogonal basis of $\Uq^{-}\left(w\right)$ with respect
to the Lusztig pairing. The basis $\{F^{{\rm low}}\left(\bm{c},\bm{i}\right)\}_{\bm{c}\in\mathbb{Z}_{\geq0}^{\ell}}$
is called \emph{the (lower) Poincaré-Birkhoff-Witt type basis} associated
with $\bm{i}\in I\left(w\right)$, and the basis $\{F^{{\rm up}}\left(\bm{c},\bm{i}\right)\}_{\bm{c}}$
is called \emph{the dual (or upper) Poincaré-Birkhoff-Witt type basis}. 
\begin{defn}
\label{d:qunip} For $w\in W$, we set 
\begin{align*}
\Uq^{+}(w) & :=\left(\Uq^{-}(w)\right)^{\vee},\\
\mathbf{A}_{q}\!\left[N_{-}\left(w\right)\right] & :=\ast\left(\Uq^{-}(w)\right).
\end{align*}
We call $\Aq[N_{-}\left(w\right)]$ \emph{a quantum unipotent subgroup}.
The quantum unipotent subgroup has a $Q_{-}$-graded algebra structure
induced from that of $\Uq^{-}$. Note that $\varphi\left(\Aq[N_{-}\left(w\right)]\right)=\Uq^{+}(w)$. 
\end{defn}
\begin{prop}[{{{\cite[Theorem 4.25, Theorem 4.29]{MR2914878}}}}]
\label{p:dualcanonical} Let $w\in W$ and $\bm{i}\in I\left(w\right)$.
Then the following hold: 

\textup{(1)} $\Uq^{-}(w)\cap\mathbf{B}^{\mathrm{up}}$ is a basis
of $\Uq^{-}(w)$.

\textup{(2)} each element $\Gup(b)$ of $\Uq^{-}(w)\cap\mathbf{B}^{\mathrm{up}}$
is characterized by the following conditions:
\begin{enumerate}
\item[(DCB1)] $\sigma(\Gup(b))=\Gup(b)$, and 
\item[(DCB2)] $\Gup\left(b\right)=F^{{\rm {up}}}\left(\bm{c},\bm{i}\right)+\sum_{\bm{c}'<\bm{c}}d_{\bm{c},\bm{c}'}^{\bm{i}}F^{{\rm {up}}}\left(\bm{c}',\bm{i}\right)$
with $d_{\bm{c},\bm{c}'}^{\bm{i}}\in q\mathbb{Z}[q]$ for some $\bm{c}\in\mathbb{Z}_{\geq0}^{\ell}$. 
\end{enumerate}
Here $<$ denotes the left lexicographic order on $\mathbb{Z}_{\geq0}^{\ell}$,
that is, we write $(c_{1},\dots,c_{\ell})<(c'_{1},\dots,c'_{\ell})$
if and only if there exists $k\in\{1,\dots,\ell\}$ such that $c_{1}=c'_{1},\dots,c_{k-1}=c'_{k-1}$
and $c_{k}<c'_{k}$. 
\end{prop}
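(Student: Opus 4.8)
The plan is to derive both statements from the interplay between the dual Poincaré--Birkhoff--Witt basis $\{F^{\mathrm{up}}(\bm c,\bm i)\}_{\bm c\in\mathbb Z_{\geq0}^{\ell}}$ of $\Uq^{-}(w)$ --- which is already a basis by Proposition \ref{p:PBWbasis} --- and the dual canonical basis $\mathbf B^{\mathrm{up}}$ of $\Uq^{-}$. The engine is a Lusztig-type unitriangular change of basis: I expect that, with respect to the left lexicographic order on $\bm c$ for the fixed $\bm i$, there is a family $\{b(\bm c)\}_{\bm c}\subset\mathscr B(\infty)$ with an expansion $\Gup(b(\bm c))=F^{\mathrm{up}}(\bm c,\bm i)+\sum_{\bm c'<\bm c}d^{\bm i}_{\bm c,\bm c'}F^{\mathrm{up}}(\bm c',\bm i)$, $d^{\bm i}_{\bm c,\bm c'}\in q\mathbb Z[q]$, together with the companion statement that the dual bar-involution $\sigma$ acts on the dual PBW basis unitriangularly over $\mathbb Z[q^{\pm1}]$ with respect to the same order. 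Granting these two facts, parts (1) and (2) follow by standard bar-invariance arguments.

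For the Lusztig-type unitriangularity I would argue as follows. Since we work in a symmetrizable Kac--Moody setting, any $\bm i\in I(w)$ extends to reduced words of arbitrarily large length; for such an extension $\widetilde{\bm i}$ Lusztig's results relating the canonical and PBW bases apply to the finite pieces and, after dualizing via the Lusztig pairing $(\ ,\ )_L$, give the unitriangular expansion of the relevant $\Gup$ in terms of $\{F^{\mathrm{up}}(\bm d,\widetilde{\bm i})\}$. The crucial stabilization point is that $F^{\mathrm{low}}(\bm c,\bm i)$, hence $F^{\mathrm{up}}(\bm c,\bm i)$, depends only on $\bm c$ and $\bm i$ and not on the chosen extension; consequently the vectors $F^{\mathrm{up}}(\bm c,\bm i)$ supported on the first $\ell$ coordinates span a subspace on which the transition matrix restricts, so that those $\Gup$ whose PBW-leading term is supported on the first $\ell$ coordinates already lie in $\Uq^{-}(w)$ and are expressed purely in terms of $\{F^{\mathrm{up}}(\bm c,\bm i)\}_{\bm c}$. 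The $\sigma$-invariance side is obtained either directly from the relation between $\sigma$ and the bar-involution in Proposition \ref{p:dualbar}, applied to the PBW monomials, or by transporting Lusztig's computation of $\overline{F^{\mathrm{low}}}$ through the same extension argument.

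With these inputs, part (2) goes as follows: for $b$ with $\Gup(b)\in\Uq^{-}(w)$, condition (DCB1) is exactly Proposition \ref{p:barinv}, and (DCB2) is the unitriangular expansion above, with $\bm c$ the PBW-leading exponent of $b$. For uniqueness, if $x,x'\in\Uq^{-}(w)$ both satisfy (DCB1) and (DCB2) for the same $\bm c$, then $y:=x-x'$ is $\sigma$-invariant and lies in $\bigoplus_{\bm c'<\bm c}q\mathbb Z[q]\,F^{\mathrm{up}}(\bm c',\bm i)$; since $\sigma$ is unitriangular over $\mathbb Z[q^{\pm1}]$ on the dual PBW basis, the usual Kazhdan--Lusztig-type argument (inspect the top-order surviving term of $\sigma(y)-y$) forces $y=0$. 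The same lemma produces, for every $\bm c\in\mathbb Z_{\geq0}^{\ell}$, a unique $\sigma$-invariant $\Theta(\bm c)\in\Uq^{-}(w)$ with leading term $F^{\mathrm{up}}(\bm c,\bm i)$ and lower coefficients in $q\mathbb Z[q]$; by the characterization of $\mathbf B^{\mathrm{up}}$ inside $\Uq^{-}$ (again via the reduced-word extension) each $\Theta(\bm c)$ equals some $\Gup(b(\bm c))$. Finally the $\Theta(\bm c)$ are triangular with respect to the basis $\{F^{\mathrm{up}}(\bm c,\bm i)\}_{\bm c}$, hence linearly independent and spanning, which proves (1) and identifies $\Uq^{-}(w)\cap\mathbf B^{\mathrm{up}}=\{\Theta(\bm c)\}_{\bm c}$, completing (2).

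The main obstacle is precisely the Kac--Moody, arbitrary-$w$ version of the PBW-versus-canonical unitriangularity with the sharp coefficient bound $q\mathbb Z[q]$ and the compatibility of the relevant orders under extending reduced words; controlling convex orders on the positive roots occurring in $\Delta_{+}(\leq w)$ and the stability of PBW monomials under such extensions is the delicate point. An alternative that localizes the difficulty is an induction on $\ell(w)$: writing $\bm i=(i_1,i_2,\dots,i_\ell)$, one relates $\Uq^{-}(w)$ to $\Uq^{-}(s_{i_1}w)$ through the braid symmetry $T_{i_1}$ (via $F^{\mathrm{low}}(\bm c,\bm i)=f_{i_1}^{(c_1)}T_{i_1}(F^{\mathrm{low}}(\bm c',(i_2,\dots,i_\ell)))$) and the $\ast$-crystal operators $\tilde e_{i_1}^{\ast},\tilde f_{i_1}^{\ast}$, reducing everything to the rank-one case $\Uq^{-}(s_i)=\bigoplus_{n\geq0}\mathbb Q(q)\,f_i^{(n)}$, where all claims are immediate; the inductive step then only needs the compatibility of the dual canonical basis with $e_i'$ and ${}_ie'$ recorded in Proposition \ref{p:EFaction}.
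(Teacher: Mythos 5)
The paper itself gives no proof of this proposition: it is imported from \cite{MR2914878} (Theorems 4.25 and 4.29), where the argument is an induction on $\ell(w)$ whose engine is the compatibility of the (dual) canonical basis with the braid symmetries $T_i$ (Saito--Lusztig), so only the ``alternative'' in your last paragraph is in the spirit of the actual proof; your primary route does not work.

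Concretely, the main route is circular. In a symmetrizable Kac--Moody algebra there is no longest element and no PBW basis of all of $\Uq^{-}$, and Lusztig's unitriangularity theorem (canonical versus PBW, off-diagonal coefficients in $q\mathbb{Z}[q]$) is a finite-type statement about reduced words of $w_0$. Extending $\bm i$ to a longer reduced word $\widetilde{\bm i}$ only produces the dual PBW basis of the larger subspace $\Uq^{-}(w')$, and the assertion that the relevant elements of $\mathbf{B}^{\mathrm{up}}$ lie in $\Uq^{-}(w')$ and expand unitriangularly in $\{F^{\mathrm{up}}(\bm d,\widetilde{\bm i})\}$ with the $q\mathbb{Z}[q]$ bound is exactly Proposition \ref{p:dualcanonical} for $w'$; likewise the claim that ``those $\Gup$ whose PBW-leading term is supported on the first $\ell$ coordinates already lie in $\Uq^{-}(w)$'' is part (1) itself, not a formal consequence of stabilization of $F^{\mathrm{up}}(\bm c,\bm i)$ under extensions. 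As for the inductive alternative, the statement that the step ``only needs'' Proposition \ref{p:EFaction} is too optimistic: that proposition controls the $i_1$-string filtration (hence the exponent $c_1$), but to identify the remaining factor with a dual canonical basis element of $\Uq^{-}(s_{i_1}w)$, and to push the $q\mathbb{Z}[q]$ bound and the lexicographic order through $F^{\mathrm{low}}(\bm c,\bm i)=f_{i_1}^{(c_1)}T_{i_1}\bigl(F^{\mathrm{low}}(\bm c'',(i_2,\dots,i_{\ell}))\bigr)$, one needs the nontrivial theorem that $T_{i_1}$ is compatible with the canonical basis on $\{b\mid\varepsilon_{i_1}(b)=0\}$ (resp.\ the $\ast$-version) --- this Saito--Lusztig input is the real content and is nowhere supplied in your sketch. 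The unitriangularity of $\sigma$ on the dual PBW basis similarly requires the Levendorskii--Soibelman straightening relations in $\Uq^{-}(w)$ rather than the extension argument. Granting these inputs, your final uniqueness and basis arguments (bar-invariance plus triangularity, then linear independence of $\mathbf{B}^{\mathrm{up}}$) are standard and correct.
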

\begin{defn}
\label{d:PBWparam} Proposition \ref{p:dualcanonical} (2) says that
each $F^{{\rm {up}}}\left(\bm{c},\bm{i}\right)$ determines a unique
dual canonical basis element $\Gup(b)$ in $\Uq^{-}(w)$. We write
the corresponding element of $\mathscr{B}(\infty)$ as $b\left(\bm{c},\bm{i}\right)$.
Then 
\[
\Uq^{-}(w)\cap\mathbf{B}^{\mathrm{up}}=\{\Gup(b(\bm{c},\bm{i}))\}_{\bm{c}\in\mathbb{Z}_{\geq0}^{\ell}}.
\]
Write $\mathscr{B}(\Uq^{-}(w)):=\{b\left(\bm{c},\bm{i}\right)\}_{\bm{c}\in\mathbb{Z}_{\geq0}^{\ell}}$.
Note that $\mathscr{B}(\Uq^{-}(w))$ does not depend on the choice
of $\bm{i}\in I(w)$. Set $b_{-1}\left(\bm{c},\bm{i}\right):=\ast\left(b(\bm{c},\bm{i})\right)$.
Then $\Aq[N_{-}\left(w\right)]\cap\mathbf{B}^{\mathrm{up}}=\{\Gup(b_{-1}(\bm{c},\bm{i}))\}_{\bm{c}\in\mathbb{Z}_{\geq0}^{\ell}}$. 
\end{defn}

The following is the specialization result for the quantum unipotent
subgroup which justifies the notation $\Aq\left[N_{-}\left(w\right)\right]$. 
\begin{thm}[{\cite[Theorem 4.44]{MR2914878}}]\label{t:grpspecial}
For $w\in W$, we set $\mathbf{A}_{\mathbb{Q}\left[q^{\pm1}\right]}\!\left[N_{-}\left(w\right)\right]:=\mathbf{A}_{\mathbb{Q}\left[q^{\pm1}\right]}\!\left[\boldsymbol{N}_{-}\right]\cap\mathbf{A}_{q}\!\left[N_{-}\left(w\right)\right]$.
Then we have 
\[
\mathbf{A}_{\mathbb{Q}\left[q^{\pm1}\right]}\!\left[N_{-}\left(w\right)\right]\otimes_{\mathcal{A}}\mathbb{C}\simeq\mathbb{C}\left[N_{-}\left(w\right)\right],
\]
here we regard $\mathbb{C}$ as an $\mathcal{A}$-module via $q^{\pm1}\mapsto1$. 
\end{thm}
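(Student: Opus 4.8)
The plan is to prove the freeness of $\mathbf{A}_{\mathbb{Q}\left[q^{\pm1}\right]}\!\left[N_{-}\left(w\right)\right]$ over $\mathcal{A}$ on a piece of the dual canonical basis, then embed its $q\mapsto1$ specialization as a commutative graded subalgebra of $\mathbb{C}\left[\boldsymbol{N}_{-}\right]$, and finally identify that subalgebra with $\pi_{w}^{\ast}\bigl(\mathbb{C}[N_{-}(w)]\bigr)$. For the first step: since $*$ fixes every $f_{i}^{(n)}$ it preserves $\mathbf{U}_{\mathcal{A}}^{-}$, and by \eqref{astinv} it preserves $(\ ,\ )_{L}$, hence also the dual lattice $\mathbf{A}_{\mathbb{Q}\left[q^{\pm1}\right]}\!\left[\boldsymbol{N}_{-}\right]$; together with $\Aq[N_{-}(w)]=*(\Uq^{-}(w))$ this identifies $\mathbf{A}_{\mathbb{Q}\left[q^{\pm1}\right]}\!\left[N_{-}\left(w\right)\right]$ with $*\bigl(\mathbf{A}_{\mathbb{Q}\left[q^{\pm1}\right]}\!\left[\boldsymbol{N}_{-}\right]\cap\Uq^{-}(w)\bigr)$. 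Now $\mathbf{B}^{\mathrm{up}}$ is, by $(\ ,\ )_{L}$-duality with the $\mathcal{A}$-basis $\mathbf{B}^{\mathrm{low}}$ of $\mathbf{U}_{\mathcal{A}}^{-}$, an $\mathcal{A}$-basis of $\mathbf{A}_{\mathbb{Q}\left[q^{\pm1}\right]}\!\left[\boldsymbol{N}_{-}\right]$, and its subset $\mathbf{B}^{\mathrm{up}}\cap\Uq^{-}(w)=\{\Gup(b(\bm{c},\bm{i}))\}_{\bm{c}}$ is a $\mathbb{Q}(q)$-basis of $\Uq^{-}(w)$ by Proposition \ref{p:dualcanonical}(1) and Definition \ref{d:PBWparam}. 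Intersecting a free $\mathcal{A}$-lattice with the subspace spanned by a subset of an $\mathcal{A}$-basis returns the corresponding free sublattice as a direct $\mathcal{A}$-summand; applying $*$ (which permutes $\mathbf{B}^{\mathrm{up}}$, Proposition \ref{p:Kasinv}) I conclude that $\mathbf{A}_{\mathbb{Q}\left[q^{\pm1}\right]}\!\left[N_{-}\left(w\right)\right]$ is free over $\mathcal{A}$ on $\{\Gup(b_{-1}(\bm{c},\bm{i}))\}_{\bm{c}}$ and is a direct $\mathcal{A}$-summand of $\mathbf{A}_{\mathbb{Q}\left[q^{\pm1}\right]}\!\left[\boldsymbol{N}_{-}\right]$.

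\emph{Embedding the specialization.} Tensoring a split inclusion with $\mathbb{C}$ stays injective, so $\mathbf{A}_{\mathbb{Q}\left[q^{\pm1}\right]}\!\left[N_{-}\left(w\right)\right]\otimes_{\mathcal{A}}\mathbb{C}\hookrightarrow\mathbf{A}_{\mathbb{Q}\left[q^{\pm1}\right]}\!\left[\boldsymbol{N}_{-}\right]\otimes_{\mathcal{A}}\mathbb{C}$, and the target is canonically $\mathbb{C}\left[\boldsymbol{N}_{-}\right]$ (the standard classical limit of $\Uq^{-}\cong\Aq[\boldsymbol{N}_{-}]$: one has $\mathbf{U}_{\mathcal{A}}^{-}\otimes_{\mathcal{A}}\mathbb{C}\cong\mathbf{U}(\mathfrak{n}_{-})$ with the twisted coproduct degenerating to the cocommutative one, and then dualizes). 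Since this is a $\mathbb{C}$-algebra homomorphism into the commutative ring $\mathbb{C}\left[\boldsymbol{N}_{-}\right]$, the source is automatically commutative and its image $\mathcal{I}_{w}$ is a $Q_{-}$-graded subalgebra. By the unitriangular change of basis of Proposition \ref{p:dualcanonical}(2) (whose coefficients and inverse-coefficients lie in $\mathbb{Z}[q^{\pm1}]=\mathcal{A}$), the same $\mathcal{A}$-form also has $\{*\bigl(F^{\mathrm{up}}(\bm{c},\bm{i})\bigr)\}_{\bm{c}}$ as an $\mathcal{A}$-basis, so $\mathcal{I}_{w}$ is spanned by the classes $\overline{*(F^{\mathrm{up}}(\bm{c},\bm{i}))}\in\mathbb{C}\left[\boldsymbol{N}_{-}\right]$.

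\emph{Identifying the image.} I would then compare $\mathcal{I}_{w}$ with $\pi_{w}^{\ast}\bigl(\mathbb{C}[N_{-}(w)]\bigr)\subseteq\mathbb{C}\left[\boldsymbol{N}_{-}\right]$, the subalgebra of left $\boldsymbol{N}_{-}'(w)$-invariant functions attached to the splitting $\boldsymbol{N}_{-}\simeq\boldsymbol{N}_{-}'(w)\times N_{-}(w)$ (subsections \ref{ss:prounip}, \ref{ss:unipcellautom}). Both are $Q_{-}$-graded subalgebras whose degree-$(-\xi)$ component has dimension $\#\{\bm{c}\in\mathbb{Z}_{\geq0}^{\ell}\mid\sum_{k}c_{k}\alpha_{i_{k}}=\xi\}$: for $\pi_{w}^{\ast}(\mathbb{C}[N_{-}(w)])\cong\mathbb{C}[N_{-}(w)]=\mathbf{U}(\mathfrak{n}_{-}(w))_{\mathrm{gr}}^{\ast}$ this is the PBW theorem for $\mathfrak{n}_{-}(w)$, whose root vectors carry the weights $-s_{i_{1}}\cdots s_{i_{k-1}}\alpha_{i_{k}}$; for $\mathcal{I}_{w}$ it is the first step. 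Since all graded components are finite-dimensional, it suffices to prove the single inclusion $\mathcal{I}_{w}\subseteq\pi_{w}^{\ast}(\mathbb{C}[N_{-}(w)])$, i.e. that each $\overline{*(F^{\mathrm{up}}(\bm{c},\bm{i}))}$ is left $\boldsymbol{N}_{-}'(w)$-invariant — equivalently, supported on the $\mathbf{U}(\mathfrak{n}_{-}(w))$-part of $\mathbf{U}(\mathfrak{n}_{-})$. This is the $q\mapsto1$ shadow of the facts that $\Uq^{-}(w)$ is generated by the PBW root vectors $F^{\mathrm{low}}(\bm{e}_{k},\bm{i})$ (Proposition \ref{p:PBWbasis}), that these lie in $\mathbf{U}_{\mathcal{A}}^{-}$ and specialize to the classical root vectors of $\mathfrak{n}_{-}(w)$, and that they straighten with coefficients in $\mathcal{A}$. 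Once $\mathcal{I}_{w}=\pi_{w}^{\ast}(\mathbb{C}[N_{-}(w)])$ is known, composing the isomorphism $\mathbf{A}_{\mathbb{Q}\left[q^{\pm1}\right]}\!\left[N_{-}\left(w\right)\right]\otimes_{\mathcal{A}}\mathbb{C}\xrightarrow{\sim}\mathcal{I}_{w}$ with $(\pi_{w}^{\ast})^{-1}$ gives the desired $\mathbb{C}$-algebra isomorphism.

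\emph{Main obstacle.} The hard part is not the formal summand/dimension bookkeeping but keeping everything $\mathcal{A}$-integral in the last step: one must control the Levendorskii--Soibelman straightening relations among the PBW monomials with coefficients in $\mathcal{A}$, the compatibility of Lusztig's braid operators $T_{i}$ both with $\mathbf{U}_{\mathcal{A}}^{-}$ and with the classical limit (so that the specializations of the $F^{\mathrm{low}}(\bm{c},\bm{i})$ really are the PBW monomials in the classical root vectors $f_{\beta_{k}}$), and the interaction of all of this with the $(\ ,\ )_{L}$-dual lattice through which $\Aq[N_{-}(w)]$ is realized inside $\mathbb{C}\left[\boldsymbol{N}_{-}\right]$. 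These $\mathcal{A}$-integral verifications — establishing in particular that the specialized structure constants are precisely those of $\mathbb{C}[N_{-}(w)]=\mathbf{U}(\mathfrak{n}_{-}(w))_{\mathrm{gr}}^{\ast}$ rather than merely producing an algebra of the right graded dimension — are the substance of the argument; the detailed computations are those of \cite[Section 4]{MR2914878}.
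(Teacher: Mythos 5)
First, a structural point: the paper does not prove Theorem \ref{t:grpspecial} at all — it imports it verbatim from \cite{MR2914878}, and the only original content nearby is the remark that the $\mathcal{A}$-form there is built from a differently normalized pairing $(\ ,\ )_{K}$, which does not affect the specialization because the structure constants on dual canonical bases agree. So there is no in-paper argument to compare with; your proposal has to be judged as a self-contained reconstruction, and as such it has one genuine gap. Your first two steps are fine: $\mathbf{A}_{\mathbb{Q}[q^{\pm1}]}[\boldsymbol{N}_{-}]$ is indeed the free $\mathcal{A}$-module on $\mathbf{B}^{\mathrm{up}}$ (duality with the $\mathcal{A}$-basis $\mathbf{B}^{\mathrm{low}}$ of $\mathbf{U}_{\mathcal{A}}^{-}$, weight spaces being finite-dimensional), $*$ preserves both the lattice and $(\ ,\ )_{L}$, and intersecting with the span of the subset $\mathbf{B}^{\mathrm{up}}\cap\Uq^{-}(w)$ does give a free direct summand, so the specialization embeds into $\mathbb{C}[\boldsymbol{N}_{-}]$ and the graded dimension count (modulo the slip that the exponent condition should read $\sum_{k}c_{k}\,s_{i_{1}}\cdots s_{i_{k-1}}\alpha_{i_{k}}=\xi$, not $\sum_{k}c_{k}\alpha_{i_{k}}=\xi$) reduces everything to the single inclusion $\mathcal{I}_{w}\subseteq\pi_{w}^{\ast}(\mathbb{C}[N_{-}(w)])$.

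That inclusion is exactly where the argument is not carried out. The facts you invoke — that the quantum root vectors generate $\Uq^{-}(w)$, lie in $\mathbf{U}_{\mathcal{A}}^{-}$, specialize to classical root vectors, and straighten over $\mathcal{A}$ — all live on the "lower" side and do not by themselves say anything about the specialized \emph{dual} elements $\overline{*(F^{\mathrm{up}}(\bm{c},\bm{i}))}$ being left $\boldsymbol{N}'_{-}(w)$-invariant functions. (Note also that $F^{\mathrm{up}}(\bm{c},\bm{i})$ itself vanishes at $q=1$ as an element of $\mathbf{U}_{\mathcal{A}}^{-}$ because of the factors $\prod(1-q_{i}^{2j})$ from \eqref{norm}; its specialization only makes sense inside the dual lattice, so "specializes to the classical root vectors" cannot be used directly.) What is actually needed is a duality statement: invariance under $\boldsymbol{N}'_{-}(w)$ translates into vanishing of the functional on the appropriate one-sided ideal generated by the specialization of $\Uq^{-}\cap T_{w}\Uq^{-}\cap\Ker(\varepsilon)$, and this requires (i) the orthogonal-complement description of $(\Uq^{-}(w))^{\perp}$ as in Lemma \ref{l:compl}, applied to $F^{\mathrm{up}}(\bm{c},\bm{i})\in\Uq^{-}(w)$ and transported through $*$, and (ii) an $\mathcal{A}$-integral form of $\Uq^{-}\cap T_{w}\Uq^{-}$ whose specialization is $\mathbf{U}(\mathfrak{n}'_{-}(w))$, so that the orthogonality survives at $q=1$. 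You acknowledge this is "the substance of the argument" and defer it to \cite[Section 4]{MR2914878} — but that reference is precisely the source of the theorem, so as written your proposal is closer to a citation with a correct surrounding framework than to an independent proof. To close it you should either spell out the duality/orthogonality step above, or explicitly present the proof as a reduction to the compatibility results you already quote from \cite{MR2914878}.
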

\begin{rem}
In \cite{MR2914878}, the $\mathcal{A}$-form $\mathbf{A}_{\mathbb{Q}\left[q^{\pm1}\right]}\!\left[N_{-}\left(w\right)\right]$
is defined by the non-degenerate bilinear form $\left(\ ,\ \right)_{K}$
on $\mathbf{U}_{q}^{-}\left(\mathfrak{g}\right)$ with $\left(f_{i},f_{i}\right)_{K}=1$
for $i\in I$. But this specialization result is not affected since
the structure constants with respect to the dual canonical bases defined
by $\left(\ ,\ \right)_{L}$ and $\left(\ ,\ \right)_{K}$ are the same.
For more details, see \cite[Lemma 2.12]{MR2914878}.
\end{rem}

\subsection{Quantum closed unipotent cells}

In this section, we review the definition of quantum closed unipotent
cells. For more details, see \cite[Section 5]{MR2914878}.
\begin{defn}
\label{d:repbraid} Let $M=\bigoplus_{\mu\in P}M_{\mu}$ be an integrable $\Uq$-module (i.e., $e_{i}$ and $f_{i}$ act
locally nilpotently on $M$ for all $i\in I$) with weight space decomposition. For $i\in I$, there exists a $\mathbb{Q}(q)$-linear
automorphism $T_{i}$ of $M$ given by 
\begin{align*}
 & T_{i}(m):=\sum_{-a+b-c=\langle h_{i,}\mu\rangle}(-1)^{b}q_{i}^{-ac+b}e_{i}^{(a)}f_{i}^{(b)}e_{i}^{(c)}.m,\\
 & T_{i}^{-1}(m)=\sum_{a-b+c=\langle h_{i},\mu\rangle}(-1)^{b}q_{i}^{ac-b}f_{i}^{(a)}e_{i}^{(b)}f_{i}^{(c)}.m
\end{align*}
for $m\in M_{\mu}$, $\mu\in P$. The maps $T_{i}$ and $T_{i}^{-1}$
are denoted by $T_{i}=T''_{i,1}$ and $T_{i}=T'_{i,-1}$ respectively
in \cite[Chapter 5]{Lus:intro}. 
\end{defn}

The following propositions are fundamental properties of $T_{i}$.
See, for example, \cite[Chapter 37, 39]{Lus:intro}: 
\begin{prop}
\label{p:braid} Let $M$ be an integrable $\Uq$-module. (See Definition
\ref{d:repbraid}.) \textup{(1)} For $x\in\Uq$ and $m\in M$, we
have $T_{i}(x.m)=T_{i}(x).T_{i}(m)$.

\textup{(2)} For $w\in W$, the composite map $T_{w}:=T_{i_{1}}\cdots T_{i_{\ell}}\colon M\to M$
does not depend on the choice of $(i_{1},\dots,i_{\ell})\in I(w)$.

\textup{(3)} For $\mu\in P$ and $w\in W$, $T_{w}$ maps $M_{\mu}$
to $M_{w\mu}$.
\end{prop}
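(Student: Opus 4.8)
The plan is to reduce each of the three assertions to a rank-one or rank-two computation, establishing first the weight shift, then $(1)$, then $(2)$, and finally $(3)$ in general; all three statements are classical, going back to Lusztig.

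I would begin with the weight shift. If $m\in M_\mu$, then every term $e_i^{(a)}f_i^{(b)}e_i^{(c)}.m$ occurring in $T_i(m)$ satisfies $-a+b-c=\langle h_i,\mu\rangle$, hence has weight $\mu+(a-b+c)\alpha_i=\mu-\langle h_i,\mu\rangle\alpha_i=s_i\mu$; thus $T_i(M_\mu)\subseteq M_{s_i\mu}$, and likewise for $T_i^{-1}$. This already proves $(3)$ for $w=s_i$, and---once $(2)$ is available---iterating along a reduced word yields $(3)$ for all $w$. For $(1)$, since both sides are $\mathbb{Q}(q)$-linear and $T_i(xy)=T_i(x)T_i(y)$, it suffices to check $T_i(x.m)=T_i(x).T_i(m)$ when $x$ runs over the generators $q^h$, $e_j$, $f_j$. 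The case $x=q^h$ is immediate from the weight shift, the formula $T_i(q^h)=q^{s_i(h)}$, and the identity $\langle s_i(h),s_i\mu\rangle=\langle h,\mu\rangle$. For $x=e_i$ or $f_i$ the required identity involves only the subalgebra generated by $e_i,f_i,t_i^{\pm1}$; since $M$ is integrable it decomposes as a direct sum of finite-dimensional simple modules over this subalgebra, on each of which I would verify the identity on a weight basis---this is the familiar action of $T_i$ on simple $\Uq(\mathfrak{sl}_2)$-modules. For $x=e_j$ or $f_j$ with $j\neq i$, I would substitute the explicit expressions for $T_i(e_j)$, $T_i(f_j)$ from Definition \ref{d:braidaction} and verify the identity by a direct computation with the commutation relations, reducing to the $\Uq(\mathfrak{sl}_2)_i$-isotypic components of $M$ (this is Lusztig's \cite[Proposition 37.1.2]{Lus:intro}).

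Granting $(1)$, I would deduce $(2)$ as follows. By the Matsumoto--Tits theorem it suffices to verify the braid relation $T_iT_jT_i\cdots=T_jT_iT_j\cdots$ ($m_{ij}$ factors on each side) as operators on $M$, for each pair $i\neq j$ with $m_{ij}<\infty$---in which case the rank-two subalgebra $\Uq^{(ij)}$ generated by $e_i,f_i,e_j,f_j$ (and the $q^h$) is of finite type. Write $\Theta,\Theta'$ for the two composites and $\rho$ for the action on $M$. By $(1)$, $\Theta\circ\rho(x)=\rho(\tau(x))\circ\Theta$ and $\Theta'\circ\rho(x)=\rho(\tau'(x))\circ\Theta'$, where $\tau=T_iT_jT_i\cdots$ and $\tau'=T_jT_iT_j\cdots$ are the corresponding $\mathbb{Q}(q)$-algebra automorphisms of $\Uq$; but $\tau=\tau'$, since $\{T_i\}$ already satisfy the braid relations on $\Uq$ (Definition \ref{d:braidaction}). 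Hence $\phi:=(\Theta')^{-1}\Theta$ commutes with $\rho$, i.e.\ is an endomorphism of $M$ as a $\Uq$-module. Restricting to $\Uq^{(ij)}$, over which $M$ is a direct sum of absolutely simple finite-dimensional modules, Schur's lemma shows that $\phi$ acts by a scalar on each summand; evaluating on a highest-weight vector $u_\lambda$---where $\Theta(u_\lambda)$ and $\Theta'(u_\lambda)$ both lie in the one-dimensional weight space $M_{w_{ij}\lambda}$, with $w_{ij}$ the longest element of $W_{\{i,j\}}$, and can be computed explicitly---forces this scalar to be $1$. Then $(3)$ for general $w$ follows by iterating the $w=s_i$ case along a reduced word. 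I expect the main effort to lie in the two rank-two computations invoked above: the $\Uq(\mathfrak{sl}_2)$-module identity underpinning the $e_i,f_i$ case of $(1)$, and the explicit comparison of $\Theta(u_\lambda)$ with $\Theta'(u_\lambda)$ in the braid relation---with $B_2$ and $G_2$ the most laborious cases; both are standard and can be extracted from \cite[Chapters 37 and 39]{Lus:intro}.
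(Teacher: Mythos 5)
The paper does not prove this proposition at all: it is quoted verbatim from Lusztig, with the pointer ``See \cite[Chapter 37, 39]{Lus:intro}''. Your sketch reconstructs exactly the standard argument from that source (weight shift, verification of the intertwining property on generators with the rank-one and rank-two cases, then Matsumoto--Tits reducing (2) to braid relations for pairs with $m_{ij}<\infty$), so in substance you are following the same route as the cited reference, and the outline is correct. Two small points deserve attention if you were to write this out: first, in the Schur-lemma step you should note that $\Theta$ and $\Theta'$ preserve every $\Uq^{(ij)}$-submodule (each $T_i(m)$ lies in $\Uq^{(ij)}.m$, and $\Uq^{(ij)}$ contains the torus, so submodules are weight-graded), which is what lets you conclude that $\phi=(\Theta')^{-1}\Theta$ stabilizes each simple summand rather than merely each isotypic component; second, the semisimplicity of the restriction of an arbitrary integrable weight module to the finite-type rank-two subalgebra is true but not free --- one needs that cyclic submodules are finite dimensional, which follows from local nilpotency of all (rank-two) root vectors obtained by conjugating with the already-established rank-one operators $T_i$, so there is no circularity, but it should be said. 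Beyond that, the actual mathematical content --- the rank-one module identity behind (1) for $x=e_i,f_i$ and the equality $\Theta(u_\lambda)=\Theta'(u_\lambda)$ on extremal vectors in types $A_2$, $B_2$, $G_2$ --- is deferred to Lusztig, which is consistent with how the paper itself treats the statement.
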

\begin{prop}
\label{p:extremal} Let $\lambda\in P_{+}$, $w\in W$ and $\boldsymbol{i}=\left(i_{1},\dots,i_{\ell}\right)\in I\left(w\right)$.
Recall that $u_{\lambda}$ is a highest weight vector of $V(\lambda)$
(Definition \ref{d:highrep}). Then we have 
\[
u_{w\lambda}:=(T_{w^{-1}})^{-1}(u_{\lambda})=f_{i_{1}}^{\left(a_{1}\right)}f_{i_{2}}^{\left(a_{2}\right)}\dots f_{i_{\ell}}^{\left(a_{\ell}\right)}.u_{\lambda},
\]
where $a_{1}=\left\langle h_{i_{1}},s_{i_{2}}\dots s_{i_{\ell}}\lambda\right\rangle ,\dots,a_{\ell}=\left\langle h_{i_{\ell}},\lambda\right\rangle $.
Note that $a_{1},\dots,a_{\ell}\in\mathbb{Z}_{\geq0}$. 
\end{prop}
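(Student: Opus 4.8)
The plan is to argue by induction on $\ell := \ell(w)$, reducing each time to a rank-one ($\mathfrak{sl}_2$) computation via the braid relations. For $\ell = 0$ both sides equal $u_\lambda$, so there is nothing to prove. For the inductive step fix $\boldsymbol{i} = (i_1,\dots,i_\ell) \in I(w)$, set $w' := s_{i_2}\cdots s_{i_\ell}$ so that $\ell(w') = \ell - 1$ and $(i_2,\dots,i_\ell) \in I(w')$, and write $w = s_{i_1}w'$. Since $(i_\ell,\dots,i_2,i_1) \in I(w^{-1})$ and $(i_\ell,\dots,i_2) \in I((w')^{-1})$, the independence of $T_{w^{-1}}$ on the reduced word (Proposition \ref{p:braid}(2)) gives $T_{w^{-1}} = T_{(w')^{-1}}\circ T_{i_1}$ as automorphisms of $V(\lambda)$, hence $(T_{w^{-1}})^{-1} = T_{i_1}^{-1}\circ (T_{(w')^{-1}})^{-1}$. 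By the induction hypothesis, $(T_{(w')^{-1}})^{-1}(u_\lambda) = f_{i_2}^{(a_2)}\cdots f_{i_\ell}^{(a_\ell)}.u_\lambda =: u_{w'\lambda}$; it is nonzero (the image of $u_\lambda$ under an automorphism) and, by Proposition \ref{p:braid}(3), it has weight $w'\lambda$. So it remains to prove the rank-one identity
\[
T_{i_1}^{-1}(u_{w'\lambda}) = f_{i_1}^{(a_1)}.u_{w'\lambda}, \qquad a_1 = \langle h_{i_1}, w'\lambda\rangle .
\]

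The first key point is that $u_{w'\lambda}$ is a highest weight vector for the $\mathfrak{sl}_2$-triple attached to $i_1$, i.e. $e_{i_1}.u_{w'\lambda} = 0$, and that $a_1 \geq 0$; both follow from $\ell(s_{i_1}w') = \ell(w') + 1$. Indeed this says $(w')^{-1}\alpha_{i_1} \in \Delta_+$, hence $(w')^{-1}h_{i_1}$ is a positive coroot and $a_1 = \langle (w')^{-1}h_{i_1},\lambda\rangle \geq 0$ since $\lambda \in P_+$. Moreover, if $w'\lambda + \alpha_{i_1}$ were a weight of $V(\lambda)$, then, the weight set of $V(\lambda)$ being $W$-stable, $\lambda + (w')^{-1}\alpha_{i_1} = (w')^{-1}(w'\lambda + \alpha_{i_1})$ would also be a weight, contradicting that all weights of $V(\lambda)$ lie in $\lambda - Q_+$; thus $V(\lambda)_{w'\lambda + \alpha_{i_1}} = 0$, and since $e_{i_1}$ maps $V(\lambda)_{w'\lambda}$ into $V(\lambda)_{w'\lambda + \alpha_{i_1}}$ we get $e_{i_1}.u_{w'\lambda} = 0$. (The same argument, applied to the reduced word $(i_k,\dots,i_\ell)$, shows $a_k = \langle h_{i_k}, s_{i_{k+1}}\cdots s_{i_\ell}\lambda\rangle \geq 0$ for every $k$.)

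The second key point is the identity $T_{i_1}^{-1}(m) = f_{i_1}^{(n)}.m$ for any vector $m$ in an integrable module with $e_{i_1}.m = 0$ and $\langle h_{i_1}, \wt m\rangle = n$: such $m$ generates an irreducible $\mathfrak{sl}_2$-submodule of highest weight $n \geq 0$ with basis $m, f_{i_1}.m, \dots, f_{i_1}^{(n)}.m$, and substituting the explicit formula for $T_{i_1}^{-1}$ from Definition \ref{d:repbraid} one sees that each summand $f_{i_1}^{(a)}e_{i_1}^{(b)}f_{i_1}^{(c)}.m$ with $a - b + c = n$ is, after the standard $\mathfrak{sl}_2$ reductions, a scalar multiple of $f_{i_1}^{(n)}.m$; the resulting $q$-binomial sum then collapses to $1$ by the Gauss $q$-binomial identity. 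Applying this with $m = u_{w'\lambda}$ yields the displayed rank-one identity, and combining it with the first paragraph gives $(T_{w^{-1}})^{-1}(u_\lambda) = f_{i_1}^{(a_1)}f_{i_2}^{(a_2)}\cdots f_{i_\ell}^{(a_\ell)}.u_\lambda$ with all $a_k \in \mathbb{Z}_{\geq 0}$, completing the induction. I expect the only genuinely computational step to be that last $q$-binomial collapse — equivalently, checking that the normalization of $T_{i_1}$ fixed in Definition \ref{d:repbraid} produces the scalar $1$ on a highest weight vector rather than a power of $q_{i_1}$ or a sign; everything else is formal once the $W$-stability of the weight set is invoked for the annihilation $e_{i_1}.u_{w'\lambda}=0$.
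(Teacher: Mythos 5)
Your proof is correct, and in fact the paper offers no proof of Proposition \ref{p:extremal} at all: it is quoted as a standard fact about Lusztig's symmetries (implicitly covered by the reference to \cite[Chapters 37, 39]{Lus:intro} given just above for Proposition \ref{p:braid}), and the induction-to-rank-one argument you give is precisely the standard one. Your reduction is sound: $T_{w^{-1}}=T_{(w')^{-1}}\circ T_{i_{1}}$ follows from Proposition \ref{p:braid}(2) applied to the reduced word $(i_{\ell},\dots,i_{1})$ of $w^{-1}$, and the vanishing $e_{i_{1}}.u_{w'\lambda}=0$ together with $a_{1}\geq 0$ follows, as you say, from $(w')^{-1}\alpha_{i_{1}}\in\Delta_{+}$ (equivalent to $\ell(s_{i_{1}}w')=\ell(w')+1$) and the $W$-stability of the set of weights of $V(\lambda)$, which is itself immediate from Proposition \ref{p:braid}(3). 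Concerning the one point you flagged: the rank-one scalar is indeed exactly $1$ with the normalization $T_{i}^{-1}=T'_{i,-1}$ of Definition \ref{d:repbraid}. For $m$ with $e_{i}.m=0$ and $n=\langle h_{i},\wt m\rangle$, the $\mathfrak{sl}_{2}$ reductions turn the summand indexed by $(a,b,c)$ with $a-b+c=n$ into
\[
(-1)^{b}\,q_{i}^{\,a(n-a)+(a-1)b}\left[\begin{array}{c}
a\\
b
\end{array}\right]_{i}\left[\begin{array}{c}
n\\
a
\end{array}\right]_{i}f_{i}^{(n)}.m,
\]
and summing over $b$ for fixed $a$ kills every $a\geq 1$ by the Gaussian binomial identity $\sum_{b}(-1)^{b}q^{(a-1)b}\left[\begin{array}{c}
a\\
b
\end{array}\right]=\delta_{a,0}$ (see \cite[Chapter 1]{Lus:intro}), so only the term $a=b=0$, $c=n$ survives, with coefficient $1$; hence $T_{i}^{-1}(m)=f_{i}^{(n)}.m$ on the nose, and your induction closes.
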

It is easy to show that $\left(u_{w\lambda},u_{w\lambda}\right)_{\lambda}^{\varphi}=1$
for $\lambda\in P_{+}$ and $w\in W$. Actually, the vector $u_{w\lambda}$
belongs to $\mathbf{B}^{\mathrm{low}}(\lambda)$ and $\mathbf{B}^{\mathrm{up}}(\lambda)$
\cite[subsection 3.2]{MR1240605}. 
\begin{prop}[{{\cite[Proposition 3.2.3, 3.2.5]{MR1240605}}}]
\label{p:Demazure} \textup{(1)} For $\lambda\in P_{+}$, $w\in W$
and $\boldsymbol{i}=\left(i_{1},\cdots,i_{\ell}\right)\in I\left(w\right)$,
we set 
\[
\mathscr{B}_{w}\left(\lambda\right):=\left\{ \widetilde{f}_{i_{1}}^{a_{1}}\cdots\widetilde{f}_{i_{\ell}}^{a_{\ell}}u_{\lambda}\mid\boldsymbol{a}=\left(a_{1},\cdots,a_{\ell}\right)\in\mathbb{Z}_{\geq0}^{\ell}\right\} \setminus\left\{ 0\right\} \subset\mathscr{B}\left(\lambda\right)
\]
and $V_{w}(\lambda):=\Uq^{+}.u_{w\lambda}$. Then we have 
\[
V_{w}(\lambda)=\bigoplus_{b\in\mathscr{B}_{w}\left(\lambda\right)}\mathbb{Q}\left(q\right)G_{\lambda}^{\mathrm{low}}\left(b\right).
\]
In particular, $\mathscr{B}_{w}\left(\lambda\right)$ is independent of the choice of $\bm{i}\in I(w)$. 

\textup{(2)} For $w\in W$ and $\boldsymbol{i}=\left(i_{1},\cdots,i_{\ell}\right)\in I\left(w\right)$,
we set 
\[
\mathscr{B}_{w}\left(\infty\right):=\left\{ \widetilde{f}_{i_{1}}^{a_{1}}\cdots\widetilde{f}_{i_{\ell}}^{a_{\ell}}u_{\infty}\mid\boldsymbol{a}=\left(a_{1},\cdots,a_{\ell}\right)\in\mathbb{Z}_{\geq0}^{\ell}\right\} 
\]
and $\mathbf{U}_{w,q}^{-}:=\sum_{a_{1},\cdots,a_{\ell}\in \mathbb{Z}_{\geq 0}}\mathbb{Q}\left(q\right)f_{i_{1}}^{a_{1}}\cdots f_{i_{\ell}}^{a_{\ell}}$.
Then we have 
\[
\mathbf{U}_{w,q}^{-}=\bigoplus_{b\in\mathscr{B}_{w}\left(\infty\right)}\mathbb{Q}\left(q\right)G^{\mathrm{low}}\left(b\right).
\]
Moreover $\mathscr{B}_{w}\left(\infty\right)$ and $\mathbf{U}_{w,q}^{-}$ are independent of the choice of $\bm{i}\in I(w)$. 
\end{prop}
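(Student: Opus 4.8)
The plan is to prove both parts by induction on $\ell(w)$, peeling a simple reflection off the \emph{left} of a reduced word $\bm{i}=(i_1,\dots,i_\ell)\in I(w)$: write $w=s_{i_1}w'$ with $w'=s_{i_2}\cdots s_{i_\ell}$, so $\ell(w)=\ell(w')+1$ and $(i_2,\dots,i_\ell)\in I(w')$. On the crystal side the recursion is immediate from the definition: setting $\mathfrak{F}_i(D):=\{\tilde{f}_i^{k}b\mid b\in D,\ k\geq0\}\setminus\{0\}$ for $D\subseteq\mathscr{B}(\lambda)$, one has $\mathscr{B}_w(\lambda)=\mathfrak{F}_{i_1}(\mathscr{B}_{w'}(\lambda))$, and likewise for $\mathscr{B}_w(\infty)$. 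So for (1) it suffices to establish, for $w=s_iw'$ with $\ell(s_iw')=\ell(w')+1$, the two facts: (a) the Demazure--module recursion $V_w(\lambda)=\sum_{k\geq0}\mathbb{Q}(q)\,f_i^{(k)}V_{w'}(\lambda)$; and (b) for every subset $D\subseteq\mathscr{B}(\lambda)$, $\ \sum_{k\geq0}\mathbb{Q}(q)\,f_i^{(k)}\bigl(\bigoplus_{b\in D}\mathbb{Q}(q)\,\Glow_\lambda(b)\bigr)=\bigoplus_{b\in\mathfrak{F}_i(D)}\mathbb{Q}(q)\,\Glow_\lambda(b)$. Granting these, the induction reads $V_w(\lambda)\overset{(a)}{=}\sum_k f_i^{(k)}V_{w'}(\lambda)=\sum_k f_i^{(k)}\bigoplus_{b\in\mathscr{B}_{w'}(\lambda)}\mathbb{Q}(q)\,\Glow_\lambda(b)\overset{(b)}{=}\bigoplus_{b\in\mathscr{B}_w(\lambda)}\mathbb{Q}(q)\,\Glow_\lambda(b)$, the base case being $V_e(\lambda)=\mathbb{Q}(q)u_\lambda=\mathbb{Q}(q)\Glow_\lambda(u_\lambda)$; and then $\mathscr{B}_w(\lambda)=\{b\in\mathscr{B}(\lambda)\mid\Glow_\lambda(b)\in V_w(\lambda)=\Uq^+.u_{w\lambda}\}$ manifestly does not depend on $\bm{i}$, which gives the ``in particular'' clause.

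For (b) I would invoke the compatibility of the lower global basis with $i$-strings: an $i$-string $\{\tilde{f}_i^{j}b_0\}_j$ (with $b_0$ its $i$-highest element) spans a submodule over the $\mathfrak{sl}_2$-copy $\langle e_i,f_i,t_i^{\pm1}\rangle$, isomorphic to the irreducible of dimension $\varepsilon_i(b)+\varphi_i(b)+1$, on which the divided powers $f_i^{(k)}$ act by the standard $\mathfrak{sl}_2$-formulas; hence $\sum_{k\geq0}\mathbb{Q}(q)f_i^{(k)}\Glow_\lambda(b)=\bigoplus_{k=0}^{\varphi_i(b)}\mathbb{Q}(q)\Glow_\lambda(\tilde{f}_i^{k}b)$ for each $b$, and summing over $b\in D$ yields (b). For (a): since $\ell(s_iw')>\ell(w')$ we have $m:=\langle h_i,w'\lambda\rangle\geq0$, the extremal vector $u_{w'\lambda}$ satisfies $\varepsilon_i(u_{w'\lambda})=0$, $\varphi_i(u_{w'\lambda})=m$ and $\tilde{f}_i^{m}u_{w'\lambda}=u_{s_iw'\lambda}=u_{w\lambda}$, so Proposition \ref{p:EFaction}, applied to $b=u_{w'\lambda}$ (which is its own upper global basis element), gives $u_{w\lambda}=f_i^{(m)}.u_{w'\lambda}\in f_i^{(m)}V_{w'}(\lambda)$; since $N:=\sum_{k\geq0}\mathbb{Q}(q)f_i^{(k)}V_{w'}(\lambda)$ is a $\Uq^+$-submodule of $V(\lambda)$ (because $[e_j,f_i]=0$ for $j\neq i$ and $e_if_i^{(k)}-f_i^{(k)}e_i\in f_i^{(k-1)}\Uq^0$), it follows that $V_w(\lambda)=\Uq^+.u_{w\lambda}\subseteq N$. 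For the reverse inclusion, $u_{w'\lambda}=e_i^{(m)}.u_{w\lambda}\in V_w(\lambda)$ (again by Proposition \ref{p:EFaction}, as $\varepsilon_i(u_{w\lambda})=m$) gives $V_{w'}(\lambda)\subseteq V_w(\lambda)$, and $V_w(\lambda)$ is stable under the action of $f_i$ because $s_iw=w'<w$ --- which I would prove by induction on $\wt$ using $f_ie_i=e_if_i-(t_i-t_i^{-1})/(q_i-q_i^{-1})$ together with $\tilde{f}_iu_{w\lambda}=0$ (as $\varphi_i(u_{w\lambda})=0$); hence $N\subseteq V_w(\lambda)$, proving (a).

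For (2) the plan is to run the identical induction inside $\Uq^-$, with $u_\infty$, $\mathbf{U}_{w,q}^-$ and left multiplication by $f_i$ playing the roles of $u_\lambda$, $V_w(\lambda)$ and the $f_i$-action: the analogue of (a), $\mathbf{U}_{w,q}^-=\sum_{k\geq0}\mathbb{Q}(q)f_{i_1}^{(k)}\mathbf{U}_{w',q}^-$, is immediate from the definition ($f_i$-stability being automatic), while the analogue of (b) again rests on the compatibility of $\mathbf{B}^{\mathrm{low}}$ with the $i$-strings of $\mathscr{B}(\infty)$. Alternatively one can deduce (2) from (1) by applying the projections $\pi_\lambda\colon\Uq^-\to V(\lambda)$ for $\lambda$ sufficiently dominant relative to a fixed weight: there $\pi_\lambda$ is injective on that weight space, intertwines the $\tilde{f}_i$, and satisfies $\pi_\lambda(\Glow(b))=\Glow_\lambda(\pi_\lambda(b))$ with $\pi_\lambda(b)\neq0$ (Propositions \ref{p:pilambda}, \ref{p:jlambda}), so it identifies $\mathbf{U}_{w,q}^-$ with $V_w(\lambda)$ and $\mathscr{B}_w(\infty)$ with $\mathscr{B}_w(\lambda)$ weight space by weight space.

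The step I expect to be the main obstacle is the Demazure recursion (a): the inclusion $V_w(\lambda)\subseteq N$ needs the extremal-vector identity $u_{w\lambda}=f_i^{(m)}.u_{w'\lambda}$ together with the $\Uq^+$-module structure of $N$, while the reverse inclusion needs the $f_i$-stability of $V_w(\lambda)$ when $s_iw<w$; the $i$-string statement (b) is then a routine $\mathfrak{sl}_2$-computation. (Everything here is essentially Kashiwara \cite[Proposition 3.2.3, 3.2.5]{MR1240605}, whose argument I am following.)
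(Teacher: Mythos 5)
First, note that the paper does not prove this proposition at all: it is quoted verbatim from Kashiwara \cite[Propositions 3.2.3, 3.2.5]{MR1240605}, so your reconstruction is being measured against Kashiwara's argument, which you say you are following. Your skeleton does match it in part: the induction on $\ell(w)$, the Demazure recursion (a) (with the extremal-vector identity $u_{w\lambda}=f_{i_1}^{(a_1)}.u_{w'\lambda}$, the $\Uq^{+}$-stability of $\sum_k f_i^{(k)}V_{w'}(\lambda)$, and the $f_i$-stability of $V_w(\lambda)$ when $s_iw<w$) is correct in outline, as is the reduction of (2) to (1) via $\pi_\lambda$ for $\lambda$ deep in the dominant cone.

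The genuine gap is step (b), which is both false as stated and falsely justified. The lower global basis is \emph{not} compatible with $i$-strings in the naive $\mathfrak{sl}_2$-sense: an $i$-string does not span a $\langle e_i,f_i,t_i^{\pm1}\rangle$-submodule, and $\sum_{k\geq0}\mathbb{Q}(q)f_i^{(k)}\Glow_\lambda(b)$ is in general strictly larger than $\bigoplus_{k}\mathbb{Q}(q)\Glow_\lambda(\tilde{f}_i^{k}b)$. The correct statement is only triangular: $f_i^{(k)}\Glow(b)$ equals a $q$-binomial multiple of $\Glow(\tilde{f}_i^{k}b)$ \emph{modulo} global basis elements $b'$ with $\varepsilon_i(b')>\varepsilon_i(b)+k$, and those $b'$ need not lie on the string of $b$. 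A concrete counterexample to your per-string claim already occurs in type $\mathrm{A}_2$: with $b=\tilde{f}_1\tilde{f}_2u_\infty$, so $\Glow(b)=f_1f_2$, the Serre relation gives $f_2\cdot(f_1f_2)=f_1f_2^{(2)}+f_2^{(2)}f_1$, a sum of two canonical basis elements, one of which lies off the $\tilde f_2$-string through $b$; hence (b) fails for $D=\{b\}$, and your asserted identity "for every subset $D$" is wrong. Because of this, the induction as written does not close: what is actually needed (and is the real content of Kashiwara's proof) is to carry along, simultaneously with the spanning statement, the fact that $\mathscr{B}_{w}(\lambda)$ is stable under all $\tilde{e}_i$ (i.e.\ it is a union of upper parts of $i$-strings), and then to combine the triangularity above with the stability of $V_{w'}(\lambda)$ under the actual operator $e_i$ to deduce $\sum_k f_i^{(k)}V_{w'}(\lambda)=\bigoplus_{b\in\mathfrak{F}_{i}(\mathscr{B}_{w'}(\lambda))}\mathbb{Q}(q)\Glow_\lambda(b)$. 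None of this is in your outline, and replacing it by "standard $\mathfrak{sl}_2$-formulas on strings" is not a routine computation but a false shortcut; the same defect propagates to your treatment of (2), whether done directly in $\Uq^-$ or deduced from (1).
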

\begin{rem}\label{r:specialDem}
By Proposition \ref{p:Demazure} (2), $\mathbf{U}_{w,q}^{-}\cap\mathbf{U}_{\mathcal{A}}^{-}=\bigoplus_{b\in\mathscr{B}_{w}\left(\infty\right)}\mathcal{A}G^{\mathrm{low}}\left(b\right)=:\mathbf{U}_{w,\mathcal{A}}^{-}$. Hence, 
\[
\mathbf{U}_{w,\mathcal{A}}^{-}\otimes_{\mathcal{A}}\mathbb{C}\xrightarrow{\sim}\mathbf{U}_{w}^{-}. 
\]
Set  $\mathbf{U}_{w}^{-}:=\sum_{a_{1},\cdots,a_{\ell}}\mathbb{C}f_{i_{1}}^{a_{1}}\cdots f_{i_{\ell}}^{a_{\ell}}\subset\mathbf{U}\left(\mathfrak{n}_{-}\right)$
for a reduced expression $\boldsymbol{i}\in I\left(w\right)$.
\end{rem}
For more details on Demazure modules and its crystal basis, see Kashiwara
\cite{MR1240605}. 
\begin{rem}
\label{r:Demazure} We have 
\[
\bigcup_{\lambda\in P_{+}}\overline{\jmath}_{\lambda}\left(\mathscr{B}_{w}\left(\lambda\right)\right)=\mathscr{B}_{w}\left(\infty\right).
\]
See also Theorem \ref{t:cryKP}. 
\end{rem}
\begin{defn}[{\cite[5.1.3]{MR2914878}}]
 \label{d:qclosed} Let $w\in W$. Set 
\[
\left(\mathbf{U}_{w,q}^{-}\right)^{\perp}:=\{x\in\Uq^{-}\mid(x,\mathbf{U}_{w,q}^{-})_{L}=0\}.
\]
Then, by $\Delta\left(\mathbf{U}_{w,q}^{-}\right)\subset\mathbf{U}_{w,q}^{-}\Uq^{0}\otimes\mathbf{U}_{w,q}^{-}$
and Lemma \ref{l:DLrel}, $\left(\mathbf{U}_{w,q}^{-}\right)^{\perp}$
is a two-sided ideal of $\Uq^{-}$. Hence we obtain a $\mathbb{Q}(q)$-algebra
\[
\Aq\left[N_{-}\cap X_{w}\right]:=\Uq^{-}/\left(\mathbf{U}_{w,q}^{-}\right)^{\perp},
\]
called the \emph{quantum closed unipotent cell}. The quantum closed
unipotent cell has a $Q_{-}$-graded algebra structure induced from
that of $\Uq^{-}$. Note that 
\[
\left(\mathbf{U}_{w,q}^{-}\right)^{\perp}=\bigoplus_{b\in\mathscr{B}\left(\infty\right)\setminus\mathscr{B}_{w}\left(\infty\right)}\mathbb{Q}\left(q\right)G^{\mathrm{up}}\left(b\right).
\]
Describe the canonical projection $\Uq^{-}\to\mathbf{A}_{q}\!\left[N_{-}\cap X_{w}\right]$
as $x\mapsto[x]$. The element $[x]$ clearly depends on $w$, however,
we omit to write $w$ because it will cause no confusion below.
\end{defn}
\begin{rem}
In \cite[5.1.3]{MR2914878}, $\mathbf{A}_{q}\!\left[N_{-}\cap X_{w}\right]$
is denoted by $\mathcal{O}_{q}\left[\overline{N_{w}}\right]$. 
\end{rem}
We set the $\mathcal{A}$-form $\mathbf{A}_{\mathbb{Q}\left[q^{\pm1}\right]}\!\left[N_{-}\cap X_{w}\right]$
of $\mathbf{A}_{q}\!\left[N_{-}\cap X_{w}\right]$ by 
\[
\mathbf{A}_{\mathbb{Q}\left[q^{\pm1}\right]}\!\left[N_{-}\cap X_{w}\right]:=\mathbf{A}_{\mathbb{Q}\left[q^{\pm1}\right]}\!\left[\boldsymbol{N}_{-}\right]/\left(\left(\mathbf{U}_{w,q}^{-}\right)^{\perp}\cap\mathbf{A}_{\mathbb{Q}\left[q^{\pm1}\right]}\!\left[\boldsymbol{N}_{-}\right]\right).
\]
Note that we have 
\begin{equation}
\left(\mathbf{U}_{w,q}^{-}\right)^{\perp}\cap\mathbf{A}_{\mathbb{Q}\left[q^{\pm1}\right]}\!\left[\boldsymbol{N}_{-}\right]=\bigoplus_{b\in\mathscr{B}\left(\infty\right)\setminus\mathscr{B}_{w}\left(\infty\right)}\mathcal{A}G^{\mathrm{up}}\left(b\right).\label{eq:closedcan}
\end{equation}
The following is the specialization result for quantum closed unipotent
cell which justifies the notation $\Aq\left[N_{-}\cap X_{w}\right]$.
\begin{thm}
For $w\in W$, we have 
\[
\mathbf{A}_{\mathbb{Q}\left[q^{\pm1}\right]}\!\left[N_{-}\cap X_{w}\right]\otimes_{\mathcal{A}}\mathbb{C}\simeq\mathbb{C}\left[N_{-}\cap X_{w}\right].
\]
here we regard $\mathbb{C}$ as an $\mathcal{A}$-module via $q^{\pm1}\mapsto1$. 
\end{thm}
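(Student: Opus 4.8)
The plan is to reduce the claim to the classical presentation of $\mathbb{C}\left[N_{-}\cap X_{w}\right]$ in Proposition \ref{p:closedcoord}, via the (standard) specialization isomorphism $\mathbf{A}_{\mathbb{Q}\left[q^{\pm1}\right]}\!\left[\boldsymbol{N}_{-}\right]\otimes_{\mathcal{A}}\mathbb{C}\xrightarrow{\sim}\mathbb{C}\left[\boldsymbol{N}_{-}\right]=\mathbf{U}\left(\mathfrak{n}_{-}\right)_{\mathrm{gr}}^{*}$ of $\mathbb{C}$-algebras. This isomorphism carries $\overline{G^{\mathrm{up}}(b)}$, the image of $G^{\mathrm{up}}(b)$ at $q=1$, to the element of $\mathbf{U}\left(\mathfrak{n}_{-}\right)_{\mathrm{gr}}^{*}$ dual to $\overline{G^{\mathrm{low}}(b)}$ with respect to the canonical pairing; it underlies Theorem \ref{t:grpspecial} and is part of the integral theory of (dual) canonical bases developed in \cite{MR2914878}.

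First I would note that, by the definition of $\mathbf{A}_{\mathbb{Q}\left[q^{\pm1}\right]}\!\left[\boldsymbol{N}_{-}\right]$ together with the $(\,,\,)_{L}$-duality of $\mathbf{B}^{\mathrm{low}}$ and $\mathbf{B}^{\mathrm{up}}$, the module $\mathbf{A}_{\mathbb{Q}\left[q^{\pm1}\right]}\!\left[\boldsymbol{N}_{-}\right]$ is free over $\mathcal{A}$ with basis $\{G^{\mathrm{up}}(b)\}_{b\in\mathscr{B}(\infty)}$. By \eqref{eq:closedcan}, the kernel $\mathcal{I}:=\left(\mathbf{U}_{w,q}^{-}\right)^{\perp}\cap\mathbf{A}_{\mathbb{Q}\left[q^{\pm1}\right]}\!\left[\boldsymbol{N}_{-}\right]$ of the surjective $\mathcal{A}$-algebra homomorphism $\mathbf{A}_{\mathbb{Q}\left[q^{\pm1}\right]}\!\left[\boldsymbol{N}_{-}\right]\twoheadrightarrow\mathbf{A}_{\mathbb{Q}\left[q^{\pm1}\right]}\!\left[N_{-}\cap X_{w}\right]$ is the $\mathcal{A}$-span of $\{G^{\mathrm{up}}(b)\}_{b\in\mathscr{B}(\infty)\setminus\mathscr{B}_{w}(\infty)}$, hence a direct $\mathcal{A}$-summand. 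Therefore the short exact sequence $0\to\mathcal{I}\to\mathbf{A}_{\mathbb{Q}\left[q^{\pm1}\right]}\!\left[\boldsymbol{N}_{-}\right]\to\mathbf{A}_{\mathbb{Q}\left[q^{\pm1}\right]}\!\left[N_{-}\cap X_{w}\right]\to0$ is split, so it stays exact after applying $(-)\otimes_{\mathcal{A}}\mathbb{C}$ (there is no $\mathrm{Tor}$ obstruction). Identifying $\mathbf{A}_{\mathbb{Q}\left[q^{\pm1}\right]}\!\left[\boldsymbol{N}_{-}\right]\otimes_{\mathcal{A}}\mathbb{C}$ with $\mathbb{C}\left[\boldsymbol{N}_{-}\right]$ as above and noting that the projection is an algebra map, I obtain a $\mathbb{C}$-algebra isomorphism $\mathbf{A}_{\mathbb{Q}\left[q^{\pm1}\right]}\!\left[N_{-}\cap X_{w}\right]\otimes_{\mathcal{A}}\mathbb{C}\simeq\mathbb{C}\left[\boldsymbol{N}_{-}\right]/\overline{\mathcal{I}}$, where $\overline{\mathcal{I}}$ is the $\mathbb{C}$-span of $\{\overline{G^{\mathrm{up}}(b)}\}_{b\in\mathscr{B}(\infty)\setminus\mathscr{B}_{w}(\infty)}$.

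It then remains to check $\overline{\mathcal{I}}=\left(\mathbf{U}_{w}^{-}\right)^{\perp}$. Under the specialization, the pairing $(\,,\,)_{L}$ becomes the canonical perfect pairing between $\mathbf{U}\left(\mathfrak{n}_{-}\right)=\mathbf{U}_{\mathcal{A}}^{-}\otimes_{\mathcal{A}}\mathbb{C}$ and $\mathbf{U}\left(\mathfrak{n}_{-}\right)_{\mathrm{gr}}^{*}=\mathbb{C}\left[\boldsymbol{N}_{-}\right]$, for which $\{\overline{G^{\mathrm{low}}(b)}\}_{b\in\mathscr{B}(\infty)}$ and $\{\overline{G^{\mathrm{up}}(b)}\}_{b\in\mathscr{B}(\infty)}$ remain dual bases. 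By Remark \ref{r:specialDem}, $\mathbf{U}_{w}^{-}=\bigoplus_{b\in\mathscr{B}_{w}(\infty)}\mathbb{C}\,\overline{G^{\mathrm{low}}(b)}$ inside $\mathbf{U}\left(\mathfrak{n}_{-}\right)$, so its annihilator in $\mathbb{C}\left[\boldsymbol{N}_{-}\right]$ is exactly $\bigoplus_{b\in\mathscr{B}(\infty)\setminus\mathscr{B}_{w}(\infty)}\mathbb{C}\,\overline{G^{\mathrm{up}}(b)}=\overline{\mathcal{I}}$. Combining with Proposition \ref{p:closedcoord} gives $\mathbf{A}_{\mathbb{Q}\left[q^{\pm1}\right]}\!\left[N_{-}\cap X_{w}\right]\otimes_{\mathcal{A}}\mathbb{C}\simeq\mathbb{C}\left[\boldsymbol{N}_{-}\right]/\left(\mathbf{U}_{w}^{-}\right)^{\perp}\simeq\mathbb{C}\left[N_{-}\cap X_{w}\right]$, as desired.

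The main obstacle I foresee is the input invoked in the first paragraph: that the $q=1$ specialization of the dual canonical basis $\mathcal{A}$-form $\mathbf{A}_{\mathbb{Q}\left[q^{\pm1}\right]}\!\left[\boldsymbol{N}_{-}\right]$ is genuinely the classical coordinate ring $\mathbb{C}\left[\boldsymbol{N}_{-}\right]$ as a $\mathbb{C}$-algebra, and that $\overline{\mathbf{B}^{\mathrm{up}}}$ is the basis dual to $\overline{\mathbf{B}^{\mathrm{low}}}$ for the classical pairing. This uses the compatibility of $(\,,\,)_{L}$ with the $\mathcal{A}$-forms and the behaviour of (dual) canonical bases under specialization, exactly as in the proof of Theorem \ref{t:grpspecial}; once this is granted, everything else is the splitting / linear-algebra argument above, carried out weight space by weight space (each weight space being finite dimensional).
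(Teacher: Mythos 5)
Your proposal is correct and follows essentially the same route as the paper: the short exact sequence of $\mathcal{A}$-modules cut out by \eqref{eq:closedcan}, the freeness/splitting over $\mathcal{A}$ coming from the dual canonical basis, specialization at $q=1$ to $\mathbf{U}\left(\mathfrak{n}_{-}\right)_{\mathrm{gr}}^{*}/\left(\mathbf{U}_{w}^{-}\right)^{\perp}$, and the conclusion via Proposition \ref{p:closedcoord}. The only difference is that you spell out the identification of the specialized kernel with $\left(\mathbf{U}_{w}^{-}\right)^{\perp}$ (via Remark \ref{r:specialDem} and the duality of the specialized bases), which the paper leaves implicit.
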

\begin{proof}
We have an exact sequence of $\mathcal{A}$-modules

\[
0\to\left(\mathbf{U}_{w}^{-}\right)^{\perp}\cap\mathbf{A}_{\mathbb{Q}\left[q^{\pm1}\right]}\!\left[\boldsymbol{N}_{-}\right]\to\mathbf{A}_{\mathbb{Q}\left[q^{\pm1}\right]}\!\left[\boldsymbol{N}_{-}\right]\to\mathbf{A}_{\mathbb{Q}\left[q^{\pm1}\right]}\!\left[N_{-}\cap X_{w}\right]\to0,
\]
here the second map is the inclusion and the third map is the projection.
Moreover, $\left(\mathbf{U}_{w}^{-}\right)^{\perp}\cap\mathbf{A}_{\mathbb{Q}\left[q^{\pm1}\right]}\!\left[\boldsymbol{N}_{-}\right]$
and $\mathbf{A}_{\mathbb{Q}\left[q^{\pm1}\right]}\!\left[\boldsymbol{N}_{-}\right]$
are free $\mathcal{A}$-modules and an $\mathcal{A}$-basis of the
former can be chosen as the subset of that of the latter (see (\ref{eq:closedcan})).
Therefore $\mathbf{A}_{\mathbb{Q}\left[q^{\pm1}\right]}\!\left[N_{-}\cap X_{w}\right]$
is also a free $\mathcal{A}$-module (more precisely, $\mathbf{A}_{\mathbb{Q}\left[q^{\pm1}\right]}\!\left[N_{-}\cap X_{w}\right]$
admits the projected dual canonical basis), and we have
\begin{align*}
\mathbf{A}_{\mathbb{Q}\left[q^{\pm1}\right]}\!\left[N_{-}\cap X_{w}\right]\otimes_{\mathcal{A}}\mathbb{C} & \simeq \frac{\mathbf{A}_{\mathbb{Q}\left[q^{\pm1}\right]}\!\left[\boldsymbol{N}_{-}\right]\otimes_{\mathcal{A}}\mathbb{C}}{\left(\left(\mathbf{U}_{w}^{-}\right)^{\perp}\cap\mathbf{A}_{\mathbb{Q}\left[q^{\pm1}\right]}\!\left[\boldsymbol{N}_{-}\right]\right)\otimes_{\mathcal{A}}\mathbb{C}}\\
 & =\mathbf{U}\left(\mathfrak{n}_{-}\right)_{\mathrm{gr}}^{*}/\left(\mathbf{U}_{w}^{-}\right)^{\perp}\simeq\mathbb{C}\left[N_{-}\cap X_{w}\right],
\end{align*}
 here the last isomorphism follows from Proposition \ref{p:closedcoord}. 
\end{proof}

\subsection{Unipotent quantum matrix coefficients}
\begin{defn}\label{d:minor} For $\lambda\in P_{+}$ and $u,u'\in V(\lambda)$,
define the element $D_{u,u'}\in\Uq^{-}$ by 
\[
(D_{u,u'},x)_{L}=(u,x.u')_{\lambda}^{\varphi}
\]
for all $x\in\Uq^{-}$. We call an element of this form \emph{a unipotent
quantum matrix coefficient}. Note that $\wt\left(D_{u,u'}\right)=\wt u-\wt u'$ for weight vectors $u,u'\in V(\lambda)$. For $w,w'\in W$, we write
\[
D_{w\lambda,w'\lambda}:=D_{u_{w\lambda},u_{w'\lambda}},
\]
which is called \emph{a unipotent quantum minor}. See \cite[Section 6]{MR2914878}. 
\end{defn}
\begin{defn}
\label{d:picheck} Let $\lambda\in P_{+}$. Define a surjective $\mathbb{Q}\left(q\right)$-linear
map $\pi_{w\lambda}^{\vee}\colon\mathbf{U}_{q}^{-}\to V_{w}\left(\lambda\right)$
by 
\[
\pi_{w\lambda}^{\vee}\left(y\right)=\left(y\right)^{\vee}.u_{w\lambda}.
\]
\end{defn}
\begin{prop}[{\cite[Proposition 25.2.6]{Lus:intro},\cite[8.2.2 (iii), (iv)]{MR1262212}}]
\label{p:picheck} Let $\lambda\in P_{+}$ and $w\in W$. Then there
exists a surjective map $\pi_{w\lambda}^{\vee}\colon\mathscr{B}(\infty)\to\mathscr{B}_{w}(\lambda)\coprod\{0\}$
such that 
\[
\pi_{w\lambda}^{\vee}(\Glow(b))=\Glow_{\lambda}(\pi_{w\lambda}^{\vee}(b))
\]
for $b\in\mathscr{B}(\infty)$, here $\Glow_{\lambda}(0)=0$. Moreover,
$\pi_{w\lambda}^{\vee}$ induces a bijection $(\pi_{w\lambda}^{\vee})^{-1}(\mathscr{B}_{w}(\lambda))\to\mathscr{B}_{w}(\lambda)$. 
\end{prop}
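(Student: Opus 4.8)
The plan is to separate two assertions: the \emph{linear} one, that the $\mathbb{Q}(q)$-linear map $\pi_{w\lambda}^\vee\colon\Uq^-\to V(\lambda)$, $y\mapsto y^\vee.u_{w\lambda}$, carries $\mathbf{B}^{\mathrm{low}}$ into $\mathbf{B}^{\mathrm{low}}(\lambda)\cup\{0\}$, and the \emph{combinatorial} one, that the induced map of crystals is surjective and injective off its zero set. First, $\pi_{w\lambda}^\vee$ is well defined and surjective onto $V_w(\lambda)$: indeed $\vee\colon\Uq^-\xrightarrow{\ \sim\ }\Uq^+$ is a $\mathbb{Q}(q)$-algebra isomorphism and $V_w(\lambda)=\Uq^+.u_{w\lambda}$ by Proposition \ref{p:Demazure}(1). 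Since the same proposition gives $V_w(\lambda)=\bigoplus_{b'\in\mathscr{B}_w(\lambda)}\mathbb{Q}(q)\Glow_{\lambda}(b')$, once the linear assertion is known each nonzero $\pi_{w\lambda}^\vee(\Glow(b))$ lies in $V_w(\lambda)$ and hence equals some $\Glow_{\lambda}(b')$ with $b'\in\mathscr{B}_w(\lambda)$; setting $\pi_{w\lambda}^\vee(b):=b'$, resp.\ $:=0$, with $\Glow_{\lambda}(0):=0$ gives the asserted map of crystals.

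To prove the linear assertion I would show that $\pi_{w\lambda}^\vee$ intertwines the bar involutions, the $\mathcal{A}$-forms, and the lower crystal lattices $L(\infty),L(\lambda)$ together with their crystal bases; by the uniqueness of the lower global basis this forces $\pi_{w\lambda}^\vee(\mathbf{B}^{\mathrm{low}})\subseteq\mathbf{B}^{\mathrm{low}}(\lambda)\cup\{0\}$. Bar-compatibility $\overline{\pi_{w\lambda}^\vee(y)}=\pi_{w\lambda}^\vee(\overline{y})$ holds because $\vee$ commutes with the bar involution and $u_{w\lambda}\in\mathbf{B}^{\mathrm{low}}(\lambda)$ (Proposition \ref{p:extremal} and the discussion right after it) is bar-invariant, so the bar involution of $V(\lambda)$ normalized at $u_\lambda$ restricts on $V_w(\lambda)$ to the one normalized at $u_{w\lambda}$. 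The $\mathcal{A}$-lattice compatibility $\pi_{w\lambda}^\vee(\mathbf{U}_{\mathcal{A}}^-)=\vee(\mathbf{U}_{\mathcal{A}}^-).u_{w\lambda}\subseteq V(\lambda)_{\mathcal{A}}$ is immediate, since $\vee(\mathbf{U}_{\mathcal{A}}^-)$ is the $\mathcal{A}$-subalgebra generated by the $e_i^{(n)}$ and $u_{w\lambda}=f_{i_1}^{(a_1)}\cdots f_{i_\ell}^{(a_\ell)}.u_\lambda\in V(\lambda)_{\mathcal{A}}$ by Proposition \ref{p:extremal}. The crystal-lattice compatibility — $\pi_{w\lambda}^\vee(L(\infty))\subseteq L(\lambda)$ with the induced $\mathbb{Q}$-linear map $L(\infty)/qL(\infty)\to L(\lambda)/qL(\lambda)$ sending $\mathscr{B}(\infty)$ into $\mathscr{B}(\lambda)\cup\{0\}$ — is the crux. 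The key mechanism is the identity $\pi_{w\lambda}^\vee(f_i.y)=e_i.\pi_{w\lambda}^\vee(y)$ (from $(f_iy)^\vee=e_iy^\vee$), which turns left multiplication by $f_i$ on $\Uq^-$ into the $e_i$-action on $V(\lambda)$; combined with the $i$-string decomposition $\Uq^-=\bigoplus_{n\geq0}f_i^{(n)}\ker e_i'$ and the explicit action of $e_i^{(n)},(e_i')^{(n)},({_{i}e'})^{(n)}$ on global basis vectors (Proposition \ref{p:EFaction}), this reduces the claim, by induction on $\ell(w)$, to the already established case of $\pi_\lambda$ (Proposition \ref{p:pilambda}); the base $w=e$ is direct, as $V_e(\lambda)=\mathbb{Q}(q)u_\lambda$ and $\pi_{e\lambda}^\vee(\Glow(b))=\varepsilon(\Glow(b)^\vee)u_\lambda$ equals $u_\lambda$ for $b=u_\infty$ and $0$ otherwise. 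An equivalent route, and essentially the one of the cited references, is to transport $\pi_{w\lambda}^\vee$ to $\pi_\lambda$ through the braid symmetry $T_{w^{-1}}$ of $V(\lambda)$, using $u_{w\lambda}=(T_{w^{-1}})^{-1}(u_\lambda)$ and Proposition \ref{p:braid}(1).

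For the combinatorial assertion, surjectivity of $\mathscr{B}(\infty)\to\mathscr{B}_w(\lambda)\cup\{0\}$ is automatic, since the linear map is onto $\bigoplus_{b'\in\mathscr{B}_w(\lambda)}\mathbb{Q}(q)\Glow_{\lambda}(b')$ and carries $\mathbf{B}^{\mathrm{low}}$ to $(\mathbf{B}^{\mathrm{low}}(\lambda)\cap V_w(\lambda))\cup\{0\}$, so every $\Glow_{\lambda}(b')$ is hit. Injectivity on the non-vanishing locus I would obtain as for $\pi_\lambda$ in Proposition \ref{p:pilambda}, by exhibiting a section on the level of crystals: a $\vee$-twisted counterpart of $\overline{\jmath}_\lambda$ identifying $\mathscr{B}_w(\lambda)$ with the subset $\{b\in\mathscr{B}_w(\infty)\mid\pi_{w\lambda}^\vee(b)\neq0\}$ of $\mathscr{B}(\infty)$ — here Remark \ref{r:Demazure} and the extremal-string description behind Proposition \ref{p:jlambdaimage} enter — and then checking that its composite with $\pi_{w\lambda}^\vee$ is the identity on $\mathscr{B}_w(\lambda)$, which forces each fibre over $\mathscr{B}_w(\lambda)$ to be a single point. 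The main obstacle of the whole argument is the crystal-lattice compatibility in the middle step: controlling precisely how the $e_i\leftrightarrow f_i$ swap $\vee$ interacts with the Demazure/braid structure — whether through the $\ell(w)$-induction with the Kashiwara operators, or through tracking the sign and power-of-$q$ twists in the braid transport — whereas everything surrounding it is formal. This is why the result is quoted from Lusztig and Joseph rather than reproved here.
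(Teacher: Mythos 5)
The paper offers no internal argument for this proposition: it is imported verbatim from Lusztig \cite[Proposition 25.2.6]{Lus:intro} and Joseph \cite[8.2.2]{MR1262212}, so your proposal has to stand on its own, and as it stands it does not. Your formal frame is the right shape: once one knows that $\Glow(b)^{\vee}.u_{w\lambda}$ is always a lower global basis vector of $V(\lambda)$ or zero, the image lies in $V_{w}(\lambda)=\Uq^{+}.u_{w\lambda}=\bigoplus_{b'\in\mathscr{B}_{w}(\lambda)}\mathbb{Q}(q)\Glow_{\lambda}(b')$ (Proposition \ref{p:Demazure}), and surjectivity onto $\mathscr{B}_{w}(\lambda)\coprod\{0\}$ is automatic. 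But the ``crux'' you isolate is exactly the cited theorem, and neither route you sketch delivers it. The braid-transport route is not ``essentially the one of the cited references'': $T_{w^{-1}}$ does not preserve the crystal lattice or the canonical basis of $V(\lambda)$ (compatibility of the $T_i$ with global bases holds only on restricted pieces), so one cannot simply carry Proposition \ref{p:pilambda} across $u_{w\lambda}=(T_{w^{-1}})^{-1}(u_{\lambda})$. The induction on $\ell(w)$ is, in substance, Kashiwara's Demazure-crystal theorem, whose proof requires establishing the string property of $\mathscr{B}_{w}(\lambda)$ simultaneously with the global-basis compatibility; the identity $\pi_{w\lambda}^{\vee}(f_{i}y)=e_{i}.\pi_{w\lambda}^{\vee}(y)$ plus the $i$-string decomposition does not by itself carry the induction step, and Proposition \ref{p:EFaction} concerns the \emph{upper} global bases, so it cannot be invoked as stated for the lower-basis manipulations you need.

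The injectivity step also contains a concrete error: the non-vanishing locus $\{b\in\mathscr{B}(\infty)\mid \Glow(b)^{\vee}.u_{w\lambda}\neq0\}$ is \emph{not} a subset of $\mathscr{B}_{w}(\infty)$. For $\mathfrak{g}=\mathfrak{sl}_{3}$, $w=s_{1}s_{2}$, $\lambda=\varpi_{2}$, the element $b$ with $\Glow(b)=f_{2}f_{1}$ satisfies $e_{2}e_{1}.u_{w\lambda}\neq0$, yet $b=\tilde f_{2}\tilde f_{1}u_{\infty}\notin\mathscr{B}_{s_{1}s_{2}}(\infty)$. The correct locus is $\overline{\jmath}_{w\lambda}^{\vee}(\mathscr{B}_{w}(\lambda))$, which lies in $\mathscr{B}(\Uq^{-}(w))$ (Theorem \ref{t:cryKP}); it is only its image under $\ast$ that lands in $\mathscr{B}_{w}(\infty)$ (compare Proposition \ref{p:minor}(2) and Proposition \ref{p:usualinj}), so your ``$\vee$-twisted $\overline{\jmath}_{\lambda}$'' is missing the $\ast$-twist. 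Moreover, the existence of a section only re-proves surjectivity; single-point fibres follow only if you additionally show the section's image exhausts the non-vanishing locus, which is the part still owed. In the end your argument, like the paper, defers the essential content to Lusztig and Joseph; that is a legitimate choice, but the surrounding sketch should not be presented as a reduction that makes the citation dispensable.
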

\begin{defn}
\label{d:jcheck} Let $\lambda\in P_{+}$ and $w\in W$. Set $V_{w}\left(\lambda\right)^{\perp}:=\left\{ u\in V\left(\lambda\right)\mid\left(u,V_{w}\left(\lambda\right)\right)_{\lambda}^{\varphi}=0\right\} $.
Define $j_{w\lambda}^{\vee}\colon V\left(\lambda\right)/V_{w}\left(\lambda\right)^{\perp}\hookrightarrow\mathbf{U}_{q}^{-}$
as the dual homomorphism of $\pi_{w\lambda}^{\vee}$ given by the
non-degenerate bilinear forms $\left(\;,\;\right)_{\lambda}^{\varphi}\colon V\left(\lambda\right)\times V\left(\lambda\right)\to\mathbb{Q}\left(q\right)$
and $\left(\;,\;\right)_{L}\colon\mathbf{U}_{q}^{-}\times\mathbf{U}_{q}^{-}\to\mathbb{Q}\left(q\right)$,
that is, 
\begin{align*}
\left(j_{w\lambda}^{\vee}\left(u\right),y\right)_{L}=\left(u,\pi_{w\lambda}^{\vee}\left(y\right)\right)_{\lambda}^{\varphi}(=\left(u,y^{\vee}.u_{w\lambda}\right)_{\lambda}^{\varphi}=\left(\varphi\left(y^{\vee}\right).u,u_{w\lambda}\right)_{\lambda}^{\varphi}).
\end{align*}
\[
\xymatrix{V\left(\lambda\right)/\left(V_{w}\left(\lambda\right)\right)^{\perp}\ar[d]\ar[rr]^{j_{w\lambda}^{\vee}} &  & \mathbf{U}_{q}^{-}\ar[d]\\
\left(V_{w}\left(\lambda\right)\right)_{\mathrm{gr}}^{*}\ar[rr]^{\left(\pi_{w\lambda}^{\vee}\right)_{\mathrm{gr}}^{*}} &  & \left(\mathbf{U}_{q}^{-}\right)_{\mathrm{gr}}^{*}
}
\]

In the following, the map $V\left(\lambda\right)\to\mathbf{U}_{q}^{-}$
given by $u\mapsto j_{w\lambda}^{\vee}(p_{w}(u))$ is also denoted
by $j_{w\lambda}^{\vee}$, here $p_{w}$ denotes the canonical projection
$V\left(\lambda\right)\to V(\lambda)/V_{w}\left(\lambda\right)^{\perp}$. 
\end{defn}
The following proposition immediately follows from Proposition \ref{p:picheck}. 
\begin{prop}
\label{p:jcheck} Let $\lambda\in P_{+}$ and $w\in W$. Then there
is an injective map $\overline{\jmath}_{w\lambda}^{\vee}\colon\mathscr{B}_{w}\left(\lambda\right)\hookrightarrow\mathscr{B}\left(\infty\right)$
such that 
\[
\left(G_{\lambda}^{\mathrm{up}}\left(b\right),G^{\mathrm{low}}\left(b'\right)^{\vee}.u_{w\lambda}\right)_{\lambda}^{\varphi}=\delta_{b',\overline{\jmath}_{w\lambda}^{\vee}\left(b\right)}
\]
for any $b\in\mathscr{B}_{w}\left(\lambda\right)$ and $b'\in\mathscr{B}\left(\infty\right)$.
That is, we have $j_{w\lambda}^{\vee}\left(G_{\lambda}^{\mathrm{up}}\left(b\right)\right)=G^{\mathrm{up}}\left(\overline{\jmath}_{w\lambda}^{\vee}\left(b\right)\right)$.
\end{prop}
\begin{rem}
\label{r:jcheck} Let $\lambda\in P_{+}$ and $w\in W$. Then, 
\begin{itemize}
\item $\wt\overline{\jmath}_{w\lambda}^{\vee}\left(b\right)=-\wt b+w\lambda$
for $b\in\mathscr{B}_{w}(\lambda)$, and 
\item $\overline{\jmath}_{w\lambda}^{\vee}\left(\pi_{w\lambda}^{\vee}\left(b\right)\right)=b$
for $b\in(\pi_{w\lambda}^{\vee})^{-1}(\mathscr{B}_{w}(\lambda))$. 
\end{itemize}
\end{rem}
\begin{prop}
\label{p:minor} Let $\lambda\in P_{+}$ and $w\in W$. Then the following
hold: 
\begin{enumerate}
\item[(1)] $D_{G_{\lambda}^{\mathrm{up}}\left(b\right),u_{\lambda}}=G^{\mathrm{up}}\left(\overline{\jmath}_{\lambda}(b)\right)$
for all $b\in\mathscr{B}\left(\lambda\right)$, 
\item[(2)] $D_{u_{w\lambda},G_{\lambda}^{\mathrm{up}}\left(b\right)}=G^{\mathrm{up}}\left(\ast\overline{\jmath}_{w\lambda}^{\vee}\left(b\right)\right)$
for all $b\in\mathscr{B}_{w}\left(\lambda\right)$, and 
\item[(3)] $D_{u_{w\lambda},G_{\lambda}^{\mathrm{up}}\left(b\right)}=0$ for
all $b\in\mathscr{B}\left(\lambda\right)\setminus\mathscr{B}_{w}\left(\lambda\right)$. 
\end{enumerate}
\end{prop}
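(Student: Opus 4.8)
The plan is to reduce all three assertions to the identifications of the embeddings $j_\lambda$ and $j_{w\lambda}^\vee$ with dual canonical bases, which are already available as Propositions~\ref{p:jlambda} and~\ref{p:jcheck}. For (1), I would first note that, comparing Definition~\ref{d:minor} with Definition~\ref{d:jlambda}, both $D_{v,u_\lambda}$ and $j_\lambda(v)$ are the unique element $z\in\Uq^-$ with $(z,x)_L=(v,x.u_\lambda)_\lambda^\varphi$ for all $x\in\Uq^-$ (uniqueness by non-degeneracy of $(\;,\;)_L$); hence $D_{v,u_\lambda}=j_\lambda(v)$ for every $v\in V(\lambda)$. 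Taking $v=\Gup_\lambda(b)$ and invoking Proposition~\ref{p:jlambda} yields $D_{\Gup_\lambda(b),u_\lambda}=\Gup(\overline{\jmath}_\lambda(b))$, which is (1).

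For (2) and (3) the heart of the matter is the identity
\[
D_{u_{w\lambda},u}=\ast\bigl(j_{w\lambda}^\vee(p_w(u))\bigr)\qquad(u\in V(\lambda)),
\]
where $p_w\colon V(\lambda)\to V(\lambda)/V_w(\lambda)^\perp$ is the canonical projection. To prove it I would compute, for $y\in\Uq^-$, that $(D_{u_{w\lambda},u},y^\ast)_L=(u_{w\lambda},y^\ast.u)_\lambda^\varphi=(y^\ast.u,u_{w\lambda})_\lambda^\varphi$ by the symmetry of $(\;,\;)_\lambda^\varphi$, then apply the adjunction property to get $(y^\ast.u,u_{w\lambda})_\lambda^\varphi=(u,\varphi(y^\ast).u_{w\lambda})_\lambda^\varphi$. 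Since $\varphi=\vee\circ\ast=\ast\circ\vee$ and $\vee$ is an involution, $\varphi(y^\ast)=y^\vee$, so this equals $(u,y^\vee.u_{w\lambda})_\lambda^\varphi=(u,\pi_{w\lambda}^\vee(y))_\lambda^\varphi=(j_{w\lambda}^\vee(p_w(u)),y)_L$ by Definition~\ref{d:jcheck}. On the other hand, the $\ast$-invariance~\eqref{astinv} gives $(D_{u_{w\lambda},u},y^\ast)_L=(\ast(D_{u_{w\lambda},u}),y)_L$. Comparing the two expressions over all $y$ and using non-degeneracy of $(\;,\;)_L$ gives $\ast(D_{u_{w\lambda},u})=j_{w\lambda}^\vee(p_w(u))$, hence the displayed identity since $\ast^2=\mathrm{id}$.

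Given this identity, I would finish as follows. First, $V_w(\lambda)^\perp=\bigoplus_{b\in\mathscr{B}(\lambda)\setminus\mathscr{B}_w(\lambda)}\mathbb{Q}(q)\,\Gup_\lambda(b)$: indeed $V_w(\lambda)=\bigoplus_{b\in\mathscr{B}_w(\lambda)}\mathbb{Q}(q)\,\Glow_\lambda(b)$ by Proposition~\ref{p:Demazure}(1), and $\{\Glow_\lambda(b)\}_b$, $\{\Gup_\lambda(b)\}_b$ are dual bases of $V(\lambda)$ with respect to $(\;,\;)_\lambda^\varphi$. Hence $p_w(\Gup_\lambda(b))=0$ whenever $b\in\mathscr{B}(\lambda)\setminus\mathscr{B}_w(\lambda)$, so the displayed identity gives $D_{u_{w\lambda},\Gup_\lambda(b)}=0$, which is (3). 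For $b\in\mathscr{B}_w(\lambda)$, Proposition~\ref{p:jcheck} gives $j_{w\lambda}^\vee(p_w(\Gup_\lambda(b)))=\Gup(\overline{\jmath}_{w\lambda}^\vee(b))$, and applying $\ast$ together with Proposition~\ref{p:Kasinv} yields $D_{u_{w\lambda},\Gup_\lambda(b)}=\Gup(\ast\,\overline{\jmath}_{w\lambda}^\vee(b))$, which is (2).

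I do not expect a genuine obstacle here: the argument is essentially an unravelling of the defining bilinear-form characterizations. The only delicate point is the involution bookkeeping in the displayed identity — using $\varphi=\vee\circ\ast$ so that $\varphi(y^\ast)=y^\vee$, the symmetry of $(\;,\;)_\lambda^\varphi$, and the $\ast$-invariance~\eqref{astinv} of $(\;,\;)_L$ — combined with the fact that $j_{w\lambda}^\vee$ is only defined on the quotient $V(\lambda)/V_w(\lambda)^\perp$, which is precisely what forces the vanishing statement (3).
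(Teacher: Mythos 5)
Your proof is correct and follows essentially the same route as the paper: part (1) is immediate from Proposition \ref{p:jlambda}, and parts (2)--(3) come from pairing $D_{u_{w\lambda},u}$ against $\Uq^-$ and unwinding the adjunction, $\varphi=\vee\circ\ast$, the $\ast$-invariance \eqref{astinv}, Proposition \ref{p:jcheck}, Proposition \ref{p:Kasinv}, and Proposition \ref{p:Demazure}. Packaging the computation as the identity $D_{u_{w\lambda},u}=\ast\bigl(j_{w\lambda}^\vee(p_w(u))\bigr)$ is only a presentational variation on the paper's direct calculation of $(D_{u_{w\lambda},G_{\lambda}^{\mathrm{up}}(b)},y)_L$, not a different argument.
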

\begin{proof}
The equality (1) follows immediately by Proposition \ref{p:jlambda}.
For $y\in\Uq^{-}$, we have 
\begin{align*}
(D_{u_{w\lambda},G_{\lambda}^{\mathrm{up}}\left(b\right)},y)_{L} & =(u_{w\lambda},y.G_{\lambda}^{\mathrm{up}}\left(b\right))_{\lambda}^{\varphi}\\
 & =(G_{\lambda}^{\mathrm{up}}\left(b\right),(\ast(y))^{\vee}.u_{w\lambda})_{\lambda}^{\varphi}\\
 & =(G^{\mathrm{up}}\left(\overline{\jmath}_{w\lambda}^{\vee}\left(b\right)\right),\ast(y))_{L}\\
 & =(G^{\mathrm{up}}\left(\ast\overline{\jmath}_{w\lambda}^{\vee}\left(b\right)\right),y)_{L}.
\end{align*}
This completes the proof of (2). The assertion (3) follows from the
similar calculation and Proposition \ref{p:Demazure}. 
\end{proof}
\begin{prop}[{\cite[Corollary 6.4]{MR2914878}}]
\label{p:minorPBW} Let $w\in W$ and ${\bm{i}}=(i_{1},\dots,i_{\ell})\in I(w)$.
For $i\in I$, define ${\bm{n}}^{(i)}=(n_{1}^{(i)},\dots,n_{\ell}^{(i)})\in\mathbb{Z}_{\geq0}^{\ell}$
by 
\[
n_{k}^{(i)}=\begin{cases}
1 & \text{if}\;i_{k}=i,\\
0 & \text{otherwise.}
\end{cases}
\]
For $\lambda\in P_{+}$, set ${\bm{n}}^{\lambda}:=\sum_{i\in I}\langle\lambda,h_{i}\rangle{\bm{n}}^{(i)}$.
Then we have 
\[
D_{w\lambda,\lambda}=G^{\mathrm{up}}(b_{-1}({\bm{n}}^{\lambda},{\bm{i}})).
\]
\end{prop}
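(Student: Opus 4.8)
The plan is to identify the unipotent quantum minor $D_{w\lambda,\lambda}$ with a specific upper PBW basis vector and then invoke the characterization of dual canonical basis elements in Proposition \ref{p:dualcanonical}(2). First I would recall from Definition \ref{d:minor} that $D_{w\lambda,\lambda}=D_{u_{w\lambda},u_{\lambda}}$, the element of $\Uq^{-}$ determined by $(D_{w\lambda,\lambda},x)_{L}=(u_{w\lambda},x.u_{\lambda})_{\lambda}^{\varphi}$ for all $x\in\Uq^{-}$. By Proposition \ref{p:extremal}, $u_{w\lambda}=f_{i_{1}}^{(a_{1})}\cdots f_{i_{\ell}}^{(a_{\ell})}.u_{\lambda}$ with $a_{k}=\langle h_{i_{k}},s_{i_{k+1}}\cdots s_{i_{\ell}}\lambda\rangle$, and one checks directly that $\bm{n}^{\lambda}=(a_{1},\dots,a_{\ell})$: indeed $n_{k}^{(i)}$ counts the positions equal to $i$, so $\sum_{i}\langle\lambda,h_{i}\rangle n_{k}^{(i)}=\langle\lambda,h_{i_{k}}\rangle$; but $a_{k}=\langle h_{i_{k}},s_{i_{k+1}}\cdots s_{i_{\ell}}\lambda\rangle$ is \emph{not} literally $\langle\lambda,h_{i_{k}}\rangle$, so I should instead verify the PBW-expansion identity via the triangularity relating the extremal vector to $f_{i_{1}}^{a_{1}}\cdots f_{i_{\ell}}^{a_{\ell}}.u_{\lambda}$ modulo lower terms, or appeal to the known relation between $u_{w\lambda}$ and $F^{\mathrm{up}}(\bm{n}^{\lambda},\bm{i})$.

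The main computation I would carry out is the pairing of $D_{w\lambda,\lambda}$ against the lower PBW basis $\{F^{\mathrm{low}}(\bm{c},\bm{i})\}_{\bm{c}}$ of $\Uq^{-}(w)$. Since $D_{w\lambda,\lambda}$ has weight $w\lambda-\lambda$, which equals $\wt F^{\mathrm{low}}(\bm{n}^{\lambda},\bm{i})$, and since $T_{i_{1}}\cdots T_{i_{k-1}}(f_{i_{k}}^{(c_{k})})$ acting on the highest weight vector reproduces Demazure-type crystal combinatorics, one shows $(D_{w\lambda,\lambda},F^{\mathrm{low}}(\bm{c},\bm{i}))_{L}=(u_{w\lambda},F^{\mathrm{low}}(\bm{c},\bm{i}).u_{\lambda})_{\lambda}^{\varphi}$ vanishes unless $\bm{c}=\bm{n}^{\lambda}$, because $F^{\mathrm{low}}(\bm{c},\bm{i}).u_{\lambda}$ has weight $\lambda-\wt$ disjoint from $w\lambda$ otherwise, and is proportional to $u_{w\lambda}$ when $\bm{c}=\bm{n}^{\lambda}$ — this last point uses Proposition \ref{p:extremal} together with $\left(u_{w\lambda},u_{w\lambda}\right)_{\lambda}^{\varphi}=1$. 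Hence $D_{w\lambda,\lambda}$ is a scalar multiple of $F^{\mathrm{up}}(\bm{n}^{\lambda},\bm{i})$; tracking the normalization constants in \eqref{norm} and in the definition of $F^{\mathrm{up}}$ pins the scalar to $1$, so $D_{w\lambda,\lambda}=F^{\mathrm{up}}(\bm{n}^{\lambda},\bm{i})$ exactly, which is in particular unitriangular (trivially) in the sense of (DCB2).

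To conclude, I would verify conditions (DCB1) and (DCB2) of Proposition \ref{p:dualcanonical}(2) for $D_{w\lambda,\lambda}$. Condition (DCB2) is immediate once $D_{w\lambda,\lambda}=F^{\mathrm{up}}(\bm{n}^{\lambda},\bm{i})$ (the correction terms $d^{\bm{i}}_{\bm{c},\bm{c}'}$ are all zero). Condition (DCB1), namely $\sigma(D_{w\lambda,\lambda})=D_{w\lambda,\lambda}$, follows from Proposition \ref{p:barinv} applied to the dual canonical basis together with the invariance properties: one checks $\overline{D_{w\lambda,\lambda}}$ and $*(D_{w\lambda,\lambda})$ behave compatibly, using that $u_{w\lambda}\in\mathbf{B}^{\mathrm{low}}(\lambda)$ is bar-invariant, so $D_{w\lambda,\lambda}$ is a matrix coefficient of bar-invariant vectors, hence bar-invariant under the appropriate twisted involution. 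Given (DCB1) and (DCB2), Proposition \ref{p:dualcanonical}(2) forces $D_{w\lambda,\lambda}=\Gup(b(\bm{n}^{\lambda},\bm{i}))$, and then applying the bijection $*$ of Definition \ref{d:PBWparam} together with $\ast(\Gup(b))=\Gup(\ast b)$ from Proposition \ref{p:Kasinv} — wait, here I want $D_{w\lambda,\lambda}\in\mathbf{A}_q[N_-(w)]$, so actually the cleaner route is: $D_{w\lambda,\lambda}\in\mathbf{A}_q[N_-(w)]=\ast(\Uq^-(w))$, and by Proposition \ref{p:minor}(2)--(3) it is a dual canonical basis element; combined with the PBW-expansion just computed this identifies it as $\Gup(b_{-1}(\bm{n}^{\lambda},\bm{i}))$. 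The main obstacle I expect is the bookkeeping in matching the combinatorial parameter $\bm{n}^{\lambda}$ with the exponents $(a_1,\dots,a_\ell)$ from Proposition \ref{p:extremal} and verifying that the normalization scalar is exactly $1$ rather than a power of $q$ — this requires careful use of \eqref{norm} and the $\ast$-invariance \eqref{astinv} of the Lusztig form.
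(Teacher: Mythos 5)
This proposition is not proved in the paper at all; it is quoted from \cite[Corollary 6.4]{MR2914878}, so your proposal has to stand on its own, and unfortunately its central step is false. You claim that $(D_{w\lambda,\lambda},F^{\mathrm{low}}(\bm{c},\bm{i}))_{L}=(u_{w\lambda},F^{\mathrm{low}}(\bm{c},\bm{i}).u_{\lambda})_{\lambda}^{\varphi}$ vanishes unless $\bm{c}=\bm{n}^{\lambda}$, justify this by a weight argument, and conclude $D_{w\lambda,\lambda}=F^{\mathrm{up}}(\bm{n}^{\lambda},\bm{i})$. The weight argument cannot work: every $\bm{c}$ with $\wt F^{\mathrm{low}}(\bm{c},\bm{i})=w\lambda-\lambda$ produces a vector $F^{\mathrm{low}}(\bm{c},\bm{i}).u_{\lambda}$ of the same weight $w\lambda$, so weights never single out $\bm{n}^{\lambda}$. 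Concretely, take $\mathfrak{g}=\mathfrak{sl}_{3}$, $w=w_{0}$, $\bm{i}=(1,2,1)$, $\lambda=\varpi_{1}$, so $\bm{n}^{\lambda}=(1,0,1)$. Then $u_{w_{0}\varpi_{1}}=f_{2}f_{1}.u_{\varpi_{1}}$ and a direct computation gives $D_{w_{0}\varpi_{1},\varpi_{1}}=(1-q^{2})(f_{2}f_{1}-qf_{1}f_{2})$, while $F^{\mathrm{low}}((1,0,1),\bm{i})=f_{1}f_{2}$ and $F^{\mathrm{low}}((0,1,0),\bm{i})=T_{1}(f_{2})=f_{2}f_{1}-qf_{1}f_{2}$. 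Here $F^{\mathrm{low}}(\bm{n}^{\lambda},\bm{i}).u_{\varpi_{1}}=f_{1}f_{2}.u_{\varpi_{1}}=0$ while $F^{\mathrm{low}}((0,1,0),\bm{i}).u_{\varpi_{1}}=u_{w_{0}\varpi_{1}}$, so $(D_{w_{0}\varpi_{1},\varpi_{1}},F^{\mathrm{low}}(\bm{n}^{\lambda},\bm{i}))_{L}=0$ and $(D_{w_{0}\varpi_{1},\varpi_{1}},F^{\mathrm{low}}((0,1,0),\bm{i}))_{L}=1$ --- exactly the opposite of your vanishing claim --- and $D_{w_{0}\varpi_{1},\varpi_{1}}\neq F^{\mathrm{up}}(\bm{n}^{\lambda},\bm{i})=(1-q^{2})^{2}f_{1}f_{2}$. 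There is also a structural confusion: $D_{w\lambda,\lambda}$ lies in $\Aq[N_{-}(w)]=\ast(\Uq^{-}(w))$ and in general not in $\Uq^{-}(w)$ (e.g.\ for $w=s_{1}s_{2}$, $\lambda=\varpi_{2}$ one gets $D_{w\varpi_{2},\varpi_{2}}=(1-q^{2})(f_{1}f_{2}-qf_{2}f_{1})\notin\Uq^{-}(s_{1}s_{2})$), so expanding $D_{w\lambda,\lambda}$ itself in the PBW basis of $\Uq^{-}(w)$ is not even the right move; and even after applying $\ast$ one only gets a unitriangular expansion, $\ast D_{w_{0}\varpi_{1},\varpi_{1}}=F^{\mathrm{up}}((1,0,1),\bm{i})-qF^{\mathrm{up}}((0,1,0),\bm{i})$ in the example, never the exact equality you assert.

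What a correct argument must supply is different in nature. Since $u_{w\lambda}$ is an extremal vector lying in $\mathbf{B}^{\mathrm{up}}(\lambda)$, Proposition \ref{p:minor}(1) already gives for free that $D_{w\lambda,\lambda}=\Gup(\overline{\jmath}_{\lambda}(u_{w\lambda}))$ is a dual canonical basis element, so (DCB1) and membership in $\mathbf{B}^{\mathrm{up}}$ are not the issue; the entire content of the proposition is the identification of the crystal parameter, namely $\ast\overline{\jmath}_{\lambda}(u_{w\lambda})=b(\bm{n}^{\lambda},\bm{i})$, equivalently that $\ast D_{w\lambda,\lambda}$ has unitriangular dual-PBW expansion with leading term $F^{\mathrm{up}}(\bm{n}^{\lambda},\bm{i})$ and lower terms in $q\mathbb{Z}[q]$. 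The mismatch you yourself flagged --- $\bm{n}^{\lambda}_{k}=\langle h_{i_{k}},\lambda\rangle$ versus the exponents $a_{k}=\langle h_{i_{k}},s_{i_{k+1}}\cdots s_{i_{\ell}}\lambda\rangle$ of Proposition \ref{p:extremal} --- is precisely this issue, and your proposed fixes (``triangularity relating the extremal vector to $f_{i_{1}}^{a_{1}}\cdots f_{i_{\ell}}^{a_{\ell}}.u_{\lambda}$'' or ``the known relation between $u_{w\lambda}$ and $F^{\mathrm{up}}(\bm{n}^{\lambda},\bm{i})$'') assume the very statement to be proved. The actual proof in \cite[Section 6]{MR2914878} establishes this Lusztig-datum computation (via the $\ast$-twisted crystal structure and the behaviour of quantum minors under the PBW filtration), and no argument of that kind appears in your proposal.
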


\subsection{Kumar-Peterson identities}

We investigate the map $\overline{\jmath}_{w\lambda}^{\vee}$ a little
more. Kumar and Peterson studied the identity which expresses the
$H$-characters of the coordinate ring $\mathbb{C}\left[X_{w}\cap \boldsymbol{U}_{v}\right]$
of the intersection $X_{w}\cap \boldsymbol{U}_{v}$ of Schubert varieties $X_{w}$
and $v$-translates of the open cell $\boldsymbol{U}_{v}$ as the limit of a family
of ``twisted'' characters of Demazure modules in general Kac-Moody
Lie algebras, see Kumar \cite[Theorem 12.1.3]{MR1923198}. In the
special case with $v=w$, it reduces to the case of Schubert cells,
that is, we have $X_{w}\cap \boldsymbol{U}_{w}=\mathring{X}_{w}$ (see Kumar \cite[Lemma 7.3.10]{MR1923198})
and the following equality can be considered as a crystalized Kumar-Peterson
identity.
\begin{thm}
\label{t:cryKP}We have 
\[
\bigcup_{\lambda\in P_{+}}\overline{\jmath}_{w\lambda}^{\vee}\left(\mathscr{B}_{w}\left(\lambda\right)\right)=\mathscr{B}\left(\mathbf{U}_{q}^{-}\left(w\right)\right).
\]
\end{thm}
The rest of this subsection is devoted to the proof of Theorem \ref{t:cryKP}. 
\begin{lem}[{{{\cite[Lemma 3.19]{Kimura:2016vn}}}}]
\label{l:compl}For $w\in W$, let $\mathbf{U}_{q}^{-}\left(w\right)^{\perp}$
be the orthogonal complement of $\Uq^{-}\left(w\right)$ with respect
to $\left(\ ,\ \right)_{L}$. We have an isomorphism as $\mathbb{Q}\left(q\right)$-vector
spaces: 
\[
\Uq^{-}\left(w\right)\otimes\left(\mathbf{U}_{q}^{-}\cap T_{w}\mathbf{U}_{q}^{-}\cap\mathrm{Ker}\left(\varepsilon\right)\right)\xrightarrow{\sim}\left(\mathbf{U}_{q}^{-}\left(w\right)\right)^{\bot}\subset\mathbf{U}_{q}^{-}
\]
under the multiplication $\mathbf{U}_{q}^{-}\left(w\right)\otimes\left(\mathbf{U}_{q}^{-}\cap T_{w}\mathbf{U}_{q}^{-}\right)\xrightarrow{\sim}\mathbf{U}_{q}^{-}$,
here recall that $\varepsilon$ is the counit of $\mathbf{U}_{q}$
(see Definition \ref{d:QEA}). 
\end{lem}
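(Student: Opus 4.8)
The plan is to deduce the statement from the Levendorskii--Soibelman/PBW-type factorization $m\colon\Uq^-(w)\otimes Z_w\xrightarrow{\sim}\Uq^-$, where $Z_w:=\Uq^-\cap T_w\Uq^-$ (this is precisely the multiplication isomorphism already built into the assertion; see \cite{Lus:intro}), together with a weight argument and the comultiplicativity of the Lusztig pairing. First I would note that $Z_w$ is a $Q_-$-graded subalgebra of $\Uq^-$ with $(Z_w)_0=\mathbb{Q}(q)1$, hence $Z_w=\mathbb{Q}(q)1\oplus(Z_w\cap\Ker\varepsilon)$; applying $m$ this yields a $Q_-$-graded vector space decomposition $\Uq^-=\Uq^-(w)\oplus\Uq^-(w)\cdot(Z_w\cap\Ker\varepsilon)$, the multiplication map $\Uq^-(w)\otimes(Z_w\cap\Ker\varepsilon)\to\Uq^-(w)\cdot(Z_w\cap\Ker\varepsilon)$ being injective as a restriction of $m$. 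Since $(\ ,\ )_L$ is non-degenerate and pairs $(\Uq^-)_\xi$ with $(\Uq^-)_{\xi'}$ trivially for $\xi\neq\xi'$ (Definition~\ref{d:Lusform}), in each finite-dimensional weight space $(\Uq^-(w))^\perp\cap(\Uq^-)_\xi$ is the orthogonal complement of $(\Uq^-(w))_\xi$ inside $(\Uq^-)_\xi$, so $\dim((\Uq^-(w))^\perp)_\xi=\dim(\Uq^-)_\xi-\dim(\Uq^-(w))_\xi=\dim(\Uq^-(w)\cdot(Z_w\cap\Ker\varepsilon))_\xi$. Therefore it suffices to prove the single containment $\Uq^-(w)\cdot(Z_w\cap\Ker\varepsilon)\subseteq(\Uq^-(w))^\perp$; equality weight space by weight space, and hence the claimed isomorphism, then follows.

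Next I would establish the special case $Z_w\cap\Ker\varepsilon\subseteq(\Uq^-(w))^\perp$ by a pure weight count. By definition $\Uq^-(w)=\Uq^-\cap T_w\Uq^{\geq0}$ has weights contained in $Q_-\cap wQ_+$, whereas $Z_w=\Uq^-\cap T_w\Uq^-$ has weights contained in $Q_-\cap wQ_-$. Since the simple roots are linearly independent, $Q_+\cap Q_-=\{0\}$, hence $wQ_+\cap wQ_-=\{0\}$, so $\Uq^-(w)$ and $Z_w$ share no nonzero weight. As $Z_w\cap\Ker\varepsilon=\bigoplus_{\xi\neq0}(Z_w)_\xi$ lies in nonzero weights only, it shares no weight at all with $\Uq^-(w)$, and because $(\ ,\ )_L$ only pairs equal weights, $(z,y)_L=0$ for all $z\in Z_w\cap\Ker\varepsilon$ and $y\in\Uq^-(w)$.

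Then I would upgrade this to arbitrary left factors via the coalgebra structure. For $x\in\Uq^-(w)$, $z\in Z_w\cap\Ker\varepsilon$ and $y\in\Uq^-(w)$, using $(a,b)_L=(\psi(a),b)_D$, that $\psi$ is an algebra anti-automorphism, and property (ii) of the Drinfeld pairing in Proposition~\ref{p:formU}, one gets $(xz,y)_L=(\psi(z)\psi(x),y)_D=(\psi(x)\otimes\psi(z),\Delta(y))_D$. Now $\Delta(\Uq^-(w))\subseteq\Uq^-(w)\Uq^0\otimes\Uq^-(w)$: this follows by the same computation as in Definition~\ref{d:qclosed}, using that $\Uq^-(w)=\mathbf{U}_{w,q}^-$ is spanned by the ordered monomials $f_{i_1}^{a_1}\cdots f_{i_\ell}^{a_\ell}$ for $\bm i\in I(w)$ (Proposition~\ref{p:Demazure}, \cite{MR2914878}), since $\Delta(f_{i_1}\cdots f_{i_\ell})$ expands into terms of the form $(\text{ordered submonomial})\,q^h\otimes(\text{complementary ordered submonomial})$. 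Writing $\Delta(y)=\sum_j a_jq^{h_j}\otimes b_j$ with homogeneous $a_j,b_j\in\Uq^-(w)$ and $h_j\in P^\ast$, Lemma~\ref{l:DLrel} gives $(\psi(x),a_jq^{h_j})_D=(x,a_j)_L$ and $(\psi(z),b_j)_D=(z,b_j)_L$, whence $(xz,y)_L=\sum_j(x,a_j)_L(z,b_j)_L=0$ by the previous paragraph (applied with $b_j$ in place of $y$). This proves $\Uq^-(w)\cdot(Z_w\cap\Ker\varepsilon)\subseteq(\Uq^-(w))^\perp$, and combined with the first paragraph it finishes the proof.

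Since the multiplication isomorphism $m$ is part of the hypothesis, the argument has no serious obstacle. The point requiring the most care is the last paragraph: keeping straight the three pairings $(\ ,\ )_L$, $(\ ,\ )_D$ and their compatibility through $\psi$, $\Delta$ and Lemma~\ref{l:DLrel} in the chosen Hopf-algebra conventions, and verifying $\Delta(\Uq^-(w))\subseteq\Uq^-(w)\Uq^0\otimes\Uq^-(w)$ in that normalization; everything else is weight bookkeeping and a dimension count.
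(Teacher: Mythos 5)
Your overall architecture is sound and, as far as one can tell, close in spirit to the cited source (the paper itself gives no proof here; the lemma is quoted from \cite[Lemma 3.19]{MR3806461}): reducing to the single containment $\Uq^{-}(w)\cdot(Z_w\cap\Ker(\varepsilon))\subseteq(\Uq^{-}(w))^{\perp}$ by a weight-space dimension count, handling the case of trivial left factor by the observation that $\Uq^{-}(w)$ and $Z_w:=\Uq^{-}\cap T_w\Uq^{-}$ have weights in $Q_-\cap wQ_+$ and $Q_-\cap wQ_-$ respectively, and then passing to general products $xz$ via $\Delta$, property (ii) of Proposition \ref{p:formU} and Lemma \ref{l:DLrel}. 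That bookkeeping is done in the correct convention.

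The gap is in the justification of the one nontrivial input, the coideal property. The identification $\Uq^{-}(w)=\mathbf{U}_{w,q}^{-}$ is false: already for $\mathfrak{g}=\mathfrak{sl}_3$ and $w=s_1s_2$ one has $f_2=f_1^{0}f_2^{1}\in\mathbf{U}_{w,q}^{-}$, while every weight of $\Uq^{-}(w)=\sum\mathbb{Q}(q)f_1^{(c_1)}T_1(f_2^{(c_2)})$ is of the form $-c_1\alpha_1-c_2(\alpha_1+\alpha_2)$, so $(\Uq^{-}(w))_{-\alpha_2}=0$. (The paper treats these as genuinely different spaces: compare Definition \ref{d:qnilp} with Proposition \ref{p:Demazure}, and note that $\ast(\mathscr{B}(\Uq^{-}(w)))\subseteq\mathscr{B}_w(\infty)$ in Proposition \ref{p:usualinj} is a theorem, not a tautology.) Hence the ``ordered submonomial'' computation does not apply to $\Uq^{-}(w)$, and the property you assert, $\Delta(\Uq^{-}(w))\subseteq\Uq^{-}(w)\Uq^{0}\otimes\Uq^{-}(w)$, is in fact false: in the same example $\Delta(T_1(f_2))=T_1(f_2)\otimes1+(q^{-1}-q)\,t_1f_2\otimes f_1+t_1t_2\otimes T_1(f_2)$, and the left factor $t_1f_2$ does not lie in $\Uq^{-}(w)\Uq^{0}$. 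Your computation only uses that the \emph{right} tensor factors $b_j$ lie in $\Uq^{-}(w)$, i.e.\ the left-coideal property $\Delta(\Uq^{-}(w))\subseteq\Uq^{\leq0}\otimes\Uq^{-}(w)$, which is true in this convention --- but it is a nontrivial theorem about quantum Schubert cells (proved by induction on $\ell(w)$ using the braid operators; cf.\ \cite{MR2914878}) and must be proved or cited as such, not derived from the false identification. With that repair the proof goes through.
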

\begin{lem}
\label{l:orthog}For $y\in\mathbf{U}_{q}^{-}\left(w\right)^{\perp}$,
we have $y^{\vee}.u_{w\lambda}=0$ for all $\lambda\in P_{+}$. 
\end{lem}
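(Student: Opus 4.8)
The plan is to peel off the relevant tensor factor using Lemma~\ref{l:compl}, and then to deduce the vanishing from the fact that $\Uq^{+}\cap\mathrm{Ker}(\varepsilon)$ annihilates the highest weight vector $u_{\lambda}$, transporting the situation along the braid operator $T_{w^{-1}}$. More precisely, by Lemma~\ref{l:compl} the space $\left(\Uq^{-}(w)\right)^{\perp}$ is spanned by products $ab$ with $a\in\Uq^{-}(w)$ and $b\in\Uq^{-}\cap T_{w}\Uq^{-}\cap\mathrm{Ker}(\varepsilon)$; since $\vee$ is an algebra homomorphism, $(ab)^{\vee}.u_{w\lambda}=a^{\vee}.(b^{\vee}.u_{w\lambda})$, so it suffices to prove $b^{\vee}.u_{w\lambda}=0$ for every such $b$ and every $\lambda\in P_{+}$.

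Next I would record the braid identities. A direct computation on the generators shows $\varepsilon\circ T_{i}=\varepsilon$ and $\ast\circ T_{i}=T_{i}^{-1}\circ\ast$ (recall $T_{i}^{-1}=T'_{i,-1}$). Applying the latter repeatedly along a reduced expression $w=s_{i_{1}}\cdots s_{i_{\ell}}$ gives $\ast\circ T_{w}=T_{i_{1}}^{-1}\cdots T_{i_{\ell}}^{-1}\circ\ast=(T_{w^{-1}})^{-1}\circ\ast$, and likewise $\varepsilon\circ T_{w}=\varepsilon$. Combined with $\ast(\Uq^{-})=\Uq^{-}$ and $\varepsilon\circ\ast=\varepsilon$, this shows that for $b\in\Uq^{-}\cap T_{w}\Uq^{-}\cap\mathrm{Ker}(\varepsilon)$ one has $\ast(b)\in(T_{w^{-1}})^{-1}(\Uq^{-})$, hence $T_{w^{-1}}(\ast(b))\in\Uq^{-}$, and moreover $\varepsilon(T_{w^{-1}}(\ast(b)))=\varepsilon(b)=0$. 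Therefore $\varphi(T_{w^{-1}}(\ast(b)))\in\Uq^{+}\cap\mathrm{Ker}(\varepsilon)$, so $\varphi(T_{w^{-1}}(\ast(b))).u_{\lambda}=0$.

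Finally I would run the pairing computation. Fix $u'\in V(\lambda)$. Using that $(\;,\;)_{\lambda}^{\varphi}$ is symmetric and $\varphi$-contravariant, that $\varphi(b^{\vee})=\ast(b)$ (as $\varphi=\ast\circ\vee$ and $\vee^{2}=\mathrm{id}$), that $u_{w\lambda}=(T_{w^{-1}})^{-1}(u_{\lambda})$ (Proposition~\ref{p:extremal}), and that $T_{w^{-1}}(x.m)=T_{w^{-1}}(x).T_{w^{-1}}(m)$ (Proposition~\ref{p:braid}), one gets
\[
(u',\,b^{\vee}.u_{w\lambda})_{\lambda}^{\varphi}=(u_{w\lambda},\,\ast(b).u')_{\lambda}^{\varphi}=\bigl((T_{w^{-1}})^{-1}(u_{\lambda}),\,\ast(b).u'\bigr)_{\lambda}^{\varphi}.
\]
If $\ast(b).u'$ is not of weight $w\lambda$ this is $0$; otherwise $T_{w^{-1}}(\ast(b).u')=T_{w^{-1}}(\ast(b)).T_{w^{-1}}(u')$ has weight $\lambda$, and
\[
\bigl(u_{\lambda},\,T_{w^{-1}}(\ast(b)).T_{w^{-1}}(u')\bigr)_{\lambda}^{\varphi}=\bigl(\varphi(T_{w^{-1}}(\ast(b))).u_{\lambda},\,T_{w^{-1}}(u')\bigr)_{\lambda}^{\varphi}=0
\]
by the previous step; since $V(\lambda)_{\lambda}=\mathbb{Q}(q)u_{\lambda}$, $(u_{\lambda},u_{\lambda})_{\lambda}^{\varphi}=1$ and $T_{w^{-1}}$ is invertible, this forces $\ast(b).u'=0$, whence $(u',\,b^{\vee}.u_{w\lambda})_{\lambda}^{\varphi}=0$. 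As $u'$ is arbitrary and $(\;,\;)_{\lambda}^{\varphi}$ is non-degenerate, $b^{\vee}.u_{w\lambda}=0$.

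The step I expect to need the most care is the braid bookkeeping in the second paragraph: the identities $\ast\circ T_{i}=T_{i}^{-1}\circ\ast$ and $\varepsilon\circ T_{i}=\varepsilon$ (both immediate from the defining formulae), together with the observation that the order reversal produced by the anti-involution $\ast$ is matched exactly by passing from $w$ to $w^{-1}$ on the braid side, so that $T_{w^{-1}}(\ast(b))$ returns to $\Uq^{-}$. Note that no positivity, crystal, or categorification input is used here; everything runs on the Lusztig pairing side through Lemma~\ref{l:compl} and the elementary compatibilities of $T_{w}$ with $\ast$, $\varepsilon$ and $\varphi$.
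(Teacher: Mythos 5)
Your proof is correct, and it shares the paper's skeleton: the reduction via Lemma~\ref{l:compl} to a single homogeneous factor $b\in\Uq^{-}\cap T_{w}\Uq^{-}\cap\Ker(\varepsilon)$, and the transport to the highest weight vector through $u_{w\lambda}=(T_{w^{-1}})^{-1}(u_{\lambda})$ (Proposition~\ref{p:extremal}) and $T_{w^{-1}}(x.m)=T_{w^{-1}}(x).T_{w^{-1}}(m)$ (Proposition~\ref{p:braid}). Where you diverge is the endgame. The paper writes $y^{\vee}.u_{w\lambda}=(T_{w^{-1}})^{-1}\bigl(\sum T_{w^{-1}}(y_{(1)}^{\vee})\,T_{w^{-1}}(y_{(2)}^{\vee}).u_{\lambda}\bigr)$ and finishes with a one-line weight count: since $y_{(2)}\in T_{w}\Uq^{-}$ is homogeneous of nonzero weight, $\wt\bigl(T_{w^{-1}}(y_{(2)}^{\vee})\bigr)\in Q_{+}\setminus\{0\}$, so $T_{w^{-1}}(y_{(2)}^{\vee}).u_{\lambda}$ lies in a weight space of $V(\lambda)$ strictly above $\lambda$, hence vanishes; no pairing, no $\ast$, and no $\ast\circ T_{i}=T_{i}^{-1}\circ\ast$ is needed. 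You instead dualize through $(\ ,\ )_{\lambda}^{\varphi}$, use $\ast\circ T_{w}=(T_{w^{-1}})^{-1}\circ\ast$ (which does check out against the formulae of Definition~\ref{d:braidaction}; it is Lusztig's standard compatibility) to put $T_{w^{-1}}(\ast(b))$ in the augmentation ideal of $\Uq^{-}$, move it to $\Uq^{+}\cap\Ker(\varepsilon)$ by $\varphi$, and conclude via $\dim V(\lambda)_{\lambda}=1$ and non-degeneracy. This is valid but does more work than necessary; what the extra machinery buys is nothing here, though the identity $\ast\circ T_{w}=(T_{w^{-1}})^{-1}\circ\ast$ is a useful fact elsewhere in this circle of ideas. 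Two cosmetic points: to speak of the weight of $\ast(b).u'$ you should take $b$ homogeneous (legitimate, since $\Uq^{-}\cap T_{w}\Uq^{-}\cap\Ker(\varepsilon)$ is a graded subspace, and the paper's proof does exactly this) and $u'$ a weight vector, which suffices by linearity.
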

\begin{proof}
By Lemma \ref{l:compl}, we write $y=\sum y_{\left(1\right)}y_{\left(2\right)}$
with $y_{\left(1\right)}\in\Uq^{-}\left(w\right)$ and homogeneous
elements $y_{\left(2\right)}\in\Uq^{-}\cap T_{w}\mathbf{U}_{q}^{-}\cap\Ker\left(\varepsilon\right)$.
Then we have 
\[
\left(y\right)^{\vee}.u_{w\lambda}=(T_{w^{-1}})^{-1}\left(\sum T_{w^{-1}}\left(y_{(1)}^{\vee}\right)T_{w^{-1}}\left(y_{(2)}^{\vee}\right).u_{\lambda}\right)=0
\]
because $\wt\left(T_{w^{-1}}\left(y_{(2)}^{\vee}\right)\right)\in Q_{+}\setminus\{0\}$.
\end{proof}
\begin{prop}
\label{p:incl} We have 
\[
\bigcup_{\lambda\in P_{+}}\overline{\jmath}_{w\lambda}^{\vee}\left(\mathscr{B}_{w}\left(\lambda\right)\right)\subset\mathscr{B}\left(\mathbf{U}_{q}^{-}\left(w\right)\right).
\]
\end{prop}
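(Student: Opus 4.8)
The plan is to reduce the whole statement to the single fact that the dual embedding $j_{w\lambda}^{\vee}$ has image inside the quantum nilpotent subalgebra $\mathbf{U}_{q}^{-}(w)$, and then to invoke the description of $\mathscr{B}(\mathbf{U}_{q}^{-}(w))$ through the dual canonical basis.

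First I would show that $j_{w\lambda}^{\vee}(V(\lambda))\subset\mathbf{U}_{q}^{-}(w)$ for every $\lambda\in P_{+}$. Observe that $\mathbf{U}_{q}^{-}(w)$ is a $Q_{-}$-graded subspace of $\Uq^{-}$ on which $(\ ,\ )_{L}$ restricts non-degenerately, since it carries the weight-homogeneous orthogonal PBW basis of Proposition \ref{p:PBWbasis}. Consequently $\mathbf{U}_{q}^{-}(w)^{\perp}$ is graded, $\mathbf{U}_{q}^{-}(w)\cap\mathbf{U}_{q}^{-}(w)^{\perp}=0$, and, using finite-dimensionality of the weight spaces of $\Uq^{-}$, one obtains $(\mathbf{U}_{q}^{-}(w)^{\perp})^{\perp}=\mathbf{U}_{q}^{-}(w)$. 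Hence it suffices to check that $j_{w\lambda}^{\vee}(u)$ is orthogonal to $\mathbf{U}_{q}^{-}(w)^{\perp}$ for every $u\in V(\lambda)$. For $y\in\mathbf{U}_{q}^{-}(w)^{\perp}$ the defining property of $j_{w\lambda}^{\vee}$ (Definition \ref{d:jcheck}) gives $(j_{w\lambda}^{\vee}(u),y)_{L}=(u,y^{\vee}.u_{w\lambda})_{\lambda}^{\varphi}$, and this vanishes by Lemma \ref{l:orthog}. This is the only substantive step, and it rests entirely on Lemma \ref{l:orthog}, which in turn relies on Lemma \ref{l:compl}; both may be assumed.

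Next I would fix $\lambda\in P_{+}$ and $b\in\mathscr{B}_{w}(\lambda)$. By Proposition \ref{p:jcheck} we have $\Gup(\overline{\jmath}_{w\lambda}^{\vee}(b))=j_{w\lambda}^{\vee}(G_{\lambda}^{\mathrm{up}}(b))$, so by the previous step this element lies in $\mathbf{U}_{q}^{-}(w)\cap\mathbf{B}^{\mathrm{up}}$. By Proposition \ref{p:dualcanonical}(1) together with Definition \ref{d:PBWparam}, $\mathbf{U}_{q}^{-}(w)\cap\mathbf{B}^{\mathrm{up}}=\{\Gup(b')\mid b'\in\mathscr{B}(\mathbf{U}_{q}^{-}(w))\}$, and since $b'\mapsto\Gup(b')$ is injective on $\mathscr{B}(\infty)$ this forces $\overline{\jmath}_{w\lambda}^{\vee}(b)\in\mathscr{B}(\mathbf{U}_{q}^{-}(w))$. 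Taking the union over all $\lambda\in P_{+}$ yields the asserted inclusion.

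I do not expect a genuine obstacle here: the representation-theoretic content has been isolated into Lemmas \ref{l:compl} and \ref{l:orthog}, and the remainder of the argument is formal duality bookkeeping. The one place that requires a careful line is the identity $(\mathbf{U}_{q}^{-}(w)^{\perp})^{\perp}=\mathbf{U}_{q}^{-}(w)$, where the $Q_{-}$-grading, the finite-dimensionality of the weight spaces, and the non-degeneracy of $(\ ,\ )_{L}$ on $\mathbf{U}_{q}^{-}(w)$ all enter.
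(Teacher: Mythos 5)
Your proof is correct and takes essentially the same route as the paper's: the decisive input is Lemma \ref{l:orthog} (hence Lemma \ref{l:compl}), combined with the orthogonal decomposition $\Uq^{-}=\Uq^{-}\left(w\right)\oplus\Uq^{-}\left(w\right)^{\perp}$ and the compatibility of the dual canonical basis with $\Uq^{-}\left(w\right)$ from Proposition \ref{p:dualcanonical} and Definition \ref{d:PBWparam}. The only difference is packaging: you show directly that the image of $j_{w\lambda}^{\vee}$ lands in $\left(\Uq^{-}\left(w\right)^{\perp}\right)^{\perp}=\Uq^{-}\left(w\right)$, whereas the paper runs the same orthogonality argument dually, via the nonvanishing of the projection $\pi\left(w\right)\left(G^{\mathrm{low}}\left(b\right)\right)$ onto $\Uq^{-}\left(w\right)$.
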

\begin{proof}
Let $\pi\left(w\right)\colon\mathbf{U}_{q}^{-}\to\mathbf{U}_{q}^{-}\left(w\right)$
be the projection with respect to the decomposition $\mathbf{U}_{q}^{-}=\mathbf{U}_{q}^{-}\left(w\right)\oplus\mathbf{U}_{q}^{-}\left(w\right)^{\perp}$. Since $\Uq^{-}\left(w\right)^{\perp}\cap\mathbf{B}^{\mathrm{low}}$
is a basis of $\Uq^{-}\left(w\right)^{\perp}$ by Proposition \ref{p:dualcanonical},
we have $\pi\left(w\right)\left(G^{\mathrm{low}}\left(b\right)\right)\neq0$
if and only if $b\in\mathscr{B}\left(\mathbf{U}_{q}^{-}\left(w\right)\right)$
for $b\in\mathscr{B}\left(\infty\right)$. Let $b\in\bigcup_{\lambda\in P_{+}}\overline{\jmath}_{w\lambda}^{\vee}\left(\mathscr{B}_{w}\left(\lambda\right)\right)$.
Then there exists $\lambda\in P_{+}$ such that $\left(G^{\mathrm{low}}\left(b\right)\right)^{\vee}.u_{w\lambda}\neq0$.
By Proposition \ref{l:orthog}, we have 
\begin{align*}
\left(G^{\mathrm{low}}\left(b\right)\right)^{\vee}.u_{w\lambda}=\left(\pi\left(w\right)\left(G^{\mathrm{low}}\left(b\right)\right)\right)^{\vee}.u_{w\lambda}.
\end{align*}
In particular, we have $\pi\left(w\right)\left(G^{\mathrm{low}}\left(b\right)\right)\neq0$.
This completes the proof. 
\end{proof}
We prove the opposite inclusion. 
\begin{prop}
\label{p:oppincl} We have 
\[
\mathscr{B}\left(\mathbf{U}_{q}^{-}\left(w\right)\right)\subset\bigcup_{\lambda\in P_{+}}\overline{\jmath}_{w\lambda}^{\vee}\left(\mathscr{B}_{w}\left(\lambda\right)\right).
\]
\end{prop}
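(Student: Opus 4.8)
The plan is to transfer the reverse inclusion to a purely numerical statement, namely (the $v=w$ case of) the classical Kumar--Peterson identity recalled just before Theorem \ref{t:cryKP}; all the crystal bookkeeping has essentially been done in Propositions \ref{p:picheck}--\ref{p:jcheck} and Lemma \ref{l:orthog}.

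First I would reformulate. By Propositions \ref{p:picheck}, \ref{p:jcheck} and Remark \ref{r:jcheck}, the map $\overline{\jmath}_{w\lambda}^{\vee}$ is the inverse of the bijection $\pi_{w\lambda}^{\vee}\colon(\pi_{w\lambda}^{\vee})^{-1}(\mathscr{B}_w(\lambda))\xrightarrow{\sim}\mathscr{B}_w(\lambda)$, so $\overline{\jmath}_{w\lambda}^{\vee}(\mathscr{B}_w(\lambda))=(\pi_{w\lambda}^{\vee})^{-1}(\mathscr{B}_w(\lambda))=\{b\in\mathscr{B}(\infty)\mid\pi_{w\lambda}^{\vee}(b)\neq 0\}$; and since $\pi_{w\lambda}^{\vee}(G^{\mathrm{low}}(b))=(G^{\mathrm{low}}(b))^{\vee}.u_{w\lambda}=G^{\mathrm{low}}_{\lambda}(\pi_{w\lambda}^{\vee}(b))$, this last condition reads $(G^{\mathrm{low}}(b))^{\vee}.u_{w\lambda}\neq 0$. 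Thus Proposition \ref{p:oppincl} is equivalent to: for every $b\in\mathscr{B}(\Uq^{-}(w))$ there is some $\lambda\in P_+$ with $(G^{\mathrm{low}}(b))^{\vee}.u_{w\lambda}\neq 0$. Next, let $\pi(w)\colon\Uq^{-}\to\Uq^{-}(w)$ be the orthogonal projection for $\Uq^{-}=\Uq^{-}(w)\oplus\Uq^{-}(w)^{\perp}$ (legitimate since $(\ ,\ )_L$ is non-degenerate and the PBW basis of $\Uq^{-}(w)$ is orthogonal with non-zero norms, cf.\ \eqref{norm}). Because $G^{\mathrm{low}}(b)-\pi(w)(G^{\mathrm{low}}(b))\in\Uq^{-}(w)^{\perp}$, Lemma \ref{l:orthog} gives $(G^{\mathrm{low}}(b))^{\vee}.u_{w\lambda}=\bigl(\pi(w)(G^{\mathrm{low}}(b))\bigr)^{\vee}.u_{w\lambda}$; and for $b\in\mathscr{B}(\Uq^{-}(w))$ the element $z:=\pi(w)(G^{\mathrm{low}}(b))$ is a non-zero homogeneous element of $\Uq^{-}(w)$, say of weight $-\beta$ (this non-vanishing is exactly the point used in the proof of Proposition \ref{p:incl}). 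So it suffices to find $\lambda\in P_+$ with $\pi_{w\lambda}^{\vee}(z)\neq 0$.

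The key observation is then that $\pi_{w\lambda}^{\vee}$ restricts to a \emph{surjection} $\Uq^{-}(w)\twoheadrightarrow V_w(\lambda)$: indeed $\pi_{w\lambda}^{\vee}\colon\Uq^{-}\twoheadrightarrow V_w(\lambda)$ is onto by Definition \ref{d:picheck}, and $\Uq^{-}(w)^{\perp}\subseteq\ker\pi_{w\lambda}^{\vee}$ by Lemma \ref{l:orthog}. Passing to weight spaces (note $z^{\vee}$ has weight $\beta$, hence $\pi_{w\lambda}^{\vee}(z)\in V(\lambda)_{w\lambda+\beta}$), we obtain a surjection $\Uq^{-}(w)_{-\beta}\twoheadrightarrow V_w(\lambda)_{w\lambda+\beta}$, so $\dim V_w(\lambda)_{w\lambda+\beta}\le\dim\Uq^{-}(w)_{-\beta}$, with equality exactly when $\pi_{w\lambda}^{\vee}|_{\Uq^{-}(w)_{-\beta}}$ is injective. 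Now $\dim\Uq^{-}(w)_{-\beta}$ is, by the PBW basis $\{F^{\mathrm{low}}(\bm c,\bm i)\}$, the number of ways of writing $\beta$ as a $\mathbb{Z}_{\ge 0}$-combination of the roots in $\Delta_+(\le w)$ --- i.e.\ the corresponding graded dimension of $\mathbb{C}[N_+(w)]\cong\mathbb{C}[\mathring X_w]$ --- while the classical Kumar--Peterson identity (Kumar \cite[Theorem 12.1.3]{MR1923198}, specialized to $v=w$, using $X_w\cap\boldsymbol U_w=\mathring X_w$ by \cite[Lemma 7.3.10]{MR1923198}) asserts that $e^{-w\lambda}\operatorname{ch}V_w(\lambda)$ converges coefficientwise to $\operatorname{ch}\mathbb{C}[\mathring X_w]$ as $\lambda\to\infty$ in $P_+$; concretely, for each fixed $\beta$ there is $\lambda_0\in P_+$ with $\dim V_w(\lambda)_{w\lambda+\beta}=\dim\Uq^{-}(w)_{-\beta}$ whenever $\lambda-\lambda_0\in P_+$. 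For such $\lambda$ the surjection above is an isomorphism, so $\pi_{w\lambda}^{\vee}(z)\neq 0$, and by the reformulation $b\in\overline{\jmath}_{w\lambda}^{\vee}(\mathscr{B}_w(\lambda))$. (As a sanity check one can verify monotonicity of $\lambda\mapsto\overline{\jmath}_{w\lambda}^{\vee}(\mathscr{B}_w(\lambda))$ directly, via $V(\lambda+\mu)\hookrightarrow V(\lambda)\otimes V(\mu)$, $u_{w(\lambda+\mu)}\mapsto u_{w\lambda}\otimes u_{w\mu}$, and the fact that the leading term of $\Delta(x)$ for $x\in(\Uq^{+})_{\beta}$ is $x\otimes(\text{torus element})$; this is not needed for the argument.)

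The hard part is the dimension equality $\dim V_w(\lambda)_{w\lambda+\beta}=\dim\Uq^{-}(w)_{-\beta}$ for $\lambda$ deep in the dominant chamber --- the statement that the Demazure modules $V_w(\lambda)$ exhaust $\Uq^{+}(w)$ in the limit. This is precisely the classical Kumar--Peterson identity and is the only external input; everything else is formal manipulation with the orthogonal decomposition $\Uq^{-}=\Uq^{-}(w)\oplus\Uq^{-}(w)^{\perp}$ and the crystal compatibilities already established, exactly as the opposite inclusion in Proposition \ref{p:incl} was obtained from Lemma \ref{l:orthog}.
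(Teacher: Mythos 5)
Your proof is correct, and it shares the paper's reduction but not its key step. Like the paper, you pass to $z:=\pi(w)(G^{\mathrm{low}}(b))\neq 0$ and use Lemma \ref{l:orthog}, Proposition \ref{p:picheck} and Remark \ref{r:jcheck} to reduce everything to showing $z^{\vee}.u_{w\lambda}\neq 0$ for some $\lambda\in P_{+}$. The paper then stays inside the quantum algebra: since $z\in\Uq^{-}\cap T_{w}\Uq^{\geq 0}$ and $u_{w\lambda}=(T_{w^{-1}})^{-1}(u_{\lambda})$, it rewrites $\overline{z}^{\vee}.u_{w\lambda}=(T_{w^{-1}})^{-1}\bigl((\vee\circ\overline{\phantom{x}}\circ T_{w}^{-1})(z).u_{\lambda}\bigr)$ and concludes by the elementary fact that $(\Uq^{-})_{\xi}\to V(\lambda)_{\lambda+\xi}$, $y\mapsto y.u_{\lambda}$, is an isomorphism for $\lambda$ sufficiently dominant. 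You instead argue by dimensions: the surjection $\Uq^{-}(w)_{-\beta}\twoheadrightarrow V_{w}(\lambda)_{w\lambda+\beta}$ (surjectivity of $\pi_{w\lambda}^{\vee}$ plus Lemma \ref{l:orthog}) is injective as soon as $\dim V_{w}(\lambda)_{w\lambda+\beta}=\dim\Uq^{-}(w)_{-\beta}$, and you obtain this equality for $\lambda$ deep in $P_{+}$ from the classical Kumar--Peterson identity \cite[Theorem 12.1.3]{MR1923198} with $v=w$, the left-hand side being computed by the PBW basis. This is sound and non-circular (Kumar's theorem does not depend on Theorem \ref{t:cryKP}), but you leave two bridges implicit: the quantum Demazure module $V_{w}(\lambda)$ must have the same character as its classical counterpart (standard, via the Demazure crystal $\mathscr{B}_{w}(\lambda)$ of Proposition \ref{p:Demazure}), and the limit in Kumar's identity must be read as coefficientwise stabilization so that some fixed $\lambda$ realizes the equality for your given $\beta$ (which is how it holds, and a single such $\lambda$ suffices). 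The trade-off: the paper's braid-group argument is self-contained, purely quantum, and needs only the standard asymptotic faithfulness of $\Uq^{-}\to V(\lambda)$, whereas your route imports a heavier classical theorem but makes the phrase ``crystalized Kumar--Peterson identity'' literal: Theorem \ref{t:cryKP} becomes a direct lift of the classical character identity, with the crystal maps serving only as bookkeeping.
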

\begin{proof}
Let $b\in\mathscr{B}\left(\mathbf{U}_{q}^{-}\left(w\right)\right)$,
that is $0\neq\pi\left(w\right)\left(G^{\mathrm{low}}\left(b\right)\right)\in\mathbf{U}_{q}^{-}\left(w\right)$.
(See the proof of Proposition \ref{p:incl}.) By Proposition \ref{p:picheck}
and Remark \ref{r:jcheck}, it suffices to show that $G^{\mathrm{low}}\left(b\right)^{\vee}.u_{w\lambda}=\left(\pi\left(w\right)\left(G^{\mathrm{low}}\left(b\right)\right)\right)^{\vee}.u_{w\lambda}\neq0$
for some $\lambda\in P_{+}$. Note that $\left(\pi\left(w\right)\left(G^{\mathrm{low}}\left(b\right)\right)\right)^{\vee}.u_{w\lambda}\neq0$
is equivalent to $\overline{\left(\pi\left(w\right)\left(G^{\mathrm{low}}\left(b\right)\right)\right)}^{\vee}.u_{w\lambda}\neq0$.

By the way, we have 
\[
\overline{y}^{\vee}.u_{w\lambda}=(T_{w^{-1}})^{-1}\left(\left(T_{w^{-1}}\circ\vee\circ\overline{\phantom{x}}\right)\left(y\right).u_{\lambda}\right)=(T_{w^{-1}})^{-1}\left(\left(\vee\circ\overline{\phantom{x}}\circ T_{w}^{-1}\right)\left(y\right).u_{\lambda}\right).
\]

Since $y_{0}:=\pi\left(w\right)\left(G^{\mathrm{low}}\left(b\right)\right)\in\mathbf{U}_{q}^{-}\cap T_{w}\mathbf{U}_{q}^{\geq0}$,
we have $\left(\vee\circ\overline{\phantom{x}}\circ T_{w}^{-1}\right)\left(y_{0}\right)\in\Uq^{\leq0}$.
It is well-known that, for $\xi\in Q_{-}$, there exists an element
$\lambda\in P_{+}$ such that the projection $\left(\mathbf{U}_{q}^{-}\right)_{\xi}\to V\left(\lambda\right)_{\xi+\lambda}$
given by $y\mapsto y.u_{\lambda}$ is an isomorphism of vector space.
Hence it can be shown that there exists $\lambda\in P_{+}$ such that
$\left(\vee\circ\overline{\phantom{x}}\circ T_{w}^{-1}\right)\left(y_{0}\right).u_{\lambda}\neq0$.
\end{proof}

\section{Quantum unipotent cells and the De Concini-Procesi isomorphisms\label{sec:Quantum-unipotent-cell}}

In this section, we introduce quantum unipotent cells $\mathbf{A}_{q}\!\left[N_{-}^{w}\right]$
following De Concini-Procesi \cite{MR1635678}, and show that they
are isomorphic to the quantum coordinate ring of $N_{-}\left(w\right)\cap wG_{0}^{\min}$
. This isomorphism, called the De Concini-Procesi isomorphism, was
proved in \cite[Theorem 3.2]{MR1635678} under the assumption that
$\mathfrak{g}$ is of finite type. We will prove it in the case of
arbitrary symmetrizable Kac-Moody cases (Theorem \ref{thm:DeCP}).
We also introduce the dual canonical bases of the quantum unipotent
cells (Definition \ref{d:localdualcan}).

\subsection{Quantum unipotent cells}

To define the quantum unipotent cells, we use the localizations of
$\mathbf{A}_{q}\!\left[N_{-}\left(w\right)\right]$ and $\mathbf{A}_{q}\!\left[N_{-}\cap X_{w}\right]$. We recall the Ore properties of the unipotent quantum minors. The following is the multiplicative property of the dual canonical
bases with respect to the unipotent quantum minors. 
\begin{prop}[{\cite[Theorem 6.24, Theorem 6.25]{MR2914878}}]
\label{p:localization} Let $w\in W$. 
\begin{enumerate}
\item[(1)] For $\lambda\in P_{+}$ and $b\in\mathscr{B}_{w}\left(\infty\right)$,
there exists $b'\in\mathscr{B}_{w}\left(\infty\right)$ such that
\[
q^{-(\lambda,\wt b)}[D_{w\lambda,\lambda}][G^{\mathrm{up}}(b)]=[G^{\mathrm{up}}(b')].
\]
\item[(2)] For $\lambda\in P_{+}$, $\bm{i}\in I(w)$ and ${\bm{c}}\in\mathbb{Z}_{\geq0}^{\ell(w)}$,
we have 
\[
q^{-(\lambda,\wt b_{-1}({\bm{c}},{\bm{i}}))}D_{w\lambda,\lambda}G^{\mathrm{up}}(b_{-1}({\bm{c}},{\bm{i}}))=G^{\mathrm{up}}(b_{-1}({\bm{c}}+{\bm{n}}^{\lambda},{\bm{i}})),
\]
where ${\bm{n}}^{\lambda}$ is defined as in Proposition \ref{p:minorPBW}.
\end{enumerate}
\end{prop}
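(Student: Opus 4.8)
The plan is to prove (2) first — it is the sharp, PBW-labelled version — and then obtain (1) from it. Set $z := q^{-(\lambda,\wt b_{-1}(\bm{c},\bm{i}))}\, D_{w\lambda,\lambda}\, \Gup(b_{-1}(\bm{c},\bm{i})) \in \mathbf{A}_q[N_-(w)]$, and aim to recognise $z$ as the dual canonical basis element attached to $\bm{c}+\bm{n}^\lambda$. Applying the anti-involution $\ast$, which carries $\mathbf{A}_q[N_-(w)]$ onto $\Uq^-(w)$, sends $\Gup(b_{-1}(\bm{c},\bm{i}))$ to $\Gup(b(\bm{c},\bm{i}))$, and, by Proposition \ref{p:minorPBW}, sends $D_{w\lambda,\lambda}=\Gup(b_{-1}(\bm{n}^\lambda,\bm{i}))$ to $\Gup(b(\bm{n}^\lambda,\bm{i}))$, statement (2) becomes the identity
\[
q^{-(\lambda,\wt b(\bm{c},\bm{i}))}\,\Gup(b(\bm{c},\bm{i}))\,\Gup(b(\bm{n}^\lambda,\bm{i})) = \Gup(b(\bm{c}+\bm{n}^\lambda,\bm{i}))
\]
in $\Uq^-(w)$. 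By Proposition \ref{p:dualcanonical}(2) it then suffices to verify that the left-hand side is $\sigma$-invariant (DCB1) and is unitriangular with $q\mathbb{Z}[q]$ off-diagonal coefficients with respect to the dual PBW basis $\{F^{\up}(\bm{c}',\bm{i})\}$, with leading term $F^{\up}(\bm{c}+\bm{n}^\lambda,\bm{i})$ (DCB2).

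For (DCB1): since $\sigma(xy)=q^{(\wt x,\wt y)}\sigma(y)\sigma(x)$ (Proposition \ref{p:dualbar}) and $\sigma$ fixes dual canonical basis elements (Proposition \ref{p:barinv}), $\sigma$-invariance of the left-hand side is equivalent to a $q$-commutation relation $\Gup(b(\bm{c},\bm{i}))\,\ast(D_{w\lambda,\lambda}) = q^{(\wt b(\bm{c},\bm{i}),\,\lambda+w\lambda)}\,\ast(D_{w\lambda,\lambda})\,\Gup(b(\bm{c},\bm{i}))$, with a $q$-power depending only on weights. I would prove this by unwinding the definition $(D_{u_{w\lambda},u_\lambda},y)_L=(u_{w\lambda},y.u_\lambda)_\lambda^\varphi$ together with the compatibility of $(\ ,\ )_L$ with the coproduct: testing both sides of the commutation against $\Uq^-$, the coproduct of a test element contributes only through its "extremal" part, because $u_{w\lambda}$ is an extremal weight vector on which the raising operators act as dictated by Proposition \ref{p:extremal} and Proposition \ref{p:EFaction}; the surviving discrepancy is precisely the claimed power of $q$. (Alternatively this $q$-commutation of unipotent quantum minors may simply be quoted.)

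For (DCB2): I would expand both $\Gup(b(\bm{c},\bm{i}))$ and $\Gup(b(\bm{n}^\lambda,\bm{i}))$ in the PBW basis — each is $F^{\up}$ of its label plus strictly smaller terms, with $q\mathbb{Z}[q]$ coefficients — and multiply, using the straightening rules for products of dual PBW monomials in the skew-polynomial algebra $\Uq^-(w)$. The special shape of $\bm{n}^\lambda$ (namely $D_{w\lambda,\lambda}$ is, up to a scalar, built from the right end of the word $\bm{i}$) forces the product to be $q^{(\lambda,\wt b(\bm{c},\bm{i}))}F^{\up}(\bm{c}+\bm{n}^\lambda,\bm{i})$ plus strictly lower PBW terms; dividing by $q^{(\lambda,\wt b(\bm{c},\bm{i}))}$ gives the required leading term, and the remaining coefficients land in $q\mathbb{Z}[q]$ by the uniqueness in Proposition \ref{p:dualcanonical}(2) once (DCB1) is in hand. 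This proves (2). Part (1) follows by applying the canonical projection $\Uq^-\to\mathbf{A}_q[N_-\cap X_w]$ and matching the descriptions of $\mathscr{B}_w(\infty)$ via Theorem \ref{t:cryKP}; equivalently, one runs the same extremal-weight/coproduct argument directly inside $\mathbf{A}_q[N_-\cap X_w]$, using that it is perfectly paired with $\mathbf{U}_{w,q}^-$ and that its multiplication is dual to the comultiplication of $\mathbf{U}_{w,q}^-$.

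The main obstacle is the $q$-commutation underlying (DCB1): it is not enough that $D_{w\lambda,\lambda}$ $q$-commutes with each dual canonical basis element of $\mathbf{A}_q[N_-(w)]$; one needs the $q$-exponent to be a fixed bilinear expression in the weights so that it is cancelled by the single monomial $q^{-(\lambda,\wt b)}$, and the bookkeeping of the extremal-weight computation (equivalently, of the would-be quantum cluster algebra's $\Lambda$-matrix) is delicate. A secondary difficulty is controlling the PBW straightening in (DCB2) precisely enough to see that the top slot receives no lower-order correction and carries exactly the power $q^{(\lambda,\wt b(\bm{c},\bm{i}))}$.
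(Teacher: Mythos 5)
This proposition is quoted in the paper from \cite[Theorems 6.24, 6.25]{MR2914878}; the paper contains no proof of it, so your argument has to stand on its own. Its skeleton for part (2) is reasonable and is indeed the standard route: transport by $\ast$ to $\Uq^-(w)$, then verify (DCB1) and (DCB2) of Proposition \ref{p:dualcanonical}, and your $q$-commutation exponent $(\wt b(\bm{c},\bm{i}),\lambda+w\lambda)$ is exactly the one that makes the prefactor $q^{-(\lambda,\wt b)}$ yield $\sigma$-invariance. (Note, however, that inside this paper you may not simply quote that commutation from Proposition \ref{p:comm}, since the paper derives \ref{p:comm} \emph{from} the proposition you are proving; you would need the independent matrix-coefficient argument of Remark \ref{r:commproof} or an external reference, and your extremal-vector sketch never indicates where membership in $\Uq^-(w)$ is used, although $q$-centrality fails in $\Uq^-$ at large.) The genuine gap is in (DCB2). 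What is needed is that multiplying by $\ast(D_{w\lambda,\lambda})=F^{\up}(\bm{n}^\lambda,\bm{i})$ sends each dual PBW monomial to exactly $q^{(\lambda,\wt)}F^{\up}(\bm{c}+\bm{n}^\lambda,\bm{i})$, with no straightening corrections (or at least with lower-order corrections having coefficients in $q\mathbb{Z}[q]$). Your justification for this is wrong on its face: $\bm{n}^\lambda$ is \emph{not} concentrated at the right end of the word -- by Proposition \ref{p:minorPBW} its $k$-th entry is $\langle h_{i_k},\lambda\rangle$, so it fills every slot of $\bm{i}$ -- and generic Levendorskii--Soibelman straightening only produces coefficients in $\mathbb{Z}[q^{\pm1}]$. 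The fallback ``the remaining coefficients land in $q\mathbb{Z}[q]$ by the uniqueness in Proposition \ref{p:dualcanonical}(2) once (DCB1) is in hand'' is circular: uniqueness can only be invoked after \emph{both} (DCB1) and (DCB2) are verified, and $\sigma$-invariance alone does not force coefficients into $q\mathbb{Z}[q]$. This exact-shift property of the minor on the dual PBW basis is the substantive content of the cited result and must be proved (in the reference it is obtained by a genuine argument, not by appealing to the characterization it is meant to feed).

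The deduction of (1) from (2) also does not work as you describe. Projecting (2) along $\Uq^-\to\Aq[N_-\cap X_w]$ only yields (1) for $b\in\ast(\mathscr{B}(\Uq^-(w)))$, and by Theorem \ref{t:cryKP} this set equals $\bigcup_{\lambda}\overline{\jmath}_{w\lambda}^{\vee}(\mathscr{B}_w(\lambda))$, which is in general a \emph{proper} subset of $\mathscr{B}_w(\infty)=\bigcup_{\lambda}\overline{\jmath}_{\lambda}(\mathscr{B}_w(\lambda))$ (Remark \ref{r:Demazure}); for instance, for $\mathfrak{g}=\mathfrak{sl}_3$ and $w=s_1s_2$ the element $\tilde f_1\tilde f_2^2u_\infty\in\mathscr{B}_w(\infty)$ has weight $-\alpha_1-2\alpha_2$, which is not a weight of $\Aq[N_-(w)]$ at all. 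So statement (1), which concerns every $b\in\mathscr{B}_w(\infty)$, requires a separate argument carried out in the quotient $\Aq[N_-\cap X_w]$, where there is no PBW basis and the (DCB1)/(DCB2) characterization is unavailable; your alternative suggestion to ``run the same argument inside $\Aq[N_-\cap X_w]$'' is not developed enough to fill this. (You also cannot route through the De Concini--Procesi isomorphism of Theorem \ref{thm:DeCP}, whose proof in this paper rests on the present proposition.) In short: correct overall strategy for (2) but with the decisive lemma unproven and misjustified, and an invalid passage from (2) to (1).
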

Proposition \ref{p:localization} together with Proposition \ref{p:dualbar} deduces the following (cf.~Remark \ref{r:commproof} below). 
\begin{prop}
\label{p:comm} Let $w\in W$ and set $\mathcal{D}_{w}:=\{q^{m}D_{w\lambda,\lambda}\mid m\in\mathbb{Z},\lambda\in P_{+}\}$.
Then the sets $\mathcal{D}_{w}$ and $[\mathcal{D}_{w}]$ are Ore
sets of $\mathbf{A}_{q}\!\left[N_{-}\left(w\right)\right]$ and $\mathbf{A}_{q}\!\left[N_{-}\cap X_{w}\right]$
respectively consisting of $q$-central elements. More explicitly,
for $\lambda,\lambda'\in P_{+}$ and homogeneous elements $x\in\mathbf{A}_{q}\!\left[N_{-}\left(w\right)\right]$,
$y\in\mathbf{A}_{q}\!\left[N_{-}\cap X_{w}\right]$, we have 
\begin{align*}
q^{-(\lambda,w\lambda'-\lambda')}D_{w\lambda,\lambda}D_{w\lambda',\lambda'} & =D_{w(\lambda+\lambda'),\lambda+\lambda'}\\
D_{w\lambda,\lambda}x & =q^{(\lambda+w\lambda,\wt x)}xD_{w\lambda,\lambda}\;\text{in}\;\mathbf{A}_{q}\!\left[N_{-}\left(w\right)\right],\;\text{and}\\{}
[D_{w\lambda,\lambda}][y] & =q^{(\lambda+w\lambda,\wt y)}[y][D_{w\lambda,\lambda}]\;\text{in}\;\mathbf{A}_{q}\!\left[N_{-}\cap X_{w}\right].
\end{align*}
\end{prop}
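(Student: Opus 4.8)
The plan is to first establish the multiplicative identity $q^{-(\lambda,w\lambda'-\lambda')}D_{w\lambda,\lambda}D_{w\lambda',\lambda'} = D_{w(\lambda+\lambda'),\lambda+\lambda'}$, and then deduce the two $q$-centrality statements from it together with Proposition \ref{p:localization}. For the multiplicative identity, I would use Proposition \ref{p:minorPBW}, which identifies $D_{w\lambda,\lambda}$ with the dual canonical basis element $G^{\mathrm{up}}(b_{-1}({\bm n}^\lambda,{\bm i}))$ for a fixed $\bm i \in I(w)$. Since ${\bm n}^{\lambda+\lambda'} = {\bm n}^\lambda + {\bm n}^{\lambda'}$, Proposition \ref{p:localization}(2) (applied with ${\bm c} = {\bm n}^{\lambda'}$) gives $q^{-(\lambda,\wt b_{-1}({\bm n}^{\lambda'},{\bm i}))} D_{w\lambda,\lambda} G^{\mathrm{up}}(b_{-1}({\bm n}^{\lambda'},{\bm i})) = G^{\mathrm{up}}(b_{-1}({\bm n}^\lambda + {\bm n}^{\lambda'},{\bm i}))$, i.e. $q^{-(\lambda,\wt b_{-1}({\bm n}^{\lambda'},{\bm i}))} D_{w\lambda,\lambda} D_{w\lambda',\lambda'} = D_{w(\lambda+\lambda'),\lambda+\lambda'}$. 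It then remains to check $\wt b_{-1}({\bm n}^{\lambda'},{\bm i}) = w\lambda' - \lambda'$; this is just the weight of $D_{w\lambda',\lambda'}$, which by Definition \ref{d:minor} equals $\wt u_{w\lambda'} - \wt u_{\lambda'} = w\lambda' - \lambda'$.

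Next I would prove $q$-centrality. For $x \in \mathbf{A}_q[N_-(w)]$ homogeneous, it suffices to treat $x = G^{\mathrm{up}}(b_{-1}({\bm c},{\bm i}))$ a dual canonical basis element (these span, and the claimed relation is linear in $x$ for fixed weight). By Proposition \ref{p:localization}(2), $q^{-(\lambda,\wt x)} D_{w\lambda,\lambda}\, x = G^{\mathrm{up}}(b_{-1}({\bm c}+{\bm n}^\lambda,{\bm i}))$. On the other hand I need to compute $x\, D_{w\lambda,\lambda}$. Here the key input is Proposition \ref{p:dualbar}: since $\sigma(G^{\mathrm{up}}(b)) = G^{\mathrm{up}}(b)$ for all $b$ (Proposition \ref{p:barinv}), applying $\sigma$ to the identity $q^{-(\lambda,\wt x)} D_{w\lambda,\lambda}\, x = G^{\mathrm{up}}(b_{-1}({\bm c}+{\bm n}^\lambda,{\bm i}))$ and using $\sigma(ab) = q^{(\wt a,\wt b)}\sigma(b)\sigma(a)$ gives $q^{-(\lambda,\wt x)} q^{(w\lambda-\lambda,\wt x)}\, x\, D_{w\lambda,\lambda} = G^{\mathrm{up}}(b_{-1}({\bm c}+{\bm n}^\lambda,{\bm i}))$ (using $\wt D_{w\lambda,\lambda} = w\lambda - \lambda$). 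Comparing the two expressions for $G^{\mathrm{up}}(b_{-1}({\bm c}+{\bm n}^\lambda,{\bm i}))$ yields $D_{w\lambda,\lambda}\, x = q^{(\lambda + w\lambda,\wt x)} x\, D_{w\lambda,\lambda}$, as desired. The identical argument inside $\mathbf{A}_q[N_-\cap X_w]$, using Proposition \ref{p:localization}(1) and the fact that $\sigma$ descends to the quotient (it preserves $(\mathbf{U}_{w,q}^-)^\perp = \bigoplus_{b\notin\mathscr{B}_w(\infty)}\mathbb{Q}(q)G^{\mathrm{up}}(b)$ since $\sigma$ fixes each $G^{\mathrm{up}}(b)$), gives $[D_{w\lambda,\lambda}][y] = q^{(\lambda+w\lambda,\wt y)}[y][D_{w\lambda,\lambda}]$.

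Finally, the Ore condition: $\mathcal{D}_w$ (resp. $[\mathcal{D}_w]$) is multiplicatively closed by the first identity, and it consists of non-zero divisors because the $q$-centrality relations show that multiplying by $D_{w\lambda,\lambda}$ from the left or right is, up to an invertible scalar on each weight space, the same injective map (injectivity of left multiplication by $D_{w\lambda,\lambda}$ on $\mathbf{A}_q[N_-(w)]$, resp. on $\mathbf{A}_q[N_-\cap X_w]$, follows from Proposition \ref{p:localization}, which sends basis elements to basis elements); $q$-centrality then immediately gives the Ore intersection condition $D_{w\lambda,\lambda}\mathcal{A} \cap a\mathcal{D}_w \neq \emptyset$ and its right analogue for any $a$. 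I expect the main obstacle to be bookkeeping the scalars correctly — in particular making sure the $\sigma$-twist contributes exactly $q^{(w\lambda-\lambda,\wt x)}$ and not some other power — and confirming that $\sigma$ is well-defined on the quotient algebra $\mathbf{A}_q[N_-\cap X_w]$; both are straightforward given Propositions \ref{p:dualbar} and \ref{p:barinv} and the explicit description of $(\mathbf{U}_{w,q}^-)^\perp$ in Definition \ref{d:qclosed}, but they are the points where care is needed.
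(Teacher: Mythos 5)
Your overall route is the same as the paper's: Proposition \ref{p:comm} is deduced there precisely from Proposition \ref{p:localization} combined with the twisted multiplicativity of the dual bar involution (Proposition \ref{p:dualbar}), which is exactly your plan; the paper also records an alternative derivation via the quantized coordinate ring description in Remark \ref{r:commproof}. Your proof of the multiplicative identity (Proposition \ref{p:minorPBW} plus Proposition \ref{p:localization}(2) with $\bm{c}=\bm{n}^{\lambda'}$ and $\bm{n}^{\lambda}+\bm{n}^{\lambda'}=\bm{n}^{\lambda+\lambda'}$), the reduction of $q$-centrality to dual canonical basis elements, and the observation that $\sigma$ descends to $\Aq[N_{-}\cap X_{w}]$ because it fixes each $G^{\mathrm{up}}(b)$ are all fine.

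There is, however, one concrete slip at exactly the point you flagged: $\sigma$ is \emph{not} $\mathbb{Q}(q)$-linear. By Definition \ref{d:dualbar} (equivalently Proposition \ref{p:dualbar}, which involves $\overline{\phantom{x}}$), $\sigma(q^{n}u)=q^{-n}\sigma(u)$. Hence applying $\sigma$ to $q^{-(\lambda,\wt x)}D_{w\lambda,\lambda}\,x=G^{\mathrm{up}}(b_{-1}(\bm{c}+\bm{n}^{\lambda},\bm{i}))$ gives $q^{+(\lambda,\wt x)}q^{(w\lambda-\lambda,\wt x)}\,x\,D_{w\lambda,\lambda}=G^{\mathrm{up}}(b_{-1}(\bm{c}+\bm{n}^{\lambda},\bm{i}))$, not the equation you wrote with $q^{-(\lambda,\wt x)}$ in front; as displayed, your intermediate identity would only yield $D_{w\lambda,\lambda}x=q^{(w\lambda-\lambda,\wt x)}xD_{w\lambda,\lambda}$, which contradicts the (correct) formula you then state. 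With the scalar conjugated, the comparison gives $D_{w\lambda,\lambda}x=q^{2(\lambda,\wt x)+(w\lambda-\lambda,\wt x)}xD_{w\lambda,\lambda}=q^{(\lambda+w\lambda,\wt x)}xD_{w\lambda,\lambda}$, as desired. A smaller point on the Ore set: in $\Aq[N_{-}(w)]$ injectivity of left multiplication by $D_{w\lambda,\lambda}$ is immediate since $\bm{c}\mapsto\bm{c}+\bm{n}^{\lambda}$ is injective, but Proposition \ref{p:localization}(1) as quoted only asserts the \emph{existence} of $b'$, so the non-zero-divisor property of $[D_{w\lambda,\lambda}]$ in $\Aq[N_{-}\cap X_{w}]$ needs the injectivity of $b\mapsto b'$; this is supplied by the cited results of \cite{MR2914878}, or alternatively by the embedding into $\Rq^{+}/\Qq_{w}^{+}$, which has no zero divisors (Remark \ref{r:zerodiv}, cf.\ Remark \ref{r:commproof}).
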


Using the Proposition \ref{p:comm}, we obtain the definition of quantum
unipotent cells.
\begin{defn}
\label{d:localization}For $w\in W$, we set 
\begin{align*}
\mathbf{A}_{q}\!\left[N_{-}(w)\cap wG_{0}^{\min}\right] & :=\mathbf{A}_{q}\!\left[N_{-}\left(w\right)\right]\left[\mathcal{D}_{w}^{-1}\right],\\
\mathbf{A}_{q}\!\left[N_{-}^{w}\right] & :=\mathbf{A}_{q}\!\left[N_{-}\cap X_{w}\right]\left[\left[\mathcal{D}_{w}\right]^{-1}\right].
\end{align*}
Those algebras have $Q$-graded algebra structures in an obvious way.
The algebra $\mathbf{A}_{q}\!\left[N_{-}^{w}\right]$ is called a
quantum unipotent cell. 
\end{defn}
\begin{rem}
We note that the notations $\mathbf{A}_{q}\!\left[N_{-}(w)\cap wG_{0}^{\min}\right]$
and $\mathbf{A}_{q}\!\left[N_{-}^{w}\right]$ will be justified after
proving the existence of the dual canonical bases of that.
\end{rem}

\subsection{Dual canonical bases of quantum unipotent cells}

In this subsection, we define the dual canonical bases of quantum
unipotent cells using localization and the ``multiplicative property''
of the dual canonical bases of $\Aq[N_{-}^{w}]$ and $\Aq[N_{-}(w)\cap wG_{0}^{\min}]$.
\begin{prop}
\label{p:localdualcan}Let $w\in W$ and $\bm{i}\in I(w)$. Then the
following hold: 

\textup{(1)} The subset 
\[
\{q^{(\lambda,\wt b+\lambda-w\lambda)}[D_{w\lambda,\lambda}]^{-1}[G^{\mathrm{up}}(b)]\mid\lambda\in P_{+},b\in\mathscr{B}_{w}(\infty)\}
\]
of $\Aq[N_{-}^{w}]$ forms a $\mathbb{Q}(q)$-basis of $\Aq[N_{-}^{w}]$. 

\textup{(2)} The subset 
\[
\{q^{(\lambda,\wt b_{-1}({\bm{c}},{\bm{i}})+\lambda-w\lambda)}D_{w\lambda,\lambda}^{-1}G^{\mathrm{up}}(b_{-1}({\bm{c}},{\bm{i}}))\mid\lambda\in P_{+},\bm{c}\in\mathbb{Z}_{\geq0}^{\ell(w)}\}
\]
of $\Aq[N_{-}(w)\cap wG_{0}^{\min}]$ forms a $\mathbb{Q}(q)$-basis
of $\Aq[N_{-}(w)\cap wG_{0}^{\min}]$. 
\end{prop}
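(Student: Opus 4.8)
The plan is to reduce both parts to the same mechanism. The key observation is that $\mathcal D_w$ and $[\mathcal D_w]$ are \emph{directed} Ore sets: for $\lambda_1,\dots,\lambda_k\in P_+$ the weight $\mu:=\lambda_1+\cdots+\lambda_k$ lies in $P_+$ with each $\mu-\lambda_j\in P_+$, and by Proposition~\ref{p:comm} the minor $D_{w\mu,\mu}$ is, up to a power of $q$, both a left and a right multiple of each $D_{w\lambda_j,\lambda_j}$ (with complementary factor $D_{w(\mu-\lambda_j),\mu-\lambda_j}$). Consequently every element of $\Aq[N_-^w]$ can be written as $q^m[D_{w\mu,\mu}]^{-1}a$ for a single $\mu\in P_+$ and some $a\in\Aq[N_-\cap X_w]$, and likewise every element of $\Aq[N_-(w)\cap wG_0^{\min}]$ as $q^m D_{w\mu,\mu}^{-1}a$ with $a\in\Aq[N_-(w)]$.

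The heart of the argument, and the only step I expect to require genuine work, is a normalization lemma. By Proposition~\ref{p:localization}(1), left multiplication by $[D_{w\sigma,\sigma}]$ ($\sigma\in P_+$) defines an injection $\Theta_\sigma\colon\mathscr B_w(\infty)\to\mathscr B_w(\infty)$ with $[D_{w\sigma,\sigma}][G^{\mathrm{up}}(b)]=q^{(\sigma,\wt b)}[G^{\mathrm{up}}(\Theta_\sigma(b))]$, and hence $\wt\Theta_\sigma(b)=\wt b+w\sigma-\sigma$ and $\Theta_{\sigma+\sigma'}=\Theta_\sigma\circ\Theta_{\sigma'}$ (from the first relation of Proposition~\ref{p:comm}). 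Setting $\widetilde E_{\lambda,b}:=q^{(\lambda,\wt b+\lambda-w\lambda)}[D_{w\lambda,\lambda}]^{-1}[G^{\mathrm{up}}(b)]$, the lemma states that $\widetilde E_{\lambda,b}=\widetilde E_{\lambda+\sigma,\Theta_\sigma(b)}$ for all $\lambda,\sigma\in P_+$ and $b\in\mathscr B_w(\infty)$. To prove it I would substitute the factorization of $[D_{w(\lambda+\sigma),\lambda+\sigma}]$ as a power of $q$ times $[D_{w\lambda,\lambda}][D_{w\sigma,\sigma}]$, rewrite $[G^{\mathrm{up}}(\Theta_\sigma(b))]$ using $[D_{w\sigma,\sigma}]$, commute the $D$-factors past one another via the $q$-commutation relations of Proposition~\ref{p:comm}, and check that all resulting $q$-powers cancel; the cancellation works \emph{precisely} because of the form $(\lambda,\wt b+\lambda-w\lambda)$ of the exponent, so this is where the content of the statement sits. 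For part (2) the same computation is more explicit: Proposition~\ref{p:localization}(2) gives $\Theta_\sigma(b_{-1}(\bm c,\bm i))=b_{-1}(\bm c+\bm n^\sigma,\bm i)$ with $\bm n^\sigma$ as in Proposition~\ref{p:minorPBW}, and one obtains the corresponding identity for the elements indexed by $(\lambda,\bm c)$.

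Granting the normalization lemma, the proof concludes quickly. For spanning, write an arbitrary element of $\Aq[N_-^w]$ as $q^m[D_{w\mu,\mu}]^{-1}a$ and expand $a$ in the dual canonical basis $\{[G^{\mathrm{up}}(b)]\}_{b\in\mathscr B_w(\infty)}$ of $\Aq[N_-\cap X_w]$ (Definition~\ref{d:qclosed}): this displays the element as a linear combination of the $[D_{w\mu,\mu}]^{-1}[G^{\mathrm{up}}(b)]$, each a power of $q$ times a claimed element $\widetilde E_{\mu,b}$; part (2) is identical with the basis $\{G^{\mathrm{up}}(b_{-1}(\bm c,\bm i))\}_{\bm c}$ of $\Aq[N_-(w)]$ (Definition~\ref{d:PBWparam}). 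For linear independence, suppose $\sum_j a_j\,\widetilde E_{\lambda_j,b_j}=0$ is a relation among \emph{pairwise distinct} claimed elements with all $a_j\neq0$; pick $\mu\in P_+$ with $\mu-\lambda_j\in P_+$ for all $j$. By the normalization lemma $\widetilde E_{\lambda_j,b_j}=\widetilde E_{\mu,\Theta_{\mu-\lambda_j}(b_j)}$, so multiplying the relation on the left by the non-zero-divisor $[D_{w\mu,\mu}]$ yields $\sum_j a_j'\,[G^{\mathrm{up}}(\Theta_{\mu-\lambda_j}(b_j))]=0$ in $\Aq[N_-\cap X_w]$ with all $a_j'\neq0$. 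The indices $\Theta_{\mu-\lambda_j}(b_j)$ are pairwise distinct — if two coincided, the normalization lemma would force the corresponding $\widetilde E_{\lambda_j,b_j}$ to coincide — so linear independence of the dual canonical basis of $\Aq[N_-\cap X_w]$ forces every $a_j'=0$, a contradiction. Part (2) runs verbatim, now clearing the denominator $D_{w\mu,\mu}$ and appealing to linear independence of $\{G^{\mathrm{up}}(b_{-1}(\bm c,\bm i))\}_{\bm c}$ in $\Aq[N_-(w)]$, where $\mu-\lambda_j\in P_+$ ensures $\bm c_j+\bm n^{\mu-\lambda_j}\in\mathbb Z_{\geq0}^{\ell(w)}$.
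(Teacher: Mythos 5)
Your proposal is correct and follows essentially the same route as the paper: both arguments clear denominators by multiplying with $[D_{w\mu,\mu}]$ for a common dominant weight $\mu$, use Proposition \ref{p:localization} together with the $q$-commutation rules of Proposition \ref{p:comm} to land in the dual canonical basis of $\Aq[N_{-}\cap X_{w}]$ (resp.\ $\Aq[N_{-}(w)]$), and handle possible coincidences among the listed elements before invoking linear independence. Your ``normalization lemma'' $\widetilde E_{\lambda,b}=\widetilde E_{\lambda+\sigma,\Theta_\sigma(b)}$ is exactly the computation the paper performs implicitly via the weight relation $\wt b+\lambda-w\lambda=\wt b^{(\lambda_0-\lambda)}+\lambda_0-w\lambda_0$, so the two proofs are the same in substance.
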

\begin{proof}
We prove only (1). The assertion (2) is proved in the same manner.
The given subset obviously spans the $\mathbb{Q}(q)$-vector space
$\Aq[N_{-}^{w}]$. Hence it remains to show that this set is a linearly
independent set. For $(\lambda,b),(\lambda',b')\in P_{+}\times\mathscr{B}_{w}(\infty)$,
write $(\lambda,b)\sim(\lambda',b')$ if and only if $q^{(\lambda,\wt b+\lambda-w\lambda)}[D_{w\lambda,\lambda}]^{-1}[G^{\mathrm{up}}(b)]=q^{(\lambda',\wt b'+\lambda'-w\lambda')}[D_{w\lambda',\lambda'}]^{-1}[G^{\mathrm{up}}(b')]$.
The relation $\sim$ is clearly an equivalence relation, and we take
a complete set $F$ of coset representatives of $(P_{+}\times\mathscr{B}_{w}(\infty))/\sim$.

Suppose that there exists a finite subset $F'\subset F$ and $a_{\lambda,b}\in\mathbb{Q}(q)$
$((\lambda,b)\in F')$ such that $\sum_{(\lambda,b)\in F'}q^{(\lambda,\wt b+\lambda-w\lambda)}a_{\lambda,b}[D_{w\lambda,\lambda}]^{-1}[G^{\mathrm{up}}(b)]=0$.
There exists $\lambda_{0}\in P_{+}$ such that $\lambda_{0}-\lambda\in P_{+}$
for all $\lambda\in P_{+}$ such that $(\lambda,b)\in F'$ for some
$b\in\mathscr{B}_{w}(\infty)$. Now the equality $\sum_{(\lambda,b)\in F'}q^{(\lambda,\wt b+\lambda-w\lambda)}a_{\lambda,b}[D_{w\lambda,\lambda}]^{-1}[G^{\mathrm{up}}(b)]=0$
is equivalent to the equality 
\begin{align}
[D_{w\lambda_{0},\lambda_{0}}]\left(\sum_{(\lambda,b)\in F'}q^{(\lambda,\wt b+\lambda-w\lambda)}a_{\lambda,b}[D_{w\lambda,\lambda}]^{-1}[G^{\mathrm{up}}(b)]\right)=0.\label{erase}
\end{align}
By Proposition \ref{p:comm} and Proposition \ref{p:localization},
for $(\lambda,b)\in F'$, we have 
\begin{align*}
[D_{w\lambda_{0},\lambda_{0}}]\left(q^{(\lambda,\wt b+\lambda-w\lambda)}[D_{w\lambda,\lambda}]^{-1}[G^{\mathrm{up}}(b)]\right) & =q^{-(\lambda_{0}-\lambda,w\lambda-\lambda)+(\lambda,\wt b+\lambda-w\lambda)}[D_{w(\lambda_{0}-\lambda),(\lambda_{0}-\lambda)}][G^{\mathrm{up}}(b)]\\
 & =q^{(\lambda_{0},\wt b+\lambda-w\lambda)}[G^{\mathrm{up}}(b^{(\lambda_{0}-\lambda)})]
\end{align*}
for some $b^{(\lambda_{0}-\lambda)}\in\mathscr{B}_{w}(\infty)$. Note
that $\wt b+\lambda-w\lambda=\wt b^{(\lambda_{0}-\lambda)}-\wt D_{w\lambda_{0},\lambda_{0}}$.
Therefore if $b^{(\lambda_{0}-\lambda)}=(b')^{(\lambda_{0}-\lambda')}$
for $(\lambda,b),(\lambda',b')\in F'$ then we have the equality 
\[
[D_{w\lambda_{0},\lambda_{0}}]\left(q^{(\lambda,\wt b+\lambda-w\lambda)}[D_{w\lambda,\lambda}]^{-1}[G^{\mathrm{up}}(b)]\right)=[D_{w\lambda_{0},\lambda_{0}}]\left(q^{(\lambda',\wt b'+\lambda'-w\lambda')}[D_{w\lambda',\lambda'}]^{-1}[G^{\mathrm{up}}(b')]\right),
\]
hence $(\lambda,b)=(\lambda',b')$. Thus (\ref{erase}) implies $a_{\lambda,b}=0$
for all $(\lambda,b)\in F'$. This completes the proof. 
\end{proof}
\begin{defn}
\label{d:localdualcan} Let $w\in W$. We call 
\begin{align*}
\widetilde{\mathbf{B}}^{\mathrm{up},w} & :=\{q^{(\lambda,\wt b+\lambda-w\lambda)}[D_{w\lambda,\lambda}]^{-1}[G^{\mathrm{up}}(b)]\mid\lambda\in P_{+},b\in\mathscr{B}_{w}(\infty)\},\;\text{and}\\
\widetilde{\mathbf{B}}^{\mathrm{up}}(w) & :=\{q^{(\lambda,\wt b_{-1}({\bm{c}},{\bm{i}})+\lambda-w\lambda)}D_{w\lambda,\lambda}^{-1}G^{\mathrm{up}}(b_{-1}({\bm{c}},{\bm{i}}))\mid\lambda\in P_{+},\bm{c}\in\mathbb{Z}_{\geq0}^{\ell(w)}\}
\end{align*}
the dual canonical bases of $\Aq[N_{-}^{w}]$ and $\Aq[N_{-}(w)\cap wG_{0}^{\min}]$,
respectively.

For $\lambda\in P$, there exist $\lambda_{1},\lambda_{2}\in P_{+}$
such that $\lambda=-\lambda_{1}+\lambda_{2}$. Set 
\[
D_{w,\lambda}:=q^{(\lambda_{1},w\lambda-\lambda)}D_{w\lambda_{1},\lambda_{1}}^{-1}D_{w\lambda_{2},\lambda_{2}}\in\widetilde{\mathbf{B}}^{\mathrm{up}}(w).
\]
Then $D_{w,\lambda}$ does not depend on the choice of $\lambda_{1},\lambda_{2}\in P_{+}$
by Proposition \ref{p:localdualcan}. Note that $\wt D_{w,\lambda}=w\lambda-\lambda$. 
\end{defn}
The following is straightforwardly proved by Proposition \ref{p:comm}. 
\begin{prop}
\label{p:Elambda} Let $w\in W$ and $\lambda,\lambda'\in P_{+}$.
Then the following hold: 
\begin{enumerate}
\item[(1)] $D_{w,\lambda}=q^{(\lambda,w\lambda_{1}-\lambda_{1})}D_{w\lambda_{2},\lambda_{2}}D_{w\lambda_{1},\lambda_{1}}^{-1}$
for $\lambda_{1},\lambda_{2}\in P_{+}$ with $\lambda=-\lambda_{1}+\lambda_{2}$.
\item[(2)] $D_{w,\lambda}D_{w,\lambda'}=q^{(\lambda,w\lambda'-\lambda')}D_{w,\lambda+\lambda'}$.
In particular, $D_{w,\lambda}^{-1}=q^{(\lambda,w\lambda-\lambda)}D_{w,-\lambda}$.
\item[(3)]  $D_{w,\lambda}x=q^{(\lambda+w\lambda,\wt x)}xD_{w,\lambda}$ for
$\lambda\in P_{+}$and a homogeneous element $x\in\Aq[N_{-}(w)\cap wG_{0}^{\min}]$. 
\end{enumerate}
\end{prop}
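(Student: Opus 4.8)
The plan is to deduce all three statements from the two $q$-commutation relations and the product formula for unipotent quantum minors recorded in Proposition \ref{p:comm}, after noting two elementary facts from Definition \ref{d:localdualcan}: taking $\lambda_1=\lambda_2$ shows $D_{w,0}=1$, and taking $\lambda_1=0$, $\lambda_2=\lambda$ shows $D_{w,\lambda}=D_{w\lambda,\lambda}$ for $\lambda\in P_+$ (using $D_{w\cdot 0,0}=1$). I will also use throughout that both $q$-commutation relations of Proposition \ref{p:comm} pass from $\Aq[N_-(w)]$ (resp.\ $\Aq[N_-\cap X_w]$) to its localization $\Aq[N_-(w)\cap wG_0^{\min}]$ (resp.\ $\Aq[N_-^w]$): inverting $D_{w\mu,\mu}x=q^{(\mu+w\mu,\wt x)}xD_{w\mu,\mu}$ gives $D_{w\mu,\mu}^{-1}x=q^{-(\mu+w\mu,\wt x)}xD_{w\mu,\mu}^{-1}$ for homogeneous $x$, and every element of the localization is a finite sum of terms $q^mD_{w\mu,\mu}^{-1}y$ with $y\in\Aq[N_-(w)]$ homogeneous; this is where the $q$-centrality of $\mathcal D_w$ and $[\mathcal D_w]$ is used.

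For (1), I observe that the right-hand side of the product formula in Proposition \ref{p:comm} is symmetric in $\lambda,\lambda'$, so $q^{-(\lambda_1,w\lambda_2-\lambda_2)}D_{w\lambda_1,\lambda_1}D_{w\lambda_2,\lambda_2}=q^{-(\lambda_2,w\lambda_1-\lambda_1)}D_{w\lambda_2,\lambda_2}D_{w\lambda_1,\lambda_1}$; multiplying this by $D_{w\lambda_1,\lambda_1}^{-1}$ on the left and on the right yields $D_{w\lambda_1,\lambda_1}^{-1}D_{w\lambda_2,\lambda_2}=q^{(\lambda_2,w\lambda_1-\lambda_1)-(\lambda_1,w\lambda_2-\lambda_2)}D_{w\lambda_2,\lambda_2}D_{w\lambda_1,\lambda_1}^{-1}$. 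Substituting this into the definition $D_{w,\lambda}=q^{(\lambda_1,w\lambda-\lambda)}D_{w\lambda_1,\lambda_1}^{-1}D_{w\lambda_2,\lambda_2}$ and using $\lambda=\lambda_2-\lambda_1$, the total $q$-exponent becomes $(\lambda_1,w\lambda-\lambda)+(\lambda_2,w\lambda_1-\lambda_1)-(\lambda_1,w\lambda_2-\lambda_2)$, which a direct expansion using only the bilinearity and symmetry of $(\ ,\ )$ simplifies to $(\lambda_2-\lambda_1,w\lambda_1-\lambda_1)=(\lambda,w\lambda_1-\lambda_1)$, giving the claimed formula.

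For (2), I fix dominant $\lambda_1,\lambda_2,\lambda_1',\lambda_2'$ with $\lambda=-\lambda_1+\lambda_2$ and $\lambda'=-\lambda_1'+\lambda_2'$, so $\lambda+\lambda'=-(\lambda_1+\lambda_1')+(\lambda_2+\lambda_2')$. Writing out $D_{w,\lambda}D_{w,\lambda'}$ via the definition gives a scalar times $D_{w\lambda_1,\lambda_1}^{-1}D_{w\lambda_2,\lambda_2}D_{w\lambda_1',\lambda_1'}^{-1}D_{w\lambda_2',\lambda_2'}$; moving $D_{w\lambda_1',\lambda_1'}^{-1}$ to the left past $D_{w\lambda_2,\lambda_2}$ using the $q$-commutation relation from the proof of (1) (now in the localization), the product formula of Proposition \ref{p:comm} then collapses $D_{w\lambda_1,\lambda_1}^{-1}D_{w\lambda_1',\lambda_1'}^{-1}=(D_{w\lambda_1',\lambda_1'}D_{w\lambda_1,\lambda_1})^{-1}$ into a scalar times $D_{w(\lambda_1+\lambda_1'),\lambda_1+\lambda_1'}^{-1}$ and $D_{w\lambda_2,\lambda_2}D_{w\lambda_2',\lambda_2'}$ into a scalar times $D_{w(\lambda_2+\lambda_2'),\lambda_2+\lambda_2'}$; comparing with Definition \ref{d:localdualcan} and collecting $q$-exponents (again a formal computation using only bilinearity and symmetry of $(\ ,\ )$) identifies the result with $q^{(\lambda,w\lambda'-\lambda')}D_{w,\lambda+\lambda'}$. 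The identity $D_{w,\lambda}^{-1}=q^{(\lambda,w\lambda-\lambda)}D_{w,-\lambda}$ is then the special case $\lambda'=-\lambda$, using $D_{w,0}=1$. Finally, (3) is immediate: for $\lambda\in P_+$ we have $D_{w,\lambda}=D_{w\lambda,\lambda}$, and the relation $D_{w\lambda,\lambda}x=q^{(\lambda+w\lambda,\wt x)}xD_{w\lambda,\lambda}$ of Proposition \ref{p:comm} holds in $\Aq[N_-(w)]$ and extends to $\Aq[N_-(w)\cap wG_0^{\min}]$ as explained in the first paragraph. The only genuine work is the bookkeeping of $q$-exponents in (1) and (2), and the only point that needs a word of care is the extension of the $q$-commutation relations to the localizations; both are routine.
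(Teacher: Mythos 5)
Your proof is correct and follows exactly the route the paper intends: the paper offers no written argument beyond the remark that the proposition "is straightforwardly proved by Proposition \ref{p:comm}", and your argument is precisely that straightforward derivation, spelling out the $q$-exponent bookkeeping from the product formula, the $q$-commutation relations (extended to the localizations via $q$-centrality of $\mathcal{D}_w$), and Definition \ref{d:localdualcan}. Your care in working with general $\lambda,\lambda'\in P$ in (2) is exactly what is needed for the "in particular" statement $D_{w,\lambda}^{-1}=q^{(\lambda,w\lambda-\lambda)}D_{w,-\lambda}$.
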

\begin{rem}
\label{r:param} By using Proposition \ref{p:localization} (2), we
can parametrize explicitly the elements of $\widetilde{\mathbf{B}}^{\mathrm{up}}(w)$.
Fix ${\bm{i}}=(i_{1},\dots,i_{\ell})\in I(w)$. An element ${\bm{c}}\in\mathbb{Z}_{\geq0}^{\ell}$
is said to have gaps if $\min\{c_{k}\mid i_{k}=i\}=0$ for all $i\in I$.
Then, by Propositions \ref{p:localization} (2) and \ref{p:localdualcan}
(2), we obtain the non-overlapping parametrization of the elements
of $\widetilde{\mathbf{B}}^{\mathrm{up}}(w)$ as follows: 
\[
\widetilde{\mathbf{B}}^{\mathrm{up}}(w)=\{q^{-(\lambda,\wt b_{-1}({\bm{c}},{\bm{i}}))}D_{w,\lambda}G^{\mathrm{up}}(b_{-1}({\bm{c}},{\bm{i}}))\mid\lambda\in P,{\bm{c}}\in\mathbb{Z}_{\geq0}^{\ell}\;\text{has gaps}\}.
\]
\end{rem}
We define the dual bar involutions on $\Aq[N_{-}^{w}]$
and $\Aq[N_{-}(w)\cap wG_{0}^{\min}]$, which are useful when we study
the dual canonical bases. 
\begin{prop}
\label{p:dualbarex} The following hold: 

\textup{(1)} The twisted dual bar involution $\sigma'$ induces $\mathbb{Q}$-algebra
anti-involutions $\Aq[N_{-}\cap X_{w}]\to\Aq[N_{-}\cap X_{w}]$ and
$\Aq[N_{-}(w)]\to\Aq[N_{-}(w)]$. See Definition \ref{d:twdualbar}
for the definition of $\sigma'$. Moreover these maps are extended
to $\mathbb{Q}$-algebra anti-involutions $\sigma'\colon\Aq\left[N_{-}^{w}\right]\to\Aq\left[N_{-}^{w}\right]$
and $\sigma'\colon\Aq[N_{-}(w)\cap wG_{0}^{\min}]\to\Aq[N_{-}(w)\cap wG_{0}^{\min}]$. 

\textup{(2)} Define a $\mathbb{Q}(q)$-linear isomorphism $c_{\mathrm{tw}}\colon\Aq[N_{-}^{w}]\to\Aq[N_{-}^{w}]$
(resp.~$\Aq[N_{-}(w)\cap wG_{0}^{\min}]\to\Aq[N_{-}(w)\cap wG_{0}^{\min}]$)
by 
\[
x\mapsto q^{\left(\wt x,\wt x\right)/2-\left(\wt x,\rho\right)}x
\]
for every homogeneous element $x\in\Aq[N_{-}^{w}]$ (resp.~$x\in\Aq[N_{-}(w)\cap wG_{0}^{\min}]$).

Set $\sigma:=c_{\mathrm{tw}}\circ\sigma'$. Then for homogeneous elements
$x,y\in\Aq[N_{-}^{w}]$ (resp.~$\Aq[N_{-}(w)\cap wG_{0}^{\min}]$)
we have 
\begin{equation}
\sigma(xy)=q^{(\wt x,\wt y)}\sigma(y)\sigma(x).\label{twmulti}
\end{equation}
Moreover the elements of the dual canonical bases $\widetilde{\mathbf{B}}^{\mathrm{up},w}$
and $\widetilde{\mathbf{B}}^{\mathrm{up}}(w)$ are fixed by $\sigma$. 
\end{prop}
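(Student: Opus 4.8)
The plan is to reduce every assertion to the action of the twisted dual bar involution $\sigma'$ on dual canonical basis vectors, combined with the $q$-commutation relations of Proposition~\ref{p:comm} and the $W$-invariance of $(\ ,\ )$. First, for~(1): by Proposition~\ref{p:barinv} we have $\sigma(\Gup(b))=\Gup(b)$ for every $b\in\mathscr{B}(\infty)$, so by Remark~\ref{r:twdualbar} (equivalently, by the explicit formula in Definition~\ref{d:twdualbar}) $\sigma'(\Gup(b))=q^{-(\wt b,\wt b)/2+(\wt b,\rho)}\Gup(b)$, and the exponent is an integer because $(\alpha_i,\alpha_i)\in2\mathbb{Z}$ and $\langle h_i,\rho\rangle=1$. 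Thus $\sigma'$ scales each dual canonical basis vector of $\Uq^-$ by a unit of $\mathcal{A}$, whence it preserves the ideal $(\mathbf{U}_{w,q}^-)^\perp=\bigoplus_{b\notin\mathscr{B}_w(\infty)}\mathbb{Q}(q)\Gup(b)$ and the subalgebra $\Aq[N_-(w)]=\bigoplus_{\bm c}\mathbb{Q}(q)\Gup(b_{-1}(\bm c,\bm i))$; hence it induces, respectively restricts to, a $\mathbb{Q}$-algebra anti-involution on $\Aq[N_-\cap X_w]$ and on $\Aq[N_-(w)]$. Since each $D_{w\lambda,\lambda}$ is a dual canonical basis vector (Proposition~\ref{p:minorPBW}), $\sigma'$ sends the Ore set $\mathcal{D}_w$ (and likewise $[\mathcal{D}_w]$) onto itself, so by the universal property of Ore localisation, applied to an anti-homomorphism inverting the prescribed set, $\sigma'$ extends uniquely to the localisations $\Aq[N_-(w)\cap wG_0^{\min}]$ and $\Aq[N_-^w]$; uniqueness forces these extensions to be involutive.

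Second, for the twisted multiplicativity in~(2): the operator $c_{\mathrm{tw}}$ is defined on the localisations because they carry $Q$-gradings (Definition~\ref{d:localization}), and since $c_{\mathrm{tw}}$ and $\sigma'$ restrict on $\Uq^-$ to the maps of the same name, $\sigma:=c_{\mathrm{tw}}\circ\sigma'$ extends the $\sigma$ of Proposition~\ref{p:dualbar}. As $*$ and $\overline{\phantom{x}}$ preserve the weight grading of $\Uq^-$, so does $\sigma'$, hence also $\sigma'$ on each of the four algebras above. Then for homogeneous $x,y$ we get $\sigma(xy)=c_{\mathrm{tw}}\!\bigl(\sigma'(y)\sigma'(x)\bigr)$, and the identity $(\wt x+\wt y,\wt x+\wt y)/2-(\wt x+\wt y,\rho)=\bigl((\wt x,\wt x)/2-(\wt x,\rho)\bigr)+\bigl((\wt y,\wt y)/2-(\wt y,\rho)\bigr)+(\wt x,\wt y)$ together with bilinearity of $(\ ,\ )$ yields $\sigma(xy)=q^{(\wt x,\wt y)}\sigma(y)\sigma(x)$, exactly as in the proof of Proposition~\ref{p:dualbar}.

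Third, to show $\sigma$ fixes the bases: write a typical vector of $\widetilde{\mathbf{B}}^{\mathrm{up},w}$ as $g=q^{(\lambda,\wt b+\lambda-w\lambda)}[D_{w\lambda,\lambda}]^{-1}[\Gup(b)]$ and put $\mu:=\wt D_{w\lambda,\lambda}=w\lambda-\lambda$, $\beta:=\wt b$. From $\sigma([D_{w\lambda,\lambda}])=[D_{w\lambda,\lambda}]$ and $\sigma([\Gup(b)])=[\Gup(b)]$ (both inherited from Proposition~\ref{p:barinv}), the relation $\sigma(a^{-1})=q^{(\wt a,\wt a)}\sigma(a)^{-1}$ obtained from \eqref{twmulti}, the twisted multiplicativity above, and the $q$-commutation $[D_{w\lambda,\lambda}][\Gup(b)]=q^{(\lambda+w\lambda,\beta)}[\Gup(b)][D_{w\lambda,\lambda}]$ of Proposition~\ref{p:comm}, a short computation with $q$-exponents gives $\sigma(g)=q^{(\lambda,\beta)+(\lambda,\mu)+(\mu,\mu)}[D_{w\lambda,\lambda}]^{-1}[\Gup(b)]$, while $g=q^{(\lambda,\beta)-(\lambda,\mu)}[D_{w\lambda,\lambda}]^{-1}[\Gup(b)]$. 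Hence $\sigma(g)=g$ is equivalent to $(\mu,\mu+2\lambda)=0$, i.e.\ to $(w\lambda,w\lambda)=(\lambda,\lambda)$, which holds by $W$-invariance of $(\ ,\ )$. The same computation with $D_{w\lambda,\lambda}G^{\mathrm{up}}(b_{-1}(\bm c,\bm i))$ in place of $[D_{w\lambda,\lambda}]^{-1}[\Gup(b)]$ settles $\widetilde{\mathbf{B}}^{\mathrm{up}}(w)$.

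The conceptual content is all in Propositions~\ref{p:barinv}, \ref{p:comm} and the $W$-invariance of the form; the main hazard is the bookkeeping of $q$-exponents in the third step — in particular the $\overline{\phantom{x}}$-twist on scalars, since $\sigma$ is only $\mathbb{Q}$-linear — together with the need, in the first step, to verify that the descent, restriction and localisation of $\sigma'$ are mutually consistent so that the identifications used afterwards ($\sigma$ fixes $[D_{w\lambda,\lambda}]$ and $[\Gup(b)]$, and $\sigma$ extends the $\sigma$ of $\Uq^-$) are legitimate.
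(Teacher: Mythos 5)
Your proof is correct and follows essentially the same route as the paper's: part (1) via compatibility of the dual canonical basis with the quotient and subalgebra plus the universal property of Ore localization, and the basis-invariance via the twisted anti-multiplicativity \eqref{twmulti}, the computation of $\sigma(D_{w\lambda,\lambda}^{-1})$ from $\sigma(D_{w\lambda,\lambda}D_{w\lambda,\lambda}^{-1})=1$, the $q$-commutation of Proposition \ref{p:comm}, and the $W$-invariance of $(\ ,\ )$ (which the paper uses in the guise $-(w\lambda-\lambda,w\lambda-\lambda)=2(\lambda,w\lambda-\lambda)$). The only blemish is a notational slip at the end: the element of $\widetilde{\mathbf{B}}^{\mathrm{up}}(w)$ involves $D_{w\lambda,\lambda}^{-1}G^{\mathrm{up}}(b_{-1}(\bm{c},\bm{i}))$, not $D_{w\lambda,\lambda}G^{\mathrm{up}}(b_{-1}(\bm{c},\bm{i}))$, but your ``same computation'' goes through verbatim with the inverse in place.
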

\begin{defn}
\label{d:dualbarex} The $\mathbb{Q}$-linear isomorphisms $\sigma$
and $\sigma'\colon\Aq[N_{-}^{w}]\to\Aq[N_{-}^{w}],\Aq[N_{-}(w)\cap wG_{0}^{\min}]\to\Aq[N_{-}(w)\cap wG_{0}^{\min}]$
defined in Proposition \ref{p:dualbarex} will be also called \emph{the dual bar involution} and \emph{the twisted dual bar involution}, respectively. 
\end{defn}
\begin{proof}[{{Proof of Proposition \ref{p:dualbarex}}}]
 Recall that $\sigma'(\Gup(b))=q^{-\left(\wt b,\wt b\right)/2+\left(\wt b,\rho\right)}\Gup(b)$
for all $b\in\mathscr{B}(\infty)$. See Remark \ref{r:twdualbar}.
Hence (1) follows from the compatibility of the algebras $\Aq[N_{-}\cap X_{w}]$,
$\Aq[N_{-}(w)]$ and the dual canonical basis (Definition \ref{d:PBWparam},
Definition \ref{d:qclosed}), and the universality of localization
\cite[Proposition 6.3]{MR1020298}. A direct calculation immediately
shows the equality \ref{twmulti}. For $\lambda\in P_{+}$, we have
\begin{align*}
1 & =\sigma(D_{w\lambda,\lambda}D_{w\lambda,\lambda}^{-1})\\
 & =q^{-(w\lambda-\lambda,w\lambda-\lambda)}\sigma(D_{w\lambda,\lambda}^{-1})\sigma(D_{w\lambda,\lambda})\\
 & =q^{2(\lambda,w\lambda-\lambda)}\sigma(D_{w\lambda,\lambda}^{-1})D_{w\lambda,\lambda}
\end{align*}
in $\Aq[N_{-}(w)\cap wG_{0}^{\min}]$. Hence 
\[
\sigma(D_{w\lambda,\lambda}^{-1})=q^{-2(\lambda,w\lambda-\lambda)}D_{w\lambda,\lambda}^{-1}.
\]
Let $b\in\mathscr{B}_{w}(\infty)$. Then, by Proposition \ref{p:comm}
and the equality above, we have 
\begin{align*}
 & \sigma(q^{(\lambda,\wt b+\lambda-w\lambda)}[D_{w\lambda,\lambda}]^{-1}[G^{\mathrm{up}}(b)])\\
 & =q^{-(\lambda,\wt b+\lambda-w\lambda)+(\lambda-w\lambda,\wt b)}\sigma([G^{\mathrm{up}}(b)])\sigma([D_{w\lambda,\lambda}]^{-1})\\
 & =q^{-(\lambda,\wt b+\lambda-w\lambda)+(\lambda-w\lambda,\wt b)-2(\lambda,w\lambda-\lambda)}[G^{\mathrm{up}}(b)][D_{w\lambda,\lambda}]^{-1}\\
 & =q^{-(\lambda,\wt b+\lambda-w\lambda)+(\lambda-w\lambda,\wt b)-2(\lambda,w\lambda-\lambda)+(\lambda+w\lambda,\wt b)}[D_{w\lambda,\lambda}]^{-1}[G^{\mathrm{up}}(b)]\\
 & =q^{(\lambda,\wt b+\lambda-w\lambda)}[D_{w\lambda,\lambda}]^{-1}[G^{\mathrm{up}}(b)].
\end{align*}
This proves the dual bar invariance property for $\widetilde{\mathbf{B}}^{\mathrm{up},w}$.
The assertion for $\tilde{\mathbf{B}}^{\mathrm{up}}(w)$ is proved
in the same manner.
\end{proof}
As a corollary of the existence of the dual canonical bases of $\mathbf{A}_{q}\!\left[N_{-}\left(w\right)\cap wG_{0}^{\min}\right]$
and $\mathbf{A}_{q}\!\left[N_{-}^{w}\right]$, we have the following
specialization theorem.
\begin{cor}
\label{c:specializationlocal}Let $w\in W$. 

\textup{(1)} Set $\mathbf{A}_{\mathbb{Q}\left[q^{\pm1}\right]}\!\left[N_{-}\left(w\right)\cap wG_{0}^{\min}\right]$
to be the free $\mathcal{A}$-module spanned by $\widetilde{\mathbf{B}}^{\mathrm{up}}\left(w\right)$.
Then it is a $\mathcal{A}$-subalgebra of $\mathbf{A}_{q}\!\left[N_{-}\left(w\right)\cap wG_{0}^{\min}\right]$
and we have an isomorphism 
\[
\mathbf{A}_{\mathbb{Q}\left[q^{\pm1}\right]}\!\left[N_{-}\left(w\right)\cap wG_{0}^{\min}\right]\otimes_{\mathcal{A}}\mathbb{C}\simeq\mathbb{C}\left[N_{-}\left(w\right)\cap wG_{0}^{\min}\right]
\]
 as $\mathbb{C}$-algebras.

\textup{(2)} Set $\mathbf{A}_{\mathbb{Q}\left[q^{\pm1}\right]}\!\left[N_{-}^{w}\right]$
to be the free $\mathcal{A}$-module spanned by $\widetilde{\mathbf{B}}^{\mathrm{up},w}$.
Then it is a $\mathcal{A}$-subalgebra of $\mathbf{A}_{q}\!\left[N_{-}^{w}\right]$
and we have an isomorphism 
\[
\mathbf{A}_{\mathbb{Q}\left[q^{\pm1}\right]}\!\left[N_{-}^{w}\right]\otimes_{\mathcal{A}}\mathbb{C}\simeq\mathbb{C}\left[N_{-}^{w}\right]
\]
as $\mathbb{C}$-algebras.
\end{cor}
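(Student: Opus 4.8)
The plan is to deduce both parts of the corollary from a single structural fact together with a base-change argument. The structural fact is: the $\mathcal{A}$-form $\mathbf{A}_{\mathbb{Q}[q^{\pm1}]}[N_{-}(w)\cap wG_{0}^{\min}]$, defined as the $\mathcal{A}$-span of $\widetilde{\mathbf{B}}^{\mathrm{up}}(w)$, coincides with the Ore localization of the (already understood) $\mathcal{A}$-form $\mathbf{A}_{\mathbb{Q}[q^{\pm1}]}[N_{-}(w)]$ of Theorem \ref{t:grpspecial} at the Ore set $\mathcal{D}_{w}$; and likewise $\mathbf{A}_{\mathbb{Q}[q^{\pm1}]}[N_{-}^{w}]$ is the Ore localization of $\mathbf{A}_{\mathbb{Q}[q^{\pm1}]}[N_{-}\cap X_{w}]$ at $[\mathcal{D}_{w}]$. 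Granting these identifications, part (1) follows by combining Theorem \ref{t:grpspecial} with \eqref{eq:grpcoord}, and part (2) by combining the specialization theorem $\mathbf{A}_{\mathbb{Q}[q^{\pm1}]}[N_{-}\cap X_{w}]\otimes_{\mathcal{A}}\mathbb{C}\simeq\mathbb{C}[N_{-}\cap X_{w}]$ established above with Corollary \ref{c:cellcoord}. I will describe the argument for (1); part (2) is entirely parallel, using Proposition \ref{p:localization} (1) in place of (2).

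First I would check that $\mathcal{D}_{w}$ is a $q$-central Ore set already inside $\mathbf{A}_{\mathbb{Q}[q^{\pm1}]}[N_{-}(w)]$: each $D_{w\lambda,\lambda}$ lies in it by Proposition \ref{p:minorPBW} (it is $G^{\mathrm{up}}(b_{-1}({\bm{n}}^{\lambda},{\bm{i}}))$), and the relations of Proposition \ref{p:comm} hold over $\mathcal{A}$, so $\mathbf{A}_{\mathbb{Q}[q^{\pm1}]}[N_{-}(w)][\mathcal{D}_{w}^{-1}]$ is a well-defined $\mathcal{A}$-subalgebra of $\mathbf{A}_{q}[N_{-}(w)\cap wG_{0}^{\min}]$. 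The inclusion $\mathbf{A}_{\mathbb{Q}[q^{\pm1}]}[N_{-}(w)\cap wG_{0}^{\min}]\subseteq\mathbf{A}_{\mathbb{Q}[q^{\pm1}]}[N_{-}(w)][\mathcal{D}_{w}^{-1}]$ is immediate from the explicit description of $\widetilde{\mathbf{B}}^{\mathrm{up}}(w)$ (Definition \ref{d:localdualcan} and Remark \ref{r:param}), since $G^{\mathrm{up}}(b_{-1}({\bm{c}},{\bm{i}}))$ lies in $\mathbf{A}_{\mathbb{Q}[q^{\pm1}]}[N_{-}(w)]$ and $D_{w,\lambda}$ equals, up to a unit of $\mathcal{A}$, a product $D_{w\lambda_{2},\lambda_{2}}D_{w\lambda_{1},\lambda_{1}}^{-1}$ (Proposition \ref{p:Elambda} (1)). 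For the reverse inclusion, using $q$-centrality and the relation $q^{\bullet}D_{w\lambda,\lambda}D_{w\lambda',\lambda'}=D_{w(\lambda+\lambda'),\lambda+\lambda'}$ one writes a general element of $\mathbf{A}_{\mathbb{Q}[q^{\pm1}]}[N_{-}(w)][\mathcal{D}_{w}^{-1}]$ as an $\mathcal{A}$-combination of elements $D_{w\lambda,\lambda}^{-1}G^{\mathrm{up}}(b_{-1}({\bm{c}},{\bm{i}}))$ with $\lambda\in P_{+}$; Proposition \ref{p:localization} (2) then rewrites each of these as a unit of $\mathcal{A}$ times an element of $\widetilde{\mathbf{B}}^{\mathrm{up}}(w)$ (raising ${\bm{c}}$ by ${\bm{n}}^{\lambda}$ if necessary and applying the relation in reverse). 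Since $\widetilde{\mathbf{B}}^{\mathrm{up}}(w)$ is a $\mathbb{Q}(q)$-basis (Proposition \ref{p:localdualcan} (2)), this proves the equality of free $\mathcal{A}$-modules, and in particular $\mathbf{A}_{\mathbb{Q}[q^{\pm1}]}[N_{-}(w)\cap wG_{0}^{\min}]$ is an $\mathcal{A}$-subalgebra, as claimed.

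Next I would perform the base change, using only the universal property of Ore localization (so that no delicate flatness statement is needed). Set $R:=\mathbf{A}_{\mathbb{Q}[q^{\pm1}]}[N_{-}(w)]$ and $R^{\mathrm{loc}}:=R[\mathcal{D}_{w}^{-1}]$. Both are free $\mathcal{A}$-modules, the chosen basis of $R$ (the case $\lambda=0$ of $\widetilde{\mathbf{B}}^{\mathrm{up}}(w)$) being part of that of $R^{\mathrm{loc}}$, so $R\otimes_{\mathcal{A}}\mathbb{C}\hookrightarrow R^{\mathrm{loc}}\otimes_{\mathcal{A}}\mathbb{C}$. By Theorem \ref{t:grpspecial}, $R\otimes_{\mathcal{A}}\mathbb{C}\cong\mathbb{C}[N_{-}(w)]$, and under this isomorphism the class of $D_{w\lambda,\lambda}=D_{u_{w\lambda},u_{\lambda}}$ is the classical unipotent minor $D_{w\lambda,\lambda}^{\mathbb{C}}$ — this is the compatibility of quantum and classical matrix coefficients, which follows from the specialization at $q=1$ of the forms $(\ ,\ )_{L}$ and $(\ ,\ )_{\lambda}^{\varphi}$ to the forms used to define $\Delta_{w\lambda,\lambda}$ in subsection \ref{ss:unipcellautom} (see also \cite{MR2914878}). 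The image of $\mathcal{D}_{w}$ in $\mathbb{C}[N_{-}(w)]$ thus consists of the nonzero elements $D_{w\lambda,\lambda}^{\mathbb{C}}$ of the domain $\mathbb{C}[N_{-}(w)]$. The map $R\to\mathbb{C}[N_{-}(w)]\to\mathbb{C}[N_{-}(w)][(D_{w\lambda,\lambda}^{\mathbb{C}})^{-1}\mid\lambda\in P_{+}]$ inverts $\mathcal{D}_{w}$ and kills $q-1$, hence factors through $R^{\mathrm{loc}}\otimes_{\mathcal{A}}\mathbb{C}$; conversely $R\otimes_{\mathcal{A}}\mathbb{C}=\mathbb{C}[N_{-}(w)]\to R^{\mathrm{loc}}\otimes_{\mathcal{A}}\mathbb{C}$ sends the $D_{w\lambda,\lambda}^{\mathbb{C}}$ to units, so induces a map in the other direction; these are mutually inverse. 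Therefore $\mathbf{A}_{\mathbb{Q}[q^{\pm1}]}[N_{-}(w)\cap wG_{0}^{\min}]\otimes_{\mathcal{A}}\mathbb{C}\cong\mathbb{C}[N_{-}(w)][(D_{w\lambda,\lambda}^{\mathbb{C}})^{-1}\mid\lambda\in P_{+}]\cong\mathbb{C}[N_{-}(w)\cap wG_{0}^{\min}]$ by \eqref{eq:grpcoord}, an isomorphism of $\mathbb{C}$-algebras by construction. Part (2) is obtained identically, with $\mathbf{A}_{\mathbb{Q}[q^{\pm1}]}[N_{-}^{w}]=\mathbf{A}_{\mathbb{Q}[q^{\pm1}]}[N_{-}\cap X_{w}][[\mathcal{D}_{w}]^{-1}]$, whose specialization is $\mathbb{C}[N_{-}\cap X_{w}][[D_{w\lambda,\lambda}^{\mathbb{C}}]_{w}^{-1}\mid\lambda\in P_{+}]\cong\mathbb{C}[N_{-}^{w}]$ by Corollary \ref{c:cellcoord}.

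The main obstacle I expect is the structural fact of the second paragraph — that the $\mathcal{A}$-span of the localized dual canonical basis is exactly the Ore localization of the smaller $\mathcal{A}$-form. It hinges on the multiplicative property of dual canonical basis elements under multiplication by quantum minors (Proposition \ref{p:localization}) holding integrally, i.e.\ over $\mathcal{A}$, so one must verify that every normalizing power of $q$ appearing in $\widetilde{\mathbf{B}}^{\mathrm{up}}(w)$ and in Propositions \ref{p:comm}, \ref{p:localization} and \ref{p:Elambda} is a unit of $\mathcal{A}$, and carry out the bookkeeping that reduces an arbitrary localized element to the explicitly parametrized basis of Remark \ref{r:param}. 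The base-change step, by contrast, is formal once this identification is in hand, and the coincidence of specialized quantum minors with classical ones is a routine compatibility already implicit in \cite{MR2914878}.
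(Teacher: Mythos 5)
Your proposal is correct and matches the argument the paper intends: the paper states Corollary \ref{c:specializationlocal} without proof as an immediate consequence of the existence of the dual canonical bases $\widetilde{\mathbf{B}}^{\mathrm{up}}(w)$, $\widetilde{\mathbf{B}}^{\mathrm{up},w}$ together with Theorem \ref{t:grpspecial}, the closed-cell specialization theorem, \eqref{eq:grpcoord} and Corollary \ref{c:cellcoord}, and the details you supply (identifying the $\mathcal{A}$-span of the localized basis with the Ore localization of the $\mathcal{A}$-form via Propositions \ref{p:comm}, \ref{p:localization}, \ref{p:Elambda}, then commuting specialization with localization by universal properties rather than flatness) are exactly the intended ones. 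The only point you defer — that the specialized quantum minors $D_{w\lambda,\lambda}$ become the classical minors $D^{\mathbb{C}}_{w\lambda,\lambda}$ — is likewise taken as known from \cite{MR2914878} in the paper, so this is not a gap relative to the paper's level of detail.
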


\subsection{De Concini-Procesi isomorphisms}\label{ss:DeCP}

In this subsection, we give a proof of the De Concini-Procesi isomorphism between $\mathbf{A}_{q}\!\left[N_{-}\left(w\right)\right]$ and $\mathbf{A}_{q}\!\left[N_{-}\cap X_{w}\right]$ for general symmetrizable Kac-Moody Lie algebras, by using theory of canonical bases and specialization. We should remark that the original proof in \cite{MR1635678} uses the downward induction on the length of elements of the Weyl group $W$ from the longest element, which exists only in finite type cases. 

\begin{prop}[{{{\cite[Theorem 5.13]{MR2914878}}}}]
\label{p:usualinj} Let $w\in W$. Define $\iota_{w}\colon\Aq[N_{-}(w)]\to\Aq[N_{-}\cap X_{w}]$
as a $\mathbb{Q}(q)$-algebra homomorphism induced from the canonical
projection $\Uq^{-}\to\Aq[N_{-}\cap X_{w}]$. Recall Definition \ref{d:qunip}
and \ref{d:qclosed}. Then $\iota_{w}$ is injective, or equivalently,
$\ast(\mathscr{B}(\Uq^{-}(w)))\subset\mathscr{B}_{w}(\infty)$.
\end{prop}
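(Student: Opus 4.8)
The plan is to reduce the injectivity of $\iota_w$ to the corresponding classical statement and to establish that by a specialization argument, after first recording the crystal reformulation.

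For the reformulation: by Proposition~\ref{p:dualcanonical} the set $\Uq^{-}(w)\cap\mathbf{B}^{\mathrm{up}}=\{\Gup(b)\mid b\in\mathscr{B}(\Uq^{-}(w))\}$ is a $\mathbb{Q}(q)$-basis of $\Uq^{-}(w)$, so applying the anti-involution $\ast$ (which sends $\Gup(b)$ to $\Gup(\ast b)$ by Proposition~\ref{p:Kasinv}) and using $\Aq[N_{-}(w)]=\ast(\Uq^{-}(w))$ gives $\Aq[N_{-}(w)]=\bigoplus_{b\in\ast(\mathscr{B}(\Uq^{-}(w)))}\mathbb{Q}(q)\Gup(b)$. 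On the other hand the kernel of the canonical projection $\Uq^{-}\to\Aq[N_{-}\cap X_{w}]$ is $(\mathbf{U}_{w,q}^{-})^{\perp}=\bigoplus_{b\in\mathscr{B}(\infty)\setminus\mathscr{B}_{w}(\infty)}\mathbb{Q}(q)\Gup(b)$ by Definition~\ref{d:qclosed}. Since both spaces are spanned by subsets of $\mathbf{B}^{\mathrm{up}}$, their intersection $\ker\iota_w$ is spanned by $\{\Gup(b)\mid b\in\ast(\mathscr{B}(\Uq^{-}(w)))\setminus\mathscr{B}_{w}(\infty)\}$; hence $\iota_w$ is injective if and only if $\ast(\mathscr{B}(\Uq^{-}(w)))\subseteq\mathscr{B}_{w}(\infty)$, and it remains to prove injectivity.

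For injectivity I would pass to the $\mathcal{A}$-forms ($\mathcal{A}=\mathbb{Q}[q^{\pm1}]$) and specialize at $q=1$. The map $\iota_w$ sends each $\Gup(b)$ with $b\in\ast(\mathscr{B}(\Uq^{-}(w)))$ either to a projected dual canonical basis element or to $0$, so it restricts to a homomorphism $M:=\mathbf{A}_{\mathbb{Q}[q^{\pm1}]}[N_{-}(w)]\to N:=\mathbf{A}_{\mathbb{Q}[q^{\pm1}]}[N_{-}\cap X_{w}]$ between free $\mathcal{A}$-modules (each spanned by a suitable subset of the (projected) dual canonical basis). By Theorem~\ref{t:grpspecial}, the specialization theorem for $\Aq[N_{-}\cap X_{w}]$, and Proposition~\ref{p:closedcoord}, tensoring this map with $\mathbb{C}$ along $q\mapsto1$ yields the classical homomorphism $\mathbb{C}[N_{-}(w)]\xrightarrow{\pi_{w}^{\ast}}\mathbb{C}[\boldsymbol{N}_{-}]\twoheadrightarrow\mathbb{C}[N_{-}\cap X_{w}]$, that is, the map $\iota$ from the proof of Corollary~\ref{c:spusualisom}. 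This $\iota$ is induced by the morphism $N_{-}\cap X_{w}\hookrightarrow\boldsymbol{N}_{-}\xrightarrow{\pi_{w}}N_{-}(w)$, which is dominant because on the nonempty open subscheme $N_{-}^{w}$ it restricts to the isomorphism $N_{-}^{w}\xrightarrow{\sim}N_{-}(w)\cap wG_{0}^{\min}$ of Proposition~\ref{p:BZGLS}, whose image is a nonempty, hence dense, open subscheme of the irreducible scheme $N_{-}(w)$; a dominant morphism of integral schemes induces an injection of coordinate rings, so $\iota$ is injective.

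Finally I would lift injectivity back to generic $q$. Let $K=\ker(M\to N)$; then $M/K$ embeds in the free module $N$ over the principal ideal domain $\mathcal{A}$, hence is free, so $0\to K\otimes_{\mathcal{A}}\mathbb{C}\to M\otimes_{\mathcal{A}}\mathbb{C}\to(M/K)\otimes_{\mathcal{A}}\mathbb{C}\to0$ is exact. Since $M\otimes_{\mathcal{A}}\mathbb{C}\to N\otimes_{\mathcal{A}}\mathbb{C}$ is injective and factors through $M\otimes_{\mathcal{A}}\mathbb{C}\to(M/K)\otimes_{\mathcal{A}}\mathbb{C}$, the latter is injective, forcing $K\otimes_{\mathcal{A}}\mathbb{C}=0$; as $K$ is a submodule of the free $\mathcal{A}$-module $M$ it is itself free, so $K=0$, and $\iota_w$, obtained from $M\to N$ by $\otimes_{\mathcal{A}}\mathbb{Q}(q)$, is injective. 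The step I expect to be the main obstacle is the identification in the preceding paragraph: one must know that the specialization isomorphisms are compatible with the inclusion $\Aq[N_{-}(w)]\subseteq\Uq^{-}$ and the projection $\Uq^{-}\twoheadrightarrow\Aq[N_{-}\cap X_{w}]$, so that the $q=1$ limit of $\iota_w$ really is the classical $\pi_w$-composite $\iota$, together with the (classical) dominance giving injectivity of $\iota$ — this is where the Gauss-decomposition description of the twist and Proposition~\ref{p:BZGLS} genuinely enter.
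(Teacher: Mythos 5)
Your argument is correct, but note that the paper does not prove Proposition \ref{p:usualinj} at all: it is quoted from \cite[Theorem 5.13]{MR2914878}, where the proof is crystal-theoretic, i.e.\ the inclusion $\ast(\mathscr{B}(\Uq^{-}(w)))\subset\mathscr{B}_{w}(\infty)$ is established directly from the theory of (dual) canonical bases and Demazure crystals, with no classical geometry. Your route is genuinely different and mirrors the specialization technique the paper itself uses later for Theorem \ref{thm:DeCP}: pass to the $\mathcal{A}$-forms spanned by the relevant (projected) dual canonical basis elements, identify the $q=1$ limit of $\iota_{w}$ with the classical composite $\mathbb{C}[N_{-}(w)]\xrightarrow{\pi_{w}^{\ast}}\mathbb{C}[\boldsymbol{N}_{-}]\twoheadrightarrow\mathbb{C}[N_{-}\cap X_{w}]$, deduce classical injectivity from dominance of $\pi_{w}|_{N_{-}\cap X_{w}}$ via Proposition \ref{p:BZGLS} (the target $N_{-}(w)$ being integral), and lift back using freeness over the principal ideal domain $\mathcal{A}$; all of these steps check out, and there is no circularity, since Theorem \ref{t:grpspecial} is \cite[Theorem 4.44]{MR2914878} (independent of the quoted Theorem 5.13), the specialization theorem for $\Aq[N_{-}\cap X_{w}]$ uses only the projected dual canonical basis, and Proposition \ref{p:BZGLS} and Corollary \ref{c:spusualisom} are purely classical. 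The point you flag as the main obstacle—that the isomorphism of Theorem \ref{t:grpspecial} is induced by the inclusion $\Aq[N_{-}(w)]\subset\Uq^{-}$, i.e.\ that it identifies the specialization with the invariant subalgebra $\pi_{w}^{\ast}(\mathbb{C}[N_{-}(w)])\subset\mathbb{C}[\boldsymbol{N}_{-}]$—is exactly the compatibility this paper itself invokes (``by definition of $\iota_{w}$'') in the proof of Theorem \ref{thm:DeCP}, and it is how the isomorphism of \cite{MR2914878} is constructed, so your proof is complete modulo that citation. As for what each approach buys: the original crystal proof gives the combinatorial inclusion of crystals intrinsically and needs no input from the Berenstein--Zelevinsky/\GLS{} twist, whereas your proof is shorter and more geometric but imports the classical twist isomorphism and the compatibility of the specialization maps.
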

\begin{thm}[{The De Concini-Procesi isomorphism}]
\label{thm:DeCP} Let $w\in W$. Then $\iota_{w}$ induces an isomorphism;
\[
\iota_{w}\colon\Aq[N_{-}(w)\cap wG_{0}^{\min}]\xrightarrow{\sim}\Aq[N_{-}^{w}].
\]
\end{thm}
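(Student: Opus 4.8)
The plan is to show that the injective map $\iota_{w}\colon\Aq[N_{-}(w)]\to\Aq[N_{-}\cap X_{w}]$ of Proposition \ref{p:usualinj}, after localizing at the Ore sets $\mathcal{D}_{w}$ and $[\mathcal{D}_{w}]$ (Proposition \ref{p:comm}), becomes an isomorphism. First I would verify that $\iota_{w}$ sends $\mathcal{D}_{w}$ into $[\mathcal{D}_{w}]$: this is immediate since $\iota_{w}(D_{w\lambda,\lambda})=[D_{w\lambda,\lambda}]$ by the definition of $\iota_{w}$ as the restriction of the canonical projection $\Uq^{-}\to\Aq[N_{-}\cap X_{w}]$. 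By the universal property of Ore localization (\cite[Proposition 6.3]{MR1020298}), $\iota_{w}$ therefore extends uniquely to a $\mathbb{Q}(q)$-algebra homomorphism $\iota_{w}\colon\Aq[N_{-}(w)\cap wG_{0}^{\min}]\to\Aq[N_{-}^{w}]$, and this extension is again injective because localization at an Ore set of non-zero divisors is exact/flat. So the only real content is surjectivity.

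For surjectivity, I would work with the dual canonical bases constructed in Definition \ref{d:localdualcan}. By Proposition \ref{p:localdualcan}(1), $\Aq[N_{-}^{w}]$ is spanned over $\mathbb{Q}(q)$ by the elements $q^{(\lambda,\wt b+\lambda-w\lambda)}[D_{w\lambda,\lambda}]^{-1}[G^{\mathrm{up}}(b)]$ with $\lambda\in P_{+}$ and $b\in\mathscr{B}_{w}(\infty)$. It therefore suffices to show each such element lies in the image of the extended $\iota_{w}$. Since $[D_{w\lambda,\lambda}]^{-1}=\iota_{w}(D_{w\lambda,\lambda}^{-1})$ (as $\iota_{w}$ already inverts the image of $\mathcal{D}_{w}$ after localization), it is enough to show that $[G^{\mathrm{up}}(b)]\in\Image(\iota_{w})$ for every $b\in\mathscr{B}_{w}(\infty)$ — equivalently, that every $[G^{\mathrm{up}}(b)]$ with $b\in\mathscr{B}_{w}(\infty)$ already lies in the (unlocalized) image $\iota_{w}(\Aq[N_{-}(w)])$, possibly after multiplying by a suitable power of $[D_{w\lambda,\lambda}]$. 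This is where the crystalized Kumar–Peterson identity (Theorem \ref{t:cryKP}) enters: it identifies $\mathscr{B}_{w}(\infty)=\bigcup_{\lambda\in P_{+}}\overline{\jmath}_{w\lambda}^{\vee}(\mathscr{B}_{w}(\lambda))$. Given $b\in\mathscr{B}_{w}(\infty)$, pick $\lambda\in P_{+}$ with $b\in\overline{\jmath}_{w\lambda}^{\vee}(\mathscr{B}_{w}(\lambda))$; using Proposition \ref{p:minor} and Proposition \ref{p:localization}(1) I would find $b'\in\mathscr{B}_{w}(\infty)$ with $[D_{w\lambda,\lambda}][G^{\mathrm{up}}(b)]=q^{\bullet}[G^{\mathrm{up}}(b')]$, and then use that $G^{\mathrm{up}}(b')$ lies in $\Aq[N_{-}(w)]\cap\mathbf{B}^{\mathrm{up}}$ modulo $(\mathbf{U}_{w,q}^{-})^{\perp}$ — i.e. $\ast(\mathscr{B}(\Uq^{-}(w)))\subset\mathscr{B}_{w}(\infty)$, the equivalent form of Proposition \ref{p:usualinj} — together with the explicit description in Remark \ref{r:param} of $\widetilde{\mathbf{B}}^{\mathrm{up}}(w)$ to conclude that $q^{(\lambda,\wt b+\lambda-w\lambda)}[D_{w\lambda,\lambda}]^{-1}[G^{\mathrm{up}}(b)]$ is the $\iota_{w}$-image of the corresponding element of $\widetilde{\mathbf{B}}^{\mathrm{up}}(w)$.

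In fact the cleanest way to package this is to show directly that $\iota_{w}$ carries $\widetilde{\mathbf{B}}^{\mathrm{up}}(w)$ bijectively onto $\widetilde{\mathbf{B}}^{\mathrm{up},w}$: by Proposition \ref{p:usualinj}, $\iota_{w}(G^{\mathrm{up}}(b_{-1}(\bm{c},\bm{i})))=[G^{\mathrm{up}}(b_{-1}(\bm{c},\bm{i}))]$ and $b_{-1}(\bm{c},\bm{i})\in\ast(\mathscr{B}(\Uq^{-}(w)))\subset\mathscr{B}_{w}(\infty)$, so each generator of $\widetilde{\mathbf{B}}^{\mathrm{up}}(w)$ maps to a generator of $\widetilde{\mathbf{B}}^{\mathrm{up},w}$ (same $q$-power, same $D_{w\lambda,\lambda}$); and this map on index sets $\{(\lambda,b_{-1}(\bm{c},\bm{i}))\}\to\{(\lambda,b)\}$ is a bijection precisely because both families of PBW-type parametrizations exhaust $\mathscr{B}_{w}(\infty)$ by Theorem \ref{t:cryKP} and Proposition \ref{p:usualinj}. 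Since $\iota_{w}$ sends a basis to a basis, it is an isomorphism. The main obstacle I anticipate is the bookkeeping around making the chain of equalities $\ast(\mathscr{B}(\Uq^{-}(w)))=\mathscr{B}_{w}(\infty)$ genuinely hold — Proposition \ref{p:usualinj} only gives one inclusion, and the reverse inclusion (equivalently, that $\iota_{w}$ is surjective on the level of these crystals) is exactly what Theorem \ref{t:cryKP} is designed to supply; so the real work is checking that $\bigcup_{\lambda}\overline{\jmath}_{w\lambda}^{\vee}(\mathscr{B}_{w}(\lambda))$ and $\mathscr{B}(\Uq^{-}(w))$ match up under $\ast$ compatibly with the minor-multiplication formulas, after which everything else is formal localization theory.
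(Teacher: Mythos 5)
Your first paragraph matches the paper's opening moves: $\iota_{w}$ sends $D_{w\lambda,\lambda}$ to $[D_{w\lambda,\lambda}]$, hence extends to an injective map of the localizations by the universal property, and it carries elements of $\widetilde{\mathbf{B}}^{\mathrm{up}}(w)$ to elements of $\widetilde{\mathbf{B}}^{\mathrm{up},w}$. The gap is in your surjectivity argument. First, you misquote Theorem \ref{t:cryKP}: it states $\bigcup_{\lambda\in P_{+}}\overline{\jmath}_{w\lambda}^{\vee}(\mathscr{B}_{w}(\lambda))=\mathscr{B}(\Uq^{-}(w))$, not $=\mathscr{B}_{w}(\infty)$; the identity $\bigcup_{\lambda}\overline{\jmath}_{\lambda}(\mathscr{B}_{w}(\lambda))=\mathscr{B}_{w}(\infty)$ of Remark \ref{r:Demazure} involves the other embedding $\overline{\jmath}_{\lambda}$. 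Second, your closing claim that the two PBW-type families "exhaust $\mathscr{B}_{w}(\infty)$", i.e.\ that $\ast(\mathscr{B}(\Uq^{-}(w)))=\mathscr{B}_{w}(\infty)$, is false in general: for $\mathfrak{g}=\mathfrak{sl}_{3}$ and $w=s_{1}s_{2}$, the set $\mathscr{B}_{w}(\infty)$ contains $\tilde{f}_{1}\tilde{f}_{2}^{2}u_{\infty}$ of weight $-\alpha_{1}-2\alpha_{2}$, whereas $\Uq^{-}(w)$ has zero weight space there, so $\ast(\mathscr{B}(\Uq^{-}(w)))\subsetneq\mathscr{B}_{w}(\infty)$ — this is just the statement that the unlocalized $\iota_{w}\colon\Aq[N_{-}(w)]\to\Aq[N_{-}\cap X_{w}]$ is injective but not surjective. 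Hence "each basis element maps to a basis element with the same $\lambda$ and same $q$-power" cannot yield a bijection onto $\widetilde{\mathbf{B}}^{\mathrm{up},w}$ by comparing index sets.

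What is actually needed, and what you gesture at but do not prove, is the statement that for every $b\in\mathscr{B}_{w}(\infty)$ there exists $\lambda\in P_{+}$ with $q^{\bullet}[D_{w\lambda,\lambda}][\Gup(b)]=[\Gup(b')]$ for some $b'\in\ast(\mathscr{B}(\Uq^{-}(w)))$. Proposition \ref{p:localization}(1) only guarantees $b'\in\mathscr{B}_{w}(\infty)$, and Proposition \ref{p:minor} together with Theorem \ref{t:cryKP} does not produce such a $b'$ by itself; in the paper a statement of this kind only comes out a posteriori, through the quantized-coordinate-ring isomorphisms $\mathcal{I}_{w}$, $\mathcal{I}_{w}^{+}$ of Section \ref{sec:Quantum-twist-isomorphisms} and Theorem \ref{t:preserve} (compatibility of $\gamma_{w,q}$ with the bases), which are independent of Theorem \ref{thm:DeCP} and could in principle furnish an alternative, purely quantum proof — but then you must actually run that machinery. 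The paper instead closes the surjectivity gap by specialization: since $\iota_{w}$ maps the $\mathcal{A}$-basis $\widetilde{\mathbf{B}}^{\mathrm{up}}(w)$ injectively into the $\mathcal{A}$-basis $\widetilde{\mathbf{B}}^{\mathrm{up},w}$, it is an isomorphism if and only if $\iota_{w}\mid_{q=1}$ is (Corollary \ref{c:specializationlocal}), and $\iota_{w}\mid_{q=1}$ is identified with the classical isomorphism of Corollary \ref{c:spusualisom}, which rests on the geometry in Proposition \ref{p:BZGLS}. Your proposal never invokes the $q=1$ specialization or the classical twist geometry, so as written the surjectivity step remains unproven.
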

\begin{proof}
The map $\iota_{w}$ in Proposition \ref{p:usualinj} induces an injective
algebra homomorphism $\iota_{w}\colon\Aq[N_{-}(w)]\to\Aq[N_{-}^{w}]$.
Since this map sends $D_{w\lambda,\lambda}$ to $[D_{w\lambda,\lambda}]$
for $\lambda\in P_{+}$, it is extended to the injective algebra homomorphism
\begin{equation}
\iota_{w}\colon\Aq[N_{-}(w)\cap wG_{0}^{\min}]=\Aq[N_{-}(w)][\mathcal{D}_{w}^{-1}]\to\Aq[N_{-}^{w}]\label{iotaex}
\end{equation}
by the universality of localization. It follows immediately from the
definition of dual canonical bases and Proposition \ref{p:usualinj}
that $\iota_{w}$ induces an injective map from $\widetilde{\mathbf{B}}^{\mathrm{up}}(w)$
to $\widetilde{\mathbf{B}}^{\mathrm{up},w}$. Therefore the map (\ref{iotaex})
is an isomorphism if and only if the (well-defined) map 
\begin{equation}
\iota_{w}\mid_{q=1}\colon\mathbf{A}_{\mathbb{Q}[q^{\pm1}]}[N_{-}(w)\cap wG_{0}^{\min}]\otimes_{\mathcal{A}}\mathbb{C}\to\mathbf{A}_{\mathbb{Q}[q^{\pm1}]}[N_{-}^{w}]\otimes_{\mathcal{A}}\mathbb{C}\label{itotacl}
\end{equation}
is an isomorphism. Through the isomorphisms in Corollary \ref{c:specializationlocal},
the map $\iota_{w}\mid_{q=1}$ coincides with the map in Corollary
\ref{c:spusualisom} by definition of $\iota_{w}$; hence it is an
isomorphism. This completes the proof.
\end{proof}

\section{Quantum twist isomorphisms\label{sec:Quantum-twist-isomorphisms}}

In this section, we construct the quantum twist isomorphisms between
$\mathbf{A}_{q}\!\left[N_{-}\left(w\right)\cap wG_{0}^{\min}\right]$ and
$\mathbf{A}_{q}\!\left[N_{-}^{w}\right]$ (see Theorem \ref{t:BZisom})
and define the quantum twist automorphisms on $\mathbf{A}_{q}\!\left[N_{-}^{w}\right]$
as a composite of the quantum twist isomorphism and the De Concini-Procesi
isomorphism.

\subsection{Quantized coordinate algebras}

In this subsection, we give a brief review on the quantized coordinate
rings. For more details, see \cite[Chapter 9, 10]{MR1315966}.
\begin{defn}
\label{d:mat_coeff} Let $M$ be a $\Uq$-module. For $f\in M^{\ast}:=\mathrm{Hom}_{\mathbb{\mathbb{Q}}(q)}\left(M,\mathbb{\mathbb{Q}}(q)\right)$
and $u\in M$, define a $\mathbb{Q}\left(q\right)$-linear map $c_{f,u}^{M}\in\Uq^{\ast}$
given by 
\[
x\mapsto\langle f,x.u\rangle
\]
for $x\in\Uq$. When $M=V\left(\lambda\right)$ ($\lambda\in P_{+}$),
we abbreviate $c_{f,u}^{V(\lambda)}$ to $c_{f,u}^{\lambda}$. For
$w,w'\in W$ and $\lambda\in P_{+}$, we write 
\[
c_{w\lambda,w'\lambda}^{\lambda}:=c_{f_{w\lambda},u_{w'\lambda}}^{\lambda},
\]
here $f_{w\lambda}\in V(\lambda)^{\ast}$ is defined by $u\mapsto(u_{w\lambda},u)_{\lambda}^{\varphi}$. 
\end{defn}
\begin{defn}
\label{d;grdual} Let $M$ be a $\Uq$-module. For $\mu\in P$, we
set 
\[
M_{\mu}:=\{m\in M\mid q^{h}.m=q^{\langle\mu,h\rangle}m\;\text{for all}\;h\in P^{*}\}.
\]
For a $\Uq$-module $M=\bigoplus_{\mu\in P}M_{\mu}$ with weight space
decomposition, we write its graded dual $\bigoplus_{\mu\in P}M_{\mu}^{\ast}$
as $M^{\star}$. Note that $M^{\star}$ is a right $\Uq$-module.
For $\lambda\in P_{+}$, $V(\lambda)^{\star}$ is an integrable highest
weight right $\Uq$-module with highest weight $\lambda$. For $u\in V(\lambda)$,
define $u^{\ast}\in V(\lambda)^{\star}$ by $u'\mapsto(u,u')_{\lambda}^{\varphi}$.
Then we have $V(\lambda)^{\star}=\{u^{\ast}\mid u\in V(\lambda)\}$
since the bilinear form $(\ ,\ )_{\lambda}^{\varphi}$ is non-degenerate.
Note that $f_{w\lambda}=u_{w\lambda}^{\ast}$ for $w\in W$.
\end{defn}
Let $\Rq$ be the $\mathbb{Q}(q)$-vector subspace of $\Uq^{\ast}$
spanned by the elements 
\[
\left\{ c_{f,u}^{\lambda}\mid f\in V(\lambda)^{\star},u\in V(\lambda)\;\text{and}\;\lambda\in P_{+}\right\} .
\]
Henceforth, we consider the algebra structure of $\Uq^{\ast}$ induced
from the coalgebra structure of $\Uq$. 
\begin{prop}[{\cite[Definition 7.2.1, Proposition 7.2.2]{MR1203234}}]
 The subspace $\Rq$ is a subalgebra of $\Uq^{\ast}$, which is isomorphic
to $\bigoplus_{\lambda\in P_{+}}V(\lambda)^{\star}\otimes V(\lambda)$
as a $\Uq$-bimodule. 
\end{prop}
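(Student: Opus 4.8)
The plan is to verify directly that the subspace $\mathbf{R}_q$ is closed under the multiplication inherited from $\mathbf{U}_q^*$, and then to exhibit the claimed bimodule isomorphism explicitly via matrix coefficients. For closure under multiplication, I would recall that the product of $f,g\in\mathbf{U}_q^*$ is the convolution $\langle fg,x\rangle=\langle f\otimes g,\Delta(x)\rangle$. Given $c_{f_1,u_1}^{\lambda_1}$ and $c_{f_2,u_2}^{\lambda_2}$, I would use that $V(\lambda_1)\otimes V(\lambda_2)$ is an integrable $\mathbf{U}_q$-module (in the category $\mathcal{O}_{\mathrm{int}}$) that decomposes into a finite direct sum of $V(\mu)$'s with $\mu\in P_+$; writing $u_1\otimes u_2$ and $f_1\otimes f_2$ in terms of bases adapted to this decomposition, the defining property of the coproduct gives $c_{f_1,u_1}^{\lambda_1}c_{f_2,u_2}^{\lambda_2}=c_{f_1\otimes f_2,\,u_1\otimes u_2}^{V(\lambda_1)\otimes V(\lambda_2)}$, which is a sum of matrix coefficients $c_{f,u}^{\mu}$, hence lies in $\mathbf{R}_q$. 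The identity element $\varepsilon$ is $c_{u_0^*,u_0}^0$, so $\mathbf{R}_q$ is a subalgebra.

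For the bimodule structure, the left and right $\mathbf{U}_q$-actions on $\mathbf{U}_q^*$ are the standard ones dual to the coproduct, namely $\langle x.f.y,u\rangle=\langle f,yux\rangle$; one checks $x.c_{f,u}^{\lambda}.y=c_{x.f,\,y.u}^{\lambda}$ where $x.f$ uses the right $\mathbf{U}_q$-module structure on $V(\lambda)^{\star}$ and $y.u$ the left one on $V(\lambda)$. Then I would define the map
\[
\Psi\colon\bigoplus_{\lambda\in P_+}V(\lambda)^{\star}\otimes V(\lambda)\to\mathbf{R}_q,\qquad f\otimes u\mapsto c_{f,u}^{\lambda},
\]
which is $\mathbf{U}_q$-bilinear by the above and surjective by definition of $\mathbf{R}_q$. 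Injectivity is the crucial point: it amounts to the statement that the matrix coefficients of the pairwise non-isomorphic simple modules $V(\lambda)$, $\lambda\in P_+$, are linearly independent in $\mathbf{U}_q^*$. This follows from a Jacobson density / Peter–Weyl type argument — each $V(\lambda)$ is simple and integrable, distinct $\lambda$ give non-isomorphic simples, and the images of $\mathbf{U}_q$ in $\prod_\lambda \operatorname{End}_{\mathbb{Q}(q)}(V(\lambda))$ surject onto any finite product of the finite-dimensional weight-space blocks; dualizing gives the independence. This is precisely the quantum analogue of the Peter–Weyl isomorphism $\Phi$ recalled in Section~\ref{sec:Preliminaries} for $\mathbf{U}(\mathfrak g)$, and is exactly the content of \cite[Definition 7.2.1, Proposition 7.2.2]{MR1203234}.

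The main obstacle is the injectivity/density step: one must be careful that $\mathbf{U}_q$ acts densely enough on each $V(\lambda)$ and that, working over the field $\mathbb{Q}(q)$, a finite set of $\lambda$'s can be separated simultaneously on a fixed (finite-dimensional) collection of weight spaces. Since each $V(\lambda)$ has finite-dimensional weight spaces and is a simple $\mathbf{U}_q$-module, restricting to a large enough finite-dimensional subquotient reduces this to the classical Jacobson density theorem for the associated finite-dimensional algebra; I would organize the argument so that only finitely many $V(\lambda)$ and finitely many weights are involved at a time, which suffices to conclude linear independence of all the $c_{f,u}^\lambda$ and hence that $\Psi$ is an isomorphism of $\mathbf{U}_q$-bimodules. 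The closure-under-product part, by contrast, is routine given the complete reducibility of tensor products in $\mathcal{O}_{\mathrm{int}}$.
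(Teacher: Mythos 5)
The paper gives no proof of this proposition: it is quoted verbatim from Kashiwara \cite[Definition 7.2.1, Proposition 7.2.2]{MR1203234}, so there is no internal argument to compare yours with. Your proof is the standard one and is essentially correct: the product of two matrix coefficients is a matrix coefficient of the tensor product module, semisimplicity of $\mathcal{O}_{\mathrm{int}}$ turns this into a sum of matrix coefficients of simples, and injectivity of $f\otimes u\mapsto c^{\lambda}_{f,u}$ is the linear independence of matrix coefficients of pairwise non-isomorphic simple integrable modules, which follows from Schur's lemma and Jacobson density applied to finitely many $V(\lambda)$ and finitely many vectors at a time, exactly as you organize it. Two small repairs are needed. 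First, in the Kac--Moody setting $V(\lambda_1)\otimes V(\lambda_2)$ is in general an \emph{infinite} direct sum of simples; the argument survives because the fixed vector $u_1\otimes u_2$, and hence the submodule it generates, has nonzero components in only finitely many summands, and $f_1\otimes f_2$ lies in the graded dual of the tensor product since its weight spaces are finite-dimensional, so the product is still a finite sum of elements $c^{\mu}_{f,u}$. Second, with the convention you state, $\langle x.c^{\lambda}_{f,u}.y,z\rangle=\langle f,(yzx).u\rangle=\langle f.y,\,z.(x.u)\rangle$, hence $x.c^{\lambda}_{f,u}.y=c^{\lambda}_{f.y,\,x.u}$: the left $\Uq$-action lands in the $V(\lambda)$ slot and the right action in the $V(\lambda)^{\star}$ slot, the opposite of the matching you wrote. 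With the slots assigned correctly the map $\bigoplus_{\lambda}V(\lambda)^{\star}\otimes V(\lambda)\to\Rq$ is indeed a bimodule isomorphism, so this is only a bookkeeping slip, not a gap.
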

The $\mathbb{Q}(q)$-algebra $\Rq$ is called \emph{the quantized coordinate algebra} associated with $\Uq$. 
\begin{defn}
\label{d:Qhomog} Let $v,w\in W$ and $\lambda\in P_{+}$. Set 
\begin{align*}
 & \Rq^{w(+)}(\lambda):=\{c_{f,u_{w\lambda}}^{\lambda}\mid f\in V(\lambda)^{\star}\},\;\Rq^{w(+)}:=\sum_{\lambda'\in P_{+}}\Rq^{w(+)}(\lambda')\subset\Rq,\\
 & \Qq_{v}^{w(+)}(\lambda):=\{c_{f,u_{w\lambda}}^{\lambda}\mid f\in V(\lambda)^{\star},\;\langle f,\Uq^{+}.u_{v\lambda}\rangle=0\},\;\Qq_{v}^{w(+)}:=\sum_{\lambda'\in P_{+}}\Qq_{v}^{w(+)}(\lambda')\subset\Rq.
\end{align*}
When $w=e$, we write $\Rq^{e(+)}$ (resp.~$\Qq_{v}^{e(+)}$) as
$\Rq^{+}$ (resp.~$\Qq_{u}^{+}$). It is easy to show that, for all
$w\in W$, $\Rq^{w(+)}$ is a subalgebra of $\Rq$, and isomorphic
to $\Rq^{+}$ as algebras via $c_{f,u_{w\lambda}}^{\lambda}\mapsto c_{f,u_{\lambda}}^{\lambda}$.
See, for example, \cite[Chapter 3]{Tani:QFA}. Moreover, for all $v,w\in W$,
$\Qq_{v}^{w(+)}$ is a two-sided ideal of $\Rq^{w(+)}$, and the above
isomorphism induces an isomorphism from $\Rq^{w(+)}/\Qq_{v}^{w(+)}$
to $\Rq^{+}/\Qq_{v}^{+}$. 
\end{defn}

\subsection{Other descriptions of quantum unipotent subgroups and quantum closed unipotent
cells}

In this subsection, we describe the algebras, quantum unipotent subgroups
and quantum unipotent cells, by using the quantized coordinate algebra
$\Rq$. The following descriptions are essentially shown in \cite[9.1.7]{MR1315966},
\cite[Theorem 3.7]{MR2679698}. However, we restate them emphasizing
the terms of dual canonical bases. Actually, we can now prove each
statement immediately. 
\begin{notation} Let $v,w\in W$. By abuse of notation, we describe
the canonical projection $\Rq^{w(+)}\to\Rq^{w(+)}/\Qq_{v}^{w(+)}$
as $c\mapsto[c]$. \end{notation} 
\begin{defn}
\label{d:formhom} As a bridge between quantized enveloping algebras
and quantized coordinate algebras, we consider the following two linear
maps: 
\begin{align*}
\Phi\colon\cUq^{\leq0}\to(\Uq^{\leq0})^{\ast}, & \;y_{1}\mapsto\left(y_{2}\mapsto\left(\psi(y_{1}),y_{2}\right)_{D}\right),\\
\Phi^{+}\colon\cUq^{\geq0}\to(\Uq^{\geq0})^{\ast}, & \;x_{1}\mapsto\left(x_{2}\mapsto\left(x_{1},\psi(x_{2})\right)_{D}\right).
\end{align*}
By the properties of the Drinfeld pairing $\left(\ ,\ \right)_{D}$,
$\Phi$ is an injective algebra homomorphism and $\Phi^{+}$ is an
injective algebra anti-homomorphism. 
\end{defn}
\begin{defn}
Let $\lambda\in P_{+}$. Set 
\[
\Uq^{-}(\lambda):=j_{\lambda}\left(V(\lambda)\right)=\sum\nolimits _{b\in\mathscr{B}\left(\lambda\right)}\mathbb{Q}(q)G^{\mathrm{up}}\left(\overline{\jmath}_{\lambda}(b)\right).
\]
\end{defn}
The following propositions follow from the non-degeneracy of the Drinfeld
pairing, Lemma \ref{l:DLrel} and Proposition \ref{p:minor}. 
\begin{prop}
\label{p:isom} The restriction map $\Uq^{\ast}\to(\Uq^{\leq0})^{\ast}$
induces the injective algebra homomorphism $r_{\leq0}\colon\Rq^{+}\to(\Uq^{\leq0})^{\ast}$,
and $\Image r_{\leq0}\subset\Image\Phi$. Moreover the well-defined
$\mathbb{Q}(q)$-algebra homomorphism $\Rq^{+}\to\cUq^{\leq0},\;c\mapsto(\Phi^{-1}\circ r_{\leq0})(c)$
induces the $\mathbb{Q}(q)$-algebra isomorphism $\mathcal{I}\colon\Rq^{+}\to\sum_{\lambda\in P_{+}}\Uq^{-}(\lambda)q^{-\lambda}$. 
\end{prop}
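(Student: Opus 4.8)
The plan is to verify the three assertions of the statement in order: first that the restriction map factors through $\Rq^{+}$ to give an algebra homomorphism $r_{\leq 0}$ landing inside $\Image\Phi$, then that $r_{\leq 0}$ is injective, and finally that the composite $\Phi^{-1}\circ r_{\leq 0}$ is an isomorphism onto $\sum_{\lambda\in P_{+}}\Uq^{-}(\lambda)q^{-\lambda}$. For the first part, I would take a generic generator $c^{\lambda}_{f,u_{\lambda}}\in\Rq^{+}(\lambda)$ and compute its restriction to $\Uq^{\leq 0}$. Writing $f=u^{\ast}$ for a suitable weight vector $u\in V(\lambda)$, the value $\langle c^{\lambda}_{u^{\ast},u_{\lambda}},yq^{h}\rangle$ for $y\in\Uq^{-}$, $h\in P^{\ast}$ equals $q^{\langle h,\wt u\rangle}(u,y.u_{\lambda})_{\lambda}^{\varphi}$, which by Definition \ref{d:minor} and Definition \ref{d:jlambda} is $q^{\langle h,\wt u\rangle}(D_{u,u_{\lambda}},y)_{L}=q^{\langle h,\wt u\rangle}(j_{\lambda}(u),y)_{L}$; the weight $\wt u$ can be read off as $\lambda+\wt(j_{\lambda}(u))$. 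Comparing this with the formula for $\Phi$ and using Lemma \ref{l:DLrel}, one sees that $r_{\leq 0}(c^{\lambda}_{u^{\ast},u_{\lambda}})=\Phi(j_{\lambda}(u)q^{-\lambda})$, so $r_{\leq 0}$ is well-defined, is an algebra homomorphism (both $\Rq^{+}\hookrightarrow\Uq^{\ast}$ and the restriction $\Uq^{\ast}\to(\Uq^{\leq 0})^{\ast}$ being algebra maps), and has image inside $\Image\Phi$.

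Injectivity of $r_{\leq 0}$ then follows from injectivity of $\Phi$ together with the fact that the assignment $c^{\lambda}_{u^{\ast},u_{\lambda}}\mapsto j_{\lambda}(u)q^{-\lambda}$ detects $c$: concretely, if $r_{\leq 0}(c)=0$ then $(j_{\lambda}(u),y)_{L}=0$ for all $y\in\Uq^{-}$ (taking $h=0$), and the non-degeneracy of $(\ ,\ )_{L}$ forces $j_{\lambda}(u)=0$, hence $u=0$ since $j_{\lambda}$ is injective (Definition \ref{d:jlambda}), hence $c=0$; a general element of $\Rq^{+}$ is a finite sum over distinct $\lambda$'s of such terms, and these are separated by the $\Uq^{0}$-action, i.e. by the factor $q^{-\lambda}$. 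Finally, for the isomorphism statement, the composite $\mathcal{I}=\Phi^{-1}\circ r_{\leq 0}$ sends $c^{\lambda}_{u^{\ast},u_{\lambda}}$ to $j_{\lambda}(u)q^{-\lambda}$, so its image is exactly $\sum_{\lambda\in P_{+}}j_{\lambda}(V(\lambda))q^{-\lambda}=\sum_{\lambda\in P_{+}}\Uq^{-}(\lambda)q^{-\lambda}$ by the very definition of $\Uq^{-}(\lambda)$; surjectivity onto this subalgebra is immediate, and injectivity was just established. One should check along the way that the target is indeed a subalgebra of $\cUq^{\leq 0}$, which follows from the multiplicativity of $\mathcal I$ and the fact that $\Rq^{+}$ is an algebra.

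The main obstacle I anticipate is bookkeeping the precise powers of $q$ and the intertwining of the $\Uq^{0}$-part: the maps $\Phi$ and $\Phi^{+}$ are defined using $\psi$ and the Drinfeld pairing $(\ ,\ )_{D}$, so relating $\langle c^{\lambda}_{f,u_{\lambda}},yq^{h}\rangle$ to $(\psi(j_{\lambda}(u)q^{-\lambda}),yq^{h})_{D}$ requires carefully invoking Lemma \ref{l:DLrel} in the form $(\psi(y_{1}q^{\mu}),y_{2}q^{h})_{D}=q^{-\langle h,\mu\rangle}(y_{1},y_{2})_{L}$ with $\mu=-\lambda$, and matching the sign/weight conventions so that the $q^{\langle h,\wt u\rangle}$ coming from the $\Uq$-module structure on $V(\lambda)^{\star}$ agrees with $q^{\langle h,\lambda+\wt(j_\lambda(u))\rangle}\cdot q^{-\langle h,\lambda\rangle}$. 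Once this normalization is pinned down, the rest is formal. I would also remark that the analogous statement for $\Rq^{+}\to(\Uq^{\geq 0})^{\ast}$ via $\Phi^{+}$ is proved identically using the anti-homomorphism property and the second identity in Lemma \ref{l:DLrel}, should it be needed later.
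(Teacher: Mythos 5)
Your proposal is correct and is essentially the expansion of the argument the paper leaves implicit: the paper disposes of Propositions \ref{p:isom} and \ref{p:idealisom} in one line by citing exactly the ingredients you use (non-degeneracy of the Drinfeld pairing, Lemma \ref{l:DLrel}, and the identification $D_{u,u_{\lambda}}=j_{\lambda}(u)$ coming from Definitions \ref{d:minor}, \ref{d:jlambda} and Proposition \ref{p:minor}), and your key identity $r_{\leq0}(c^{\lambda}_{u^{\ast},u_{\lambda}})=\Phi\bigl(j_{\lambda}(u)q^{-\lambda}\bigr)$ agrees with Proposition \ref{p:idealisom}. One minor bookkeeping remark: the factor $q^{\langle h,\wt u\rangle}$ you write is the evaluation on $q^{h}y$ (evaluation on $yq^{h}$ gives $q^{\langle h,\lambda\rangle}$), and your closing ``matching'' $q^{\langle h,\wt u\rangle}=q^{\langle h,\lambda+\wt(j_{\lambda}(u))\rangle}\cdot q^{-\langle h,\lambda\rangle}$ is off by $q^{\langle h,\lambda\rangle}$; neither slip affects the key identity or the remainder of your argument (injectivity via non-degeneracy of $(\ ,\ )_{L}$ and separation of the $q^{-\lambda}$'s, surjectivity onto $\sum_{\lambda\in P_{+}}\Uq^{-}(\lambda)q^{-\lambda}$ by the definition of $\Uq^{-}(\lambda)$).
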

\begin{prop}
\label{p:idealisom} For $\lambda\in P_{+}$ and $b\in\mathscr{B}\left(\lambda\right)$,
we have 
\[
\mathcal{I}\left(c_{G_{\lambda}^{\mathrm{up}}\left(b\right)^{\ast},u_{\lambda}}^{\lambda}\right)=G^{\mathrm{up}}\left(\overline{\jmath}_{\lambda}(b)\right)q^{-\lambda}=D_{G_{\lambda}^{\mathrm{up}}\left(b\right),u_{\lambda}}q^{-\lambda}.
\]
In particular, we have 
\[
\mathcal{I}\left(\Qq_{w}^{+}(\lambda)\right)=\sum\nolimits _{b\in\mathscr{B}\left(\lambda\right)\setminus\mathscr{B}_{w}\left(\lambda\right)}\mathbb{Q}(q)G^{\mathrm{up}}\left(\overline{\jmath}_{\lambda}(b)\right)q^{-\lambda}.
\]
\end{prop}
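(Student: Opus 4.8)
The strategy is to unwind the definition of $\mathcal{I}$ from Proposition \ref{p:isom} on the specific matrix coefficients $c_{G_{\lambda}^{\mathrm{up}}(b)^{\ast},u_{\lambda}}^{\lambda}$ and reduce the computation to the already-established identification of $j_{\lambda}$ with the crystal map $\overline{\jmath}_{\lambda}$. By definition $\mathcal{I}(c)=\Phi^{-1}(r_{\leq 0}(c))$, where $r_{\leq 0}(c_{G_{\lambda}^{\mathrm{up}}(b)^{\ast},u_{\lambda}}^{\lambda})$ is the restriction to $\Uq^{\leq0}$ of the functional $x\mapsto\langle G_{\lambda}^{\mathrm{up}}(b)^{\ast},x.u_{\lambda}\rangle$. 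First I would evaluate this functional on a spanning set of $\Uq^{\leq0}$: using the triangular factorization $\Uq^{\leq0}\cong\Uq^{-}\otimes\Uq^{0}$ it suffices to take elements $y q^{h}$ with $y\in\Uq^{-}$, $h\in P^{\ast}$, and since $u_{\lambda}$ has weight $\lambda$ one obtains $\langle G_{\lambda}^{\mathrm{up}}(b)^{\ast},y q^{h}.u_{\lambda}\rangle=q^{\langle h,\lambda\rangle}(G_{\lambda}^{\mathrm{up}}(b),\pi_{\lambda}(y))_{\lambda}^{\varphi}=q^{\langle h,\lambda\rangle}(j_{\lambda}(G_{\lambda}^{\mathrm{up}}(b)),y)_{L}$ by Definitions \ref{d:highrep} and \ref{d:jlambda}.

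On the other hand, Lemma \ref{l:DLrel} with $\mu=-\lambda$ gives $\Phi(y_{0}q^{-\lambda})(y q^{h})=(\psi(y_{0}q^{-\lambda}),y q^{h})_{D}=q^{\langle h,\lambda\rangle}(y_{0},y)_{L}$ for all $y_{0},y\in\Uq^{-}$ and $h\in P^{\ast}$. Comparing the two displays and invoking the non-degeneracy of $(\ ,\ )_{L}$ on $\Uq^{-}$ forces $r_{\leq0}(c_{G_{\lambda}^{\mathrm{up}}(b)^{\ast},u_{\lambda}}^{\lambda})=\Phi(j_{\lambda}(G_{\lambda}^{\mathrm{up}}(b))q^{-\lambda})$, hence $\mathcal{I}(c_{G_{\lambda}^{\mathrm{up}}(b)^{\ast},u_{\lambda}}^{\lambda})=j_{\lambda}(G_{\lambda}^{\mathrm{up}}(b))q^{-\lambda}$. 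Then Proposition \ref{p:jlambda} rewrites $j_{\lambda}(G_{\lambda}^{\mathrm{up}}(b))=G^{\mathrm{up}}(\overline{\jmath}_{\lambda}(b))$, and Proposition \ref{p:minor}(1) gives $G^{\mathrm{up}}(\overline{\jmath}_{\lambda}(b))=D_{G_{\lambda}^{\mathrm{up}}(b),u_{\lambda}}$, completing the displayed chain of equalities. The only point requiring real care here is the bookkeeping of the weight twist by $q^{-\lambda}$ and of the scalars $q^{\langle h,\lambda\rangle}$ appearing on both sides, together with the reduction to the spanning elements $y q^{h}$ so that an equality of functionals on $\Uq^{-}$ upgrades to an equality on all of $\Uq^{\leq0}$; beyond that I expect no genuine obstacle, as the statement is essentially a translation of Propositions \ref{p:isom}, \ref{p:jlambda} and \ref{p:minor} into matrix-coefficient language.

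For the final assertion, recall that $\{G_{\lambda}^{\mathrm{up}}(b)^{\ast}\mid b\in\mathscr{B}(\lambda)\}$ is the basis of $V(\lambda)^{\star}$ dual to $\mathbf{B}^{\mathrm{low}}(\lambda)=\{G_{\lambda}^{\mathrm{low}}(b)\}_{b\in\mathscr{B}(\lambda)}$ with respect to $(\ ,\ )_{\lambda}^{\varphi}$. Expanding a general $f\in V(\lambda)^{\star}$ in this basis, the defining condition $\langle f,\Uq^{+}.u_{w\lambda}\rangle=\langle f,V_{w}(\lambda)\rangle=0$ of $\Qq_{w}^{+}(\lambda)$ becomes, in view of $V_{w}(\lambda)=\bigoplus_{b\in\mathscr{B}_{w}(\lambda)}\mathbb{Q}(q)G_{\lambda}^{\mathrm{low}}(b)$ (Proposition \ref{p:Demazure}(1)), precisely the vanishing of the coefficients indexed by $b\in\mathscr{B}_{w}(\lambda)$. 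Thus $\Qq_{w}^{+}(\lambda)$ is spanned by $\{c_{G_{\lambda}^{\mathrm{up}}(b)^{\ast},u_{\lambda}}^{\lambda}\mid b\in\mathscr{B}(\lambda)\setminus\mathscr{B}_{w}(\lambda)\}$, and applying the first formula to each such basis element yields $\mathcal{I}(\Qq_{w}^{+}(\lambda))=\sum_{b\in\mathscr{B}(\lambda)\setminus\mathscr{B}_{w}(\lambda)}\mathbb{Q}(q)G^{\mathrm{up}}(\overline{\jmath}_{\lambda}(b))q^{-\lambda}$, as claimed.
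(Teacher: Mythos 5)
Your proof is correct and follows exactly the route the paper intends: the paper omits the argument, remarking only that the proposition follows from the non-degeneracy of the Drinfeld pairing, Lemma \ref{l:DLrel} and Proposition \ref{p:minor}, and your computation on the spanning elements $yq^{h}$ of $\Uq^{\leq0}$, combined with Propositions \ref{p:jlambda}, \ref{p:minor} and \ref{p:Demazure}, is precisely that argument written out. The only cosmetic imprecision is that the identity $r_{\leq0}(c)=\Phi\bigl(j_{\lambda}(G_{\lambda}^{\mathrm{up}}(b))q^{-\lambda}\bigr)$ already follows from agreement on the spanning set; non-degeneracy (injectivity of $\Phi$) is what you then invoke to apply $\Phi^{-1}$, which does not affect correctness.
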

\begin{defn}
An element $z$ of $\Rq^{+}$ (resp.~$\Rq^{+}/\Qq_{w}^{+}$) is said
to be \emph{$q$-central} if, for every weight vector $f\in V(\lambda)^{\star}$
and $\lambda\in P_{+}$, there exists $l\in\mathbb{Z}$ such that
\[
zc_{f,u_{\lambda}}^{\lambda}=q^{l}c_{f,u_{\lambda}}^{\lambda}z\hspace{10pt}(\text{resp.}\;z[c_{f,u_{\lambda}}^{\lambda}]=q^{l}[c_{f,u_{\lambda}}^{\lambda}]z).
\]
\end{defn}
\begin{cor}
\label{c:qcent} The set $\{c_{\lambda,\lambda}^{\lambda}\}_{\lambda\in P_{+}}$
is an Ore set in $\Rq^{+}$ consisting of $q$-central elements. In
particular, $\mathcal{S}:=\{[c_{\lambda,\lambda}^{\lambda}]\}_{\lambda\in P_{+}}$
is an Ore set in $\Rq^{+}/\Qq_{w}^{+}$ consisting of $q$-central
elements. 
\end{cor}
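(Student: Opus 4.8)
The plan is to transport everything to $\cUq$ through the algebra isomorphism $\mathcal{I}\colon\Rq^{+}\xrightarrow{\sim}\sum_{\lambda\in P_{+}}\Uq^{-}(\lambda)q^{-\lambda}$ of Proposition \ref{p:isom}. The first step is to pin down the images of the distinguished matrix coefficients. Since $c_{\lambda,\lambda}^{\lambda}=c_{u_{\lambda}^{\ast},u_{\lambda}}^{\lambda}$ with $u_{\lambda}=\Gup_{\lambda}(u_{\lambda})$, Proposition \ref{p:idealisom} gives $\mathcal{I}(c_{\lambda,\lambda}^{\lambda})=D_{u_{\lambda},u_{\lambda}}q^{-\lambda}$, and $D_{u_{\lambda},u_{\lambda}}=\Gup(\overline{\jmath}_{\lambda}(u_{\lambda}))=\Gup(u_{\infty})=1$ by Proposition \ref{p:minor} (1) and Remark \ref{r:jlambda} (note $\wt\overline{\jmath}_{\lambda}(u_{\lambda})=0$, so $\overline{\jmath}_{\lambda}(u_{\lambda})=u_{\infty}$). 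Hence $\mathcal{I}$ identifies the family $\{c_{\lambda,\lambda}^{\lambda}\}_{\lambda\in P_{+}}$ with the family $\{q^{-\lambda}\}_{\lambda\in P_{+}}$ of group-like units of $\cUq$.

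From this, three properties follow at once, each inherited from the corresponding property of $\{q^{-\lambda}\}$ in $\cUq$ via the fact that $\mathcal{I}$ is an algebra isomorphism onto a subalgebra of $\cUq$: (i) $\{c_{\lambda,\lambda}^{\lambda}\}_{\lambda\in P_{+}}$ is multiplicatively closed and contains $1=c_{0,0}^{0}$, because $q^{-\lambda}q^{-\lambda'}=q^{-(\lambda+\lambda')}$ and $q^{0}=1$; (ii) it consists of $q$-central elements, because the relations of $\cUq$ give $q^{-\lambda}z=q^{-(\lambda,\wt z)}zq^{-\lambda}$ for every homogeneous $z$, while $\mathcal{I}(c_{f,u_{\lambda}}^{\lambda})$ is homogeneous for every weight vector $f$ by Proposition \ref{p:idealisom}; and (iii) each $c_{\lambda,\lambda}^{\lambda}$ is a non-zero-divisor of $\Rq^{+}$, because $\sum_{\mu}\Uq^{-}(\mu)q^{-\mu}=\Image\mathcal{I}$ is a subalgebra of the domain $\cUq$ and is therefore itself a domain, while $c_{\lambda,\lambda}^{\lambda}\neq0$.

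It then remains to deduce the Ore property from (i)--(iii), which is a general fact: if $\mathcal{M}\ni1$ is a multiplicatively closed set of $q$-central non-zero-divisors in a $Q$-graded algebra $\mathcal{A}$, then for $s\in\mathcal{M}$ and $a=\sum_{\nu}a_{\nu}\in\mathcal{A}$ with homogeneous components $a_{\nu}$, writing $sa_{\nu}=q^{l(s,\nu)}a_{\nu}s$, one has $as=s\bigl(\sum_{\nu}q^{-l(s,\nu)}a_{\nu}\bigr)\in s\mathcal{A}$ and $sa=\bigl(\sum_{\nu}q^{l(s,\nu)}a_{\nu}\bigr)s\in\mathcal{A}s$, which together with the non-zero-divisor hypothesis shows that $\mathcal{M}$ is a left and right Ore set. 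Applying this with $\mathcal{A}=\Rq^{+}$ and $\mathcal{M}=\{c_{\lambda,\lambda}^{\lambda}\}_{\lambda\in P_{+}}$ proves the first assertion.

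For the quotient, the canonical projection $\Rq^{+}\to\Rq^{+}/\Qq_{w}^{+}$ is a surjective algebra homomorphism, so properties (i) and (ii) pass verbatim to $\mathcal{S}=\{[c_{\lambda,\lambda}^{\lambda}]\}_{\lambda\in P_{+}}$, and the only point requiring further argument is that each $[c_{\lambda,\lambda}^{\lambda}]$ is still a non-zero-divisor. For this I would again use $\mathcal{I}$: by Proposition \ref{p:idealisom} it identifies $\Qq_{w}^{+}$ with $\sum_{\mu}\bigl(\sum_{b\in\mathscr{B}(\mu)\setminus\mathscr{B}_{w}(\mu)}\mathbb{Q}(q)\Gup(\overline{\jmath}_{\mu}(b))\bigr)q^{-\mu}$, so $\mathcal{I}$ induces an isomorphism of $\Rq^{+}/\Qq_{w}^{+}$ onto the span of the complementary elements $\Gup(\overline{\jmath}_{\mu}(b))q^{-\mu}$ with $b\in\mathscr{B}_{w}(\mu)$ --- a subalgebra of $\cUq$ since $\Qq_{w}^{+}$ is a two-sided ideal and $\mathcal{I}$ is an algebra map --- hence a domain; and $[c_{\lambda,\lambda}^{\lambda}]\neq0$ there because $q^{-\lambda}=\Gup(u_{\infty})q^{-\lambda}=\Gup(\overline{\jmath}_{\lambda}(u_{\lambda}))q^{-\lambda}$ is such a complementary element ($u_{\lambda}\in\mathscr{B}_{w}(\lambda)$ always). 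The general fact above then gives that $\mathcal{S}$ is an Ore set. The only genuinely delicate ingredient here is that this complementary span is closed under multiplication, i.e.\ that $\Rq^{+}/\Qq_{w}^{+}$ embeds into $\cUq$ as an algebra; this is part of the descriptions ``essentially shown in \cite[9.1.7]{MR1315966}, \cite[Theorem 3.7]{MR2679698}'' referred to above, and is where I expect any real work to lie --- the rest being bookkeeping around $\mathcal{I}$.
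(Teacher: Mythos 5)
Your treatment of $\Rq^{+}$ itself is fine and is essentially the argument the paper intends: via $\mathcal{I}$ one has $\mathcal{I}(c_{\lambda,\lambda}^{\lambda})=D_{u_{\lambda},u_{\lambda}}q^{-\lambda}=q^{-\lambda}$, from which multiplicativity, $q$-centrality on the (direct-sum) pieces $\Rq^{+}(\lambda')$ decomposed by the weight of $f$, and the non-zero-divisor property (image inside the domain $\cUq^{\leq0}$, or simply invertibility of $q^{-\lambda}$ in $\cUq$) all follow, and your general ``$q$-central non-zero-divisors form an Ore set'' lemma is correct.

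The quotient part, however, has a genuine gap, exactly at the point you flag and then defer. You claim that $\mathcal{I}$ induces an algebra embedding of $\Rq^{+}/\Qq_{w}^{+}$ into $\cUq$ whose image is the complementary span $\sum_{\mu}\bigl(\sum_{b\in\mathscr{B}_{w}(\mu)}\mathbb{Q}(q)\Gup(\overline{\jmath}_{\mu}(b))\bigr)q^{-\mu}$, ``a subalgebra of $\cUq$ since $\Qq_{w}^{+}$ is a two-sided ideal and $\mathcal{I}$ is an algebra map.'' The ideal property only makes $\mathcal{I}(\Qq_{w}^{+})$ an ideal of $\operatorname{Im}\mathcal{I}$; it does not make a vector-space complement multiplicatively closed, and in fact this complement is \emph{not} closed under multiplication. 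Already for $\mathfrak{g}=\mathfrak{sl}_{3}$ and $w=s_{1}s_{2}$ the elements $\Gup(\overline{\jmath}_{\varpi_{2}}(\tilde{f}_{2}u_{\varpi_{2}}))q^{-\varpi_{2}}$ and $\Gup(\overline{\jmath}_{\varpi_{1}}(\tilde{f}_{1}u_{\varpi_{1}}))q^{-\varpi_{1}}$ (i.e.\ scalar multiples of $f_{2}q^{-\varpi_{2}}$ and $f_{1}q^{-\varpi_{1}}$) lie in the complementary span, but their product is a scalar multiple of $f_{2}f_{1}q^{-\rho}$, and $f_{2}f_{1}$ has a nonzero coefficient on $\Gup(\tilde{f}_{2}\tilde{f}_{1}u_{\infty})$, whose crystal label is \emph{not} in $\mathscr{B}_{w}(\infty)$ (one computes $(f_{2}f_{1},f_{2}f_{1})_{L}\neq0$ while $\Glow(\tilde{f}_{2}\tilde{f}_{1}u_{\infty})=f_{2}f_{1}$); so the product leaves the span. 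Hence the statement you defer to Joseph/Yakimov is false as stated, and your ``hence a domain'' conclusion for the quotient is unsupported. What is true, and what the paper uses (Proposition \ref{p:isom^+} and Remark \ref{r:zerodiv}), is that $\Rq^{+}/\Qq_{w}^{+}\cong\Rq^{w(+)}/\Qq_{w}^{w(+)}$ anti-embeds into $\cUq^{\geq0}$ via a \emph{different} map: restrict the matrix coefficients $c_{f,u_{w\lambda}}^{\lambda}$ to $\Uq^{\geq0}$ and use $\Phi^{+}$; the kernel of this restriction is exactly $\Qq_{w}^{w(+)}$ by definition of $\Qq$, so the quotient is a domain and every nonzero $q$-central element of it is a non-zero-divisor. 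Alternatively, you can avoid the domain property altogether: since $\mathcal{I}(\Qq_{w}^{+})=\bigoplus_{\mu}\bigl(\Uq^{-}(\mu)\cap(\mathbf{U}_{w,q}^{-})^{\perp}\bigr)q^{-\mu}$ (using Remark \ref{r:Demazure}), left multiplication by $q^{-\lambda}$ maps the $q^{-\mu}$-slice to the $q^{-\mu-\lambda}$-slice by rescaling weight components by nonzero powers of $q$, and $\Uq^{-}(\mu+\lambda)\cap(\mathbf{U}_{w,q}^{-})^{\perp}\cap\Uq^{-}(\mu)=\Uq^{-}(\mu)\cap(\mathbf{U}_{w,q}^{-})^{\perp}$, so $q^{-\lambda}z\in\mathcal{I}(\Qq_{w}^{+})$ forces $z\in\mathcal{I}(\Qq_{w}^{+})$; this gives the non-zero-divisor property of $[c_{\lambda,\lambda}^{\lambda}]$ directly (it is also essentially Proposition \ref{p:localization}(1)). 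With either repair the corollary follows.
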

By Corollary \ref{c:qcent}, we can consider the algebra $(\Rq^{+}/\Qq_{w}^{+})[\mathcal{S}^{-1}]$.
Proposition \ref{p:isom} and \ref{p:idealisom} together with Remark
\ref{r:Demazure} immediately imply the following proposition. This
gives the description of $\Aq[N_{-}\cap X_{w}]$ in terms of the quantized
coordinate algebra $\Rq$. This kind of description appears in \cite[9.1.7]{MR1315966}. 
\begin{prop}
\label{p:description} Let $w\in W$. Set $\Aq[N_{-}\cap X_{w}]^{\mathrm{ex}}:=\cUq^{\leq0}/(\mathbf{U}_{w}^{-})^{\perp}\cUq^{0}$.
Note that $(\mathbf{U}_{w}^{-})^{\perp}\cUq^{0}$ is a two-sided ideal
of $\cUq^{\leq0}$. Then the $\mathbb{Q}(q)$-algebra isomorphism
$\mathcal{I}\colon\Rq^{+}\to\sum_{\lambda\in P_{+}}\Uq^{-}(\lambda)q^{-\lambda}$
induces the $\mathbb{Q}(q)$-algebra isomorphism 
\[
\mathcal{I}_{w}\colon\left(\Rq^{+}/\Qq_{w}^{+}\right)[\mathcal{S}^{-1}]\to\Aq[N_{-}\cap X_{w}]^{\mathrm{ex}}.
\]
Moreover the $\mathbb{Q}(q)$-algebra $\sum_{\lambda\in P_{+}}\left(\Rq^{+}(\lambda)/\Qq_{w}^{+}\right)[c_{\lambda,\lambda}^{\lambda}]^{-1}(\subset\left(\Rq^{+}/\Qq_{w}^{+}\right)[\mathcal{S}^{-1}])$
is isomorphic to $\Aq[N_{-}\cap X_{w}]$.
\end{prop}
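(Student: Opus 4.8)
The plan is to derive both assertions from the algebra isomorphism $\mathcal{I}\colon\Rq^{+}\xrightarrow{\sim}\sum_{\lambda\in P_{+}}\Uq^{-}(\lambda)q^{-\lambda}$ of Proposition~\ref{p:isom}, by first inverting the multiplicative set and then passing to the quotient. Since $u_{\lambda}$ is the highest weight element of $\mathscr{B}(\lambda)$ we have $\overline{\jmath}_{\lambda}(u_{\lambda})=u_{\infty}$ and $D_{u_{\lambda},u_{\lambda}}=1$, so Proposition~\ref{p:idealisom} gives $\mathcal{I}(c_{\lambda,\lambda}^{\lambda})=D_{u_{\lambda},u_{\lambda}}q^{-\lambda}=q^{-\lambda}$. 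Using that $\mathcal{S}=\{c_{\lambda,\lambda}^{\lambda}\}_{\lambda\in P_{+}}$ is an Ore set of $q$-central elements of $\Rq^{+}$ (Corollary~\ref{c:qcent}), $\mathcal{I}$ then extends to an isomorphism of $\Rq^{+}[\mathcal{S}^{-1}]$ onto the localization of $\sum_{\lambda}\Uq^{-}(\lambda)q^{-\lambda}$ at $\{q^{-\lambda}\mid\lambda\in P_{+}\}$. Because $\bigcup_{\lambda}\Uq^{-}(\lambda)=\Uq^{-}$ (cf.~the proof of Proposition~\ref{p:oppincl}) and $P_{+}$ generates $P$ as a group (as $\rho\in P_{+}$), this localization is $\cUq^{\leq0}$; I thus obtain an isomorphism $\Rq^{+}[\mathcal{S}^{-1}]\xrightarrow{\sim}\cUq^{\leq0}$, still denoted $\mathcal{I}$.

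The crux is then to check that $\mathcal{I}$ sends the two-sided ideal $\Qq_{w}^{+}[\mathcal{S}^{-1}]$ (equivalently, the ideal of $\Rq^{+}[\mathcal{S}^{-1}]$ generated by $\Qq_{w}^{+}$) onto $(\mathbf{U}_{w,q}^{-})^{\perp}\cUq^{0}$; granting this, $\mathcal{I}$ descends to an isomorphism of quotients, and since Ore localization is exact and $\mathcal{S}$ remains Ore modulo $\Qq_{w}^{+}$ (Corollary~\ref{c:qcent}) the source quotient is $(\Rq^{+}/\Qq_{w}^{+})[\mathcal{S}^{-1}]$, yielding $\mathcal{I}_{w}$. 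By Proposition~\ref{p:idealisom}, $\mathcal{I}(\Qq_{w}^{+})$ is spanned by the elements $G^{\mathrm{up}}(\overline{\jmath}_{\lambda}(b))q^{-\lambda}$ with $\lambda\in P_{+}$ and $b\in\mathscr{B}(\lambda)\setminus\mathscr{B}_{w}(\lambda)$, so the ideal it generates in $\cUq^{\leq0}$ is $\cUq^{0}\cdot\mathrm{span}\{G^{\mathrm{up}}(\overline{\jmath}_{\lambda}(b))\mid\lambda\in P_{+},\ b\in\mathscr{B}(\lambda)\setminus\mathscr{B}_{w}(\lambda)\}\cdot\cUq^{0}$, and in view of the description $(\mathbf{U}_{w,q}^{-})^{\perp}=\bigoplus_{b\in\mathscr{B}(\infty)\setminus\mathscr{B}_{w}(\infty)}\mathbb{Q}(q)G^{\mathrm{up}}(b)$ from Definition~\ref{d:qclosed} it suffices to prove $\bigcup_{\lambda}\overline{\jmath}_{\lambda}\big(\mathscr{B}(\lambda)\setminus\mathscr{B}_{w}(\lambda)\big)=\mathscr{B}(\infty)\setminus\mathscr{B}_{w}(\infty)$. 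For ``$\supseteq$'': any $b\in\mathscr{B}(\infty)\setminus\mathscr{B}_{w}(\infty)$ equals $\overline{\jmath}_{\lambda}(b')$ for some $\lambda$ and $b'\in\mathscr{B}(\lambda)$ (Proposition~\ref{p:jlambdaimage}), and $b'\in\mathscr{B}_{w}(\lambda)$ would force $b\in\bigcup_{\mu}\overline{\jmath}_{\mu}(\mathscr{B}_{w}(\mu))=\mathscr{B}_{w}(\infty)$ by Remark~\ref{r:Demazure}, a contradiction. For ``$\subseteq$'', i.e.\ $\overline{\jmath}_{\lambda}(b')\notin\mathscr{B}_{w}(\infty)$ whenever $b'\in\mathscr{B}(\lambda)\setminus\mathscr{B}_{w}(\lambda)$: using $\pi_{\lambda}(G^{\mathrm{low}}(b))=G^{\mathrm{low}}_{\lambda}(\pi_{\lambda}(b))$ and $\pi_{\lambda}\circ\overline{\jmath}_{\lambda}=\mathrm{id}$ on $\mathscr{B}(\lambda)$ (Proposition~\ref{p:pilambda}, Remark~\ref{r:jlambda}), if $\overline{\jmath}_{\lambda}(b')\in\mathscr{B}_{w}(\infty)$ then $G^{\mathrm{low}}_{\lambda}(b')=G^{\mathrm{low}}(\overline{\jmath}_{\lambda}(b')).u_{\lambda}\in\mathbf{U}_{w,q}^{-}.u_{\lambda}$; but $\mathbf{U}_{w,q}^{-}.u_{\lambda}=V_{w}(\lambda)=\bigoplus_{b\in\mathscr{B}_{w}(\lambda)}\mathbb{Q}(q)G^{\mathrm{low}}_{\lambda}(b)$ (the standard description of Demazure modules, Kashiwara~\cite{MR1240605}; cf.~Proposition~\ref{p:Demazure}(1)), which forces $b'\in\mathscr{B}_{w}(\lambda)$, a contradiction. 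This identity $\mathbf{U}_{w,q}^{-}.u_{\lambda}=V_{w}(\lambda)$ is the one genuinely non-formal input — I expect the only real work is to quote it in the right form — while everything else is bookkeeping with Ore localizations.

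Finally, for the ``moreover'' part I would trace the subalgebra through $\mathcal{I}_{w}$. By Proposition~\ref{p:idealisom}, $\mathcal{I}$ maps $\Rq^{+}(\lambda)$ into $\Uq^{-}(\lambda)q^{-\lambda}$ and $c_{\lambda,\lambda}^{\lambda}$ to $q^{-\lambda}$, so $\mathcal{I}_{w}$ maps $\big(\Rq^{+}(\lambda)/\Qq_{w}^{+}\big)[c_{\lambda,\lambda}^{\lambda}]^{-1}$ onto the image of $q^{\lambda}\cdot\Uq^{-}(\lambda)q^{-\lambda}$ in $\cUq^{\leq0}/(\mathbf{U}_{w,q}^{-})^{\perp}\cUq^{0}$; since $q^{\lambda}xq^{-\lambda}=q^{(\lambda,\wt x)}x$ for homogeneous $x\in\Uq^{-}$, this is just the image of $\Uq^{-}(\lambda)$. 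Summing over $\lambda\in P_{+}$ and using $\bigcup_{\lambda}\Uq^{-}(\lambda)=\Uq^{-}$ shows that $\sum_{\lambda}\big(\Rq^{+}(\lambda)/\Qq_{w}^{+}\big)[c_{\lambda,\lambda}^{\lambda}]^{-1}$ is carried by $\mathcal{I}_{w}$ onto the image of $\Uq^{-}$, which is $\Uq^{-}/\big((\mathbf{U}_{w,q}^{-})^{\perp}\cUq^{0}\cap\Uq^{-}\big)=\Uq^{-}/(\mathbf{U}_{w,q}^{-})^{\perp}=\Aq[N_{-}\cap X_{w}]$; being the restriction of an isomorphism, the resulting map is an isomorphism, as claimed.
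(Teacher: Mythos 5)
Your argument is correct and follows exactly the route the paper intends (the paper records no detailed proof, asserting that Propositions \ref{p:isom}, \ref{p:idealisom} and Remark \ref{r:Demazure} immediately imply the statement): you extend $\mathcal{I}$ over the Ore localization using $\mathcal{I}(c^{\lambda}_{\lambda,\lambda})=q^{-\lambda}$, identify the extended ideal of $\Qq_{w}^{+}$ with $(\mathbf{U}_{w,q}^{-})^{\perp}\cUq^{0}$ via the crystal identity $\bigcup_{\lambda}\overline{\jmath}_{\lambda}\bigl(\mathscr{B}(\lambda)\setminus\mathscr{B}_{w}(\lambda)\bigr)=\mathscr{B}(\infty)\setminus\mathscr{B}_{w}(\infty)$, and trace the degreewise subalgebra to recover $\Aq[N_{-}\cap X_{w}]$. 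The one external input you flag, $\mathbf{U}_{w,q}^{-}.u_{\lambda}=V_{w}(\lambda)$ (equivalently $\pi_{\lambda}(\mathscr{B}_{w}(\infty))\subset\mathscr{B}_{w}(\lambda)\cup\{0\}$), is indeed the standard Demazure-module fact of Kashiwara, so the proof is complete.
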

Next, we study the quantum unipotent subgroups via the quantized coordinate
rings following Joseph and Yakimov. We consider the algebra $\Rq^{w(+)}/\Qq_{w}^{w(+)}$, which is
isomorphic to $\Rq^{+}/\Qq_{w}^{+}$. See Definition \ref{d:Qhomog}.
\begin{defn}
Let $w\in W$ and $\lambda\in P_{+}$. Set 
\[
\Uq^{+}(w,\lambda):=\left(j_{w\lambda}^{\vee}\left(V(\lambda)/V_{w}(\lambda)^{\perp}\right)\right)^{\vee}=\sum\nolimits _{b\in\mathscr{B}_{w}\left(\lambda\right)}\mathbb{Q}(q)G^{\mathrm{up}}\left(\overline{\jmath}_{w\lambda}^{\vee}\left(b\right)\right)^{\vee}.
\]
\end{defn}
The following proposition follows again from the non-degeneracy of
the Drinfeld pairing, the equality (\ref{formrel}), Lemma \ref{l:DLrel},
Proposition \ref{p:Kasinv} and Proposition \ref{p:minor}. 
\begin{prop}
\label{p:isom^+} Let $w\in W$. The restriction map $\Uq^{\ast}\to(\Uq^{\geq0})^{\ast}$
induces the algebra homomorphism $r_{\geq0}^{w}\colon\Rq^{w(+)}\to(\Uq^{\geq0})^{\ast}$,
and it satisfies $\Ker(r_{\geq0}^{w})=\Qq_{w}^{w(+)}$ and $\Image r_{\geq0}^{w}\subset\Image\Phi^{+}$.
Hence $r_{\geq0}^{w}$ induces the $\mathbb{Q}(q)$-algebra isomorphism
$\overline{r}_{\geq0}^{w}\colon\Rq^{w(+)}/\Qq_{w}^{w(+)}\to\Image r_{\geq0}^{w}$.
Moreover we have a well-defined algebra anti-isomorphism $\mathcal{I}_{w}^{+}\colon\Rq^{+}/\Qq_{w}^{+}\to\sum_{\lambda\in P_{+}}\Uq^{+}(w,\lambda)q^{-w\lambda}$
given by $[c_{f,u_{\lambda}}^{\lambda}]\mapsto\left((\Phi^{+})^{-1}\circ\overline{r}_{\geq0}^{w}\right)\left([c_{f,u_{w\lambda}}^{\lambda}]\right)$
for $f\in V(\lambda)^{\star}$, $\lambda\in P_{+}$. We have 
\[
\mathcal{I}_{w}^{+}\left([c_{G_{\lambda}^{\mathrm{up}}\left(b\right)^{\ast},u_{\lambda}}^{\lambda}]\right)=G^{\mathrm{up}}\left(\overline{\jmath}_{w\lambda}^{\vee}\left(b\right)\right)^{\vee}q^{-w\lambda}=\varphi(D_{u_{w\lambda},G_{\lambda}^{\mathrm{up}}\left(b\right)})q^{-w\lambda}
\]
for $b\in\mathscr{B}_{w}(\lambda)$. 
\end{prop}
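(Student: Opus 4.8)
\emph{Proof strategy.} The plan is to run the argument of Proposition \ref{p:isom} after transporting everything from $\Uq^{\leq0}$ to $\Uq^{\geq0}$ by means of the involution $\vee$, using the comparison \eqref{formrel} between $(\ ,\ )_L$ and $(\ ,\ )_L^{+}$. First, $r_{\geq0}^{w}$ is an algebra homomorphism because $\Uq^{\geq0}$ is a Hopf subalgebra of $\Uq$ (the coproduct formulae for $e_i$ and $q^h$ lie in $\Uq^{\geq0}\otimes\Uq^{\geq0}$), so the restriction $\Uq^{\ast}\to(\Uq^{\geq0})^{\ast}$ is multiplicative, and $\Rq^{w(+)}$ is a subalgebra of $\Rq$ by Definition \ref{d:Qhomog}. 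Since $u_{w\lambda}$ is a weight vector, $\Uq^{\geq0}.u_{w\lambda}=\Uq^{+}.u_{w\lambda}=V_w(\lambda)$; hence $r_{\geq0}^{w}(c_{f,u_{w\lambda}}^{\lambda})$ depends only on $f|_{V_w(\lambda)}$, giving $\Qq_w^{w(+)}\subseteq\Ker r_{\geq0}^{w}$, and the reverse inclusion follows from the non-degeneracy of the Drinfeld pairing on $\Uq^{+}\times\Uq^{-}$, applied along the $P_+$-grading $\Rq^{w(+)}=\bigoplus_{\lambda}\Rq^{w(+)}(\lambda)$, exactly as in the injectivity step of Proposition \ref{p:isom}.

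The core is the computation of $r_{\geq0}^{w}$ on $c_{u^{\ast},u_{w\lambda}}^{\lambda}$ for a weight vector $u\in V(\lambda)$. For $x\in\Uq^{+}$, writing $y:=x^{\vee}\in\Uq^{-}$ so that $x=y^{\vee}$, the definition of $j_{w\lambda}^{\vee}$ (Definition \ref{d:jcheck}) yields
\[
\langle u^{\ast},x.u_{w\lambda}\rangle=(u,y^{\vee}.u_{w\lambda})_{\lambda}^{\varphi}=(j_{w\lambda}^{\vee}(u),y)_{L}=(j_{w\lambda}^{\vee}(u),x^{\vee})_{L},
\]
and by \eqref{formrel} together with the definition of $(\ ,\ )_L^{+}$ in Definition \ref{d:Lusform} this equals $(j_{w\lambda}^{\vee}(u)^{\vee},x)_{L}^{+}=(j_{w\lambda}^{\vee}(u)^{\vee},\psi(x))_{D}=\Phi^{+}(j_{w\lambda}^{\vee}(u)^{\vee})(x)$. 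Since $q^{h}.u_{w\lambda}=q^{\langle h,w\lambda\rangle}u_{w\lambda}$, evaluating on $xq^{h}$ introduces the scalar $q^{\langle h,w\lambda\rangle}$, which Lemma \ref{l:DLrel} matches by pinning the torus exponent to $-w\lambda$; thus
\[
r_{\geq0}^{w}(c_{u^{\ast},u_{w\lambda}}^{\lambda})=\Phi^{+}\!\left(j_{w\lambda}^{\vee}(u)^{\vee}q^{-w\lambda}\right)\quad\text{in }(\Uq^{\geq0})^{\ast}.
\]
In particular $\Image r_{\geq0}^{w}\subset\Image\Phi^{+}$, so $r_{\geq0}^{w}$ induces the isomorphism $\overline{r}_{\geq0}^{w}\colon\Rq^{w(+)}/\Qq_w^{w(+)}\xrightarrow{\sim}\Image r_{\geq0}^{w}$; and as $u$ runs over $V(\lambda)/V_w(\lambda)^{\perp}$, the element $j_{w\lambda}^{\vee}(u)^{\vee}$ runs over $\Uq^{+}(w,\lambda)$, so $(\Phi^{+})^{-1}(\Image r_{\geq0}^{w})=\sum_{\lambda}\Uq^{+}(w,\lambda)q^{-w\lambda}$.

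Next, $\mathcal{I}_w^{+}$ is the composite of the algebra isomorphism $\Rq^{+}/\Qq_w^{+}\xrightarrow{\sim}\Rq^{w(+)}/\Qq_w^{w(+)}$ of Definition \ref{d:Qhomog}, then $\overline{r}_{\geq0}^{w}$, then $(\Phi^{+})^{-1}$ (an algebra anti-isomorphism onto its image by Definition \ref{d:formhom}); hence it is a well-defined algebra anti-isomorphism with the asserted target. Taking $u=G_{\lambda}^{\mathrm{up}}(b)$ for $b\in\mathscr{B}_w(\lambda)$, the displayed formula combined with $j_{w\lambda}^{\vee}(G_{\lambda}^{\mathrm{up}}(b))=G^{\mathrm{up}}(\overline{\jmath}_{w\lambda}^{\vee}(b))$ (Proposition \ref{p:jcheck}) gives $\mathcal{I}_w^{+}([c_{G_{\lambda}^{\mathrm{up}}(b)^{\ast},u_{\lambda}}^{\lambda}])=G^{\mathrm{up}}(\overline{\jmath}_{w\lambda}^{\vee}(b))^{\vee}q^{-w\lambda}$; and the identity $\varphi(D_{u_{w\lambda},G_{\lambda}^{\mathrm{up}}(b)})=G^{\mathrm{up}}(\overline{\jmath}_{w\lambda}^{\vee}(b))^{\vee}$ follows from $D_{u_{w\lambda},G_{\lambda}^{\mathrm{up}}(b)}=G^{\mathrm{up}}(\ast\overline{\jmath}_{w\lambda}^{\vee}(b))$ (Proposition \ref{p:minor}(2)), the relation $\varphi=\vee\circ\ast$, and the $\ast$-invariance of $\mathbf{B}^{\mathrm{up}}$ (Proposition \ref{p:Kasinv}).

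The only genuinely delicate points are the torus bookkeeping — that inserting $q^{h}$ forces the exponent $-w\lambda$ and nothing else — and the verification that $\Ker r_{\geq0}^{w}$ is exactly $\Qq_w^{w(+)}$ rather than something larger. Both are routine given the corresponding steps in Proposition \ref{p:isom}, so I do not anticipate any substantial obstacle; the argument is essentially a formal dualization via $\vee$ of the already-established $\Uq^{\leq0}$-side statement.
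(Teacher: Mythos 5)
Your proposal is correct and follows exactly the route the paper intends: the paper gives no detailed argument but states that the proposition follows from the non-degeneracy of the Drinfeld pairing, the relation \eqref{formrel}, Lemma \ref{l:DLrel}, Proposition \ref{p:Kasinv} and Proposition \ref{p:minor}, and your computation $r_{\geq0}^{w}(c^{\lambda}_{u^{\ast},u_{w\lambda}})=\Phi^{+}\bigl(j_{w\lambda}^{\vee}(u)^{\vee}q^{-w\lambda}\bigr)$ together with Definition \ref{d:Qhomog}, Definition \ref{d:jcheck} and Proposition \ref{p:jcheck} is precisely the intended filling-in of that sketch. The kernel identification and the final identity via $\varphi=\vee\circ\ast$ and the $\ast$-invariance of $\mathbf{B}^{\mathrm{up}}$ are also handled as the paper envisages, so there is nothing to correct.
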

\begin{cor}
\label{c:qcent^+} The set $\mathcal{S}_{w}:=\{[c_{w\lambda,\lambda}^{\lambda}]\}_{\lambda\in P_{+}}$
is an Ore set in $\Rq^{+}/\Qq_{w}^{+}$ consisting of $q$-central
elements. 
\end{cor}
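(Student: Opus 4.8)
The plan is to transport the assertion to $\cUq^{\geq0}$ via the $\mathbb{Q}(q)$-algebra anti-isomorphism $\mathcal{I}_{w}^{+}$ of Proposition \ref{p:isom^+}, in exact parallel with the way Corollary \ref{c:qcent} is deduced from Propositions \ref{p:isom} and \ref{p:idealisom}. First I would identify the image of $[c_{w\lambda,\lambda}^{\lambda}]$. We have $c_{w\lambda,\lambda}^{\lambda}=c_{u_{w\lambda}^{\ast},u_{\lambda}}^{\lambda}$, and $u_{w\lambda}=G^{\mathrm{up}}_{\lambda}(b_{w\lambda})$ for the element $b_{w\lambda}\in\mathscr{B}(\lambda)$ of weight $w\lambda$, which lies in the Demazure piece $\mathscr{B}_{w}(\lambda)$ (it is $\widetilde{f}_{i_{1}}^{a_{1}}\cdots\widetilde{f}_{i_{\ell}}^{a_{\ell}}u_{\lambda}$, cf.~Proposition \ref{p:Demazure}). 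Hence the formula in Proposition \ref{p:isom^+} applies with $b=b_{w\lambda}$. Since $D_{u_{w\lambda},u_{w\lambda}}$ is homogeneous of weight $0$ and satisfies $(D_{u_{w\lambda},u_{w\lambda}},1)_{L}=(u_{w\lambda},u_{w\lambda})_{\lambda}^{\varphi}=1=(1,1)_{L}$, it equals $1$, so
\[
\mathcal{I}_{w}^{+}\!\left([c_{w\lambda,\lambda}^{\lambda}]\right)=\varphi(D_{u_{w\lambda},u_{w\lambda}})\,q^{-w\lambda}=q^{-w\lambda}\in\sum\nolimits_{\mu\in P_{+}}\Uq^{+}(w,\mu)q^{-w\mu}.
\]
(Equivalently, $\overline{\jmath}_{w\lambda}^{\vee}(b_{w\lambda})$ has weight $-\wt b_{w\lambda}+w\lambda=0$ by Remark \ref{r:jcheck}, so $G^{\mathrm{up}}(\overline{\jmath}_{w\lambda}^{\vee}(b_{w\lambda}))^{\vee}=1$.)

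Next I would verify directly that $\{q^{-w\lambda}\}_{\lambda\in P_{+}}$ is an Ore set of $q$-central elements of $\sum_{\mu\in P_{+}}\Uq^{+}(w,\mu)q^{-w\mu}$. It is multiplicatively closed, since $q^{-w\lambda}q^{-w\mu}=q^{-w(\lambda+\mu)}$ and $1=q^{-w\cdot0}$; each $q^{-w\lambda}$ is a unit of $\cUq^{\geq0}$ lying in the subalgebra, hence a non-zero divisor of it; and for a homogeneous $x\in(\Uq^{+})_{\beta}$ and $\mu\in P_{+}$ one has, in $\cUq^{\geq0}$,
\[
q^{-w\lambda}\,(xq^{-w\mu})=q^{-(w\lambda,\beta)}(xq^{-w\mu})\,q^{-w\lambda},
\]
which is exactly the $q$-centrality of $q^{-w\lambda}$ and, being of the form $sa=(\text{power of }q)\,as$, at once yields the left and right Ore conditions (for a general $a$ one decomposes it into weight components and collects the scalars into a single left/right multiplier).

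Finally, since an anti-isomorphism of $\mathbb{Q}(q)$-algebras preserves all of the relevant properties — multiplicative closedness, consisting of non-zero divisors, the left/right Ore conditions, and the $q$-centrality relation $zc=q^{l}cz$, which becomes $\mathcal{I}_{w}^{+}(c)\,\mathcal{I}_{w}^{+}(z)=q^{l}\,\mathcal{I}_{w}^{+}(z)\,\mathcal{I}_{w}^{+}(c)$ — I conclude that $\mathcal{S}_{w}=(\mathcal{I}_{w}^{+})^{-1}\!\left(\{q^{-w\lambda}\}_{\lambda\in P_{+}}\right)$ is an Ore set of $q$-central elements of $\Rq^{+}/\Qq_{w}^{+}$. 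The argument is essentially formal; the only step requiring a little care is the identification $\mathcal{I}_{w}^{+}([c_{w\lambda,\lambda}^{\lambda}])=q^{-w\lambda}$, for which one must recognize $u_{w\lambda}$ as a dual canonical basis vector of $V(\lambda)$ lying in $\mathscr{B}_{w}(\lambda)$, so that Proposition \ref{p:isom^+} is applicable.
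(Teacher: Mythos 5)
Your proposal is correct and follows exactly the route the paper intends: Corollary \ref{c:qcent^+} is stated as an immediate consequence of Proposition \ref{p:isom^+}, i.e.\ one transports $\mathcal{S}_{w}$ through the anti-isomorphism $\mathcal{I}_{w}^{+}$, identifies $\mathcal{I}_{w}^{+}([c_{w\lambda,\lambda}^{\lambda}])=q^{-w\lambda}$ (since $D_{u_{w\lambda},u_{w\lambda}}=1$, or equivalently $\overline{\jmath}_{w\lambda}^{\vee}$ of the extremal crystal element has weight $0$), and reads off $q$-centrality and the Ore conditions from the weight-grading relations in $\cUq^{\geq0}$. Your filled-in details (that $u_{w\lambda}$ is a dual canonical basis vector lying in $\mathscr{B}_{w}(\lambda)$, and the weight-component argument for the Ore condition) are exactly the points the paper leaves implicit.
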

\begin{rem}
\label{r:zerodiv} The description in Proposition \ref{p:isom^+}
implies that the algebra $\Rq^{+}/\Qq_{w}^{+}$ has no zero divisors.
\end{rem}
By Corollary \ref{c:qcent^+}, we can consider the $\mathbb{Q}(q)$-algebra
$(\Rq^{+}/\Qq_{w}^{+})[\mathcal{S}_{w}^{-1}]$. Proposition \ref{p:isom^+}
immediately implies the following proposition. This gives the description
of $\Aq[N_{-}(w)]$ in terms of the quantized coordinate algebra $\Rq$.
This description appears in \cite[Theorem 3.7]{MR2679698} modulo some difference of conventions. 
\begin{prop}
\label{p:description^+} Let $w\in W$. Then $\mathcal{I}_{w}^{+}$
induces the algebra anti-isomorphism 
\[
\mathcal{I}_{w}^{+}\colon\left(\Rq^{+}/\Qq_{w}^{+}\right)[\mathcal{S}_{w}^{-1}]\to\Uq^{+}(w)\cUq^{0}.
\]
Moreover the $\mathbb{Q}(q)$-algebra $\sum_{\lambda\in P_{+}}\left(\Rq^{+}(\lambda)/\Qq_{w}^{+}\right)[c_{w\lambda,\lambda}^{\lambda}]^{-1}(\subset\left(\Rq^{+}/\Qq_{w}^{+}\right)[\mathcal{S}_{w}^{-1}])$
is anti-isomorphic to $\Uq^{+}(w)$, and is isomorphic to $\Aq[N_{-}(w)]$
via $\varphi$. 
\end{prop}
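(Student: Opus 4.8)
The plan is to deduce both assertions from Proposition \ref{p:isom^+} purely by localizing, so the only genuinely new work is to track the image of the Ore set $\mathcal{S}_{w}$ and to identify the resulting localization.

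First I would determine $\mathcal{I}_{w}^{+}(\mathcal{S}_{w})$. Since $u_{w\lambda}$ belongs to $\mathbf{B}^{\mathrm{up}}(\lambda)$, I can apply the explicit formula of Proposition \ref{p:isom^+} with $G_{\lambda}^{\mathrm{up}}(b)=u_{w\lambda}$ and $f_{w\lambda}=u_{w\lambda}^{\ast}$, obtaining $\mathcal{I}_{w}^{+}([c_{w\lambda,\lambda}^{\lambda}])=\varphi(D_{u_{w\lambda},u_{w\lambda}})q^{-w\lambda}$; and $D_{u_{w\lambda},u_{w\lambda}}$ has weight $0$, hence is a scalar, which pairing against $1\in\Uq^{-}$ identifies as $1$ (using $(u_{w\lambda},u_{w\lambda})_{\lambda}^{\varphi}=1=(1,1)_{L}$). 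Therefore $\mathcal{I}_{w}^{+}([c_{w\lambda,\lambda}^{\lambda}])=q^{-w\lambda}$, so $\mathcal{I}_{w}^{+}$ carries $\mathcal{S}_{w}$ bijectively onto $\{q^{-w\lambda}\mid\lambda\in P_{+}\}$, which is then automatically an Ore set of $\sum_{\lambda\in P_{+}}\Uq^{+}(w,\lambda)q^{-w\lambda}$ (being the image of the Ore set $\mathcal{S}_{w}$ under an anti-isomorphism). By the universal property of Ore localization, $\mathcal{I}_{w}^{+}$ extends to an algebra anti-isomorphism from $(\Rq^{+}/\Qq_{w}^{+})[\mathcal{S}_{w}^{-1}]$ onto the localization of $\sum_{\lambda\in P_{+}}\Uq^{+}(w,\lambda)q^{-w\lambda}$ at $\{q^{-w\lambda}\mid\lambda\in P_{+}\}$, which may be computed inside $\cUq^{\geq0}$ since these torus elements are already units there.

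The next step is to identify that localization with $\Uq^{+}(w)\cUq^{0}$. Multiplying the generator $\Uq^{+}(w,\lambda)q^{-w\lambda}$ by the (now invertible) $q^{w\lambda}$ shows the localization contains each $\Uq^{+}(w,\lambda)$, hence $\sum_{\lambda\in P_{+}}\Uq^{+}(w,\lambda)$; by the description $\Uq^{+}(w,\lambda)=\sum_{b\in\mathscr{B}_{w}(\lambda)}\mathbb{Q}(q)G^{\mathrm{up}}(\overline{\jmath}_{w\lambda}^{\vee}(b))^{\vee}$ together with the crystalised Kumar--Peterson identity (Theorem \ref{t:cryKP}) and Proposition \ref{p:dualcanonical}, this sum is exactly $\Uq^{+}(w)$. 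The localization also contains $q^{\pm w\lambda}$ for all $\lambda\in P_{+}$; since $\langle h_{i},\rho\rangle=1>0$ for all $i$ we have $P=P_{+}-P_{+}$, so for $\mu\in P$ written as $\mu=\lambda_{2}-\lambda_{1}$ with $\lambda_{1},\lambda_{2}\in P_{+}$ we get $q^{w\mu}=q^{w\lambda_{2}}q^{-w\lambda_{1}}$ in the localization, and thus all of $\cUq^{0}$. Conversely every generator ($\Uq^{+}(w,\lambda)q^{-w\lambda}$ and $q^{w\lambda}$) lies in the subalgebra $\Uq^{+}(w)\cUq^{0}$ (a subalgebra, as $\Uq^{+}(w)$ is $Q$-graded and hence stable under conjugation by $\cUq^{0}$). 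This yields the first assertion. For the ``moreover'' part, the same formula shows $\mathcal{I}_{w}^{+}$ restricts to an anti-isomorphism of $(\Rq^{+}(\lambda)/\Qq_{w}^{+})[c_{w\lambda,\lambda}^{\lambda}]^{-1}$ onto $\Uq^{+}(w,\lambda)q^{-w\lambda}q^{w\lambda}=\Uq^{+}(w,\lambda)$, hence of $\sum_{\lambda\in P_{+}}(\Rq^{+}(\lambda)/\Qq_{w}^{+})[c_{w\lambda,\lambda}^{\lambda}]^{-1}$ onto $\sum_{\lambda\in P_{+}}\Uq^{+}(w,\lambda)=\Uq^{+}(w)$; composing with the $\mathbb{Q}(q)$-algebra anti-involution $\varphi$ and using $\varphi(\Uq^{+}(w))=\Aq[N_{-}(w)]$ (Definition \ref{d:qunip}) upgrades this to the desired isomorphism onto $\Aq[N_{-}(w)]$.

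Essentially all of the substance is inherited from Proposition \ref{p:isom^+} (the non-localized anti-isomorphism) and Theorem \ref{t:cryKP} (which pins down $\Uq^{+}(w)=\sum_{\lambda}\Uq^{+}(w,\lambda)$), so I expect no serious obstacle; the one place that requires a moment's care rather than a routine manipulation is checking that the torus elements $\{q^{\pm w\lambda}\}_{\lambda\in P_{+}}$ produced by inverting $\mathcal{S}_{w}$ exhaust $\cUq^{0}$, which, as noted, reduces to the elementary fact $P=P_{+}-P_{+}$.
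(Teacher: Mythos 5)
Your proposal is correct and follows essentially the same route as the paper: everything reduces, after routine Ore-localization bookkeeping starting from Proposition \ref{p:isom^+} (including the computation $\mathcal{I}_{w}^{+}([c_{w\lambda,\lambda}^{\lambda}])=q^{-w\lambda}$, which the paper leaves implicit), to the identity $\sum_{\lambda\in P_{+}}\Uq^{+}(w,\lambda)=\Uq^{+}(w)$, which is exactly the crystalized Kumar--Peterson identity (Theorem \ref{t:cryKP}) that the paper's own proof invokes. Your extra care with the image of $\mathcal{S}_{w}$ and with recovering all of $\cUq^{0}$ from $\{q^{\pm w\lambda}\}_{\lambda\in P_{+}}$ just makes explicit what the paper treats as immediate.
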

\begin{proof}
It suffices to show that $\sum_{\lambda\in P_{+}}\Uq^{+}(w,\lambda)=\Uq^{+}(w)$.
This follows from Theorem \ref{t:cryKP}. 
\end{proof}

\subsection{Quantum twist isomorphisms and dual canonical bases}
In this subsection, we prove the existence of quantum twist isomorphisms (Theorem \ref{t:BZisom}). 
The following lemma easily follows from Corollary
\ref{c:qcent} and \ref{c:qcent^+}. See also \cite[Proposition 6.3]{MR1020298}. 
\begin{lem}
\label{l:local} Let $w\in W$. Then the set $\tilde{\mathcal{S}}_{w}:=\{q^m[c_{w\lambda,\lambda}^{\lambda}c_{\lambda',\lambda'}^{\lambda'}]\mid m\in \mathbb{Z}, \lambda,\lambda'\in P_{+}\}$
is an Ore set in $\Rq^{+}/\Qq_{w}^{+}$ consisting of $q$-central
elements.

Moreover the maps $(\Rq^{+}/\Qq_{w}^{+})[\mathcal{S}^{-1}]\to(\Rq^{+}/\Qq_{w}^{+})[\tilde{\mathcal{S}}_{w}^{-1}]$,
$[c_{f,u_{\lambda}}^{\lambda}][c_{\lambda',\lambda'}^{\lambda'}]^{-1}\mapsto[c_{f,u_{\lambda}}^{\lambda}][c_{\lambda',\lambda'}^{\lambda'}]^{-1}$
and $(\Rq^{+}/\Qq_{w}^{+})[\mathcal{S}_{w}^{-1}]\to(\Rq^{+}/\Qq_{w}^{+})[\tilde{\mathcal{S}}_{w}^{-1}]$,
$[c_{f,u_{\lambda}}^{\lambda}][c_{w\lambda',\lambda'}^{\lambda'}]^{-1}\mapsto[c_{f,u_{\lambda}}^{\lambda}][c_{w\lambda',\lambda'}^{\lambda'}]^{-1}$
are injective $\mathbb{Q}(q)$-algebra homomorphisms. 
\end{lem}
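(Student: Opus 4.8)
The plan is to reduce everything to Corollaries~\ref{c:qcent} and \ref{c:qcent^+}, Remark~\ref{r:zerodiv} (which asserts that $\Rq^{+}/\Qq_{w}^{+}$ is a domain), and the universal property of Ore localization (\cite[Proposition~6.3]{MR1020298}). Note first that $\tilde{\mathcal{S}}_{w}$ is indeed a subset of $\Rq^{+}/\Qq_{w}^{+}$, since $c_{w\lambda,\lambda}^{\lambda}$ and $c_{\lambda',\lambda'}^{\lambda'}$ both lie in $\Rq^{+}=\Rq^{e(+)}$ (their second arguments are highest weight vectors). I would then check that every element of $\tilde{\mathcal{S}}_{w}$ is $q$-central: each $[c_{w\lambda,\lambda}^{\lambda}]$ is $q$-central by Corollary~\ref{c:qcent^+}, each $[c_{\lambda',\lambda'}^{\lambda'}]$ is $q$-central by Corollary~\ref{c:qcent}, and the $q$-central elements form a multiplicatively closed set containing $q^{\mathbb{Z}}$ (if $sa=q^{l}as$ and $ta=q^{l'}at$ for homogeneous $a$, then $(st)a=q^{l+l'}a(st)$, and $q$ is a central unit). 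Also $1\in\tilde{\mathcal{S}}_{w}$, by taking $\lambda=\lambda'=0$, $m=0$ and using $c_{0,0}^{0}=1$.

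The next step is the multiplicative closure of $\tilde{\mathcal{S}}_{w}$, which rests on the two product identities
\[
[c_{w\lambda,\lambda}^{\lambda}]\,[c_{w\lambda',\lambda'}^{\lambda'}]\in q^{\mathbb{Z}}[c_{w(\lambda+\lambda'),\lambda+\lambda'}^{\lambda+\lambda'}],\qquad [c_{\mu,\mu}^{\mu}]\,[c_{\mu',\mu'}^{\mu'}]\in q^{\mathbb{Z}}[c_{\mu+\mu',\mu+\mu'}^{\mu+\mu'}].
\]
These follow from Propositions~\ref{p:isom} and \ref{p:isom^+}: using also Proposition~\ref{p:idealisom}, the element $[c_{w\lambda,\lambda}^{\lambda}]$ corresponds under $\mathcal{I}_{w}^{+}$ to $q^{-w\lambda}$ (since $D_{u_{w\lambda},u_{w\lambda}}=1$) and $[c_{\mu,\mu}^{\mu}]$ corresponds under $\mathcal{I}$ to $q^{-\mu}$, and such group-like elements multiply as displayed (one may also invoke the classical multiplication rule for matrix coefficients at extremal weight vectors). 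Given a product of two elements of $\tilde{\mathcal{S}}_{w}$, I would use $q$-centrality to collect the two ``$w\lambda$-type'' factors and the two ``$\mu$-type'' factors, apply the identities above, and obtain an expression of the form $q^{m''}[c_{w\nu,\nu}^{\nu}c_{\rho,\rho}^{\rho}]$. Combined with Remark~\ref{r:zerodiv}, which makes every nonzero element -- in particular every element of $\tilde{\mathcal{S}}_{w}$ -- a non-zero-divisor, and with the observation that $q$-centrality of $s$ forces $s\,(\Rq^{+}/\Qq_{w}^{+})=(\Rq^{+}/\Qq_{w}^{+})\,s$ on homogeneous elements (so that both Ore conditions are automatic), this shows $\tilde{\mathcal{S}}_{w}$ is an Ore set of $q$-central elements.

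For the second assertion, setting $\lambda'=0$ in the definition of $\tilde{\mathcal{S}}_{w}$ shows $\mathcal{S}_{w}\subseteq\tilde{\mathcal{S}}_{w}$, and setting $\lambda=0$ shows $\mathcal{S}\subseteq\tilde{\mathcal{S}}_{w}$. Hence the canonical embedding $\Rq^{+}/\Qq_{w}^{+}\hookrightarrow(\Rq^{+}/\Qq_{w}^{+})[\tilde{\mathcal{S}}_{w}^{-1}]$ inverts both $\mathcal{S}$ and $\mathcal{S}_{w}$, so by the universal property of localization it factors uniquely through $(\Rq^{+}/\Qq_{w}^{+})[\mathcal{S}^{-1}]$ and through $(\Rq^{+}/\Qq_{w}^{+})[\mathcal{S}_{w}^{-1}]$, producing the two $\mathbb{Q}(q)$-algebra homomorphisms; by construction each sends a fraction to the same fraction computed in the larger ring, which is the stated formula. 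Injectivity is formal: $\Rq^{+}/\Qq_{w}^{+}$ embeds into all three localizations because the denominator sets consist of non-zero-divisors, so if $as^{-1}$ (with $s\in\mathcal{S}$ or $s\in\mathcal{S}_{w}$) maps to $0$ in $(\Rq^{+}/\Qq_{w}^{+})[\tilde{\mathcal{S}}_{w}^{-1}]$, then $a=0$ there, hence $a=0$ in $\Rq^{+}/\Qq_{w}^{+}$, hence $as^{-1}=0$ already. The only ingredient that is not purely formal is the pair of product identities above, i.e.\ the multiplicative closure of $\tilde{\mathcal{S}}_{w}$; I expect this to be the main (though mild) obstacle, everything else being an immediate consequence of the cited results.
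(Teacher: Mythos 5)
Your proposal is correct and follows exactly the route the paper intends: the paper leaves this lemma as an easy consequence of Corollaries \ref{c:qcent} and \ref{c:qcent^+} together with the universal property of Ore localization (\cite[Proposition 6.3]{MR1020298}), and your write-up simply fills in those details ($q$-centrality, multiplicative closure of $\tilde{\mathcal{S}}_{w}$ via the product identities read off from $\mathcal{I}$ and $\mathcal{I}_{w}^{+}$, non-zero-divisor property from Remark \ref{r:zerodiv}, and formal injectivity). No gaps.
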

\begin{thm}
\label{t:BZisom} There exists an isomorphism of the $\mathbb{Q}(q)$-algebras
\[
\gamma_{w,q}\colon\Aq[N_{-}^{w}]\to\Aq[N_{-}(w)\cap wG_{0}^{\min}]
\]
given by 
\begin{align*}
[D_{u,u_{\lambda}}] & \mapsto q^{-(\lambda,\wt u-\lambda)}D_{w\lambda,\lambda}^{-1}D_{u_{w\lambda},u} & [D_{w\lambda,\lambda}]^{-1}\mapsto & q^{(\lambda,w\lambda-\lambda)}D_{w\lambda,\lambda}
\end{align*}
for a weight vector $u\in V(\lambda)$ and $\lambda\in P_{+}$. 
\end{thm}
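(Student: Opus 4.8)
The plan is to realize both algebras in the statement as localizations of the single algebra $\Rq^+/\Qq_w^+$ and to define $\gamma_{w,q}$ as the resulting composite isomorphism; the explicit formulas are then read off by tracking unipotent quantum matrix coefficients through the identifications.

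\emph{Step 1 (two localization pictures).} By Proposition \ref{p:description} the isomorphism $\mathcal{I}_w$ identifies $\Aq[N_-\cap X_w]$ with $\sum_{\lambda\in P_+}(\Rq^+(\lambda)/\Qq_w^+)[c_{\lambda,\lambda}^\lambda]^{-1}$, and by Proposition \ref{p:idealisom} it sends, up to a power of $q$, the element $[D_{u,u_\lambda}]$ to $[c_{u^\ast,u_\lambda}^\lambda][c_{\lambda,\lambda}^\lambda]^{-1}$ and $[D_{w\lambda,\lambda}]$ to $[c_{w\lambda,\lambda}^\lambda][c_{\lambda,\lambda}^\lambda]^{-1}$ (the latter since $u_{w\lambda}\in\mathbf{B}^{\mathrm{up}}(\lambda)$ and $c_{w\lambda,\lambda}^\lambda=c_{u_{w\lambda}^\ast,u_\lambda}^\lambda$). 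In $(\Rq^+/\Qq_w^+)[\tilde{\mathcal{S}}_w^{-1}]$ (Lemma \ref{l:local}) this element is invertible, so by the universal property of localization $\Aq[N_-^w]=\Aq[N_-\cap X_w][[\mathcal{D}_w]^{-1}]$ embeds in $(\Rq^+/\Qq_w^+)[\tilde{\mathcal{S}}_w^{-1}]$; write $A$ for its image. Dually, $\varphi^{-1}\circ\mathcal{I}_w^+$ is a genuine algebra isomorphism (a composite of two anti-isomorphisms) which, by Propositions \ref{p:description^+} and \ref{p:isom^+}, identifies $\Aq[N_-(w)]$ with $\sum_{\lambda\in P_+}(\Rq^+(\lambda)/\Qq_w^+)[c_{w\lambda,\lambda}^\lambda]^{-1}$, sending, up to a power of $q$, the element $D_{u_{w\lambda},u}$ to $[c_{u^\ast,u_\lambda}^\lambda][c_{w\lambda,\lambda}^\lambda]^{-1}$ and $D_{w\lambda,\lambda}$ to $[c_{\lambda,\lambda}^\lambda][c_{w\lambda,\lambda}^\lambda]^{-1}$ (using $\mathcal{I}_w^+([c_{w\lambda,\lambda}^\lambda])=q^{-w\lambda}$, which holds because $D_{u_{w\lambda},u_{w\lambda}}=1$). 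Hence $\Aq[N_-(w)\cap wG_0^{\min}]=\Aq[N_-(w)][\mathcal{D}_w^{-1}]$ embeds in $(\Rq^+/\Qq_w^+)[\tilde{\mathcal{S}}_w^{-1}]$ with some image $B$.

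\emph{Step 2 ($A=B$, and the definition of $\gamma_{w,q}$).} The subalgebra $A$ is generated by the $[c_{f,u_\lambda}^\lambda][c_{\lambda,\lambda}^\lambda]^{-1}$ together with the inverses $[c_{\lambda,\lambda}^\lambda][c_{w\lambda,\lambda}^\lambda]^{-1}$; among the former one finds, for $f=u_{w\lambda}^\ast$, the element $[c_{w\lambda,\lambda}^\lambda][c_{\lambda,\lambda}^\lambda]^{-1}$, so $A$ contains both $[c_{\lambda,\lambda}^\lambda][c_{w\lambda,\lambda}^\lambda]^{-1}$ and $[c_{w\lambda,\lambda}^\lambda][c_{\lambda,\lambda}^\lambda]^{-1}$. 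Since the $[c_{\lambda,\lambda}^\lambda]$ and $[c_{w\lambda,\lambda}^\lambda]$ are $q$-central (Corollaries \ref{c:qcent}, \ref{c:qcent^+}), multiplying $[c_{f,u_\lambda}^\lambda][c_{\lambda,\lambda}^\lambda]^{-1}$ by $[c_{\lambda,\lambda}^\lambda][c_{w\lambda,\lambda}^\lambda]^{-1}$ shows $[c_{f,u_\lambda}^\lambda][c_{w\lambda,\lambda}^\lambda]^{-1}\in A$, whence $B\subseteq A$; the reverse inclusion is symmetric, so $A=B$. I then define $\gamma_{w,q}$ to be the composite $\Aq[N_-^w]\xrightarrow{\ \sim\ }A=B\xrightarrow{\ \sim\ }\Aq[N_-(w)\cap wG_0^{\min}]$ of the two embeddings of Step 1 (the second inverted), which is automatically an isomorphism of $\mathbb{Q}(q)$-algebras.

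\emph{Step 3 (the formulas and the main obstacle).} Tracking $[D_{w\lambda,\lambda}]$ and $[D_{u,u_\lambda}]$ through this composite, using Step 1, gives $[D_{w\lambda,\lambda}]^{-1}\mapsto q^{\bullet}D_{w\lambda,\lambda}$ and $[D_{u,u_\lambda}]\mapsto q^{\bullet}D_{w\lambda,\lambda}^{-1}D_{u_{w\lambda},u}$ for certain integer powers of $q$; these are pinned down by the $q$-commutation relations of Propositions \ref{p:comm} and \ref{p:Elambda} together with $\wt D_{u,u_\lambda}=\wt u-\lambda$, yielding the claimed exponents $(\lambda,w\lambda-\lambda)$ and $-(\lambda,\wt u-\lambda)$ (alternatively, once well-definedness and bijectivity are secured one may specialize at $q=1$, where by Corollaries \ref{c:specializationlocal} and \ref{c:spusualisom} the map must reduce to the classical twist of Proposition \ref{prop:clstwist}, and matching constants fixes the normalization). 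I expect the delicate point to be the careful handling of the anti-isomorphism $\mathcal{I}_w^+$ and of the torus factors ($\cUq^0$) present in the target algebras $\Aq[N_-\cap X_w]^{\mathrm{ex}}$ and $\Uq^+(w)\cUq^0$ of $\mathcal{I}_w$ and $\mathcal{I}_w^+$: one must verify that after inverting $\tilde{\mathcal{S}}_w$ these factors become irrelevant and that the two localizations really are the \emph{same} subalgebra $A=B$ of $(\Rq^+/\Qq_w^+)[\tilde{\mathcal{S}}_w^{-1}]$, not merely abstractly isomorphic ones — this is essentially the $A=B$ verification of Step 2 combined with the injectivity statements in Lemma \ref{l:local}. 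Once that is in place, the rest is routine $q$-bookkeeping.
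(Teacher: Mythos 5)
Your construction is essentially the paper's own proof: both realize $\Aq[N_-^w]$ and $\Aq[N_-(w)\cap wG_0^{\min}]$ inside $(\Rq^+/\Qq_w^+)[\tilde{\mathcal{S}}_w^{-1}]$ via Propositions \ref{p:description} and \ref{p:description^+} together with Lemma \ref{l:local}, identify the two localized images (the paper exhibits the common intermediate algebra $\sum_{\lambda=\lambda'+\lambda''}\left(\Rq^{+}(\lambda)/\Qq_{w}^{+}\right)[c_{\lambda',\lambda'}^{\lambda'}c_{w\lambda'',\lambda''}^{\lambda''}]^{-1}$, which is exactly your $A=B$ step), and define $\gamma_{w,q}$ as the composite $\mathcal{J}_2\circ\mathcal{J}_1$. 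One caveat on your Step 3: the precise powers of $q$ come from the commutation of $[c_{\lambda,\lambda}^{\lambda}]$ with $[c_{u^{\ast},u_{\lambda}}^{\lambda}]$, which is what Proposition \ref{p:idealisom} supplies via $\mathcal{I}$ (Propositions \ref{p:comm} and \ref{p:Elambda} concern the $D$'s and only relate the exponents to one another), while your parenthetical alternative of fixing the normalization by specializing at $q=1$ and comparing with Proposition \ref{prop:clstwist} cannot work, since specialization identifies all powers of $q$.
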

\begin{defn}
We call $\gamma_{w,q}$ \emph{a quantum twist isomorphism} (cf.~Proposition \ref{prop:clstwist}). 
\end{defn}
\begin{proof}[{{Proof of Theorem \ref{t:BZisom}}}]
By Proposition \ref{p:description} (see also Proposition \ref{p:idealisom}),
we have the algebra isomorphism 
\begin{align}
\Aq[N_{-}\cap X_{w}]\xrightarrow{\mathcal{I}_{w}^{-1}}\sum_{\lambda\in P_{+}}\left(\Rq^{+}(\lambda)/\Qq_{w}^{+}\right)[c_{\lambda,\lambda}^{\lambda}]^{-1}\label{isom}
\end{align}
given by 
\begin{align}
[D_{u,u_{\lambda}}]\mapsto[c_{u^{\ast},u_{\lambda}}^{\lambda}][c_{\lambda,\lambda}^{\lambda}]^{-1}\label{Dimage}
\end{align}
for $\lambda\in P_{+}$ and $u\in V(\lambda)$. In particular, $\mathcal{I}_{w}^{-1}([D_{w\lambda,\lambda}])=[c_{w\lambda,\lambda}^{\lambda}][c_{\lambda,\lambda}^{\lambda}]^{-1}$.

By Lemma \ref{l:local}, $\sum_{\lambda\in P_{+}}\left(\Rq^{+}(\lambda)/\Qq_{w}^{+}\right)[c_{\lambda,\lambda}^{\lambda}]^{-1}$
is naturally regarded as a subalgebra of $(\Rq^{+}/\Qq_{w}^{+})[\tilde{\mathcal{S}}_{w}^{-1}]$,
and in the latter algebra, the set $\{q^m[c_{w\lambda,\lambda}^{\lambda}][c_{\lambda,\lambda}^{\lambda}]^{-1}\mid m\in \mathbb{Z}, \lambda\in P_{+}\}$
is a multiplicative set consisting of invertible $q$-central elements.
Hence the algebra isomorphism (\ref{isom}) is extended to the algebra
isomorphism 
\begin{align}
\mathcal{J}_{1}\colon\Aq[N_{-}^{w}]\to\sum_{\substack{\lambda,\lambda',\lambda''\in P_{+}\\
\lambda=\lambda'+\lambda''
}
}\left(\Rq^{+}(\lambda)/\Qq_{w}^{+}\right)[c_{\lambda',\lambda'}^{\lambda'}c_{w\lambda'',\lambda''}^{\lambda''}]^{-1}.\label{isomext}
\end{align}
On the other hand, by Proposition \ref{p:description^+} (see also
Proposition \ref{p:isom^+}), we have an algebra isomorphism 
\begin{align}
\sum_{\lambda\in P_{+}}\left(\Rq^{+}(\lambda)/\Qq_{w}^{+}\right)[c_{w\lambda,\lambda}^{\lambda}]^{-1}\xrightarrow{\varphi\circ\mathcal{I}_{w}^{+}}\Aq[N_{-}(w)],\label{isom^+}
\end{align}
given by 
\begin{align}
[c_{w\lambda,\lambda}^{\lambda}]^{-1}[c_{u^{\ast},u_{\lambda}}^{\lambda}]\mapsto D_{u_{w\lambda},u}\label{Dimage^+}
\end{align}
for $\lambda\in P_{+}$ and $u\in V(\lambda)$. In particular, $(\varphi\circ\mathcal{I}_{w}^{+})([c_{w\lambda,\lambda}^{\lambda}]^{-1}[c_{\lambda,\lambda}^{\lambda}])=D_{w\lambda,\lambda}$.

As above, the set $\{q^m[c_{w\lambda,\lambda}^{\lambda}]^{-1}[c_{\lambda,\lambda}^{\lambda}]\mid m\in \mathbb{Z}, \lambda\in P_{+}\}$
is a multiplicative set consisting of invertible $q$-central elements
of $(\Aq^{+}/\Qq_{w}^{+})[\tilde{\mathcal{S}}_{w}^{-1}]$. Hence the
algebra isomorphism (\ref{isom^+}) is extended to the algebra isomorphism
\begin{align}
\mathcal{J}_{2}\colon\sum_{\substack{\lambda,\lambda',\lambda''\in P_{+}\\
\lambda=\lambda'+\lambda''
}
}\left(\Rq^{+}(\lambda)/\Qq_{w}^{+}\right)[c_{\lambda',\lambda'}^{\lambda'}c_{w\lambda'',\lambda''}^{\lambda''}]^{-1}\to\Aq[N_{-}(w)\cap wG_{0}^{\min}].\label{isomext^+}
\end{align}

By (\ref{isomext}) and (\ref{isomext^+}), we obtain the $\mathbb{Q}(q)$-algebra
isomorphism 
\[
\gamma_{w,q}:=\mathcal{J}_{2}\circ\mathcal{J}_{1}\colon\Aq[N_{-}^{w}]\to\Aq[N_{-}(w)\cap wG_{0}^{\min}].
\]
Moreover, for $\lambda\in P_{+}$ and a weight vector $u\in V(\lambda)$,
we have 
\begin{align*}
\gamma_{w,q}(D_{u,u_{\lambda}}) & =\mathcal{J}_{2}([c_{u^{\ast},u_{\lambda}}^{\lambda}][c_{\lambda,\lambda}^{\lambda}]^{-1})\;\text{by}\;\eqref{Dimage},\\
 & =\mathcal{J}_{2}(q^{-(\lambda,\wt u-\lambda)}[c_{\lambda,\lambda}^{\lambda}]^{-1}[c_{u^{\ast},u_{\lambda}}^{\lambda}])\;\text{by Proposition \ref{p:idealisom}},\\
 & =\mathcal{J}_{2}(q^{-(\lambda,\wt u-\lambda)}[c_{\lambda,\lambda}^{\lambda}]^{-1}[c_{w\lambda,\lambda}^{\lambda}][c_{w\lambda,\lambda}^{\lambda}]^{-1}[c_{u^{\ast},u_{\lambda}}^{\lambda}])\\
 & =q^{-(\lambda,\wt u-\lambda)}D_{w\lambda,\lambda}^{-1}D_{u_{w\lambda,u}}\;\text{by}\;\eqref{Dimage^+}.
\end{align*}
Moreover, 
\begin{align*}
1 & =\gamma_{w,q}([D_{w\lambda,\lambda}][D_{w\lambda,\lambda}]^{-1})\\
 & =q^{-(\lambda,w\lambda-\lambda)}D_{w\lambda,\lambda}^{-1}\gamma_{w,q}([D_{w\lambda,\lambda}]^{-1}).
\end{align*}
Hence, 
\[
\gamma_{w,q}([D_{w\lambda,\lambda}]^{-1})=q^{(\lambda,w\lambda-\lambda)}D_{w\lambda,\lambda}.
\]
This completes the proof of the theorem. 
\end{proof}
\begin{rem}\label{r:commproof}
We can also deduce Proposition \ref{p:comm} from the descriptions 
\begin{align*}
[D_{w\lambda,\lambda}]&=\mathcal{I}_{w}([c_{w\lambda,\lambda}^{\lambda}][c_{\lambda,\lambda}^{\lambda}]^{-1})& 
D_{w\lambda,\lambda}&=(\varphi\circ\mathcal{I}_{w}^{+})([c_{w\lambda,\lambda}^{\lambda}]^{-1}[c_{\lambda,\lambda}^{\lambda}])
\end{align*}
appearing in the proof of Theorem \ref{t:BZisom} together with Propositions \ref{p:idealisom} and \ref{p:isom^+}. 
\end{rem}
The quantum twist isomorphism $\gamma_{w,q}$ is compatible with the
dual canonical bases as follows: 
\begin{thm}
\label{t:preserve} Let $w\in W$. Then the quantum twist isomorphism
$\gamma_{w,q}\colon\Aq[N_{-}^{w}]\to\Aq[N_{-}(w)\cap wG_{0}^{\min}]$
restricts to the bijection $\widetilde{\mathbf{B}}^{\mathrm{up},w}\to\widetilde{\mathbf{B}}^{\mathrm{up}}(w)$
given by 
\[
q^{(\lambda,\wt(\overline{\jmath}_{\lambda'}(b))+\lambda-w\lambda)}[D_{w\lambda,\lambda}]^{-1}[G^{\mathrm{up}}\left(\overline{\jmath}_{\lambda'}(b)\right)]\mapsto q^{-(\lambda-\lambda',\wt(\ast\overline{\jmath}_{w\lambda'}^{\vee}\left(b\right)))}D_{w,\lambda-\lambda'}G^{\mathrm{up}}\left(\ast\overline{\jmath}_{w\lambda'}^{\vee}\left(b\right)\right)
\]
for $\lambda,\lambda'\in P_{+},b\in\mathscr{B}_{w}\left(\lambda'\right)$.
In particular, $\gamma_{w,q}([D_{w,\lambda}])=D_{w,-\lambda}$ for
$\lambda\in P$, and $\gamma_{w,q}\circ\sigma=\sigma\circ\gamma_{w,q}$.
\end{thm}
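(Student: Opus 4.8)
The plan is to prove Theorem~\ref{t:preserve} by a direct computation of $\gamma_{w,q}$ on the basis elements of $\widetilde{\mathbf{B}}^{\mathrm{up},w}$, using the explicit formulas for $\gamma_{w,q}$ from Theorem~\ref{t:BZisom} together with the identification of dual canonical basis elements of $V(\lambda)$ with unipotent quantum matrix coefficients in Proposition~\ref{p:minor}. First I would reduce to analyzing a single generator. Recall from Definition~\ref{d:localdualcan} that an element of $\widetilde{\mathbf{B}}^{\mathrm{up},w}$ has the form $q^{(\lambda,\wt b+\lambda-w\lambda)}[D_{w\lambda,\lambda}]^{-1}[G^{\mathrm{up}}(b)]$ with $\lambda\in P_{+}$ and $b\in\mathscr{B}_{w}(\infty)$; by Proposition~\ref{p:minor}(1) together with Proposition~\ref{p:jlambdaimage} (and Remark~\ref{r:Demazure}), we may write $G^{\mathrm{up}}(b)=G^{\mathrm{up}}(\overline{\jmath}_{\lambda'}(b'))=D_{G^{\mathrm{up}}_{\lambda'}(b'),u_{\lambda'}}=[D_{G^{\mathrm{up}}_{\lambda'}(b'),u_{\lambda'}}]$ for a suitable $\lambda'\in P_{+}$ and $b'\in\mathscr{B}_{w}(\lambda')$ with $\wt b=\wt\overline{\jmath}_{\lambda'}(b')=\wt b'-\lambda'$. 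So it suffices to compute $\gamma_{w,q}\bigl(q^{(\lambda,\wt b+\lambda-w\lambda)}[D_{w\lambda,\lambda}]^{-1}[D_{G^{\mathrm{up}}_{\lambda'}(b'),u_{\lambda'}}]\bigr)$.

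Next I would apply $\gamma_{w,q}$ using Theorem~\ref{t:BZisom}: it sends $[D_{w\lambda,\lambda}]^{-1}$ to $q^{(\lambda,w\lambda-\lambda)}D_{w\lambda,\lambda}$ (so $[D_{w\lambda,\lambda}]$ to $q^{-(\lambda,w\lambda-\lambda)}D_{w\lambda,\lambda}^{-1}$, and one must be careful about which side things land; since $\gamma_{w,q}$ is an algebra homomorphism, $\gamma_{w,q}([D_{w\lambda,\lambda}]^{-1}) = \gamma_{w,q}([D_{w\lambda,\lambda}])^{-1} = q^{(\lambda,w\lambda-\lambda)}D_{w\lambda,\lambda}$ directly) and sends $[D_{G^{\mathrm{up}}_{\lambda'}(b'),u_{\lambda'}}]$ to $q^{-(\lambda',\wt b'-\lambda')}D_{w\lambda',\lambda'}^{-1}D_{u_{w\lambda'},G^{\mathrm{up}}_{\lambda'}(b')}$, using $\wt u = \wt b'$ for $u = G^{\mathrm{up}}_{\lambda'}(b')$. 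By Proposition~\ref{p:minor}(2), $D_{u_{w\lambda'},G^{\mathrm{up}}_{\lambda'}(b')}=G^{\mathrm{up}}(\ast\overline{\jmath}^{\vee}_{w\lambda'}(b'))$. Then I would collect the scalar: combine $q^{(\lambda,\wt b+\lambda-w\lambda)}$, $q^{(\lambda,w\lambda-\lambda)}$, and $q^{-(\lambda',\wt b'-\lambda')}$, use the $q$-commutation relations of Proposition~\ref{p:comm}/\ref{p:Elambda} to move $D_{w\lambda,\lambda}$ past $D_{w\lambda',\lambda'}^{-1}$ and past $G^{\mathrm{up}}(\ast\overline{\jmath}^{\vee}_{w\lambda'}(b'))$, and use Proposition~\ref{p:Elambda}(1)--(2) to rewrite $D_{w\lambda,\lambda}D_{w\lambda',\lambda'}^{-1}$ as a power of $q$ times $D_{w,\lambda-\lambda'}$. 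Since $\wt G^{\mathrm{up}}(\ast\overline{\jmath}^{\vee}_{w\lambda'}(b')) = -\wt\overline{\jmath}^{\vee}_{w\lambda'}(b') = \wt b' - w\lambda'$ (Remark~\ref{r:jcheck}), the weight bookkeeping is fully determined; the claim is then that all the $q$-powers collapse to the stated exponent $-(\lambda-\lambda',\wt(\ast\overline{\jmath}^{\vee}_{w\lambda'}(b')))$. This is a finite, if somewhat tedious, exponent computation using bilinearity of $(\ ,\ )$ and the identity $(\mu,w\nu)=(w^{-1}\mu,\nu)$ where needed; I would present it compactly rather than in full. Finally, that $\gamma_{w,q}$ maps $\widetilde{\mathbf{B}}^{\mathrm{up},w}$ onto $\widetilde{\mathbf{B}}^{\mathrm{up}}(w)$ bijectively follows because $\gamma_{w,q}$ is an algebra isomorphism (Theorem~\ref{t:BZisom}) and the target expressions, by Definition~\ref{d:localdualcan}/Remark~\ref{r:param}, exhaust $\widetilde{\mathbf{B}}^{\mathrm{up}}(w)$ as $(\lambda',b')$ and $\lambda$ vary (surjectivity onto $\mathscr{B}_{w}(\infty)$ of $\ast\overline{\jmath}^{\vee}_{w\lambda'}$ composed over $\lambda'$ is Theorem~\ref{t:cryKP} combined with $\ast(\mathscr{B}(\Uq^{-}(w)))\subset\mathscr{B}_{w}(\infty)$ of Proposition~\ref{p:usualinj}).

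For the two ``in particular'' assertions: $\gamma_{w,q}([D_{w,\lambda}])=D_{w,-\lambda}$ for $\lambda\in P$ follows by writing $\lambda=-\lambda_1+\lambda_2$ and $D_{w,\lambda}=q^{(\lambda_1,w\lambda-\lambda)}D_{w\lambda_1,\lambda_1}^{-1}D_{w\lambda_2,\lambda_2}$ (Definition~\ref{d:localdualcan}), then applying $\gamma_{w,q}([D_{w\mu,\mu}]) = q^{-(\mu,w\mu-\mu)}D_{w\mu,\mu}^{-1}$ for $\mu\in P_{+}$ and using Proposition~\ref{p:Elambda}(2) to reassemble; alternatively it is the special case $b=u_{w\lambda'}$ (where $\ast\overline{\jmath}^{\vee}_{w\lambda'}(b)$ lies in the image of $D_{w\lambda',\lambda'}$) of the main formula. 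The commutation $\gamma_{w,q}\circ\sigma=\sigma\circ\gamma_{w,q}$ then follows from the fact, proved in Proposition~\ref{p:dualbarex}, that the elements of $\widetilde{\mathbf{B}}^{\mathrm{up},w}$ and $\widetilde{\mathbf{B}}^{\mathrm{up}}(w)$ are exactly the $\sigma$-fixed elements spanning the respective algebras with the stated $q$-power normalization: since $\gamma_{w,q}$ is a $\mathbb{Q}(q)$-algebra isomorphism carrying the one distinguished basis to the other, and $\sigma$ on each side is the $\mathbb{Q}$-semilinear (i.e. $\overline{q}=q^{-1}$) map fixing that basis and satisfying $\sigma(xy)=q^{(\wt x,\wt y)}\sigma(y)\sigma(x)$, the composite $\gamma_{w,q}^{-1}\circ\sigma\circ\gamma_{w,q}$ is a $\mathbb{Q}$-semilinear anti-automorphism-twist of $\Aq[N_{-}^{w}]$ fixing $\widetilde{\mathbf{B}}^{\mathrm{up},w}$ and obeying the same twisted multiplicativity, hence equals $\sigma$ by the uniqueness built into its construction.

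The main obstacle I anticipate is the exponent bookkeeping in the second paragraph: correctly tracking the $q$-powers coming from (a) the normalizing factors in the definitions of $\widetilde{\mathbf{B}}^{\mathrm{up},w}$ and $\widetilde{\mathbf{B}}^{\mathrm{up}}(w)$, (b) the scalars produced by $\gamma_{w,q}$ on $[D_{u,u_\lambda}]$ and $[D_{w\lambda,\lambda}]^{\pm1}$, (c) the $q$-commutation relations of Proposition~\ref{p:comm} and~\ref{p:Elambda}(3) used to reorder $D_{w\lambda,\lambda}$ and $D_{w,\lambda-\lambda'}$ relative to $G^{\mathrm{up}}(\ast\overline{\jmath}^{\vee}_{w\lambda'}(b'))$, and (d) Proposition~\ref{p:idealisom}'s factor $q^{-(\lambda',\wt u-\lambda')}$ when passing $[c^{\lambda'}_{u^{\ast},u_{\lambda'}}]$ past $[c^{\lambda'}_{\lambda',\lambda'}]^{-1}$ inside the proof of Theorem~\ref{t:BZisom} (already absorbed into the stated formula for $\gamma_{w,q}(D_{u,u_\lambda})$). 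Getting the signs and the symmetric-form arguments right so that everything telescopes to $-(\lambda-\lambda',\wt(\ast\overline{\jmath}^{\vee}_{w\lambda'}(b')))$ is the only genuinely delicate point; the structural claims about bijectivity and commutation with $\sigma$ are then formal.
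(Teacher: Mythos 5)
Your proposal is correct and follows essentially the same route as the paper's proof: rewrite $G^{\mathrm{up}}(\overline{\jmath}_{\lambda'}(b))$ as $D_{G^{\mathrm{up}}_{\lambda'}(b),u_{\lambda'}}$ via Proposition \ref{p:minor}, apply the explicit formulas of Theorem \ref{t:BZisom}, identify $D_{u_{w\lambda'},G^{\mathrm{up}}_{\lambda'}(b)}=G^{\mathrm{up}}(\ast\overline{\jmath}^{\vee}_{w\lambda'}(b))$, and collapse the $q$-powers using Propositions \ref{p:comm} and \ref{p:Elambda}. The exponent bookkeeping you deferred does telescope exactly as claimed (it is the paper's four-line display), and your treatment of bijectivity and of the two ``in particular'' statements, which the paper leaves implicit, is sound.
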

\begin{proof}
By Proposition \ref{p:minor}, for $\lambda,\lambda'\in P_{+}$ and
$b\in\mathscr{B}_{w}(\lambda')$, we have 
\begin{align*}
 & \gamma_{w,q}(q^{(\lambda,\wt(\overline{\jmath}_{\lambda'}(b))+\lambda-w\lambda)}[D_{w\lambda,\lambda}]^{-1}[G^{\mathrm{up}}\left(\overline{\jmath}_{\lambda'}(b)\right)])\\
 & =\gamma_{w,q}(q^{(\lambda,\wt b-\lambda'+\lambda-w\lambda)}[D_{w\lambda,\lambda}]^{-1}[D_{G_{\lambda'}^{\mathrm{up}}(b),u_{\lambda'}}])\\
 & =q^{(\lambda,\wt b-\lambda'+\lambda-w\lambda)}(q^{(\lambda,w\lambda-\lambda)}D_{w\lambda,\lambda})(q^{-(\lambda',\wt b-\lambda')}D_{w\lambda',\lambda'}^{-1}D_{u_{w\lambda'},G_{\lambda'}^{\mathrm{up}}(b)})\\
 & =q^{-(\lambda-\lambda',\wt(\ast\overline{\jmath}_{w\lambda'}^{\vee}\left(b\right)))}D_{w,\lambda-\lambda'}G^{\mathrm{up}}\left(\ast\overline{\jmath}_{w\lambda'}^{\vee}\left(b\right)\right).
\end{align*}
This completes the proof.
\end{proof}

\section{Twist automorphisms on quantum unipotent cells}\label{sec:Twist-aut}
We now obtain the twist automorphisms on quantum unipotent cells. 
\begin{thm}
\label{t:BZauto} Let $w\in W$. Then there exists a $\mathbb{Q}(q)$-algebra
automorphism 
\[
\eta_{w,q}:=\iota_{w}\circ\gamma_{w,q}\colon\Aq[N_{-}^{w}]\to\Aq[N_{-}^{w}]
\]
given by 

\begin{align*}
[D_{u,u_{\lambda}}]\mapsto & q^{-(\lambda,\wt u-\lambda)}[D_{w\lambda,\lambda}]^{-1}[D_{u_{w\lambda},u}] & [D_{w\lambda,\lambda}]^{-1}\mapsto & q^{(\lambda,w\lambda-\lambda)}[D_{w\lambda,\lambda}]
\end{align*}
for a weight vector $u\in V(\lambda)$ and $\lambda\in P_{+}$. In particular, $\wt\eta_{w,q}([x])=-\wt[x]$ for homogeneous elements
$[x]\in\Aq[N_{-}^{w}]$. Moreover $\eta_{w,q}$ restricts to a permutation
on the dual canonical bases $\widetilde{\mathbf{B}}^{\mathrm{up},w}$.
In particular, $\eta_{w,q}$ commutes with the dual bar involution
$\sigma$, and $\eta_{w,q}([D_{w,\lambda}])=[D_{w,-\lambda}]$ for
$\lambda\in P_{+}$. 
\end{thm}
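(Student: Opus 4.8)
The plan is to deduce everything from the two isomorphism theorems already established. Since $\gamma_{w,q}\colon\Aq[N_{-}^{w}]\to\Aq[N_{-}(w)\cap wG_{0}^{\min}]$ is an isomorphism of $\mathbb{Q}(q)$-algebras (Theorem \ref{t:BZisom}) and $\iota_{w}\colon\Aq[N_{-}(w)\cap wG_{0}^{\min}]\to\Aq[N_{-}^{w}]$ is an isomorphism of $\mathbb{Q}(q)$-algebras (Theorem \ref{thm:DeCP}), the composite $\eta_{w,q}:=\iota_{w}\circ\gamma_{w,q}$ is at once a $\mathbb{Q}(q)$-algebra automorphism of $\Aq[N_{-}^{w}]$. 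To read off the explicit formulas I would substitute the images $\gamma_{w,q}([D_{u,u_{\lambda}}])=q^{-(\lambda,\wt u-\lambda)}D_{w\lambda,\lambda}^{-1}D_{u_{w\lambda},u}$ and $\gamma_{w,q}([D_{w\lambda,\lambda}]^{-1})=q^{(\lambda,w\lambda-\lambda)}D_{w\lambda,\lambda}$ from Theorem \ref{t:BZisom} and then apply $\iota_{w}$, using that $\iota_{w}$ is an algebra map, hence sends inverses to inverses. The only thing to verify here is that $\iota_{w}$ --- which is induced from the canonical projection $\Uq^{-}\to\Aq[N_{-}\cap X_{w}]$ on $\Aq[N_{-}(w)]$ and extended over $\mathcal{D}_{w}^{-1}$ --- sends $D_{w\lambda,\lambda}$ to $[D_{w\lambda,\lambda}]$ (recorded in the proof of Theorem \ref{thm:DeCP}, via Proposition \ref{p:minorPBW}) and $D_{u_{w\lambda},u}$ to $[D_{u_{w\lambda},u}]$. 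For the latter I would note that, by Proposition \ref{p:minor}(2),(3), $D_{u_{w\lambda},u}$ is a $\mathbb{Q}(q)$-linear combination of the $G^{\mathrm{up}}(\ast\overline{\jmath}_{w\lambda}^{\vee}(b))$ with $b\in\mathscr{B}_{w}(\lambda)$, and that by the crystalized Kumar--Peterson identity (Theorem \ref{t:cryKP}) every such label lies in $\ast(\mathscr{B}(\Uq^{-}(w)))$, so that $D_{u_{w\lambda},u}\in\Aq[N_{-}(w)]$ lies in the domain where $\iota_{w}$ is literally the projection. This yields the two displayed formulas, which moreover characterize $\eta_{w,q}$ since the $[D_{u,u_{\lambda}}]$ together with the $[D_{w\lambda,\lambda}]^{-1}$ generate $\Aq[N_{-}^{w}]$.

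For the grading statement I would check directly that the element $q^{-(\lambda,\wt u-\lambda)}D_{w\lambda,\lambda}^{-1}D_{u_{w\lambda},u}$ has weight $-(w\lambda-\lambda)+(w\lambda-\wt u)=-(\wt u-\lambda)=-\wt[D_{u,u_{\lambda}}]$, and similarly on $[D_{w\lambda,\lambda}]^{\pm1}$; since $\iota_{w}$ preserves the $Q$-grading, $\eta_{w,q}$ negates weight, i.e.\ $\wt\eta_{w,q}([x])=-\wt[x]$ for homogeneous $[x]$. For the compatibility with the dual canonical bases, Theorem \ref{t:preserve} gives that $\gamma_{w,q}$ restricts to a bijection $\widetilde{\mathbf{B}}^{\mathrm{up},w}\xrightarrow{\sim}\widetilde{\mathbf{B}}^{\mathrm{up}}(w)$, and the proof of Theorem \ref{thm:DeCP} gives that $\iota_{w}$ restricts to a bijection $\widetilde{\mathbf{B}}^{\mathrm{up}}(w)\xrightarrow{\sim}\widetilde{\mathbf{B}}^{\mathrm{up},w}$; composing, $\eta_{w,q}$ restricts to a permutation of $\widetilde{\mathbf{B}}^{\mathrm{up},w}$. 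Commutation with $\sigma$ then follows from $\gamma_{w,q}\circ\sigma=\sigma\circ\gamma_{w,q}$ (Theorem \ref{t:preserve}) together with $\iota_{w}\circ\sigma=\sigma\circ\iota_{w}$, the latter because $\iota_{w}$ is $\mathbb{Q}(q)$-linear and carries $\widetilde{\mathbf{B}}^{\mathrm{up}}(w)$ onto $\widetilde{\mathbf{B}}^{\mathrm{up},w}$ while $\sigma$ fixes every element of both dual canonical bases and acts $\overline{\phantom{x}}$-semilinearly on coefficients (Proposition \ref{p:dualbarex}). Finally, $\eta_{w,q}([D_{w,\lambda}])=[D_{w,-\lambda}]$ is immediate from $\gamma_{w,q}([D_{w,\lambda}])=D_{w,-\lambda}$ (Theorem \ref{t:preserve}) and $\iota_{w}(D_{w,-\lambda})=[D_{w,-\lambda}]$, which follows from $\iota_{w}(D_{w\lambda_{i},\lambda_{i}})=[D_{w\lambda_{i},\lambda_{i}}]$ applied to the defining product expression for $D_{w,-\lambda}$ in Definition \ref{d:localdualcan} (cf.\ Proposition \ref{p:Elambda}).

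The substantive work has all been done in Theorems \ref{thm:DeCP}, \ref{t:BZisom} and \ref{t:preserve}, so there is no genuine obstacle here beyond careful bookkeeping of the scalars $q^{\pm(\cdot,\cdot)}$; the one place I would be most attentive is making sure that in each formula the element $\iota_{w}$ is applied to actually lies in $\Aq[N_{-}(w)]$ (or its localization by $\mathcal{D}_{w}$), rather than in some larger localized algebra, which is exactly where Theorem \ref{t:cryKP} and Proposition \ref{p:minor} enter.
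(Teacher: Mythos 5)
Your proposal is correct and matches the paper's intent exactly: the paper gives no separate proof of Theorem \ref{t:BZauto}, treating it as an immediate consequence of Theorems \ref{thm:DeCP}, \ref{t:BZisom} and \ref{t:preserve}, which is precisely the composition-and-bookkeeping argument you carry out. Your extra care in checking that $D_{u_{w\lambda},u}$ lies in $\Aq[N_{-}(w)]$ (via Proposition \ref{p:minor} and Theorem \ref{t:cryKP}) and that $\iota_{w}$ maps $\widetilde{\mathbf{B}}^{\mathrm{up}}(w)$ bijectively onto $\widetilde{\mathbf{B}}^{\mathrm{up},w}$ is exactly the right bookkeeping and is consistent with what is established in the proofs of those theorems.
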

The following follows from the theorem above and Proposition \ref{prop:clstwist}. 
\begin{cor}
\label{c:spBZauto} Let $w\in W$. Then the $\mathbb{Q}(q)$-algebra
automorphism $\eta_{w,q}\colon\Aq[N_{-}^{w}]\to\Aq[N_{-}^{w}]$ induces
a $\mathcal{A}$-algebra automorphism $\eta_{w,\mathcal{A}}\colon\mathbf{A}_{\mathcal{\mathbb{Q}}[q^{\pm1}]}[N_{-}^{w}]\to\mathbf{A}_{\mathbb{Q}[q^{\pm1}]}[N_{-}^{w}]$
and a $\mathbb{C}$-algebra automorphism 
\[
\eta_{w,q}\mid_{q=1}\colon\mathbf{A}_{\mathbb{Q}[q^{\pm1}]}[N_{-}^{w}]\otimes_{\mathcal{A}}\mathbb{C}\to\mathbf{A}_{\mathbb{Q}[q^{\pm1}]}[N_{-}^{w}]\otimes_{\mathcal{A}}\mathbb{C}.
\]
Moreover, through the isomorphism in Corollary \ref{c:specializationlocal},
the automorphism $\eta_{w,q}\mid_{q=1}$ coincides with $\eta_{w}^{\ast}$. 
\end{cor}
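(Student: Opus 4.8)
The plan is to assemble the corollary directly from three ingredients that are already in hand: Theorem \ref{t:BZauto} (existence of $\eta_{w,q}$ and its permutation action on $\widetilde{\mathbf{B}}^{\mathrm{up},w}$), Corollary \ref{c:specializationlocal} (the $\mathcal{A}$-form $\mathbf{A}_{\mathbb{Q}[q^{\pm1}]}[N_-^w]$ spanned by $\widetilde{\mathbf{B}}^{\mathrm{up},w}$ and its specialization to $\mathbb{C}[N_-^w]$), and Proposition \ref{prop:clstwist} together with Proposition \ref{p:BZGLS} (the classical twist automorphism $\eta_w$ and the formula for $\eta_w^\ast=\gamma_w^\ast\circ\pi_w^\ast$). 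First I would observe that since $\eta_{w,q}$ permutes the $\mathbb{Q}(q)$-basis $\widetilde{\mathbf{B}}^{\mathrm{up},w}$ of $\Aq[N_-^w]$ (the last sentence of Theorem \ref{t:BZauto}), it preserves the $\mathcal{A}$-span of that basis, which is exactly $\mathbf{A}_{\mathbb{Q}[q^{\pm1}]}[N_-^w]$ by Corollary \ref{c:specializationlocal}(2); the same holds for $\eta_{w,q}^{-1}=\gamma_{w,q}^{-1}\circ\iota_w^{-1}$, since $\iota_w$ identifies the two $\mathcal{A}$-forms and $\gamma_{w,q}$ sends $\widetilde{\mathbf{B}}^{\mathrm{up},w}$ bijectively onto $\widetilde{\mathbf{B}}^{\mathrm{up}}(w)$ by Theorem \ref{t:preserve}. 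Hence $\eta_{w,q}$ restricts to an $\mathcal{A}$-algebra automorphism $\eta_{w,\mathcal{A}}$, and base change along $\mathcal{A}\to\mathbb{C}$, $q^{\pm1}\mapsto 1$, yields the $\mathbb{C}$-algebra automorphism $\eta_{w,q}|_{q=1}$ of $\mathbf{A}_{\mathbb{Q}[q^{\pm1}]}[N_-^w]\otimes_{\mathcal{A}}\mathbb{C}\simeq\mathbb{C}[N_-^w]$.

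The substantive step is the identification $\eta_{w,q}|_{q=1}=\eta_w^\ast$. I would factor this through the De Concini–Procesi and twist isomorphisms, exactly as $\eta_{w,q}=\iota_w\circ\gamma_{w,q}$ is defined. On the one hand, by the proof of Theorem \ref{thm:DeCP}, $\iota_w|_{q=1}$ is the classical isomorphism $\mathbb{C}[N_-(w)\cap wG_0^{\min}]\xrightarrow{\sim}\mathbb{C}[N_-^w]$ of Corollary \ref{c:spusualisom}, which is induced by $\pi_w|_{N_-^w}\colon N_-^w\to N_-(w)\cap wG_0^{\min}$ (equivalently, its pullback is essentially $\pi_w^\ast$ localized). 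On the other hand, I claim $\gamma_{w,q}|_{q=1}$ equals $\gamma_w^\ast$, the pullback of the classical twist isomorphism. To see this, compare formulas: by Theorem \ref{t:BZisom}, $\gamma_{w,q}([D_{u,u_\lambda}])=q^{-(\lambda,\wt u-\lambda)}D_{w\lambda,\lambda}^{-1}D_{u_{w\lambda},u}$ and $\gamma_{w,q}([D_{w\lambda,\lambda}]^{-1})=q^{(\lambda,w\lambda-\lambda)}D_{w\lambda,\lambda}$; setting $q=1$ and using that the specializations of $[D_{u,u_\lambda}]$, $D_{u_{w\lambda},u}$, $D_{w\lambda,\lambda}$ are the classical functions $[D^{\mathbb{C}}_{u,u_\lambda}]_w$, $D^{\mathbb{C}}_{u_{w\lambda},u}$, $D^{\mathbb{C}}_{w\lambda,\lambda}$ respectively (via Corollaries \ref{c:cellcoord}, \eqref{eq:grpcoord}, \ref{c:specializationlocal}), the $q$-power factors become $1$ and we recover precisely the formula $\gamma_w^\ast([D^{\mathbb{C}}_{u,u_\lambda}]_w)=D^{\mathbb{C}}_{u_{w\lambda},u}/D^{\mathbb{C}}_{u_{w\lambda},u_\lambda}$ of Proposition \ref{prop:clstwist} after clearing the localized minor. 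Composing, $\eta_{w,q}|_{q=1}=(\iota_w\circ\gamma_{w,q})|_{q=1}$ is induced by $\gamma_w$ followed by the inclusion-pullback, i.e.\ by the scheme map $\eta_w=\gamma_w\circ\pi_w|_{N_-^w}$ of Proposition \ref{p:BZGLS}(5); so it coincides with $\eta_w^\ast$.

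The main obstacle I anticipate is purely bookkeeping: tracking that the several localizations and $\mathcal{A}$-forms are mutually compatible, so that ``restrict, then specialize'' really agrees with ``specialize, then use the classical localization isomorphism.'' Concretely, one must check that under the identifications of Corollary \ref{c:specializationlocal} the image of $[D_{w\lambda,\lambda}]\in\mathbf{A}_{\mathbb{Q}[q^{\pm1}]}[N_-^w]$ is $[D^{\mathbb{C}}_{w\lambda,\lambda}]_w$ and that the localization at $[\mathcal{D}_w]$ specializes to localization at $\{[D^{\mathbb{C}}_{w\lambda,\lambda}]_w\}$ (flatness of Ore localization, or the explicit bases in Definition \ref{d:localdualcan} and Remark \ref{r:param}), and likewise on the $N_-(w)\cap wG_0^{\min}$ side; this is routine but must be spelled out. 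Once that compatibility is recorded, the formula comparison above and Proposition \ref{prop:clstwist} give the result, and the fact that $\eta_{w,q}$ fixes $\widetilde{\mathbf{B}}^{\mathrm{up},w}$ pointwise up to permutation (hence the $\mathcal{A}$-automorphism statement) follows immediately from the last assertion of Theorem \ref{t:BZauto}.
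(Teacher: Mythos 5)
Your proposal is correct and follows essentially the argument the paper intends: the paper derives this corollary directly from Theorem \ref{t:BZauto} (permutation of $\widetilde{\mathbf{B}}^{\mathrm{up},w}$, giving the $\mathcal{A}$-form statement) together with Proposition \ref{prop:clstwist} and the classical factorization $\eta_{w}=\gamma_{w}\circ\pi_{w}|_{N_{-}^{w}}$, which is exactly your comparison of the $q=1$ specializations of $\iota_{w}$ and $\gamma_{w,q}$ with $(\pi_{w}|_{N_{-}^{w}})^{\ast}$ and $\gamma_{w}^{\ast}$. The bookkeeping you flag (compatibility of the $\mathcal{A}$-forms and of the quantum minors with their classical counterparts under Corollary \ref{c:specializationlocal}) is likewise left implicit in the paper and poses no obstruction.
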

\begin{defn}
Let $w\in W$. Then we call the $\mathbb{Q}(q)$-algebra automorphism
$\eta_{w,q}\colon\Aq[N_{-}^{w}]\to\Aq[N_{-}^{w}]$ \emph{a twist automorphism} on the quantum unipotent cell $N_{-}^{w}$. 
\end{defn}
\begin{rem}
\label{r:minlambda} In order to apply quantum twist automorphisms
to a dual canonical basis element $[\Gup(\tilde{b})]$, $\tilde{b}\in\mathscr{B}(\infty)$,
we have to find $\lambda\in P_{+}$ and $b\in\mathscr{B}(\lambda)$
such that $\Gup(\tilde{b})=D_{\Gup_{\lambda}(b),u_{\lambda}}=\Gup(\overline{\jmath}_{\lambda}(b))$.
By Proposition \ref{p:jlambdaimage}, we can take $\lambda$ as $\lambda_{\tilde{b}}:=\sum_{i\in I}\varepsilon_{i}^{\ast}(\tilde{b})\varpi_{i}$.
Note that $\lambda_{\tilde{b}}$ is ``minimal'' in an appropriate
sense. 
\end{rem}
\section{Quantum twist automorphisms and quantum cluster algebras}\label{sec:GLS}

\label{s:GLS} In this section, we consider an additive categorification
of the twist automorphism $\eta_{w,q}$ on a quantum unipotent cell
$\Aq[N_{-}^{w}]$ in the sense of Gei\ss-Leclerc-Schr\"oer. When $\mathfrak{g}$
is symmetric, Gei\ss-Leclerc-Schr\"oer \cite{MR2833478} obtained a categorification
of the twist automorphism $\eta_{w}^{\ast}$ on the coordinate algebra
of a unipotent cell $N_{-}^{w}$ (Proposition \ref{p:GLStwist}).
They used subcategories $\mathcal{C}_{w}$, introduced by Buan-Iyama-Reiten-Scott
\cite{MR2521253}, of the module category of the preprojective algebra
$\Pi$ corresponding to the Dynkin diagram for $\mathfrak{g}$. Gei\ss-Leclerc-Schr\"oer
\cite{MR3090232} have also shown that the quantum unipotent subgroup
$\Aq[N_{-}(w)]$ is isomorphic to a certain quantum cluster algebra
$\mathscr{A}_{\mathbb{Q}(q)}(\mathcal{C}_{w})$, which is determined
by data of $\mathcal{C}_{w}$ (Proposition \ref{p:GLSqclus}). Combining
these results, we obtain a categorification of the twist automorphism
$\eta_{w,q}$ (Theorem \ref{t:qGLS}). See also Corollary \ref{c:clustermonom}.

In this section, we always consider the case that $\mathfrak{g}$
is symmetric. We assume that $(\alpha_{i},\alpha_{i})=2$ for all
$i\in I$, and thus $q_{i}=q$ for all $i\in I$.
\begin{notation}
\label{n:index} For $m,m'\in\mathbb{Z}_{\geq0}$ with $m\leq m'$,
set $[m,m']:=\{k\in\mathbb{Z}\mid m\leq k\leq m'\}$. 
\end{notation}

\subsection{Quantum cluster algebras}

In this subsection, we briefly review quantum cluster algebras. The
main references are \cite{MR2146350} and \cite{MR3090232}.
\begin{defn}
\label{d:qcluster} Let $n,\ell$ be positive integers such that $n\leq\ell$.
Let $\Lambda=(\lambda_{ij})_{i,j\in[1,\ell]}$ be a skew-symmetric
integer matrix. The skew-symmetric integer matrix $\Lambda$ determines
a skew-symmetric $\mathbb{Z}$-bilinear form $\mathbb{Z}^{\ell}\times\mathbb{Z}^{\ell}\to\mathbb{Z}$
by $\Lambda(\bm{e}_{i},\bm{e}_{j})=\lambda_{ij}$ for $i,j\in[1,\ell]$,
denoted also by $\Lambda$. Here $\{\bm{e}_{i}\mid i\in[1,\ell]\}$
denotes the standard basis of $\mathbb{Z}^{\ell}$. \emph{The based
quantum torus $\mathcal{T}(=\mathcal{T}(\Lambda))$ associated with
$\Lambda$ }is the $\mathbb{Q}[q^{\pm1/2}]$-algebra defined as follows:
as a $\mathbb{Q}[q^{\pm1/2}]$-module $\mathcal{T}$ is free and has
a $\mathbb{Q}[q^{\pm1/2}]$-basis $\{X^{\bm{a}}\mid\bm{a}\in\mathbb{Z}^{\ell}\}$.
The multiplication is defined by 
\[
X^{\bm{a}}X^{\bm{b}}=q^{\Lambda(a,b)/2}X^{\bm{a}+\bm{b}}
\]
for $\bm{a},\bm{b}\in\mathbb{Z}^{\ell}$. Then 
\begin{itemize}
\item $\mathcal{T}$ is an associative algebra, 
\item $X^{\bm{a}}X^{\bm{b}}=q^{\Lambda(\bm{a},\bm{b})}X^{\bm{b}}X^{\bm{a}}$
for $\bm{a},\bm{b}\in\mathbb{Z}^{\ell}$, 
\item $X^{\bm{0}}=1$ and $(X^{\bm{a}})^{-1}=X^{-\bm{a}}$ for $\bm{a}\in\mathbb{Z}^{\ell}$. 
\end{itemize}
The based quantum torus $\mathcal{T}$ is contained in its skew-field
of fractions $\mathcal{F}(=\mathcal{F}(\Lambda))$ \cite[Appendix A]{MR2146350}.
Note that $\mathcal{F}$ is a $\mathbb{Q}(q^{1/2})$-algebra. Write
$X_{i}:=X^{\bm{e}_{i}}$ for $i\in[1,\ell]$. 

Next we define an important operation, called \emph{mutation}. Let
$\widetilde{B}=(b_{ij})_{i\in[1,\ell],j\in[1,\ell-n]}$ be an $\ell\times(\ell-n)$
integer matrix. The submatrix $B=(b_{ij})_{i,j\in[1,\ell-n]}$ of
$\widetilde{B}$ is called \emph{the principal part of $\widetilde{B}$}.
The pair $(\Lambda,\widetilde{B})$ is said to be \emph{compatible}
if, for $i\in[1,\ell]$ and $j\in[1,\ell-n]$, 
\[
\sum_{k=1}^{\ell}b_{kj}\lambda_{ki}=\delta_{ij}d_{j}\;\text{for some}\;d_{j}\in\mathbb{Z}_{>0}.
\]
Note that, when $(\Lambda,\widetilde{B})$ is compatible, $\widetilde{B}$
has full rank $\ell-n$ and its principal part $B=(b_{ij})_{i,j\in[1,\ell-n]}$
is skew-symmetrizable \cite[Proposition 3.3]{MR2146350}. We will
assume that $B$ is skew-symmetric.

For $k\in[1,\ell-n]$, define $E^{(k)}=(e_{ij})_{i,j\in[1,\ell]}$
and $F^{(k)}=(f_{ij})_{i,j\in[1,\ell-n]}$ as follows:
\begin{align*}
e_{ij}=\begin{cases}
\delta_{i,j} & \text{if}\;j\neq k,\\
-1 & \text{if}\;i=j=k,\\
\max(0,-b_{ik}) & \text{if}\;i\neq j=k,
\end{cases} &  & f_{ij}=\begin{cases}
\delta_{i,j} & \text{if}\;i\neq k,\\
-1 & \text{if}\;i=j=k,\\
\max(0,b_{kj}) & \text{if}\;i=k\neq j.
\end{cases}
\end{align*}
Set 
\begin{align*}
\mu_{k}(\Lambda)=(E^{(k)})^{T}\Lambda E^{(k)} &  & \mu_{k}(\widetilde{B})=E^{(k)}\widetilde{B}F^{(k)}.
\end{align*}
Then $\mu_{k}(\Lambda,\widetilde{B}):=(\mu_{k}(\widetilde{B}),\mu_{k}(\Lambda))$
is again compatible \cite[Proposition 3.4]{MR2146350}. It is said
that\emph{ $\mu_{k}(\Lambda,\widetilde{B})$ is obtained from $(\Lambda,\widetilde{B})$
by the mutation in direction $k$.} Note that $\mu_{k}(\mu_{k}(\Lambda,\widetilde{B}))=(\Lambda,\widetilde{B})$.

The pair $\mathscr{S}=(\{X_{i}\}_{i\in[1,\ell]},\widetilde{B},\Lambda)$
is called\emph{ a quantum seed in $\mathcal{F}$,} and $\{X_{i}\}_{i\in[1,\ell]}$
is called\emph{ the quantum cluster of $\mathscr{S}$. }For $k\in[1,\ell-n]$,
define $\mu_{k}(\{X_{i}\}_{i\in[1,\ell]})=\{X'_{i}\}_{i\in[1,\ell]}\subset\mathcal{F}\setminus\{0\}$
by 
\begin{itemize}
\item $X'_{i}=X_{i}$ if $i\neq k$, 
\item $X'_{k}=X^{-\bm{e}_{k}+\sum_{j;b_{jk}>0}b_{jk}\bm{e}_{j}}+X^{-\bm{e}_{k}-\sum_{j;-b_{jk}>0}b_{jk}\bm{e}_{j}}$. 
\end{itemize}
Then there is an injective $\mathbb{Q}[q^{\pm1/2}]$-algebra homomorphisms
$\mathcal{T}(\mu_{k}(\Lambda))\to\mathcal{F}(\Lambda)$ given by $X_{i}^{\pm1}\mapsto(X'_{i})^{\pm1}$
($i\in[1,\ell]$). Moreover there exist a basis $\{\bm{c}_{i}\}_{i\in[1,\ell]}$
of $\mathbb{Z}^{\ell}$ and a $\mathbb{Q}(q^{1/2})$-algebra automorphism
$\tau\colon\mathcal{F}(\Lambda)\to\mathcal{F}(\Lambda)$ such that
$\tau(X^{\bm{c}_{i}})=X'_{i}$ for $i\in[1,\ell]$ \cite[Proposition 4.7]{MR2146350}.
Hence the map above is extended to the isomorphism $\mathcal{F}(\mu_{k}(\Lambda))\to\mathcal{F}(\Lambda)$.
Through this isomorphism, we identify $\mathcal{F}(\mu_{k}(\Lambda))$
with $\mathcal{F}(\Lambda)$, and henceforth always write $\mathcal{F}$
for this skew-field. Write 
\[
\mu_{k}(\mathscr{S}):=(\mu_{k}(\{X_{i}\}_{i\in[1,\ell]}),\mu_{k}(\widetilde{B}),\mu_{k}(\Lambda))
\]
and this is called \emph{a quantum seed obtained from the mutation
of $\mathscr{S}$ in direction $k$.} Note that $\mu_{k}(\mu_{k}(\mathscr{S}'))=\mathscr{S}'$
for any quantum seed $\mathscr{S}'$ and $k\in[1,\ell-n]$. By the
argument above, we can consider the iterated mutations in arbitrary
various directions $k\in[1,\ell-n]$. The subset $\{X_{i}\mid i\in[\ell-n+1,\ell]\}$,
called the set of \emph{frozen variables}, is contained in the quantum
cluster of an arbitrary seed obtained by iterated mutations of $\mathscr{S}$.

\emph{The quantum cluster algebra} $\mathscr{A}_{q^{\pm1/2}}(\mathcal{\mathscr{S}})$
is defined as the $\mathbb{Q}[q^{\pm1/2}]$-subalgebra of $\mathcal{F}$
generated by the union of the quantum clusters of all quantum seeds
obtained by iterated mutations of $\mathscr{S}$. An element $M\in\mathscr{A}_{q^{\pm1/2}}(\mathscr{S})$
is called \emph{a quantum cluster monomial} if there exists a quantum
cluster $\{X'_{i}=(X')^{\bm{e}_{i}}\}_{i\in[1,\ell]}$ of a quantum
seed obtained by iterated mutations of $\mathscr{S}$ such that $M=(X')^{\bm{a}}$
for some $\bm{a}\in\mathbb{Z}_{\geq0}^{\ell}$. 
\end{defn}
\begin{prop}[{{\cite[Corollary 5.2]{MR2146350}}}]
\label{p:laurentpheno} The quantum cluster algebra $\mathscr{A}_{q^{\pm1/2}}(\mathscr{S})$
is contained in the based quantum torus generated by the quantum cluster
of an arbitrary quantum seed obtained by iterated mutations of $\mathscr{S}$.
\end{prop}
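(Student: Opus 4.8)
The plan is to prove Proposition~\ref{p:laurentpheno} --- the quantum Laurent phenomenon --- by adapting the Fomin--Zelevinsky proof of its classical counterpart, in three steps. \emph{First (reductions):} mutation of quantum seeds is an involution ($\mu_k\circ\mu_k=\operatorname{id}$), so the collection of quantum seeds obtained from $\mathscr{S}$ by iterated mutation is unchanged if $\mathscr{S}$ is replaced by any one of those seeds; hence $\mathscr{A}_{q^{\pm1/2}}(\mathscr{S}')=\mathscr{A}_{q^{\pm1/2}}(\mathscr{S})$ for every such $\mathscr{S}'$, and the assertion for an arbitrary $\mathscr{S}'$ reduces to the single special case $\mathscr{A}_{q^{\pm1/2}}(\mathscr{S})\subseteq\mathcal{T}(\mathscr{S})$ with $\mathscr{S}$ playing the role of a fixed ``root'' seed. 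Since $\mathcal{T}(\mathscr{S})$ is a $\mathbb{Q}[q^{\pm1/2}]$-subalgebra of $\mathcal{F}$ and $\mathscr{A}_{q^{\pm1/2}}(\mathscr{S})$ is generated, as a $\mathbb{Q}[q^{\pm1/2}]$-algebra, by the quantum cluster variables of all seeds reachable from $\mathscr{S}$, it suffices to show that each such cluster variable lies in $\mathcal{T}(\mathscr{S})$.

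\emph{Second (induction along a mutation path via a quantum caterpillar lemma):} fix a quantum cluster variable $Z$ together with a mutation sequence $\mathscr{S}=\mathscr{S}_0\xrightarrow{k_1}\mathscr{S}_1\xrightarrow{k_2}\cdots\xrightarrow{k_m}\mathscr{S}_m$ for which $Z$ is a variable of $\mathscr{S}_m$, and induct on $m$. For $m=0$ there is nothing to prove; for $m=1$ the defining formula for $\mu_{k_1}$ writes the new variable as $X^{\bm a}+X^{\bm b}$ with $\bm a,\bm b\in\mathbb{Z}^{\ell}$, which is visibly an element of $\mathcal{T}(\mathscr{S})$. For $m\geq 2$ the cluster of $\mathscr{S}_m$ agrees with that of $\mathscr{S}_{m-1}$ outside position $k_m$, so by the inductive hypothesis we may assume $Z=X'_{k_m}$ is the genuinely new variable; and if $k_m=k_{m-1}$ then $\mathscr{S}_m=\mathscr{S}_{m-2}$ and we are done by induction. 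In the remaining essential case $j:=k_{m-1}\neq k:=k_m$ I would invoke the quantum form of the Fomin--Zelevinsky \emph{caterpillar lemma}: its purpose is precisely to transport the Laurent property of $X'_k$ back along the path to the root $\mathscr{S}$, by reducing it to the Laurent property of the finitely many cluster variables lying on the ``caterpillar'' spanned by alternating $j$- and $k$-mutations, plus one divisibility statement. Explicitly, one inductive step amounts to composing the exchange relation for $X'_k$ (written in the variables of $\mathscr{S}_{m-1}$) with the exchange relations expressing the variables of $\mathscr{S}_{m-1}$ over those of $\mathscr{S}_{m-2}$; this produces $X'_k$ as a quotient $P/X_j^{\,b}$, where $X_j$ is the variable of $\mathscr{S}_{m-2}$ mutated at step $m-1$, $b\in\mathbb{Z}_{\geq0}$, and $P$ lies in the based quantum torus attached to $\mathscr{S}_{m-2}$. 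The caterpillar lemma then reduces everything to checking that $P$ is divisible by $X_j^{\,b}$ there, the remaining hypotheses of the lemma being straightforward to verify in the quantum setting.

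\emph{Third (the main obstacle):} the technical heart --- and the step I expect to be hardest --- is that divisibility, i.e.\ verifying that the apparent $X_j$-pole produced by composing the two exchange relations actually cancels, with every $q^{1/2}$-power tracked correctly. Classically this is Fomin--Zelevinsky's key lemma on the primeness of exchange binomials; in the quantum setting one must in addition check that normal-ordering the relevant monomials inside $\mathcal{T}$ does not obstruct the cancellation, and this is exactly where the compatibility condition $\sum_{p}b_{pj}\lambda_{pi}=\delta_{ij}d_j$ enters, forcing the two exchange monomials to recombine into a multiple of the missing power $X_j^{\,b}$. A sanity check that the cancellation must occur is provided by the specialization $q^{1/2}\mapsto 1$, which is well defined on the localization of $\mathcal{T}(\mathscr{S})$ at the cluster variables appearing on the path (they specialize to nonzero classical cluster variables) and sends $Z$ to the corresponding classical cluster variable, which lies in $\mathbb{Q}[X_1^{\pm1},\dots,X_\ell^{\pm1}]$ by the classical Laurent phenomenon of Fomin--Zelevinsky \cite{MR1887642}; turning this observation into a proof would still require controlling the $q$-powers, so in practice one carries out the quantum caterpillar computation directly, as in Berenstein--Zelevinsky \cite{MR2146350}.
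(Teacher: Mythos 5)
There is a genuine gap, and it sits exactly where you locate it yourself. The whole weight of your argument rests on ``the quantum form of the Fomin--Zelevinsky caterpillar lemma'' together with the divisibility claim that the apparent $X_j$-pole cancels in the based quantum torus of $\mathscr{S}_{m-2}$; neither is proved in your proposal, and neither can be cited off the shelf. In the classical caterpillar argument the induction is not simply ``along the path'' as you describe: one must establish the Laurent property of the relevant variables with respect to the \emph{root} seed of the caterpillar simultaneously with gcd/coprimality statements among the exchange binomials (primeness of an exchange binomial, coprimality of a cluster variable with the newly created one), and only then does the divisibility transport back to the root. Your inductive step, which works only with $\mathscr{S}_{m-1}$ and $\mathscr{S}_{m-2}$ and asserts that the compatibility condition $\sum_p b_{pj}\lambda_{pi}=\delta_{ij}d_j$ ``forces'' $P$ to be divisible by $X_j^{\,b}$, replaces the hard content by an assertion; divisibility of a sum of two monomial multiples by a power of a generator in a non-commutative torus is precisely what requires a careful lemma, and the $q=1$ specialization, as you concede, does not control the $q^{1/2}$-powers. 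Falling back on ``as in Berenstein--Zelevinsky'' does not close the gap either, because their proof of this statement does not go through a caterpillar lemma at all.

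For comparison: the paper offers no proof of Proposition \ref{p:laurentpheno}; it is quoted directly from \cite[Corollary 5.2]{MR2146350}. There the quantum Laurent phenomenon is deduced from the invariance under mutation of the quantum upper bound $\mathcal{T}(\mathscr{S})\cap\bigcap_k\mathcal{T}(\mu_k(\mathscr{S}))$ (their Theorem 5.1), an adaptation of the Berenstein--Fomin--Zelevinsky upper-bound method rather than of the original caterpillar proof; a notable feature of that route is that the coprimality hypotheses needed classically become automatic in the presence of a compatible pair. So your outline is plausible as a program, but to make it a proof you would have to formulate and prove a quantum caterpillar lemma, including the non-commutative analogues of the primeness and coprimality statements for exchange binomials, with all $q^{1/2}$-powers tracked --- or else switch to the upper-bound argument that the cited reference actually uses.
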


\subsection{Quantum cluster algebra structures on quantum unipotent subgroups
and quantum unipotent cells}

In this subsection, we review the construction of the quantum cluster
algebra structure on $\mathbf{A}_{q}\!\left[N_{-}\left(w\right)\right]$
following \cite{MR2822235,MR2833478,MR3090232}. We note that our
convention is slightly different from Gei\ss-Leclerc-Schr\"oer's one,
see Remark \ref{r:covention}.
\begin{defn}
\label{d:preproj} \emph{A finite quiver} $\mathbf{Q}=(\mathbf{Q}_{0},\mathbf{Q}_{1},s,t)$
is a datum such that 

\textup{(1)} $\mathbf{Q}_{0}$ is a finite set, called the set of
vertices, 

\textup{(2)} $\mathbf{Q}_{1}$ is a finite set, called the set of
arrows, 

\textup{(3)} $s,t\colon\mathbf{Q}_{1}\to\mathbf{Q}_{0}$ are maps,
and it is said that $a\in\mathbf{Q}_{1}$ is an arrow from $s(a)$
to $t(a)$. 

Here we take a finite quiver $\mathbf{Q}$ such that $\mathbf{Q}_{0}=I$,
$s(a)\neq t(a)$ for all $a\in\mathbf{Q}_{1}$ and $a_{ij}(:=\langle h_{i},\alpha_{j}\rangle)=-\#\{a\in\mathbf{Q}_{1}\mid s(a)=i,t(a)=j\}-\#\{a\in\mathbf{Q}_{1}\mid s(a)=j,t(a)=i\}$.
Such a quiver $\mathbf{Q}$ is called \emph{a finite quiver without
edge loops which corresponds to the symmetric generalized Cartan matrix
$A$} \cite[Subsection 2.1 and 4.1]{MR2822235}. Let $\overline{\mathbf{Q}}=(\mathbf{Q}_{0},\overline{\mathbf{Q}}_{1}:=\mathbf{Q}_{1}\coprod\mathbf{Q}_{1}^{\ast},s,t)$
be the double quiver of $\mathbf{Q}$, which is obtained from $\mathbf{Q}$
by adding to each arrow $a\in\mathbf{Q}_{1}$ an arrow $a^{\ast}\in\mathbf{Q}_{1}^{\ast}$
such that $s(a^{\ast})=t(a)$ and $t(a^{\ast})=s(a)$. Set 
\[
\Pi:=\mathbb{C}\overline{\mathbf{Q}}/\left(\sum_{a\in\mathbf{Q}_{1}}(a^{\ast}a-aa^{\ast})\right),
\]
Here $\mathbb{C}\overline{\mathbf{Q}}$ is a path algebra of $\overline{\mathbf{Q}}$,
and $\left(\sum_{a\in\mathbf{Q}_{1}}(a^{\ast}a-aa^{\ast})\right)$
is the two-sided ideal generated by $\sum_{a\in\mathbf{Q}_{1}}(a^{\ast}a-aa^{\ast})$.
This is called \emph{the preprojective algebra associated with $\mathbf{Q}$.}
Denote by $\epsilon_{i}$ the idempotent of $\Pi$ corresponding to
$i\in I$. For a finite dimensional $\Pi$-module $X$, write $\dimv X:=-\sum_{i\in I}(\dim_{\mathbb{C}}\epsilon_{i}.X)\alpha_{i}\in Q_{-}$.
Remark that we do not regard $\dimv X$ as an element of $Q_{+}$.
A finite dimensional $\Pi$-module $X$ is said to be\emph{ nilpotent
}if there exists $N\in\mathbb{Z}_{\ge0}$ such that $a_{1}\cdots a_{N}.X=0$
for any sequence $(a_{1},\dots,a_{N})\in\overline{\mathbf{Q}}_{1}^{N}$
.
\end{defn}
\begin{prop}[{{\cite[Lemma 1]{MR1781930}}}]
\label{p:CB} For any finite dimensional $\Pi$-module $X,Y$, we
have 
\[
(\dimv X,\dimv Y)=\dim_{\mathbb{C}}\Hom_{\Pi}(X,Y)+\dim_{\mathbb{C}}\Hom_{\Pi}(Y,X)-\dim_{\mathbb{C}}\Ext_{\Pi}^{1}(X,Y).
\]
\end{prop}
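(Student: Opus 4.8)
The plan is to identify both sides of the asserted equality with the Euler characteristic of one three-term complex of finite-dimensional vector spaces attached to $X$ and $Y$, in the spirit of Crawley-Boevey. First I would view a finite dimensional $\Pi$-module $X$ as a representation of $\overline{\mathbf{Q}}$ obeying the preprojective relation, write $X_i:=\epsilon_iX$, $d_i:=\dim_\mathbb{C}X_i$, $e_i:=\dim_\mathbb{C}Y_i$, and recall the standard projective presentation of $X$ over $\Pi$,
\[
\bigoplus_{i\in I}\Pi\epsilon_i\otimes_\mathbb{C}X_i\xrightarrow{\ \sigma\ }\bigoplus_{a\in\overline{\mathbf{Q}}_1}\Pi\epsilon_{t(a)}\otimes_\mathbb{C}X_{s(a)}\xrightarrow{\ \tau\ }\bigoplus_{i\in I}\Pi\epsilon_i\otimes_\mathbb{C}X_i\xrightarrow{\ \mu\ }X\to 0,
\]
whose maps are built from the arrow actions on $X$ and the preprojective relation. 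This sequence is exact at the two right-hand terms and, as a standard feature of preprojective algebras, also exact at the middle term, i.e.\ $\operatorname{im}\sigma=\ker\tau$ (it fails to be exact at the leftmost term precisely in Dynkin type, but that will not be needed).

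Applying $\Hom_\Pi(-,Y)$ to the three projective terms and using $\Hom_\Pi(\Pi\epsilon_j\otimes_\mathbb{C}V,Y)\cong\Hom_\mathbb{C}(V,Y_j)$, I obtain the complex
\[
C^\bullet(X,Y):\quad\bigoplus_{i\in I}\Hom_\mathbb{C}(X_i,Y_i)\xrightarrow{\ \tau^\ast\ }\bigoplus_{a\in\overline{\mathbf{Q}}_1}\Hom_\mathbb{C}(X_{s(a)},Y_{t(a)})\xrightarrow{\ \sigma^\ast\ }\bigoplus_{i\in I}\Hom_\mathbb{C}(X_i,Y_i).
\]
Its Euler characteristic is $2\sum_{i\in I}d_ie_i-\sum_{a\in\mathbf{Q}_1}(d_{s(a)}e_{t(a)}+d_{t(a)}e_{s(a)})$, and a short computation, using $(\alpha_i,\alpha_i)=2$ so that $(\alpha_i,\alpha_j)=\langle h_i,\alpha_j\rangle$, together with $\langle h_i,\alpha_j\rangle=-\#\{a\in\mathbf{Q}_1\mid\{s(a),t(a)\}=\{i,j\}\}$ for $i\neq j$ (Definition \ref{d:rootdatum}\,(b) and Definition \ref{d:preproj}), identifies this number with $(\dimv X,\dimv Y)$. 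From the presentation, $H^0(C^\bullet(X,Y))=\ker\tau^\ast=\Hom_\Pi(X,Y)$; and since $\operatorname{im}\sigma=\ker\tau$, a map $P_1\to Y$ lies in $\ker\sigma^\ast$ iff it vanishes on $\ker\tau$, which is exactly the degree-one cocycle condition for a genuine projective resolution, so $H^1(C^\bullet(X,Y))=\ker\sigma^\ast/\operatorname{im}\tau^\ast=\Ext^1_\Pi(X,Y)$.

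The remaining, and crucial, point is the functorial duality $H^2(C^\bullet(X,Y))=\operatorname{coker}\sigma^\ast\cong\Hom_\mathbb{C}(\Hom_\Pi(Y,X),\mathbb{C})$. I would prove it via the trace pairing $((\phi_i)_i,(\psi_i)_i)\mapsto\sum_{i\in I}\operatorname{tr}(\phi_i\psi_i)$, which is a perfect pairing between the degree-$2$ term of $C^\bullet(X,Y)$ and the degree-$0$ term of $C^\bullet(Y,X)$; the content of the lemma is that under it the subspace $\operatorname{im}\sigma^\ast$ of the former is exactly the annihilator of $\ker(\tau^\ast\ \text{for}\ (Y,X))=\Hom_\Pi(Y,X)$ in the latter. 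This is checked directly from the explicit formulas for $\sigma$ and $\tau$: the signs in the preprojective relation together with the cyclicity of the trace make $\sigma$ and $\tau$ adjoint to one another after interchanging the roles of $X$ and $Y$. Granting this,
\[
(\dimv X,\dimv Y)=\dim H^0-\dim H^1+\dim H^2=\dim\Hom_\Pi(X,Y)+\dim\Hom_\Pi(Y,X)-\dim\Ext^1_\Pi(X,Y),
\]
which is the claim. The main obstacle is precisely this last step: carefully unwinding the definitions of $\sigma$, $\tau$ and the sign convention to see that the trace pairing identifies $\operatorname{im}\sigma^\ast$ with $\Hom_\Pi(Y,X)^\perp$, which is the heart of the cited lemma of Crawley-Boevey; everything else is bookkeeping with dimensions and the bilinear form.
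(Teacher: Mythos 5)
The paper offers no proof of this proposition at all—it is quoted directly from Crawley-Boevey's Lemma 1—and your argument is essentially a reconstruction of that cited proof: the standard presentation $P_2\to P_1\to P_0\to X\to 0$ over $\Pi$ (exact at $P_1,P_0,X$), the Euler-characteristic bookkeeping that matches $(\dimv X,\dimv Y)$, the identifications $H^0=\Hom_{\Pi}(X,Y)$ and $H^1=\Ext^1_{\Pi}(X,Y)$, and the trace-pairing isomorphism $\operatorname{coker}\sigma^{\ast}\cong\Hom_{\Pi}(Y,X)^{\ast}$. So the proposal is correct and takes the same route as the source the paper relies on; the one step you describe without writing out in full (the adjointness of $\sigma$ and $\tau$ under the trace pairing, giving $\operatorname{im}\sigma^{\ast}=\Hom_{\Pi}(Y,X)^{\perp}$) is exactly the computation performed in Crawley-Boevey's proof, and your outline of it is accurate.
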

\begin{rem}
For any finite dimensional $\Pi$-modules $X,Y$, we have 
\[
\dim_{\mathbb{C}}\Ext_{\Pi}^{1}(X,Y)=\dim_{\mathbb{C}}\Ext_{\Pi}^{1}(Y,X)
\]
by Proposition \ref{p:CB}.
\end{rem}
A finite dimensional nilpotent $\Pi$-module $X$ determines an element
$\varphi_{X}$ of $\mathbb{C}[\boldsymbol{N}_{-}]=\mathbf{U}\left(\mathfrak{n}_{-}\right)_{\mathrm{gr}}^{*}$
through Lusztig's construction of the universal enveloping algebra
$\mathbf{U}\left(\mathfrak{n}_{-}\right)$ as a space $\mathcal{M}$
consisting of certain constructible functions with convolution product
\cite{MR1758244}. There is a short summary, for instance, in \cite[Subsection 2.2]{MR2822235}.
However we remark that, in this paper, we consider the convolution
product on $\mathcal{M}$ opposite to the one in \cite[Subsection 2.2]{MR2822235}.
See also Remark \ref{r:covention}. The following are important properties
of $\varphi_{X}$. 
\begin{prop}[\cite{MR1758244,MR2144987,MR2360317}]
\label{p:phimap} Let $X,Y$ be finite dimensional nilpotent $\Pi$-modules.
The following hold:

\textup{\textup{(1)} $\wt\varphi_{X}=\dimv X$. }

\textup{\textup{(2)}} $\varphi_{X}\varphi_{Y}=\varphi_{X\oplus Y}$. 

\textup{\textup{(3)}} Suppose that $\dim_{\mathbb{C}}\Ext_{\Pi}^{1}(X,Y)=1$.
Write non-split short exact sequences as 
\begin{align*}
0\to X\to Z_{1}\to Y\to0 &  &  & 0\to Y\to Z_{2}\to X\to0.
\end{align*}
 Then we have $\varphi_{X}\varphi_{Y}=\varphi_{Z_{1}}+\varphi_{Z_{2}}$.
\end{prop}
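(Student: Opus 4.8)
I would recall Lusztig's constructible‑function realization and then treat the three assertions in turn. Write $\Lambda_{\mathbf{d}}$ for Lusztig's variety of nilpotent $\Pi$‑module structures of dimension vector $\mathbf{d}$, and $\mathcal{M}=\bigoplus_{\mathbf{d}}\mathcal{M}_{\mathbf{d}}$ for the associated convolution algebra of $\GL_{\mathbf{d}}$‑invariant constructible functions, with Lusztig's algebra isomorphism $\Theta\colon\mathbf{U}(\mathfrak{n}_{-})\xrightarrow{\sim}\mathcal{M}$ \cite{MR1758244} (with the convolution product fixed as in the paragraph preceding Proposition~\ref{p:phimap}). For a finite dimensional nilpotent $\Pi$‑module $X$, the element $\varphi_X\in\mathbb{C}\left[\boldsymbol{N}_{-}\right]=\mathbf{U}\left(\mathfrak{n}_{-}\right)_{\mathrm{gr}}^{*}$ is the functional determined by $\langle\varphi_X,y\rangle=(\Theta y)(x_X)$, where $x_X\in\Lambda_{\dimv X}$ is any point isomorphic to $X$; concretely, for a monomial $y=f_{i_1}\cdots f_{i_n}$ one has $\langle\varphi_X,y\rangle=\chi\!\left(\Phi_X(i_1,\dots,i_n)\right)$, the Euler characteristic of the projective variety of composition series $0=X_0\subset X_1\subset\cdots\subset X_n=X$ with $X_k/X_{k-1}$ simple at the vertex $i_k$. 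Then (1) is immediate: $\Phi_X(i_1,\dots,i_n)=\emptyset$ unless $\alpha_{i_1}+\cdots+\alpha_{i_n}=-\dimv X$, since the simple subquotients of a composition series have dimension vectors summing to that of $X$; hence $\varphi_X$ annihilates all monomials of weight $\neq\dimv X$, so $\wt\varphi_X=\dimv X$ in $\mathbb{C}\left[\boldsymbol{N}_{-}\right]$.

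For (2), the product on $\mathbf{U}\left(\mathfrak{n}_{-}\right)_{\mathrm{gr}}^{*}$ is dual to the (cocommutative) coproduct of $\mathbf{U}(\mathfrak{n}_{-})$, so $\langle\varphi_X\varphi_Y,f_{i_1}\cdots f_{i_n}\rangle=\sum_{S\subseteq\{1,\dots,n\}}\chi\!\left(\Phi_X(\mathbf{i}_S)\right)\chi\!\left(\Phi_Y(\mathbf{i}_{S^{c}})\right)$, where $\mathbf{i}_S$ denotes the subword indexed by $S$. On the other hand, sending a composition series $(X_k)$ of $X\oplus Y$ to the pair consisting of $(X_k\cap X)$ in $X$ and $(\Image(X_k\to Y))$ in $Y$ defines a morphism $\Phi_{X\oplus Y}(\mathbf{i})\to\bigsqcup_S\Phi_X(\mathbf{i}_S)\times\Phi_Y(\mathbf{i}_{S^{c}})$ whose fibres are iterated affine bundles: step by step, the possible lifts of a submodule form a torsor under a $\Hom_{\Pi}$‑space. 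Since Euler characteristic is preserved by affine bundles, $\chi(\Phi_{X\oplus Y}(\mathbf{i}))=\sum_S\chi(\Phi_X(\mathbf{i}_S))\chi(\Phi_Y(\mathbf{i}_{S^{c}}))=\langle\varphi_X\varphi_Y,f_{\mathbf{i}}\rangle$, i.e.\ $\varphi_{X\oplus Y}=\varphi_X\varphi_Y$. This is the multiplicativity of Lusztig's construction \cite{MR1758244}.

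Assertion (3) is the rank‑one case of the Geiß–Leclerc–Schröer multiplication formula \cite{MR2144987,MR2360317}, and is the substantive step. The strategy parallels (2): compute $\langle\varphi_X\varphi_Y,f_{\mathbf{i}}\rangle$ through the comultiplication and reinterpret it, via Lusztig's restriction functor on $\mathcal{M}$, as the Euler characteristic of a variety $\mathcal{V}_{\mathbf{i}}$ parametrising composition series of type $\mathbf{i}$ of an ambient module together with a submodule having subquotients isomorphic to $X$ and $Y$. Stratifying $\mathcal{V}_{\mathbf{i}}$ by the isomorphism type $[L]$ of the ambient module and applying the affine‑bundle argument on each stratum yields $\varphi_X\varphi_Y=\sum_{[L]}c_L\varphi_L$, where $c_L$ is the Euler characteristic of a projectivised $\Ext$‑variety recording the extensions $0\to X\to L\to Y\to 0$ and $0\to Y\to L\to X\to 0$. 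When $\dim_{\mathbb{C}}\Ext^1_{\Pi}(X,Y)=1$ — hence also $\dim_{\mathbb{C}}\Ext^1_{\Pi}(Y,X)=1$ by the Remark following Proposition~\ref{p:CB} — these parameter spaces reduce to single points, so the only surviving coefficients are $c_{Z_1}=c_{Z_2}=1$, giving $\varphi_X\varphi_Y=\varphi_{Z_1}+\varphi_{Z_2}$ (equivalently $\varphi_{X\oplus Y}=\varphi_{Z_1}+\varphi_{Z_2}$ by (2)). The main obstacle is exactly this bookkeeping: checking that the stratification contributes precisely these coefficients and that no $\varphi_{X\oplus Y}$‑term survives. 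This relies on the homological structure of $\Pi$ — Crawley‑Boevey's formula (Proposition~\ref{p:CB}) and the attendant $\Ext$‑symmetry, together with the fact that $\Pi$ has global dimension at most $2$ — and is carried out in detail in \cite{MR2360317}, which I would invoke for the full argument.
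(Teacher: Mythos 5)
Your proposal is fine: the paper does not prove Proposition \ref{p:phimap} but quotes it from \cite{MR1758244,MR2144987,MR2360317}, and your sketch correctly reconstructs the standard arguments behind those references — evaluation of $\varphi_X$ against monomials by Euler characteristics of composition-series varieties for (1), duality with the cocommutative coproduct plus the affine-fibre splitting of flags of $X\oplus Y$ for (2), and the Geiß-Leclerc-Schröer multiplication formula specialized to $\dim_{\mathbb{C}}\Ext_{\Pi}^{1}(X,Y)=1$ for (3). Since you ultimately invoke \cite{MR2360317} for the substantive step, your treatment is essentially the same as the paper's (a citation of these results).
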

\begin{notation}
Let $w\in W$ and fix $\bm{i}=(i_{1},\dots,i_{\ell})\in I(w)$ . Then,
for $k=1,\dots,\ell$, we set 
\begin{align*}
k^{+} & :=\min(\{\ell+1\}\cup\{k+1\leq j\leq\ell\mid i_{j}=i_{k}\}),\\
k^{-} & :=\max(\{0\}\cup\{1\leq j\leq k-1\mid i_{j}=i_{k}\}),\\
k^{\mathrm{max}} & :=\max\{1\leq j\leq\ell\mid i_{j}=i_{k}\},\\
k^{\mathrm{min}} & :=\min\{1\leq j\leq\ell\mid i_{j}=i_{k}\}.
\end{align*}

Moreover, set $I_{w}:=\{i\in I\mid i=i_{k}\ \text{for some }k=1,\dots,\ell\}$
. Then we can easily check that $I_{w}$ does not depend on the choice
of $\bm{i}$. 
\end{notation}
\begin{defn}
\label{d:C_w} Let $S_{i}$ be the (simple) $\Pi$-module such that
$\dimv S_{i}=-\alpha_{i}$ for $i\in I$. For a $\Pi$-module $X$
and $i\in I$, define $\soc_{i}(X)\subset X$ by the sum of all submodules
of $X$ isomorphic to $S_{i}$. For a sequence $(i_{1},\dots,i_{k})\in I^{k}$
($k\in\mathbb{Z}_{>0}$), there exists a unique chain 
\[
X\supset X_{0}\supset X_{1}\supset X_{2}\supset\cdots\supset X_{k}=0
\]
of submodules of $X$ such that $X_{j-1}/X_{j}\simeq\soc_{i_{j}}(X/X_{j})$
for $j=1,\dots,k$. Set $\soc_{(i_{1},\dots,i_{k})}(X):=X_{0}$. For
$i\in I$, denote by $\hat{I}_{i}$ the indecomposable injective $\Pi$-module
with socle $S_{i}$. Let $w\in W$ and $\bm{i}=(i_{1},\dots,i_{\ell})\in I(w)$.
For $k=1,\dots,\ell$, set 
\[
V_{\bm{i},k}:=\soc_{(i_{1},\dots,i_{k})}(\hat{I}_{i_{k}}).
\]
Set $V_{\bm{i}}:=\bigoplus_{k=1,\dots,\ell}V_{\bm{i},k}$. Define
$\mathcal{C}_{w}$ as a full subcategory of the category of $\Pi$-modules
consisting of all $\Pi$-modules $X$ such that there exist $t\in\mathbb{Z}_{>0}$
and a surjective homomorphism $V_{\bm{i}}^{\oplus t}\to X$. Then
it is known that $\mathcal{C}_{w}$ does not depend on the choice
of $\bm{i}\in I(w)$. Note that all objects of $\mathcal{C}_{w}$
are nilpotent $\Pi$-modules. An object $C\in\mathcal{C}_{w}$ is
called\emph{ $\mathcal{C}_{w}$-projective} (resp.~\emph{$\mathcal{C}_{w}$-injective})
if $\Ext_{\Pi}^{1}(C,X)=0$ (resp.~$\Ext_{\Pi}^{1}(X,C)=0$) for
all $X\in\mathcal{C}_{w}$. The category $\mathcal{C}_{w}$ is closed
under extension and is Frobenius. In particular, an object $X\in\mathcal{C}_{w}$
is $\mathcal{C}_{w}$-projective if and only if it is $\mathcal{C}_{w}$-injective.
An object $T$ of $\mathcal{C}_{w}$ is called\emph{ $\mathcal{C}_{w}$-maximal
rigid} if $\Ext_{\Pi}^{1}(T\oplus X,X)=0$ with $X\in\mathcal{C}_{w}$
implies that $X$ is isomorphic to a direct summand of a direct sum
of copies of $T$. A $\Pi$-module $M$ is called \emph{basic} if
it is written as a direct sum of pairwise non-isomorphic indecomposable
modules. Then, in fact, $V_{\bm{i}}$ is a basic $\mathcal{C}_{w}$-maximal
rigid module and $V_{\bm{i},k^{\mathrm{max}}}$ is the $\mathcal{C}_{w}$-projective-injective
module with socle $S_{i_{k}}$ for $k=1,\dots,\ell$. See \cite{MR2521253}
for more details, and \cite[Subsection 2.4]{MR2822235} for more detailed
summaries.

Let $T$ be a basic $\mathcal{C}_{w}$-maximal rigid module and $T=T_{1}\oplus\cdots\oplus T_{\ell}$
its indecomposable decomposition. From now on, we write $I_{w}=[1,n]$
for simplicity, and always number indecomposable summands of $T$
so that $T_{\ell-n+i}$, $i\in I_{w}$ is the $\mathcal{C}_{w}$-projective-injective
module with socle $S_{i}$. Note that this labelling is different
from the labelling $V_{\bm{i}}=\bigoplus_{k\in[1,\ell]}V_{\bm{i},k}$.
Let $\Gamma_{T}$ be the Gabriel quiver of $A_{T}:=\End_{\Pi}(T)^{\mathrm{op}}$,
that is, the vertex set of $\Gamma_{T}$ is $[1,\ell]$ and $d_{ij}:=\dim_{\mathbb{C}}\Ext_{A_{T}}^{1}(S_{T_{i}},S_{T_{j}})$
arrows from $i$ to $j$, where $S_{T_{i}}$ is the head of a (projective)
$A_{T}$-module $\Hom_{\Pi}(T,T_{i})$. Define $\widetilde{B}_{T}=(b_{ij})_{i\in[1,\ell],j\in[1,\ell-n]}$
by $b_{ij}:=d_{ji}-d_{ij}$. The following proposition is an essential
results for the additive categorification of cluster algebras. 
\end{defn}
\begin{prop}[\cite{MR2521253,GLS:uniparXiv}]
\label{p:mutcat} In the setting above, the following hold: 

\textup{\textup{(1)}} $\ell=\ell(w)$.

\textup{\textup{(2)} }For any $k\in[1,\ell-n]$, there exists a unique
indecomposable $\Pi$-module in $\mathcal{C}_{w}$ such that $T_{k}^{\ast}\not\simeq T_{k}$
and $(T/T_{k})\oplus T_{k}^{\ast}$ is a basic $\mathcal{C}_{w}$-maximal
rigid module. This basic $\mathcal{C}_{w}$-maximal rigid module is
denoted by $\mu_{T_{k}}(T)$ and called \emph{the mutation of $T$
in direction $T_{k}$.}

\textup{\textup{(3)} }For any $k\in[1,\ell-n]$, $\mu_{k}(\widetilde{B}_{T})=\widetilde{B}_{\mu_{T_{k}}(T)}$.

\textup{\textup{(4)} }For any $k\in[1,\ell-n]$, we have $\dim_{\mathbb{C}}\Ext_{\Pi}^{1}(T_{k},T_{k}^{\ast})=1$,
and there exists non-split exact sequences 
\begin{align*}
0\to T_{k}\to T_{-}\to T_{k}^{\ast}\to0 &  &  & 0\to T_{k}^{\ast}\to T_{+}\to T_{k}\to0
\end{align*}
such that $T_{-}\simeq\bigoplus_{j;b_{jk}<0}T_{j}^{\oplus(-b_{jk})}$
and $T_{+}\simeq\bigoplus_{j;b_{jk}>0}T_{j}^{\oplus b_{jk}}$.
\end{prop}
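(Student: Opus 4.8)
The plan is to observe that Proposition \ref{p:mutcat} is not really proved from scratch here: parts (1)--(4) are the basic package of Buan--Iyama--Reiten--Scott's mutation theory for cluster-tilting (= maximal rigid) objects in the Frobenius category $\mathcal{C}_w$, so the reduction I would carry out is to the general machinery of Hom-finite stably $2$-Calabi--Yau Frobenius categories together with the structural facts about $\mathcal{C}_w$ established in \cite{MR2521253}. First I would recall the needed input: $\Pi$ is $2$-Calabi--Yau (Proposition \ref{p:CB} is the incarnation of this that we use at the level of dimension vectors), $\mathcal{C}_w$ is closed under extensions and summands, is functorially finite in $\mod\Pi$, is Frobenius, and its $\mathcal{C}_w$-projective-injective indecomposables are exactly the modules $V_{\bm{i},k^{\mathrm{max}}}$, one for each $i\in I_w$, so there are precisely $n=|I_w|$ of them (Definition \ref{d:C_w}). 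These are exactly the hypotheses under which the Iyama--Yoshino/BIRS mutation theorem applies, and I would cite \cite{MR2521253} for their verification.

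For (1), I would use that $V_{\bm{i}}$ is a basic $\mathcal{C}_w$-maximal rigid module with exactly $\ell(w)$ indecomposable summands (Definition \ref{d:C_w}); on the other hand, in a Hom-finite stably $2$-CY Frobenius category a basic maximal rigid object is cluster-tilting, and any two basic cluster-tilting objects have the same number of indecomposable summands. Hence every basic $\mathcal{C}_w$-maximal rigid $T$ has $\ell=\ell(w)$ indecomposable summands, with the last $n$ of them exhausting the $\mathcal{C}_w$-projective-injectives. For (2) and (4), given $T=T_k\oplus(T/T_k)$ with $k\in[1,\ell-n]$, I would form the minimal right $\add(T/T_k)$-approximation $T_{-}\to T_k$ and the minimal left $\add(T/T_k)$-approximation $T_k\to T_{+}$; the stably $2$-CY property forces $\dim_{\mathbb{C}}\Ext_{\Pi}^{1}(T_k,T_k^{\ast})=1$ for the resulting exchange module $T_k^{\ast}$, the two non-split extensions in $\Ext_{\Pi}^{1}(T_k,T_k^{\ast})$ and $\Ext_{\Pi}^{1}(T_k^{\ast},T_k)$ are the asserted short exact sequences, and $T_k^{\ast}$ is the unique indecomposable module in $\mathcal{C}_w$ not isomorphic to $T_k$ which completes $T/T_k$ to a basic $\mathcal{C}_w$-maximal rigid module. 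The point requiring care is that $T_k^{\ast}\in\mathcal{C}_w$, that $\mu_{T_k}(T)$ is again maximal rigid, and that $\add(T_{-}),\add(T_{+})\subset\add(T/T_k)$; this is precisely the BIRS mutation theorem, which I would invoke rather than reprove.

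For (3), the equality $\mu_k(\widetilde{B}_T)=\widetilde{B}_{\mu_{T_k}(T)}$ is the compatibility of Fomin--Zelevinsky matrix mutation with mutation of the Gabriel quiver of $\End_{\Pi}(T)^{\mathrm{op}}$. I would prove it by comparing arrows of $\Gamma_T$ and $\Gamma_{\mu_{T_k}(T)}$ incident to $k$ and to its neighbours, reading these off from the two exchange sequences in (4) together with the absence of loops and $2$-cycles at exchangeable vertices (again a consequence of the $2$-CY property), exactly as in \cite{MR2521253} and \cite{GLS:uniparXiv}.

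The main obstacle is not in this paper at all: it is the verification that $\mathcal{C}_w$ meets the hypotheses of the $2$-CY mutation theory (Hom-finiteness, functorial finiteness, the stable $2$-Calabi--Yau property, and the identification of the $\mathcal{C}_w$-projective-injectives) and the equivalence ``$\mathcal{C}_w$-maximal rigid $=$ cluster-tilting'' in this category. Once that is granted, statements (1)--(4) are formal consequences of \cite{MR2521253} and \cite{GLS:uniparXiv}, so the honest shape of the argument here is a careful reduction followed by a precise citation rather than an independent proof.
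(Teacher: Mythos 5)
Your proposal is correct and matches the paper's treatment: Proposition \ref{p:mutcat} is stated there purely as a recollection of the mutation theory for $\mathcal{C}_{w}$-maximal rigid modules and is justified only by the citation to \cite{MR2521253,GLS:uniparXiv}, exactly the reduction-plus-citation you describe. Your additional sketch of how the stably $2$-Calabi--Yau Frobenius machinery yields (1)--(4) is consistent with those sources, so nothing further is needed.
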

Note that, by Proposition \ref{p:phimap} and \ref{p:mutcat}, we
have 
\begin{align}
\varphi_{T_{k}}\varphi_{T_{k}^{\ast}}=\varphi_{T_{+}}+\varphi_{T_{-}}=\prod_{j;b_{jk}>0}\varphi_{T_{j}}^{b_{jk}}+\prod_{j;b_{jk}<0}\varphi_{T_{j}}^{-b_{jk}}.\label{clmut}
\end{align}
This is \emph{an additive categorification of mutation.} See \cite[Subsection 2.7]{MR2822235}
and references therein for more details. An object $T$ of $\mathcal{C}_{w}$
is said to be\emph{ reachable (in $\mathcal{C}_{w}$)} if $T$ is
isomorphic to a direct summand of a direct sum of copies of a basic
$\mathcal{C}_{w}$-maximal rigid module which is obtained from $V_{\bm{i}}$
by iterated mutations. In fact, the notion of reachable does not depend
on the choice of $\bm{i}$ \cite[Proposition III.4.3]{MR2521253}.
\begin{rem}
\label{r:CB} Let $T$ be a basic reachable $\mathcal{C}_{w}$-maximal
rigid module, and $T=T_{1}\oplus\cdots\oplus T_{\ell}$ its indecomposable
decomposition. By Proposition \ref{p:CB}, for any $i,j\in[1,\ell]$,
we have 
\[
(\dimv T_{i},\dimv T_{j})=\dim_{\mathbb{C}}\Hom_{\Pi}(T_{i},T_{j})+\dim_{\mathbb{C}}\Hom_{\Pi}(T_{j},T_{i}).
\]
\end{rem}
\begin{defn}
Let $\bm{i}=(i_{1},\dots,i_{\ell})\in I(w)$. For $1\leq a\leq b\leq\ell$
with $i_{a}=i_{b}$, there exists a natural injective homomorphism
$V_{\bm{i},a^{-}}\to V_{\bm{i},b}$ of $\Pi$-modules, and the cokernel
of this homomorphism is denoted by $M_{\bm{i}}[b,a]$. Here we set
$V_{\bm{i},0}:=0$. In particular, $M_{\bm{i}}[b,b^{\mathrm{min}}]$
is isomorphic to $V_{\bm{i},b}$. Gei\ss-Leclerc-Schr\"oer shows that
$M_{\bm{i}}[b,a]$ is reachable for all $1\leq a\leq b\leq\ell$ with
$i_{a}=i_{b}$ \cite[section 13]{MR2822235}. 
\end{defn}
We use the notation in Definition \ref{d:C_w}. Gei\ss-Leclerc-Schr\"oer
construct a quantum cluster algebra $\mathscr{A}_{\mathbb{Q}(q)}(\mathcal{C}_{w})$
associated with $\mathcal{C}_{w}$ as we shall recall. Let $T$ be
a basic $\mathcal{C}_{w}$-maximal rigid module and $T=T_{1}\oplus\cdots\oplus T_{\ell}$
its indecomposable decomposition. Define $\Lambda_{T}:=(\lambda_{ij})_{i,j\in[1,\ell]}$
by 
\begin{align*}
\lambda_{ij} & :=\dim_{\mathbb{C}}\Hom_{\Pi}(T_{i},T_{j})-\dim_{\mathbb{C}}\Hom_{\Pi}(T_{j},T_{i}).
\end{align*}

Gei\ss-Leclerc-Schr\"oer have shown the following properties: 
\begin{prop}[{\cite[Proposition 10.1, Proposition 10.2]{MR3090232}}]
 \textup{\textup{(1)}} $(\widetilde{B}_{T},\Lambda_{T})$ is compatible
in the sense of Definition \ref{d:qcluster}. 

\textup{\textup{(2)}}$\mu_{k}(\widetilde{B}_{T},\Lambda_{T})=(\widetilde{B}_{\mu_{T_{k}}(T)},\Lambda_{\mu_{T_{k}}(T)})$
for $k\in[1,\ell-n]$.
\end{prop}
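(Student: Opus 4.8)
The statement to prove is the compatibility of the pair $(\widetilde{B}_T,\Lambda_T)$ (in the sense of Definition \ref{d:qcluster}) together with the fact that mutation of the module $T$ at an exchangeable summand $T_k$ corresponds to mutation of $(\widetilde{B}_T,\Lambda_T)$ at the vertex $k$. Both assertions are attributed to Gei\ss-Leclerc-Schr\"oer in the excerpt, so the role of the proof here is to record the key identities and how they combine; I would follow \cite{MR3090232} closely. The plan is to reduce everything to the bilinear identity of Proposition \ref{p:CB} (Crawley-Boevey's formula), which expresses $(\dimv T_i,\dimv T_j)$ as the symmetrized dimension of Hom-spaces, so that the symmetric form ``$(\,,\,)$ on the root lattice'' and the skew form $\Lambda_T$ together encode the full Hom-pairing between the indecomposable summands of $T$.

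First I would set up notation: write $h_{ij}:=\dim_{\mathbb{C}}\Hom_\Pi(T_i,T_j)$, so that by definition $\lambda_{ij}=h_{ij}-h_{ji}$, and by Remark \ref{r:CB} (which is Proposition \ref{p:CB} applied to the rigid module $T$, where $\Ext^1_\Pi(T_i,T_j)=0$) one has $(\dimv T_i,\dimv T_j)=h_{ij}+h_{ji}$. Next I would verify the compatibility condition $\sum_{k=1}^{\ell}b_{kj}\lambda_{ki}=\delta_{ij}d_j$ for $i\in[1,\ell]$, $j\in[1,\ell-n]$, with $d_j\in\mathbb{Z}_{>0}$. The key computational input is the non-split exact sequences of Proposition \ref{p:mutcat}(4): $0\to T_j\to T_-\to T_j^\ast\to 0$ and $0\to T_j^\ast\to T_+\to T_j\to 0$ with $T_-\simeq\bigoplus_{k;b_{kj}<0}T_k^{\oplus(-b_{kj})}$ and $T_+\simeq\bigoplus_{k;b_{kj}>0}T_k^{\oplus b_{kj}}$. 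Applying $\Hom_\Pi(T_i,-)$ and $\Hom_\Pi(-,T_i)$ to these sequences and using that $T_i$ is rigid relative to $T$ (so the relevant $\Ext^1$ terms vanish, except for the single one-dimensional $\Ext^1_\Pi(T_j,T_j^\ast)$ which must be tracked when $i$ is the exchanged index), one obtains expressions for $\sum_k b_{kj}h_{ik}$ and $\sum_k b_{kj}h_{ki}$ in terms of $h_{i j}$, $h_{i j^\ast}$ etc.; subtracting gives $\sum_k b_{kj}\lambda_{ki}$, and the Hom-contributions cancel except for a boundary term supported at $i=j$, which is exactly where $d_j$ appears and where one checks $d_j>0$. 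I would also need the standard fact that the Gabriel-quiver description of $\widetilde{B}_T$ via $b_{ij}=d_{ji}-d_{ij}$ matches the exchange-sequence description — this is part of the Buan-Iyama-Reiten-Scott / Gei\ss-Leclerc-Schr\"oer package cited as Proposition \ref{p:mutcat}(3),(4), so I would invoke it rather than reprove it.

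For part (2), that $\mu_k(\widetilde{B}_T,\Lambda_T)=(\widetilde{B}_{\mu_{T_k}(T)},\Lambda_{\mu_{T_k}(T)})$, the $\widetilde{B}$-component is already Proposition \ref{p:mutcat}(3). So the content is the $\Lambda$-component: one must show $\mu_k(\Lambda_T)=\Lambda_{\mu_{T_k}(T)}$, where the left side is $(E^{(k)})^T\Lambda_T E^{(k)}$ and the right side is computed from the Hom-spaces after replacing $T_k$ by $T_k^\ast$. Writing $T'=\mu_{T_k}(T)$ with $T'_i=T_i$ for $i\neq k$ and $T'_k=T_k^\ast$, the only entries of $\Lambda_{T'}$ that differ from $\Lambda_T$ are those in row/column $k$, and those are governed by $\dim\Hom_\Pi(T_i,T_k^\ast)$ and $\dim\Hom_\Pi(T_k^\ast,T_i)$. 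Again applying $\Hom_\Pi(T_i,-)$ to $0\to T_k^\ast\to T_+\to T_k\to 0$ and $\Hom_\Pi(-,T_i)$ to $0\to T_k\to T_-\to T_k^\ast\to 0$, together with the rigidity vanishing of $\Ext^1$, expresses $h_{i k^\ast}$ and $h_{k^\ast i}$ in terms of the $h_{ij}$ with $b_{jk}>0$ (resp.\ $b_{jk}<0$) and $h_{ik}$, $h_{ki}$; taking the difference yields precisely the linear combination of the old $\lambda_{i\bullet}$ prescribed by the matrix $E^{(k)}$, i.e.\ the mutation rule $\lambda'_{ik}=-\lambda_{ik}+\sum_{j;b_{jk}>0}b_{jk}\lambda_{ij}$ and its companion. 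I expect the main obstacle to be bookkeeping the single nonzero $\Ext^1_\Pi(T_k,T_k^\ast)$ (dimension one by Proposition \ref{p:mutcat}(4)) in the long exact sequences when $i=k$: this is the one place where the naive ``$\Ext^1=0$'' shortcut fails, and it is exactly this term that produces the sign flip $\lambda_{kk}=0\mapsto$ the correct diagonal behavior and the $-1$ appearing in the matrix $E^{(k)}$. Once that term is handled carefully, the remaining equalities are linear-algebra identities that match the definitions of $\mu_k(\Lambda)$ and $\mu_k(\widetilde{B})$ verbatim.
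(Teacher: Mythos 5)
The paper gives no proof of this statement---it is quoted directly from Gei\ss--Leclerc--Schr\"oer \cite[Propositions 10.1, 10.2]{MR3090232}---and your sketch reconstructs essentially their argument: apply $\Hom_{\Pi}(T_i,-)$ and $\Hom_{\Pi}(-,T_i)$ to the two exchange sequences of Proposition \ref{p:mutcat}(4), use rigidity of $T$ and of $\mu_{T_k}(T)$ to kill every $\Ext^1$ term except the one-dimensional $\Ext_{\Pi}^{1}(T_k,T_k^{\ast})$, and read off both the compatibility identity (the boundary terms at the exchanged index giving $d_j=2>0$) and the mutation rule for $\Lambda$, with the $\widetilde{B}$-part delegated to Proposition \ref{p:mutcat}(3). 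This is correct and takes the same route as the cited source, so there is nothing further to compare against the paper itself.
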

\begin{defn}
The quantum cluster algebra $\mathscr{A}_{q^{\pm1/2}}\left(\mathcal{C}_{w}\right)$ is defined as the quantum cluster algebra with the initial seed $((X_{T})_{i})_{i\in[1,\ell]},\widetilde{B}_{T},\Lambda_{T})$
for a basic reachable $\mathcal{C}_{w}$-maximal rigid module $T$.
Note that this algebra $\mathscr{A}_{q^{\pm1/2}}(C_{w})$ does not
depend on the choice of $T$. By the properties above, we may write
\[
\mu_{k}(((X_{T})_{i})_{i\in[1,\ell]},\widetilde{B}_{T},\Lambda_{T})=(((X_{\mu_{T_{k}}(T)})_{i})_{i\in[1,\ell]},\widetilde{B}_{\mu_{T_{k}}(T)},\Lambda_{\mu_{T_{k}}(T)})
\]
for $k\in[1,\ell-n]$. Moreover, for $\bm{a}=(a_{1},\dots,a_{\ell})\in\mathbb{Z}_{\geq0}^{\ell}$,
set $X_{\bigoplus_{i\in[1,\ell]}T_{i}^{\oplus a_{i}}}:=(X_{T})^{\bm{a}}$.
Then the quantum cluster monomials of $\mathscr{A}_{q^{\pm1/2}}(\mathcal{C}_{w})$
are indexed by reachable $\Pi$-modules in $\mathcal{C}_{w}$.

Set 
\[
Y_{R}:=q^{(\dimv R,\dimv R)/4}X_{R}.
\]
for every reachable $\Pi$-module $R$ in $\mathcal{C}_{w}$. Recall
that $\dimv R\in Q_{-}$. Define \emph{the rescaled quantum cluster
algebra $\mathscr{A}_{q^{\pm1}}(\mathcal{C}_{w})$} as an $\mathcal{A}(:=\mathbb{Q}[q^{\pm1}])$-subalgebra of $\mathscr{A}_{q^{\pm1/2}}(\mathcal{C}_{w})$
generated by $\{Y_{R}\mid R\;\text{is reachable in}\;\mathcal{C}_{w}\}$. For any basic reachable $\mathcal{C}_{w}$-maximal rigid module $T=T_{1}\oplus\cdots\oplus T_{\ell}$, the rescaled quantum cluster algebra $\mathscr{A}_{q^{\pm1}}(\mathcal{C}_{w})$ is contained in \emph{the rescaled based quantum torus $\mathcal{T}_{\mathcal{A},T}:=\mathcal{A}[Y_{T_{k}}^{\pm1}\mid k\in[1,\ell]](\subset\mathcal{F})$}
\cite[Lemma 10.4 and Proposition 10.5]{MR3090232} (they are cited
as (\ref{qmonom}) and Proposition \ref{p:qmutation} below). Note
that, for $(a_{1},\dots,a_{\ell})\in\mathbb{Z}_{\geq0}^{\ell}$, we
have 
\begin{equation}
Y_{R}=q^{\alpha(R)}Y_{T_{1}}^{a_{1}}\cdots Y_{T_{\ell}}^{a_{\ell}},\label{qmonom}
\end{equation}
here we set $R:=\bigoplus_{i\in[1,\ell]}T_{i}^{\oplus a_{i}}$ and
\[
\alpha(R):=\sum_{i\in[1,\ell]}a_{i}(a_{i}-1)\dim_{\mathbb{C}}\Hom_{\Pi}(T_{i},T_{i})/2+\sum_{i<j;i,j\in[1,\ell]}a_{i}a_{j}\dim_{\mathbb{C}}\Hom_{\Pi}(T_{j},T_{i}).
\]
Note that $\mathbf{I}:=\{q^{m}Y_{\bigoplus_{i\in[\ell-n+1,\ell]}T_{i}^{a_{i}}}\mid(a_{\ell-n+1},\dots,a_{\ell})\in\mathbb{Z}_{\geq0}^{n},m\in\mathbb{Z}\}$
is an Ore set in $\mathscr{A}_{q^{\pm1}}(\mathcal{C}_{w})$. Set $\widetilde{\mathscr{A}}_{q^{\pm1}}(\mathcal{C}_{w}):=\mathscr{A}_{q^{\pm1}}(\mathcal{C}_{w})[\mathbf{I}^{-1}]$,
and $\mathscr{A}_{\mathbb{Q}(q)}(\mathcal{C}_{w}):=\mathbb{Q}(q)\otimes_{\mathcal{A}}\mathscr{A}_{q^{\pm1}}(\mathcal{C}_{w})$,
$\widetilde{\mathscr{A}}_{\mathbb{Q}(q)}(\mathcal{C}_{w}):=\mathbb{Q}(q)\otimes_{\mathcal{A}}\widetilde{\mathscr{A}}_{q^{\pm1}}(\mathcal{C}_{w})$.

For $X\in\mathcal{C}_{w}$, denote by $I(X)$ the injective hull of
$X$ in $\mathcal{C}_{w}$, and by $\Omega_{w}^{-1}(X)$ the cokernel
of the corresponding injective homomorphism $X\to I(X)$. Hence we
have an exact sequence 
\[
0\to X\to I(X)\to\Omega_{w}^{-1}(X)\to0.
\]
\end{defn}
\begin{prop}[{{\cite[Proposition 13.4]{MR2822235}}}]
\label{p:syzygy} Let $w\in W$, $T$ a basic reachable $\mathcal{C}_{w}$-maximal
rigid module and $T=T_{1}\oplus\cdots\oplus T_{\ell}$ its indecomposable
decomposition. Then $T':=\Omega_{w}^{-1}(T)\oplus\bigoplus_{i\in I_{w}}T_{\ell-n+i}$
is also a basic reachable $\mathcal{C}_{w}$-maximal rigid module;
hence there exists a bijection $[1,\ell-n]\to[1,\ell-n],k\mapsto k^{\ast}$
such that $T'_{k^{\ast}}=\Omega_{w}^{-1}(T_{k})$.

Let $k\in[1,\ell-n]$ and write $\mu_{T_{k}}(T)=(T/T_{k})\oplus T_{k}^{\ast}$.
Then we have 
\[
\mu_{T'_{k^{\ast}}}(T')=(T'/T'_{k^{\ast}})\oplus\Omega_{w}^{-1}(T_{k}^{\ast}).
\]
\end{prop}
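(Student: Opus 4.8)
The plan is to deduce the statement from the fact that $\Omega_w^{-1}$ is the suspension functor of the triangulated stable category of $\mathcal{C}_w$, together with the standard fact that triangle auto-equivalences intertwine mutation of maximal rigid objects. First I would recall that $\mathcal{C}_w$ is a Frobenius category (Definition \ref{d:C_w}), so its stable category $\underline{\mathcal{C}}_w$ is triangulated with suspension $\Omega_w^{-1}$, and the latter induces an auto-equivalence of $\underline{\mathcal{C}}_w$ with quasi-inverse the syzygy $\Omega_w$. For the chosen labelling, the $\mathcal{C}_w$-projective-injective summands of $T$ are exactly $T_{\ell-n+1},\dots,T_\ell$; since the injective hull in $\mathcal{C}_w$ of such a module is itself, one has $\Omega_w^{-1}(T_{\ell-n+i})=0$ as a $\Pi$-module, so $\Omega_w^{-1}(T)=\bigoplus_{k=1}^{\ell-n}\Omega_w^{-1}(T_k)$ and $T'=\Omega_w^{-1}(T)\oplus\bigoplus_{i\in I_w}T_{\ell-n+i}$ has exactly $\ell$ indecomposable summands, pairwise non-isomorphic because $\Omega_w^{-1}$ restricts to a bijection on iso-classes of non-projective indecomposables (and $T_1,\dots,T_{\ell-n}$ are non-projective-injective).

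Next I would check that $T'$ is $\mathcal{C}_w$-maximal rigid. Using the isomorphism $\Ext^1_\Pi(X,Y)\cong\underline{\Hom}_{\mathcal{C}_w}(X,\Omega_w^{-1}Y)$, rigidity of $T$ is a condition invariant under the triangle auto-equivalence $\Omega_w^{-1}$, so $T'$ is rigid; and if $T'\oplus X$ is rigid for some $X\in\mathcal{C}_w$, then applying $\Omega_w$ and discarding projective-injective summands (which do not affect rigidity) shows $T\oplus\Omega_w(X)$ is rigid, hence $\Omega_w(X)$ is a summand of a direct sum of copies of $T$, and therefore $X$ is a summand of a direct sum of copies of $T'$. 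Comparing indecomposable summands on both sides produces the bijection $k\mapsto k^*$ of $[1,\ell-n]$ with $T'_{k^*}=\Omega_w^{-1}(T_k)$.

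For the mutation statement I would apply $\Omega_w^{-1}$ to the two exchange sequences of Proposition \ref{p:mutcat}(4),
\[
0\to T_k\to T_-\to T_k^*\to 0,\qquad 0\to T_k^*\to T_+\to T_k\to 0,
\]
in $\mathcal{C}_w$: since $\Omega_w^{-1}$ carries triangles of $\underline{\mathcal{C}}_w$ to triangles, these become short exact sequences in $\mathcal{C}_w$ up to adjoining projective-injective summands to the middle terms (which, being projective-injective, must be among the $T_{\ell-n+i}$), and these are precisely the exchange sequences exhibiting $\mu_{T'_{k^*}}(T')=(T'/T'_{k^*})\oplus\Omega_w^{-1}(T_k^*)$. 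Reachability of $T'$ then follows by induction on the length of a mutation sequence connecting $V_{\bm i}$ to $T$: the inductive step is exactly the intertwining just established (keeping track of how $k\mapsto k^*$ evolves at each mutation), and the base case — that $\Omega_w^{-1}(V_{\bm i})\oplus\bigoplus_{i\in I_w}T_{\ell-n+i}$ is reachable — is supplied by \cite[Section 13]{MR2822235}, where each $\Omega_w^{-1}(V_{\bm i,k})$ is identified with one of the reachable modules $M_{\bm i}[b,a]$.

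I expect the genuine obstacle to be this last point: that $\Omega_w^{-1}$ is compatible with mutation at the level of modules in $\mathcal{C}_w$ rather than merely in the stable category, and that it sends the initial cluster-tilting module $V_{\bm i}$ to a reachable one. Both require the explicit combinatorial description of the modules $M_{\bm i}[b,a]$ and their images under $\Omega_w^{-1}$ from \cite{MR2822235}; by contrast, the purely triangulated-category input (maximal rigidity of $T'$, the bijection $k\mapsto k^*$, and the abstract intertwining of exchange triangles) is formal and follows from $\Omega_w^{-1}$ being a triangle auto-equivalence.
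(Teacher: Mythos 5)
First, a remark on the ground truth: the paper does not prove this proposition at all — it is imported verbatim from \cite[Proposition 13.4]{MR2822235} — so the only proof to compare with is Geiß--Leclerc--Schröer's, and your reconstruction follows the same two-layer strategy: formal arguments in the stable category of the Frobenius category $\mathcal{C}_{w}$, plus an explicit computation for the initial module $V_{\bm{i}}$. The formal layer of your sketch is essentially correct: $\Omega_{w}^{-1}$ kills the $\mathcal{C}_{w}$-projective-injectives, is a bijection on isomorphism classes of non-projective indecomposables, and via $\Ext_{\Pi}^{1}(X,Y)\simeq\underline{\Hom}_{\mathcal{C}_{w}}(X,\Omega_{w}^{-1}Y)$ your argument for rigidity and maximal rigidity of $T'$ (apply $\Omega_{w}$ to a rigid complement, use maximal rigidity of $T$, then come back with $\Omega_{w}^{-1}$ and absorb projective-injective summands into $\add T'$) goes through; the bijection $k\mapsto k^{\ast}$ is then immediate.

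Two points need tightening. For the mutation statement, lifting the shifted exchange triangles to conflations only determines the middle terms up to projective-injective summands, and exhibiting a non-split conflation with middle term in $\add(T'/T'_{k^{\ast}})$ does not by itself identify the exchange partner; the clean route — which uses nothing beyond what you have already established — is to apply the (formal) maximal-rigidity part of the first assertion to the basic maximal rigid module $\mu_{T_{k}}(T)=(T/T_{k})\oplus T_{k}^{\ast}$, observe that $\Omega_{w}^{-1}(\mu_{T_{k}}(T))\oplus\bigoplus_{i\in I_{w}}T_{\ell-n+i}=(T'/T'_{k^{\ast}})\oplus\Omega_{w}^{-1}(T_{k}^{\ast})$ is basic maximal rigid with $\Omega_{w}^{-1}(T_{k}^{\ast})\not\simeq T'_{k^{\ast}}$, and invoke the uniqueness of the complement in Proposition \ref{p:mutcat}(2). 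In particular, contrary to your closing remark, the intertwining of $\Omega_{w}^{-1}$ with mutation is purely formal and needs no combinatorics. The genuinely non-formal input is only the base case of reachability, and there your appeal to \cite[Section 13]{MR2822235} is stated too weakly: reachability of each summand $\Omega_{w}^{-1}(V_{\bm{i},k})=M_{\bm{i}}[k^{\mathrm{max}},k^{+}]$ only says each occurs in \emph{some} reachable maximal rigid module, whereas the induction needs the single basic maximal rigid module $\Omega_{w}^{-1}(V_{\bm{i}})\oplus\bigoplus_{i\in I_{w}}T_{\ell-n+i}$ to be obtained from $V_{\bm{i}}$ by one concrete sequence of mutations; that stronger statement is exactly what Geiß--Leclerc--Schröer establish there, so the citation is right but must be invoked for the whole module, not summand by summand. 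With these two adjustments your outline is a correct proof.
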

\begin{rem}
\label{r:covention} Let $w\in W$. In this remark, we explain the
difference between our convention and Gei\ss-Leclerc-Schr\"oer's one in
\cite{MR2822235,MR2833478,MR3090232}. An object $\mathcal{X}$ in
Gei\ss-Leclerc-Schr\"oer's papers is denoted by $\mathcal{X}^{\mathrm{GLS}}$
here.

The category $\mathcal{C}_{w}$ is the same category as $\mathcal{C}_{w^{-1}}^{\mathrm{GLS}}$.
Moreover $N_{-}(w)=(N(w^{-1})^{\mathrm{GLS}})^{T}$ and $N_{-}^{w}=((N^{w^{-1}})^{\mathrm{GLS}})^{T}$.
We omitted the definition of $\varphi_{X}$ for a finite dimensional
nilpotent $\Pi$-module $X$, however the algebra $\mathcal{M}$ used
for its precise definition (see Definition \ref{d:preproj}) is the
same space as $\mathcal{M}^{\mathrm{GLS}}$ in \cite[Subsection 2.2]{MR2822235}
equipped with the opposite convolution product.

Thus there exist algebra isomorphisms $\mathbb{C}[N_{-}(w)]\to\mathbb{C}[N(w^{-1})^{\mathrm{GLS}}]$
and $\mathbb{C}[N_{-}^{w}]\to\mathbb{C}[N^{w^{-1},\mathrm{GLS}}]$
given by $f\to f\circ(-)^{T}$. Moreover $\varphi_{X}=\varphi_{X}^{\mathrm{GLS}}\circ(-)^{T}$
for all $X\in\mathcal{C}_{w}=\mathcal{C}_{w^{-1}}^{\mathrm{GLS}}$.
See also \cite[Chapter 6]{MR2822235}. (This is the reason why we
consider the opposite product on $\mathcal{M}$.)

The quantum nilpotent subalgebra $\Uq(\mathfrak{n}(w^{-1}))^{\mathrm{GLS}}$
in \cite{MR3090232} is equal to $\Aq[N_{-}(w)]^{\vee}$. Gei\ss-Leclerc-Schr\"oer
consider a $\mathbb{Q}(q)$-algebra $\Aq(\mathfrak{n}(w^{-1}))^{\mathrm{GLS}}$,
called the quantum coordinate ring, which is defined in $(\Uq^{+})^{\ast}$
\cite[(4.6)]{MR3090232}, and define an algebra isomorphism $\Psi^{\mathrm{GLS}}\colon\Uq(\mathfrak{n}(w^{-1}))^{\mathrm{GLS}}\to\Aq(\mathfrak{n}(w^{-1}))^{\mathrm{GLS}}$
by using a non-degenerate bilinear form $(\ ,\ )^{\mathrm{GLS}}$ \cite[Proposition 4.1]{MR3090232}.
Actually, for $x\in(\Uq^{+})_{\beta}$, $y\in(\Uq^{+})_{\beta'}$
($\beta,\beta'\in Q_{+}$), we have 
\begin{align*}
(x,y)^{\mathrm{GLS}} & =\delta_{\beta,\beta'}(1-q^{-2})^{\height\beta}\overline{(\overline{x},\overline{y})_{L}^{+}}\\
 & =\delta_{\beta,\beta'}(1-q^{-2})^{\height\beta}\overline{(\overline{x^{\vee}},\overline{y^{\vee}})_{L}}\\
 & =\delta_{\beta,\beta'}(1-q^{-2})^{\height\beta}(x^{\vee},\sigma(\overline{y^{\vee}}))_{L}\\
 & =q^{(\beta,\beta)/2}(q^{-1}-q)^{\height\beta}(x^{\vee},\varphi(y))_{L}.
\end{align*}
The last equality follows from Proposition \ref{p:dualbar}. By the
way, there exists a $\mathbb{Q}(q)$-algebra automorphism $m_{\mathrm{norm}}\colon\Uq^{-}\to\Uq^{-}$
given by $f_{i}\mapsto(q^{-1}-q)^{-1}f_{i}$ for $i\in I$. We now
have the following $\mathbb{Q}(q)$-algebra isomorphism; 
\[
I_{\mathrm{norm}}\colon\Aq[N_{-}(w)]\xrightarrow{m_{\mathrm{norm}}}\Aq[N_{-}(w)]\xrightarrow{\vee}\Uq(\mathfrak{n}(w^{-1}))^{\mathrm{GLS}}\xrightarrow{\Psi^{\mathrm{GLS}}}\Aq(\mathfrak{n}(w^{-1}))^{\mathrm{GLS}},
\]
which maps $x\in(\Uq^{-})_{\beta}$ ($\beta\in-Q_{+}$) to $q^{(\beta,\beta)/2}(x,\varphi(-))_{L}$.
By using this isomorphism, we describe their results. Note that $I_{\mathrm{norm}}(D_{w\lambda,w'\lambda})=q^{(w\lambda-w'\lambda,w\lambda-w'\lambda)/2}D_{w'\lambda,w\lambda}^{\mathrm{GLS}}$
for $w,w'\in W$ and $\lambda\in P_{+}$ \cite[(5.5)]{MR3090232}.

The definitions of the quantum cluster algebra $\mathscr{A}_{q^{\pm1/2}}(\mathcal{C}_{w})=\mathscr{A}_{q^{\pm1/2}}(\mathcal{C}_{w^{-1}}^{\mathrm{GLS}})$
are the same. We have $Y_{R}=q^{(\dimv R,\dimv R)/2}Y_{R}^{\mathrm{GLS}}$
for every reachable $\Pi$-module $R$ \cite[(10.16)]{MR3090232}.
Note that $(\dimv R,\dimv R)/2\in\mathbb{Z}$. Therefore we have $\mathscr{A}_{q^{\pm1}}(\mathcal{C}_{w})=\mathscr{A}_{\mathbb{A}}(\mathcal{C}_{w^{-1}}^{\mathrm{GLS}})^{\mathrm{GLS}}$. 
\end{rem}
The following propositions describe mutations of quantum clusters
and twisted dual bar involutions in $\mathscr{A}_{q^{\pm1}}(\mathcal{C}_{w})$. 
\begin{prop}[{{\cite[Proposition 10.5]{MR3090232}}}]
\label{p:qmutation} Let $T$ be a basic reachable $\mathcal{C}_{w}$-maximal
rigid module, and $T=T_{1}\oplus\cdots\oplus T_{\ell}$ its indecomposable
decomposition. Fix $k\in[1,\ell-n]$. Write $\tilde{B}_{T}=(b_{ij})_{i\in[1,\ell],j\in[1,\ell-n]}$
and $\mu_{T_{k}}(T)=(T/T_{k})\oplus T_{k}^{\ast}$. Set $T_{+}:=\bigoplus_{j;b_{jk}>0}T_{j}^{\oplus b_{jk}}$
and $T_{-}:=\bigoplus_{j;b_{jk}<0}T_{j}^{\oplus(-b_{jk})}$. Then
we have 
\[
Y_{T_{k}^{\ast}}Y_{T_{k}}=q^{-\dim_{\mathbb{C}}\Hom_{\Pi}(T_{k},T_{k}^{\ast})}(qY_{T_{+}}+Y_{T_{-}}).
\]
\end{prop}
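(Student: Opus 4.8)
The plan is to reduce the identity to the defining relations of the based quantum torus and then to evaluate the resulting $q$-exponents using the two short exact sequences of Proposition~\ref{p:mutcat}(4) together with the Crawley--Boevey formula (Proposition~\ref{p:CB}). Fix $T=T_1\oplus\cdots\oplus T_\ell$ and $k\in[1,\ell-n]$, write $\widetilde{B}_T=(b_{ij})$, and let $\bm{d}_+,\bm{d}_-\in\mathbb{Z}_{\ge0}^\ell$ be given by $(\bm{d}_+)_j=\max(0,b_{jk})$, $(\bm{d}_-)_j=\max(0,-b_{jk})$, so that $(X_T)^{\bm{d}_+}=X_{T_+}$ and $(X_T)^{\bm{d}_-}=X_{T_-}$. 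First I would use that $\mathscr{A}_{q^{\pm1/2}}(\mathcal{C}_w)$ is built so that the quantum seed attached to $\mu_{T_k}(T)$ is the mutation in direction $k$ of the one attached to $T$ (this is the content of Proposition~\ref{p:mutcat}(3) and its $\Lambda$-counterpart, i.e.\ of \cite{MR3090232}); hence by the mutation rule of Definition~\ref{d:qcluster}, inside $\mathcal{F}(\Lambda_T)$,
\[
X_{T_k^\ast}=X^{-\bm{e}_k+\bm{d}_+}+X^{-\bm{e}_k+\bm{d}_-}.
\]
Multiplying on the right by $X_{T_k}=X^{\bm{e}_k}$ and applying $X^{\bm{a}}X^{\bm{b}}=q^{\Lambda_T(\bm{a},\bm{b})/2}X^{\bm{a}+\bm{b}}$ with the skew-symmetry of $\Lambda_T$ yields
\[
X_{T_k^\ast}X_{T_k}=q^{\Lambda_T(\bm{d}_+,\bm{e}_k)/2}X_{T_+}+q^{\Lambda_T(\bm{d}_-,\bm{e}_k)/2}X_{T_-}.
\]

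Next I would pass to the rescaled variables via $Y_R=q^{(\dimv R,\dimv R)/4}X_R$. The sequences $0\to T_k^\ast\to T_+\to T_k\to 0$ and $0\to T_k\to T_-\to T_k^\ast\to 0$ give $\dimv T_+=\dimv T_-=\dimv T_k+\dimv T_k^\ast$, so the rescaling factors of $T_+$ and $T_-$ behave the same way, and the claim reduces to the two scalar identities
\[
-\tfrac12(\dimv T_k,\dimv T_k^\ast)+\tfrac12\Lambda_T(\bm{d}_+,\bm{e}_k)=-\dim_{\mathbb{C}}\Hom_\Pi(T_k,T_k^\ast)+1,
\]
\[
-\tfrac12(\dimv T_k,\dimv T_k^\ast)+\tfrac12\Lambda_T(\bm{d}_-,\bm{e}_k)=-\dim_{\mathbb{C}}\Hom_\Pi(T_k,T_k^\ast).
\]
Since $\lambda_{ij}=\dim_{\mathbb{C}}\Hom_\Pi(T_i,T_j)-\dim_{\mathbb{C}}\Hom_\Pi(T_j,T_i)$ and $\Hom_\Pi$ is additive, $\Lambda_T(\bm{d}_\pm,\bm{e}_k)=\dim_{\mathbb{C}}\Hom_\Pi(T_\pm,T_k)-\dim_{\mathbb{C}}\Hom_\Pi(T_k,T_\pm)$. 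I would compute these four Hom-dimensions by applying $\Hom_\Pi(-,T_k)$ and $\Hom_\Pi(T_k,-)$ to the two short exact sequences, using that $T$ is rigid (so $\Ext^1_\Pi(T_k,T_k)=\Ext^1_\Pi(T_\pm,T_k)=\Ext^1_\Pi(T_k,T_\pm)=0$, as $T_k$ is not a summand of $T_\pm$ because $b_{kk}=0$), that $\dim_{\mathbb{C}}\Ext^1_\Pi(T_k,T_k^\ast)=\dim_{\mathbb{C}}\Ext^1_\Pi(T_k^\ast,T_k)=1$, and that the connecting map $\Hom_\Pi(T_k,T_k)\to\Ext^1_\Pi(\cdot,\cdot)$ in each case is surjective because it sends $\mathrm{id}_{T_k}$ to the nonzero class of the non-split extension. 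This gives
\[
\Lambda_T(\bm{d}_+,\bm{e}_k)=\dim_{\mathbb{C}}\Hom_\Pi(T_k^\ast,T_k)-\dim_{\mathbb{C}}\Hom_\Pi(T_k,T_k^\ast)+1,
\]
\[
\Lambda_T(\bm{d}_-,\bm{e}_k)=\dim_{\mathbb{C}}\Hom_\Pi(T_k^\ast,T_k)-\dim_{\mathbb{C}}\Hom_\Pi(T_k,T_k^\ast)-1.
\]
Finally Proposition~\ref{p:CB} applied to $T_k,T_k^\ast$ gives $(\dimv T_k,\dimv T_k^\ast)=\dim_{\mathbb{C}}\Hom_\Pi(T_k,T_k^\ast)+\dim_{\mathbb{C}}\Hom_\Pi(T_k^\ast,T_k)-1$; substituting the last three displays into the two scalar identities verifies them, whence $Y_{T_k^\ast}Y_{T_k}=q^{-\dim_{\mathbb{C}}\Hom_\Pi(T_k,T_k^\ast)}(qY_{T_+}+Y_{T_-})$.

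The step I expect to be the main obstacle is the very first one: identifying $X_{T_k^\ast}$ with the quantum seed mutation $X'_k$ of the seed attached to $T$. This is not formal from the bookkeeping definition $X_{\bigoplus T_i^{\oplus a_i}}=(X_T)^{\bm{a}}$ alone; it requires that module mutation in $\mathcal{C}_w$ realizes quantum seed mutation, i.e.\ the compatibility of $(\widetilde{B}_T,\Lambda_T)$, its behaviour under mutation, and the correct normalization of the cluster variables attached to mutated maximal rigid modules --- exactly the work carried out in \cite{MR3090232}. Once that is granted, the remainder is the $q$-power bookkeeping above; a secondary point needing care is consistency of conventions (left- versus right-multiplication, and the normalization $Y_R=q^{(\dimv R,\dimv R)/4}X_R$ used here rather than the one in \cite{MR3090232}, compare Remark~\ref{r:covention}).
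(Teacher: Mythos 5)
Your argument is correct, but note that the paper contains no proof of this proposition: it is quoted from Geiß--Leclerc--Schröer \cite{MR3090232} (their Proposition 10.5), so the relevant comparison is with their argument, which your computation essentially reconstructs. Within this paper's conventions the step you single out as the main obstacle --- identifying $X_{T_k^{\ast}}$ with the mutated variable $X'_k$ of the seed attached to $T$ --- is in fact built into the definition of $\mathscr{A}_{q^{\pm1/2}}(\mathcal{C}_w)$: the labelling $\mu_k(((X_T)_i)_{i},\widetilde{B}_T,\Lambda_T)=(((X_{\mu_{T_k}(T)})_i)_{i},\widetilde{B}_{\mu_{T_k}(T)},\Lambda_{\mu_{T_k}(T)})$ is adopted there on the strength of the cited compatibility and mutation results (the proposition quoting \cite[Propositions 10.1, 10.2]{MR3090232} and Proposition \ref{p:mutcat}(3)), so invoking those, rather than Proposition 10.5 itself, introduces no circularity. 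The remaining bookkeeping is right: applying $\Hom_{\Pi}(-,T_k)$ and $\Hom_{\Pi}(T_k,-)$ to the two exchange sequences of Proposition \ref{p:mutcat}(4), using $\Ext^1_{\Pi}(T,T)=0$, $\dim_{\mathbb{C}}\Ext^1_{\Pi}(T_k,T_k^{\ast})=\dim_{\mathbb{C}}\Ext^1_{\Pi}(T_k^{\ast},T_k)=1$ and the non-splitness of the connecting classes, yields exactly the four Hom-dimensions you assert; together with Proposition \ref{p:CB} for $(\dimv T_k,\dimv T_k^{\ast})$, your two exponent identities check out and correctly produce the asymmetric factor $q$ on the $Y_{T_+}$ term, consistently with the exchange rule of Definition \ref{d:qcluster} and the rescaling $Y_R=q^{(\dimv R,\dimv R)/4}X_R$. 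The only cosmetic remark is that the vanishings $\Ext^1_{\Pi}(T_k,T_{\pm})=\Ext^1_{\Pi}(T_{\pm},T_k)=\Ext^1_{\Pi}(T_k,T_k)=0$ already follow from rigidity of $T$, irrespective of whether $T_k$ occurs as a summand of $T_{\pm}$, so the aside about $b_{kk}=0$ is unnecessary.
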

\begin{prop}[{{\cite[Lemma 10.6, Lemma 10.7]{MR3090232}}}]
\label{p:GLSdualbar} Let $T$ be a basic reachable $\mathcal{C}_{w}$-maximal
rigid module. Then there exists a unique $\mathbb{Q}$-algebra anti-involution
$\sigma'_{T}$ on $\mathcal{T}_{\mathcal{A},T}$ such that 
\begin{align*}
q\mapsto q^{-1},\ Y_{R}\mapsto q^{-(\dimv R,\dimv R)/2-\dim_{\mathbb{C}}R}Y_{R}
\end{align*}
for every direct summand $R$ of a direct sum of copies of $T$. Moreover
$\sigma'_{T}$ induces $\mathbb{Q}$-algebra anti-involutions $\sigma'$
on $\mathscr{A}_{q^{\pm1}}(\mathcal{C}_{w})$ and $\widetilde{\mathscr{A}}_{q^{\pm1}}(\mathcal{C}_{w})$,
and $\sigma'$ does not depend on the choice of a basic reachable
$\mathcal{C}_{w}$-maximal rigid module $T$. 
\end{prop}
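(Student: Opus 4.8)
The plan is to first construct $\sigma'_{T}$ on the rescaled based quantum torus $\mathcal{T}_{\mathcal{A},T}=\mathcal{A}[Y_{T_{k}}^{\pm1}\mid k\in[1,\ell]]$, and then to propagate it to $\mathscr{A}_{q^{\pm1}}(\mathcal{C}_{w})$ and $\widetilde{\mathscr{A}}_{q^{\pm1}}(\mathcal{C}_{w})$ using compatibility with mutation. Recall that $\mathcal{T}_{\mathcal{A},T}$ is a twisted Laurent polynomial ring, with $Y_{T_{i}}Y_{T_{j}}=q^{\lambda_{ij}}Y_{T_{j}}Y_{T_{i}}$ where $\lambda_{ij}=\dim_{\mathbb{C}}\Hom_{\Pi}(T_{i},T_{j})-\dim_{\mathbb{C}}\Hom_{\Pi}(T_{j},T_{i})$ (the rescaling $Y_{T_{i}}=q^{(\dimv T_{i},\dimv T_{i})/4}X_{T_{i}}$ does not affect the commutation relations since the scalar is central), and it is generated as a $\mathbb{Q}$-algebra by $q^{\pm1}$ and the $Y_{T_{k}}^{\pm1}$. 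To define a $\mathbb{Q}$-algebra anti-homomorphism it therefore suffices to prescribe images of these generators respecting the defining relations with the order of products reversed. Set $q\mapsto q^{-1}$ and $Y_{T_{k}}\mapsto q^{e_{k}}Y_{T_{k}}$ with $e_{k}:=-(\dimv T_{k},\dimv T_{k})/2-\dim_{\mathbb{C}}T_{k}$. The relation $Y_{T_{i}}Y_{T_{j}}=q^{\lambda_{ij}}Y_{T_{j}}Y_{T_{i}}$ is then preserved, because the commutation scalar $q^{\lambda_{ij}}$ is inverted by $q\mapsto q^{-1}$ while the rescaling factors $q^{e_{i}}q^{e_{j}}$ are symmetric in $i,j$; uniqueness is immediate from the generation statement, and $\sigma'_{T}\circ\sigma'_{T}=\id$ follows by evaluating on generators. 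Finally, using the monomial formula \eqref{qmonom} together with the identity $(\dimv T_{i},\dimv T_{j})=\dim_{\mathbb{C}}\Hom_{\Pi}(T_{i},T_{j})+\dim_{\mathbb{C}}\Hom_{\Pi}(T_{j},T_{i})$ of Remark \ref{r:CB}, one checks the closed formula $\sigma'_{T}(Y_{R})=q^{-(\dimv R,\dimv R)/2-\dim_{\mathbb{C}}R}Y_{R}$ for every direct summand $R$ of a direct sum of copies of $T$.

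Next I would pass to the cluster algebra. The algebra $\mathscr{A}_{q^{\pm1}}(\mathcal{C}_{w})$ is generated over $\mathcal{A}$ by the quantum cluster variables $Y_{T'_{k}}$ attached to all basic reachable $\mathcal{C}_{w}$-maximal rigid modules $T'$, and by the Laurent phenomenon (Proposition \ref{p:laurentpheno}) it is contained in $\mathcal{T}_{\mathcal{A},T'}$ for every such $T'$. Hence it suffices to show that $\sigma'_{T}$ maps $\mathscr{A}_{q^{\pm1}}(\mathcal{C}_{w})$ into itself, which I would prove by induction on the mutation distance from $T$. For a one-step mutation $\mu_{T_{k}}(T)=(T/T_{k})\oplus T_{k}^{\ast}$ with $k\in[1,\ell-n]$, the non-split exchange sequences of Proposition \ref{p:mutcat}(4) give $\dimv T_{k}^{\ast}=\dimv T_{+}-\dimv T_{k}=\dimv T_{-}-\dimv T_{k}$ and likewise $\dim_{\mathbb{C}}T_{k}^{\ast}=\dim_{\mathbb{C}}T_{+}-\dim_{\mathbb{C}}T_{k}=\dim_{\mathbb{C}}T_{-}-\dim_{\mathbb{C}}T_{k}$; applying $\sigma'_{T}$ to the exchange relation of Proposition \ref{p:qmutation} and using the formula from the previous paragraph for the summands $Y_{T_{k}}$, $Y_{T_{+}}$, $Y_{T_{-}}$, one computes $\sigma'_{T}(Y_{T_{k}^{\ast}})=q^{-(\dimv T_{k}^{\ast},\dimv T_{k}^{\ast})/2-\dim_{\mathbb{C}}T_{k}^{\ast}}Y_{T_{k}^{\ast}}$. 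Thus $\sigma'_{T}$ sends the whole initial quantum cluster of $\mu_{T_{k}}(T)$ to scalar multiples of itself, hence stabilizes $\mathcal{T}_{\mathcal{A},\mu_{T_{k}}(T)}$, and its restriction there equals $\sigma'_{\mu_{T_{k}}(T)}$ by the uniqueness established above. Iterating along arbitrary mutation sequences shows that $\sigma'_{T}$ preserves $\mathscr{A}_{q^{\pm1}}(\mathcal{C}_{w})$ and coincides with $\sigma'_{T'}$ for every basic reachable $\mathcal{C}_{w}$-maximal rigid $T'$, so it is independent of the choice of $T$. For $\widetilde{\mathscr{A}}_{q^{\pm1}}(\mathcal{C}_{w})=\mathscr{A}_{q^{\pm1}}(\mathcal{C}_{w})[\mathbf{I}^{-1}]$, note that $\sigma'_{T}$ sends each frozen variable $Y_{T_{i}}$, $i\in[\ell-n+1,\ell]$, to a unit multiple of itself, hence preserves the Ore set $\mathbf{I}$ up to units; by the universal property of Ore localization it extends uniquely to an anti-involution of $\widetilde{\mathscr{A}}_{q^{\pm1}}(\mathcal{C}_{w})$.

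The one genuinely delicate point is the bookkeeping of $q$-exponents, which enters in exactly two places: verifying $\sigma'_{T}(Y_{R})=q^{-(\dimv R,\dimv R)/2-\dim_{\mathbb{C}}R}Y_{R}$ for an arbitrary summand $R$, and verifying the same formula for the mutated variable $Y_{T_{k}^{\ast}}$. Both amount to reconciling (i) the reordering scalars produced by the commutation relations of $\mathcal{T}_{\mathcal{A},T}$, (ii) the value of $(\dimv X,\dimv Y)$ rewritten via the Crawley--Boevey formula of Proposition \ref{p:CB}, and (iii) the additivity of $\dimv$ and $\dim_{\mathbb{C}}$ along the short exact sequences attached to mutation. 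Bilinearity of $(\;,\;)$ on $Q$ and the identity in Remark \ref{r:CB} make each of these a bounded, mechanical computation, so the real work is organizing the exponents, exactly as carried out in \cite[Lemma~10.6, Lemma~10.7]{MR3090232}; once they are pinned down, everything else --- existence and uniqueness on the torus, the involution property, and the descent to the localized cluster algebra --- is formal.
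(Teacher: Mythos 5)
This proposition is not proved in the paper at all: it is quoted from Geiß--Leclerc--Schröer \cite[Lemma 10.6, Lemma 10.7]{MR3090232}, and your reconstruction follows essentially the same route as theirs: define $\sigma'_{T}$ on the initial rescaled torus by its values on $q$ and the $Y_{T_{k}}$ (the defining quasi-commutation relations are preserved because $\Lambda_{T}$ is skew-symmetric), verify the closed formula for $Y_{R}$ via \eqref{qmonom} and Remark \ref{r:CB}, propagate along mutations by applying the map to the exchange relation of Proposition \ref{p:qmutation}, and finally localize at the frozen variables. The outline is sound, and the exponent identities you defer do close.

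Two caveats. First, the list of ingredients you give for the mutation step is not sufficient as stated: bilinearity of $(\ ,\ )$, the Crawley--Boevey formula of Proposition \ref{p:CB}, and additivity of $\dimv$ and $\dim_{\mathbb{C}}$ along the exchange sequences only control the \emph{sums} $\dim_{\mathbb{C}}\Hom_{\Pi}(T_{k},T_{\pm})+\dim_{\mathbb{C}}\Hom_{\Pi}(T_{\pm},T_{k})$ and $\dim_{\mathbb{C}}\End_{\Pi}(T_{\pm})$, whereas the two scalar identities you must check (one for the $Y_{T_{+}}$-term, one for the $Y_{T_{-}}$-term) involve the \emph{differences} $\dim_{\mathbb{C}}\Hom_{\Pi}(T_{k},T_{\pm})-\dim_{\mathbb{C}}\Hom_{\Pi}(T_{\pm},T_{k})$ as well as $\dim_{\mathbb{C}}\End_{\Pi}(T_{k}^{\ast})$. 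To pin these down one must in addition apply $\Hom_{\Pi}(T_{k},-)$ and $\Hom_{\Pi}(-,T_{k})$ to the two non-split sequences of Proposition \ref{p:mutcat}(4) and use that the connecting maps surject onto the one-dimensional $\Ext_{\Pi}^{1}(T_{k},T_{k}^{\ast})$; with that extra homological input the coefficients of $Y_{T_{+}}$ and $Y_{T_{-}}$ (distinct torus monomials, so comparison is legitimate) match and one gets $\sigma'_{T}(Y_{T_{k}^{\ast}})=q^{-(\dimv T_{k}^{\ast},\dimv T_{k}^{\ast})/2-\dim_{\mathbb{C}}T_{k}^{\ast}}Y_{T_{k}^{\ast}}$. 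Second, the phrase that $\sigma'_{T}$ ``stabilizes $\mathcal{T}_{\mathcal{A},\mu_{T_{k}}(T)}$'' is not quite right, since that torus is not contained in $\mathcal{T}_{\mathcal{A},T}$ (the inverse of $Y_{T_{k}^{\ast}}$ is not a Laurent polynomial in the old cluster); what your induction actually establishes, and what suffices, is that $\sigma'_{T}$ multiplies every rescaled cluster monomial of every reachable seed by the prescribed power of $q$ (using also that the quasi-commutation data of the mutated seed is $\Lambda_{\mu_{T_{k}}(T)}$, i.e.\ the cited compatibility of mutation), hence preserves $\mathscr{A}_{q^{\pm1}}(\mathcal{C}_{w})$ and agrees there with $\sigma'_{T'}$ for any reachable $T'$; the passage to $\widetilde{\mathscr{A}}_{q^{\pm1}}(\mathcal{C}_{w})$ is then the formal Ore-localization step you describe.
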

Gei\ss-Leclerc-Schr\"oer showed that a rescaled quantum cluster algebra
$\mathscr{A}_{\mathbb{Q}(q)}(\mathcal{C}_{w})$ gives an additive
categorification of the quantum unipotent subgroup $\Aq[N_{-}(w)]$
as follows. 
\begin{prop}[{{\cite[Theorem 12.3]{MR3090232}}}]
\label{p:GLSqclus} Let $w\in W$ and ${\bm{i}}=(i_{1},\dots,i_{\ell})\in I(w)$.
Then there is an isomorphism of $\mathbb{Q}(q)$-algebras $\kappa\colon\Aq[N_{-}(w)]\to\mathscr{A}_{\mathbb{Q}(q)}(\mathcal{C}_{w})$
given by 
\[
D_{s_{i_{1}}\cdots s_{i_{b}}\varpi_{i_{b}},s_{i_{1}}\cdots s_{i_{a-}}\varpi_{i_{a}}}\mapsto Y_{M[b,a]}
\]
for all $1\leq a\leq b\leq\ell$ with $i_{a}=i_{b}$. Moreover we
have $\sigma'\circ\kappa=\kappa\circ\sigma'$. Recall Definition \ref{d:twdualbar}.
\end{prop}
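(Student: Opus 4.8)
The plan is to transport Geiß-Leclerc-Schröer's theorem \cite[Theorem~12.3]{MR3090232} across the dictionary recorded in Remark~\ref{r:covention}. In \cite{MR3090232} one has a cluster isomorphism for the quantum coordinate ring $\Aq(\mathfrak{n}(w^{-1}))^{\mathrm{GLS}}$, sending the GLS generalized minor attached to $M_{\bm i}[b,a]$ to $Y_{M_{\bm i}[b,a]}^{\mathrm{GLS}}$, and the isomorphism $I_{\mathrm{norm}}\colon\Aq[N_{-}(w)]\xrightarrow{\sim}\Aq(\mathfrak{n}(w^{-1}))^{\mathrm{GLS}}$ built from $m_{\mathrm{norm}}$, $\vee$, and $\Psi^{\mathrm{GLS}}$. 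Composing these and using the comparison formulae $I_{\mathrm{norm}}(D_{w\lambda,w'\lambda})=q^{(w\lambda-w'\lambda,\,w\lambda-w'\lambda)/2}D_{w'\lambda,w\lambda}^{\mathrm{GLS}}$ and $Y_R=q^{(\dimv R,\dimv R)/2}Y_R^{\mathrm{GLS}}$, I would check that the composite $\kappa$ carries $D_{s_{i_1}\cdots s_{i_b}\varpi_{i_b},\,s_{i_1}\cdots s_{i_{a^-}}\varpi_{i_a}}$ to $Y_{M_{\bm i}[b,a]}$. The only real content here is the power-of-$q$ bookkeeping, for which one uses $\dimv M_{\bm i}[b,a]=\wt D_{s_{i_1}\cdots s_{i_b}\varpi_{i_b},\,s_{i_1}\cdots s_{i_{a^-}}\varpi_{i_a}}$ and the fact that GLS index their minors in the opposite order.

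For a self-contained argument one would instead build $\kappa$ on the initial seed and propagate it by mutation. First, by a Drinfeld-pairing computation of the type underlying Proposition~\ref{p:comm}, show that the minors $\{D_{s_{i_1}\cdots s_{i_k}\varpi_{i_k},\varpi_{i_k}}\}_{k\in[1,\ell]}$ (which are the images of the initial cluster $\{Y_{V_{\bm i,k}}\}_{k\in[1,\ell]}$) pairwise $q$-commute with exponents given by $\Lambda_{V_{\bm i}}$, and that $\Aq[N_{-}(w)]$ embeds into the localized based quantum torus they generate --- the latter via the dual PBW basis of Proposition~\ref{p:PBWbasis} and the quantum Laurent phenomenon (Proposition~\ref{p:laurentpheno}). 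This defines $\kappa$ on the initial quantum cluster. The heart of the matter is then to verify that the minors $D_{s_{i_1}\cdots s_{i_b}\varpi_{i_b},\,s_{i_1}\cdots s_{i_{a^-}}\varpi_{i_a}}$ satisfy the quantum cluster exchange relations attached to $(\widetilde B_{V_{\bm i}},\Lambda_{V_{\bm i}})$ along the mutation sequence of \cite[Section~13]{MR2822235}; these are precisely the quantum determinantal identities ("quantum $T$-systems") $D_bD_{b^-}=q^{\bullet}D_++q^{\bullet}D_-$, which I would establish by lifting their classical counterparts and pinning down the exponents using bar-invariance of both sides together with the leading-term normalization of Proposition~\ref{p:dualcanonical}. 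Granting these, surjectivity of $\kappa$ is immediate, and injectivity follows by specializing at $q=1$ and comparing with the classical isomorphism of \cite{MR2822235}, using compatible $\mathcal{A}$-forms as in Corollary~\ref{c:specializationlocal}.

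Finally, the identity $\sigma'\circ\kappa=\kappa\circ\sigma'$ is checked on the generating family of quantum minors. Each $D_{s_{i_1}\cdots s_{i_b}\varpi_{i_b},\,s_{i_1}\cdots s_{i_{a^-}}\varpi_{i_a}}$ lies in the dual canonical basis (Proposition~\ref{p:minorPBW}), hence is fixed by $\sigma$, equivalently $\sigma'(D)=q^{-(\wt D,\wt D)/2+(\wt D,\rho)}D$ by Remark~\ref{r:twdualbar}; on the other side $\sigma'_T(Y_R)=q^{-(\dimv R,\dimv R)/2-\dim_{\mathbb{C}}R}Y_R$ by Proposition~\ref{p:GLSdualbar}. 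Since $\dimv M_{\bm i}[b,a]=\wt D$, $\dim_{\mathbb{C}}M_{\bm i}[b,a]=-\height(\wt D)$, and $(\xi,\rho)=\height(\xi)$ for $\xi\in Q$ under the normalization $(\alpha_i,\alpha_i)=2$ in force in this section, the two prefactors agree, so $\kappa$ intertwines the two anti-involutions on generators and therefore everywhere. The main obstacle throughout is the quantum determinantal identities; everything else is convention bookkeeping or a $q=1$ specialization argument, and in the present paper the cleanest route is simply to invoke \cite[Theorem~12.3]{MR3090232} via Remark~\ref{r:covention}.
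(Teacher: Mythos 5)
Your proposal is correct and takes essentially the paper's route: the paper offers no independent proof of Proposition~\ref{p:GLSqclus}, presenting it as \cite[Theorem~12.3]{MR3090232} read through the convention dictionary of Remark~\ref{r:covention}, which is exactly your primary argument (and your $\sigma'$-prefactor bookkeeping is sound, since $\dimv M_{\bm i}[b,a]=\wt D$, $\dim_{\mathbb{C}}M_{\bm i}[b,a]=-\height(\wt D)$ and $(\xi,\rho)=\height(\xi)$ in the simply-laced normalization, while the self-contained sketch via quantum $T$-systems mirrors the actual GLS proof). One minor citation slip: dual canonical basis membership of the general two-index minors follows from Proposition~\ref{p:minor}(2) (with Proposition~\ref{p:Demazure}), not from Proposition~\ref{p:minorPBW}, which covers only $D_{w\lambda,\lambda}$.
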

By Theorem \ref{thm:DeCP}, this result also gives an additive categorification
of the quantum unipotent cell $\Aq[N_{-}^{w}]$.
\begin{cor}
\label{c:GLSqclus}Let $w\in W$ and ${\bm{i}}=(i_{1},\dots,i_{\ell})\in I(w)$.
Then there is an isomorphism of $\mathbb{Q}(q)$-algebras $\widetilde{\kappa}\colon\Aq[N_{-}^{w}]\to\widetilde{\mathscr{A}}_{\mathbb{Q}(q)}(\mathcal{C}_{w})$
given by 
\[
[D_{s_{i_{1}}\cdots s_{i_{b}}\varpi_{i_{b}},s_{i_{1}}\cdots s_{i_{a^{-}}}\varpi_{i_{a}}}]\mapsto Y_{M[b,a]}
\]
for all $1\leq a\leq b\leq\ell$ with $i_{a}=i_{b}$. Moreover we
have $\sigma'\circ\widetilde{\kappa}=\widetilde{\kappa}\circ\sigma'$.
Recall Definition \ref{d:dualbarex}.
\end{cor}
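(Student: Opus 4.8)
The plan is to deduce the Corollary from Proposition \ref{p:GLSqclus} and Theorem \ref{thm:DeCP} by a localization argument. Recall from Definition \ref{d:localization} that $\Aq[N_-^w]=\Aq[N_-\cap X_w][[\mathcal{D}_w]^{-1}]$ and $\Aq[N_-(w)\cap wG_0^{\min}]=\Aq[N_-(w)][\mathcal{D}_w^{-1}]$, while $\widetilde{\mathscr{A}}_{q^{\pm1}}(\mathcal{C}_w)=\mathscr{A}_{q^{\pm1}}(\mathcal{C}_w)[\mathbf{I}^{-1}]$ and hence $\widetilde{\mathscr{A}}_{\mathbb{Q}(q)}(\mathcal{C}_w)=\mathscr{A}_{\mathbb{Q}(q)}(\mathcal{C}_w)[\mathbf{I}^{-1}]$ (localization commutes with $\mathbb{Q}(q)\otimes_{\mathcal{A}}-$). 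The isomorphism $\widetilde{\kappa}$ will be produced as $\widetilde{\kappa}:=\kappa^{\mathrm{loc}}\circ\iota_w^{-1}$, where $\iota_w\colon\Aq[N_-(w)\cap wG_0^{\min}]\xrightarrow{\sim}\Aq[N_-^w]$ is the De Concini--Procesi isomorphism of Theorem \ref{thm:DeCP} and $\kappa^{\mathrm{loc}}$ is the extension of the isomorphism $\kappa$ of Proposition \ref{p:GLSqclus} to the localized algebras.

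The crucial point — the one I expect to need actual work — is to check that $\kappa$ carries the Ore set $\mathcal{D}_w$ of $\Aq[N_-(w)]$ onto the Ore set $\mathbf{I}$ of $\mathscr{A}_{q^{\pm1}}(\mathcal{C}_w)$. By the multiplicativity and $q$-commutation relations of the quantum minors $D_{w\lambda,\lambda}$ in Proposition \ref{p:comm}, the $q$-multiplicative set $\mathcal{D}_w$ is generated by $q^{\pm1}$ together with the $D_{w\varpi_i,\varpi_i}$, $i\in I$; and for $i\notin I_w$ one has $w\varpi_i=\varpi_i$, so $D_{w\varpi_i,\varpi_i}=1$ and only the indices $i\in I_w$ matter. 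Fix ${\bm i}=(i_1,\dots,i_\ell)\in I(w)$ and, for $i\in I_w$, let $k^{\mathrm{max}}$ be the last index with $i_{k^{\mathrm{max}}}=i$. Since $s_{i_{k^{\mathrm{max}}+1}},\dots,s_{i_\ell}$ fix $\varpi_i$ and $(k^{\mathrm{min}})^-=0$, we get $D_{w\varpi_i,\varpi_i}=D_{s_{i_1}\cdots s_{i_{k^{\mathrm{max}}}}\varpi_{i_{k^{\mathrm{max}}}},\,s_{i_1}\cdots s_{i_{(k^{\mathrm{min}})^-}}\varpi_{i_{k^{\mathrm{min}}}}}$, which by Proposition \ref{p:GLSqclus} is sent by $\kappa$ to $Y_{M[k^{\mathrm{max}},k^{\mathrm{min}}]}=Y_{V_{\bm{i},k^{\mathrm{max}}}}$, i.e.\ to the frozen cluster variable $Y_{T_{\ell-n+i}}$. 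On the other hand, by \eqref{qmonom} the set $\mathbf{I}$ is precisely the $q$-multiplicative set generated by $q^{\pm1}$ and the frozen variables $Y_{T_{\ell-n+i}}$, $i\in I_w$. Hence $\kappa(\mathcal{D}_w)=\mathbf{I}$, and by the universal property of Ore localization \cite[Proposition 6.3]{MR1020298} the isomorphism $\kappa$ extends to an isomorphism
\[
\kappa^{\mathrm{loc}}\colon\Aq[N_-(w)\cap wG_0^{\min}]\xrightarrow{\ \sim\ }\widetilde{\mathscr{A}}_{\mathbb{Q}(q)}(\mathcal{C}_w).
\]

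It then remains to assemble the pieces. The composite $\widetilde{\kappa}=\kappa^{\mathrm{loc}}\circ\iota_w^{-1}$ is an isomorphism $\Aq[N_-^w]\to\widetilde{\mathscr{A}}_{\mathbb{Q}(q)}(\mathcal{C}_w)$; since $\iota_w$ is induced from the canonical projection $\Uq^-\to\Aq[N_-\cap X_w]$ (Proposition \ref{p:usualinj}) followed by localization, it sends $D_{s_{i_1}\cdots s_{i_b}\varpi_{i_b},s_{i_1}\cdots s_{i_{a^-}}\varpi_{i_a}}$ to $[D_{s_{i_1}\cdots s_{i_b}\varpi_{i_b},s_{i_1}\cdots s_{i_{a^-}}\varpi_{i_a}}]$, so combining with Proposition \ref{p:GLSqclus} gives the stated formula $[D_{s_{i_1}\cdots s_{i_b}\varpi_{i_b},s_{i_1}\cdots s_{i_{a^-}}\varpi_{i_a}}]\mapsto Y_{M[b,a]}$ for $1\le a\le b\le\ell$ with $i_a=i_b$. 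Finally, for the $\sigma'$-compatibility: $\sigma'$ on $\Uq^-$ restricts to $\Aq[N_-(w)]$ and descends through the projection onto $\Aq[N_-\cap X_w]$ (these subspaces are spanned by dual canonical basis elements, on which $\sigma'$ acts by scalars), so $\iota_w\circ\sigma'=\sigma'\circ\iota_w$, and this passes to the localizations by uniqueness; likewise $\sigma'\circ\kappa=\kappa\circ\sigma'$ (Proposition \ref{p:GLSqclus}) extends to $\sigma'\circ\kappa^{\mathrm{loc}}=\kappa^{\mathrm{loc}}\circ\sigma'$, using the uniqueness of the extensions of $\sigma'$ in Propositions \ref{p:dualbarex} and \ref{p:GLSdualbar}. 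Composing, $\widetilde{\kappa}\circ\sigma'=\sigma'\circ\widetilde{\kappa}$. Everything beyond the identification $\kappa(\mathcal{D}_w)=\mathbf{I}$ is formal bookkeeping with Ore localizations and their universal property.
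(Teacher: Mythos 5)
Your argument is correct and is exactly the route the paper intends: the corollary is obtained by localizing the \GLS{} isomorphism $\kappa$ of Proposition \ref{p:GLSqclus} at the quantum minors $D_{w\varpi_i,\varpi_i}$ (which $\kappa$ sends to the frozen variables $Y_{T_{\ell-n+i}}$, so $\kappa(\mathcal{D}_w)=\mathbf{I}$ up to the obvious identifications) and composing with the De Concini--Procesi isomorphism of Theorem \ref{thm:DeCP}, with the $\sigma'$-compatibility following by uniqueness of the extensions in Propositions \ref{p:dualbarex} and \ref{p:GLSdualbar}. No gaps; your write-up just makes explicit the bookkeeping the paper leaves implicit.
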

The following is the classical counterpart of the results above due
to Gei\ss-Leclerc-Schr\"oer. Note that we explain it as a ``specialization''
of the results above but it is actually the preceding result of them.
\begin{prop}[{\cite[Theorem 3.1, Theorem 3.3]{MR2822235}}]
\label{p:GLSclus} Let $w\in W$ . For every reachable $\Pi$-module
$R$ in $\mathcal{C}_{w}$, we have $\varphi_{R}\in\mathbb{C}[N_{-}(w)]$,
and the correspondence 
\[
\varphi_{R}(\text{resp.~}[\varphi_{R}])\mapsto1\otimes Y_{R}.
\]
gives the $\mathbb{C}$-algebra isomorphism from $\mathbb{C}[N_{-}(w)]$
(resp.~$\mathbb{C}[N_{-}^{w}]$) to $\mathbb{C}\otimes_{\mathcal{A}}\mathscr{A}_{q^{\pm1}}(\mathcal{C}_{w})$
(resp.~$\mathbb{C}\otimes_{\mathcal{A}}\widetilde{\mathscr{A}}_{q^{\pm1}}(\mathcal{C}_{w})$). 
\end{prop}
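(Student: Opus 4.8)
The plan is to obtain this statement by \emph{specializing} the quantum results at $q=1$, exactly as the surrounding discussion suggests, rather than reproving the classical theorem of \GLS{} from scratch. First I would recall from Proposition \ref{p:GLSqclus} the $\mathbb{Q}(q)$-algebra isomorphism $\kappa\colon\Aq[N_{-}(w)]\to\mathscr{A}_{\mathbb{Q}(q)}(\mathcal{C}_{w})$, and note that by construction (and Remark \ref{r:covention}) it restricts to an $\mathcal{A}$-algebra isomorphism $\kappa_{\mathcal{A}}\colon\mathbf{A}_{\mathbb{Q}[q^{\pm1}]}[N_{-}(w)]\to\mathscr{A}_{q^{\pm1}}(\mathcal{C}_{w})$ between the natural $\mathcal{A}$-forms: the dual canonical basis elements $D_{s_{i_1}\cdots s_{i_b}\varpi_{i_b},\,s_{i_1}\cdots s_{i_{a^-}}\varpi_{i_a}}$ lie in $\mathbf{A}_{\mathbb{Q}[q^{\pm1}]}[N_{-}(w)]$, their images $Y_{M[b,a]}$ generate $\mathscr{A}_{q^{\pm1}}(\mathcal{C}_{w})$ over $\mathcal{A}$, and $\kappa$ sends cluster monomials to cluster monomials. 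Then I would tensor with $\mathbb{C}$ over $\mathcal{A}$ via $q^{\pm1}\mapsto 1$: using Theorem \ref{t:grpspecial}, the left side becomes $\mathbb{C}[N_{-}(w)]$, so $\kappa_{\mathcal{A}}\otimes_{\mathcal{A}}\mathbb{C}$ gives a $\mathbb{C}$-algebra isomorphism $\mathbb{C}[N_{-}(w)]\xrightarrow{\sim}\mathbb{C}\otimes_{\mathcal{A}}\mathscr{A}_{q^{\pm1}}(\mathcal{C}_{w})$.

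Next I must check that this specialized isomorphism is the expected one, i.e.\ that it sends $\varphi_{R}$ to $1\otimes Y_{R}$ for every reachable $R\in\mathcal{C}_{w}$. For the basic $\mathcal{C}_{w}$-maximal rigid module $V_{\bm i}$, the modules $M_{\bm i}[b,a]$ are precisely the indecomposable summands of the mutation orbit, and the identification $\varphi_{M_{\bm i}[b,a]}\mapsto 1\otimes Y_{M_{\bm i}[b,a]}$ follows from the definition of $\kappa$ together with the specialization statement $D_{\ldots}\mid_{q=1}=\varphi_{M_{\bm i}[b,a]}\mid_{\mathbb{C}[N_{-}(w)]}$, which is essentially Proposition \ref{p:minorPBW} combined with \GLS's identification of $\varphi$-maps with Feigin maps (Proposition \ref{p:closedcoord}) — here one uses the compatibility of conventions spelled out in Remark \ref{r:covention}. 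For a general reachable $R$, write $R=\bigoplus_i T_i^{\oplus a_i}$ for a basic reachable $\mathcal{C}_{w}$-maximal rigid $T$ obtained from $V_{\bm i}$ by iterated mutations; by induction on the number of mutations, using the additive categorification of mutation \eqref{clmut} on the classical side and Proposition \ref{p:qmutation} (whose $q\to1$ limit is \eqref{clmut}) on the quantum side, one gets $\kappa_{\mathcal{A}}(\varphi_{T_i})\mid_{q=1}=1\otimes Y_{T_i}$; Proposition \ref{p:phimap}(2) and \eqref{qmonom} then give the claim for $R$ after matching the scalar $q^{\alpha(R)}\mapsto 1$.

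Finally, for the unipotent cell statement I would localize: by Theorem \ref{thm:DeCP} and Corollary \ref{c:GLSqclus}, $\widetilde{\kappa}\colon\Aq[N_{-}^{w}]\to\widetilde{\mathscr{A}}_{\mathbb{Q}(q)}(\mathcal{C}_{w})$ is the localization of $\kappa$ at the Ore set generated by the $[D_{w\lambda,\lambda}]$ (equivalently, the frozen variables $Y_{M[k^{\max},k^{\min}]}$, $k\in[1,\ell]$). Restricting to $\mathcal{A}$-forms — $\mathbf{A}_{\mathbb{Q}[q^{\pm1}]}[N_{-}^{w}]$ spanned by $\widetilde{\mathbf{B}}^{\mathrm{up},w}$ on one side, $\widetilde{\mathscr{A}}_{q^{\pm1}}(\mathcal{C}_{w})=\mathscr{A}_{q^{\pm1}}(\mathcal{C}_{w})[\mathbf{I}^{-1}]$ on the other — and applying $-\otimes_{\mathcal{A}}\mathbb{C}$ together with Corollary \ref{c:specializationlocal}(2), which identifies the left side with $\mathbb{C}[N_{-}^{w}]$, yields the desired isomorphism $\mathbb{C}[N_{-}^{w}]\xrightarrow{\sim}\mathbb{C}\otimes_{\mathcal{A}}\widetilde{\mathscr{A}}_{q^{\pm1}}(\mathcal{C}_{w})$ sending $[\varphi_R]\mapsto 1\otimes Y_R$; here one also uses that localization commutes with the flat base change $-\otimes_{\mathcal{A}}\mathbb{C}$. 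The main obstacle I anticipate is purely bookkeeping rather than conceptual: keeping the several layers of conventions aligned — the transpose $(-)^T$ relating our $\mathcal{C}_w$ to $\mathcal{C}_{w^{-1}}^{\mathrm{GLS}}$, the opposite convolution product on $\mathcal{M}$, the normalization $m_{\mathrm{norm}}$ and the rescalings $Y_R=q^{(\dimv R,\dimv R)/4}X_R$ — so that the scalar powers of $q$ really do specialize to $1$ and the $\varphi_R\leftrightarrow Y_R$ correspondence comes out on the nose; this is precisely what Remark \ref{r:covention} was set up to handle.
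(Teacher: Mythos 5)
There is no proof to compare against here: the paper imports this proposition verbatim from \GLS{} (\cite[Theorems 3.1, 3.3]{MR2822235}), and the sentence immediately preceding it explicitly warns that, although one can ``explain'' it as a specialization of the quantum statements, it is in fact the \emph{preceding} result on which they rest. Your plan inverts that logical order. Proposition \ref{p:GLSqclus} is \GLS{}'s quantum theorem \cite[Theorem 12.3]{MR3090232}, whose proof already uses the classical cluster structure on $\mathbb{C}[N_{-}(w)]$ and the identification of the functions $\varphi_{M_{\bm i}[b,a]}$ with flag minors; so as a proof of the classical statement your $q=1$ specialization is circular in substance. You also re-import those classical facts implicitly: Proposition \ref{p:closedcoord} only describes $\mathbb{C}[N_{-}\cap X_w]$ as a quotient and does not give the equality of the specialized quantum minors with $\varphi_{M_{\bm i}[b,a]}$ --- that identification is itself part of \cite{MR2822235}.

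Even taking Proposition \ref{p:GLSqclus} as a black box, the decisive step of your argument is unjustified: to apply $-\otimes_{\mathcal{A}}\mathbb{C}$ you need $\kappa$ to restrict to an isomorphism of $\mathcal{A}$-forms $\mathbf{A}_{\mathbb{Q}[q^{\pm1}]}[N_{-}(w)]\to\mathscr{A}_{q^{\pm1}}(\mathcal{C}_{w})$, and this is not ``by construction''. The initial variables $Y_{M[b,a]}$ do not generate $\mathscr{A}_{q^{\pm1}}(\mathcal{C}_{w})$ over $\mathcal{A}$ (one needs \emph{all} cluster variables), so surjectivity requires knowing that $\kappa^{-1}(Y_{R})$ lies in the dual-canonical lattice for every reachable $R$, and the reverse inclusion requires that every dual canonical basis element of $\Aq[N_{-}(w)]$ is an $\mathcal{A}$-combination of quantum cluster monomials; neither is available in this paper. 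Indeed Remark \ref{r:KKKO} points out that even the membership $Y_{R}\in\mathbf{B}^{\mathrm{up}}$ is a deep theorem of Kang--Kashiwara--Kim--Oh \cite{MR3758148} with no known proof via this additive categorification. Theorem \ref{t:grpspecial} identifies the specialization only of the dual-canonical lattice, so you cannot instead specialize the lattice $\kappa^{-1}(\mathscr{A}_{q^{\pm1}}(\mathcal{C}_{w}))$ without comparing the two $\mathcal{A}$-forms, which is the same gap; moreover your mutation induction computes images under the specialized map whose very existence depends on that unproved lattice statement. Since the proposition is an external theorem, the correct treatment is the paper's: cite \cite{MR2822235} and do not attempt to rederive it from its own quantum descendants.
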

\begin{defn}
\label{d:GLSdualbar} Let $T$ be a basic reachable $\mathcal{C}_{w}$-maximal
rigid module and $T=T_{1}\oplus\cdots\oplus T_{\ell}$ its indecomposable
decomposition. Then the $Q_{-}$-grading on $\mathbb{Q}[q^{\pm1}][Y_{T_{k}}\mid k=1,\dots,\ell](\subset\mathcal{T}_{\mathcal{A},T})$
given by $\wt Y_{T_{k}}=\dimv T_{k}$ is extended to the $Q$-grading
on $\mathcal{T}_{\mathcal{A},T}$. A homogeneous element $X\in\mathcal{T}_{\mathcal{A},T}$
is said to be \emph{dual bar invariant} if 
\[
\sigma'_{T}(X)=q^{-(\wt X,\wt X)/2+(\wt X,\rho)}X,
\]
here recall that $\rho:=\sum_{i\in I}\varpi_{i}$ (Definition \ref{d:rootdatum}).
When $X\in\mathscr{A}_{\mathbb{Q}(q)}(\mathcal{C}_{w})$ (resp.~$\widetilde{\mathscr{A}}_{\mathbb{Q}(q)}(\mathcal{C}_{w})$),
the $Q$-grading and the definition of dual bar invariance of homogeneous
elements are compatible with the corresponding notions in $\Aq[N_{-}(w)]$
(resp.~$\Aq[N_{-}^{w}]$) via $\kappa$ (resp.~$\widetilde{\kappa}$).
See Remark \ref{r:twdualbar}. Note that $Y_{R}$ is dual bar invariant
for any reachable $\Pi$-module $R$.
\end{defn}
\begin{rem}
Through $\kappa$ (resp. $\widetilde{\kappa}$), we can translate
also the \emph{nontwisted} dual bar involution $\sigma$ on $\Aq[N_{-}(w)]$ (resp.~$\Aq[N_{-}^{w}]$)
into the involution on $\mathscr{A}_{\mathbb{Q}(q)}(\mathcal{C}_{w})$
(resp. $\widetilde{\mathscr{A}}_{\mathbb{Q}(q)}(\mathcal{C}_{w})$).
Then this involution coincides with the twisted bar involution 
in the sense of \cite[Section 6]{MR2146350} if we take a \emph{grading}
datum $\Sigma=(\sigma_{ij})_{i,j\in[1,\ell]}$ associated with $T$
in Definition \ref{d:GLSdualbar} as $\sigma_{ij}=-(\dimv T_{i},\dimv T_{j})$
for $i,j\in[1,\ell]$ (see \cite[Definition 6.5]{MR2146350} for the
definition of the notion of grading).
\end{rem}
Gei\ss-Leclerc-Schr\"oer also obtained an additive categorification of
the twist automorphism $\eta_{w}^{\ast}$ on the coordinate algebra
$\mathbb{C}[N_{-}^{w}]$ of a unipotent cell $N_{-}^{w}$ in non-quantum
settings. Here the image of $\varphi_{X}\in\mathbb{C}\left[N_{-}\right]$
under the restriction map $\mathbb{C}\left[N_{-}\right]\to\mathbb{C}\left[N_{-}^{w}\right]$
is denoted by $\left[\varphi_{X}\right]\in\mathbb{C}\left[N_{-}^{w}\right]$.
\begin{prop}[{{\cite[Theorem 6]{MR2833478}}}]
\label{p:GLStwist} Let $w\in W$. Then for every $X\in\mathcal{C}_{w}$
we have 
\[
\eta_{w}^{\ast}\left(\left[\varphi_{X}\right]\right)=\frac{\left[\varphi_{\Omega_{w}^{-1}(X)}\right]}{\left[\varphi_{I(X)}\right]}.
\]
\end{prop}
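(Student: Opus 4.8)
The plan is to prove Proposition~\ref{p:GLStwist} (the statement labeled \cite[Theorem 6]{MR2833478}) by transporting the quantum twist automorphism $\eta_{w,q}$ through the isomorphisms already set up in the excerpt and then specializing at $q=1$; in fact, since this is the quantum analogue being proved, I would instead deduce it as the $q=1$ shadow of a quantum statement. The key objects are the quantum twist automorphism $\eta_{w,q}\colon\Aq[N_{-}^{w}]\to\Aq[N_{-}^{w}]$ from Theorem~\ref{t:BZauto}, the isomorphism $\widetilde{\kappa}\colon\Aq[N_{-}^{w}]\to\widetilde{\mathscr{A}}_{\mathbb{Q}(q)}(\mathcal{C}_{w})$ from Corollary~\ref{c:GLSqclus}, and the syzygy operation $\Omega_w^{-1}$ on $\mathcal{C}_w$ together with Proposition~\ref{p:syzygy}. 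First I would show that $\widetilde{\kappa}\circ\eta_{w,q}\circ\widetilde{\kappa}^{-1}$ is the $\mathbb{Q}(q)$-algebra automorphism of $\widetilde{\mathscr{A}}_{\mathbb{Q}(q)}(\mathcal{C}_{w})$ sending $Y_{M_{\bm i}[b,a]}$ to (a $q$-power times) $Y_{\Omega_w^{-1}(M_{\bm i}[b,a])}\cdot Y_{I(M_{\bm i}[b,a])}^{-1}$, matching the syzygy functor up to normalization. Then specializing at $q=1$ via Corollary~\ref{c:spBZauto} and Proposition~\ref{p:GLSclus}, the automorphism $\eta_{w,q}\mid_{q=1}$ becomes $\eta_w^{\ast}$ on $\mathbb{C}[N_{-}^{w}]$ and the image of $Y_R$ becomes $[\varphi_R]$, yielding the displayed formula $\eta_w^{\ast}([\varphi_X])=[\varphi_{\Omega_w^{-1}(X)}]/[\varphi_{I(X)}]$ first for $X=M_{\bm i}[b,a]$ and hence, by the algebra homomorphism property and $\varphi_{X\oplus Y}=\varphi_X\varphi_Y$, for all reachable $X$ and finally all $X\in\mathcal{C}_w$ by the ``multiplicative'' spanning.

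Carrying this out, the main technical steps in order are: (i) identify $\eta_{w,q}$ explicitly on unipotent quantum minors — from Theorem~\ref{t:BZauto}, $[D_{u,u_\lambda}]\mapsto q^{-(\lambda,\wt u-\lambda)}[D_{w\lambda,\lambda}]^{-1}[D_{u_{w\lambda},u}]$ — and rewrite both sides in terms of the generators $D_{s_{i_1}\cdots s_{i_b}\varpi_{i_b},\,s_{i_1}\cdots s_{i_{a^-}}\varpi_{i_a}}$ appearing in Corollary~\ref{c:GLSqclus}; (ii) use Proposition~\ref{p:minor}, Proposition~\ref{p:minorPBW} and the Demazure-crystal bookkeeping (Propositions~\ref{p:Demazure}, \ref{p:jlambdaimage}, Remark~\ref{r:minlambda}) to recognize $[D_{u_{w\lambda},u}]$, for $u$ a suitable dual canonical basis vector, as another dual canonical basis element, so that $\eta_{w,q}$ is seen to permute $\widetilde{\mathbf{B}}^{\mathrm{up},w}$ (already asserted in Theorem~\ref{t:BZauto}); (iii) on the categorified side, use Proposition~\ref{p:syzygy} to see that $\Omega_w^{-1}$ sends the rigid module $V_{\bm i}$ to another basic reachable $\mathcal{C}_w$-maximal rigid module, hence induces a well-defined map on the $Y$-variables, and then identify $\Omega_w^{-1}(M_{\bm i}[b,a])$ combinatorially (this is where the BIRS/GLS description of $\mathcal{C}_w$ enters); (iv) track the $q$-powers: compare $\alpha(R)$, $(\dimv R,\dimv R)/4$ and the exponents in Theorem~\ref{t:BZauto} and Proposition~\ref{p:qmutation} to confirm that $\widetilde\kappa\circ\eta_{w,q}\circ\widetilde\kappa^{-1}$ really is the normalized syzygy map and that it specializes correctly; (v) finally invoke Corollary~\ref{c:spBZauto} and Proposition~\ref{p:GLSclus} to read off the classical identity.

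The hard part will be step (iii)–(iv): precisely matching, at the level of $q$-powers and injective hulls, the quantum twist $\eta_{w,q}$ (defined purely algebraically via minors and localization) with the functor $\Omega_w^{-1}$ (defined homologically on $\mathcal{C}_w$). Concretely, one must check that the image of a cluster variable $Y_{M_{\bm i}[b,a]}$ under $\widetilde\kappa\circ\eta_{w,q}\circ\widetilde\kappa^{-1}$ equals, up to the explicit scalar $q^{\bullet}$ dictated by the weights, the element $Y_{\Omega_w^{-1}(M_{\bm i}[b,a])}Y_{I(M_{\bm i}[b,a])}^{-1}$; the weight/scalar bookkeeping is delicate because three different normalizations ($Y_R$ vs. $X_R$ vs. GLS's $Y_R^{\mathrm{GLS}}$, see Remark~\ref{r:covention}) are in play, and because $\eta_{w,q}$ inverts weights ($\wt\eta_{w,q}([x])=-\wt[x]$) which must be reconciled with the fact that $\dimv\Omega_w^{-1}(X)-\dimv I(X)$ need not simply be $-\dimv X$. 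I would handle this by first proving the statement for the \emph{classical} $\eta_w^{\ast}$ directly — i.e., re-deriving Proposition~\ref{p:GLStwist} from Proposition~\ref{prop:clstwist} (the matrix-coefficient description of the classical twist) together with the $\varphi$-map formula $y_{\bm i}^{\ast}$, thereby reproving \cite[Theorem 6]{MR2833478} within the present framework — and only then, if desired, lifting it. But given the excerpt's logic, the cleanest route is: quantum twist preserves dual canonical bases (Theorem~\ref{t:BZauto}) $\Rightarrow$ it specializes to a well-defined automorphism of $\mathbb{C}[N_{-}^{w}]$ fixing $[\varphi_R]=1\otimes Y_R$ up to the combinatorial rule $R\mapsto\Omega_w^{-1}(R)\oplus$(injectives)${}^{-1}$ $\Rightarrow$ the displayed formula. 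I expect that verifying the injective-hull identity $\eta_w^{\ast}([\varphi_{V_{\bm i,k}}])=[\varphi_{\Omega_w^{-1}(V_{\bm i,k})}]/[\varphi_{\hat I_{i_k}^{\,\cap\mathcal{C}_w}}]$ on the initial cluster, and then propagating it by the mutation rule \eqref{clmut} compatibly with \eqref{Dimage}, is the crux.
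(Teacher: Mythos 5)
There is a genuine gap, and it is one of logical direction. In this paper Proposition \ref{p:GLStwist} is not proved at all: it is imported verbatim from Geiß--Leclerc--Schröer \cite{MR2833478} and then used as an \emph{input}. Concretely, the paper's proof of the quantum statement Theorem \ref{t:qGLS} invokes Proposition \ref{p:GLStwist} in the mutation-induction step (it is what produces the classical identities \eqref{classical} and \eqref{classical'}, which pin down the modules $I_{\pm}\oplus\Omega_w^{-1}(T_{\pm})$ before the dual-bar-invariance argument fixes the $q$-powers). Your primary plan — establish that $\widetilde{\kappa}\circ\eta_{w,q}\circ\widetilde{\kappa}^{-1}$ is the normalized syzygy map and then specialize at $q=1$ via Corollary \ref{c:spBZauto} and Proposition \ref{p:GLSclus} — is exactly Theorem \ref{t:qGLS} followed by specialization, so within this framework it is circular: you would be using a theorem whose proof already presupposes the statement you want. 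The abstract even flags this dependence ("Geiß--Leclerc--Schröer's results are essential in our proof").

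Even if one granted Theorem \ref{t:qGLS} by some independent argument, your final step does not close. Specialization yields the formula only for \emph{reachable} modules $R$ (direct summands of direct sums of copies of reachable $\mathcal{C}_w$-maximal rigid modules), i.e.\ for the modules indexing quantum cluster monomials, whereas Proposition \ref{p:GLStwist} is asserted for \emph{every} $X\in\mathcal{C}_w$. Passing "by the multiplicative spanning" fails: $\varphi_{X\oplus Y}=\varphi_X\varphi_Y$ lets you reduce to indecomposables, but in general $\mathcal{C}_w$ contains indecomposable modules that are not reachable, and their $\varphi$-functions are not products of $\varphi$'s of reachable modules; the dual semicanonical basis is strictly larger than the set of cluster monomials. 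Your fallback — rederiving the statement directly from Proposition \ref{prop:clstwist} (the matrix-coefficient description of $\gamma_w$) and the $\varphi$-map — is the correct direction, but as written it is only a declaration of intent: the actual content of GLS's Theorem 6, namely an argument relating $\varphi_X\bigl(\bigl[z^{T}\overline{w}\bigr]_{-}\bigr)$ to $\varphi_{\Omega_w^{-1}(X)}(z)$ and $\varphi_{I(X)}(z)$ for arbitrary $X\in\mathcal{C}_w$, requires genuine homological input beyond anything sketched here, and none of it appears in the proposal.
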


\subsection{Quantum twist automorphisms and the quantum algebra structures}

Our main result in this subsection is the following quantum analogue
of Proposition \ref{p:GLStwist}. Recall Proposition \ref{p:syzygy}. 
\begin{thm}
\label{t:qGLS} Let $w\in W$, $T$ a basic reachable $\mathcal{C}_{w}$-maximal
rigid module, and $T=T_{1}\oplus\cdots\oplus T_{\ell}$ its indecomposable
decomposition. Through $\widetilde{\kappa}$ in Corollary \ref{c:GLSqclus},
we regard the quantum twist map $\eta_{w,q}$ as an algebra automorphism
on $\widetilde{\mathscr{A}}_{\mathbb{Q}(q)}(\mathcal{C}_{w})$. Then,
for every direct summand $R$ of a direct sum of copies of $T$ (that
is, every reachable $\Pi$-module $R$ in $\mathcal{C}_{w}$), we
have 
\begin{equation}
\eta_{w,q}(Y_{R})=q^{\sum_{i\in I_{w}}\lambda_{i}\dim_{\mathbb{C}}\epsilon_{i}.R}Y_{I(R)}^{-1}Y_{\Omega_{w}^{-1}(R)}.\label{qGLS}
\end{equation}
here we write $I(R)=\bigoplus_{i\in I_{w}}T_{\ell-n+i}^{\oplus\lambda_{i}}$. 
\end{thm}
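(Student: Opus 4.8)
The plan is to verify \eqref{qGLS} by reducing to the cases where both sides are well understood, then propagating through the mutation structure. First, by Proposition \ref{p:localdualcan} and Remark \ref{r:param}, every element $Y_R$ corresponds under $\widetilde{\kappa}$ to a (rescaled) dual canonical basis element of $\Aq[N_{-}^{w}]$ up to a power of $q$; in fact $Y_{M[b,a]}$ corresponds to the unipotent quantum minor $[D_{s_{i_{1}}\cdots s_{i_{b}}\varpi_{i_{b}},s_{i_{1}}\cdots s_{i_{a^{-}}}\varpi_{i_{a}}}]$. So the main case to settle is $R=T_{k}=V_{\bm{i},k}=M[k,k^{\min}]$, i.e. the building blocks $[D_{s_{i_1}\cdots s_{i_k}\varpi_{i_k},\varpi_{i_k}}]$, together with general products. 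I would first compute $\eta_{w,q}$ on a unipotent quantum minor $[D_{s_{i_1}\cdots s_{i_k}\varpi_{i_k},\varpi_{i_k}}]$ directly from the explicit formula in Theorem \ref{t:BZauto}: writing $\mu=\varpi_{i_k}$ and $u=u_{s_{i_1}\cdots s_{i_k}\mu}$, we get $\eta_{w,q}([D_{u,u_\mu}])=q^{\bullet}[D_{w\mu,\mu}]^{-1}[D_{u_{w\mu},u}]$, and by Proposition \ref{p:minor}(2) the numerator $[D_{u_{w\mu},u}]$ is again (a $q$-power times) a dual canonical basis element, while $[D_{w\mu,\mu}]^{-1}$ is a frozen inverse. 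The point is to match this, via $\widetilde{\kappa}$, with $Y_{I(T_k)}^{-1}Y_{\Omega_w^{-1}(T_k)}$, using that $\Omega_w^{-1}(V_{\bm{i},k})$ and the $\mathcal{C}_w$-injective hull are explicitly identified in \cite{MR2822235} in terms of the combinatorics of $\bm{i}$ (this is exactly the classical statement Proposition \ref{p:GLStwist}, whose $q=1$ shadow must be \eqref{qGLS}).

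Second, I would establish the multiplicativity needed to pass from the $T_k$'s to arbitrary $R$. Since $\eta_{w,q}$ is an algebra automorphism and $Y_R=q^{\alpha(R)}Y_{T_1}^{a_1}\cdots Y_{T_\ell}^{a_\ell}$ by \eqref{qmonom}, it suffices to know $\eta_{w,q}(Y_{T_k})$ for each indecomposable summand $T_k$ of a \emph{fixed} basic reachable maximal rigid module $T$ — but here there is a subtlety: $T$ ranges over all reachable maximal rigid modules, not just $V_{\bm{i}}$, so I must show the formula is mutation-invariant. This is where Proposition \ref{p:syzygy} enters: $\Omega_w^{-1}$ intertwines mutation $\mu_{T_k}$ with $\mu_{T'_{k^*}}$, and $I(-)$ on the projective-injective part is unchanged, so if \eqref{qGLS} holds for the summands of $T$ it holds for the summands of $\mu_{T_k}(T)$. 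Concretely, apply $\eta_{w,q}$ to the exchange relation in Proposition \ref{p:qmutation}, $Y_{T_k^*}Y_{T_k}=q^{-\dim\Hom(T_k,T_k^*)}(qY_{T_+}+Y_{T_-})$; using the inductive hypothesis on $Y_{T_k},Y_{T_\pm}$ and the fact that $\Omega_w^{-1}$ is exact on $\mathcal{C}_w$ (it sends the two exchange triangles to the exchange triangles for $T'$), one solves for $\eta_{w,q}(Y_{T_k^*})$ and checks it equals $q^{\sum_i \lambda_i\dim\epsilon_i.T_k^*}Y_{I(T_k^*)}^{-1}Y_{\Omega_w^{-1}(T_k^*)}$, the only delicate part being bookkeeping of the $q$-powers, which are pinned down by weight considerations ($\wt\eta_{w,q}([x])=-\wt[x]$ from Theorem \ref{t:BZauto}, together with the explicit form of $\alpha(R)$ and $\Lambda_T$).

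Third, to nail the exponent $q^{\sum_{i\in I_w}\lambda_i\dim_{\mathbb{C}}\epsilon_i.R}$ without a full Gaussian-elimination-style computation, I would use a short-cut: both sides of \eqref{qGLS} are dual-bar-invariant up to an \emph{a priori} unknown power of $q$ (the left side because $\eta_{w,q}$ commutes with $\sigma$ by Theorem \ref{t:BZauto} and $Y_R$ is dual bar invariant, the right side by Proposition \ref{p:GLSdualbar} and Definition \ref{d:GLSdualbar}), they have the same weight, and they specialize at $q=1$ to the same element by Corollary \ref{c:spBZauto} and Proposition \ref{p:GLStwist}. An element of $\widetilde{\mathscr{A}}_{\mathbb{Q}(q)}(\mathcal{C}_w)$ that is dual-bar-invariant and has a prescribed $q=1$ specialization is determined up to the ambiguity in matching the $\frac12(\wt,\wt)-(\wt,\rho)$ normalization; comparing this normalization on the two sides forces the exponent to be exactly $\sum_{i\in I_w}\lambda_i\dim_{\mathbb{C}}\epsilon_i.R$ (note $I(R)=\bigoplus_{i\in I_w}T_{\ell-n+i}^{\oplus\lambda_i}$ and $\wt Y_{T_{\ell-n+i}}=\dimv T_{\ell-n+i}$).

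\textbf{Main obstacle.} The hard part will be the second step: controlling the $q$-powers through an iterated mutation argument, i.e. showing \eqref{qGLS} is stable under $\mu_{T_k}$ with the \emph{exact} exponent rather than merely up to an uncontrolled scalar. This requires combining Proposition \ref{p:syzygy} (the compatibility of $\Omega_w^{-1}$ with mutation) with the precise quantum exchange relation of Proposition \ref{p:qmutation} and a careful analysis of how $\dim_{\mathbb{C}}\Hom_\Pi$ and $\dimv$ behave along the exchange sequences — essentially re-deriving, in the quantum setting, the $q$-commutation bookkeeping that underlies \cite[Theorem 12.3]{MR3090232}. Once the base case $R=V_{\bm{i},k}$ and this inductive step are in hand, the general formula follows by \eqref{qmonom} and induction on the mutation distance from $V_{\bm{i}}$.
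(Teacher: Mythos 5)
Your overall architecture is the same as the paper's: compute the base case on the summands of $V_{\bm{i}}$ (and the frozen summands) directly from the explicit formula in Theorem \ref{t:BZauto} together with Corollary \ref{c:GLSqclus}, reduce arbitrary reachable $R$ to indecomposable summands via the monomial expression \eqref{qmonom}, induct on the mutation distance from $V_{\bm{i}}$ using Proposition \ref{p:syzygy} and the quantum exchange relation of Proposition \ref{p:qmutation}, and fix powers of $q$ by dual bar invariance (this is exactly the role of Lemmas \ref{l:dualbarinv}, \ref{l:dualbar} and \ref{l:indec} in the paper). However, there is a genuine gap in your inductive step: you invoke "the fact that $\Omega_w^{-1}$ is exact on $\mathcal{C}_w$ (it sends the two exchange triangles to the exchange triangles for $T'$)". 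This is false as stated. The cosyzygy functor is only an equivalence of the \emph{stable} category; the middle terms of the exchange sequences for $T'$ at $T'_{k^{\ast}}$ are $T'_{\pm}$, which agree with $\Omega_w^{-1}(T_{\pm})$ only up to $\mathcal{C}_w$-projective-injective summands $I_{\pm}$, with $I(T_k\oplus T_k^{\ast})\simeq I(T_{\pm})\oplus I_{\pm}$. The identity $\{I_{+}\oplus\Omega_w^{-1}(T_{+}),\,I_{-}\oplus\Omega_w^{-1}(T_{-})\}=\{T'_{+},T'_{-}\}$ (equation \eqref{eq:modeq} in the paper) is precisely what allows the expression obtained by solving the quantum exchange relation to be recognized as the exchange relation for $T'$ at $T'_{k^{\ast}}$, and it is where the $Y_{I(T_{\pm})}^{-1}$ factors get absorbed; the paper does not get it from any exactness property, but by applying the classical twist formula (Proposition \ref{p:GLStwist}) to the classical exchange relation for the $\varphi$-functions and comparing with the classical exchange relation for $T'$. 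Proposition \ref{p:syzygy} alone (which identifies only the \emph{new} summand $(T'_{k^{\ast}})^{\ast}=\Omega_w^{-1}(T_k^{\ast})$) does not give this, so as written your induction does not close.

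A secondary point: the $q$-power bookkeeping cannot be "pinned down by weight considerations", since multiplying by $q^m$ does not change the weight. Your step 3 names the right tool, but the general principle you appeal to — that a dual-bar-invariant element with prescribed specialization at $q=1$ is determined — is false (compare $X$ with $\tfrac12(q+q^{-1})X$). What is true, and what the paper uses, is the narrow uniqueness statement that a Laurent monomial of the quantum torus $\mathcal{T}_{\mathcal{A},T'}$ admits a unique rescaling by a power of $q$ that is dual bar invariant (Lemma \ref{l:dualbarinv}), combined with the verification that the right-hand side of \eqref{qGLS} is dual bar invariant (Lemma \ref{l:dualbar}) and that $\eta_{w,q}$ preserves dual bar invariance (Theorem \ref{t:BZauto}). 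Once the mutation induction (repaired as above) exhibits both sides, term by term, as monomials of $\mathcal{T}_{\mathcal{A},T'}$ with the same exponent vectors up to powers of $q$, this uniqueness fixes the powers and yields \eqref{qGLS}; so this part of your plan is repairable, but it must be phrased through that uniqueness rather than through weights or the $q=1$ specialization alone.
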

As a corollary of the above result, we obtain the following.
\begin{cor}
\label{c:clustermonom} Let $R$ be a reachable $\Pi$-module in $\mathcal{C}_{w}$.
Then $\kappa^{-1}(Y_{R})\in\mathbf{B}^{\mathrm{up}}\cap\Aq[N_{-}(w)]$
if and only if $\kappa^{-1}(Y_{\Omega_{w}^{-1}(R)})\in\mathbf{B}^{\mathrm{up}}\cap\Aq[N_{-}(w)]$.
\end{cor}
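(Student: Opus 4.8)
The plan is to deduce Corollary~\ref{c:clustermonom} directly from Theorem~\ref{t:qGLS}, Theorem~\ref{t:BZauto}, and the fact that quantum twist automorphisms permute the dual canonical basis $\widetilde{\mathbf{B}}^{\mathrm{up},w}$ of $\Aq[N_{-}^{w}]$. The essential observation is that the statement concerns membership in $\mathbf{B}^{\mathrm{up}}\cap\Aq[N_{-}(w)]$ (a subset of the quantum unipotent subgroup), not $\widetilde{\mathbf{B}}^{\mathrm{up},w}$ directly, so I first need to relate the two via the canonical projection $\iota_w\colon\Aq[N_{-}(w)]\hookrightarrow\Aq[N_{-}\cap X_w]$ and its localized version. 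By Proposition~\ref{p:usualinj} and the description of $\widetilde{\mathbf{B}}^{\mathrm{up},w}$ in Definition~\ref{d:localdualcan}, an element $\kappa^{-1}(Y_R)\in\Aq[N_{-}(w)]$ lies in $\mathbf{B}^{\mathrm{up}}\cap\Aq[N_{-}(w)]$ if and only if its image $[\kappa^{-1}(Y_R)]\in\Aq[N_{-}\cap X_w]$ lies in the projected dual canonical basis, which (since $R$ is reachable, hence $Y_R$ corresponds to a quantum cluster monomial, and these are known to be non-vanishing under the projection up to a power of $q$) is equivalent to $\widetilde{\kappa}(\ ) = q^{\bullet}Y_R$ being an element of $\widetilde{\mathbf{B}}^{\mathrm{up},w}$ multiplied by a unit of the form $q^{\bullet}[D_{w,\lambda}]$ for a suitable $\lambda\in P$.

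The key step is then the following chain. Suppose $\kappa^{-1}(Y_R)\in\mathbf{B}^{\mathrm{up}}\cap\Aq[N_{-}(w)]$. Write $\kappa^{-1}(Y_R)=\Gup(b)$ with $b\in\mathscr{B}(\Uq^{-}(w))$; then $\widetilde{\kappa}(\widetilde{\iota}_w(\Gup(b))) = q^{\bullet}Y_R$ where $\widetilde{\iota}_w$ denotes the canonical map into $\Aq[N_{-}^{w}]$, and up to rescaling by an element of $q^{\mathbb{Z}}[D_{w,\lambda}]$ this is an element of $\widetilde{\mathbf{B}}^{\mathrm{up},w}$. By Theorem~\ref{t:BZauto}, $\eta_{w,q}$ permutes $\widetilde{\mathbf{B}}^{\mathrm{up},w}$ and sends $[D_{w,\lambda}]$ to $[D_{w,-\lambda}]$, so $\eta_{w,q}(q^{\bullet}Y_R)$ is again an element of $\widetilde{\mathbf{B}}^{\mathrm{up},w}$ up to a power of $q$ times some $[D_{w,\mu}]$. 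On the other hand, Theorem~\ref{t:qGLS} computes $\eta_{w,q}(Y_R)=q^{\bullet}Y_{I(R)}^{-1}Y_{\Omega_w^{-1}(R)}$; since $I(R)$ is a direct sum of $\mathcal{C}_w$-projective-injectives, $Y_{I(R)}$ corresponds under $\widetilde{\kappa}$ to a product of $q$-powers of unipotent quantum minors $[D_{w\lambda_i,\lambda_i}]$, hence $Y_{I(R)}^{-1}$ matches a factor $q^{\bullet}[D_{w,\nu}]$. Comparing, $\widetilde{\kappa}^{-1}$ of $q^{\bullet}Y_{\Omega_w^{-1}(R)}$, after stripping off the $[D_{w,\bullet}]$ factor, is an element of $\widetilde{\mathbf{B}}^{\mathrm{up},w}$; by the inverse of the identification in the first paragraph (and Proposition~\ref{p:usualinj} again, noting $\Omega_w^{-1}(R)$ is reachable in $\mathcal{C}_w$ by Proposition~\ref{p:syzygy}) this forces $\kappa^{-1}(Y_{\Omega_w^{-1}(R)})\in\mathbf{B}^{\mathrm{up}}\cap\Aq[N_{-}(w)]$. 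The converse follows by applying the same argument with $R$ replaced by $\Omega_w^{-1}(R)$ and using that $\eta_{w,q}$ is an automorphism (so that applying it enough times, or using $\Omega_w$ directly via Proposition~\ref{p:syzygy}, returns us to $R$ up to a projective-injective summand whose $Y$-class is a product of minors).

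The main obstacle I anticipate is bookkeeping the precise powers of $q$ and the precise $[D_{w,\lambda}]$-factors so that ``$q^{\bullet}Y_R$ belongs to $\widetilde{\mathbf{B}}^{\mathrm{up},w}\cdot q^{\mathbb{Z}}[D_{w,\bullet}]$'' is genuinely equivalent to ``$\kappa^{-1}(Y_R)$ is a dual canonical basis element of $\Aq[N_{-}(w)]$'' — in particular verifying that the projection $\iota_w$ does not collapse $\Gup(b)$ (which is exactly Proposition~\ref{p:usualinj}, i.e.\ $\ast(\mathscr{B}(\Uq^{-}(w)))\subset\mathscr{B}_w(\infty)$) and that the localization factor introduced is exactly of the controlled form. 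A secondary subtlety is that $Y_R\in\mathscr{A}_{q^{\pm1}}(\mathcal{C}_w)$ carries an extra $q^{1/2}$-rescaling relative to the Geiß-Leclerc-Schröer normalization; I would handle this by working throughout with the $\mathcal{A}=\mathbb{Q}[q^{\pm1}]$-form and the isomorphism $\kappa$ of Proposition~\ref{p:GLSqclus}, whose compatibility with $\sigma'$ (hence with dual bar invariance, hence with the characterization of dual canonical basis elements via (DCB1)--(DCB2)) is already recorded. Once these normalizations are pinned down, the corollary is a formal consequence of the two main theorems.
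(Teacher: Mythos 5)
Your proposal follows the paper's own route: both deduce the corollary from Theorem \ref{t:qGLS} together with Theorem \ref{t:BZauto}, translating membership in $\mathbf{B}^{\mathrm{up}}\cap\Aq[N_{-}(w)]$ into membership of the localized image in $\widetilde{\mathbf{B}}^{\mathrm{up},w}$ via $\iota_{w}$ (Proposition \ref{p:usualinj}), applying the fact that $\eta_{w,q}$ permutes $\widetilde{\mathbf{B}}^{\mathrm{up},w}$, and then stripping off the $Y_{I(R)}^{-1}$-factor, with dual bar invariance pinning down the powers of $q$ --- this is exactly the bookkeeping the paper carries out.

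The one place where your write-up goes astray is the converse direction. Replacing $R$ by $\Omega_{w}^{-1}(R)$ and ``applying $\eta_{w,q}$ enough times to return to $R$'' implicitly invokes periodicity of the syzygy functor, which is available only in finite type (Theorem \ref{t:period}) and fails for general $w$ in the symmetric Kac--Moody setting; moreover Proposition \ref{p:syzygy} concerns $\Omega_{w}^{-1}$ only, so it does not by itself supply an inverse operation. This detour is unnecessary: every step of your forward argument is already an equivalence. Indeed, $\eta_{w,q}$ restricts to a bijection of $\widetilde{\mathbf{B}}^{\mathrm{up},w}$ (so $x\in\widetilde{\mathbf{B}}^{\mathrm{up},w}$ if and only if $\eta_{w,q}(x)\in\widetilde{\mathbf{B}}^{\mathrm{up},w}$), and since every element of $\widetilde{\mathbf{B}}^{\mathrm{up},w}$ has by definition the form $q^{(\lambda,\wt b+\lambda-w\lambda)}[D_{w\lambda,\lambda}]^{-1}[G^{\mathrm{up}}(b)]$ with $b\in\mathscr{B}_{w}(\infty)$, the statement that $q^{\bullet}Y_{I(R)}^{-1}Y_{\Omega_{w}^{-1}(R)}$ is such an element is equivalent (using dual bar invariance of $Y_{\Omega_{w}^{-1}(R)}$ to fix the $q$-power, and injectivity of $\iota_{w}$ on $\Aq[N_{-}(w)]$) to $\kappa^{-1}(Y_{\Omega_{w}^{-1}(R)})\in\mathbf{B}^{\mathrm{up}}\cap\Aq[N_{-}(w)]$. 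Phrasing the chain as a sequence of ``if and only if''s, as the paper does, yields both directions at once, with no iteration of $\eta_{w,q}$ and no appeal to periodicity.
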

Before proving Theorem \ref{t:qGLS}, we show its corollary.
\begin{proof}
By Theorem \ref{t:BZauto} and \ref{t:qGLS}, $\kappa^{-1}(Y_{R})\in\mathbf{B}^{\mathrm{up}}$
if and only if $\widetilde{\kappa}^{-1}(q^{\sum_{i\in I_{w}}\lambda_{i}\dim_{\mathbb{C}}\epsilon_{i}.R}Y_{I(R)}^{-1}Y_{\Omega_{w}^{-1}(R)})\in\widetilde{\mathbf{B}}^{\mathrm{up},w}$.
By Theorem \ref{t:BZauto} and the dual bar invariance of $Y_{R}$,
the element $q^{\sum_{i\in I_{w}}\lambda_{i}\dim_{\mathbb{C}}\epsilon_{i}.R}Y_{I(R)}^{-1}Y_{\Omega_{w}^{-1}(R)}$
is also dual bar invariant. Combining this fact with the definition
of $\widetilde{\mathbf{B}}^{\mathrm{up},w}=\iota_{w}(\mathbf{\widetilde{B}}^{\mathrm{up}}(w))$
and the dual bar invariance of $Y_{\Omega_{w}^{-1}(R)}$, we have
$\widetilde{\kappa}^{-1}(q^{\sum_{i\in I_{w}}\lambda_{i}\dim_{\mathbb{C}}\epsilon_{i}.R}Y_{I(R)}^{-1}Y_{\Omega_{w}^{-1}(R)})\in\widetilde{\mathbf{B}}^{\mathrm{up},w}$
if and only if $\kappa^{-1}(Y_{\Omega_{w}^{-1}(R)})\in\mathbf{B}^{\mathrm{up}}$. 
\end{proof}
\begin{rem}
\label{r:KKKO} Kang-Kashiwara-Kim-Oh \cite{KKKO} have shown
that all (rescaled) quantum cluster monomials belong to $\mathbf{B}^{\mathrm{up}}$
by using the categorification via representations of quiver Hecke
algebras. (See \cite[Introduction]{KKKO} for several results of this direction before \cite{KKKO}.) Hence we have already known that $Y_{R}$ is an element
of $\mathbf{B}^{\mathrm{up}}$ for an arbitrary reachable $\Pi$-module
in $\mathcal{C}_{w}$. However there is now no proof of this strong
result through the additive categorification above. Therefore it would
be interesting to determine the quantum monomials in $\mathbf{B}^{\mathrm{up}}$
which are obtained from Corollary \ref{c:clustermonom} and, for example,
$(Y_{V_{\bm{i}}})^{\bm{a}}$ for $\bm{a}\in\mathbb{Z}_{\geq0}^{\ell(w)}$
and $\bm{i}\in I(w)$. Actually, it is easy to show that $(Y_{V_{\bm{i}}})^{\bm{a}}\in\mathbf{B}^{\mathrm{up}}$
by Proposition \ref{p:localization}. Moreover it is unclear whether
a quantum twist automorphism $\eta_{w,q}$ is categorified by using
finite dimensional representations of quiver Hecke algebras. In particular,
we do not know that quantum twist automorphisms preserve the basis
coming from the simple modules of quiver Hecke algebras. 
\end{rem}
The rest of this subsection is devoted to the proof of Theorem \ref{t:qGLS}.
In this proof, we essentially use Gei\ss-Leclerc-Schr\"oer's theory. 
\begin{lem}
\label{l:dualbarinv} Let $T$ be a basic reachable $\mathcal{C}_{w}$-maximal
rigid module and $T=T_{1}\oplus\cdots\oplus T_{\ell}$ its indecomposable
decomposition. Take $(a_{1},\dots,a_{\ell})\in\mathbb{Z}^{\ell}$.
Then there exists a unique integer $m$ such that $q^{m}Y_{T_{1}}^{a_{1}}\cdots Y_{T_{\ell}}^{a_{\ell}}$
is dual bar invariant in $\mathcal{T}_{\mathcal{A},T}$. 
\end{lem}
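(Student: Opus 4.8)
The plan is to compute $\sigma'_T$ on the monomial $X_0 := Y_{T_1}^{a_1}\cdots Y_{T_\ell}^{a_\ell}$ explicitly and then read off $m$ from the defining equation of dual bar invariance (Definition \ref{d:GLSdualbar}), obtaining uniqueness for free since $X_0$ is invertible in $\mathcal{T}_{\mathcal{A},T}$.

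First I would record the two inputs. By Proposition \ref{p:GLSdualbar}, $\sigma'_T(Y_{T_k}) = q^{c_k}Y_{T_k}$ with $c_k := -(\dimv T_k,\dimv T_k)/2 - \dim_{\mathbb{C}}T_k$; and in $\mathcal{T}_{\mathcal{A},T}$ the generators satisfy $Y_{T_i}Y_{T_j} = q^{\lambda_{ij}}Y_{T_j}Y_{T_i}$ with $\lambda_{ij} = \dim_{\mathbb{C}}\Hom_\Pi(T_i,T_j) - \dim_{\mathbb{C}}\Hom_\Pi(T_j,T_i) \in \mathbb{Z}$ (this is just the commutation relation of the quantum torus for $\Lambda_T$, rescaling by $Y$ being a scalar multiple of $X$). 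Since $\sigma'_T$ is a $\mathbb{Q}$-algebra anti-involution with $\sigma'_T(q) = q^{-1}$, applying it to $X_0$ reverses the product, produces the scalars $q^{a_k c_k}$, and then reordering the reversed product back to standard order via the commutation relation gives
\[
\sigma'_T(X_0) = q^{\sum_k a_k c_k - \sum_{i<j}a_ia_j\lambda_{ij}}\,X_0.
\]
Because $\sigma'_T(q^m X_0) = q^{-m}\sigma'_T(X_0)$ and $\wt X_0 = \beta := \sum_k a_k\dimv T_k$, the element $q^m X_0$ is dual bar invariant exactly when
\[
2m = \sum_k a_k c_k - \sum_{i<j}a_ia_j\lambda_{ij} + (\beta,\beta)/2 - (\beta,\rho),
\]
and since $X_0$ is a unit this $m$ is unique if it exists.

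For existence I would simplify the right-hand side using $(\dimv T_k,\rho) = -\dim_{\mathbb{C}}T_k$ (immediate from $\dimv T_k\in Q_-$ and $(\alpha_i,\rho)=1$, valid since $(\alpha_i,\alpha_i)=2$), so that $c_k = -(\dimv T_k,\dimv T_k)/2 + (\dimv T_k,\rho)$, together with $(\dimv T_i,\dimv T_j) - \lambda_{ij} = 2\dim_{\mathbb{C}}\Hom_\Pi(T_j,T_i)$ from Remark \ref{r:CB}. Expanding $(\beta,\beta)$ and $(\beta,\rho)$, the terms linear in $(\dimv T_k,\rho)$ cancel and the expression collapses to
\[
2m = \sum_k \binom{a_k}{2}(\dimv T_k,\dimv T_k) + 2\sum_{i<j}a_ia_j\dim_{\mathbb{C}}\Hom_\Pi(T_j,T_i).
\]
The second sum is an even integer; in the first sum $\binom{a_k}{2}=a_k(a_k-1)/2\in\mathbb{Z}$ for $a_k\in\mathbb{Z}$, and $(\dimv T_k,\dimv T_k)\in2\mathbb{Z}$ since $(\ ,\ )$ restricts to an even symmetric bilinear form on $Q$ (again using $(\alpha_i,\alpha_i)=2$ and $(\alpha_i,\alpha_j)\in\mathbb{Z}$). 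Hence the right-hand side lies in $2\mathbb{Z}$ and the desired $m$ exists.

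There is no deep obstacle; the one place to be careful is the bookkeeping in the first display — the direction in which $\sigma'_T$ reverses products and the sign convention in $\lambda_{ij}$ — since a slip there would corrupt the parity computation. As a consistency check, when $\bm{a}\in\mathbb{Z}_{\geq0}^{\ell}$ the resulting $m$ coincides with the exponent $\alpha(R)$ of \eqref{qmonom} for $R=\bigoplus_i T_i^{\oplus a_i}$, matching the already-observed dual bar invariance of $Y_R$.
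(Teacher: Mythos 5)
Your proposal is correct and follows essentially the same route as the paper's proof: apply the anti-involution $\sigma'_{T}$ to the monomial, reorder via the commutation relations of $\Lambda_{T}$, equate with the dual bar invariance condition, and simplify using Remark \ref{r:CB} to get $2m=\sum_{k}a_{k}(a_{k}-1)(\dimv T_{k},\dimv T_{k})/2+2\sum_{i<j}a_{i}a_{j}\dim_{\mathbb{C}}\Hom_{\Pi}(T_{j},T_{i})\in2\mathbb{Z}$. You are merely a bit more explicit than the paper about the identity $(\dimv T_{k},\rho)=-\dim_{\mathbb{C}}T_{k}$ and the parity argument, which is fine.
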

\begin{proof}
We have 
\begin{align*}
\sigma'_{T}(q^{m}Y_{T_{1}}^{a_{1}}\cdots Y_{T_{\ell}}^{a_{\ell}}) & =q^{-m}\sigma'_{T}(Y_{T_{\ell}})^{a_{\ell}}\cdots\sigma'_{T}(Y_{T_{1}})^{a_{1}}\\
 & =q^{-m+\sum_{i\in[1,\ell]}a_{i}(-(\dimv T_{i},\dimv T_{i})/2+(\dimv T_{i},\rho))}Y_{T_{\ell}}^{a_{\ell}}\cdots Y_{T_{1}}^{a_{1}}\\
 & =q^{-m+\sum_{i\in[1,\ell]}a_{i}(-(\dimv T_{i},\dimv T_{i})/2+(\dimv T_{i},\rho))-\sum_{i<j}a_{i}a_{j}\lambda_{ij}}Y_{T_{1}}^{a_{1}}\cdots Y_{T_{\ell}}^{a_{\ell}}.
\end{align*}
Here we write $\Lambda_{T}=(\lambda_{ij})_{i,j\in[1,\ell]}$. Therefore
$q^{m}Y_{T_{1}}^{a_{1}}\cdots Y_{T_{\ell}}^{a_{\ell}}$ is dual bar
invariant if and only if 
\begin{align*}
 & m-\sum_{i\in[1,\ell]}a_{i}^{2}(\dimv T_{i},\dimv T_{i})/2-\sum_{i<j}a_{i}a_{j}(\dimv T_{i},\dimv T_{j})+\sum_{i\in[1,\ell]}a_{i}(\dimv T_{i},\rho)\\
 & =-m+\sum_{i\in[1,\ell]}a_{i}(-(\dimv T_{i},\dimv T_{i})/2+(\dimv T_{i},\rho))-\sum_{i<j}a_{i}a_{j}\lambda_{ij}.
\end{align*}
By Remark \ref{r:CB}, this is equivalent to 
\[
2m=\sum_{i\in[1,\ell]}a_{i}(a_{i}-1)(\dimv T_{i},\dimv T_{i})/2+2\sum_{i<j}a_{i}a_{j}\dim_{\mathbb{C}}\Hom_{\Pi}(T_{j},T_{i}).
\]
The right-hand side is an element of $2\mathbb{Z}$. Therefore we
can take an integer $m\in\mathbb{Z}$ uniquely which satisfies this
equality. 
\end{proof}
\begin{rem}
For $(a_{1},\dots,a_{\ell})\in\mathbb{Z}_{\geq0}^{\ell}$, the unique
dual bar invariant element in $\{q^{m}Y_{T_{1}}^{a_{1}}\cdots Y_{T_{\ell}}^{a_{\ell}}\mid m\in\mathbb{Z}\}$
is $Y_{\bigoplus_{i\in[1,\ell]}T_{i}^{\oplus a_{i}}}$. 
\end{rem}
\begin{lem}
\label{l:dualbar} With the notation in Theorem \ref{t:qGLS}, $q^{\sum_{i\in I_{w}}\lambda_{i}\dim_{\mathbb{C}}\epsilon_{i}.R}Y_{I(R)}^{-1}Y_{\Omega_{w}^{-1}(R)}$
is dual bar invariant. 
\end{lem}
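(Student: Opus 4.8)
The plan is to transport the statement to $\Aq[N_{-}^{w}]$ via the isomorphism $\widetilde{\kappa}$ of Corollary \ref{c:GLSqclus}. This isomorphism preserves the $Q$-gradings and intertwines the twisted dual bar involutions $\sigma'$, hence also $\sigma=c_{\mathrm{tw}}\circ\sigma'$; and since a homogeneous element $X$ is dual bar invariant if and only if $\sigma(X)=X$ (Remark \ref{r:twdualbar}, Proposition \ref{p:dualbarex}), with $\sigma$ obeying the twisted anti-multiplicativity rule $\sigma(XY)=q^{(\wt X,\wt Y)}\sigma(Y)\sigma(X)$ of \eqref{twmulti}, it suffices to establish the analogous identity inside $\Aq[N_{-}^{w}]$.

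\emph{Weight bookkeeping.} Put $\mu:=\sum_{i\in I_{w}}\lambda_{i}\varpi_{i}\in P_{+}$. Since we are in the symmetric case, $(\alpha_{j},\varpi_{i})=\delta_{ij}$, so $c=\sum_{i\in I_{w}}\lambda_{i}\dim_{\mathbb{C}}\epsilon_{i}.R=-(\dimv R,\mu)$. Taking $a=i^{\mathrm{min}}$, $b=i^{\mathrm{max}}$ in Corollary \ref{c:GLSqclus} gives $\widetilde{\kappa}^{-1}(Y_{T_{\ell-n+i}})=[D_{w\varpi_{i},\varpi_{i}}]$, hence $\dimv T_{\ell-n+i}=\wt[D_{w\varpi_{i},\varpi_{i}}]=w\varpi_{i}-\varpi_{i}$ and therefore $\dimv I(R)=w\mu-\mu$. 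By \eqref{qmonom} together with the relation $q^{-(\nu,w\nu'-\nu')}D_{w\nu,\nu}D_{w\nu',\nu'}=D_{w(\nu+\nu'),\nu+\nu'}$ of Proposition \ref{p:comm}, we have $\widetilde{\kappa}^{-1}(Y_{I(R)})=q^{m_{0}}[D_{w\mu,\mu}]$ for some $m_{0}\in\mathbb{Z}$; in particular it is invertible, and by Proposition \ref{p:comm} (extended to $\Aq[N_{-}^{w}]$ by the universality of localization) it $q$-commutes with every homogeneous $g$ of weight $\gamma$ through $\widetilde{\kappa}^{-1}(Y_{I(R)})\,g=q^{(\mu+w\mu,\gamma)}\,g\,\widetilde{\kappa}^{-1}(Y_{I(R)})$. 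Finally the defining exact sequence $0\to R\to I(R)\to\Omega_{w}^{-1}(R)\to 0$ yields $w\mu-\mu=\dimv R+\dimv\Omega_{w}^{-1}(R)$.

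\emph{Computation.} The modules $I(R)$ and $\Omega_{w}^{-1}(R)$ are reachable ($\Omega_{w}^{-1}(R)$ is a summand of a direct sum of copies of $\Omega_{w}^{-1}(T)$, which is reachable by Proposition \ref{p:syzygy}), so $Y_{I(R)}$ and $Y_{\Omega_{w}^{-1}(R)}$ are fixed by $\sigma$. Applying $\sigma$ to $Y_{I(R)}Y_{I(R)}^{-1}=1$ with the rule \eqref{twmulti} gives $\sigma(Y_{I(R)}^{-1})=q^{(w\mu-\mu,w\mu-\mu)}Y_{I(R)}^{-1}$. Writing $d:=\dimv\Omega_{w}^{-1}(R)$, the same rule gives
\[
\sigma\!\big(Y_{I(R)}^{-1}Y_{\Omega_{w}^{-1}(R)}\big)=q^{-(w\mu-\mu,d)+(w\mu-\mu,w\mu-\mu)}\,Y_{\Omega_{w}^{-1}(R)}Y_{I(R)}^{-1},
\]
and moving $Y_{\Omega_{w}^{-1}(R)}$ back past $Y_{I(R)}^{-1}$ via the frozen $q$-commutation above multiplies this by $q^{(\mu+w\mu,d)}$. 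Using $W$-invariance of $(\ ,\ )$ and $w\mu-\mu=\dimv R+d$, the resulting exponent collapses to $(w\mu-\mu,w\mu-\mu)+(2\mu,d)=-2(w\mu-\mu,\mu)+(2\mu,d)=-2(\dimv R,\mu)=2c$. Hence $\sigma(Y_{I(R)}^{-1}Y_{\Omega_{w}^{-1}(R)})=q^{2c}Y_{I(R)}^{-1}Y_{\Omega_{w}^{-1}(R)}$, and since $\sigma$ sends $q$ to $q^{-1}$, we get $\sigma(q^{c}Y_{I(R)}^{-1}Y_{\Omega_{w}^{-1}(R)})=q^{c}Y_{I(R)}^{-1}Y_{\Omega_{w}^{-1}(R)}$, which is exactly dual bar invariance.

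\emph{Main obstacle.} The real work is the $q$-power bookkeeping in the last paragraph: one has to track the scalars produced by the twisted anti-multiplicativity of $\sigma$, by inverting the frozen variable $Y_{I(R)}$, and by the $q$-commutation, and then check that the $W$-invariance of the form together with the additivity of $\dimv$ along the exact sequence makes everything cancel to precisely $2c$. The point that renders this manageable is the identification of $Y_{I(R)}$ with a $q$-multiple of the quantum minor $[D_{w\mu,\mu}]$, which both pins down $\dimv I(R)=w\mu-\mu$ and supplies the explicit $q$-commutation relation from Proposition \ref{p:comm}.
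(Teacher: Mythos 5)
Your proof is correct and takes essentially the same route as the paper's: both arguments hinge on identifying $Y_{I(R)}$ with (a power of $q$ times) the frozen quantum minor $[D_{w\mu,\mu}]$ so that Proposition \ref{p:comm} supplies the $q$-commutation with $Y_{\Omega_{w}^{-1}(R)}$, and then on the dual bar invariance of the rescaled cluster monomials together with $W$-invariance of $(\ ,\ )$ and additivity of $\dimv$ along $0\to R\to I(R)\to\Omega_{w}^{-1}(R)\to 0$. The only difference is cosmetic: you check $\sigma$-fixedness using the twisted rule \eqref{twmulti}, while the paper verifies the defining $\sigma'_{T}$-identity directly via Proposition \ref{p:GLSdualbar}; the exponent bookkeeping is the same.
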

\begin{proof}
By Proposition \ref{p:GLSqclus}, 
\[
\kappa^{-1}(Y_{I(R)})=D_{w\lambda,\lambda},
\]
here $\lambda:=\sum_{i\in I_{w}}\lambda_{i}\varpi_{i}$. Hence, by
Proposition \ref{p:comm}, we have 
\begin{align*}
\kappa^{-1}(Y_{I(R)}Y_{\Omega_{w}^{-1}(R)}) & =D_{w\lambda,\lambda}\kappa^{-1}(Y_{\Omega_{w}^{-1}(R)})\\
 & =q^{(\lambda+w\lambda,\dimv\Omega_{w}^{-1}(R))}\kappa^{-1}(Y_{\Omega_{w}^{-1}(R)})D_{w\lambda,\lambda}\\
 & =q^{(\lambda+w\lambda,\dimv\Omega_{w}^{-1}(R))}\kappa^{-1}(Y_{\Omega_{w}^{-1}(R)}Y_{I(R)}).
\end{align*}
By the way, $\dimv\Omega_{w}^{-1}(R)=\dimv I(R)-\dimv R=w\lambda-\lambda-\dimv R$.
Hence $(\lambda+w\lambda,\dimv\Omega_{w}^{-1}(R))=-(\lambda+w\lambda,\dimv R)$.
Therefore 
\[
Y_{I(R)}^{-1}Y_{\Omega_{w}^{-1}(R)}=q^{(\lambda+w\lambda,\dimv R)}Y_{\Omega_{w}^{-1}(R)}Y_{I(R)}^{-1}
\]
Note that $\sum_{i\in I_{w}}\lambda_{i}\dim_{\mathbb{C}}\epsilon_{i}.R=-(\lambda,\dimv R)$.
We have 
\begin{align*}
 & q^{(\dimv\Omega_{w}^{-1}(R)-\dimv I(R),\dimv\Omega_{w}^{-1}(R)-\dimv I(R))/2-(\dimv\Omega_{w}^{-1}(R)-\dimv I(R),\rho)}\sigma'_{T}(q^{-(\lambda,\dimv R)}Y_{I(R)}^{-1}Y_{\Omega_{w}^{-1}(R)})\\
 & =q^{(\dimv R,\dimv R)/2+(\dimv R,\rho)}\sigma'_{T}(q^{(w\lambda,\dimv R)}Y_{\Omega_{w}^{-1}(R)}Y_{I(R)}^{-1})\\
 & =q^{(\dimv R,\dimv R)/2+(\dimv R,\rho)-(w\lambda,\dimv R)}\sigma'_{T}(Y_{I(R)}^{-1})\sigma'_{T}(Y_{\Omega_{w}^{-1}(R)})\\
 & =q^{(\dimv R,\dimv R)/2-(\dimv\Omega_{w}^{-1}(R),\dimv\Omega_{w}^{-1}(R))/2+(\dimv I(R),\dimv I(R))/2-(w\lambda,\dimv R)}Y_{I(R)}^{-1}Y_{\Omega_{w}^{-1}(R)}\\
 & =q^{(\dimv I(R),\dimv R)-(w\lambda,\dimv R)}Y_{I(R)}^{-1}Y_{\Omega_{w}^{-1}(R)}\\
 & =q^{-(\lambda,\dimv R)}Y_{I(R)}^{-1}Y_{\Omega_{w}^{-1}(R)}.
\end{align*}
This competes the proof. 
\end{proof}
\begin{lem}
\label{l:indec} Let $T$ be a basic reachable $\mathcal{C}_{w}$-maximal
rigid module and $T=T_{1}\oplus\cdots\oplus T_{\ell}$ its indecomposable
decomposition. Then the equality (\ref{qGLS}) with $R=T_{k}$ holds
for all $k=1,\dots,\ell$ if and only if the one with $R=T_{1}^{\oplus a_{1}}\oplus\cdots\oplus T_{\ell}^{\oplus a_{\ell}}$
holds for all $(a_{1},\dots,a_{\ell})\in\mathbb{Z}_{\geq0}^{\ell}$. 
\end{lem}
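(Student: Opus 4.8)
The plan is to reduce the general case $R=\bigoplus_{i\in[1,\ell]}T_i^{\oplus a_i}$ to the indecomposable cases $R=T_k$ by a direct computation using the monomial expansion \eqref{qmonom} and the behaviour of $\Omega_w^{-1}$ and $I(-)$ on direct sums. First I would note that both sides of \eqref{qGLS} are, by Lemma \ref{l:dualbarinv} (or rather its refinement needed here), governed by dual bar invariance: by Lemma \ref{l:dualbar} the right-hand side $q^{\sum_{i\in I_w}\lambda_i\dim_{\mathbb C}\epsilon_i.R}Y_{I(R)}^{-1}Y_{\Omega_w^{-1}(R)}$ is dual bar invariant, and $Y_R$ is dual bar invariant, so $\eta_{w,q}(Y_R)$ is dual bar invariant since $\eta_{w,q}$ commutes with $\sigma$ (Theorem \ref{t:BZauto}) and $\widetilde\kappa$ intertwines the dual bar involutions (Corollary \ref{c:GLSqclus}).

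The key observation is that $\Omega_w^{-1}$ and $I(-)$ are additive: $\Omega_w^{-1}(R)=\bigoplus_i \Omega_w^{-1}(T_i)^{\oplus a_i}$ and $I(R)=\bigoplus_i I(T_i)^{\oplus a_i}$, because injective hulls and cokernels of injective hulls are taken summand-wise (using that $\mathcal C_w$ is a Frobenius category and $I(T_i)$ is the $\mathcal C_w$-injective hull of $T_i$). Consequently $\dimv\Omega_w^{-1}(R)=\sum_i a_i\dimv\Omega_w^{-1}(T_i)$ and $\dim_{\mathbb C}\epsilon_i.R=\sum_j a_j\dim_{\mathbb C}\epsilon_i.T_j$, so the right-hand side of \eqref{qGLS}, when written in terms of the quantum cluster of $T':=\Omega_w^{-1}(T)\oplus\bigoplus_{i\in I_w}T_{\ell-n+i}$ (Proposition \ref{p:syzygy}), is, up to a power of $q$, the monomial $\prod_k \bigl(Y_{I(T_k)}^{-1}Y_{\Omega_w^{-1}(T_k)}\bigr)^{a_k}$. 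On the other hand, $\eta_{w,q}$ is an algebra homomorphism, so $\eta_{w,q}(Y_R)$, via \eqref{qmonom}, equals $q^{\alpha(R)}\prod_k \eta_{w,q}(Y_{T_k})^{a_k}$. Thus if \eqref{qGLS} holds for each $R=T_k$, then $\eta_{w,q}(Y_R)$ is a power of $q$ times $\prod_k\bigl(Y_{I(T_k)}^{-1}Y_{\Omega_w^{-1}(T_k)}\bigr)^{a_k}$, which is a power of $q$ times the right-hand side of \eqref{qGLS} for general $R$.

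To pin down the power of $q$, I would invoke uniqueness of the dual bar invariant scalar multiple: the expression $\prod_k\bigl(Y_{I(T_k)}^{-1}Y_{\Omega_w^{-1}(T_k)}\bigr)^{a_k}$, being a Laurent monomial in the quantum cluster variables $\{Y_{T'_j}\}$ attached to the maximal rigid module $T'$, has (by the analogue of Lemma \ref{l:dualbarinv} applied to the based quantum torus $\mathcal T_{\mathcal A,T'}$) a unique $q$-power rescaling making it dual bar invariant, namely $q^{\sum_{i\in I_w}\lambda_i\dim_{\mathbb C}\epsilon_i.R}Y_{I(R)}^{-1}Y_{\Omega_w^{-1}(R)}$ by Lemma \ref{l:dualbar}. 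Since $\eta_{w,q}(Y_R)$ is also dual bar invariant and differs from this monomial only by a $q$-power, the two must coincide. The converse direction (from general $R$ to $R=T_k$) is trivial, taking $a_k=1$ and all other $a_i=0$. The main obstacle I anticipate is bookkeeping the compatibility of the two quantum-torus structures $\mathcal T_{\mathcal A,T}$ and $\mathcal T_{\mathcal A,T'}$ — i.e.\ checking that the exponents $\lambda_{ij}$ of $\Lambda_T$ and $\Lambda_{T'}$ transform correctly under $R\mapsto\Omega_w^{-1}(R)$ so that the ``power of $q$'' argument is actually airtight — but this is exactly the content of Lemma \ref{l:dualbar}'s computation, extended from a single summand to a general direct sum by additivity of $\dimv(-)$ and of $\Hom_\Pi$-dimensions in each variable.
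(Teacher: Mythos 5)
Your argument is correct and is essentially the paper's own proof: expand $Y_R$ as a $q$-power times $Y_{T_1}^{a_1}\cdots Y_{T_\ell}^{a_\ell}$, use that $\eta_{w,q}$ is an algebra homomorphism together with the indecomposable cases to get the right-hand side of \eqref{qGLS} up to a power of $q$, and then pin down that power by the dual bar invariance of $\eta_{w,q}(Y_R)$ (Theorem \ref{t:BZauto}) and of the claimed expression (Lemma \ref{l:dualbar}), together with the uniqueness statement of Lemma \ref{l:dualbarinv} applied in $\mathcal{T}_{\mathcal{A},T'}$. The compatibility worry you raise at the end is not an issue, since Lemma \ref{l:dualbarinv} is stated for an arbitrary basic reachable $\mathcal{C}_{w}$-maximal rigid module (so it applies verbatim to $T'$, which is reachable by Proposition \ref{p:syzygy}) and $\sigma'$ is independent of the chosen maximal rigid module by Proposition \ref{p:GLSdualbar}.
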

\begin{proof}
The latter obviously implies the former. Suppose that the equality
(\ref{qGLS}) holds for $R=T_{k}$, $k=1,\dots,\ell$. Write 
\[
\eta_{w,q}(Y_{T_{k}})=q^{m_{k}}Y_{I(T_{k})}^{-1}Y_{\Omega_{w}^{-1}(T_{k})},\ m_{k}\in\mathbb{Z},
\]
for $k=1,\dots,\ell$. Set $R=T_{1}^{\oplus a_{1}}\oplus\cdots\oplus T_{\ell}^{\oplus a_{\ell}}$
for $(a_{1},\dots,a_{\ell})\in\mathbb{Z}_{\geq0}^{\ell}$. Note that
$I(R)=I(T_{1})^{\oplus a_{1}}\oplus\cdots\oplus I(T_{\ell})^{\oplus a_{\ell}}$
and $\Omega_{w}^{-1}(R)=\Omega_{w}^{-1}(T_{1})^{\oplus a_{1}}\oplus\cdots\oplus\Omega_{w}^{-1}(T_{\ell})^{\oplus a_{\ell}}$.
(Actually $I(T_{\ell-n+i})=T_{\ell-n+i}$ and $\Omega_{w}^{-1}(T_{\ell-n+i})=0$
for $i\in I_{w}$.) There exist unique $A_{1},A_{2},A_{3}\in\mathbb{Z}$
such that the following hold: 
\begin{align*}
\eta_{w,q}(Y_{R}) & =q^{A_{1}}\eta_{w,q}(Y_{T_{1}}^{a_{1}}\cdots Y_{T_{\ell}}^{a_{\ell}})\\
 & =q^{A_{1}}(q^{m_{1}}Y_{I(T_{1})}^{-1}Y_{\Omega_{w}^{-1}(T_{1})})^{a_{1}}\cdots(q^{m_{\ell}}Y_{I(T_{\ell})}^{-1}Y_{\Omega_{w}^{-1}(T_{\ell})})^{a_{\ell}}\\
 & =q^{A_{2}}(Y_{I(T_{1})}^{a_{1}}\cdots Y_{I(T_{\ell})}^{a_{\ell}})^{-1}Y_{\Omega_{w}^{-1}(T_{1})}^{a_{1}}\cdots Y_{\Omega_{w}^{-1}(T_{\ell})}^{a_{\ell}}\\
 & =q^{A_{3}}Y_{I(R)}^{-1}Y_{\Omega_{w}^{-1}(R)}.
\end{align*}
Moreover $\eta_{w,q}(Y_{R})$ is dual bar invariant because of the
dual bar invariance of $Y_{R}$ and Theorem \ref{t:BZauto}. Hence,
by Lemma \ref{l:dualbarinv} and Lemma \ref{l:dualbar}, the equality
(\ref{qGLS}) also holds for $R$. 
\end{proof}
\begin{proof}[{{Proof of Theorem \ref{t:qGLS}}}]
Recall that we always assume that $T_{\ell-n+i}$ is a $\mathcal{C}_{w}$-projective-injective
module with socle $S_{i}$ for all $i\in I_{w}=[1,n]$, in particular,
the isomorphism class of $T_{\ell-n+i}$ does not depend on the choice
of $T$. From now on, we identify $\widetilde{\mathscr{A}}_{\mathbb{Q}(q)}(\mathcal{C}_{w})$
with $\Aq[N_{-}^{w}]$ via $\widetilde{\kappa}$. First we consider
the case that $R$ in the statement of Theorem \ref{t:qGLS} is equal
to $T_{\ell-n+i}$ for $i\in I_{w}$. Then 
\begin{align*}
\eta_{w,q}(Y_{T_{\ell-n+i}}) & =\eta_{w,q}([D_{w\varpi_{i},\varpi_{i}}])\\
 & =q^{-(\varpi_{i},w\varpi_{i}-\varpi_{i})}[D_{w\varpi_{i},\varpi_{i}}]^{-1}\\
 & =q^{\dim_{\mathbb{C}}\epsilon_{i}.T_{\ell-n+i}}Y_{T_{\ell-n+i}}^{-1},
\end{align*}
which is the desired equality in this case since $I(T_{\ell-n+i})=T_{\ell-n+i}$
and $\Omega_{w}^{-1}(T_{\ell-n+i})=0$. 

Let $\bm{i}\in I(w)$. Henceforth, we will prove the theorem by induction
on the minimal length of sequences of mutations which we need to obtain
$T$ from $V_{\bm{i}}$. We begin with the case that $R=V_{\bm{i},k}$
for some $k\in[1,\ell]$ with $k^{+}\neq\ell+1$. Then $I(V_{\bm{i},k})=V_{\bm{i},k^{\mathrm{max}}}$
and $\Omega_{w}^{-1}(V_{\bm{i},k})=M_{\bm{i}}[k^{\mathrm{max}},k^{+}]$.
Therefore we have 
\begin{align*}
\eta_{w,q}(Y_{V_{\bm{i},k}}) & =\eta_{w,q}([D_{s_{i_{1}}\cdots s_{i_{k}}\varpi_{i_{k}},\varpi_{i_{k}}}])\\
 & =q^{-(\varpi_{i_{k}},s_{i_{1}}\cdots s_{i_{k}}\varpi_{i_{k}}-\varpi_{i_{k}})}[D_{w\varpi_{i_{k}},\varpi_{i_{k}}}]^{-1}[D_{u_{w\varpi_{i_{k}}},s_{i_{1}}\cdots s_{i_{k}}\varpi_{i_{k}}}]\\
 & =q^{-(\varpi_{i_{k}},\dimv V_{\bm{i},k})}Y_{V_{\bm{i},k^{\mathrm{max}}}}^{-1}Y_{M_{\bm{i}}[k^{\mathrm{max}},k^{+}]}\\
 & =q^{\dim_{\mathbb{C}}\epsilon_{i}.V_{\bm{i},k}}Y_{I(V_{\bm{i},k})}^{-1}Y_{\Omega_{w}^{-1}(V_{\bm{i},k})}.
\end{align*}
Hence, by Lemma \ref{l:indec}, the equality (\ref{qGLS}) holds when
$R=V_{\bm{i}}$. Next, suppose that the equality (\ref{qGLS}) holds
for $R=T_{1}^{\oplus a_{1}}\oplus\cdots\oplus T_{\ell}^{\oplus a_{\ell}}$,
where $T=T_{1}\oplus\cdots\oplus T_{\ell}$ is a basic reachable $\mathcal{C}_{w}$-maximal
rigid module. Let $k\in[1,\ell-n]$ and write $\mu_{T_{k}}(T)=(T/T_{k})\oplus T_{k}^{\ast}$,
$I(T_{k}^{\ast})=\bigoplus_{i\in I_{w}}T_{\ell-n+i}^{\oplus\lambda_{i}}$.
Then, by Lemma \ref{l:indec}, it remains to prove the following equality:
\begin{align}
\eta_{w,q}(Y_{T_{k}^{\ast}})=q^{\sum_{i\in I_{w}}\lambda_{i}\dim_{\mathbb{C}}\epsilon_{i}.T_{k}^{\ast}}Y_{I(T_{k}^{\ast})}^{-1}Y_{\Omega_{w}^{-1}(T_{k}^{\ast})}.\label{aimeq}
\end{align}
Write $\tilde{B}_{T}=(b_{ij})_{i\in[1,\ell],j\in[1,\ell-n]}$. Set
$T_{+}:=\bigoplus_{j;b_{jk}>0}T_{j}^{\oplus b_{jk}}$ and $T_{-}:=\bigoplus_{j;b_{jk}<0}T_{j}^{\oplus(-b_{jk})}$.
By (\ref{clmut}), Proposition \ref{p:phimap} (2) and Proposition
\ref{p:GLStwist}, we have 
\begin{align*}
\eta_{w}^{\ast}([\varphi_{T_{k}}\varphi_{T_{k}^{\ast}}])=\eta_{w}^{\ast}([\varphi_{T_{+}}]+[\varphi_{T_{-}}])=\frac{[\varphi_{\Omega_{w}^{-1}(T_{+})}]}{[\varphi_{I(T_{+})}]}+\frac{[\varphi_{\Omega_{w}^{-1}(T_{-})}]}{[\varphi_{I(T_{-})}]},
\end{align*}
and 
\begin{align*}
\eta_{w}^{\ast}([\varphi_{T_{k}}\varphi_{T_{k}^{\ast}}]) & =\frac{[\varphi_{\Omega_{w}^{-1}(T_{k})}]}{[\varphi_{I(T_{k})}]}\cdot\frac{[\varphi_{\Omega_{w}^{-1}(T_{k}^{\ast})}]}{[\varphi_{I(T_{k}^{\ast})}]}=\frac{[\varphi_{\Omega_{w}^{-1}(T_{k})}\varphi_{\Omega_{w}^{-1}(T_{k}^{\ast})}]}{[\varphi_{I(T_{k}\oplus T_{k}^{\ast})}]}.
\end{align*}
Therefore , 
\begin{align}
[\varphi_{\Omega_{w}^{-1}(T_{k})}\varphi_{\Omega_{w}^{-1}(T_{k}^{\ast})}]=[\varphi_{I(T_{k}\oplus T_{k}^{\ast})}]\left(\frac{[\varphi_{\Omega_{w}^{-1}(T_{+})}]}{[\varphi_{I(T_{+})}]}+\frac{[\varphi_{\Omega_{w}^{-1}(T_{-})}]}{[\varphi_{I(T_{-})}]}\right).\label{classical}
\end{align}
By Proposition \ref{p:syzygy}, $T':=\Omega_{w}^{-1}(T)\oplus\bigoplus_{i\in I_{w}}T_{\ell-n+i}$
is a basic reachable $\mathcal{C}_{w}$-maximal rigid module; hence
there exists a bijection $[1,\ell-n]\to[1,\ell-n],j\mapsto j^{\ast}$
such that $T'_{j^{\ast}}=\Omega_{w}^{-1}(T_{j})$. Moreover we have
\[
\mu_{T'_{k^{\ast}}}(T')=(T'/T'_{k^{\ast}})\oplus\Omega_{w}^{-1}(T_{k}^{\ast}).
\]
Write $\tilde{B}_{T'}=(b'_{ij})_{i\in[1,\ell],j\in[1,\ell-n]}$ and
$(T'_{k^{\ast}})^{\ast}:=\Omega_{w}^{-1}(T_{k}^{\ast})$. Set $T'_{+}:=\bigoplus_{j;b'_{j^{\ast}k^{\ast}}>0}(T'_{j^{\ast}})^{\oplus b'_{j^{\ast}k^{\ast}}}$
and $T'_{-}:=\bigoplus_{j;b'_{j^{\ast}k^{\ast}}<0}(T'_{j^{\ast}})^{\oplus(-b'_{j^{\ast}k^{\ast}})}$.
Then, by (\ref{clmut}) and (\ref{classical}), we have 
\begin{align}
[\varphi_{I(T_{k}\oplus T_{k}^{\ast})}]\left(\frac{[\varphi_{\Omega_{w}^{-1}(T_{+})}]}{[\varphi_{I(T_{+})}]}+\frac{[\varphi_{\Omega_{w}^{-1}(T_{-})}]}{[\varphi_{I(T_{-})}]}\right)=[\varphi_{T'_{+}}]+[\varphi_{T'_{-}}].\label{classical'}
\end{align}

Note that all $\Pi$-modules appearing in the equality (\ref{classical'})
are direct summands of a direct sum of copies of $T'$. Therefore,
by Proposition \ref{p:phimap} (2), we have $I(T_{k}\oplus T_{k}^{\ast})=I(T_{+})\oplus I_{+}=I(T_{-})\oplus I_{-}$
for some $\mathcal{C}_{w}$-projective-injective modules $I_{+},I_{-}$,
and 
\begin{equation}
\{I_{+}\oplus\Omega_{w}^{-1}(T_{+}),I_{-}\oplus\Omega_{w}^{-1}(T_{-})\}=\{T'_{+},T'_{-}\}\label{eq:modeq}
\end{equation}

By the way, we recall our assumption that the equality (\ref{qGLS})
holds for $R=T_{1}^{\oplus a_{1}}\oplus\cdots\oplus T_{\ell}^{\oplus a_{\ell}}$.
By Proposition \ref{p:qmutation} and our assumption, there exist
unique $A_{1},A'_{1},A_{2},A'_{2},A_{3}\in\mathbb{Z}$ such that 
\begin{align*}
\eta_{w,q}(Y_{T_{k}}Y_{T_{k}^{\ast}}) & =\eta_{w,q}(q^{A_{1}}Y_{T_{+}}+q^{A_{2}}Y_{T_{-}})\\
 & =q^{A'_{1}}Y_{I(T_{+})}^{-1}Y_{\Omega_{w}^{-1}(T_{+})}+q^{A'_{2}}Y_{I(T_{-})}^{-1}Y_{\Omega_{w}^{-1}(T_{-})},
\end{align*}
and 
\begin{align*}
\eta_{w,q}(Y_{T_{k}}Y_{T_{k}^{\ast}}) & =q^{A_{3}}Y_{I(T_{k})}^{-1}Y_{T'_{k^{\ast}}}\eta_{w,q}(Y_{T_{k}^{\ast}}).
\end{align*}
These equalities together with (\ref{eq:modeq}) imply that there
exist unique $A',A''_{1},A''_{2}\in\mathbb{Z}$ such that 
\begin{align*}
\eta_{w,q}(Y_{T_{k}^{\ast}}) & =q^{-A_{3}}Y_{T'_{k^{\ast}}}^{-1}Y_{I(T_{k})}(q^{A'_{1}}Y_{I(T_{+})}^{-1}Y_{\Omega_{w}^{-1}(T_{+})}+q^{A'_{2}}Y_{I(T_{-})}^{-1}Y_{\Omega_{w}^{-1}(T_{-})})\\
 & =q^{A'}Y_{T'_{k^{\ast}}}^{-1}Y_{I(T_{k}^{\ast})}^{-1}Y_{I(T_{k}\oplus T_{k}^{\ast})}(q^{A'_{1}}Y_{I(T_{+})}^{-1}Y_{\Omega_{w}^{-1}(T_{+})}+q^{A'_{2}}Y_{I(T_{-})}^{-1}Y_{\Omega_{w}^{-1}(T_{-})})\\
 & =Y_{I(T_{k}^{\ast})}^{-1}Y_{T'_{k^{\ast}}}^{-1}\left(q^{A''_{1}}Y_{T'_{+}}+q^{A''_{2}}Y_{T'_{-}}\right).
\end{align*}
Note that all rescaled quantum cluster monomials appearing in the
rightmost hand side of the equality above are monomials of the based
quantum torus $\mathcal{T}_{\mathcal{A},T'}$. Moreover, $\eta_{w,q}(Y_{T_{k}^{\ast}})$
is dual bar invariant because, by Theorem \ref{t:BZauto}, the quantum
twist automorphism $\eta_{w,q}$ preserves dual bar invariance property
of elements of $\widetilde{\mathscr{A}}_{\mathbb{Q}(q)}(\mathcal{C}_{w})$
(recall Definition \ref{d:GLSdualbar}). Hence $q^{A''_{1}}Y_{I(T_{k}^{\ast})}^{-1}Y_{T'_{k^{\ast}}}^{-1}Y_{T'_{+}}$
and $q^{A''_{2}}Y_{I(T_{k}^{\ast})}^{-1}Y_{T'_{k^{\ast}}}^{-1}Y_{T'_{-}}$
are dual bar invariant elements of $\mathcal{T}_{\mathcal{A},T'}$.
By Lemma \ref{l:dualbarinv}, $A''_{1}$ and $A''_{2}$ are uniquely
determined by this property. On the other hand, by Proposition \ref{p:qmutation},
$q^{\sum_{i\in I_{w}}\lambda_{i}\dim_{\mathbb{C}}\epsilon_{i}.T_{k}^{\ast}}Y_{I(T_{k}^{\ast})}^{-1}Y_{(T'_{k^{\ast}})^{\ast}}$
is of the following form as an element of $\mathcal{T}_{q^{\pm1},T'}$:
\[
Y_{I(T_{k}^{\ast})}^{-1}Y_{T'_{k^{\ast}}}^{-1}\left(q^{M_{1}}Y_{T'_{+}}+q^{M_{2}}Y_{T'_{-}}\right),\ M_{1},M_{2}\in\mathbb{Z}.
\]
Moreover, by Lemma \ref{l:dualbar}, $q^{\sum_{i\in I_{w}}\lambda_{i}\dim_{\mathbb{C}}\epsilon_{i}.T_{k}^{\ast}}Y_{I(T_{k}^{\ast})}^{-1}Y_{(T'_{k^{\ast}})^{\ast}}=q^{\sum_{i\in I_{w}}\lambda_{i}\dim_{\mathbb{C}}\epsilon_{i}.T_{k}^{\ast}}Y_{I(T_{k}^{\ast})}^{-1}Y_{\Omega_{w}^{-1}(T_{k}^{\ast})}$
is dual bar invariant. Hence, by the argument above, $M_{1}=A'_{1}$
and $M_{2}=A''_{2}$. Therefore we obtain 
\[
\eta_{w,q}(Y_{T_{k}^{\ast}})=q^{\sum_{i\in I_{w}}\lambda_{i}\dim_{\mathbb{C}}\epsilon_{i}.T_{k}^{\ast}}Y_{I(T_{k}^{\ast})}^{-1}Y_{\Omega_{w}^{-1}(T_{k}^{\ast})},
\]
 which completes the proof.
\end{proof}

\section{Finite type cases : 6-periodicity\label{sec:fin}}

Since the map $\eta_{q,w}$ is an automorphism, we can apply it repeatedly.
In this section, we show the ``$6$-periodicity'' of specific quantum
twist automorphisms. Assume that $\mathfrak{g}$ is a finite dimensional
Lie algebra, and let $w_{0}$ be the longest element of $W$. 
\begin{thm}
\label{t:period} For a homogeneous element $x\in\Aq[N_{-}^{w_{0}}]$,
we have 
\[
\eta_{w_{0},q}^{6}(x)=q^{(\wt x+w_{0}\wt x,\wt x)}D_{w_{0},-\wt x-w_{0}\wt x}x.
\]
\end{thm}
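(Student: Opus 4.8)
The plan is to reduce the statement to a verification on dual canonical basis elements, use the explicit formula for $\eta_{w_0,q}$, and then invoke the $6$-periodicity of the cosyzygy operation (equivalently, a combinatorial $6$-periodicity on $\mathscr{B}(\infty)$).

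First I would simplify the setup using finiteness. In finite type $\mathbf{U}_{w_0,q}^{-}=\Uq^{-}$, so $(\mathbf{U}_{w_0,q}^{-})^{\perp}=0$, $\Aq[N_{-}\cap X_{w_0}]=\Uq^{-}$, and $\iota_{w_0}$ (Proposition~\ref{p:usualinj}) is the identity; hence $\eta_{w_0,q}$ is identified with $\gamma_{w_0,q}$, an algebra automorphism of $\Aq[N_{-}^{w_0}]=\Uq^{-}[[\mathcal{D}_{w_0}]^{-1}]$. Every element of $\Aq[N_{-}^{w_0}]$ is a $\mathbb{Q}(q)$-combination of products $y\,[D_{w_0\lambda,\lambda}]^{-1}$ with $y\in\Uq^{-}$ and $\lambda\in P_{+}$, and $\{G^{\mathrm{up}}(\tilde b)\}_{\tilde b\in\mathscr{B}(\infty)}$ is a basis of $\Uq^{-}$, so it suffices to check the formula on the $[D_{w_0\lambda,\lambda}]$ and on the $G^{\mathrm{up}}(\tilde b)$. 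For the former, Theorem~\ref{t:BZauto} with $u=u_{w_0\lambda}$ (and $D_{u_{w_0\lambda},u_{w_0\lambda}}=1$, since $u_{w_0\lambda}$ is the lowest weight vector) gives $\eta_{w_0,q}([D_{w_0\lambda,\lambda}])=q^{-(\lambda,w_0\lambda-\lambda)}[D_{w_0\lambda,\lambda}]^{-1}$, so $\eta_{w_0,q}^{2}$ fixes each $[D_{w_0\lambda,\lambda}]$, hence each $[D_{w_0,\nu}]$, $\nu\in P$; and since $\wt[D_{w_0,\nu}]+w_0\wt[D_{w_0,\nu}]=(w_0\nu-\nu)+(\nu-w_0\nu)=0$, the right-hand side of the claim is $[D_{w_0,\nu}]$ as well. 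So the frozen part is automatic, and the content is $x=G^{\mathrm{up}}(\tilde b)$.

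For $x=G^{\mathrm{up}}(\tilde b)$, pick $\lambda\in P_{+}$ and $b\in\mathscr{B}(\lambda)$ with $\overline{\jmath}_{\lambda}(b)=\tilde b$. By Proposition~\ref{p:minor}, $G^{\mathrm{up}}(\tilde b)=[D_{G^{\mathrm{up}}_{\lambda}(b),u_{\lambda}}]$ and $[D_{u_{w_0\lambda},G^{\mathrm{up}}_{\lambda}(b)}]=G^{\mathrm{up}}(\ast\overline{\jmath}^{\vee}_{w_0\lambda}(b))$, so Theorem~\ref{t:BZauto} gives $\eta_{w_0,q}(G^{\mathrm{up}}(\tilde b))=q^{-(\lambda,\wt b-\lambda)}[D_{w_0\lambda,\lambda}]^{-1}G^{\mathrm{up}}(\ast\overline{\jmath}^{\vee}_{w_0\lambda}(b))$. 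Iterating this six times yields $\eta_{w_0,q}^{6}(G^{\mathrm{up}}(\tilde b))=q^{m}\bigl(\prod_{j}[D_{w_0\mu_{j},\mu_{j}}]^{(-1)^{j}}\bigr)G^{\mathrm{up}}(\Theta^{6}(\tilde b))$, where $\Theta$ is the combinatorial self-map of $\mathscr{B}(\infty)$ implicit above and the $[D_{w_0\mu_{j},\mu_{j}}]$ are the unipotent quantum minors produced by the successive injective hulls. The key input, and the source of the number $6$, is $\Theta^{6}=\mathrm{id}$. When $\mathfrak{g}$ is symmetric this follows from Theorem~\ref{t:qGLS}: through $\widetilde{\kappa}$ one has $\eta_{w_0,q}(Y_{R})=q^{\bullet}Y_{I(R)}^{-1}Y_{\Omega_{w_0}^{-1}(R)}$, so $\Theta$ is modelled by the cosyzygy functor $\Omega_{w_0}^{-1}$ on $\mathcal{C}_{w_0}$, and $\Omega_{w_0}^{-6}\cong\mathrm{Id}$ on the stable category $\underline{\mathcal{C}}_{w_0}$ for $\Pi$ of Dynkin type (the Nakayama functor $\nu$ satisfies $\nu\cong\Omega_{w_0}^{-3}$ up to the diagram automorphism induced by $-w_0$, and $\nu^{2}$ acts trivially on $\underline{\mathcal{C}}_{w_0}$). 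The general symmetrizable finite-type case reduces to this one by folding the simply-laced datum by a diagram automorphism, under which $\eta_{w_0,q}$, the dual canonical basis, and the $D_{w_0,\mu}$ are all compatible; alternatively one verifies $\Theta^{6}=\mathrm{id}$ directly at the crystal level.

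Finally I would pin down the scalar and the minor factor. By the exact sequences $0\to\Omega_{w_0}^{-j}(R)\to I(\Omega_{w_0}^{-j}(R))\to\Omega_{w_0}^{-(j+1)}(R)\to0$, the alternating product $\prod_{j=0}^{5}Y_{I(\Omega_{w_0}^{-j}(R))}^{(-1)^{j}}$ has dimension vector $\sum_{j=0}^{5}(-1)^{j}\dimv I(\Omega_{w_0}^{-j}(R))=\dimv R-\dimv\Omega_{w_0}^{-6}(R)=0$; expanding each injective hull into copies of the $\hat I_{i}$, using $Y_{\hat I_{i}}=\kappa(D_{w_0\varpi_{i},\varpi_{i}})$ (Proposition~\ref{p:GLSqclus}) and the product rule of Proposition~\ref{p:Elambda}, this product equals $q^{\bullet}D_{w_0,-\wt\tilde b-w_0\wt\tilde b}$, the weight check $\wt D_{w_0,-\wt\tilde b-w_0\wt\tilde b}=0$ matching the vanishing dimension vector. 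Hence $\eta_{w_0,q}^{6}(G^{\mathrm{up}}(\tilde b))$ and $q^{(\wt\tilde b+w_0\wt\tilde b,\wt\tilde b)}D_{w_0,-\wt\tilde b-w_0\wt\tilde b}G^{\mathrm{up}}(\tilde b)$ are proportional of the same weight, and both are fixed by the dual bar involution $\sigma$ of Proposition~\ref{p:dualbarex} (the first because $\eta_{w_0,q}$ permutes $\widetilde{\mathbf{B}}^{\mathrm{up},w_0}$ and commutes with $\sigma$ by Theorem~\ref{t:BZauto}; the second by a direct computation using $\sigma(xy)=q^{(\wt x,\wt y)}\sigma(y)\sigma(x)$, the $\sigma$-invariance of $G^{\mathrm{up}}(\tilde b)$ and of $D_{w_0,-\wt\tilde b-w_0\wt\tilde b}$, and Proposition~\ref{p:Elambda}), so the proportionality constant is a bar-invariant element of $\mathbb{Q}(q)$; comparing leading $q$-exponents in the $Y$-monomial expansion (or specializing at $q=1$ and invoking Proposition~\ref{p:GLStwist} via Corollary~\ref{c:spBZauto}) shows it equals $1$. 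Together with the frozen case this proves the theorem. The main obstacle is establishing $\Theta^{6}=\mathrm{id}$ — i.e. $\Omega_{w_0}^{-6}\cong\mathrm{Id}$ with the correct treatment of the diagram automorphism when $w_0\neq-1$ — and carrying out the bookkeeping of the accumulated minor- and $q$-factors.
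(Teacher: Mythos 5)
There is a genuine gap at the heart of your argument: the claim $\Theta^{6}=\mathrm{id}$ on all of $\mathscr{B}(\infty)$. Your only proposed justification is the additive categorification, but Theorem \ref{t:qGLS} gives the formula $\eta_{w,q}(Y_{R})=q^{\bullet}Y_{I(R)}^{-1}Y_{\Omega_{w}^{-1}(R)}$ only for \emph{rescaled quantum cluster monomials} $Y_{R}$, i.e.\ for reachable $\Pi$-modules $R$; it says nothing about an arbitrary dual canonical basis element $G^{\mathrm{up}}(\tilde b)$, and outside small cases most elements of $\mathbf{B}^{\mathrm{up}}$ are not cluster monomials (cf.\ Remark \ref{r:KKKO}: the inclusion goes the other way). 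So the $6$-periodicity of the syzygy functor on the stable category does not yield the $6$-periodicity of the crystal self-map $\tilde b\mapsto\ast\overline{\jmath}^{\vee}_{w_{0}\lambda_{\tilde b}}(b)$ on the whole of $\mathscr{B}(\infty)$ — which is essentially the statement you are trying to prove. Moreover Section \ref{sec:GLS} assumes $\mathfrak{g}$ symmetric, so even the cluster-monomial case does not cover general finite type; your ``folding'' reduction (compatibility of $\eta_{w_0,q}$, the dual canonical bases and the $D_{w_0,\mu}$ with a diagram automorphism) is a substantial unproved claim, and ``verify $\Theta^{6}=\mathrm{id}$ directly at the crystal level'' is just a restatement of the problem. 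A secondary issue: a bar-invariant element of $\mathbb{Q}(q)$ need not be $1$ (the fixed field is $\mathbb{Q}(q+q^{-1})$), so your final normalization step needs the extra input that the constant is a priori of the form $\pm q^{m}$ before bar-invariance and $q=1$ specialization pin it down.

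The fix — and the route the paper takes — is to exploit multiplicativity instead of checking every basis element. Both $\eta_{w_0,q}^{6}$ and the map $x\mapsto q^{(\wt x+w_{0}\wt x,\wt x)}D_{w_{0},-\wt x-w_{0}\wt x}\,x$ are (twisted-)multiplicative by Propositions \ref{p:comm} and \ref{p:Elambda}, so it suffices to verify the identity on the generators $D_{s_{i}\varpi_{i},\varpi_{i}}=(1-q_{i}^{2})f_{i}$, $i\in I$ (your treatment of the inverted minors $[D_{w_{0}\lambda,\lambda}]^{-1}$ is fine and matches the paper). For these generators the crystal bookkeeping is explicit: using Lemma \ref{l:Dcrystal}, Proposition \ref{p:minor} and Remark \ref{r:minlambda} one computes the $\varepsilon_{j}^{\ast}$-data after each application of $\eta_{w_0,q}$, finds $\eta_{w_0,q}^{3}(D_{s_{i}\varpi_{i},\varpi_{i}})\simeq D_{w_{0},-\alpha_{i}}D_{s_{\theta(i)}\varpi_{\theta(i)},\varpi_{\theta(i)}}$ with $w_{0}\alpha_{i}=-\alpha_{\theta(i)}$ (handling $\mathfrak{sl}_{2}$ factors separately, where $\eta^{2}=\mathrm{id}$), hence $\eta_{w_0,q}^{6}(D_{s_{i}\varpi_{i},\varpi_{i}})\simeq D_{w_{0},\alpha_{i}-\alpha_{\theta(i)}}D_{s_{i}\varpi_{i},\varpi_{i}}$, and the residual $q$-power is forced by dual bar invariance, since $\eta_{w_0,q}^{6}$ sends dual canonical basis elements to dual canonical basis elements (Theorem \ref{t:BZauto}). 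This argument is uniform in all finite types and needs no categorification; the syzygy $6$-periodicity then appears only as an a posteriori explanation in the symmetric case, as in the remark following the theorem.
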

\begin{rem}
When the action of $w_{0}$ on $P$ is given by $\mu\mapsto-\mu$, the theorem above states that $\eta_{w_{0},q}^{6}=\mathrm{id}$. Hence
$\eta_{w_{0},q}$ is ``really'' periodic. If $\mathfrak{g}$ is
simple, then this condition is satisfied in the case that
$\mathfrak{g}$ is of type $\mathrm{B}_{n}$, $\mathrm{C}_{n}$, $\mathrm{D}_{2n}$
for $n\in\mathbb{Z}_{>0}$ and $\mathrm{E}_{7}$, $\mathrm{E}_{8}$,
$\mathrm{F}_{4}$, $\mathrm{G}_{2}$. See \cite[Section 3.7]{MR1066460}.

When $\mathfrak{g}$ is symmetric, such periodicity is also explained
by Gei\ss-Leclerc-Schr\"oer's additive categorification of twist automorphisms
(see Section \ref{s:GLS}). The periodicity corresponds to the well-known
$6$-periodicity of syzygy functors \cite{MR1388043,MR1648626}, that
is, the property that $(\Omega_{w_{0}}^{-1})^{6}(M)\simeq M$ for
an indecomposable non-projective-injective module $M$ of $\Pi$ in
the notation of Section \ref{s:GLS}.

We can consider the similar periodicity problems for every $w\in W$.
It would be interesting to find the necessary and sufficient condition
on $w\in W$ for periodicity. Since quantum twist automorphisms restrict
to permutations on dual canonical bases, the periodicity of a quantum
twist automorphism $\eta_{w,q}$ is equivalent to the periodicity
of a (non-quantum) twist automorphism $\eta_{w}$. See also Remark \ref{r:decrease} below. 
\end{rem}
\begin{lem}
\label{l:Dcrystal} Let $\lambda\in P_{+}$. Take $u,u'\in V(\lambda)$
such that $D_{u,u'}=\Gup(\tilde{b})$ for some $\tilde{b}\in\mathscr{B}(\infty)$.
Then, for $i\in I$, 
\begin{align*}
\varepsilon_{i}(\tilde{b}) & =\max\{k\in\mathbb{Z}_{\geq0}\mid D_{e_{i}^{k}.u,u'}\neq0\} & \varepsilon_{i}^{\ast}(\tilde{b}) & =\max\{k\in\mathbb{Z}_{\geq0}\mid D_{u,f_{i}^{k}.u'}\neq0\}.
\end{align*}
In particular, 
\begin{align*}
\varepsilon_{i}(\overline{\jmath}_{\lambda}(b)) & =\varepsilon_{i}(b) & \varepsilon_{i}(\overline{\jmath}_{w_{0}\lambda}^{\vee}(b)) & =\varphi_{i}(b)(=\varepsilon_{i}(b)+\langle h_{i},\wt b\rangle).
\end{align*}
\end{lem}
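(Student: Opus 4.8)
The statement to prove is Lemma~\ref{l:Dcrystal}, which characterizes the crystal invariants $\varepsilon_i(\tilde b)$ and $\varepsilon_i^\ast(\tilde b)$ of a dual canonical basis element realized as a unipotent quantum matrix coefficient $D_{u,u'}=\Gup(\tilde b)$, together with the two special consequences for $\overline{\jmath}_\lambda$ and $\overline{\jmath}_{w_0\lambda}^\vee$.

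\textbf{Approach.} The plan is to translate the crystal-theoretic quantities $\varepsilon_i(\tilde b)$ and $\varepsilon_i^\ast(\tilde b)$, which are defined via the action of the Kashiwara operators $e_i',\,{}_ie'$ on $\Uq^-$, into statements about when certain matrix coefficients vanish. The main tool is Proposition~\ref{p:EFaction}: there, $(e_i')^{(\varepsilon_i(\tilde b))}\Gup(\tilde b)$ is a nonzero multiple of $\Gup(\tilde e_i^{\varepsilon_i(\tilde b)}\tilde b)$ while $(e_i')^{(k)}\Gup(\tilde b)=0$ for $k>\varepsilon_i(\tilde b)$; similarly $({}_ie')^{(k)}$ controls $\varepsilon_i^\ast(\tilde b)$. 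So $\varepsilon_i(\tilde b)=\max\{k\ge 0\mid (e_i')^k\Gup(\tilde b)\ne 0\}$ and $\varepsilon_i^\ast(\tilde b)=\max\{k\ge 0\mid ({}_ie')^k\Gup(\tilde b)\ne 0\}$. It then remains to express $(e_i')^k D_{u,u'}$ and $({}_ie')^k D_{u,u'}$ as matrix coefficients again.

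\textbf{Key steps.} First I would establish the identities $e_i'(D_{u,u'})=(1-q_i^2)\,D_{e_i.u,\,u'}$ and ${}_ie'(D_{u,u'})=(1-q_i^2)\,D_{u,\,f_i.u'}$ (up to the harmless nonzero scalar $1-q_i^2$; the precise constant is irrelevant for a vanishing statement). These follow by combining the defining property of $D_{u,u'}$, namely $(D_{u,u'},x)_L=(u,x.u')_\lambda^\varphi$ for all $x\in\Uq^-$, with the skew-derivation property of $e_i',{}_ie'$ relative to $(\ ,\ )_L$ recorded in Definition~\ref{d:Lusform}: $(f_ix,y)_L=\tfrac{1}{1-q_i^2}(x,e_i'(y))_L$ and $(xf_i,y)_L=\tfrac{1}{1-q_i^2}(x,{}_ie'(y))_L$. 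Concretely, $(e_i'(D_{u,u'}),x)_L$ up to scalar equals $(D_{u,u'},f_i x)_L=(u,f_i x.u')_\lambda^\varphi=(\varphi(f_i).u,x.u')_\lambda^\varphi=(e_i.u,x.u')_\lambda^\varphi$, which is $(D_{e_i.u,u'},x)_L$ up to scalar; the case of ${}_ie'$ uses $(u,xf_i.u')_\lambda^\varphi$ and identifies it with $(D_{u,f_i.u'},x)_L$. Iterating, $(e_i')^k D_{u,u'}$ is a nonzero scalar times $D_{e_i^k.u,u'}$ and $({}_ie')^k D_{u,u'}$ is a nonzero scalar times $D_{u,f_i^k.u'}$. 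Since $D_{a,b}=0$ iff $(a,x.b)_\lambda^\varphi=0$ for all $x\in\Uq^-$, and since $\Uq^-.u'$ spans a subspace whose orthogonal complement detects $a$, one sees $D_{e_i^k.u,u'}\ne 0$ precisely while $e_i^k.u$ is nonzero and still pairs nontrivially; combined with Proposition~\ref{p:EFaction} this yields the two displayed formulas for $\varepsilon_i(\tilde b)$ and $\varepsilon_i^\ast(\tilde b)$.

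For the ``in particular'' clauses, I would specialize. For $\overline{\jmath}_\lambda(b)$ with $b\in\mathscr{B}(\lambda)$, Proposition~\ref{p:minor}(1) gives $D_{\Gup_\lambda(b),u_\lambda}=\Gup(\overline{\jmath}_\lambda(b))$, so $u=\Gup_\lambda(b)$, $u'=u_\lambda$. Then $\varepsilon_i(\overline{\jmath}_\lambda(b))=\max\{k\mid D_{e_i^k.\Gup_\lambda(b),\,u_\lambda}\ne 0\}=\max\{k\mid e_i^k.\Gup_\lambda(b)\ne 0\}=\varepsilon_i(b)$, using Proposition~\ref{p:EFaction} (the first block: $e_i^{(k)}.\Gup_\lambda(b)=0$ iff $k>\varepsilon_i(b)$, and $e_i^{(\varepsilon_i(b))}.\Gup_\lambda(b)=\Gup_\lambda(\tilde e_i^{\varepsilon_i(b)}b)\ne 0$) together with the fact that $D_{v,u_\lambda}=0$ iff $v=0$ (the highest weight vector $u_\lambda$ is cyclic, so $(v,\Uq^-.u_\lambda)_\lambda^\varphi=0$ forces $v=0$ by nondegeneracy of $(\ ,\ )_\lambda^\varphi$). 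For $\overline{\jmath}_{w_0\lambda}^\vee(b)$ with $b\in\mathscr{B}_{w_0}(\lambda)=\mathscr{B}(\lambda)$ (recall $V_{w_0}(\lambda)=V(\lambda)$ since $w_0$ is the longest element, so every $b\in\mathscr{B}(\lambda)$ occurs), Proposition~\ref{p:minor}(2) gives $D_{u_{w_0\lambda},\Gup_\lambda(b)}=\Gup(\ast\overline{\jmath}_{w_0\lambda}^\vee(b))$; applying $\ast$ and Proposition~\ref{p:Kasinv} together with $\varepsilon_i^\ast=\varepsilon_i\circ\ast$, I get $\varepsilon_i(\overline{\jmath}_{w_0\lambda}^\vee(b))=\varepsilon_i^\ast(\ast\overline{\jmath}_{w_0\lambda}^\vee(b))=\max\{k\mid D_{u_{w_0\lambda},\,f_i^k.\Gup_\lambda(b)}\ne 0\}=\max\{k\mid f_i^k.\Gup_\lambda(b)\ne 0\}=\varphi_i(b)$, where the last step uses Proposition~\ref{p:EFaction} (the second block: $f_i^{(k)}.\Gup_\lambda(b)=0$ iff $k>\varphi_i(b)$) and the fact that $D_{u_{w_0\lambda},v}=0$ iff $v\in V_{w_0}(\lambda)^\perp=0$, i.e.\ iff $v=0$, by Proposition~\ref{p:minor}(3) and nondegeneracy. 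Finally $\varphi_i(b)=\varepsilon_i(b)+\langle h_i,\wt b\rangle$ is the standard crystal identity on $\mathscr{B}(\lambda)$.

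\textbf{Main obstacle.} I expect the delicate point to be the passage from ``$(e_i')^k\Gup(\tilde b)\ne 0$'' to the vanishing criterion for the matrix coefficient $D_{e_i^k.u,u'}$ in the \emph{general} case (first two displayed formulas), where $u'$ need not be a highest/lowest weight vector and the pairing $(e_i^k.u,\Uq^-.u')_\lambda^\varphi$ could in principle vanish even when $e_i^k.u\ne 0$. One must argue that $D_{u,u'}=\Gup(\tilde b)$ being a genuine basis element forces $e_i^k.u$ to pair nontrivially with $\Uq^-.u'$ for exactly the range $k\le\varepsilon_i(\tilde b)$; this is where Proposition~\ref{p:EFaction}'s precise statement $(e_i')^{(\varepsilon_i(\tilde b))}\Gup(\tilde b)=(1-q_i^2)^{\varepsilon_i(\tilde b)}\Gup(\tilde e_i^{\varepsilon_i(\tilde b)}\tilde b)$ is essential, since it guarantees the iterated image is again a (nonzero) dual canonical basis element rather than merely nonzero, closing the loop. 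The ``in particular'' clauses are comparatively routine once this is in place, because there the second argument of $D$ is extremal and cyclicity makes the vanishing criterion transparent.
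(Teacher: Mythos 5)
Your proposal is correct and follows essentially the same route as the paper's proof: you reduce $\varepsilon_{i}$ and $\varepsilon_{i}^{\ast}$ to the (non)vanishing of $(e_{i}')^{k}$ and $({_{i}e'})^{k}$ on $\Gup(\tilde{b})$ via Proposition \ref{p:EFaction}, and then identify $(e_{i}')^{k}D_{u,u'}$ with $(1-q_{i}^{2})^{k}D_{e_{i}^{k}.u,u'}$ (and likewise for ${_{i}e'}$) by the adjunction in Definition \ref{d:Lusform}, which is exactly the paper's computation. The ``in particular'' clauses, which the paper only attributes to Propositions \ref{p:EFaction} and \ref{p:minor}, are filled in by you correctly (the worry raised in your ``main obstacle'' paragraph is already dissolved by the displayed identity itself, since the statement concerns $D_{e_{i}^{k}.u,u'}\neq0$ rather than $e_{i}^{k}.u\neq0$).
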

\begin{proof}
By Proposition \ref{p:EFaction}, 
\begin{align}
\varepsilon_{i}(\tilde{b})=\max\{k\in\mathbb{Z}_{\geq0}\mid(e_{i}')^{k}(D_{u,u'})\neq0\}.\label{varepsilon}
\end{align}
For $k\in\mathbb{Z}_{\geq0}$ and $x\in\Uq^{-}$, we have 
\begin{align*}
((e_{i}')^{k}(D_{u,u'}),x)_{L} & =(1-q_{i}^{2})^{k}(D_{u,u'},f_{i}^{k}x)_{L}\\
 & =(1-q_{i}^{2})^{k}(u,f_{i}^{k}x.u')_{\lambda}^{\varphi}\\
 & =(1-q_{i}^{2})^{k}(e_{i}^{k}.u,x.u')_{\lambda}^{\varphi}=(1-q_{i}^{2})^{k}(D_{e_{i}^{k}.u,u'},x)_{L}.
\end{align*}
Hence $(e_{i}')^{k}(D_{u,u'})=(1-q_{i}^{2})^{k}D_{e_{i}^{k}.u,u'}$.
Combining this equality with (\ref{varepsilon}), we obtain the first
equality. The second equality is proved in the same manner. The last
two equalities are deduced from Proposition \ref{p:EFaction} and
\ref{p:minor}. 
\end{proof}
\begin{proof}[{{Proof of Theorem \ref{t:period}}}]
 It is easily seen that we need only check the case that $x\in\Uq^{-}$.
For $i\in I$, we have $D_{s_{i}\varpi_{i},\varpi_{i}}=(1-q_{i}^{2})f_{i}$.
We first consider the images of $D_{s_{i}\varpi_{i},\varpi_{i}}$,
$i\in I$ under iterated application of $\eta_{w_{0},q}$. If $I=\{i\}$,
that is, $\mathfrak{g}=\mathfrak{sl}_{2}$, the quantum unipotent
cell $\Aq[N_{-}^{w_{0}}]$ is generated by $D_{s_{i}\varpi_{i},\varpi_{i}}^{\pm1}(=D_{w_{0}\varpi_{i},\varpi_{i}}^{\pm1})$.
In this case, $\eta_{w_{0},q}^{2}(D_{s_{i}\varpi_{i},\varpi_{i}})=D_{s_{i}\varpi_{i},\varpi_{i}}$.
Hence $\eta_{w_{0},q}^{2}=\mathrm{id}$, in particular, the theorem
holds. Henceforth, we consider the case that $\mathfrak{g}$ does
not have ideals of Lie algebras which are isomorphic to $\mathfrak{sl}_{2}$.
We have 
\[
\eta_{w_{0},q}(D_{s_{i}\varpi_{i},\varpi_{i}})\simeq D_{w_{0}\varpi_{i},\varpi_{i}}^{-1}D_{w_{0}\varpi_{i},s_{i}\varpi_{i}}.
\]
Here $\simeq$ stands for the coincidence up to some powers of $q$.
Now, by Proposition \ref{p:minor}, $D_{w_{0}\varpi_{i},s_{i}\varpi_{i}}=\Gup(\ast\overline{\jmath}_{w_{0}\varpi_{i}}^{\vee}(u_{s_{i}\varpi_{i}}))$.
By Lemma \ref{l:Dcrystal}, 
\[
\varepsilon_{j}^{\ast}(\ast\overline{\jmath}_{w_{0}\varpi_{i}}^{\vee}(u_{s_{i}\varpi_{i}}))=\varepsilon_{j}(\overline{\jmath}_{w_{0}\varpi_{i}}^{\vee}(u_{s_{i}\varpi_{i}}))=\varphi_{j}(u_{s_{i}\varpi_{i}})=\begin{cases}
-a_{ji} & \text{if}\;j\neq i,\\
0 & \text{if}\;j=i.
\end{cases}
\]
Therefore $\sum_{j\in I}\varepsilon_{j}^{\ast}(\ast\overline{\jmath}_{w_{0}\varpi_{i}}^{\vee}(u_{s_{i}\varpi_{i}}))\varpi_{j}=\varpi_{i}+s_{i}\varpi_{i}(=:\lambda_{1})$.
Recall Remark \ref{r:minlambda}. Then there exists $b_{1}\in\mathscr{B}(\lambda_{1})$
such that $D_{w_{0}\varpi_{i},s_{i}\varpi_{i}}=D_{\Gup_{\lambda_{1}}(b_{1}),u_{\lambda_{1}}}$,
that is, $\overline{\jmath}_{\lambda_{1}}(b_{1})=\ast\overline{\jmath}_{w_{0}\varpi_{i}}^{\vee}(u_{s_{i}\varpi_{i}})$.
Then 
\[
\eta_{w_{0},q}^{2}(D_{s_{i}\varpi_{i},\varpi_{i}})\simeq D_{w_{0}\varpi_{i},\varpi_{i}}D_{w_{0}\lambda_{1},\lambda_{1}}^{-1}D_{u_{w_{0}\lambda_{1}},\Gup(b_{1})}.
\]
As above, $D_{w_{0}\lambda_{1},\Gup_{\lambda_{1}}(b_{1})}=\Gup(\ast\overline{\jmath}_{w_{0}\lambda_{1}}^{\vee}(b_{1}))$,
and by Lemma \ref{l:Dcrystal}, 
\begin{align*}
\varepsilon_{j}^{\ast}(\ast\overline{\jmath}_{w_{0}\lambda_{1}}^{\vee}(b_{1})) & =\varepsilon_{j}(\overline{\jmath}_{w_{0}\lambda_{1}}^{\vee}(b_{1}))\\
 & =\varepsilon_{j}(b_{1})+\langle h_{j},\wt b_{1}\rangle\\
 & =\varepsilon_{j}(\overline{\jmath}_{\lambda_{1}}(b_{1}))+\langle h_{j},w_{0}\varpi_{i}-s_{i}\varpi_{i}+\lambda_{1}\rangle\\
 & =\varepsilon_{j}(\ast\overline{\jmath}_{w_{0}\varpi_{i}}^{\vee}(u_{s_{i}\varpi_{i}}))+\langle h_{j},w_{0}\varpi_{i}+\varpi_{i}\rangle.
\end{align*}
 By Proposition \ref{p:minor} and Lemma \ref{l:Dcrystal}, 
\[
\varepsilon_{j}(\ast\overline{\jmath}_{w_{0}\varpi_{i}}^{\vee}(u_{s_{i}\varpi_{i}}))=\max\{k\in\mathbb{Z}_{\geq0}\mid D_{e_{j}^{k}.u_{w_{0}\varpi_{i}},u_{s_{i}\varpi_{i}}}\neq0\}.
\]
By the way, there is an involution $\theta$ on $I$ defined by $w_{0}\alpha_{i}=-\alpha_{\theta(i)}$.
Then $w_{0}\varpi_{i}=-\varpi_{\theta(i)}$ and $s_{\theta(i)}w_{0}\varpi_{i}=w_{0}s_{i}\varpi_{i}$.
When $\mathfrak{g}$ does not have ideals of Lie algebras which are
isomorphic to $\mathfrak{sl}_{2}$, we have $D_{w_{0}s_{i}\varpi_{i},s_{i}\varpi_{i}}\neq0$.
Therefore $\varepsilon_{j}(\ast\overline{\jmath}_{w_{0}\varpi_{i}}^{\vee}(u_{s_{i}\varpi_{i}}))=\delta_{j,\theta(i)}$.
Hence 
\[
\varepsilon_{j}^{\ast}(\ast\overline{\jmath}_{w_{0}\lambda_{1}}^{\vee}(b_{1}))=\delta_{j,\theta(i)}-\delta_{j,\theta(i)}+\delta_{ij}=\delta_{ij}.
\]
Therefore $\sum_{j\in I}\varepsilon_{j}^{\ast}(\ast\overline{\jmath}_{w_{0}\lambda_{1}}^{\vee}(b_{1}))\varpi_{j}=\varpi_{i}$.
Then there exists $b_{2}\in\mathscr{B}(\varpi_{i})$ such that $D_{w_{0}\lambda_{1},\Gup_{\lambda_{1}}(b_{1})}=D_{\Gup_{\varpi_{i}}(b_{2}),u_{\varpi_{i}}}$.
Then 
\begin{align*}
\eta_{w_{0},q}^{3}(D_{s_{i}\varpi_{i},\varpi_{i}}) & \simeq D_{w_{0}\varpi_{i},\varpi_{i}}^{-1}D_{w_{0}\lambda_{1},\lambda_{1}}D_{w_{0}\varpi_{i},\varpi_{i}}^{-1}D_{u_{w_{0}\varpi_{i}},\Gup_{\varpi_{i}}(b_{2})}\\
 & \simeq D_{w_{0},-\alpha_{i}}D_{u_{w_{0}\varpi_{i}},\Gup_{\varpi_{i}}(b_{2})}.
\end{align*}
Here, 

\begin{align*}
\wt D_{u_{w_{0}\varpi_{i}},\Gup_{\varpi_{i}}(b_{2})} & =w_{0}\varpi_{i}-\wt b_{2}=w_{0}\varpi_{i}-(w_{0}\lambda_{1}-\wt b_{1}+\varpi_{i})\\
 & =w_{0}\varpi_{i}-(w_{0}\lambda_{1}-(w_{0}\varpi_{i}-s_{i}\varpi_{i}+\lambda_{1})+\varpi_{i})=-\alpha_{\theta(i)}.
\end{align*}
Hence $D_{u_{w_{0}\varpi_{i}},\Gup_{\varpi_{i}}(b_{2})}=D_{s_{\theta(i)}\varpi_{\theta(i)},\varpi_{\theta(i)}}$
because both sides are unique elements of the dual canonical basis
of weight $-\alpha_{\theta(i)}$. Therefore, 
\[
\eta_{w_{0},q}^{6}(D_{s_{i}\varpi_{i},\varpi_{i}})\simeq D_{w_{0},\alpha_{i}-\alpha_{\theta(i)}}D_{s_{i}\varpi_{i},\varpi_{i}}.
\]
Moreover, by Theorem \ref{t:BZauto}, $\eta_{w_{0},q}^{6}(D_{s_{i}\varpi_{i},\varpi_{i}})$
is an element of dual canonical basis, in particular, dual bar-invariant.
Therefore, 
\[
\eta_{w_{0},q}^{6}(D_{s_{i}\varpi_{i},\varpi_{i}})=q^{(\alpha_{i}-\alpha_{\theta(i)},\alpha_{i})}D_{w_{0},\alpha_{i}-\alpha_{\theta(i)}}D_{s_{i}\varpi_{i},\varpi_{i}}.
\]
By this result and Proposition \ref{p:comm}, \ref{p:Elambda}, for
$i_{1},\dots,i_{\ell}\in I$, we have 
\begin{align*}
 & \eta_{w_{0},q}^{6}(D_{s_{i_{1}}\varpi_{i_{1}},\varpi_{i_{1}}}\cdots D_{s_{i_{\ell}}\varpi_{i_{\ell}},\varpi_{i_{\ell}}})\\
 & =q^{\sum_{k=1}^{\ell}(\alpha_{i_{k}}-\alpha_{\theta(i_{k})},\alpha_{i_{k}})}D_{w_{0},\alpha_{i_{1}}-\alpha_{\theta(i_{1})}}D_{s_{i_{1}}\varpi_{i_{1}},\varpi_{i_{1}}}\cdots D_{w_{0},\alpha_{i_{\ell}}-\alpha_{\theta(i_{\ell})}}D_{s_{i_{\ell}}\varpi_{i_{\ell}},\varpi_{i_{\ell}}}\\
 & =q^{(\sum_{k=1}^{\ell}\alpha_{i_{k}}-\sum_{k=1}^{\ell}\alpha_{\theta(i_{k})},\sum_{k=1}^{\ell}\alpha_{i_{k}})}D_{w_{0},\sum_{k=1}^{\ell}\alpha_{i_{k}}-\sum_{k=1}^{\ell}\alpha_{\theta(i_{k})}}D_{s_{i_{1}}\varpi_{i_{1}},\varpi_{i_{1}}}\cdots D_{s_{i_{\ell}}\varpi_{i_{\ell}},\varpi_{i_{\ell}}}.
\end{align*}
Hence the desired equality in the theorem holds for all $x\in\Aq[N_{-}]$
since the elements $D_{s_{i}\varpi_{i},\varpi_{i}}=(1-q_{i}^{2})f_{i},i\in I$
generate the quantum unipotent subgroup $\Aq[N_{-}]$. Then we can
easily extend this result to that for $\Aq[N_{-}^{w_{0}}]$ by straightforward
calculation. The explicit calculation is left to the reader.
\end{proof}
\begin{rem}\label{r:decrease}
In the essential part of the proof of Theorem \ref{t:period}, we check the periodicity on generators of $\Aq[N_{-}^{w_{0}}]$. We should note that this set of generators is not the set of generators of $\mathbb{C}[N_{-}^{w_{0}}]$ after specialization unless $\mathfrak{g}=\mathfrak{sl}_2$.

Indeed, in general Kac-Moody cases, the quantum unipotent cell $\Aq[N_{-}^{w}]$ is generated by $\{[f_i]\mid i\in I\}\cup\{[D_{w\rho, \rho}]^{-1}\}$ (recall that $\rho:=\sum_{i\in I}\varpi_i$). In particular, 
\begin{center}
(the number of the generators of $\Aq[N_{-}^{w}]$) $\leq \# I+1$. 
\end{center}
On the other hand, 
\begin{center}
(the number of the generators of $\mathbb{C}[N_{-}^{w}]$)$\geq \dim N_-^w=\ell(w)$. 
\end{center}
Therefore the periodicity might be checked in the quantum setting more easily than in the classical setting by this decrease of numbers of generators. 
\end{rem}
\begin{acknowledgement*}
The authors are grateful to Bernard Leclerc for his enlightening advice
concerning cluster algebras and their categorifications. They
would like to express our sincere gratitude to Yoshihisa Saito, the
supervisor of the second author, for his helpful comments. They 
wish to thank Ryo Sato and Bea Schumann for several interesting
comments and discussions. They are also thankful to the anonymous referees whose suggestions significantly improve the present paper. The second author thanks the University of Caen Normandy, where a part of this paper was written, for the hospitality. 
\end{acknowledgement*}
\providecommand{\bysame}{\leavevmode\hbox to3em{\hrulefill}\thinspace}

\bibliographystyle{amsalpha}


\end{document}